\theoremstyle{plain}
\newtheorem{theo}{Theorem}[section]
\newtheorem{prop}[theo]{Proposition}
\newtheorem{lemma}[theo]{Lemma}
\newtheorem{lemm}[theo]{Lemma}
\newtheorem{coro}[theo]{Corollary}
\theoremstyle{definition}
\newtheorem{defi}[theo]{Definition}
\newtheorem{exa}[theo]{Example}
\newtheorem{conj}[theo]{Conjecture}
\newtheorem{rema}[theo]{Remark}
\DeclareMathOperator{\coker}{coker}
\DeclareMathOperator{\Ext}{Ext}
\DeclareMathOperator{\GL}{GL}
\DeclareMathOperator{\Sp}{Sp}
\DeclareMathOperator{\Lie}{Lie}
\DeclareMathOperator{\Res}{Res}
\DeclareMathOperator{\an}{an}
\DeclareMathOperator{\Tor}{Tor}
\title{...}
\date {\today}
\author{Ildar Gaisin and Joaquin Rodrigues Jacinto}
\email{ildar.gaisin@imj-prg.fr}
\email{joaquin.rodrigues@imj-prg.fr}
\def\Q{{\bf Q}}
\def\Z{{\bf Z}}
\def\N{{\bf N}}
\def\P{{\bf P}}
\def\Robba{{\mathscr{R}}}
\def\zp{{\Z_p}}
\def\zpe{{\mathbf{Z}_p^\times}}
\def\qpe{{\mathbf{Q}_p^\times}}
\def\qp{{\Q_p}}
\def\matrice#1#2#3#4{{\big(\begin{smallmatrix}#1&#2\\ #3&#4\end{smallmatrix}\big)}}
\begin{document}
\title{Arithmetic families of $(\varphi,\Gamma)$-modules and locally analytic representations of $\GL_2(\qp)$}
\begin{abstract}
Let $A$ be a $\qp$-affinoid algebra, in the sense of Tate. We develop a theory of locally convex $A$-modules parallel to the treatment in the case of a field by Schneider and Teitelbaum. We prove that there is an integration map linking a category of locally analytic representations in $A$-modules and separately continuous \emph{relative} distribution modules. There is a suitable theory of locally analytic cohomology for these objects and a version of Shapiro's Lemma. In the case of a field this has been substantially developed by Kohlhaase. As an application we propose a $p$-adic Langlands correspondence in \emph{families}: For a \emph{regular} trianguline $(\varphi,\Gamma)$-module of dimension 2 over the relative Robba ring $\Robba_A$ we construct a locally analytic $\GL_2(\qp)$-representation in $A$-modules. 
\end{abstract}

\maketitle

\makeatletter
\def\@tocline#1#2#3#4#5#6#7{\relax
  \ifnum #1>\c@tocdepth 
  \else
    \par \addpenalty\@secpenalty\addvspace{#2}%
    \begingroup \hyphenpenalty\@M
    \@ifempty{#4}{%
      \@tempdima\csname r@tocindent\number#1\endcsname\relax
    }{%
      \@tempdima#4\relax
    }%
    \parindent\z@ \leftskip#3\relax \advance\leftskip\@tempdima\relax
    \rightskip\@pnumwidth plus4em \parfillskip-\@pnumwidth
    #5\leavevmode\hskip-\@tempdima
      \ifcase #1
       \or\or \hskip 1em \or \hskip 2em \else \hskip 3em \fi%
      #6\nobreak\relax
    \dotfill\hbox to\@pnumwidth{\@tocpagenum{#7}}\par
    \nobreak
    \endgroup
  \fi}
\makeatother

\setcounter{secnumdepth}{4}
\setcounter{tocdepth}{4}
\tableofcontents

\section{Introduction}

\subsection{An extension of the $p$-adic Langlands correspondence}

The aim of this article is to study the $p$-adic Langlands correspondence for $\mathrm{GL}_2(\qp)$ in arithmetic families. To put things into context, let us recall the general lines of this correspondence. In \cite{colmez2010}, \cite{Pasimcmf} and \cite{cpdpadiclc}, a bijection $V \mapsto \Pi(V)$ between absolutely irreducible $2$-dimensional continuous $L$-representations\footnote{During all this text, $L$ will denote the coefficient field, which is a finite extension of $\qp$.} of the absolute Galois group $\mathcal{G}_\qp$ of $\qp$ and admissible unitary non-ordinary Banach $L$-representations of $\mathrm{GL}_2(\qp)$ which are topologically absolutely irreducible is established. The inverse functor $\Pi \mapsto V(\Pi)$ is sometimes referred to as  the Montr\'eal functor, cf. \cite[\S IV]{colmez2010}. 

The basic strategy of the construction of the functor $V \mapsto \Pi(V)$ is the following: by Fontaine's equivalence, the category of local Galois representations in $L$-vector spaces is equivalent to that of \'etale $(\varphi, \Gamma)$-modules over Fontaine's field $\mathscr{E}_L$ \footnote{The field $\mathscr{E}_L$ is defined as the Laurent series $\sum_{n \in \Z} a_n T^n$ such that $a_n \in L$ are bounded and $\lim_{n \to - \infty} a_n = 0$. $\mathscr{E}$ is equipped with continuous actions of $\Gamma = \zpe$ (we note $\sigma_a$, $a \in \zpe$, its elements) and an operator $\varphi$ defined by the formulas $\sigma_a(T) = (1 + T)^a - 1$ and $\varphi(T) = (1 + T)^p - 1$. Recall that a $(\varphi, \Gamma)$-module is a free $\mathscr{E}$-module equipped with semi-linear continuous actions of $\Gamma$ and $\varphi$. }. The latter (linearized-) category is considered to be an upgrade as one can often perform explicit computations. Any such $(\varphi, \Gamma)$-module $D$ can be naturally seen as a $P^+$-equivariant sheaf \footnote{The matrix ${\matrice p 0 0 1}$ codifies the action of $\varphi$, ${\matrice a 0 0 1}$ the action of $\sigma_a \in \Gamma$ and ${\matrice 1 b 0 1}$ the multiplication by $(1 + T)^b$.} over $\zp$, where $P^+ = {\matrice {\zp - \{0\}} \zp 0 1}$ is a sub-semi-group of the mirabolic subgroup ($= {\matrice {\qpe} \qp 0 1}$) of $\mathrm{GL}_2(\qp)$. If $U$ is a compact open subset of $\zp$, we denote by $D \boxtimes U$ the sections over $U$ of this sheaf. In \cite{colmez2010}, a magical involution $w_D$ acting on $D \boxtimes \zpe$ is defined, allowing one (noting tht $\P^1(\qp)$ is built by glueing two copies of $\zp$ along $\zpe$) to extend $D$ to a $\mathrm{GL}_2(\qp)$-equivariant sheaf over\footnote{From now on, $\P^1$ will always mean $\P^1(\qp)$.} $\P^1$, which is denoted $D \boxtimes_{\omega} \P^1$, where $\omega = (\det D) \chi^{-1}$ \footnote{The character $\det D$ is the character of $\qpe$ defined by the actions of $\varphi$ and $\Gamma$ on $\wedge^2 D$. If $D$ is \'etale, it can also be seen as a Galois character via local class field theory. The character $\chi \colon x \mapsto x|x|$ denotes the cyclotomic character. We see both characters as characters of $\mathrm{GL}_2(\qp)$ by composing with the determinant.}. One then cuts out the desired Banach representation $\Pi(V)$ (and its dual) from the constituents of $D \boxtimes_{\omega} \P^1$.  More precisely, we have a short exact sequence of topological $\mathrm{GL}_2(\qp)$-modules $$ 0 \to \Pi(V)^* \otimes \omega \to D \boxtimes_{\omega} \P^1 \to \Pi(V) \to 0. $$

Let $\Robba_L$ denote the Robba ring \footnote{It is defined as the ring of Laurent series $\sum_n a_n T^n$, $a_n \in L$, converging on some annulus $0 < v_p(T) \leq r$ for some $r > 0$.} with coefficients in $L$. By a combination of results of Cherbonnier-Colmez (\cite{chercolsurpa}) and Kedlaya (\cite{kedpadicmothm}), the categories of \'etale $(\varphi, \Gamma)$-modules over $\mathscr{E}_L$ and $\Robba_L$ are equivalent. Call $D \mapsto D_{\rm rig}$ this correspondence. We have analogous constructions as above for $D_{\rm rig}$ and, in particular, we have a $\mathrm{GL}_2(\qp)$-equivariant sheaf $U \mapsto D_{\rm rig} \boxtimes U$ over $\P^1$. If we note $\Pi(V)^{\rm an}$ the locally analytic vectors of $\Pi(V)$, we get an exact sequence $$ 0 \mapsto (\Pi(V)^{\rm an})^* \otimes \omega \to D_{\rm rig} \boxtimes_{\omega} \P^1 \to \Pi(V)^{\rm an} \to 0. $$  The construction however of $D_{\rm rig} \boxtimes_\omega \P^1$ is not a straightforward consequence of $D \boxtimes_\omega \P^1$. This is mainly because the formula defining the involution does not converge for a $(\varphi, \Gamma)$-module over $\Robba_L$ \footnote{To construct the involution on $D_{\rm rig}$ in the \'etale case, one shows that $w_D$ stabilises $D^\dagger \boxtimes \zpe$, where $D^\dagger$ is the $(\varphi, \Gamma)$-module over the overconvergent elements $\mathscr{E}^\dagger_L$ of $\mathscr{E}_L$ corresponding to $D$ by the Cherbonnier-Colmez correspondence, and that it defines by continuity an involution on $D_{\rm rig} \boxtimes \zpe$.}.

Inspired by the calculations of the $p$-adic local correspondence for trianguline \footnote{A rank $2$ $(\varphi, \Gamma)$-module is trianguline if it is an extension of rank $1$ $(\varphi, \Gamma)$-modules. } \'etale $(\varphi, \Gamma)$-modules, Colmez (\cite{colmez2015}) has recently given a direct construction, for a (not necessarily \'etale) $(\varphi, \Gamma)$-module $\Delta$ (of rank 2) over $\Robba_L$, of a locally analytic $L$-representation $\Pi(\Delta)$ of $\GL_2(\qp)$. More precisely, we have the following theorem: 

\begin{theo} [\cite{colmez2015}, Th\'eor\`eme 0.1] There exists a unique extension of $\Delta$ to a $\mathrm{GL}_2(\qp)$-equivariant sheaf of $\qp$-analytic type \footnote{A sheaf $U \mapsto M \boxtimes U$ is of $\qp$-analytic type if, for every open compact $U \subseteq \P^1$ and every compact $K \subseteq \mathrm{GL}_2(\qp)$ stabilizing $U$, the space $M \boxtimes U$ is of LF-type and a continuous $\mathcal{D}(K)$-module, where $\mathcal{D}(K)$ is the distribution algebra over $K$.} $\Delta \boxtimes_\omega \P^1$ over $\P^1$ with central character $\omega$. Moreover, there exists a unique admissible locally analytic $L$-representation $\Pi(\Delta)$, with central character $\omega$, of $\GL_2(\qp)$, such that $$ 0 \to \Pi(\Delta)^* \otimes \omega \to \Delta \boxtimes_{\omega} \P^1 \to \Pi(\Delta) \to 0. $$
\end{theo}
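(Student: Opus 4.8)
The strategy is to follow the blueprint that works for étale $(\varphi,\Gamma)$-modules, but to replace the non-convergent formula for the involution $w_D$ by a direct construction of the sheaf on $\mathbb{P}^1$ out of an explicit combinatorial model. First I would construct the $P^+$-equivariant sheaf $U \mapsto \Delta \boxtimes U$ on $\zp$ attached to $\Delta$: for $U$ a compact open of $\zp$ this is built from the operators $\varphi$, $\psi$ (a left inverse of $\varphi$) and the $\Gamma$-action by the usual resolution $\Delta \boxtimes \zpe = \Delta / (\text{image of } \varphi \text{ on complement})$, extending to all compact opens by glueing along translates. The point where the Robba ring behaves differently from $\mathscr{E}_L$ is that $\psi$ is not surjective in general, but for a $(\varphi,\Gamma)$-module over $\Robba_L$ one still has the key finiteness/exactness statements (from the theory of $\varphi$-modules over the Robba ring, e.g. $\Delta^{\psi=0}$ is free of rank $[\,\cdot\,]$ over $\Robba_L(\Gamma)$, or over the relevant Iwasawa-type algebra) that make $\Delta \boxtimes \zp$ an LF-space with a continuous action of the relevant compact subgroups.

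The heart of the matter is the construction of $\Delta \boxtimes_\omega \P^1$. Since $\P^1(\qp) = \zp \cup w(\zp)$ glued along $\zpe$, giving the sheaf on $\P^1$ amounts to giving, besides $\Delta \boxtimes \zpe$, an involution $w_\Delta$ of $\Delta \boxtimes \zpe$ intertwining the actions of the two copies of the mirabolic, compatibly with $\Gamma$ and with the element $w = \matrice 0 1 1 0$; equivalently one must produce a $\mathrm{GL}_2(\qp)$-action on the candidate space extending the $P^+$-action. Here I would use the trianguline structure as the essential input: for $\Delta$ an extension $0 \to \Robba_L(\delta_1) \to \Delta \to \Robba_L(\delta_2) \to 0$ one constructs $w_\Delta$ first on the rank-one pieces (where everything is explicit, essentially the Mellin transform / Fourier theory on $\zpe$ identifying $\Robba_L(\delta)^{\psi=0}$ with a space of distributions on $\zpe$, on which $w$ acts by the known formula involving the $\delta$-twist), and then checks that the extension class is respected, i.e. that $w_\Delta$ on the middle term is forced and well-defined. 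The regularity hypothesis on $\delta_1/\delta_2$ is exactly what guarantees the relevant $\Ext$/cohomology groups are one-dimensional, so that there is no obstruction and no ambiguity — this is where I expect the main difficulty to lie, since one must control convergence of the resulting operator on the LF-space $\Delta \boxtimes \zpe$ uniformly, and verify the braid/involution relations $w_\Delta^2 = \mathrm{central}$ and the compatibility with $\matrice 1 {\zp} 0 1$ and its $w$-conjugate by a density argument from the rank-one case.

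Once the sheaf $\Delta \boxtimes_\omega \P^1$ is in hand and shown to be of $\qp$-analytic type (LF-type with continuous $\mathcal{D}(K)$-module structure — this follows from the corresponding property of $\Delta \boxtimes \zp$ together with the fact that finitely many $\mathrm{GL}_2(\qp)$-translates cover $\P^1$), the representation $\Pi(\Delta)$ is \emph{defined} as the cokernel of the natural $\mathrm{GL}_2(\qp)$-equivariant inclusion $\Delta^\natural \boxtimes_\omega \P^1 \hookrightarrow \Delta \boxtimes_\omega \P^1$ of a suitable bounded submodule (the analogue of $\Pi(V)^* \otimes \omega$), dualized appropriately; concretely one takes the subspace of sections vanishing to the right order at $\infty$, using that $\Delta \boxtimes \P^1$ sits in an exact sequence relating it to $\Delta \boxtimes \zp$ and $\Delta \boxtimes (\P^1 \smallsetminus \zp)$. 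Admissibility of $\Pi(\Delta)$ reduces, via Schneider–Teitelbaum duality, to showing that the strong dual is a coadmissible (in fact finitely generated coadmissible) module over the distribution algebra $\mathcal{D}(\mathrm{GL}_2(\qp))$ — equivalently over a single Iwasawa algebra $\mathcal{D}(K)$ for $K$ a compact open — which again is inherited from the finiteness of $\Delta$ over $\Robba_L$. Uniqueness in both clauses is a separate, easier point: any two extensions to $\qp$-analytic-type $\mathrm{GL}_2(\qp)$-sheaves with central character $\omega$ agree on $\zp$ by construction and must agree on $\zpe$ because the intertwining property of $w$ together with the continuity forces the involution to be the one we built (the space of such involutions being rigid), and uniqueness of $\Pi(\Delta)$ then follows from uniqueness of the sub-object. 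I would present uniqueness first, as a rigidity statement, and then existence; the regularity hypothesis is used only in the existence half.
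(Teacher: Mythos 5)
First, a framing remark: the statement you are proving is not actually proved in this paper --- it is quoted from \cite{colmez2015}; what the paper does is redo the analogous construction in the relative (affinoid) setting for \emph{regular trianguline} $\Delta$, following the same strategy as Colmez. Measured against either of these, your proposal has a genuine gap at its central step. You propose to build the involution $w_\Delta$ on $\Delta\boxtimes\zpe$ directly from the rank-one pieces and then ``check that the extension class is respected'', asserting that regularity makes the relevant Ext groups one-dimensional so there is ``no obstruction and no ambiguity''. That check \emph{is} the hard theorem, not a verification: there is no convergent formula for the involution over the Robba ring (this is precisely why the \'etale construction does not transfer), and one-dimensionality of $\Ext^1$ of $(\varphi,\Gamma)$-modules does not by itself say that a $(\varphi,\Gamma)$-extension of $\Robba_L(\delta_2)$ by $\Robba_L(\delta_1)$ carries a compatible $G$-structure, nor that such a structure is unique. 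The actual mechanism is cohomological: Shapiro's lemma identifies $\Ext^1_G(\Robba^+_L(\delta_2)\boxtimes_\omega\P^1,\;\Robba_L(\delta_1)\boxtimes_\omega\P^1)$ with $H^1(\overline{P}^+,\Robba_L(\delta_1,\delta_2))$, and the heart of the proof is that restriction to $A^+$, i.e.\ to $H^1(A^+,\Robba_L(\delta_1\delta_2^{-1}))=\Ext^1(\Robba_L(\delta_2),\Robba_L(\delta_1))$, is an isomorphism --- established by explicit computation of $\overline{P}^+$- (equivalently Lie-algebra) cohomology, including higher-degree groups. One then shows that the resulting extension of $\Robba_L^+(\delta_2)\boxtimes_\omega\P^1$ extends \emph{uniquely} to one of $\Robba_L(\delta_2)\boxtimes_\omega\P^1$, and only a posteriori recovers the involution on $\Delta\boxtimes\zpe$, using that $\Delta^{\psi=0}$ is free over $\Robba_L(\Gamma)$. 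Your plan skips all of this, which is where the work lies.

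Two further problems. The theorem as stated carries no trianguline and no regularity hypothesis --- it concerns an arbitrary rank-$2$ $\Delta$ over $\Robba_L$ --- whereas your argument is confined to regular trianguline $\Delta$, so even if completed it would prove only a special case; and the uniqueness you dismiss as ``rigidity'' is delicate (this paper points out that when $\delta_1\delta_2^{-1}=x^{-i}$, $i\geq 1$, the restriction map above is surjective but not injective, so the naive uniqueness argument breaks down). Finally, your definition of $\Pi(\Delta)$ as the cokernel of a bounded submodule $\Delta^\natural\boxtimes_\omega\P^1$ is imported from the \'etale theory and has no analogue here: for non-\'etale $\Delta$ there is no lattice or boundedness structure. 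The correct construction splits the middle extension in the four-step filtration $[\,B_L(\delta_2,\delta_1)^*\otimes\omega \,-\, B_L(\delta_1,\delta_2) \,-\, B_L(\delta_1,\delta_2)^*\otimes\omega \,-\, B_L(\delta_2,\delta_1)\,]$ of $\Delta\boxtimes_\omega\P^1$ (again a cohomological statement, resting on $H^1(\overline{P}^+,\Robba^-_L(\delta_1,\delta_2))$ being one-dimensional), exhibiting $\Pi(\Delta)$ as an extension of locally analytic principal series; admissibility then follows from that description rather than from any ``finiteness of $\Delta$ over $\Robba_L$''.
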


The purpose of the present work is to study this correspondence in the context of arithmetic families of $(\varphi, \Gamma)$-modules. Results in this direction on the $\ell$-adic side (i.e. the \emph{classical} local Langlands correspondence, cf. \cite{HarTayllc}) have been achieved by Emerton-Helm in \cite{EmHellafam}. The arguments in \cite{colmez2015} strongly rely on the cohomology theory of locally analytic representations developed in \cite{kohl2011}, and specifically on Shapiro's lemma. Since the authors are not aware of any reference for these results in the relative setting, we develop, in an appendix (cf. \S \ref{ap:lagrep}), the necessary definitions and properties of locally analytic $\GL_2(\qp)$-representations in $A$-modules. Since this point might carry some interest on its own, we describe it in more detail in \S \ref{introla} below. We will exclusively work with affinoid spaces in the sense of Tate, rather than Berkovich or Huber. Let $A$ be a $\qp$-affinoid algebra and let $\Robba_A$ be the relative Robba ring over $A$. Our main result can be stated as follows:

\begin{theo} \label{result}
Let $A$ be a $\qp$-affinoid algebra and let $\Delta$ be a trianguline $(\varphi,\Gamma)$-module over $\Robba_A$ of rank $2$ which is an extension of $\mathscr{R}_A(\delta_2)$ by $\mathscr{R}_A(\delta_1)$, where $\delta_1,\delta_2: \qpe \rightarrow A^\times$ are locally analytic characters satisfying some regularity assumptions \footnote{Precisely, we suppose that $\delta_1 \delta_2^{-1}$ is pointwise never of the form $\chi x^i$ or $x^{-i}$ for some $i \geq 0$.}. Then there exists a unique extension of $\Delta$ to a $\mathrm{GL}_2(\qp)$-equivariant sheaf of $\qp$-analytic type $\Delta \boxtimes_{\omega} \P^1$ over $\P^1$ with central character $\omega = \delta_1 \delta_2 \chi^{-1}$ and a (not necessarily unique) locally analytic $\GL_2(\qp)$-representation \footnote{See Definition \ref{def:lareph} for the definition of a locally analytic $G$-representation in $A$-modules. } $\Pi(\Delta)$ in $A$-modules with central character $\omega$, living in an exact sequence
$$ 0 \rightarrow \Pi(\Delta)^{*} \otimes \omega \rightarrow \Delta \boxtimes_\omega \P^1 \rightarrow \Pi(\Delta) \rightarrow 0.$$
\end{theo}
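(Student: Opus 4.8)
The plan is to follow Colmez's strategy from \cite{colmez2015}, carefully extending each step to work over the affinoid base $A$ rather than a field $L$. The fundamental object is the $P^+$-equivariant sheaf $U \mapsto \Delta \boxtimes U$ over compact opens $U \subseteq \zp$ attached to $\Delta$, which is available verbatim over $\Robba_A$. The first step is to construct the involution $w_\Delta$ on $\Delta \boxtimes \zpe$. Since the defining series does not converge, I would proceed as in the trianguline case: for each rank $1$ piece $\Robba_A(\delta)$ one builds $w_\delta$ explicitly using the theory of the operators on $\Robba_A$ and Mellin-type transforms, then assemble these via the extension structure. Here the regularity hypothesis on $\delta_1 \delta_2^{-1}$ enters decisively: it guarantees that the relevant $\Ext$-group computations and the analogues of $\Delta \boxtimes \P^1$ for rank $1$ modules behave uniformly, so that there are no ``exceptional'' fibres where the construction degenerates. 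One must check that all the constructions are compatible with base change $A \to A'$ and, crucially, that they produce objects that are still of $\qp$-analytic type, i.e. LF-type $A$-modules carrying continuous actions of the distribution algebras $\mathcal{D}(K, A) = \mathcal{D}(K) \ctens_{\qp} A$ for compact $K \subseteq \glq$ stabilizing the relevant open.

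The second step is the glueing: once $w_\Delta$ is constructed, one defines $\Delta \boxtimes_\omega \P^1$ as the fibre product of two copies of $\Delta \boxtimes \zp$ over $\Delta \boxtimes \zpe$, identified via $w_\Delta$. The uniqueness of this extension as a $\glq$-equivariant sheaf of $\qp$-analytic type with central character $\omega$ should follow formally from the fact that $\P^1$ is covered by the $\glq$-translates of $\zp$ together with the rigidity of the $\qp$-analytic type condition, exactly as in the field case — this part is expected to be routine once the foundational material of the appendix (\S \ref{ap:lagrep}), in particular the relative distribution algebras and their module theory, is in place.

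The third step is to extract $\Pi(\Delta)$. One identifies the subobject $\Pi(\Delta)^* \otimes \omega$ inside $\Delta \boxtimes_\omega \P^1$ — concretely, the sections supported in a way dual to a space of functions — and defines $\Pi(\Delta)$ as the quotient. Admissibility and the locally analytic $A$-representation structure (Definition \ref{def:lareph}) should be checked using the relative analogue of Shapiro's lemma and the locally analytic cohomology machinery advertised in the abstract; this is where the appendix does its real work, replacing \cite{kohl2011}. The non-uniqueness of $\Pi(\Delta)$ reflects the fact that over a general affinoid, unlike over a field, the exact sequence need not split canonically and the quotient can have a module structure only pinned down up to the data of the sequence itself.

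The main obstacle, I expect, is the construction and continuity of the involution $w_\Delta$ in the relative setting, and in particular establishing the $\qp$-analytic type property — that $\Delta \boxtimes U$ and its twists are LF-type $A$-modules with continuous $\mathcal{D}(K,A)$-action — uniformly over $\Spa(A)$. Over a field this rests on delicate functional-analytic facts (nuclearity, the structure of $\Robba_L$ as an LF-space, exactness of completed tensor products) which are subtler over an affinoid base, where one must work with orthonormalizable or projective Banach modules and be careful about which sequences remain exact after $\ctens_{\qp} A$. Controlling this, and checking that the regularity assumption on $(\delta_1,\delta_2)$ suffices to make the rank $1$ building blocks behave well fibrewise and in family, is the technical heart of the argument; the glueing and the passage to $\Pi(\Delta)$ are then comparatively formal given the appendix.
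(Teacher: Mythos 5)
There is a genuine gap, and it sits exactly where you wave your hands: ``one builds $w_\delta$ explicitly \dots then assemble these via the extension structure.'' For a non-split extension $0 \to \Robba_A(\delta_1) \to \Delta \to \Robba_A(\delta_2) \to 0$ there is no direct way to assemble the rank-one involutions into an involution on $\Delta \boxtimes \zpe$; there is an obstruction, and controlling it is the entire content of the paper. The actual route is the reverse of yours: one does \emph{not} build $w_\Delta$ and then glue; one first proves that the natural map
\[
\mathrm{Ext}^1_G\bigl(\Robba_A^+(\delta_2) \boxtimes_\omega \P^1,\; \Robba_A(\delta_1) \boxtimes_\omega \P^1\bigr) \longrightarrow \mathrm{Ext}^1\bigl(\Robba_A(\delta_2), \Robba_A(\delta_1)\bigr)
\]
is a bijection (Theorem \ref{thm:alps}), which via relative Shapiro reduces to the comparison $H^1(\overline{P}^+, \Robba_A(\delta_1,\delta_2)) \xrightarrow{\sim} H^1(A^+, \Robba_A(\delta_1\delta_2^{-1}))$ of Theorem \ref{thm:resmincomp}. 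Proving that comparison is the technical heart: a Koszul-type complex computing $\overline{P}^+$-cohomology, pseudo-coherence/finiteness of these cohomology groups over $A$, explicit Lie-algebra computations at points showing in particular that $H^2(\overline{P}^+, \Robba_L(\delta_1,\delta_2))$ has dimension $1$ in the regular case, a Tor-spectral-sequence argument to descend from points to reduced $A$, and an induction on the nilpotence index for general $A$. Only \emph{after} obtaining the $G$-equivariant extension $M$ of sheaves does one recover $\Delta_+ \subseteq M$ as a $G^+$-submodule, read off the involution on $\Delta_+ \boxtimes \zpe$, and extend it uniquely to $\Delta \boxtimes \zpe$ using the free rank-one structure of $\Robba_A \boxtimes \zpe$ over $\Robba_A(\Gamma)$ (Propositions \ref{prop:invext} and \ref{prop:nextal}). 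None of this cohomological machinery appears in your outline, and without it the first step of your plan does not get off the ground.

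A second, smaller gap: you call the passage to $\Pi(\Delta)$ ``comparatively formal,'' but it is not. After constructing $\Delta \boxtimes_\omega \P^1$ one has a four-step filtration with graded pieces $B_A(\delta_2,\delta_1)^* \otimes \omega$, $B_A(\delta_1,\delta_2)$, $B_A(\delta_1,\delta_2)^* \otimes \omega$, $B_A(\delta_2,\delta_1)$, and one must show that the \emph{middle} extension splits in $\mathscr{G}_{G,A}$ before the Fr\'echet and LB parts can be separated. In the paper this uses that $H^1(\overline{P}^+, \Robba_A^-(\delta_1,\delta_2))$ is free of rank one over $A$ (Proposition \ref{prop:detg}, itself another instance of the $\overline{P}^+$ versus $A^+$ comparison, this time for $\Robba_A^-$) together with a vanishing argument for the map to the cohomology of the sheaf on $\P^1$; then duality ($\check\Delta \boxtimes_{\omega^{-1}} \P^1$ as topological dual) identifies the two outer extensions and yields $\Pi(\Delta)$. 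Also note the paper does \emph{not} claim admissibility of $\Pi(\Delta)$ (it is generally not admissible for non-trivial $A$), and the non-uniqueness is attributed to not knowing that the relevant $\mathrm{Ext}^1$ of principal series is free of rank one over $A$, rather than to a failure of canonical splitting as you suggest. Your identification of the foundational issues (relative distribution algebras, Shapiro, LF-type and $\mathcal{D}(K,A)$-module structure, the role of regularity) is on target, but those are the prerequisites, not the proof.
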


This result is expected to have applications to the study of \emph{eigenvarieties}, however in this paper we make no attempt to say anything in this direction. 

\subsection{The construction of the correspondence}

The construction of the correspondence follows the general lines of \cite{colmez2015}, but several technical difficulties appear along the way. Let's briefly describe how to construct the correspondence $\Delta \mapsto \Pi(\Delta)$ and the additional problems that arise in the relative (affinoid) setting.

From the calculation of the locally analytic vectors of the unitary principal series (\cite[Th\'eor\`eme 0.7]{colmezunit}), one knows that, if $D$ is an \'etale trianguline $(\varphi, \Gamma)$-module over $\mathscr{E}_L$ of dimension $2$, then $(\Pi(D))^{\rm an}$ is an extension of locally analytic principal series. The idea of \cite{colmez2015} is to intelligently reverse this \textit{d\'evissage} of $D_{\rm rig} \boxtimes_\omega \P^1$ in order to actually construct it from these pieces.

For the rest of this introduction let $G = \mathrm{GL}_2(\qp)$ and $\overline{B}$ be its lower Borel subgroup and let $\delta_1$, $\delta_2$ and $\omega$ be as in Theorem \ref{result}. Using a relative version of the dictionary of $p$-adic functional analysis, we construct, for $? \in \{ +, -, \emptyset \}$, $G$-equivariant sheaves $\Robba_A^?(\delta_1) \boxtimes_\omega \P^1$ (with central character $\omega$) of $\qp$-analytic type living in an exact sequence 
\[ 0 \to \Robba_A^+(\delta_1) \boxtimes_\omega \P^1 \to \Robba_A(\delta_1) \boxtimes_\omega \P^1 \to \Robba_A^-(\delta_1) \boxtimes_\omega \P^1 \to 0. \]
Moreover, one can get identifications $B_A(\delta_2, \delta_1)^* \otimes \omega \cong \Robba_A^+(\delta_1) \boxtimes_\omega \P^1$ and $B_A(\delta_1, \delta_2) \cong \Robba_A^-(\delta_1) \boxtimes_\omega \P^1$, where $B_A(\delta_1, \delta_2) = \mathrm{Ind}_{\overline{B}}^G(\delta_1 \chi^{-1} \otimes \delta_2)$  denotes the locally analytic principal series. These identifications allow us to consider the locally analytic principal series (and their duals) as (the global sections of) $G$-equivariant sheaves over $\P^1$ of interest.

We then construct the $G$-equivariant sheaf $\Delta \boxtimes_\omega \P^1$ over $\P^1$ as an extension of $\Robba_A(\delta_2) \boxtimes_\omega \P^1$ by $\Robba_A(\delta_1) \boxtimes_\omega \P^1$. This is done, as in \cite{colmez2015}, by showing that extensions of $\Robba_A(\delta_2)$ by $\Robba_A(\delta_1)$ correspond to extensions of $\Robba_A^+(\delta_2) \boxtimes_\omega \P^1$ by $\Robba_A(\delta_1) \boxtimes_\omega \P^1$. One then shows that an extension of $\Robba_A^+(\delta_2) \boxtimes_\omega \P^1$ by $\Robba_A(\delta_1) \boxtimes_\omega \P^1$ uniquely extends to an extension of $\Robba_A(\delta_2) \boxtimes_\omega \P^1$ by $\Robba_A(\delta_1) \boxtimes_\omega \P^1$. Once the sheaf $\Delta \boxtimes_\omega \P^1$ is constructed, one shows that the intermediate extension of $\Robba_A^+(\delta_2) \boxtimes_\omega \P^1$ by $\Robba_A^-(\delta_1) \boxtimes_\omega \P^1$ splits and thus one can separate the spaces that are Fr\'echets from those that are an inductive limit of Banach spaces so as to cut out the desired representation $\Pi(\Delta)$.

The fact that, for $? \in \{ +, -, \emptyset \}$, the $P^+$-module $\Robba_A^?(\delta_1)$ can be seen as sections over $\zp$ of a $G$-equivariant sheaf over $\P^1$, and that the semi-group $\overline{P}^+ = {\matrice{\zp - \{0\}} 0 {p \zp} 1}$ stabilizes $\zp$, show that $\Robba_A^?(\delta_1) = \Robba_A^?(\delta_1) \boxtimes_\omega \zp$ is automatically equipped with an extra action of the matrix ${\matrice 1 0 p 1}$. We call 
\[
\Robba_A^?(\delta_1,\delta_2):= (\Robba_A^?(\delta_1) \boxtimes_{\omega} \zp) \otimes \delta_2^{-1}
\] the $\overline{P}^+$-module thus defined. The technical heart for proving Theorem \ref{result} is a comparison result between the cohomology of the semi-groups $A^+ = {\matrice {\zp - \{0\}} 0 0 1}$ and $\overline{P}^+$ with values in $\Robba_A(\delta_1\delta_2^{-1})$ and $\Robba_A(\delta_1, \delta_2)$, respectively.

\begin{theo}\label{thm:resmincomp}
The restriction morphism from $\overline{P}^+$ to $A^+$ induces an isomorphism
\[
H^1(\overline{P}^+, \Robba_A(\delta_1, \delta_2)) \to H^1(A^+, \Robba_A(\delta_1\delta_2^{-1})).
\]
\end{theo}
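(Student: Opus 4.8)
We first recognise that the two sides involve, after restriction, the same module. The construction of the sheaves $\Robba_{A}^{?}(\delta)\boxtimes_{\omega}\Pro$ is set up so that taking sections over $\zp$ and restricting the equivariant structure along $A^{+}\hookrightarrow\overline{P}^{+}$ recovers the underlying $(\varphi,\Gamma)$-module; thus the first step is to record the canonical identification of $\Robba_{A}(\delta_{1},\delta_{2})$, as an $A^{+}$-module, with $\Robba_{A}(\delta_{1}\delta_{2}^{-1})$. Writing $M=\Robba_{A}(\delta_{1},\delta_{2})$, the claim becomes that restriction $H^{1}(\overline{P}^{+},M)\to H^{1}(A^{+},M)$ is an isomorphism. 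The key structural observation is that $\overline{P}^{+}$ is generated over $A^{+}$ by the single element $u=\matrice 10p1$, subject to the relations $u\varphi=\varphi u^{p}$ and $u\sigma_{a}=\sigma_{a}u^{a}$ for $a\in\zpe$; equivalently $\overline{P}^{+}=A^{+}\ltimes\overline{N}_{0}$ with $\overline{N}_{0}=\matrice 10{p\zp}1\cong\zp$, on which $A^{+}$ acts through $\zpe$-scaling (via $\Gamma$) and the contracting endomorphism $x\mapsto px$ (via $\varphi$).

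I would then argue with explicit inhomogeneous cochains. A $1$-cocycle of $\overline{P}^{+}$ is determined by its restriction to $A^{+}$ together with its value $c(u)\in M$, and the relation $u\varphi=\varphi u^{p}$ forces
\[
\Theta\big(c(u)\big)=(1-u)\,c(\varphi),\qquad\text{with}\qquad\Theta:=1-\varphi\circ\big(1+u+\cdots+u^{p-1}\big);
\]
the relations $u\sigma_{a}=\sigma_{a}u^{a}$ impose further equations on $c(u)$, but a syzygy between the two families of relations --- using only that the restriction to $A^{+}$ is a cocycle --- shows these hold automatically once the displayed equation does. Hence $\Res^{\overline{P}^{+}}_{A^{+}}$ is injective once $\Theta$ is injective (two $\overline{P}^{+}$-cocycles with equal class over $A^{+}$ may be arranged to agree over $A^{+}$, and then their difference $d$ satisfies $\Theta(d(u))=0$) and surjective once $\Theta$ is surjective. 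Everything thus reduces to showing that $\Theta$ is an automorphism of the topological $A$-module $M$. The same reduction can be packaged via the Hochschild--Serre spectral sequence $H^{p}(A^{+},H^{q}(\overline{N}_{0},M))\Rightarrow H^{p+q}(\overline{P}^{+},M)$: one must show that $H^{1}(\overline{N}_{0},M)^{A^{+}}$ and $H^{2}(A^{+},M^{\overline{N}_{0}})$ make no contribution in total degree $1$, which is again the invertibility of $\Theta$ in another guise.

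The invertibility of $\Theta$ is the heart of the matter and the main obstacle. The element $u=\matrice 10p1$ does not act by a convergent series on the Robba ring --- it is built from the involution $w$, as $u=w\,\matrice 1p01\,w$, and even its action on $\Robba_{A}(\delta_{1})\boxtimes_{\omega}\zp$ relies on the constructions made earlier --- so $\varphi\circ(1+u+\cdots+u^{p-1})$ must be handled as a trace-type operator; the point is that, on each Banach piece $\Robba_{A}^{(r)}$, it is compact, because $\varphi$ is radius-improving. Then $\Theta$ is a compact perturbation of the identity, and by Riesz--Fredholm theory --- valid for compact operators on Banach modules over the affinoid algebra $A$ --- it is an automorphism as soon as it is injective; and its kernel, hence by index reasons its cokernel, is governed by the $(\varphi,\Gamma)$-cohomology groups $H^{0}$ and $H^{2}$ of the rank-one module $\Robba_{A}(\delta_{1}\delta_{2}^{-1})$, which vanish precisely because $\delta_{1}\delta_{2}^{-1}$ is pointwise never of the form $x^{-i}$ or $\chi x^{i}$. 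A difficulty invisible over a field is that $A$ is only an affinoid algebra: one must check that this cohomology is compatible with the base changes $A\to A/\mf m_{x}$ ($x\in\Max(A)$), so that the vanishing --- classical over the residue fields --- can be used over $A$; supplying this, together with the basic analytic properties of $\Robba_{A}$ and of $\varphi$ in families, is precisely the role of the foundational part of the paper.
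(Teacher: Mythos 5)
Your cochain bookkeeping at the start is sound --- the relation-chasing on the generators $\varphi,\gamma,\tau$, and the observation that the $\gamma$-equation for $c(\tau)$ follows from the $\varphi$-equation once $1-\varphi\delta_p$ is injective, is exactly the content of the Koszul complex $\mathscr{C}_{\tau,\varphi,\gamma}$ and of the triangle $\mathscr{C}_{\tau,\varphi,\gamma}\to\mathscr{C}_{\varphi,\gamma}\to\mathscr{C}^{\mathrm{twist}}_{\varphi,\gamma}$ used in the paper (Lemma \ref{lem:comcohg}). But the step on which everything then rests, namely that $\Theta=1-\varphi\delta_p$ is an automorphism of $M=\Robba_A(\delta_1,\delta_2)$, is false, and no regularity hypothesis rescues it. On $\Robba_A^-(\delta_1,\delta_2)\cong\mathrm{LA}(\zp,A)\otimes\delta$ the operator $1-\varphi\delta_p$ is injective but very far from surjective: one only has $\Robba_A^{-}(\delta_1,\delta_2)=(1-\varphi\delta_p)\Robba_A^{-}(\delta_1,\delta_2)+\mathrm{LA}(\zpe,A)+\mathrm{Pol}_{\leq N}(\zp,A)$ (Lemma \ref{varphidelta2}), and the cokernel is infinite-dimensional even over a point; it is only after further dividing by $1-\gamma\delta_a$ (Lemma \ref{coldbean}) that anything finite appears. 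On $\Robba_A^+(\delta_1,\delta_2)$ the kernel and cokernel of $1-\varphi\delta_p$ are governed by the finite module $\mathrm{Pol}_{\leq N}(\zp,A)^*$ (Lemma \ref{varphidelta1}) and need not vanish under regularity, which constrains the character $\delta_1\delta_2^{-1}$ and not merely its value at $p$. The compactness claim is also unsupported: $\varphi$ maps $\Robba_A^{[r,s]}$ to $\Robba_A^{[r/p,s/p]}$, so $\varphi\delta_p$ is not a compact endomorphism of the LF-space $\Robba_A$ in any sense that would make $\Theta$ a Fredholm perturbation of the identity over $A$; and since injectivity already fails on $\Robba_A^+$ in general, an ``injective of index zero, hence bijective'' argument has nothing to bite on. Finally, the kernel and cokernel of the twisted operators $\varphi\delta_p,\gamma\delta_a$ are not computed by the $(\varphi,\Gamma)$-cohomology $H^0$, $H^2$ of the rank-one module $\Robba_A(\delta_1\delta_2^{-1})$; the whole of \S\ref{sec:maincofinz} exists precisely because this twisted cohomology is a different (and harder) object.

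More structurally, no argument that kills the obstruction modules can succeed, because they do not vanish: in the regular case over a point $L$ one has $\dim_L H^2(\overline{P}^+,\Robba_L(\delta_1,\delta_2))=1$ while $H^2(A^+,\Robba_L(\delta_1\delta_2^{-1}))=0$ (Proposition \ref{cohomfinal} and Remark \ref{rem:recchenres}), so the cone of the restriction map has nonzero $H^1$; the theorem holds not because the twisted cohomology vanishes but because the connecting maps behave correctly, and this is established by explicit computation. The paper's proof therefore runs along an entirely different axis: finiteness (pseudo-coherence) of the $\overline{P}^+$-Koszul complex over $A$; explicit determination of all $H^i(\overline{P}^+,\Robba_L(\delta_1,\delta_2))$ at points via the Lie-algebra complex $\mathscr{C}_{u^-,\varphi,a^+}$, in particular the one-dimensionality of $H^2$; then, for $A$ reduced, a derived base-change argument ($\mathrm{Tor}$-spectral sequences to the residue fields, local freeness, Nakayama) giving surjectivity and injectivity of the restriction map (Propositions \ref{3.25}, \ref{3.26}); and finally an induction on the nilpotence index of the nilradical, using Lemma \ref{3.29} and the five lemma, to reach general $A$ (Theorem \ref{3.31}). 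These genuinely relative steps --- finiteness over $A$, the pointwise dimension counts, and the non-reduced case --- are exactly what your proposal lacks; the base-change compatibility you mention in passing is not a side-check but, together with the pointwise computations your operator-theoretic reduction cannot supply, the heart of the proof.
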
 

The semi-group $A^+$ should be thought of as encoding the action of $\varphi$ and $\Gamma$. The difficulty of course is to codify the action of the involution and this is the underlying idea for considering the semi-group $\overline{P}^+$. Indeed $\overline{P}^+$ should be thought of as getting closer to tracking the involution. Theorem \ref{thm:resmincomp} is (essentially) saying that a trianguline $(\varphi, \Gamma)$-module as in Theorem \ref{result} admits an extension to a $G$-equivariant sheaf over $\P^1$. 

Let us briefly describe the proof of Theorem \ref{thm:resmincomp}. The main idea is to reduce this bijection to the case of a point (i.e to the case where $A = L$ is a finite extension of $\qp$). The first step is to build a \emph{Koszul} complex which calculates $\overline{P}^+$-cohomology.

\begin{prop}
Let $M$ be an $A[\overline{P}^+]$-module such that the action of $\overline{P}^+$ extends to an action of the Iwasawa algebra $\zp[[\overline{P}^+]]$. Then the complex
\[
\mathscr{C}_{\tau, \varphi, \gamma}(M): 0 \to M \xrightarrow{X} M \oplus M \oplus M \xrightarrow{Y} M \oplus M \oplus M \xrightarrow{Z} M \to 0 
\]
where\footnote{Here $\tau = {\matrice 1 0  p 1}$ and $\delta_x = \frac{\tau^x -1}{\tau -1}$ for all $x \in \zpe$.}
\begin{align*}
X(x) &= ((1 - \tau)x, (1-\varphi)x, (\gamma-1)x)\\
Y(x,y,z) &= ((1- \varphi\delta_p)x + (\tau -1)y, (\gamma\delta_a -1)x + (\tau-1)z, (\gamma-1)y + (\varphi-1)z ) \\
Z(x,y,z) &= (\gamma\delta_a -1)x + (\varphi\delta_p -1)y + (1 - \tau)z
\end{align*}
calculates $\overline{P}^+$-cohomology. That is $H^i(\mathscr{C}_{\tau, \varphi, \gamma}(M)) = H^i(\overline{P}^+, M)$. 
\end{prop}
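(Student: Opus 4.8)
\medskip

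\noindent\emph{Sketch of the intended argument.} Write $\Lambda=\zp[[\overline{P}^+]]$, so that by hypothesis $M$ is a $\Lambda$-module. The starting point is the fact that, in this setting, the semigroup cohomology $H^i(\overline{P}^+,M)$ coincides with $\Ext^i_{\Lambda}(\zp,M)$, and is therefore computed by $\Hom_{\Lambda}(P_\bullet,M)$ for \emph{any} resolution $P_\bullet\to\zp$ of the trivial module $\zp$ by topologically free left $\Lambda$-modules. Hence it suffices to produce one such resolution $P_\bullet$ for which $\Hom_{\Lambda}(P_\bullet,M)$ is exactly the complex $\mathscr{C}_{\tau,\varphi,\gamma}(M)$.

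The plan is to take the ``twisted Koszul resolution''
\[
P_\bullet\colon\qquad 0\longrightarrow\Lambda\xrightarrow{\ d_3\ }\Lambda^{3}\xrightarrow{\ d_2\ }\Lambda^{3}\xrightarrow{\ d_1\ }\Lambda\longrightarrow\zp\longrightarrow0,
\]
where each $d_i$ is right multiplication by an explicit matrix over $\Lambda$ with entries among $\tau-1$, $\varphi-1$, $\gamma-1$ and their ``norm-twisted'' companions $\varphi\delta_p-1$ and $\gamma\delta_a-1$ (here $\delta_p=\tfrac{\tau^p-1}{\tau-1}$ and $\delta_a=\tfrac{\tau^a-1}{\tau-1}$ lie in $\zp[[\overline{U}]]\subseteq\Lambda$, with $\overline{U}={\matrice 1 0 {p\zp} 1}$ the unipotent part, and act on $M$ by the hypothesis that the action extends to $\Lambda$), arranged so that applying $\Hom_{\Lambda}(-,M)$ returns precisely the maps $X$, $Y$, $Z$. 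That these matrices compose to zero reduces to the two identities $\varphi\delta_p(\tau-1)=(\tau-1)\varphi$ and $\gamma\delta_a(\tau-1)=(\tau-1)\gamma$ in $\Lambda$, which follow at once from the semigroup relations $\tau\varphi=\varphi\tau^p$, $\tau\gamma=\gamma\tau^a$ and $\varphi\gamma=\gamma\varphi$; these identities are exactly what the twisted Koszul differentials need in order to square to zero.

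The crux is the exactness of $P_\bullet$. I would deduce it from the factorisation $\overline{P}^+=\overline{U}\rtimes A^+$, with $\overline{U}\cong\zp$ and $A^+={\matrice {\zp-\{0\}} 0 0 1}\cong\Gamma\times\varphi^{\mathbb{N}}$, by assembling $P_\bullet$ out of the length-one resolution $[\,\zp[[\overline{U}]]\xrightarrow{\tau-1}\zp[[\overline{U}]]\,]$ of $\zp$ over $\zp[[\overline{U}]]$ and the usual length-two Koszul resolution of $\zp$ over $\zp[[A^+]]$ attached to the commuting pair $(\varphi-1,\gamma-1)$ (the complex computing $A^+$-cohomology). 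Since $\Lambda$ is topologically free over $\zp[[\overline{U}]]$ on one side and over $\zp[[A^+]]$ on the other, inducing and restricting along $\zp[[\overline{U}]],\zp[[A^+]]\hookrightarrow\Lambda$ keeps every term free; a Hochschild--Serre / base-change computation — using that $\overline{U}$ has cohomological dimension one — then shows that the assembled complex still resolves $\zp$. The hard part, and the whole reason the $\delta$'s intervene, is that $\overline{U}$ is not normalised by $A^+$ on both sides: $\Gamma$ normalises $\overline{U}$ but via the nontrivial automorphism $\tau\mapsto\tau^{1/a}$, and $\varphi$ normalises it only from one side, $\varphi^{-1}\overline{U}\varphi=\overline{U}^{\,p}\subsetneq\overline{U}$ (while $\varphi\overline{U}\varphi^{-1}\not\subseteq\overline{U}$). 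Reconciling the two differentials in the assembly therefore forces in the transfer operators $m\mapsto\delta_a m$ and $m\mapsto\delta_p m$ — the comparison of the $\tau\mapsto\tau^{1/a}$ twist (resp. of $\mathrm{Res}^{\overline{U}}_{\overline{U}^{\,p}}$) with the identity — which is exactly why the operators induced by $\gamma$ and $\varphi$ on $H^1(\overline{U},M)=M/(\tau-1)M$ are $\gamma\delta_a$ and $\varphi\delta_p$ rather than $\gamma$ and $\varphi$. Propagating this through the assembly and keeping track of signs yields the displayed $X$, $Y$, $Z$.

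If one prefers to avoid the structure theory of $\Lambda$, the same conclusion can be reached by an explicit comparison: write down chain maps in low degrees between $\Hom_{\Lambda}(P_\bullet,M)$ and the inhomogeneous continuous cochain complex of $\overline{P}^+$, and verify by hand, using the relations above, that they are mutually inverse up to homotopy. This route is longer but completely mechanical. Either way, once $P_\bullet$ is known to be a resolution, the stated equality $H^i(\mathscr{C}_{\tau,\varphi,\gamma}(M))=H^i(\overline{P}^+,M)$ is immediate from the first step.
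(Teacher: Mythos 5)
Your proposal isolates the right ingredients --- the dévissage of $\overline{P}^+$ through $\overline{U}$ and $A^+$, the identities $\varphi\delta_p(\tau-1)=(\tau-1)\varphi$ and $\gamma\delta_a(\tau-1)=(\tau-1)\gamma$, and the reason the norm elements $\delta_p,\delta_a$ must appear on $H^1(\overline{U},M)$ --- but as a proof it has genuine gaps, and they sit at the two load-bearing steps. First, your ``starting point'', that $H^i(\overline{P}^+,M)=\Ext^i_{\Lambda}(\zp,M)$ and that this Ext can be computed from \emph{any} resolution of $\zp$ by topologically free left $\Lambda$-modules, is asserted rather than proved, and it is not a quotable fact here: $\overline{P}^+$ is a monoid, not a group ($\varphi$ is not invertible), the cohomology in the paper is \emph{continuous} (semi)group cohomology of topological modules, and continuous cohomology cannot be tested against arbitrary resolutions --- one needs resolutions that are strongly exact (continuously $\zp$-split) together with continuous Hom's, i.e.\ relative homological algebra; without that, the comparison map between $\Hom_\Lambda(P_\bullet,M)$ and the continuous cochain complex is not even defined, let alone a quasi-isomorphism. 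Second, the exactness of your four-term complex $P_\bullet$ is the actual mathematical content, and your argument for it is a gesture. Note that the unique decomposition is $\overline{P}^+=A^+\cdot\overline{U}$ and \emph{not} $\overline{U}\cdot A^+$ (e.g.\ $\varphi\tau=\matrice p 0 p 1\notin\overline{U}A^+$), so the freeness of $\Lambda$ over $\zp[[\overline{U}]]$ and over $\zp[[A^+]]$ holds only on specified sides, and $\varphi$ acts on $\zp[[\overline{U}]]$ only through the injective, non-surjective endomorphism $\tau\mapsto\tau^p$; hence the usual crossed-product assembly of the two small resolutions does not apply verbatim, and making it work costs essentially the same effort as the proposition itself.

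The paper avoids both issues by never resolving $\zp$ over $\Lambda$: it works on the coefficient side, computing $R\Gamma(\overline{U},M)$ directly as the two-term complex $M\xrightarrow{1-\tau}M^*$ (using only that $\overline{U}\cong\zp$ is procyclic and that the action extends to $\zp[[\overline{U}]]$), identifying the induced $A^+$-action on $H^1(\overline{U},M)=M/(\tau-1)M$ as $\varphi\delta_p$, $\gamma\delta_a$ via the cocycle identity $c_{\tau^a}=\delta_a\, c_\tau$ --- the same transfer phenomenon you describe --- and then reading off $\mathscr{C}_{\tau,\varphi,\gamma}(M)$ as the fibre of $1-\tau\colon \mathscr{C}_{\varphi,\gamma}(M)\to\mathscr{C}_{\varphi,\gamma}^{\mathrm{twist}}(M^*)$ through the Hochschild--Serre factorisation $R\Gamma(\overline{P}^+,-)=R\Gamma(A^+,R\Gamma(\overline{U},-))$. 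If you wish to keep the resolution-side formulation, the honest route to exactness of $P_\bullet$ is precisely this dévissage --- at which point the cone argument already proves the proposition and the resolution becomes superfluous.
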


The asymmetric nature of $\mathscr{C}_{\tau, \varphi, \gamma}(M)$ is due to the non-commutativity of $\overline{P}^+$. A crude estimation of the maps $X$, $Y$ and $Z$ leads to the following corollary. 

\begin{coro}\label{cor:fincohinto}
The complex $\mathscr{C}_{\tau, \varphi, \gamma}(\Robba_A(\delta_1, \delta_2))$ is a pseudo-coherent complex concentrated in degrees $[0,3]$. In the terminology of the body of the paper, $\mathscr{C}_{\tau, \varphi, \gamma}(\Robba_A(\delta_1,\delta_2)) \in \mathcal{D}^{[0,3]}_{\mathrm{pc}}(A)$\footnote{We refer the reader to \S \ref{sec:formdercat} for the notion of a pseudo-coherent complex and the definition of $\mathcal{D}_{\mathrm{pc}}^{-}(A)$.}. In particular the cohomology groups $H^i(\overline{P}^+,\Robba_A(\delta_1, \delta_2))$ are finite $A$-modules. 
\end{coro}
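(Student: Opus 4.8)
The plan is to reduce the statement to the complete continuity of the Frobenius on the Banach pieces of the Robba ring, which is the ``crude estimate'' alluded to above. To begin with, concentration in degrees $[0,3]$ is built into the very shape of $\mathscr{C}_{\tau,\varphi,\gamma}$, and since a Tate affinoid algebra is Noetherian, pseudo-coherence of a complex already known to be bounded is equivalent to finite generation of its cohomology; so the two assertions of the corollary amount to the same thing. I would work with the standard presentation $\Robba_A=\varinjlim_{r}\Robba_A^{(0,r]}$ and $\Robba_A^{(0,r]}=\varprojlim_{s}\Robba_A^{[s,r]}$, in which the $\Robba_A^{[s,r]}$ are Banach $A$-algebras of functions on closed annuli, orthonormalizable over $A$, with all transition maps completely continuous; the rank-one twist $\Robba_A(\delta_1,\delta_2)$ inherits such a presentation.

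The place where the explicit formulas for $X$, $Y$, $Z$ enter is the asymmetric way $\overline{P}^{+}$ acts on this presentation. The operators produced by the Iwasawa-algebra action of $\langle\gamma\rangle$ and $\langle\tau\rangle$ --- namely $\gamma-1$, $\gamma\delta_a-1$ and $\tau-1$ --- respect the annulus filtration and are bounded on each Banach piece, with norms crudely controlled in terms of $\delta_1,\delta_2,\delta_a$; moreover $\delta_a$ and $\delta_p$ are invertible there, because $\tau$ acts topologically unipotently and $p\in A^{\times}$. By contrast $\varphi$ contracts the radius of convergence and, on the relevant annulus levels, admits a completely continuous left inverse $\psi$. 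Hence, at a fixed Banach level, each of $1-\varphi$, $1-\varphi\delta_p$, $\varphi-1$, $\varphi\delta_p-1$ is (after replacing $\varphi$ by $\psi$ where appropriate) the difference of an invertible operator and a completely continuous one, so a crude inspection of $X$, $Y$, $Z$ shows that $\mathscr{C}_{\tau,\varphi,\gamma}(\Robba_A(\delta_1,\delta_2))$ is, level by level, a complex of finite direct sums of orthonormalizable Banach $A$-modules of exactly the kind to which the pseudo-coherence criterion of \S\ref{sec:formdercat} applies. It is clarifying --- though not strictly necessary --- to repackage this by noticing that the same formulas exhibit $\mathscr{C}_{\tau,\varphi,\gamma}(M)$ as the shifted mapping cone of a chain map ``$\tau-1$'' between two copies (one twisted by $\delta_p,\delta_a$) of the Herr-style complex $[\,M\to M^{2}\to M\,]$ built from $\varphi$ and $\gamma$, the operators $\delta_a,\delta_p$ being precisely the corrections forced by the relations $\tau\varphi=\varphi\tau^{p}$, $\tau\gamma=\gamma\tau^{a}$ in $\overline{P}^{+}$; one is then reduced to the pseudo-coherence of the $(\varphi,\Gamma)$-cohomology complex of the rank-one module $\Robba_A(\delta_1\delta_2^{-1})$ --- and of a harmless $\delta_p,\delta_a$-twist of it --- which the $\psi$-estimate provides.

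Feeding this into the formalism of \S\ref{sec:formdercat} yields $\mathscr{C}_{\tau,\varphi,\gamma}(\Robba_A(\delta_1,\delta_2))\in\mathcal{D}^{[0,3]}_{\mathrm{pc}}(A)$, pseudo-coherence passing through mapping cones and the degree bound being the obvious one; since $A$ is Noetherian it follows that $H^{i}(\overline{P}^{+},\Robba_A(\delta_1,\delta_2))=H^{i}(\mathscr{C}_{\tau,\varphi,\gamma}(\Robba_A(\delta_1,\delta_2)))$ is a finite $A$-module for every $i$. I expect the real difficulty to lie not in this homological packaging but in the \emph{uniformity in $A$} of the complete continuity of $\psi$ on the relative annuli $\Robba_A^{[s,r]}$, together with the bookkeeping forced by the non-commutativity of $\overline{P}^{+}$: one must choose the levels $[s,r]$, their $\varphi$-contractions and the twists by $\delta_1,\delta_2,\delta_a,\delta_p$ in a mutually compatible way, so that all three differentials are brought into the required shape simultaneously and the resulting finite complexes organise into a single projective system, and one must check strictness of the differentials at each level in order to pass between the topological and the finite complex. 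Finally, only crude, unoptimised bounds on the characters are used here; the sharp regularity hypothesis on $\delta_1\delta_2^{-1}$ in Theorem \ref{result} plays no role at this stage, entering only in the finer isomorphism of Theorem \ref{thm:resmincomp}.
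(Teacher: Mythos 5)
Your structural reduction agrees with the paper: you present $\mathscr{C}_{\tau,\varphi,\gamma}$ as the cone of $1-\tau$ from the Herr-type complex $\mathscr{C}_{\varphi,\gamma}$ to its $(\delta_p,\delta_a)$-twist, and you correctly note that over the noetherian ring $A$ pseudo-coherence of a bounded complex is the same as finiteness of its cohomology. The gap is in how you dispose of the two pieces of the cone. For the untwisted piece the paper does not redo any Banach-level ``invertible plus completely continuous'' analysis: it quotes the finiteness theorem of Kedlaya--Pottharst--Xiao, \cite[Theorem 4.4.2]{kedlaya2014}, and nothing in \S\ref{sec:formdercat} provides the analytic criterion (orthonormalizable Banach levels with compact perturbations $\Rightarrow$ pseudo-coherence over $A$) that your ``crude inspection'' appeals to -- in families over an affinoid this is exactly the hard content of that theorem, not a formal consequence of complete continuity of $\psi$ level by level. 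More seriously, the twisted piece is not a ``harmless $\delta_p,\delta_a$-twist'' of the rank-one $(\varphi,\Gamma)$-complex: $\delta_p=1+\tau+\cdots+\tau^{p-1}$ and $\delta_a=(\tau^a-1)/(\tau-1)$ are Iwasawa-algebra elements in $\tau$, acting through the $\overline{P}^+$-structure and not $\Robba_A$-(semi)linearly, so $\mathscr{C}_{\varphi,\gamma}^{\mathrm{twist}}$ is not the Herr complex of any $(\varphi,\Gamma)$-module over $\Robba_A$, and neither KPX nor your $\psi$-estimate can be quoted for it. Your justification that $\delta_p$ is invertible ``because $\tau$ acts topologically unipotently and $p\in A^\times$'' also fails: inverting $\delta_p=p+\binom{p}{2}(\tau-1)+\cdots$ by a geometric series would require $\tau-1$ to have operator norm $<\lvert p\rvert$, which is false on the relevant spaces (on $\Robba_A^-(\delta_1,\delta_2)\cong\mathrm{LA}(\zp,A)\otimes\delta$ the operator $\tau-1$ is not small at all), and already in the distribution algebra of $\overline{U}$ the Amice transform of $\delta_p$ is $((1+T)^p-1)/T$, which vanishes at the points $\zeta-1$ with $\zeta^p=1$, $\zeta\neq 1$, of the open unit disc, hence is not a unit there.

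What the paper does instead for the twisted piece is a further d\'evissage, and this is where the actual work lies: a triangle $\mathscr{C}_{\varphi,\gamma}^{\mathrm{twist}}\to\mathscr{C}_{\varphi\delta_p}\xrightarrow{1-\gamma\delta_a}\mathscr{C}_{\varphi\delta_p}$ combined with the exact sequence $0\to\Robba^+_A(\delta_1,\delta_2)\to\Robba_A(\delta_1,\delta_2)\to\Robba^-_A(\delta_1,\delta_2)\to 0$, and then three hands-on lemmas: $1-\varphi\delta_p$ has finite kernel and cokernel on $\Robba_A^+(\delta_1,\delta_2)$ (convergence of $\sum_k(\varphi\delta_p)^k$ on $T^N\Robba_A^+$, which uses only the uniform boundedness of the operators $\delta_{p^k}$, not any invertibility of $\delta_p$); $1-\varphi\delta_p$ is injective with controlled cokernel on $\Robba_A^-(\delta_1,\delta_2)$; and $1-\gamma\delta_a$ has finite kernel and cokernel on the LB-space $\mathrm{LA}(\zpe,A)$, the kernel being finite by a density argument for $\tau$-invariant locally analytic functions (Lemmas \ref{varphidelta1}, \ref{varphidelta2}, \ref{coldbean}). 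Some substitute for these arguments -- in particular for the operators $1-\varphi\delta_p$ and $1-\gamma\delta_a$, which your compactness heuristic does not reach -- is needed before your proof is complete.
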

More precisely the proof of Corollary \ref{cor:fincohinto} is reduced to proving finiteness of a \emph{twisted} $(\varphi, \Gamma)$-cohomology of $\Robba_A(\delta_1, \delta_2)$, cf. Lemma \ref{finitecohom}. 

The issue with $\mathscr{C}_{\tau, \varphi, \gamma}(M)$ is that the operators $\delta_x$ are difficult to comprehend,  rendering the complex almost impractical for explicit computations. One can however \emph{linearize} the situation and pass to the Lie algebra, where calculations are often feasible.

\begin{prop}
If $M \in \left\{ \mathscr{R}_L^{+}(\delta_1, \delta_2), \mathscr{R}_L^{-}(\delta_1, \delta_2), \mathscr{R}_L(\delta_1, \delta_2) \right\}$ then the  complex
\begin{equation*} 
\mathscr{C}_{u^{-},\varphi,a^{+}}(M): 0 \rightarrow M \xrightarrow{X'} M \oplus M \oplus M \xrightarrow{Y'} M \oplus M \oplus M \xrightarrow{Z'} M \rightarrow 0,
\end{equation*} 
where\footnote{Here $a^+ = {\matrice 1 0  0 0}$ and $u^- = {\matrice 0 0  1 0}$ are the usual elements of the Lie algebra $\mathfrak{gl}_2$ of $\GL_2$.}
\begin{align*}
X'(x) &= ((\varphi-1)x,a^{+}x,u^{-}x) \\
Y'(x,y,z) &= (a^{+}x - (\varphi-1)y, u^{-}y - (a^{+} +1)z, (p\varphi-1)z - u^{-}x) \\
Z'(x,y,z) &= u^{-}x + (p\varphi -1)y + (a^{+} + 1)z 
\end{align*}
calculates the Lie-algebra cohomology of $\overline{P}^+$. In particular, $H^0(\tilde{P}, H^i(\mathscr{C}_{u^{-}, \varphi, a^{+}}(M))) = H^i(\overline{P}^+,M)$\footnote{Here $\tilde{P} = {\matrice{\zpe} 0 {p \zp} 1}$, is the \emph{non-discrete} subgroup of $\overline{P}^+$.}. 
\end{prop}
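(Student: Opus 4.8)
The plan is to realise $\mathscr{C}_{u^{-},\varphi,a^{+}}(M)$ as the total complex of a mapping cone, and to reduce the comparison with $\overline{P}^+$-cohomology to a Lazard-type theorem for the compact $p$-adic Lie group $\tilde{P} = {\matrice{\zpe} 0 {p\zp} 1}$, which is available precisely because $M$ is a module over the field $L$.

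First I would carry out the Lie-theoretic bookkeeping. The Lie algebra $\mathfrak{p}^- = \Lie(\tilde{P})$ has basis $a^+, u^-$ with the single bracket $[a^+, u^-] = -u^-$, so its Chevalley--Eilenberg complex with coefficients in $M$ is $0 \to M \xrightarrow{d^0} M \oplus M \xrightarrow{d^1} M \to 0$ with $d^0(x) = (a^+x, u^-x)$ and $d^1(y,z) = (a^+ + 1)z - u^-y$; the extra summand $z$ in $d^1$ is exactly the contribution of the bracket, which is where the operators $a^+ + 1$ in $Y'$ and $Z'$ come from. Moreover $\varphi^{-1}\tilde{P}\varphi \subseteq \tilde{P}$ and $\operatorname{Ad}(\varphi)a^+ = a^+$, $\operatorname{Ad}(\varphi)u^- = p^{-1}u^-$, so $\varphi$ induces an endomorphism of the Chevalley--Eilenberg complex which in degrees $0,1,2$ is $\varphi$, $(\varphi, p\varphi)$, $p\varphi$; the factors $p\varphi - 1$ in $Y'$ and $Z'$ are precisely its components minus the identity. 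A direct inspection of $X', Y', Z'$ then shows, up to signs and a reordering of summands, that $\mathscr{C}_{u^{-},\varphi,a^{+}}(M)$ is the total complex of the double complex with two columns each equal to the Chevalley--Eilenberg complex of $\mathfrak{p}^-$ and horizontal differential given by this endomorphism minus the identity. By construction this total complex computes the hypercohomology of $\varphi$ acting on $H^\ast(\mathfrak{p}^-, M)$, i.e.\ the Lie-algebra cohomology of $\overline{P}^+$; this is the first assertion.

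For the remaining identity I would use that, by the previous proposition, $H^\ast(\overline{P}^+, M)$ is the cohomology of $\mathscr{C}_{\tau,\varphi,\gamma}(M)$, and that this complex is likewise a total complex: it is the mapping cone of $\varphi - 1$ on the Koszul-type complex $\mathscr{C}_{\tau,\gamma}(M)$ computing $H^\ast(\tilde{P}, M)$ (note $\tilde{P}$ is topologically generated by $\tau$ and $\gamma$ up to the finite torsion in $\zpe$, which is invisible to cohomology over $L$). It therefore suffices to compare $\mathscr{C}_{\tau,\gamma}(M)$ with the Chevalley--Eilenberg complex of $\mathfrak{p}^-$ compatibly with $\varphi$, in such a way that the induced map on cohomology identifies $H^i_{\mathrm{la}}(\tilde{P}, M)$ with $H^0(\tilde{P}, H^i(\mathfrak{p}^-, M))$. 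This is the Lazard comparison: choosing an open uniform subgroup $\tilde{P}_0 \triangleleft \tilde{P}$, Lazard's theorem with coefficients in the LF-space $M$ gives $H^\ast_{\mathrm{la}}(\tilde{P}_0, M) \cong H^\ast(\mathfrak{p}^-, M)$; the subgroup $\tilde{P}_0 = \exp(\mathfrak{p}^-_0)$ acts trivially on $H^\ast(\mathfrak{p}^-, M)$ since inner derivations act by zero on Lie-algebra cohomology (Cartan's formula); and $\tilde{P}/\tilde{P}_0$ being finite has no higher cohomology over $L$, so Hochschild--Serre yields $H^\ast_{\mathrm{la}}(\tilde{P}, M) \cong H^\ast(\mathfrak{p}^-, M)^{\tilde{P}} = H^0(\tilde{P}, H^\ast(\mathfrak{p}^-, M))$. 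Feeding this into the two mapping cones of $\varphi - 1$ — the $\varphi$-action being compatible with all the above because $\varphi^{-1}\tilde{P}\varphi \subseteq \tilde{P}$, and $H^{>0}(\tilde{P}, -)$ vanishing on the cohomology modules in play so that $H^0(\tilde{P}, -)$ commutes with the formation of the cone — gives $H^i(\overline{P}^+, M) = H^0(\tilde{P}, H^i(\mathscr{C}_{u^{-},\varphi,a^{+}}(M)))$.

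I expect the second step to be the main obstacle: one needs the Lazard comparison for the infinite-dimensional LF-module $M$ in a form precise enough to carry the $\varphi$-equivariance, and one must control the interaction of $H^0(\tilde{P}, -)$ with the mapping cone of $\varphi - 1$, that is, the vanishing of the relevant higher $\tilde{P}$-cohomology. By contrast the identification in the first step is a finite and essentially formal computation once the bracket of $\mathfrak{p}^-$ and $\operatorname{Ad}(\varphi)$ have been written out; alternatively one could make the comparison fully explicit via the logarithm, matching $\tau$ with $\exp$ of a multiple of $u^-$ and $\gamma$ with $\exp$ of a multiple of $a^+$, the operators $\delta_p, \delta_a$ encoding the resulting $\tfrac{g-1}{\log g}$ correction factors.
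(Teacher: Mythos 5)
Your first step is fine and is essentially the same bookkeeping the paper performs: $\mathscr{C}_{u^{-},\varphi,a^{+}}(M)$ is built there as iterated cones of $[\varphi-1]$ and $[a^{+}]$ over $\mathscr{C}_{u^{-}}$, which agrees (up to signs and ordering) with your identification of it as the cone of the $\mathrm{Ad}$-twisted $\varphi-1$ on the Chevalley--Eilenberg complex of $\mathfrak{p}^{-}=\langle a^{+},u^{-}\rangle$; the bracket $[a^{+},u^{-}]=-u^{-}$ and the twist $\mathrm{Ad}(\varphi)u^{-}=p^{-1}u^{-}$ you use are correct, and this part of the argument is formal.

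The second step, which you correctly identify as the crux, contains genuine gaps. First, ``Lazard's theorem with coefficients in the LF-space $M$'' is not available: the classical comparison of \cite{Lazardgrp} requires coefficients of finite type, which is exactly why it cannot be invoked here (see the remark following Proposition \ref{prop:lazcompcom}); the input one actually needs is the \emph{analytic} Lazard isomorphism of \cite[Corollary 21]{georgana}, which identifies locally analytic cohomology of a compact group with the invariants of Lie algebra cohomology for such locally convex coefficients, together with the continuous-versus-analytic comparison of Proposition \ref{prop:lazcompcom}, since the Koszul complex $\mathscr{C}_{\tau,\varphi,\gamma}$ computes continuous cohomology. Second, two of your auxiliary claims fail: $\tilde{P}_0=\exp(\mathfrak{p}^-_0)$ does \emph{not} act trivially on $H^{\ast}(\mathfrak{p}^{-},M)$ --- Cartan's formula only kills the derived action, so the action is locally constant rather than trivial, and the explicit formulas of Remark \ref{actioncohom} (e.g.\ the fact that $H^{0}(\tilde{P},X_{\kappa(\delta)})$ is one-dimensional inside the infinite-dimensional $X_{\kappa(\delta)}$) show that every open subgroup acts nontrivially; likewise the continuous higher cohomology $H^{>0}(\tilde{P},-)$ of the Lie-algebra cohomology modules does not vanish in general (for instance $H^{1}(A^{0},X_{\kappa(\delta)})\neq 0$ is computed and used in the proof of Lemma \ref{cohom1-}), so your justification for commuting $H^{0}(\tilde{P},-)$ past the mapping cone of $\varphi-1$ breaks down. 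What makes the statement true is that the higher terms actually arising in the comparison are \emph{smooth} cohomology of a compact group with $L$-vector space coefficients, which vanishes in positive degrees; this vanishing is precisely what is packaged in the analytic Lazard isomorphism, and the compatibility with the $\varphi$-direction is then obtained by comparing the group-side devissage of Lemma \ref{lem:comcohg} with its Lie-algebra analogue termwise via \cite[Corollary 21]{georgana} and taking $\tilde{P}$-invariants, rather than from a blanket vanishing of continuous $H^{>0}(\tilde{P},-)$.
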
 

A long, tedious but direct calculation then leads to the following corollary. 

\begin{coro}\label{cor:cosdiminto}
The $L$-vector space $H^{2}(\overline{P}^{+}, \mathscr{R}_L(\delta_1, \delta_2))$ is of dimension 1.
\end{coro}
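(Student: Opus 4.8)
The plan is to feed the preceding Proposition the specific module $M=\mathscr{R}_L(\delta_1,\delta_2)$ and read off $\dim_L H^2$ of the resulting four-term complex. By that Proposition, $H^2(\overline{P}^+,\mathscr{R}_L(\delta_1,\delta_2))=H^0(\tilde{P},H^2(\mathscr{C}_{u^-,\varphi,a^+}(\mathscr{R}_L(\delta_1,\delta_2))))$, and by Corollary~\ref{cor:fincohinto} (through Lemma~\ref{finitecohom}) all the groups in sight are finite-dimensional $L$-vector spaces; so the only task is to show the dimension equals $1$.

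First I would unravel the structure of $\mathscr{C}_{u^-,\varphi,a^+}(M)$. It is the total complex of the operator $\varphi-1$ acting --- with the commutation twists $u^-\varphi=p\varphi u^-$ and $[a^+,u^-]=-u^-$ built in --- on the Chevalley--Eilenberg complex $0\to M\to M^{\oplus 2}\to M\to 0$ of the two-dimensional solvable Lie algebra $\mathfrak{p}=\langle a^+,u^-\rangle=\Lie\tilde{P}$ (the ``$ax+b$'' algebra). Writing out the mapping-cone long exact sequence produces a short exact sequence
\[
0\to \coker\!\big(\varphi-1\mid H^1_{\mathrm{Lie}}(\mathfrak{p},M)\big)\longrightarrow H^2\big(\mathscr{C}_{u^-,\varphi,a^+}(M)\big)\longrightarrow \ker\!\big(\varphi-1\mid H^2_{\mathrm{Lie}}(\mathfrak{p},M)\big)\to 0,
\]
where $H^i_{\mathrm{Lie}}(\mathfrak{p},-)=0$ for $i>2$ since $\dim\mathfrak{p}=2$. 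So it suffices to compute $H^1_{\mathrm{Lie}}(\mathfrak{p},M)$ and $H^2_{\mathrm{Lie}}(\mathfrak{p},M)$ together with the induced $\varphi$-action, and then apply $H^0(\tilde{P},-)$. By Poincar\'e duality for $\mathfrak{p}$ the group $H^2_{\mathrm{Lie}}(\mathfrak{p},M)$ is the $\mathfrak{p}$-coinvariants of $M$ twisted by the modular character of $\mathfrak{p}$, i.e.\ a quotient $M/\big(u^-M+(a^++1)M\big)$, which I would compute by taking $u^-$-coinvariants first and then the residual $(a^++1)$-coinvariants.

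The bulk of the proof is then the explicit computation on Colmez's model of $\mathscr{R}_L(\delta_1,\delta_2)=\mathscr{R}_L\cdot e$: $\varphi$ is the Frobenius of $\mathscr{R}_L$ rescaled by a factor coming from $\delta_1$ and the central twist $\delta_2^{-1}$, the operator $a^+$ acts as $\nabla+\kappa$ for the derivation $\nabla=t\partial$ on $\mathscr{R}_L$ (with $\partial=(1+T)\tfrac{d}{dT}$) and $\kappa$ the Sen weight of $\delta_1\delta_2^{-1}$, and $u^-$ acts as the differential operator read off from the $\P^1$-sheaf structure --- essentially $t^2\partial$ up to lower-order terms and a $\delta_1$-twist, this being the ``magical'' direction that distinguishes $\overline{P}^+$ from $A^+$. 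One computes the kernel and cokernel of $u^-$ on $\mathscr{R}_L$ explicitly, then tracks how $a^+$ and $\varphi-1$ act on the surviving pieces. The regularity hypothesis --- that $\delta_1\delta_2^{-1}$ is pointwise never of the form $\chi x^i$ or $x^{-i}$ ($i\geq 0$) --- enters at exactly this point: it says that no specialization of the character realizes a simultaneous coincidence of $a^+$- and $\varphi$-eigenvalues of the forbidden shape (equivalently, $\mathscr{R}_L(\delta_1\delta_2^{-1})$ has vanishing $H^0$ and $H^2$ for the usual $(\varphi,\Gamma)$-cohomology), so the relevant twisted $(\varphi,a^+)$-operators are invertible on the relevant subquotients. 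This collapses the short exact sequence above to a line. Finally one checks that $\tilde{P}=\matrice{\zpe}0{p\zp}1$ acts trivially on that line --- the modular-character twist of $\mathfrak{p}$ cancels the $\delta_2^{-1}$-twist, so the torus $\zpe$ acts through the trivial character --- so that $H^0(\tilde{P},-)$ is a no-op and $\dim_L H^2(\overline{P}^+,\mathscr{R}_L(\delta_1,\delta_2))=1$. As a sanity check one can run the same Proposition for $\mathscr{R}_L^\pm(\delta_1,\delta_2)$ and compare along the long exact sequence attached to $0\to\mathscr{R}_L^+(\delta_1,\delta_2)\to\mathscr{R}_L(\delta_1,\delta_2)\to\mathscr{R}_L^-(\delta_1,\delta_2)\to 0$.

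The main obstacle is this last step: controlling the operator $u^-$ on the infinite-dimensional space $\mathscr{R}_L$, its interaction with $a^+$ and $\varphi$, and the weight bookkeeping needed to land on dimension exactly $1$ rather than $0$ or $2$. Concretely, one must pin down the precise subquotient on which the regularity of $\delta_1\delta_2^{-1}$ is invoked to discard extra classes, and verify that passing to $\tilde{P}$-invariants at the very end changes nothing; each individual step is elementary, but the aggregate is the ``long, tedious but direct calculation'' referred to above.
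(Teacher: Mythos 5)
Your overall frame (pass to the Lie algebra complex $\mathscr{C}_{u^-,\varphi,a^+}$, deconstruct it by cones, exploit regularity, then take $\tilde{P}$-invariants) is the right one, and reorganizing the cone as ``$\varphi-1$ on the Chevalley--Eilenberg complex of $\mathfrak{p}=\langle a^+,u^-\rangle$'' is a legitimate variant of the paper's order of d\'evissage. But there is a genuine gap at the decisive step: you claim that regularity makes the relevant twisted $(\varphi,a^+)$-operators invertible so that the Lie-algebra-level $H^2$ is already a line, and that $H^0(\tilde{P},-)$ is then ``a no-op.'' This is false, and it is exactly where the actual content of the corollary lives. Even in the regular case the Lie algebra cohomology is infinite-dimensional: on the quotient $\Robba^-_L(\delta_1,\delta_2)\cong\mathrm{LA}(\zp,L)$ the kernel of $u^-$ is the space $X_{\kappa(\delta)}$ of functions that are locally of the form $c_i(x/i)^{\kappa(\delta)}$ on cosets $i+p^n\zp$, with \emph{arbitrary} locally constant coefficients $c_i$; $a^+$ kills all of $X_{\kappa(\delta)}$ (since $xa^+f=u^-f$) and $\varphi-1$ does not remove this infinite-dimensionality, so $H^2(\mathscr{C}_{u^-,\varphi,a^+})=X_{\kappa(\delta)}$ in the regular case. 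No invertibility statement forced by regularity can collapse this to a line at the infinitesimal level, because the residual freedom is smooth (locally constant in the group direction) and hence invisible to the Lie algebra. It is precisely the invariance under the honest group $\tilde{P}$ --- in particular under $A^0\cong\zpe$, acting on the coefficients $c_i$ by permuting the cosets and twisting by $\delta$ --- that cuts $X_{\kappa(\delta)}$ down to the single line $L\cdot\mathbf{1}_{\zpe}\delta$. Treating the invariants step as a formality therefore omits the core computation, and the dimension-$1$ conclusion is not reached.

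Two smaller points in the same vein: the finiteness you invoke from Corollary \ref{cor:fincohinto} concerns the $\overline{P}^+$-(group) cohomology, not the Lie algebra cohomology, so it cannot be used to declare ``all the groups in sight'' finite-dimensional before taking $\tilde{P}$-invariants; and the description of $u^-$ as ``essentially $t^2\partial$'' is off --- on $\Robba_L(\delta_1,\delta_2)$ one has $u^-=-t^{-1}\nabla(\nabla+\kappa(\delta_2\delta_1^{-1}))$, i.e.\ on the $\mathrm{LA}(\zp,L)$ side $(u^-f)(x)=\kappa(\delta)xf(x)-x^2f'(x)$, and the explicit kernel/cokernel analysis of this operator (together with the twisted actions of $\varphi$ and $a^+$ on the surviving pieces, done separately for $\Robba^+$ and $\Robba^-$ and recombined) is what the paper's ``long, tedious but direct calculation'' actually consists of.
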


Corollaries \ref{cor:fincohinto} and \ref{cor:cosdiminto} allow for an analysis of a spectral sequence to take place and prove Theorem \ref{thm:resmincomp} in the case where $A$ is reduced. One then concludes via an induction argument on the index of nilpotence of the nilradical of $A$. Via the complex $\mathscr{C}_{u^-, \varphi, a^+}(M)$ we also obtain an alternative proof of \cite[Proposition 5.18]{colmez2015} in the case of a cyclotomic $(\varphi, \Gamma)$-module. Along the way we show a comparison isomorphism relating continuous cohomology and analytic cohomology for certain $(\varphi, \Gamma)$-modules (cf. Proposition \ref{prop:lazcompcom} for a precise statemtent).

Armed with Theorem \ref{thm:resmincomp}, the reader may notice at this point however, that there is an absence of theory required to conclude (or even make sense of) Theorem \ref{result}. The following questions are therefore unavoidable:
\begin{description}
\item [Q1] What is a locally convex $A$-module?
\item [Q2] What is a locally analytic $G$-representation in $A$-modules?
\item [Q3] What is the relation between locally analytic $G$-representations in $A$-modules and modules equipped with a (separately) continuous action of the relative distribution algebra $\mathscr{D}(G,A)$?
\end{description}

We provide a set of answers to these questions (\textbf{A1}-\textbf{A3}) and prove some fundamental properties regarding the locally analytic cohomology theory of $\mathscr{D}(G,A)$-modules, which we describe in the following section.

\subsection{Analytic families of locally analytic representations} \label{introla}

Recall that for a locally $\qp$-analytic group $H$, a theory of locally analytic representations of the group $H$ in $L$-vector spaces has been developed by Schneider and Teitelbaum (cf. \cite{tetschlad}, \cite{schteiugfin}, \cite{schtelaldisad}). In order to construct the $A$-module $\Pi(\Delta)$ of Theorem \ref{result}, with a locally analytic action of $G$, we need to develop a reasonable framework to make sense of such an object. It turns out that, with some care, much of the existent theory can be extended without serious difficulties to the relative context.

\begin{defi} [\textbf{A1}]
A locally convex $A$-module is a topological $A$-module whose underlying topology is a locally convex $\qp$-vector space. We let $\mathrm{LCS}_A$ be the category of locally convex $A$-modules. Its morphisms are all continuous $A$-linear maps.
\end{defi}

There is a notion of a strong dual in the category $\mathrm{LCS}_A$, however outside of our applications, it is ill-behaved (in the sense that there are few reflexive objects which are not free $A$-modules). Let $H$ be a locally $\qp$-analytic group. 

\begin{defi} [\textbf{A2}]
We define the category $\mathrm{Rep}_A^{\mathrm{la}}(H)$ whose objects are barrelled, Hausdorff, locally convex $A$-modules $M$ equipped with a topological $A$-linear action of $H$ such that, for every $m \in M$, the orbit map $h \mapsto h \cdot m$ is a locally analytic function on $H$ with values in $M$.
\end{defi}

Denote $\mathrm{LA}(H, A)$ the space of locally analytic functions on $H$ with values in $A$ and $\mathscr{D}(H, A) = \mathrm{Hom}_{A,\mathrm{cont}}(\mathrm{LA}(H, A), A)$ (equipped with the strong topology) its strong $A$-dual, the space of $A$-valued distributions on $H$. Both $LA(H,A)$ and $\mathscr{D}(H,A)$ are locally convex $A$-modules. In order to algebrize the situation, one proceeds as in \cite{tetschlad} and shows that a locally analytic representation of $H$ is naturally a module over the relative distribution algebra. More precisely let $\mathrm{Rep}_A^{\mathrm{la}, LB}(H) \subseteq \mathrm{Rep}_A^{\mathrm{la}}(H)$ denote the full subcategory consisting of spaces which are of $A$-LB-type (i.e inductive limit of Banach spaces whose transition morphism are $A$-linear) and complete. Then our main result in \S \ref{ap:lagrep} can be stated as follows:

\begin{theo} [\textbf{A3}]  \label{thm:maiapen} 
Every locally analytic representation of $H$ carries a separably continuous $A$-linear structure of $\mathscr{D}(H, A)$-module \footnote{More precisely, a separately continuous $A$-bilinear map $\mathscr{D}(H,A) \times M \to M$.}. Moreover, the category $\mathrm{Rep}_{A}^{\mathrm{la},LB}(H)$ is equivalent to the category of complete, Hausdorff locally convex $A$-modules which are of $A$-LB-type equipped with a separately continuous $\mathscr{D}(H,A)$-action with morphisms all continuous $\mathscr{D}(H,A)$-linear maps. 
\end{theo}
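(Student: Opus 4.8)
The plan is to follow the strategy of Schneider--Teitelbaum \cite{tetschlad}, adapting each step to the relative setting over the affinoid base $A$. First I would set up the integration pairing: given $M \in \mathrm{Rep}_A^{\mathrm{la}}(H)$ and $m \in M$, the orbit map $o_m \colon h \mapsto h \cdot m$ lies in $\mathrm{LA}(H, A) \ctens_A M$ (here one needs that $M$ being barrelled and Hausdorff makes the orbit map, a priori $\mathrm{LA}(H,A)$-valued after the $A$-structure, actually land in the completed tensor product — this is where barrelledness is used, exactly as in the field case to control the topology on spaces of vector-valued analytic functions). Then for $\mu \in \mathscr{D}(H,A) = \Hom_{A,\mathrm{cont}}(\mathrm{LA}(H,A),A)$ one sets $\mu \cdot m := (\mu \ctens \mathrm{id}_M)(o_m)$. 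One checks this is $A$-bilinear and separately continuous in each variable, using that the evaluation pairing $\mathscr{D}(H,A) \times \mathrm{LA}(H,A) \to A$ is separately continuous and that $\ctens_A M$ is exact enough on the relevant (nuclear-type) spaces. That $\mu \mapsto (\nu \mapsto \mu\nu)$ is compatible with convolution and that $\delta_h \cdot m = h \cdot m$ follows from the standard formula $o_{h\cdot m}(g) = o_m(gh)$ together with the definition of convolution on $\mathscr{D}(H,A)$; associativity $\mu \cdot (\nu \cdot m) = (\mu \nu) \cdot m$ reduces to the co-associativity of the comultiplication on $\mathrm{LA}(H,A)$ and a Fubini-type interchange, valid by separate continuity and completeness.

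Next I would construct the quasi-inverse functor. Given a complete Hausdorff locally convex $A$-module $M$ of $A$-LB-type with a separately continuous $\mathscr{D}(H,A)$-action, I want to produce a locally analytic $H$-action. The group acts via $h \cdot m := \delta_h \cdot m$; the content is that the orbit maps are locally analytic. The key input is the relative version of the ``$\mathscr{D}(H,A)$ is a Fr\'echet--Stein-type algebra'' picture: writing $\mathrm{LA}(H,A) = \varinjlim_{\mathcal{U},r} C^{\mathrm{an}}(\mathcal{U}_r, A)$ as a locally convex inductive limit over analytic charts, its dual decomposes compatibly, and a separately continuous action on an $A$-LB-space $M = \varinjlim M_n$ restricts, on each $M_n$ and over a compact analytic subgroup, to a continuous module structure over a Banach completion $\mathscr{D}_r(H,A)$. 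Dualizing the evaluation, this continuous action assembles into a coaction $M_n \to C^{\mathrm{an}}(\mathcal{U}_r, A) \ctens_A M$, i.e. the orbit maps are locally analytic with values in $M$. Here $A$-LB-type and completeness are precisely what let one run the Banach-space bootstrap at each level $n$ and then pass to the limit; barrelledness of the output is automatic for an LB-space. One then checks the two constructions are mutually inverse on objects, which is a formal consequence of $\delta_h$ being ``dense enough'' in $\mathscr{D}(H,A)$ (the span of Dirac distributions is dense, by the relative analogue of \cite{schtelaldisad}), so an $A$-linear continuous map commuting with all $\delta_h$ automatically commutes with the full $\mathscr{D}(H,A)$-action, giving the equivalence on morphisms.

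The main obstacle I expect is the relative functional analysis needed to make the completed tensor products and inductive limits behave: over a field one leans heavily on nuclearity and on the fact that $\mathrm{LA}(H,L)$ is a nuclear LF-space (so $\ctens$ is exact, inductive and projective tensor topologies agree, strong duals of Fr\'echet are LB, etc.), and none of this is literally available over $A$. The fix is to work with $A$-LB-type and $A$-Fr\'echet-type spaces as developed earlier in \S\ref{ap:lagrep}, replacing nuclearity by the explicit presentations $\mathrm{LA}(H,A) = \mathrm{LA}(H,\qp) \ctens_{\qp} A$ and $\mathscr{D}(H,A) = \mathscr{D}(H,\qp) \ctens_{\qp} A$ (which hold because $A$ is a Tate affinoid, hence of compact type as a $\qp$-Banach space in the appropriate sense), so that the needed exactness and separate-continuity statements are obtained by base change from the field case. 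Concretely, the one genuinely new lemma to isolate and prove is: \emph{for $V$ a $\qp$-analytic LB- (resp. Fr\'echet-) space, $V \ctens_{\qp} A$ with its natural $A$-module topology is again of $A$-LB- (resp. $A$-Fr\'echet-) type, and forming strong duals commutes with $-\ctens_{\qp}A$ on such spaces.} Granting that, every step above is a routine, if lengthy, transcription of Schneider--Teitelbaum and Kohlhaase \cite{kohl2011}.
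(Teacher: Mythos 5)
There is a genuine gap, and it sits exactly where your third paragraph waves it away. Your construction of the integration map (orbit maps landing in $\mathrm{LA}(H,A)\,\widehat{\otimes}_A M$, coactions $M_n \to C^{\mathrm{an}}(\mathcal{U}_r,A)\,\widehat{\otimes}_A M$) and your proposed "one genuinely new lemma" both presuppose the topological identification $\mathrm{LA}(H,A) \cong \mathrm{LA}(H,\qp)\,\widehat{\otimes}_{\qp} A$ and the good behaviour of such completed tensor descriptions of $A$-valued locally analytic functions. But this identification is precisely what is \emph{not} available in the relative setting: it is equivalent to the completeness (equivalently $A$-reflexivity) of $\mathrm{LA}(H,A)$, which is an open question once $\dim H \geq 2$ (Conjecture \ref{conj:compLA}, Remark \ref{rem:conrelg}); the natural map $\mathrm{LA}(H,A)\to \mathrm{LA}(H,\qp)\,\widehat{\otimes}_{\qp,\pi}A$ is only known to be a continuous bijection. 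Your justification that it holds "because $A$ is a Tate affinoid, hence of compact type" is false: $A$ is in general an infinite-dimensional Banach algebra, hence not of compact type, and none of the nuclearity arguments from the field case transfer. Likewise, "strong duals commute with $-\widehat{\otimes}_{\qp}A$" on LB/Fr\'echet spaces is not a routine lemma over a Banach base, and even granting it, it would not compute $\mathscr{D}(H,A)=\mathrm{LA}(H,A)'_b$ unless you already knew the tensor description of $\mathrm{LA}(H,A)$.

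The paper's proof is arranged so as never to need these facts. The only base-change statement it uses is on the dual side, $\mathscr{D}(H,A)\cong\mathscr{D}(H,\qp)\,\widehat{\otimes}_{\qp}A$ (Lemmas \ref{distbasechange} and \ref{lem:distrbcompl}), and this is proved not from completeness of $\mathrm{LA}(H,A)$ but from the strictly weaker $A$-regularity of $\mathrm{LA}(H,A)$ (Lemma \ref{lem:LAgoodfu}, a Mahler-expansion argument combined with a Hahn--Banach/unit-ball trick), which lets one write $\mathscr{D}(H,A)=\varprojlim_n \mathcal{L}_{A,b}(\mathrm{LA}_n(H,A),A)$ and apply the elementary Hom-adjunction $\mathcal{L}_{\qp,b}(M,N)\cong\mathcal{L}_{A,b}(M\,\widehat{\otimes}_{\qp}A,N)$ (Corollary \ref{basechangecoro}) Banach level by Banach level. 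With that in hand, the integration map is obtained by composing Schneider--Teitelbaum's $I_{\qp}\colon \mathrm{LA}(H,M)\to \mathrm{Hom}_{\qp,\mathrm{cont}}(\mathscr{D}(H,\qp),M)$ with these identifications (Lemma \ref{locandist}), and the equivalence for $A$-LB-type objects follows formally (Corollary \ref{cor:imded}); no vector-valued tensor identities, no Banach completions $\mathscr{D}_r(H,A)$, and no quasi-inverse built from coactions are needed. To repair your argument you would either have to prove Conjecture \ref{conj:compLA} or restructure it along these dual-side lines, isolating $A$-regularity of $\mathrm{LA}(H,A)$ as the new input.
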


The idea to prove Theorem \ref{thm:maiapen} is of course to reduce to the well known result of Schneider-Teitelbaum, cf. \cite[Theorem 2.2]{tetschlad}. To achieve this, the main intermediary result required is the following proposition. 

\begin{prop}\label{prop:scbres}
There is an isomorphism of locally convex $A$-modules 
\[ \mathscr{D}(H, A) = \mathscr{D}(H, \qp) \widehat{\otimes}_{\qp, \iota} A. \] 
\end{prop}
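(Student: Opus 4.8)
The plan is to realize the identification $\mathscr{D}(H,A) = \mathscr{D}(H,\qp)\,\widehat\otimes_{\qp,\iota}A$ by dualizing a corresponding statement for spaces of locally analytic functions, and then upgrading the abstract linear-algebra isomorphism to an isomorphism of locally convex $A$-modules. First I would record the "function-side" statement: $\mathrm{LA}(H,A) = \mathrm{LA}(H,\qp)\,\widehat\otimes_{\qp}A$, where the completed tensor product can be taken with respect to the projective (or, since one factor is a nuclear Fr\'echet-type space, equivalently the injective) topology. For this one reduces to a chart $H \supseteq H_0 \cong \zp^d$, so that $\mathrm{LA}(H_0,\qp)$ is the locally convex inductive limit over $r$ of the Banach spaces $\mathrm{LA}_r$ of functions overconvergent of radius $r$; each $\mathrm{LA}_r$ has an explicit orthonormal (Mahler/Amice) basis, so $\mathrm{LA}_r \ctens_{\qp} A$ is the completed free Banach $A$-module on that basis, which visibly equals $\mathrm{LA}_r(H_0,A)$. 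Passing to the inductive limit over $r$ and then glueing over a finite cover of $H$ by cosets of $H_0$ gives the claim for $\mathrm{LA}(H,A)$; the key point is that $\ctens_{\qp}A$ commutes with the relevant countable locally convex inductive limits because the transition maps are injective (in fact compact), and that nuclearity of $\mathrm{LA}(H,\qp)$ makes the two natural topologies on the tensor product agree.

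Next I would dualize. By definition $\mathscr{D}(H,A) = \Hom_{A,\mathrm{cont}}(\mathrm{LA}(H,A),A)$ with the strong topology, and $\mathscr{D}(H,\qp) = \mathrm{LA}(H,\qp)'_b$. Using $\mathrm{LA}(H,A) = \mathrm{LA}(H,\qp)\ctens_{\qp} A$ and the adjunction $\Hom_{A,\mathrm{cont}}(V\ctens_{\qp}A, A) = \Hom_{\qp,\mathrm{cont}}(V, A)$ for a locally convex $\qp$-space $V$, one gets $\mathscr{D}(H,A) = \Hom_{\qp,\mathrm{cont}}(\mathrm{LA}(H,\qp),A) = \mathscr{D}(H,\qp)\ctens_{\qp}A$, where the last identification is the general fact that for a nuclear Fr\'echet (or LF) space $V$ and a Banach space $A$ one has $L_b(V,A) = V'_b \ctens_{\qp} A$. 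Concretely, at finite level $\mathrm{LA}_r(H_0,\qp)'_b$ is a Banach space with the dual orthonormal basis, and $\mathscr{D}(H,A)$ is the corresponding Fr\'echet space of $A$-valued "measures" with the same coefficient-decay conditions, i.e. $\mathscr{D}_r(H_0,\qp)\ctens_{\qp}A$; taking the projective limit over $r$ and assembling over the finite cover yields the isomorphism. One must check that these identifications are topological, not merely algebraic: on each Banach building block this is clear from the explicit bases, and the inductive/projective limits are taken in $\mathrm{LCS}_{\qp}$ on both sides, so the isomorphism is bicontinuous and $A$-linear, hence an isomorphism in $\mathrm{LCS}_A$.

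The main obstacle I expect is purely functional-analytic bookkeeping: making sure that $(-)\ctens_{\qp}A$ behaves correctly with respect to the colimit defining $\mathrm{LA}$ and the limit defining $\mathscr{D}$, and that all the relevant topologies (projective vs. injective tensor topology, strong dual topology, the LF-topology on $\mathrm{LA}(H,\qp)$) are compatible so that the maps are strict/open. The clean way to handle this is to work entirely at the level of the Banach pieces $\mathrm{LA}_r$ and their duals $\mathscr{D}_r$, where everything is a completed free or "coefficients vanishing at infinity" Banach $A$-module described by an explicit orthonormal basis, and to invoke: (i) $\ctens_{\qp}A$ is exact and commutes with countable injective-transition colimits on such spaces; (ii) for a reduced affinoid $A$ (or in general by the theory developed in the body of the paper) the needed tensor products are well-behaved; (iii) nuclearity of $\mathrm{LA}(H,\qp)$ to pass freely between projective and injective tensor products and to identify $L_b(\mathrm{LA}(H,\qp),A)$ with $\mathscr{D}(H,\qp)\ctens A$. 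Granting these standard facts, the proof is a matter of chaining the identifications and checking naturality in the chart $H_0 \subseteq H$, which ensures the final isomorphism is independent of choices and compatible with the algebra structures used later.
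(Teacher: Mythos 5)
Your plan hinges on the function-side identification $\mathrm{LA}(H,A) \cong \mathrm{LA}(H,\qp)\,\ctens_{\qp}A$, justified by saying that $\ctens_{\qp}A$ commutes with the inductive limit $\mathrm{LA}(H,\qp)=\varinjlim_r \mathrm{LA}_r$ because the transition maps are injective and compact. This is precisely the point that is \emph{not} known, and it is flagged in the paper as an open question (Conjecture \ref{conj:compLA} and Remark \ref{rem:conrelg}): the natural map $\mathrm{LA}(H,A)\to \mathrm{LA}(H,\qp)\ctens_{\qp,\pi}A$ is only a continuous bijection, and it is a topological isomorphism if and only if $\mathrm{LA}(H,A)$ is complete (equivalently $A$-reflexive), which is unknown once $\dim H\geq 2$. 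The compactness argument does not transfer: the transition maps $\mathrm{LA}_r(H,A)\to \mathrm{LA}_{r'}(H,A)$ are the $\qp$-compact maps tensored with $\mathrm{id}_A$, hence not compact when $A$ is infinite-dimensional over $\qp$, so the ``compact type implies complete'' mechanism is unavailable, and completed tensor products do not in general commute with locally convex inductive limits. Your fallback of working purely at the level of the Banach pieces is sound for the identification $\mathrm{LA}_r(H,A)=\mathrm{LA}_r(H,\qp)\ctens_{\qp}A$ (Amice basis), but the dual step $\mathscr{D}(H,A)=\varprojlim_r \mathcal{L}_{A,b}(\mathrm{LA}_r(H,A),A)$ requires knowing that every bounded subset of the LB-space $\mathrm{LA}(H,A)$ is contained and bounded in some $\mathrm{LA}_r(H,A)$; over $\qp$ this follows from semi-completeness, but over $A$ it is exactly the nontrivial input you are implicitly assuming.

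This is where the paper's proof differs: instead of proving completeness, it establishes the weaker \emph{$A$-regularity} of $\mathrm{LA}(H,A)$ (Lemma \ref{lem:LAgoodfu}), by a bespoke argument using Mahler expansions, the continuous bilinear map $A'_{\qp,b}\times \mathrm{LA}(H,A)\to \mathrm{LA}(H,\qp)$, the known $\qp$-regularity of $\mathrm{LA}(H,\qp)$, and a Hahn--Banach estimate (Lemma \ref{lem:schdua}) to pull boundedness back to $A$-coefficients. With that in hand, the dual chain you sketch does go through (Lemma \ref{homvarinj}, Corollary \ref{basechangecoro}, and \cite[Proposition 20.9]{schneidnon} for $\mathscr{D}(H,\qp)\ctens_{\qp,\pi}A=\mathcal{L}_{\qp,b}(\mathrm{LA}(H,\qp),A)$), and the non-compact case is handled by decomposing $H$ into pairwise disjoint compact open subsets (Lemmas \ref{lem:decodist} and \ref{lem:distrbcompl}), a reduction your finite-cover remark does not cover for general $H$. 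So the architecture of your dualization is close to the paper's, but as written the argument either assumes the open completeness statement or omits the regularity lemma that replaces it; without one of these the proof does not close.
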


In the case where $H$ is compact we show that $\mathrm{LA}(H,A)$ satisfies a boundedness property which we call $A$-regular (we refer the reader to Definition \ref{def:goodspat} and Lemma \ref{lem:LAgoodfu} for the precise statements). This is enough to prove Proposition \ref{prop:scbres}.

\begin{rema}
Proposition \ref{prop:scbres} would also follow immediately if $\mathrm{LA}(H,A)$ is complete (for $H$ compact). To the best of our knowledge this seems to be an open question if the dimension of $H \geq 2$. If $H \cong \zp$, one can identity $\mathrm{LA}(\zp,A)$ with the negative powers of $\Robba_A$ and conclude the result, cf. Lemma \ref{lem:magdimo}. In particular $\mathrm{LA}(\zp,A)$ is an example of an $A$-reflexive object, which is not free.   
\end{rema}

Finally with the equivalence of Theorem \ref{thm:maiapen} in mind, we switch our attention to cohomological questions concerning the category $\mathrm{Rep}_A^{\mathrm{la}}(H)$. 

\begin{defi}
Let $\mathscr{G}_{H,A}$ denote the category of complete Hausdorff locally convex $A$-modules with the structure of a separately continuous $A$-linear $\mathscr{D}(H,A)$-module, taking as morphisms all continuous $\mathscr{D}(H,A)$-linear maps. More precisely we demand that the module structure morphism
\[
\mathscr{D}(H,A) \times M \to M
\] 
is $A$-bilinear and separately continuous.
\end{defi}

Following Kohlhaase (\cite{kohl2011}, \cite{Tayhomcoh}), one can develop a locally analytic cohomology theory for the category $\mathscr{G}_{H,A}$. One can define groups $H^i_{\rm an}(H, M)$ and $\mathrm{Ext}^i_{\mathscr{G}_{H, A}}(M ,N)$ for $i \geq 0$ and objects $M$ and $N$ in $\mathscr{G}_{H, A}$. If $H_2$ is a closed locally $\qp$-analytic subgroup of $H_1$, we also have an induction functor\footnote{This is the dual of the \emph{standard} Induction functor, typically denoted $\mathrm{Ind}_{H_2}^{H_1}$, cf. Lemma \ref{lem:prinbag}.} $\mathrm{ind}_{H_2}^{H_1} \colon \mathscr{G}_{H_2, A} \to \mathscr{G}_{H_1, A}$. Our main purpose in considering such a theory is to show the following relative version of Shapiro's lemma, which is crucially used in the construction of the correspondence $\Delta \mapsto \Pi(\Delta)$ of Theorem \ref{result}:

\begin{prop} [Relative Shapiro's Lemma]
Let $H_1$ be a locally $\qp$-analytic group and let $H_2$ be a closed locally $\qp$-analytic subgroup. If $M$ and $N$ are objects of $\mathscr{G}_{H_2,A}$ and $\mathscr{G}_{H_1,A}$, respectively, then there are $A$-linear bijections
\[
\mathrm{Ext}^q_{\mathscr{G}_{H_1,A}}(\mathrm{ind}_{H_2}^{H_1}(M), N) \to \mathrm{Ext}^{q}_{\mathscr{G}_{H_2,A}}(M, N)
\]
for all $q \geq 0$. 
\end{prop}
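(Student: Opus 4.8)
The plan is to follow the classical proof of Shapiro's lemma in the setting of derived functors on abelian (or at least exact) categories, adapted to the topological category $\mathscr{G}_{H,A}$. The key observation is that the induction functor $\mathrm{ind}_{H_2}^{H_1}$ should be right adjoint to the restriction functor $\mathrm{res}_{H_2}^{H_1} \colon \mathscr{G}_{H_1,A} \to \mathscr{G}_{H_2,A}$, so that at the level of $\Hom$'s one has a natural isomorphism
\[
\Hom_{\mathscr{G}_{H_1,A}}(\mathrm{ind}_{H_2}^{H_1}(M), N) \cong \Hom_{\mathscr{G}_{H_2,A}}(M, N),
\]
where on the right the action on $N$ is via $\mathscr{D}(H_2,A) \hookrightarrow \mathscr{D}(H_1,A)$. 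First I would carefully set up this adjunction at the level of objects: using $\mathscr{D}(H_1,A) = \mathscr{D}(H_1,\qp) \ctens_{\qp,\iota} A$ (Proposition \ref{prop:scbres}) and the identification of $\mathrm{ind}_{H_2}^{H_1}(M)$ as an appropriate space of $M$-valued functions on $H_1$ (or, dually, as $\mathscr{D}(H_1,A) \ctens_{\mathscr{D}(H_2,A)} M$ — whichever incarnation is taken in \S\ref{ap:lagrep}), one checks that evaluation/restriction of a $\mathscr{D}(H_1,A)$-linear map along the canonical map $M \to \mathrm{res}_{H_2}^{H_1}\mathrm{ind}_{H_2}^{H_1}(M)$ (unit of the adjunction) gives the claimed bijection on $\Hom$-groups, with inverse built from the counit. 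This is the $q=0$ case.

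The next step is to pass to $\Ext$-groups by a derived-functor / $\delta$-functor argument. I would proceed as follows. The category $\mathscr{G}_{H_1,A}$ (resp. $\mathscr{G}_{H_2,A}$) should have enough injectives, or at least enough objects acyclic for the relevant functors, so that $\Ext^q_{\mathscr{G}_{H_i,A}}(-,-)$ is computed by an injective resolution of the second variable $N$. The crucial point is that $\mathrm{ind}_{H_2}^{H_1}$, being a right adjoint, is left exact, and — this is the heart of the matter — it should be \emph{exact} and should send injectives of $\mathscr{G}_{H_2,A}$ to injectives of $\mathscr{G}_{H_1,A}$; equivalently, its left adjoint $\mathrm{res}_{H_2}^{H_1}$ should be exact (which is clear, since restriction of scalars along $\mathscr{D}(H_2,A) \to \mathscr{D}(H_1,A)$ does not change underlying topological $A$-modules or short exact sequences). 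Granting that $\mathrm{res}$ is exact, a standard argument shows $\mathrm{ind}$ preserves injectives, and then for an injective resolution $N \to I^\bullet$ in $\mathscr{G}_{H_1,A}$ one gets
\[
\Ext^q_{\mathscr{G}_{H_1,A}}(\mathrm{ind}_{H_2}^{H_1}(M), N) = H^q\big(\Hom_{\mathscr{G}_{H_1,A}}(\mathrm{ind}_{H_2}^{H_1}(M), I^\bullet)\big) \cong H^q\big(\Hom_{\mathscr{G}_{H_2,A}}(M, I^\bullet)\big),
\]
using the $q=0$ adjunction isomorphism termwise and its naturality in the first variable. Since $\mathrm{res}_{H_2}^{H_1}(I^\bullet)$ is then a complex of injectives in $\mathscr{G}_{H_2,A}$ resolving $\mathrm{res}_{H_2}^{H_1}(N)$... wait — actually this is not quite what we want, since the right-hand side should be $\Ext^q_{\mathscr{G}_{H_2,A}}(M,N)$ with $N$ viewed in $\mathscr{G}_{H_2,A}$ via restriction. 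So the cleaner route: if $\mathrm{res}_{H_2}^{H_1}(I^j)$ is injective in $\mathscr{G}_{H_2,A}$ for each $j$ (this follows from $\mathrm{res}$ having an exact left adjoint, namely some "extension" or "$\mathrm{ind}$" in the other direction, or simply from a direct check), then $\mathrm{res}_{H_2}^{H_1}(I^\bullet)$ is an injective resolution of $\mathrm{res}_{H_2}^{H_1}(N)$ and the above cohomology is exactly $\Ext^q_{\mathscr{G}_{H_2,A}}(M, \mathrm{res}_{H_2}^{H_1}(N))$, as desired.

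I expect the main obstacle to be the homological-algebra bookkeeping in this topological, non-abelian setting: verifying that $\mathscr{G}_{H_i,A}$ has enough injectives (or enough acyclics) adapted to $\Ext$, that $\mathrm{ind}_{H_2}^{H_1}$ and $\mathrm{res}_{H_2}^{H_1}$ genuinely form an adjoint pair with $\mathrm{ind}$ exact (completeness and separate continuity of the $\mathscr{D}(H_1,A)$-action on $\mathrm{ind}_{H_2}^{H_1}(M)$ need care, and one must ensure the relevant $\ctens$'s behave well — this is presumably where the $A$-regularity of $\mathrm{LA}(H,A)$ and Proposition \ref{prop:scbres} are used), and that restriction of injectives stays injective. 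A secondary, more technical point is the precise definition of $\mathrm{ind}$ (the footnote says it is the dual of the standard induction $\mathrm{Ind}_{H_2}^{H_1}$), so one must match the $\Hom$-adjunction to \emph{this} functor and track the twist by modulus characters if any appears. Once the adjunction and the exactness/injective-preservation statements are in place, the $\Ext$-level statement is a formal consequence of the Grothendieck spectral sequence (or simply the effaceable $\delta$-functor uniqueness argument), so the real work is entirely in the categorical and functional-analytic foundations laid in \S\ref{ap:lagrep}.
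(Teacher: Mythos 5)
Your $q=0$ step is exactly the paper's Frobenius reciprocity (Lemma \ref{lem:relfrob}) and is fine, but note that the displayed isomorphism $\Hom_{\mathscr{G}_{H_1,A}}(\mathrm{ind}_{H_2}^{H_1}(M),N)\cong\Hom_{\mathscr{G}_{H_2,A}}(M,N)$ exhibits $\mathrm{ind}_{H_2}^{H_1}$ as a \emph{left} adjoint of restriction, not a right adjoint; the sentence ``being a right adjoint \dots it should send injectives to injectives; equivalently its left adjoint $\mathrm{res}$ should be exact'' has the adjunction backwards, and the natural homological continuation is therefore through projective-type resolutions of the \emph{first} variable, not injective resolutions of $N$.

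The genuine gap is in the passage to $\Ext^q$. In this topological setting $\mathscr{G}_{H,A}$ is not abelian, and $\Ext^q_{\mathscr{G}_{H,A}}(M,N)$ is \emph{defined} (Definition \ref{def:goodextnoab}) as the cohomology of $\Hom_{\mathscr{G}_{H,A}}(B_\bullet(H,M),N)$ for the bar resolution, i.e.\ via s-projective s-resolutions relative to the exact structure of topologically split (s-exact) sequences. No statement that $\mathscr{G}_{H_i,A}$ has enough injectives (or s-injectives) is available, and even granting such resolutions one would still need a balancing theorem identifying the injective-resolution computation with the Ext groups as defined; neither is established, so your derived-functor argument does not connect to the definition and cannot be run as stated. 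The proof actually used is the dual, projective-side argument: using $\mathscr{D}(H_i,A)=\mathscr{D}(H_i,\qp)\widehat{\otimes}_{\qp,\iota}A$ one shows $\mathscr{D}(H_1,A)$ is s-free as a right $\mathscr{D}(H_2,A)$-module (Lemma \ref{lem:dsfrs}), whence $\mathrm{ind}_{H_2}^{H_1}(M)\cong\mathscr{D}(H_1/H_2,A)\widehat{\otimes}_{A,\iota}M$, so $\mathrm{ind}_{H_2}^{H_1}$ takes s-exact sequences to s-exact sequences and s-projectives to s-projectives (Lemma \ref{lem:predjkz}); one then takes an s-projective s-resolution $X_\bullet\to M$ in $\mathscr{G}_{H_2,A}$, observes that $\mathrm{ind}_{H_2}^{H_1}(X_\bullet)\to\mathrm{ind}_{H_2}^{H_1}(M)$ is again an s-projective s-resolution, and applies Frobenius reciprocity termwise to identify the two complexes computing the Ext groups. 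To repair your proposal you would either have to switch to this projective-side argument or first develop enough s-injectives together with a balance result for the relative Ext, which is substantially more foundational work than the bookkeeping you anticipate.
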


\textbf{Structure of the paper.}
In \S \ref{sec:prelimz}, we extend the dictionary of $p$-adic funcational analysis to the relative setting. A key issue is to establish that the sheaf $\Robba_A(\delta_1) \boxtimes_{\omega} \P^1$ is $G$-equivariant over $\P^1$ and is $\qp$-analytic.

In \S \ref{sec:cohphigammch}, we use $(\varphi, \Gamma)$-cohomology to recalculate some results from \cite{chen2013} (in loc.cit. $(\psi, \Gamma)$-cohomology was used). A key result for the subsequent chapter is the nullity of $H^2(A^{+}, \Robba_A(\delta_1\delta_2^{-1})$ iff $\delta_1\delta_2^{-1}$ is (pointwise) never of the form $\chi x^{i}$ or $x^{-i}$ for some $i \geq 0$ (i.e. $\delta_1\delta_2^{-1}$ is regular).  

In \S \ref{sec:maincofinza} and \ref{sec:maincofinz}, the technical heart of the paper is carried out. We begin by proving the finiteness of $\overline{P}^+$-cohomology for $\Robba_A(\delta_1, \delta_2)$. Using the Lie-algebra complex we provide a different proof of \cite[Proposition 5.18]{colmez2015} (in the cyclotomic setting). We show that the dimension of the higher cohomology group $H^2(\overline{P}^+, \Robba_L(\delta_1, \delta_2))$ is constant (of dimension $1$) when $\delta_1\delta_2^{-1}$ is regular. 

In \S \ref{sec:extgrpact}, Theorem \ref{thm:resmincomp} is then established.   

In \S \ref{sec:cooftcor}, the general machinary developed in \cite[\S 6]{colmez2015} is used to construct $\Pi(\Delta)$ from a regular trianguline $(\varphi, \Gamma)$-module of rank 2 $\Delta$, over $\Robba_A$. 

In the appendix (\S \ref{ap:lagrep}) we establish a formal framework for the main result. We introduce the category of locally analytic $G$-representations in $A$-modules. We prove that there is a relationship between this category and a category of modules over the relative distribution algebra in the same spirit of \cite{tetschlad}. There is a locally analytic cohomology theory extending that of \cite{kohl2011} and we establish a relative version of Shapiro's Lemma. These results are used in \S \ref{sec:cooftcor}. \\

\textbf{Acknowledgments}. The debt this paper owes to the work of Pierre Colmez will be obvious to the reader. The authors are grateful to him for suggesting this problem and would like to thank him for many discussions on various aspects of this paper. The first author would like to thank Jean-Fran\c cois Dat for his continuous encouragement throughout. Most of this work has been done while the second author was completing his Ph.D. thesis under the supervision of Pierre Colmez, to whom he thanks heartily. He finally wants to express his gratitude to Sarah Zerbes. Next we want to thank Kiran Kedlaya for spending countless hours answering our questions on Robba rings and suggesting a crucial induction argument. We would also like to thank Jean-Fran\c cois Dat, Jan Kohlhaase and Peter Schneider for several helpful discussions on what the category of locally analytic $G$-representations in $A$-modules should be. Further thanks go to Gabriel Dospinescu and Arthur-C\'esar Le Bras for fruitful conversations on various topics.

\subsection{Notations}
Let $A$ be a $\qp$-affinoid algebra equipped with its Gauss-norm topology (making it a Banach space with norm $|\cdot |_{A}$ and $v_A = - \log_p |\cdot|_A$ a fixed valuation). We will denote
\[ G = \GL_2(\qp), \;\; A^+= \begin{pmatrix} \zp \backslash \{ 0 \} & 0\\ 0 & 1 \end{pmatrix}, \;\; P^+= \begin{pmatrix} \zp \backslash \{ 0 \} & \zp\\ 0 & 1 \end{pmatrix}, \;\; \overline{P}^+= \begin{pmatrix} \zp \backslash \{ 0 \} & 0\\ p\zp & 1 \end{pmatrix}. \] As usual we note $\Gamma = \zpe$, $\P^1 = \P^1(\qp)$ and we assume $p > 2$ throughout. For $n \geq 1$ we set $r_n := \frac{1}{(p-1)p^{n-1}}$ and denote the element $\begin{pmatrix} 1 & 0\\ p & 1 \end{pmatrix}$ by $\tau$. For two continuous characters $\delta_1, \delta_2 \colon \qpe \to A^\times$ we will denote $\delta = \delta_1\delta_2^{-1}\chi^{-1}$ and $\omega = \delta_1\delta_2\chi^{-1}$ where $\chi(x) = x|x|$ corresponds to the cyclotomic character via local class field theory. We denote by $\kappa(\delta_1):=\delta_1'(1)$, the weight of $\delta_1$.

\section{Preliminaries} \label{sec:prelimz}

We start by recalling, in the relative case, some well-known constructions that will play a key role in our construction. 

\subsection{Dictionary of relative functional analysis}

Let us first set up some notation and definitions. 

\subsubsection{Relative Laurent series rings}

The theory of relative Robba rings has been expounded by Kedlaya-Liu in \cite{KedLiurelpa}. For $0 < r < s \leq \infty$ (with $r$ and $s$ rational, except possibly $s = \infty$), the relative Robba ring $\Robba_A$ is defined by setting, $$ \Robba_A^{[r, s]} = \Robba_{\qp}^{[r, s]} \widehat{\otimes}_{\qp} A; \;\;\; \Robba_A^{]0, s]} = \varprojlim_{0 < r < s} \Robba_A^{[r, s]}; \; \; \; \Robba_A = \varinjlim_{s > 0} \Robba_A^{]0, s]}, $$ where $\Robba_{\qp}^{[r, s]}$ is the usual Banach ring of analytic functions on the rigid analytic annulus in the variable $T$ with radii $r \leq v_p(T) \leq s$ with coefficients in $\qp$. The Banach ring $\Robba_A^{[r,s]}$ is equipped with valuation $v^{[r,s]}$ defined by:
\[
v^{[r,s]} = \underset{r \leq v_A(z) \leq s}{\mathrm{inf}} v_A(f(z)) = \mathrm{min} \left(\underset{k \in \Z}{\mathrm{inf}}(v_A(a_k) + rk),\underset{k \in \Z}{\mathrm{inf}}(v_A(a_k) + sk) \right)
\]
for $f = \sum_{k \in \Z} a_k T^k \in \Robba_A^{[r,s]}$.

This definition admits an interpretation in terms of rings of analytic functions over a rigid space: if $I = [r,s]$ and if $\mathbb{A}^{1,\mathrm{rig}}_{\qp} = \mathrm{Sp}(\qp \langle T \rangle)$ denotes the affine rigid line with parameter $T$, then, noting $B_I$ the admissible open subset of $\mathbb{A}^{1,\mathrm{rig}}_{\qp}$ defined by $v_p(T) \in I$, we have a natural isomorphism $$ \Robba^I_A \cong \mathcal{O}(\mathrm{Sp}(A) \times B_I). $$ We can also interpret these rings in terms of Laurent series (power series $\infty \in I$) with coefficients in $A$ in the usual way. For $s <r_1$ we have an $A$-linear ring endomorphism $\varphi: \Robba_A^{[r, s]} \to \Robba_A^{[r/p, s / p]}$, sending $T$ to $(1 + T)^p - 1$, inducing an action of the operator $\varphi$ over $\Robba_A$ and we also have a continuous action of the group $\Gamma$, commuting with that of $\varphi$, whose action is given by the formula $\sigma_a(T) = (1 + T)^a - 1$, $a \in \zpe$, over all rings defined above.

\begin{lemma} [Lemme 1.3 (v) \cite{chen2013}] \label{flatnessR}
For every interval $I \subseteq ]0, \infty]$, the ring $\Robba^I_A$ is a flat $A$-module. In particular, $\Robba_A$ is flat over $A$.
\end{lemma}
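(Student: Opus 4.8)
The idea is to reduce everything to the case of a closed bounded interval, where $\Robba^{[r,s]}_A$ is an orthonormalizable Banach $A$-module — equivalently a relative Tate algebra up to relabelling the variable — and hence flat by standard rigid geometry, and then to carry flatness through the colimits and limits out of which the general $\Robba^I_A$ is assembled. Concretely: $\Robba_A=\varinjlim_{s>0}\Robba^{]0,s]}_A$ is a filtered colimit, and a filtered colimit of flat modules is flat, so it suffices to treat one interval at a time; and every interval $I\subseteq\,]0,\infty]$ which is not closed-and-bounded is, by the definitions recalled in \S\ref{sec:prelimz}, a countable projective limit $\Robba^I_A=\varprojlim_n\Robba^{I_n}_A$ along restriction maps between closed-interval Robba rings (e.g.\ $\Robba^{]0,s]}_A=\varprojlim_n\Robba^{[r_n,s]}_A$ with $r_n\downarrow 0$), these restriction maps being injective with dense image because the Laurent polynomials are dense in every term.

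\textbf{Closed bounded intervals.} Fix $I=[r,s]$ ($s$ possibly $\infty$, in which case $\Robba^{[r,\infty]}_A$ is the ring of functions on a closed disc). Choosing a finite, hence faithfully flat, extension $A\to A':=A\otimes_{\qp}\qp(p^{1/N})$ clearing the denominators of $r$ and $s$, the radii become norms of elements of $A'$, and rescaling the monomials $T^{k}$ identifies $\Robba^{[r,s]}_{A'}=\Robba^{[r,s]}_A\otimes_A A'$ with an orthonormalizable Banach $A'$-module, i.e.\ with a relative Tate algebra $A'\langle T\rangle$ up to relabelling. Relative Tate algebras are faithfully flat over their affinoid base, so $\Robba^{[r,s]}_A\otimes_A A'$ is $A'$-flat, whence $\Robba^{[r,s]}_A$ is $A$-flat by faithfully flat descent of flatness. (Equivalently: $\Robba^{[r,s]}_{\qp}$ is a $\qp$-affinoid algebra, trivially flat over the field $\qp$, and flatness of an affinoid morphism is preserved under the base change along $\Sp A\to\Sp\qp$.)

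\textbf{The remaining intervals.} Write $B=\Robba^I_A=\varprojlim_n B_n$ with $B_n=\Robba^{I_n}_A$ as above. Since $A$ is Noetherian it suffices to check $\Tor^A_1(B,A/\mathfrak{a})=0$ for every ideal $\mathfrak{a}=(f_1,\dots,f_m)$, i.e.\ that the relation module $Z_\infty:=\ker(B^m\xrightarrow{(f_i)}B)$ coincides with $Z\cdot B$, where $Z:=\ker(A^m\xrightarrow{(f_i)}A)$ is finitely generated. Flatness of each $B_n$ gives $\ker(B_n^m\to B_n)=Z\cdot B_n=\mathrm{im}(B_n\otimes_AZ)$, and kernels commute with $\varprojlim$, so $Z_\infty=\varprojlim_n(Z\cdot B_n)$. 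Now each $Z\cdot B_n$ is a \emph{closed} submodule of $B_n^m$ — finitely generated submodules of a finite free module over the Noetherian Banach algebra $B_n$ are closed — hence a complete metrizable abelian group, and the transition maps $Z\cdot B_{n+1}\to Z\cdot B_n$ have dense image. A tower of complete metrizable abelian groups with dense-image transition maps has vanishing $\varprojlim^{1}$; applying this to the towers of finitely generated submodules attached to a finite presentation of $Z$ shows that $B\otimes_AZ\to\varprojlim_n(B_n\otimes_AZ)=Z_\infty$ is onto, i.e.\ $Z_\infty=Z\cdot B$. Hence $B$ is $A$-flat, and finally $\Robba_A=\varinjlim_s\Robba^{]0,s]}_A$ is $A$-flat as a filtered colimit.

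\textbf{Expected obstacle.} The closed case is formal modulo the classical flatness of relative Tate algebras over an affinoid base; the real content is the projective-limit step, and within it the vanishing of $\varprojlim^{1}$ for the towers $(Z\cdot B_n)_n$. That vanishing rests on two inputs — $A$ Noetherian (so the relation submodules are finitely presented, hence closed and complete at each finite level) and density of the Laurent polynomials in each $\Robba^{[r_n,s]}_A$ (so the transition maps have dense image) — and it is the one point I would expect to require genuine care; it is essentially the content of \cite[Lemme 1.3(v)]{chen2013} and of the analogous flatness statements in \cite{KedLiurelpa}.
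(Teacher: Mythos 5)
Your proof is correct, but note that the paper itself gives no argument for this lemma: it is quoted directly from \cite[Lemme 1.3(v)]{chen2013}, so there is no internal proof to compare against. What you wrote is essentially the standard argument behind that citation, and it is consistent with the one place where the paper does run this kind of reasoning itself, namely Lemma \ref{lem:kasd}, where flatness of $\Robba_A^-$ is obtained from the identification of a closed-interval piece with a completed direct sum $\widehat{\oplus}_i A e_i$ (your "orthonormalizable, hence flat" step, without the detour through $A\otimes_{\qp}\qp(p^{1/N})$, which is not needed since $r,s$ are already rational and potential orthonormalizability suffices over the Noetherian Banach ring $A$).

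One point of precision in the inverse-limit step: the $\varprojlim^1$ you actually need is not that of the tower $(Z\cdot B_n)_n$ but that of the syzygy tower. Presenting $Z$ by $A^{k'}\to A^k\to Z\to 0$ and tensoring with $B_n$, the relevant tower is $I_n:=\mathrm{im}(B_n^{k'}\to B_n^k)=\ker(B_n^k\to Z\otimes_A B_n)$; vanishing of $\varprojlim^1 I_n$ gives surjectivity of $B^k\to\varprojlim_n(Z\otimes_A B_n)=Z_\infty$ and hence $Z_\infty=Z\cdot B$. Your phrase "towers of finitely generated submodules attached to a finite presentation of $Z$" gestures at exactly this, and the two inputs you verified for $Z\cdot B_n$ transfer verbatim: $I_n$ is a finitely generated submodule of $B_n^k$ generated by elements of $A^k$, hence closed (so complete metrizable) since $B_n$ is a Noetherian Banach algebra, and the transition maps have dense image because $B_{n+1}\to B_n$ does and multiplication by the fixed generators is continuous. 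With that substitution made explicit, the argument is complete; the remaining ingredients (filtered colimits of flat modules are flat, left exactness of $\varprojlim$, topological Mittag--Leffler for countable towers of complete metrizable groups with dense-image transitions) are used correctly.
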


\subsubsection{Locally analytic functions and distributions} The Robba ring $\Robba_A$ is well interpreted in terms of distributions and locally analytic functions. Define $\Robba_A^+ := \Robba_A^{]0, \infty]}$ which is stable under the action of $\varphi$ and $\Gamma$ (equipped with the subspace topology), and note $\Robba^-_A := \Robba_A / \Robba^+_A$ (with the induced action of $\varphi$ and $\Gamma$ equipped with the quoitent topology). The algebra of distributions with values in $A$ is defined as\footnote{A priori this is different to Definition\ref{defaltdisalf} where the relative distribution algebra for a general locally $\qp$-analytic group is defined. Lemma \ref{distbasechange} says that these definitions are equivalent.} 
\begin{equation} \label{disrefja}
\mathscr{D}(\zp, A) := \mathscr{D}(\zp, \qp) \widehat{\otimes}_\qp A, 
\end{equation}
where the tensor product in \eqref{disrefja} is independent of the choice of injective or projective tensor product (as $\mathscr{D}(\zp, \qp)$ is Fr\'echet and $A$ is Banach). Let $\mathrm{LA}(\zp, A)$ be the space of locally analytic functions on $\zp$ with values in $A$. If $\mu \in \mathscr{D}(\zp, A)$, it's Amice transform is defined as \[ \mathscr{A}_\mu = \sum_{n \in \N} \int_\zp {x \choose n} T^n \mu(x) \in \Robba^+_A.\] Finally, for $f \in \Robba_A$, we define its Colmez transform as (for all $x \in \zp$) 
\[ 
\phi_f(x) = \mathrm{res}_0((1 + T)^{-x} f(T) \frac{dT}{1 + T}) = \mathrm{res}_{0}((1+T)^{-x}f dt), 
\] where for $f = \sum_{n \in \Z} a_n T^n$, we put $\mathrm{res}_0(f dT) = a_{-1}$ (as usual we set $t:= \log (1+T)$). We then have the following result due to Chenevier, cf. \cite[Proposition 2.8]{chen2013} (cf. also \cite[Lemma 2.1.19]{kedlaya2014}), generalizing those of Colmez, cf. \cite[Th\'eor\`eme 2.3]{colmez2015} (cf. also \cite{padfothste}):

\begin{prop} \label{amice}  \leavevmode
\begin{itemize}
\item The Amice transform $\mu \mapsto \mathscr{A}_\mu$ induces a topological isomorphism $\mathscr{D}(\zp, A) \cong \Robba_A^+$. 
\item The Colmez transform $f \mapsto \phi_f(x)$ induces a topological isomorphism $ \Robba_A^- \cong \mathrm{LA}(\zp, A) \otimes \chi^{-1}$.
\item If $\mu \in \mathscr{D}(\zp, A)$ and $f \in \Robba_A$, then $\int_\zp \phi_f \cdot \mu = \mathrm{res}_0(\sigma_{-1}(\mathscr{A}_\mu) f \frac{dT}{1 + T})$.
\item We have a $(\varphi, \Gamma)$-equivariant short exact sequence
\[ 0 \to \mathscr{D}(\zp, A) \to \Robba_A \to \mathrm{LA}(\zp, A) \otimes \chi^{-1} \to 0. \]
\end{itemize}
\end{prop}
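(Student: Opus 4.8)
The plan is to deduce the whole proposition from the single input result of Chenevier (Proposition 2.8 of \cite{chen2013}), reorganizing it into the four statements we want. First I would recall the setup: over $\qp$ (the field case) all four assertions are classical results of Colmez (\cite[Th\'eor\`eme 2.3]{colmez2015}, see also \cite{padfothste}), so the content here is the base change to the affinoid algebra $A$. I would start from the identification $\mathscr{D}(\zp,\qp) \cong \Robba_{\qp}^+$ via the Amice transform, tensor with $A$ using $\widehat{\otimes}_\qp$, and invoke the definitions $\mathscr{D}(\zp,A) := \mathscr{D}(\zp,\qp)\widehat{\otimes}_\qp A$ and $\Robba_A^+ := \Robba_{\qp}^{]0,\infty]}\widehat{\otimes}_\qp A$. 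Since $\mathscr{D}(\zp,\qp)$ is Fr\'echet and $A$ is Banach, the completed tensor product is unambiguous and the topological isomorphism over $\qp$ tensors up to a topological isomorphism $\mathscr{D}(\zp,A) \cong \Robba_A^+$, which is manifestly $(\varphi,\Gamma)$-equivariant since those operators act on the first factor. This gives the first bullet.

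Next I would treat the Colmez transform. The key is that, over $\qp$, the map $f \mapsto \phi_f$ kills $\Robba_{\qp}^+$ exactly (this is where the residue-at-$0$ formula is used, together with the fact that negative-index coefficients are what survive) and identifies $\Robba_{\qp}^- = \Robba_{\qp}/\Robba_{\qp}^+$ topologically with $\mathrm{LA}(\zp,\qp)\otimes\chi^{-1}$. Tensoring this short exact sequence $0 \to \Robba_{\qp}^+ \to \Robba_{\qp} \to \mathrm{LA}(\zp,\qp)\otimes\chi^{-1}\to 0$ with $A$ over $\qp$ requires knowing that $\widehat{\otimes}_\qp A$ is exact on the relevant spaces; here I would appeal to the fact that $A$ is (topologically) flat in the appropriate sense — more concretely, these are nuclear Fr\'echet or $LF$ spaces and the sequence is (topologically) split or strict, so completed tensoring preserves exactness — yielding the short exact sequence of the fourth bullet and, simultaneously, the identification $\Robba_A^- \cong \mathrm{LA}(\zp,A)\otimes\chi^{-1}$ of the second bullet (using $\mathrm{LA}(\zp,A) = \mathrm{LA}(\zp,\qp)\widehat{\otimes}_\qp A$, which itself needs a word of justification, e.g. via the Mahler/orthonormal basis description of $\mathrm{LA}(\zp,\qp)$ as an $A$-regular space in the sense of the appendix). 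The $(\varphi,\Gamma)$-equivariance of the sequence is inherited from the $\qp$-level statement.

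Finally, for the third bullet — the integration formula $\int_\zp \phi_f\cdot\mu = \mathrm{res}_0(\sigma_{-1}(\mathscr{A}_\mu)f\,\tfrac{dT}{1+T})$ — I would note that both sides are separately continuous $A$-bilinear functions of $(\mu,f) \in \mathscr{D}(\zp,A)\times\Robba_A$, and that they agree when $\mu$ and $f$ come from the $\qp$-level (i.e. on the images of $\mathscr{D}(\zp,\qp)\otimes A$ and $\Robba_\qp\otimes A$), which is precisely Colmez's formula; since those subspaces are dense, the formula extends by continuity. I expect the main obstacle to be the exactness of $-\widehat{\otimes}_\qp A$ on the sequence $0 \to \Robba_{\qp}^+ \to \Robba_{\qp} \to \mathrm{LA}(\zp,\qp)\otimes\chi^{-1}\to 0$, i.e. verifying that the relevant maps are strict (topological) embeddings/quotients and that completed tensoring with the Banach algebra $A$ stays exact — this is the one place where the passage from a field to an affinoid algebra is not purely formal, and where one must invoke the structure theory (nuclearity, $A$-regularity, or an explicit basis) rather than just cite the $\qp$-case. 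Everything else is bookkeeping built on Chenevier's proposition and Colmez's original arguments.
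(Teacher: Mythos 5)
The paper itself offers no argument for this proposition: it is quoted directly from Chenevier (\cite[Proposition 2.8]{chen2013}, with \cite[Lemma 2.1.19]{kedlaya2014} for the duality statements), so your proposal has to be judged on its own merits. Your overall strategy (completed base change of Colmez's $\qp$-statements along $-\widehat{\otimes}_{\qp}A$) is indeed the natural one, and the first and third bullets go through as you say: $\mathscr{D}(\zp,A)$ is \emph{defined} as $\mathscr{D}(\zp,\qp)\widehat{\otimes}_{\qp}A$, the Fr\'echet-times-Banach tensor product is unambiguous, and the residue formula follows from density of $\mathscr{D}(\zp,\qp)\otimes A$ and $\Robba_{\qp}\otimes A$ together with separate continuity of both pairings. (Even the identification $\Robba_A^{+}\cong\Robba_{\qp}^{+}\widehat{\otimes}_{\qp}A$, and likewise $\Robba_A\cong\Robba_{\qp}\widehat{\otimes}_{\qp}A$, is not purely formal, since $\widehat{\otimes}_{\qp}A$ must be commuted with the Fr\'echet/LF limits; this is \cite[Lemme 1.3]{chen2013}, cf.\ Lemma \ref{lem:magdimo}.)

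The genuine gap is in the input you use for the second and fourth bullets, namely $\mathrm{LA}(\zp,A)\cong\mathrm{LA}(\zp,\qp)\widehat{\otimes}_{\qp}A$, which you propose to dispatch via ``nuclearity, $A$-regularity, or an explicit basis''. In the framework of this paper that identification is equivalent to the completeness (equivalently $A$-reflexivity) of $\mathrm{LA}(\zp,A)$ (Remark \ref{rem:conrelg}), which is an open question for groups of dimension $\geq 2$ (Conjecture \ref{conj:compLA}) and which, for $H=\zp$, is established precisely by identifying $\mathrm{LA}(\zp,A)$ with $\Robba_A^{-}$ via the Colmez transform — i.e.\ by the very bullet you want to deduce from it. $A$-regularity (Lemma \ref{lem:LAgoodfu}) only controls bounded subsets; it gives neither completeness of the LB-space $\mathrm{LA}(\zp,A)=\varinjlim_h\mathrm{LA}_h(\zp,A)$ nor commutation of $\widehat{\otimes}_{\qp}A$ with this inductive limit (note the transition maps are no longer compact once tensored with the infinite-dimensional $A$), and nuclearity is lost for the same reason. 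The way to close the gap, and what the cited proof actually does, is to run the Colmez transform level by level: identify the Banach space of Laurent tails in $\Robba_A^{]0,s]}$ (equivalently $\Robba_A^{]0,s]}/\Robba_A^{]0,\infty]}$, or the summand $\Robba_A^{\sim}$ of Example \ref{ex:afelrob}) with $\mathrm{LA}_h(\zp,A)$ by the explicit Amice-type estimates with coefficients in $A$, check compatibility with the transition maps, and pass to the locally convex inductive limit. Then bullet two is proved directly, bullet four follows from the topological splitting $\Robba_A=\Robba_A^{+}\oplus\Robba_A^{\sim}$, and the completeness of $\mathrm{LA}(\zp,A)$ together with $\mathrm{LA}(\zp,A)\cong\mathrm{LA}(\zp,\qp)\widehat{\otimes}_{\qp}A$ come out as corollaries rather than being assumed as inputs.
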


The Robba ring $\mathscr{R}_A$ is equipped with a left inverse of $\varphi$ constructed as follows: For $s < r_1$ the map $\oplus_{i = 0}^{p - 1} \Robba_A^{[r, s]} \to \Robba_A^{[r/p, s/p]}$ given by $(f_i)_{i = 0, \hdots, p - 1} \mapsto \sum_{i = 0}^{p - 1} (1 + T)^i \varphi(f_i)$ is a topological isomorphism and allows us to define $\psi: \Robba_A^{[r/p, s/p]} \to \Robba_A^{[r, s]}$ by $\varphi \circ \psi = p^{-1} \mathrm{Tr}_{\Robba_A^{[r/p, s/p]} / \varphi(\Robba_A^{[r, s]})}$. We also note $\psi: \Robba_A \to \Robba_A$ the induced operator, which is continuous, surjective and is a left inverse of $\varphi$.

\subsubsection{Multiplication by a function}

If $\mu \in \mathscr{D}(\zp, A)$ and $\alpha \in \mathrm{LA}(\zp, A)$, we define the distribution $\alpha \mu$ by the formula \[ \int_\zp \phi \cdot \alpha \mu := \int_\zp \alpha \phi \cdot \mu. \] If $a \in \zp, n \in \N$ and if we take $\alpha = \mathbf{1}_{a + p^n \zp}$ the characteristic function of the compact open $a + p^n \zp \subseteq \zp$, then we note $\mathrm{Res}_{a + p^n \zp}$ the multiplication by $\alpha$. Via the Amice transform this translates to
\[
\mathscr{A}_{\mathrm{Res}_{a + p^n \zp}(\mu)} = \mathrm{Res}_{a + p^n \zp}\mathscr{A}_{\mu},
\]
where the restriction map on the RHS translates to $\Res_{a + p^n \zp} = (1 + T)^a \varphi^n \circ \psi^n (1 + T)^{-a}$, cf. \cite[\S 2.1.1]{colmez2015}.

Since $\psi$ is surjective, we have a $\Gamma$-equivariant exact sequence \[ 0 \to \mathscr{D}(\zpe, A) \to \Robba_A^{\psi = 0} \to \mathrm{LA}(\zpe, A) \otimes \chi^{-1} \to 0. \]

\begin{lemm}
If $\mu \in \mathscr{D}(\zp, A)$ and $f \in \Robba_A$ then we have \[ \psi (\mathscr{A}_\mu) = \mathscr{A}_{\psi(\mu)} \text{ and } \phi_{\psi(f)} = \psi(\phi_f),\] where $\psi(\mu)$ is given by $\int_\zp \phi \cdot \psi(\mu) := \int_{p \zp} \phi(x/p) \cdot \mu$, and for any $\phi \in \mathrm{LA}(\zp, A)$, we set $\psi (\phi) (x) := \phi(px)$
\end{lemm}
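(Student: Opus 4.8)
The plan is to verify the two identities directly from the definitions, using the Amice and Colmez transforms of Proposition \ref{amice} together with the explicit description of $\psi$. Recall that $\psi$ on the side of distributions is characterised by $\int_\zp \phi\cdot\psi(\mu) = \int_{p\zp}\phi(x/p)\cdot\mu$, and on locally analytic functions by $\psi(\phi)(x)=\phi(px)$; on $\Robba_A$ it is the operator defined via $\varphi\circ\psi = p^{-1}\mathrm{Tr}_{\Robba_A^{[r/p,s/p]}/\varphi(\Robba_A^{[r,s]})}$. The key bookkeeping fact, already recorded in the excerpt as $\Res_{p\zp} = \varphi\circ\psi$ (the $a=0$, $n=1$ case of $\Res_{a+p^n\zp}=(1+T)^a\varphi^n\psi^n(1+T)^{-a}$), is what links the operator $\psi$ on $\Robba_A$ to the restriction $\Res_{p\zp}$ on distributions, and this compatibility $\mathscr{A}_{\Res_{p\zp}\mu}=\Res_{p\zp}\mathscr{A}_\mu$ is exactly what was stated just above the lemma.

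For the first identity $\psi(\mathscr{A}_\mu)=\mathscr{A}_{\psi(\mu)}$, I would argue as follows. Since $\varphi$ is injective on $\Robba_A$, it suffices to check $\varphi(\psi(\mathscr{A}_\mu)) = \varphi(\mathscr{A}_{\psi(\mu)})$. The left side is $\Res_{p\zp}\mathscr{A}_\mu = \mathscr{A}_{\Res_{p\zp}\mu}$ by the displayed compatibility. The right side, by $\varphi$-equivariance of the Amice transform (which follows from Proposition \ref{amice} together with the fact that $\varphi$ corresponds under Amice to pushforward of measures along multiplication by $p$), equals $\mathscr{A}_{p_*\psi(\mu)}$ where $p_*$ denotes pushforward along $x\mapsto px$. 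So I am reduced to the measure-theoretic identity $\Res_{p\zp}\mu = p_*(\psi(\mu))$, which one checks by pairing against an arbitrary $\phi\in\mathrm{LA}(\zp,A)$: $\int_\zp \phi\cdot p_*\psi(\mu) = \int_\zp\phi(px)\cdot\psi(\mu) = \int_{p\zp}\phi(x)\cdot\mu = \int_\zp\phi\cdot\Res_{p\zp}\mu$, where the middle step uses the defining formula for $\psi$ on distributions. Then injectivity of the Amice transform gives the claim. (One should double-check the normalisation so that $\varphi$ really is pushforward along multiplication by $p$ and not its inverse; this is where one must be careful, but it is forced by $\varphi(T)=(1+T)^p-1$, i.e. $\varphi$ dualises $x\mapsto px$ on $\zp$.)

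For the second identity $\phi_{\psi(f)} = \psi(\phi_f)$, i.e. $\phi_{\psi(f)}(x) = \phi_f(px)$ for all $x\in\zp$, I would compute directly from the residue formula $\phi_f(x) = \mathrm{res}_0((1+T)^{-x} f\,\frac{dT}{1+T})$. One needs the residue identity $\mathrm{res}_0(\psi(g)\,\frac{dT}{1+T}) = \mathrm{res}_0(g\,\frac{dT}{1+T})$ — equivalently, up to the logarithmic form, $\psi$ preserves residues — which follows from the trace description of $\psi$ (the residue of the trace equals the residue, after accounting for the $p^{-1}$ and the ramification of $\varphi$), and more generally the twisted version: for $x\in\zp$, $\psi((1+T)^{-px}g) = (1+T)^{-x}\psi(g)$ up to the appropriate adjustment coming from $\varphi((1+T)^{-x})=(1+T)^{-px}$, which is the projection formula $\psi(\varphi(h)\cdot g) = h\cdot\psi(g)$. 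Applying this with $h=(1+T)^{-x}$, one gets
\[
\phi_{\psi(f)}(x) = \mathrm{res}_0\!\left((1+T)^{-x}\psi(f)\tfrac{dT}{1+T}\right) = \mathrm{res}_0\!\left(\psi\big((1+T)^{-px}f\big)\tfrac{dT}{1+T}\right) = \mathrm{res}_0\!\left((1+T)^{-px}f\tfrac{dT}{1+T}\right) = \phi_f(px),
\]
which is $\psi(\phi_f)(x)$ by definition. Here the middle equality uses the projection formula and the last uses that $\psi$ preserves the residue of a logarithmic differential; a clean way to see the latter is via the adjunction $\int_\zp\phi\cdot\psi(\mu) = \int_{p\zp}\phi(x/p)\mu$ combined with the third bullet of Proposition \ref{amice} relating $\int\phi_f\cdot\mu$ to a residue.

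The main obstacle I anticipate is not conceptual but a matter of getting every normalisation right: the precise relationship between $\psi$ on $\Robba_A$ (trace-theoretic definition), $\psi$ on distributions (the $x\mapsto x/p$ pushforward formula), $\psi$ on functions ($\phi\mapsto\phi(p\cdot)$), and the direction in which $\varphi$ dualises multiplication-by-$p$ on $\zp$, so that all the factors of $p$ and all the $(1+T)^{\pm x}$ twists cancel as claimed. Once the $a=0,n=1$ case $\Res_{p\zp}=\varphi\circ\psi$ (quoted from \cite[\S 2.1.1]{colmez2015}) and the projection formula $\psi(\varphi(h)g)=h\psi(g)$ are in hand, both identities fall out; the rest is careful bookkeeping against test functions and distributions, which I would not spell out in full.
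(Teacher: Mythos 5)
Your proposal is correct, and it is essentially the argument the paper has in mind: the paper's own ``proof'' is just the remark that Colmez's proof of \cite[Proposition 2.2]{colmez2015} carries over verbatim to $A$-coefficients, and that proof is exactly your second computation (the projection formula $\psi(\varphi(h)g)=h\psi(g)$ applied with $h=(1+T)^{-x}$, together with the residue invariance $\mathrm{res}_0(\psi(g)\,dt)=\mathrm{res}_0(g\,dt)$, both of which are formal consequences of $\varphi\circ\psi=p^{-1}\mathrm{Tr}$ and hold with $A$-coefficients since nothing about the coefficient ring intervenes), plus the analogous direct check on Amice transforms.

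One small logical caveat on your first identity: you deduce $\psi(\mathscr{A}_\mu)=\mathscr{A}_{\psi(\mu)}$ from the displayed compatibility $\mathscr{A}_{\Res_{p\zp}\mu}=\varphi\psi(\mathscr{A}_\mu)$, but that compatibility (stated just above the lemma, with a citation rather than a proof) is essentially equivalent to the identity you are proving, so as written this half of your argument risks being circular rather than independent. It is easily repaired without changing the spirit of the proof: write $\mu=\sum_{i=0}^{p-1}\mathbf{1}_{i+p\zp}\mu$ and observe that $\mathscr{A}_{\mathbf{1}_{i+p\zp}\mu}=(1+T)^i\varphi(\mathscr{A}_{\mu_i})$, where $\mu_i$ is the pushforward of $\mathbf{1}_{i+p\zp}\mu$ under $x\mapsto (x-i)/p$; by uniqueness of the decomposition $f=\sum_i(1+T)^i\varphi(f_i)$ defining $\psi$, one gets $\psi(\mathscr{A}_\mu)=\mathscr{A}_{\mu_0}=\mathscr{A}_{\psi(\mu)}$ directly, with no appeal to the $\Res$ statement. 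Your verification that $\Res_{p\zp}\mu=p_*\psi(\mu)$ and the $\varphi$-equivariance of the Amice transform are correct and are exactly the trivial manipulations needed; only the source of the input fact should be shifted as above.
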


\begin{proof}
In the case where $A$ is a finite extension of $\qp$, this is \cite[Proposition 2.2]{colmez2015}. In our setup the same proof carries over.


%

\end{proof}

The following corollary is now immediate.

\begin{coro}
If $\mu \in \mathscr{D}(\zp, A)$ (resp. $\phi \in \mathrm{LA}(\zp, A)$), the condition $\psi(\mu) = 0$ (resp. $\psi(\phi) = 0$) translates into $\mu$ (resp. $\phi$) being supported on $\zpe$.
\end{coro}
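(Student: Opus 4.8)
The plan is to unwind the definitions and treat the two cases --- distributions and functions --- using the clopen decomposition $\zp = \zpe \sqcup p\zp$. The preliminary observation I would record is that, since $\zpe$ is open \emph{and} closed in $\zp$, saying that $\mu \in \mathscr{D}(\zp,A)$ (resp. $\phi \in \mathrm{LA}(\zp,A)$) is supported on $\zpe$ is literally the same as saying $\Res_{p\zp}(\mu) = \mathbf{1}_{p\zp}\mu = 0$ (resp. that $\phi$ vanishes on $p\zp$): no boundary subtleties arise precisely because the subset is clopen. Thus the corollary reduces to comparing the vanishing loci of $\psi$ and of $\Res_{p\zp}$.

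For a function $\phi \in \mathrm{LA}(\zp,A)$ this is immediate: the preceding lemma gives $\psi(\phi)(x) = \phi(px)$, and as $x$ runs over $\zp$ the point $px$ runs over all of $p\zp$, so $\psi(\phi) = 0$ if and only if $\phi$ vanishes on $p\zp$, i.e. $\Supp(\phi) \subseteq \zpe$. For a distribution $\mu \in \mathscr{D}(\zp,A)$ I would use the formula from the preceding lemma, $\int_\zp \phi \cdot \psi(\mu) = \int_{p\zp} \phi(x/p)\,\mu(x)$, together with the fact that $\phi \mapsto \big(x \mapsto \phi(x/p)\big)$ is a bijection $\mathrm{LA}(\zp,A) \xrightarrow{\sim} \mathrm{LA}(p\zp,A)$; hence $\psi(\mu) = 0$ if and only if $\mu$ pairs to zero against every locally analytic function on $p\zp$, that is, if and only if $\Res_{p\zp}(\mu) = 0$. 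Equivalently --- and this is the route that exhibits the statement as a genuine corollary of the displayed identity $\psi(\mathscr{A}_\mu) = \mathscr{A}_{\psi(\mu)}$ --- one transports the computation to $\Robba_A^+$ via the Amice isomorphism of Proposition \ref{amice}: there $\Res_{p\zp} = \varphi \circ \psi$, and since $\varphi$ is injective on $\Robba_A$, the vanishing of $\psi(\mathscr{A}_\mu)$ is equivalent to that of $\varphi\psi(\mathscr{A}_\mu) = \Res_{p\zp}(\mathscr{A}_\mu) = \mathscr{A}_{\Res_{p\zp}(\mu)}$, hence to $\Res_{p\zp}(\mu) = 0$ since the Amice transform is an isomorphism.

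There is essentially no obstacle here: the only point needing a word of justification is the translation between ``$\Supp \subseteq \zpe$'' and ``$\Res_{p\zp} = 0$'', which is settled in one line by the clopenness of $\zpe$ in $\zp$, and the injectivity of $\varphi$ on $\Robba_A$ if one takes the Amice-transform route, which is already part of the dictionary recalled above (it is the degree-zero component of the isomorphism $\bigoplus_{i=0}^{p-1}\Robba_A \xrightarrow{\sim} \Robba_A$). Everything else is a formal manipulation of the preceding lemma.
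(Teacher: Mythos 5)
Your argument is correct and is exactly the unwinding the paper has in mind: the Corollary is stated as an immediate consequence of the preceding Lemma, namely of the formulas $\psi(\phi)(x)=\phi(px)$ and $\int_{\zp}\phi\cdot\psi(\mu)=\int_{p\zp}\phi(x/p)\cdot\mu$, combined with the clopen decomposition $\zp=\zpe\sqcup p\zp$. Your alternative route via the Amice transform and $\Res_{p\zp}=\varphi\circ\psi$ is also consistent with the paper's dictionary, but adds nothing essential beyond the direct verification.
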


\subsubsection{The differential operator $\partial$}

We define an $A$-linear differential operator $\partial: \Robba_A \to \Robba_A$ by the formula \[ \partial f := (1 + T) \frac{df(T)}{dT}. \] This operator plays an important role in the subsequent constructions that we will consider.

\begin{lemma} \leavevmode
\begin{itemize}
\item If $f \in \Robba_A$ then $\phi_{\partial f}(x) = x \phi_f(x)$.
\item If $\mu \in \mathscr{D}(\zp, A)$, then $\partial \mathscr{A}_{\mu} = \mathscr{A}_{x \mu}$.
\item $\partial$ is bijective on $\Robba_A^{\psi = 0}$.
\end{itemize}
\end{lemma}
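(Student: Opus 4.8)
The plan is to handle the first two items by direct formal manipulation — the computations over a field in \cite{colmez2015} carry over verbatim once the relative Amice and Colmez transforms of Proposition \ref{amice} are available — and then to deduce the third from the first two using the $\psi=0$ exact sequences recorded above.

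For the first item, I would unwind $\partial f = (1+T)\frac{df}{dT}$ to get
\[
\phi_{\partial f}(x) = \mathrm{res}_0\big((1+T)^{-x}(1+T)f'(T)\tfrac{dT}{1+T}\big) = \mathrm{res}_0\big((1+T)^{-x}f'(T)\,dT\big),
\]
and then integrate by parts inside $\Robba_A$: the coefficient of $T^{-1}$ in any derivative vanishes, so $\mathrm{res}_0(g'\,dT)=0$, whence $\mathrm{res}_0(f'h\,dT) = -\mathrm{res}_0(fh'\,dT)$; taking $h = (1+T)^{-x}$ and using $\frac{d}{dT}(1+T)^{-x} = -x(1+T)^{-x-1}$ gives $\phi_{\partial f}(x) = x\,\mathrm{res}_0\big((1+T)^{-x}f\tfrac{dT}{1+T}\big) = x\phi_f(x)$. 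The only bookkeeping is that all series in play lie in $\Robba_A$, which is exactly the content of Proposition \ref{amice}.

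For the second item, I would write $\mathscr{A}_\mu = \int_{\zp}(1+T)^x\mu(x)$ (with $(1+T)^x = \sum_n\binom{x}{n}T^n$) and differentiate under the integral sign: $\partial\mathscr{A}_\mu = (1+T)\frac{d}{dT}\int_{\zp}(1+T)^x\mu = \int_{\zp}x(1+T)^x\mu = \mathscr{A}_{x\mu}$, the last step being the definition of $x\mu$. Equivalently, and to make convergence manifest, one runs the same computation coefficientwise via $\partial(T^n) = nT^{n-1}+nT^n$ and the identity $x\binom{x}{n} = (n+1)\binom{x}{n+1}+n\binom{x}{n}$; this is purely formal and identical to the case $A=L$.

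For the third item, which I expect to be the actual point, I would proceed as follows. First, $\partial$ stabilizes $\Robba_A^{\psi=0}$: from $\varphi(t)=pt$ one gets $\partial\varphi = p\varphi\partial$, and feeding this into the decomposition $\Robba_A = \bigoplus_{i=0}^{p-1}(1+T)^i\varphi(\Robba_A)$ yields $\psi\partial = p\,\partial\psi$, so $\psi f = 0 \Rightarrow \psi(\partial f)=0$. Now look at the $\Gamma$-equivariant exact sequence
\[
0 \to \mathscr{D}(\zpe,A) \to \Robba_A^{\psi=0} \to \mathrm{LA}(\zpe,A)\otimes\chi^{-1} \to 0
\]
recorded above. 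By the first two items its maps are $\partial$-equivariant, and on the two outer terms $\partial$ is multiplication by the function $x$ — on distributions supported on $\zpe$, resp. on locally analytic functions on $\zpe$. Since $x^{-1}\in\mathrm{LA}(\zpe,A)$, multiplication by $x$ is a topological automorphism of $\mathscr{D}(\zpe,A)$ and of $\mathrm{LA}(\zpe,A)$, with continuous inverse given by multiplication by $x^{-1}$. The snake lemma then forces $\partial$ to be a bijection on $\Robba_A^{\psi=0}$, and transporting the explicit inverse ``multiplication by $x^{-1}$'' through the two identifications (or invoking the open mapping theorem for these LF-type $A$-modules) shows it is a topological isomorphism. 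The main obstacle here is not any single hard estimate but making sure this $\psi=0$ sequence is genuinely $\partial$-equivariant as a diagram of topological $A$-modules and that the inverse is $A$-continuous — and both of these are precisely what the first two items furnish.
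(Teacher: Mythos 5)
Your proof is correct. The paper gives no details here—it simply cites \cite[Proposition 2.6 and Lemme 2.7]{colmez2015} and notes that the argument carries over to coefficients in $A$—and your write-up is essentially that argument: the first two identities are the formal residue/Mahler computations for the Amice and Colmez transforms, and bijectivity of $\partial$ on $\Robba_A^{\psi=0}$ follows because, through the $\psi=0$ exact sequence, $\partial$ is identified with multiplication by $x$ on $\mathscr{D}(\zpe,A)$ and on $\mathrm{LA}(\zpe,A)$, which is invertible since $x^{-1}\in\mathrm{LA}(\zpe,A)$ (note the lemma only asserts bijectivity, so your parenthetical appeal to an open mapping theorem is not needed).
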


\begin{proof}
In the case when $A$ is a finite extension of $\qp$, this is \cite[Proposition 2.6 and Lemme 2.7]{colmez2015}. In our setup the same proof carries over.

\end{proof}

\subsubsection{$\qp$-Analytic sheaves and relative $(\varphi, \Gamma)$-modules}

A crucial notion developed by Colmez is that of an analytic sheaf. This plays a greater role in the study of $(\varphi, \Gamma)$-modules for Lubin-Tate extensions (in the sense of \cite{KisRenLtgrp}) associated to a finite extension $F \not= \qp$. In analogy with\cite[Defintion 1.6]{colmez2015}, we define:

\begin{defi}
Let $H$ be a locally $\qp$-analytic semi-group and $X$ an $H$-space (totally disconnected, compact space on which $H$ acts by continuous endomorphisms). An $H$-sheaf $\mathscr{M}$ over $X$ is the datum:
\begin{enumerate}
\item For every compact open $U \subset X$, a topological $A$-module $\mathscr{M} \boxtimes U$, with  $\mathscr{M} \boxtimes \emptyset = 0$
\item For each $U \subset V$ of compact opens, there are continuous $A$-linear restriction maps:
$$\Res^{V}_{U} \colon \mathscr{M} \boxtimes V \rightarrow \mathscr{M} \boxtimes U,$$
such that if $U = \cup_{i=1}^{n}U_i$ and $s_i \in \mathscr{M} \boxtimes U_i$ for $1 \leq i \leq n$, such that
$$\Res^{U_i}_{U_i \cap U_j} s_i = \Res^{U_j}_{U_i \cap U_j} s_j,$$
then there exists a unique $s \in \mathscr{M} \boxtimes U$, such that $\Res^{U}_{U_i} s = s_i$  for all $i$.
\item There are continuous $A$-linear isomorphisms: 
$$g_U \colon \mathscr{M} \boxtimes U \cong \mathscr{M} \boxtimes gU$$
for every $g \in H$ and $U$ compact open, satisfying the cocycle condition, $g_{hU} \circ h_U = (gh)_U$ for every $g,h \in H$ and $U$ compact open. Moreover for every compact open $U$, the morphism $g \mapsto g_{U}$ is a continuous morphism of the stabiliser $H_{U}$ (of $U$) to $\mathrm{Hom}_{A, \mathrm{cont}}(\mathscr{M} \boxtimes U)$.  
\end{enumerate}
\end{defi}

Since we will be primarily interested in attaching $H$-sheaves to $(\varphi,\Gamma)$-modules, we have the following definition, cf. \cite[Definition 2.2.12]{kedlaya2014}.

\begin{defi}\label{def:phigammodde}
Let $r \in (0,r_1)$. A $\varphi$-module over $\Robba_A^{]0,r]}$ is a finite projective $\Robba_A^{]0,r]}$-module $M^{]0,r]}$ equipped with an isomorphism
\[
M^{]0,r]} \otimes_{\Robba_A^{]0,r]},\varphi} \Robba_A^{]0,r/p]} \cong M^{]0,r]} \otimes_{\Robba_A^{]0,r]}} \Robba_A^{]0,r/p]}
\]
A $(\varphi,\Gamma)$-module over $\Robba_A^{]0,r]}$ is $\varphi$-module $M^{]0,r]}$ over $\Robba_A^{]0,r]}$ equipped with a commuting semilinear continuous action of $\Gamma$. A $(\varphi,\Gamma)$-module over $\Robba_A$ is the base change to $\Robba_A$ of a $(\varphi, \Gamma)$-module over $\Robba_A^{]0,r]}$ for some $r$. Let $\Phi\Gamma(\Robba_A)$ denote the category of $(\varphi, \Gamma)$-modules over $\Robba_A$. Morphisms are $\Robba_A$-linear morphisms commuting with the actions of $\varphi$ and $\Gamma$. 
\end{defi}

In order to equip an $H$-sheaf with an action of a Lie algebra (so that one can perform explicit calculations), the following definition beckons.

\begin{defi}\label{deflasheaf}
For $(H,X) \in \{ (P^{+},\zp),(G,\P^1) \}$, we say that an $H$-sheaf $\mathscr{M}$ over $X$ is $\qp$-analytic if for all open compact $U \subset X$, $\mathscr{M} \boxtimes U$ is a locally convex $A$-module of $A$-LF-type (cf. Definition \ref{def:frech}) and a continuous $\mathscr{D}(K,A)$-module for all open compact subgroups $K \subset H$, stabilizing $U$.  
\end{defi}

The point of Definition \ref{deflasheaf} is that a $(\varphi,\Gamma)$-module $\Delta$ over $\mathscr{R}_A$ naturally provides a $\qp$-analytic $P^+$-sheaf over $\zp$, which codifies its $(\varphi, \Gamma)$-structure, cf. \cite[\S 1.3.3]{colmez2015}. For $z \in \Delta$ one sets
\[
\begin{pmatrix} p^{k}a & b \\ 0 & 1 \end{pmatrix} \cdot z := (1+T)^b \varphi^k \circ \sigma_a(z) \text{ if } k \in \mathbf{N}, a\in \zpe, b\in \zp.
\]
If $U$ is an open compact of $\zp$, we can write $U$ as a finite disjoint union $\coprod_{i \in I} i + p^n\zp$ for large enough $n$ and we define $\mathrm{Res}_{U}$ by the formula
\[
\mathrm{Res}_{U} = \sum_{i \in I} \mathrm{Res}_{i + p^n\zp}
\]
where we set 
\[
\mathrm{Res}_{i +p^n\zp} = \begin{pmatrix}1 & i \\ 0 & 1 \end{pmatrix} \circ \varphi^n \circ \psi^n \circ \begin{pmatrix}1 & -i \\ 0 & 1 \end{pmatrix}.
\]
This turns out to be independent of choice as 
$$z = \sum_{i \text{ } \mathrm{mod}\text{ }p} \mathrm{Res}_{i + p\zp}z.$$
One then sets $\Delta \boxtimes U$ to be the image of $\Res_U$. The aim is to show that for a trianguline $(\varphi,\Gamma)$-module over $\mathscr{R}_A$, we can extend the corresponding $P^+$-sheaf over $\zp$ to a $G$-sheaf over $\P^1$. Moreover the global sections of the latter will cut out the locally analytic $G$-representation in $A$-modules that we are attempting to attach to $\Delta$. 

We have the associated definition for $(\varphi,\Gamma)$-module over $\mathscr{R}_A$.

\begin{defi}
A $(\varphi,\Gamma)$-module over $\mathscr{R}_A$ is said to be analytic if its associated $P^+$-sheaf over $\mathbb{Z}_p$ is analytic.  
\end{defi} 

\begin{rema} \label{analytic}
Every $(\varphi,\Gamma)$-module over $\mathscr{R}_A$ is $\qp$-analytic (as the base field is $\qp$), cf. \cite[Lemme 4.1]{Berpaddif} or \cite[Lemma 2.2.14(3)]{kedlaya2014}, for why the action of $\Gamma$ is locally analytic.
\end{rema}

\subsubsection{Multiplication by a character on $\Robba_A$}

Let $N \geq 0$. For $f \in \Robba_A \boxtimes \zpe = \Robba_A^{\psi = 0}$ and $i \in \zpe$, we write $f_i = \psi^N (1 + T)^{-i} f$ so that $$ f = \sum_{i \in (\Z / p^N \Z)^\times} (1 + T)^i \varphi^N f_i.$$ If $k \geq 0$, by the Leibnitz rule we can write \[ \partial^k f = \partial^k \left(\sum_{i \in (\Z / p^N \Z)^\times} (1 + T)^i \varphi^N(f_i) \right) = \sum_{i \in (\Z / p^N \Z)^\times} \sum_{j = 0}^k {k \choose j} i^{k-j} (1 + T)^i p^{Nj} \varphi^N (\partial^j f_i). \] This formula suggests the following proposition, which is a relative version of \cite[Proposition 2.9]{colmez2015}: 

\begin{prop} \label{cor:stup}
 If $\delta: \zpe \to A^\times$ is a locally analytic character and $f \in \Robba_A^{\psi = 0}$, the expression 
\[ \sum_{i \in \zpe (\text{mod }p^N)} \sum_{j = 0}^{+\infty} {\kappa(\delta) \choose j} \delta(i) i^{-j} (1+T)^i p^{Nj} \varphi^N(\partial^j f_i), \] where $\kappa(\delta) = \delta'(1)$ is the weight of $\delta$, converges in $\Robba_A^{\psi = 0}$ for $N$ big enough (depending only on $\delta$) to an element $m_\delta(f)$ that does not depend on $N$ or on the choice of representatives of $\zpe (\text{mod }p^N)$.

Moreover, the map $m_\delta: \Robba_A \boxtimes \zpe \to \Robba_A \boxtimes \zpe$ thus defined, is continuous, stabilizes $\Robba_A^+ \boxtimes \zpe$ and induces the multiplication by $\delta$ on $\mathscr{D}(\zpe,A)$ and $\mathrm{LA}(\zpe,A)$.  
\end{prop}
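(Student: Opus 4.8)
\textbf{Proof plan for Proposition \ref{cor:stup}.}

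The plan is to reduce everything to the analogous statement over a field, namely \cite[Proposition 2.9]{colmez2015}, using the base change identity $\Robba_A^{\psi=0} = \Robba_{\qp}^{\psi=0} \,\widehat{\otimes}_{\qp} A$ together with the explicit formula already written down above. First I would establish convergence. Fix a Banach norm $v^{[r,s]}$ on $\Robba_A^{[r,s]}$ as in the Notations; the key point is to control the size of the coefficients $\binom{\kappa(\delta)}{j} \delta(i) i^{-j}$ and of the operators $p^{Nj}\varphi^N(\partial^j f_i)$. Since $\delta$ is locally analytic, $\kappa(\delta) \in A$ and $\delta$ restricted to $1 + p^{N_0}\zp$ is given by a convergent power series, so for $N$ large (depending only on $\delta$, via $N_0$ and $v_A(\kappa(\delta))$) the valuations $v_A\big(\binom{\kappa(\delta)}{j}\big)$ grow at least linearly in $j$ after a bounded defect, while $v_A(\delta(i)) $ and $v_A(i^{-j})$ are $\geq 0$ since $\delta$ takes values in $A^\times$ and $i \in \zpe$. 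For the analytic part, $\partial$ raises $v^{[r,s]}$ by a bounded amount on each annulus, and the factor $p^{Nj}$ forces $v_p(p^{Nj}) = Nj \to \infty$; combined with the growth of the binomial coefficients this shows the partial sums form a Cauchy sequence in each $\Robba_A^{[r,s]}$, hence converge in $\Robba_A^{\psi=0}$ once one checks (as in loc.cit.) that $\psi$ annihilates each term, which is immediate from $f_i = \psi^N(1+T)^{-i}f$ and $\psi\varphi^N = \varphi^{N-1}\psi$.

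Next I would check independence of $N$ and of the choice of representatives. The cleanest route is to observe that the formula is $A$-linear and continuous in $f$, and that both sides make sense after the faithfully flat (indeed, for convergence purposes, isometric up to the tensor norm) inclusion $\Robba_{\qp}^{\psi=0} \hookrightarrow \Robba_A^{\psi=0}$; since $\Robba_{\qp}^{\psi=0}\otimes_{\qp} A$ is dense in $\Robba_A^{\psi=0}$ and the map $m_\delta$ is continuous, it suffices to verify the independence on elementary tensors $f = g \otimes a$ with $g \in \Robba_{\qp}^{\psi=0}$, where (after replacing $\delta$ by a $\qp$-valued character in the argument — or rather, tracking the $A$-coefficients formally) it follows from the field case. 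Alternatively, and perhaps more honestly, one repeats verbatim Colmez's telescoping argument: passing from level $N$ to level $N+1$ replaces each $i + p^N\zp$ by its $p$ sub-intervals and uses $\varphi \circ \psi \circ (1+T)^{-i} = \sum$ over the refinement, together with the Leibniz expansion of $\partial^j$ displayed before the proposition, to see the two expressions agree term by term; nothing in that manipulation uses that the coefficient ring is a field, only that it is a commutative $\qp$-algebra in which the relevant series converge.

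Finally, for the last assertions: continuity of $m_\delta$ follows from the uniform (in the relevant Banach norms) estimates obtained in the convergence step, exactly as in the field case. Stability of $\Robba_A^+ \boxtimes \zpe$: note $\Robba_A^+ \boxtimes \zpe = \Robba_A^{+,\psi=0}$ is stable under $\varphi$, $\partial$ and $\psi$, so each term of the series lies in it, and it is closed, hence the limit does too. That $m_\delta$ induces multiplication by $\delta$ on $\mathscr{D}(\zpe,A)$ and on $\mathrm{LA}(\zpe,A)\otimes\chi^{-1}$ is checked on the associated graded pieces of the exact sequence $0 \to \mathscr{D}(\zpe,A) \to \Robba_A^{\psi=0} \to \mathrm{LA}(\zpe,A)\otimes\chi^{-1} \to 0$ from Proposition \ref{amice}: on the Amice transform side one matches the formula against the definition $\int \phi\cdot\delta\mu = \int \delta\phi\cdot\mu$ using $\partial\mathscr{A}_\mu = \mathscr{A}_{x\mu}$ and $\binom{\kappa(\delta)}{j}$ being the Mahler/Taylor coefficients of $\delta$ on $1+p^N\zp$, and the quotient computation on $\mathrm{LA}$ is dual. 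I expect the main obstacle to be purely bookkeeping: getting the $N$-dependence of the convergence estimates to depend \emph{only} on $\delta$ (not on $f$) uniformly in the coefficient ring $A$ — i.e. making Colmez's estimates ``relative'' — and making sure the tensor-product topology on $\Robba_A^{\psi=0} = \Robba_{\qp}^{\psi=0}\widehat{\otimes}_{\qp} A$ is compatible with the Banach norms $v^{[r,s]}$ used in those estimates; none of this is conceptually hard, but it is where all the care goes.
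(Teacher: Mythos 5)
Your proposal is correct in outline, and the convergence step is essentially the paper's argument: one bounds $v^{[r,s]}$ of the general term using $v_A\bigl(\binom{\kappa(\delta)}{j}\bigr) \geq jC_\delta$, the fact that $\delta(i), i^{-j}, (1+T)^i$ have valuation $0$, and the cost of $\varphi^N\partial^j$, then takes $N$ large and $s < r_N$. One caution there: $\partial$ does not cost a ``bounded amount'' independent of $j$ — each application loses $s$, so $\partial^j$ loses $j p^N s$ after commuting past $\varphi^N$ — and it is precisely the interplay of this linear-in-$j$ loss with the gains $Nj$ from $p^{Nj}$ and $jC_\delta$ from the binomials that forces both $N \gg 0$ and $s < r_N$; your sketch elides this but the estimate goes through. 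Where you genuinely diverge from the paper is the second half. The paper does not telescope: for fixed $f$ it observes that the level-$N$ formula is a rigid-analytic function of $\delta$ on the character variety $\mathfrak{X}$, and deduces independence of $N$, independence of the representatives, and the compatibility with multiplication by $\delta$ on $\mathscr{D}(\zpe,A)$ and $\mathrm{LA}(\zpe,A)$ all at once from Zariski density of the characters $x\mapsto x^k$, for which $m_{x^k}=\partial^k$ is already known to be multiplication by $x^k$. Your ``more honest'' route — refining $\zpe \pmod{p^N}$ to $\pmod{p^{N+1}}$, expanding $\delta(i+p^Nb)$ by local analyticity and matching term by term, then checking the action on Amice transforms directly — is a legitimate, more elementary alternative (it is how the case of a point can be done), at the price of double-series bookkeeping that the density argument avoids. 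By contrast, your first suggested route (reduce to elementary tensors $g\otimes a$ and quote the field case) does not work as stated: $\delta$ takes values in $A^\times$, so the coefficients $\binom{\kappa(\delta)}{j}\delta(i)i^{-j}$ do not factor through $\Robba_{\qp}^{\psi=0}$, and the field-case proposition says nothing about $A$-valued characters — this is exactly the difficulty the paper's analytic-family-of-characters argument is designed to bypass; you rightly flag and abandon it.
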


\begin{proof}
For $A = L$ a finite extension of $\qp$, this is \cite[Proposition 2.9]{colmez2015} (cf. also \cite[\S 3.2]{four}). Since a similar argument works for the general case, we only provide a sketch here. We start by recalling some easy and standard estimations. For $0 < r < s < r_2$ and $g \in \Robba_A^{[r, s]}$ we have
\begin{itemize}
\item $v^{[r/p, s/p]}(\varphi(g)) = v^{[r, s]}(g)$.
\item $v^{[pr , ps]}(\psi(g)) \geq v^{[r, s]}(g) - 1$.
\item $v^{[r, s]}(\sigma_a(g)) = v^{[r, s]}(g)$ for all $a \in \zpe$.
\item $v^{[r, s]}(\partial^k g) \geq v^{[r ,s]}(g) - k s$. 
\end{itemize}
By definition of the topology of the $LF$-space $\Robba_A$, we need to show that there exists an $s > 0$ such that, for all $0 < r < s$, the general term \[ f_j(\delta) = {\kappa(\delta) \choose j} \delta(i) i^{-j} (1+T)^i p^{Nj} \varphi^N(\partial^j f_i) \] of the series defining $m_\delta(f)$ goes to zero (in $\Robba_A^{[r,s]}$) as $j$ goes to $+ \infty$.

Observe that $f_j(\delta) \in \Robba_A^{]0, s]}$ whenever $s < r_N$ and $f \in \Robba_A^{]0, s]}$. We continue estimating the valuation of the terms appearing in the expression for $f_j(\delta)$. For any $0 < r < s < r_N$ we have
\begin{itemize}
\item $v^{[r, s]}((1 + T)^i) = v^{[r, s]}(i^{-j}) = v^{[r, s]}(\delta(i)) = 0$.
\item $v^{[r, s]}({\kappa(\delta) \choose j}) = v_A({\kappa(\delta) \choose j}) \geq j(\min(v_A(\kappa(\delta)), 0) - \frac{1}{p - 1})$. Note by $C_\delta = \min(v_A(\kappa(\delta)), 0) - \frac{1}{p - 1}$.
\item $v^{[r, s]}(\varphi^N(\partial^j f_i)) \geq v^{[r, s]}(f) - N - j p^N s$ (as is shown by an immediate calculation using all the estimations made in the last paragraph).
\end{itemize}
Putting all this together, we get \[ v^{[r, s]}(f_j(\delta)) \geq v^{[r, s]}(f) + j(C_\delta + N - N / j - p^N s). \]
Observe that this estimation does not depend on $r$. So for any $N > 0$  large enough (and $s < r_N$) such that $C_\delta + N - N / j - p^N s > 0$ \footnote{Take, for instance, any $N > - C_\delta + 1$.} and any $0 < r < s$, the general term $f_j(\delta)$ tends to zero in $\Robba_A^{[r, s]}$ as $j \to + \infty$ and thus the series converges. This completes the proof of the existence and continuity of $m_\delta(f)$.

A small calculation shows that the value $C_\delta$ can be bounded only in terms of the valuation of $\delta(1 + 2p)$ and that, for $f$ fixed, the formula defines a rigid analytic function on the $A$-points of the rigid analytic space $\mathfrak{X}$ whose $A$ points parametrize continuous characters $\mathrm{Hom}_{\rm cont}(\zpe, A^\times)$ \footnote{For the existence of $\mathfrak{X}$, cf. \cite[Proposition 6.1.1]{kedlaya2014}.} with values in $\Robba_A^{[r, s]}$. Using the Zariski density of the points $x \mapsto x^k$ in $\mathfrak{X}$ \footnote{i.e that any rigid analytic function on $\mathfrak{X}$ vanishing at those points vanish.}, one shows, on the one hand the independence of $m_\delta(f)$ on the choice of a system of representatives of $\zpe (\text{mod }p^N)$, and on the other hand, using the fact that, if $\delta(x) = x^k$, then $m_\delta(f) = \partial^k f$ and that $\partial$ extends to $\Robba_A$ the multiplication by $x$ on $\mathscr{D}(\zp, A)$ and on $\mathrm{LA}(\zp, A)$, that $m_\delta(f)$ extends also the multiplication by $\delta$ for any locally analytic character $\delta$. This completes the proof.
\end{proof}

\subsection{Duality} \label{sec:dark}
Let $\Phi\Gamma(\Robba_A)$ denote the category of $(\varphi, \Gamma)$-modules over $\Robba_A$, cf. Definition \ref{def:phigammodde}. In the process of constructing $\Pi(\Delta)$ we will need the notion of duality. If $\Delta \in \Phi\Gamma(\Robba_A)$, we set $\check{\Delta} = \mathrm{Hom}_{\Robba_A}(\Delta, \Robba_A(\chi))$ and denote by
\[
\langle \text{ },\text{ } \rangle \colon \check{\Delta} \times \Delta \to \Robba_A(\chi)
\]
the induced pairing. We impose a $(\varphi,\Gamma)$-structure on $\check{\Delta}$ by setting
\[
\langle g \cdot \check{z}, g \cdot z \rangle := g \cdot \langle \check{z},z \rangle
\] 
for all $\check{z} \in \check{\Delta}$, $z \in \Delta$ and $g \in \{\sigma_a, \varphi \}$. Note that $\check{\Delta} \in \Phi\Gamma(\Robba_A)$. 

The pairing $\langle \text{ },\text{ } \rangle$ defines a new pairing
\begin{align*}
\lbrace \text{ },\text{ } \rbrace \colon \check{\Delta} \times \Delta &\to A\\
(\check{z},z) &\mapsto \mathrm{res}_{0}(\langle \sigma_{-1}(\check{z}),z \rangle),
\end{align*} 
where $\mathrm{res}_{0}\left( \sum_{k \in \mathbf{Z}} a_kT^k dT \right) = a_{-1}$. Assuming that $\Delta$ is free over $\Robba_A$, the point is that the pairing $\lbrace \text{ },\text{ } \rbrace$ identifies $\check{\Delta}$ and $\Delta$ as topological duals of $\Delta$ and $\check{\Delta}$ respectively, cf. \cite[Proposition III.2.3]{colmirabol}. 

\subsection{Principal series} \label{subpinc}

Let $\delta_1, \delta_2 \colon \qpe \to A^\times$ be two continuous characters. We define $B_A(\delta_1,\delta_2)$ to be the space of locally analytic functions $\phi \colon \qp \to A$, such that $\delta(x)\phi\left(\frac{1}{x} \right)$ extends to an analytic function on a neighbourhood of $0$. We equip $B_A(\delta_1,\delta_2)$ with an action of $G$ defined by 
\[
\left( \begin{pmatrix} a & b \\ c & d \end{pmatrix} \cdot \phi \right)(x) = \delta_2(ad-bc)\delta(a-cx)\phi \left(\frac{dx-b}{a-cx} \right).
\]

One can show that $B_A(\delta_1,\delta_2) = \mathrm{Ind}_{\overline{B}}^{G}(\delta_1\chi^{-1} \otimes \delta_2)$ (where $\overline{B}$ is the lower-half Borel subgroup of $G$). Here $\delta_1\chi^{-1} \otimes \delta_2$ is viewed as the character $\begin{pmatrix} a & 0 \\ c & d \end{pmatrix} \mapsto \delta_1\chi^{-1}(a)\delta_2(d)$. For the definition of $ \mathrm{Ind}_{\overline{B}}^{G}(\delta_1\chi^{-1} \otimes \delta_2)$, cf. Remark \ref{rem:lains}. The topology of $B_A(\delta_1, \delta_2)$ is by definition the topology coming from $\mathrm{LA}(G/\overline{B}, A)$, cf. Definition \ref{def:repla}. This makes $B_A(\delta_1, \delta_2)$ into a Hausdorff, complete, locally convex $A$-module, cf. Definition \ref{def:right} and Lemma \ref{lem:magdimo}. 

The strong topological dual of $B_A(\delta_1,\delta_2)$, cf. Definition \ref{def:stdua}, identifies with a space of distributions on $\P^1$ equipped with an action of $G$ defined by 
\[
\int_{\P^1} \phi \begin{pmatrix} a & b \\ c & d \end{pmatrix} \cdot \mu = \delta_1^{-1}\chi(ad-bc)\int_{\P^1} \delta(cx+d)\phi \left(\frac{ax+b}{cx+d} \right) \mu(x). 
\]

\subsection{The $G$-module $\Robba_A(\delta) \boxtimes_\omega \P^1$} \label{RtimesP1}

Suppose $\Delta \in \Phi\Gamma(\Robba_A)$ is of rank 2 and trianguline. In this section, we follow \cite[\S 4.3]{colmez2015}, to construct the $G$-modules $\Robba_A(\delta) \boxtimes_\omega \P^1$ which will be the constituents of the $G$-module $\Delta \boxtimes_\omega \P^1$. Once $\Delta \boxtimes_\omega \P^1$ is constructed, we will see that one of its constituents is the representation $\Pi(\Delta)$, which we are searching for. Since we are only interested in constructing representations of $\mathrm{GL}_2(\qp)$, the constructions from \cite{colmez2015} (where representations of $\mathrm{GL}_2(F)$, for $F / \qp$ a finite extension, are constructed) simplify considerably.

We start by recalling a structure result for arithmetic families of $(\varphi, \Gamma)$-modules.

\begin{prop} [Theorem 3.1.1,\cite{kedlaya2014}] \label{kpxgamma} Let $A$ be a $\qp$-affinoid algebra and let $\Delta \in \Phi \Gamma(\Robba_A)$. There exists $r(\Delta)$ such that, for any $0 < r < r(\Delta)$, $\gamma - 1$ is invertible on $(\Delta^{]0, r]})^{\psi = 0}$, and the $A[\Gamma, (\gamma - 1)^{-1}]$-module structure on  $(\Delta^{]0, r]})^{\psi = 0}$ extends uniquely by continuity to a $\Robba_A^{]0, r]}(\Gamma)$-module structure for which  $(\Delta^{]0, r]})^{\psi = 0}$ is finite projective of rank $d = \mathrm{rank}_{\Robba_A} \; \Delta$. Moreover, if $\Delta$ is free over $\Robba_A$, then $(\Delta^{]0, r]})^{\psi = 0}$ admits a set of $d$ generators over $\Robba_A^{]0, r]}(\Gamma)$.
\end{prop}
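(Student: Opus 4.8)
The plan is to bootstrap everything from the trivial rank-one $(\varphi,\Gamma)$-module $\Delta=\Robba_A$. First I would fix a model: by Definition~\ref{def:phigammodde} there are $r_0\in(0,r_1)$ and a finite projective $\Robba_A^{]0,r_0]}$-module $\Delta^{]0,r_0]}$ carrying a $\varphi$-structure and a commuting continuous $\Gamma$-action, and I set $\Delta^{]0,r]}:=\Delta^{]0,r_0]}\otimes_{\Robba_A^{]0,r_0]}}\Robba_A^{]0,r]}$; shrinking $r_0$ we may assume $\psi$ is defined on every $\Delta^{]0,r]}$ with $r\le r_0$. Transporting the fundamental decomposition $\Robba_A^{]0,r]}=\bigoplus_{i=0}^{p-1}(1+T)^i\varphi(\Robba_A^{]0,pr]})$ through the $\varphi$-structure yields $\Delta^{]0,r]}=\bigoplus_{i=0}^{p-1}(1+T)^i\varphi(\Delta^{]0,pr]})$, hence a $\Gamma$-stable splitting $\Delta^{]0,r]}=(\Delta^{]0,r]})^{\psi=0}\oplus\varphi(\Delta^{]0,pr]})$ (both $\psi$ and the $(1+T)^i$-grading are $\Gamma$-equivariant). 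Iterating exhibits $\Delta^{]0,r]}$ as an appropriately completed sum of the layers $\varphi^n\big((\Delta^{]0,p^n r]})^{\psi=0}\big)$; this Frobenius descent reduces the structure of $(\Delta^{]0,r]})^{\psi=0}$ to its top layer, which is governed by the behaviour of $\Delta$ near the boundary $r\to 0$.

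For $\Delta=\Robba_A$ the engine is the Mellin transform: $\lambda\mapsto\lambda\cdot(1+T)$ is a topological isomorphism $\Robba_A^{]0,r]}(\Gamma)\xrightarrow{\sim}(\Robba_A^{]0,r]})^{\psi=0}$, where $\Robba_A^{]0,r]}(\Gamma)$ is itself a one-variable relative Robba ring in a Mellin variable $S$; thus $(\Robba_A^{]0,r]})^{\psi=0}$ is free of rank $1$, and $\gamma-1$ acts as multiplication by $[\gamma]-1=S\cdot(\text{unit})$, which is a unit in $\Robba_A^{]0,r]}(\Gamma)$ because its only zero, $S=0$, is the excluded centre of the annulus. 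I would take this rank-one computation as known (it is due to Colmez; its relative form is in \cite{KedLiurelpa}, and it is essentially the definition of $\Robba_A^{]0,r]}(\Gamma)$). For general $\Delta$, I would choose $r(\Delta)$ so small that, after the $\mu_{p-1}$-isotypic decomposition, the action of $\Gamma_1=1+p\zp$ on $\Delta^{]0,r]}$ deviates from the constant action only by an operator that becomes topologically nilpotent after passing to $(\Delta^{]0,r]})^{\psi=0}$ and applying Mellin (this deviation is controlled by the connection $\nabla$ attached to $\Delta$, cf.\ \cite{Berpaddif}); then $\gamma-1$ on $(\Delta^{]0,r]})^{\psi=0}$ is a unit plus a topologically nilpotent operator, hence invertible by a geometric series. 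Once $\gamma-1$ is invertible, the $A[\Gamma,(\gamma-1)^{-1}]$-action extends uniquely by continuity to a $\Robba_A^{]0,r]}(\Gamma)$-action, since the $A$-subalgebra generated by $\Gamma$ and $(\gamma-1)^{-1}$ is dense in $\Robba_A^{]0,r]}(\Gamma)$ and $(\Delta^{]0,r]})^{\psi=0}$ is complete.

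It remains to prove that $M:=(\Delta^{]0,r]})^{\psi=0}$ is finite projective of rank $d$ over $S:=\Robba_A^{]0,r]}(\Gamma)$, with $d$ generators when $\Delta$ is free. Here I would use that $S$ is again a Robba-type ring over $A$ (a finite product of one-variable relative Robba rings, via the $\mu_{p-1}$-isotypic decomposition), in particular coherent. Feeding the Frobenius-descent presentation of the first paragraph through the rank-one Mellin isomorphism shows that $M$ is finitely generated over $S$; and when $\Delta$ is free with basis $e_1,\dots,e_d$ over $\Robba_A^{]0,r]}$, applying the projector $1-\varphi\psi$ to $(1+T)e_1,\dots,(1+T)e_d$ yields $d$ elements of $M$ which one checks, via the rank-one case and a Nakayama argument over $S$, to be generators. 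Coherence of $S$ then makes $M$ finitely presented, and projectivity of rank $d$ is verified on fibres: specialising at a maximal ideal $\mathfrak m$ of $A$ and at a point of $\Spec(S/\mathfrak m S)$ recovers the field case $\Delta\otimes_A\kappa(\mathfrak m)$, where the statement is classical, so the fibre rank is constantly $d$; a finitely presented module of locally constant rank over a coherent ring is projective (reducing first to $A$ reduced, then by the standard induction on the index of nilpotence of the nilradical, as in the paper's proof of Theorem~\ref{thm:resmincomp}).

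The step I expect to be the real obstacle is the \emph{uniformity over the affinoid base} $A$: every radius, the convergence of the Frobenius-descent sum, and the topological-nilpotence bound underlying the invertibility of $\gamma-1$ must be tracked with $A$-coefficients and their Gauss norms, and one must develop enough ring theory of $\Robba_A^{]0,r]}(\Gamma)$---coherence, and a usable description of its maximal spectrum---to run the fibrewise projectivity argument. Over a field all of this is classical work of Colmez and Berger; the substance of the cited statement is exactly making it work in families, so in practice I would simply invoke \cite[Theorem~3.1.1]{kedlaya2014}.
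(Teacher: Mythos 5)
The paper gives no proof of this statement: it is imported verbatim as \cite[Theorem 3.1.1]{kedlaya2014}, with the final clause about the $d$ generators read off from the proof in loc.\ cit.\ (cf.\ Remark \ref{rem:che}), so your closing move of simply invoking that theorem is exactly the paper's treatment. Your preliminary sketch (Frobenius descent, the rank-one Mellin computation of Colmez--Chenevier, fibrewise projectivity) is a plausible reconstruction of the ingredients behind the cited result, but the paper does not reprove any of it, so there is nothing further to compare.
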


\begin{rema} \leavevmode \label{rem:che}
\begin{itemize}
\item The proof of the last statement of Proposition \ref{kpxgamma} can be found in the the proof \cite[Theorem 3.1.1]{kedlaya2014}. In general there exists a finite projective $\Robba_A$-module $N$ such that $\Delta \oplus N$ is free of rank $m$ over $\Robba_A$ and the proof in loc.cit. shows that $(\Delta^{]0, r_n]})^{\psi = 0}$ admits a set of $m$ generators over $\Robba_A(\Gamma)$.
\item Taking direct limits we also get that $\Delta^{\psi = 0}$ is a finite projective module over $\Robba(\Gamma)$ of rank $d$, admitting a set of $m$ generators ($m = d$ if $\Delta$ is free).
\item In the case when $\Delta = \Robba_A$, one can show that $\Delta^{\psi = 0}$ is a free module of rank one over $\Robba_A(\Gamma)$ generated by $(1+T)$, cf. \cite[Proposition 2.14 and Remarque 2.15]{chen2013}. 
\end{itemize}
\end{rema}

If $\Delta = \Robba_A$ we have a short exact sequence of $\Gamma$-modules \[ 0 \to (\Robba^+_A)^{\psi = 0} \to \Robba_A^{\psi = 0} \to (\Robba^-_A)^{\psi = 0} \to 0. \] Recall that $(\Robba^+_A)^{\psi = 0} = \Robba^+_A \boxtimes \zpe \cong \mathscr{D}(\zpe, A)$ via the Amice transform, and that we have an involution $w_*$ on it given by \[ \int_\zpe \phi(x) \cdot w_* \mu = \int_\zpe \phi(x^{-1}) \cdot \mu. \] The involution is $\Gamma$-anti-linear in the sense that we have $w_* \circ \sigma_a = \sigma_a^{-1} \circ w_*$ for all $a \in \zpe$. We denote by $\iota: \Robba_A(\Gamma) \to \Robba_A(\Gamma)$ the involution defined by $\sigma_a \mapsto \sigma_a^{-1}$ on $\Gamma$.

\begin{lemma} \label{lem3.50}
There exists a unique $\Robba_A(\Gamma)$-anti-linear involution $w_*$ with respect to $\iota$ \footnote{i.e satisfying $w_* \circ \lambda = \iota(\lambda) \circ w_*$ for all $\lambda \in \Robba_A(\Gamma)$.} on $\Robba_A \boxtimes \zpe$ extending that on $\Robba_A^+ \boxtimes \zpe$. Moreover, $w_*$ satisfies
\begin{itemize}
\item $w_* = \partial w_* \partial$.
\item $\nabla \circ w_* = -w_* \circ \nabla$.
\item $w_* \circ \mathrm{Res}_{a + p^n \zp} = \mathrm{Res}_{a^{-1} + p^n \zp} \circ w_*$, for all $a \in \zpe$, $n \geq 1$.
\end{itemize} 
\end{lemma}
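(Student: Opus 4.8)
The plan is to build $w_*$ on $\mathscr{R}_A \boxtimes \zpe = \mathscr{R}_A^{\psi=0}$ by first producing an explicit candidate on the nose and then checking it has all the stated properties, or alternatively to use uniqueness to reduce everything to the field case already available in \cite{colmez2015}. Recall that $\mathscr{R}_A^{\psi=0}$ is free of rank one over $\mathscr{R}_A(\Gamma)$ on the generator $(1+T)$ (Remark \ref{rem:che}), and that $\partial$ is bijective on $\mathscr{R}_A^{\psi=0}$. The first observation is that the $\mathscr{R}_A(\Gamma)$-anti-linearity condition $w_* \circ \lambda = \iota(\lambda) \circ w_*$, together with the prescribed value of $w_*$ on $\mathscr{R}_A^+\boxtimes\zpe = \mathscr{D}(\zpe,A)$, forces uniqueness: any $f \in \mathscr{R}_A^{\psi=0}$ can be written $f = \lambda \cdot (1+T)$ with $\lambda \in \mathscr{R}_A(\Gamma)$, and since $(1+T) \in \mathscr{R}_A^+\boxtimes\zpe$ we must have $w_*(f) = \iota(\lambda)\cdot w_*(1+T)$, with $w_*(1+T)$ already determined by the formula on $\mathscr{D}(\zpe,A)$. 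So the content is \emph{existence}: one must check that this prescription is well-defined (independent of the choice of $\lambda$, i.e.\ that the annihilator of $(1+T)$ is stable under $\iota$, which is automatic since $(1+T)$ is a free generator), continuous, and involutive.

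For existence and continuity I would follow the strategy of \cite[Lemme 2.20]{colmez2015}: exhibit the explicit limit formula $w_*(f) = \lim_{n\to\infty}\sum_{i \in (\Z/p^n\Z)^\times}(1+T)^i \sigma_{-i^2}\varphi^n\psi^n((1+T)^{-i^{-1}}f)$ and prove convergence in $\mathscr{R}_A^{[r,s]}$ using the standard valuation estimates for $\varphi$, $\psi$, $\sigma_a$ and $\partial$ recalled in the proof of Proposition \ref{cor:stup}; here the base change $\mathscr{R}_A^{[r,s]} = \mathscr{R}_{\qp}^{[r,s]}\ctens_{\qp}A$ and $A$-linearity of all operators mean the estimates are identical to the field case with $v_p$ replaced by $v^{[r,s]}$. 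Convergence on each annulus plus compatibility with restriction maps $\Res^{\zp}_U$ gives a continuous $A$-linear operator on the $LF$-space $\mathscr{R}_A^{\psi=0}$; that it extends the given involution on $\mathscr{D}(\zpe,A)$ and is anti-linear over $\iota$ is checked by the same computations as in loc.\ cit. Involutivity $w_*^2 = \mathrm{id}$ then follows either directly from the formula or, more cheaply, from density of $\mathscr{D}(\zpe,A)$ in $\mathscr{R}_A^{\psi=0}$ together with continuity and $w_*^2 = \mathrm{id}$ on $\mathscr{D}(\zpe,A)$ — but one must be careful whether $\mathscr{D}(\zpe,A)$ is actually dense; if not, one argues instead that $w_*^2$ is $\mathscr{R}_A(\Gamma)$-linear, agrees with $\mathrm{id}$ on the generator $(1+T)$, hence is the identity.

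The three listed properties are then inherited: the relation $w_* = \partial w_*\partial$ on $\mathscr{D}(\zpe,A)$ is classical (it dualizes the identity $\phi_{\partial f}(x) = x\phi_f(x)$ and $(x^{-1})$-symmetry), and since both $w_*$ and $\partial w_*\partial$ are $\mathscr{R}_A(\Gamma)$-anti-linear over $\iota$ and agree on $(1+T)$ — using $\partial(1+T) = (1+T)$ — they coincide on all of $\mathscr{R}_A^{\psi=0}$. From $w_* = \partial w_*\partial$ and the fact that $\nabla = t\partial$ acts as $\partial \circ(\text{mult by }t)$, a short manipulation gives $\nabla w_* = -w_*\nabla$ (equivalently: $\nabla$ corresponds under Amice to multiplication by $x$, and $w_*$ intertwines multiplication by $x$ with multiplication by $-x$ after the $x \mapsto x^{-1}$ flip followed by $\partial w_* \partial$ bookkeeping). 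Finally $w_* \circ \Res_{a+p^n\zp} = \Res_{a^{-1}+p^n\zp}\circ w_*$ is the "sheaf-theoretic" incarnation of anti-linearity: writing $\Res_{a+p^n\zp} = (1+T)^a\varphi^n\psi^n(1+T)^{-a}$ and pushing $w_*$ through using $w_*\sigma_b = \sigma_{b^{-1}}w_*$ and the compatibility of $w_*$ with $\varphi^n\psi^n$ (which is exactly what the limit formula encodes), one lands on $\Res_{a^{-1}+p^n\zp}$; again it suffices to verify this on $\mathscr{D}(\zpe,A)$ where it is the literal change of variables $x \mapsto x^{-1}$ in the integral defining $\Res$, and then extend by the anti-linearity/generator argument. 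The main obstacle I anticipate is the convergence-and-well-definedness bookkeeping for the explicit formula in the relative setting — in particular making sure the limit is independent of $n$ and of the choice of coset representatives $i$, exactly as in the proof of Proposition \ref{cor:stup} — rather than any of the formal consequences, which are all "check on a generator" arguments.
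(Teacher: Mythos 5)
Your uniqueness argument and your treatment of the three listed identities are essentially the paper's: one takes the free generator $e=(1+T)$ of $\Robba_A\boxtimes\zpe$ over $\Robba_A(\Gamma)$, notes that anti-linearity forces $w_*(\lambda\cdot e)=\iota(\lambda)\cdot w_*(e)$ with $w_*(e)$ already defined since $e\in\Robba_A^+\boxtimes\zpe$, and then verifies each identity on $\Robba_A^+\boxtimes\zpe$ (Colmez, Lemme 2.14, whose proof carries over to $A$) and propagates it by the "two maps anti-linear over $\iota$ that agree on the generator" argument, using $\partial(1+T)=(1+T)$. Your fallback argument for involutivity ($w_*^2$ is $\Robba_A(\Gamma)$-linear and fixes the generator) is also the right one, and your caution about density is warranted: $\Robba_A^+\boxtimes\zpe$ is closed, not dense, in $\Robba_A\boxtimes\zpe$, so the density argument is not available.

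The genuine gap is in your existence step. You propose to construct $w_*$ by the explicit limit formula $\lim_{n}\sum_{i\in(\Z/p^n\Z)^\times}(1+T)^i\sigma_{-i^2}\varphi^n\psi^n((1+T)^{-i^{-1}}f)$ and to "prove convergence using the standard valuation estimates". This is exactly the step that fails on the Robba ring: as recalled in the introduction of the paper, the formula defining the involution does not converge for a $(\varphi,\Gamma)$-module over $\Robba_L$ (in the \'etale case one only gets it by proving stability of $D^\dagger\boxtimes\zpe$ and extending by continuity), so the estimates are \emph{not} "identical to the field case with $v_p$ replaced by $v^{[r,s]}$" --- even over $L$ this convergence is not established, and the crude bounds you cite ($v^{[pr,ps]}(\psi(g))\geq v^{[r,s]}(g)-1$, etc.) degrade with $n$ and yield no Cauchy property for the partial sums. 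This is precisely why both Colmez and the present paper define the extension purely algebraically: since $(1+T)$ is a free generator, the prescription $w_*(\lambda\cdot(1+T)):=\iota(\lambda)\cdot w_*(1+T)$ is well defined, and that is all the lemma asserts (no continuity claim is made, so no analytic construction is needed). Note that you had this correct existence argument in hand --- you observed that well-definedness "is automatic since $(1+T)$ is a free generator" --- but then set it aside in favour of the problematic analytic construction; had you stopped there, your proof would be complete and would coincide with the paper's.
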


\begin{proof}

Take a generator $e$ of the free $\Robba_A(\Gamma)$-module $\Robba_A \boxtimes \zpe$ of rank one such that $e \in \Robba_A^+ \boxtimes \zpe$ (e.g $(1 + T)$, cf. Remark \ref{rem:che}). This forces \[ w_*(\lambda \cdot e) = \iota(\lambda) \cdot w_*(e) \] for every $\lambda \in \Robba(\Gamma)$, where $w_*(e) \in \Robba_A^+ \boxtimes \zpe$ is well defined since $e \in \Robba^+_A \boxtimes \zpe$.

For the rest of the properties we can use \cite[Lemme 2.14]{colmez2015} which shows that they hold for $w_*$ acting on $\Robba^+_A \boxtimes \zpe$ (the same proof carries over for any $A$). We only show the first one, the other two being immediate. Let $z = \lambda \cdot e \in \Robba_A \boxtimes \zpe$ for some $\lambda \in \Robba_A(\Gamma)$. We have \[ \partial \circ w_* \circ \partial (\lambda \cdot e) = \chi(\lambda) \; \partial \circ w_*(\lambda \cdot \partial e) =  \chi(\lambda) \; \partial ( \iota(\lambda) \cdot w_* (\partial e)) = \iota(\lambda) \cdot \partial \circ w_* \circ \partial (e) = \iota(\lambda) \cdot w_*(e) = w_*(z). \]
\end{proof}

The following gives a relation between $m_{\delta}$ and $w_*$.
\begin{lemm}\label{lem:preinvres}
If $\delta \colon \zpe \to A^{\times}$ is a continuous character, then
\[
m_{\delta} \circ w_{*} = w_{*} \circ m_{\delta^{-1}}.
\]
\end{lemm}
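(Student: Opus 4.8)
The plan is to exploit the fact that both $m_\delta$ and $w_*$ are well understood on the dense subspace $\mathscr{D}(\zpe, A) = \Robba_A^+ \boxtimes \zpe$ and then extend by density/continuity. First I would recall that $\Robba_A \boxtimes \zpe = \Robba_A^{\psi=0}$ is a free $\Robba_A(\Gamma)$-module of rank one (Remark \ref{rem:che}), generated by an element $e$ which can be taken in $\Robba_A^+ \boxtimes \zpe$, e.g. $e = (1+T)$. By Lemma \ref{lem3.50}, $w_*$ is $\Robba_A(\Gamma)$-anti-linear with respect to the involution $\iota \colon \sigma_a \mapsto \sigma_a^{-1}$, and it stabilises $\Robba_A^+ \boxtimes \zpe$. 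By Proposition \ref{cor:stup}, $m_\delta$ is a continuous $A$-linear operator on $\Robba_A^{\psi=0}$ that induces multiplication by $\delta$ on $\mathscr{D}(\zpe, A)$ (and on $\mathrm{LA}(\zpe, A) \otimes \chi^{-1}$).

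The first key step is to establish the identity on distributions, i.e. for $\mu \in \mathscr{D}(\zpe, A)$. This is a direct computation using the explicit descriptions: for $\phi \in \mathrm{LA}(\zpe, A)$ we have $\int_\zpe \phi \cdot m_\delta(w_*\mu) = \int_\zpe \delta\phi \cdot w_*\mu = \int_\zpe (\delta\phi)(x^{-1}) \cdot \mu(x)$, while $\int_\zpe \phi \cdot w_*(m_{\delta^{-1}}\mu) = \int_\zpe \phi(x^{-1}) \cdot m_{\delta^{-1}}\mu = \int_\zpe \delta^{-1}(x)\phi(x^{-1}) \cdot \mu(x)$; since $\delta(\phi(x^{-1})$ evaluated appropriately gives $\delta(x)^{-1}\phi(x^{-1}) = \delta^{-1}(x)\phi(x^{-1})$ — here one uses $\delta(x^{-1}) = \delta(x)^{-1}$ — the two sides agree. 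So the identity holds on $\mathscr{D}(\zpe, A)$.

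The second step is to propagate this from $\Robba_A^+ \boxtimes \zpe$ to all of $\Robba_A \boxtimes \zpe$. Here I would use $\Robba_A(\Gamma)$-(anti-)linearity. Writing an arbitrary $z \in \Robba_A \boxtimes \zpe$ as $z = \lambda \cdot e$ with $\lambda \in \Robba_A(\Gamma)$ and $e \in \Robba_A^+ \boxtimes \zpe$, one needs a compatibility of $m_\delta$ with the $\Robba_A(\Gamma)$-action: more precisely, since multiplication by $\delta$ on distributions intertwines the $\Gamma$-action via $\sigma_a \mapsto \delta(a)\sigma_a$ (conjugation), one expects $m_\delta(\lambda \cdot z) = \lambda^{(\delta)} \cdot m_\delta(z)$, where $\lambda^{(\delta)}$ is the twist of $\lambda$ by $\delta$, and similarly $m_{\delta^{-1}}$ twists by $\delta^{-1}$. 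Combined with the anti-linearity relation $w_* \circ \lambda = \iota(\lambda) \circ w_*$ and the fact that $\iota$ commutes appropriately with the $\delta$-twist (namely $\iota(\lambda^{(\delta^{-1})}) = \iota(\lambda)^{(\delta)}$, since $\iota$ sends $\sigma_a \mapsto \sigma_a^{-1}$ and $\delta(a^{-1}) = \delta(a)^{-1}$), one reduces the identity on $z = \lambda \cdot e$ to the already-established identity on the generator $e \in \Robba_A^+ \boxtimes \zpe$. Alternatively, and perhaps more cleanly, one can invoke density: $\mathscr{D}(\zpe, A) = \Robba_A^+ \boxtimes \zpe$ is dense in $\Robba_A \boxtimes \zpe = \Robba_A^{\psi=0}$ (cf. the exact sequence before Lemma 2.x), both $m_\delta \circ w_*$ and $w_* \circ m_{\delta^{-1}}$ are continuous, and two continuous maps agreeing on a dense subspace agree everywhere.

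The main obstacle I anticipate is bookkeeping the interaction between $m_\delta$ and the $\Robba_A(\Gamma)$-module structure — i.e. making precise the "twist by $\delta$" statement $m_\delta(\lambda \cdot z) = \lambda^{(\delta)} \cdot m_\delta(z)$ and checking it is compatible with $\iota$ — if one goes the linearity route. The density route sidesteps this but requires care that $m_\delta$ (constructed in Proposition \ref{cor:stup} via a convergent series) is genuinely continuous on the $LF$-space $\Robba_A^{\psi=0}$ and that $w_*$ is too (which is part of Lemma \ref{lem3.50}); both are available, so the density argument is the one I would actually write. In either approach the computation on distributions in the first step is routine given the explicit integral formulas, using only $\delta(x^{-1}) = \delta(x)^{-1}$ and the $\Gamma$-anti-linearity of $w_*$.
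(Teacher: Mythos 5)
Your Step 1 (the identity on $\mathscr{D}(\zpe,A)$ via the integral formulas) is fine, but the extension step you say you would actually write -- the density argument -- does not work: $\mathscr{D}(\zpe,A) = \Robba_A^+ \boxtimes \zpe$ is \emph{not} dense in $\Robba_A \boxtimes \zpe = \Robba_A^{\psi=0}$. Indeed $\Robba_A^+$ is a \emph{closed} sub-$A$-module of $\Robba_A$ (cf. Example \ref{ex:afelrob}, where the topological decomposition $\Robba_A = \Robba_A^+ \oplus \Robba_A^\sim$ is established), so $(\Robba_A^+)^{\psi=0}$ is closed in $\Robba_A^{\psi=0}$, and it is proper since the quotient is $\mathrm{LA}(\zpe,A)\otimes\chi^{-1}\neq 0$; the very exact sequence you invoke shows the subspace is closed with nonzero Hausdorff quotient, hence not dense. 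So continuity of the two operators gives you nothing beyond the distribution part, and the proposal as you intend to write it has a genuine gap.

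Your fallback route (semilinearity over $\Robba_A(\Gamma)$) is closer to viable, but its key ingredient, $m_\delta(\lambda\cdot z) = \lambda^{(\delta)}\cdot m_\delta(z)$ for all $\lambda \in \Robba_A(\Gamma)$, is not available in the paper and is not mere bookkeeping: for group-algebra elements it amounts to $m_\delta\circ\sigma_a = \delta(a)\,\sigma_a\circ m_\delta$, which itself is most naturally proved by checking it for $\delta = x^k$ (where $m_{x^k}=\partial^k$ and $\partial\sigma_a = a\,\sigma_a\partial$) and then using rigid analyticity of $\delta\mapsto m_\delta(f)$ together with Zariski density of the characters $x^k$ -- i.e.\ exactly the mechanism the paper uses to prove the lemma directly. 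The paper's proof is shorter along these lines: by Lemma \ref{lem3.50} one has $w_* = \partial\, w_*\, \partial$, hence $m_\delta\circ w_* = w_*\circ m_{\delta^{-1}}$ for $\delta = x^k$, $k\in\Z$; then, since $\delta\mapsto m_\delta\circ w_*$ and $\delta\mapsto w_*\circ m_{\delta^{-1}}$ are rigid-analytic in $\delta$ (this is part of the construction in Proposition \ref{cor:stup}) and agree on the Zariski-dense set $\{x^k\}$, they agree for all $\delta$. If you want to salvage your approach, you must either prove the twisted semilinearity over $\Robba_A(\Gamma)$ (and then your reduction $z=\lambda\cdot e$ with $e=(1+T)$ does close the argument), or abandon density and argue in $\delta$ as the paper does.
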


\begin{proof}
By Lemma \ref{lem3.50}, the identity is true for $\delta = x^k$ for all $k \in \mathbf{Z}$ (this is because $\partial^k = m_{x^k}$). Now the functions $\delta \mapsto m_{\delta} \circ w_{*}$ and $\delta \mapsto w_{*} \circ m_{\delta^{-1}}$ are rigid functions and coincide on $x^k$ for all $k \in \mathbf{Z}$. Thus they coincide for all $\delta$.  
\end{proof}

If $\Delta \in \Phi\Gamma(\Robba_A)$, $\omega \colon \qpe \to A^\times$ (for applications $\omega$ will be $\delta_1\delta_2\chi^{-1}$ for any two continuous characters $\delta_1, \delta_2 \colon \qpe \to A^{\times}$) is a locally analytic character and $\iota$ is an involution on $\Delta \boxtimes \zpe$, we can define a module $\Delta \boxtimes_{\omega, \iota} \P^1$ (cf. \cite[\S 3.1.1]{colmez2015} for details) equipped with an action of a group $\tilde{G}$ generated freely by a group $\tilde{Z}$ isomorphic to the torus $\{ {\matrice a 0 0 a}, a \in \qpe \}$ (acting on $\Delta \boxtimes_{\omega, \iota} \P^1$ via multiplication by $\omega$), a group $\tilde{A}^0 \cong \zpe$ (encoding the action of $\sigma_a$), a group $\tilde{U} \cong p \zp$ (encoding the multiplication by $(1 + T)^b$, $b \in p \zp$) and the elements ${\matrice p 0 0 1}$ (encoding the action of $\varphi$) and $w = {\matrice 0  1 1 0}$. Precisely, the $\tilde{G}$-module $\Delta \boxtimes_{\omega, \iota} \P^1$ is defined as \[ \Delta \boxtimes_{\omega, \iota} \P^1 = \{ (z_1, z_2) \in \Delta \times \Delta \; : \; \mathrm{Res}_\zpe(z_1) = \iota(\Res_{\zpe}(z_2)) \} \] and the action of $\tilde{G}$ on an element $z = (z_1, z_2) \in \Delta \boxtimes_{\omega, \iota} \P^1$ is described by the following formulae: 
\begin{itemize}
\item ${\matrice 0 1  1 0} \cdot z = (z_2, z_1)$.
\item ${\matrice a 0 0 a} \cdot z = (\omega(a) z_1, \omega(a) z_2)$, $a \in \qpe$.
\item ${\matrice a 0 0 1} \cdot z = ( {\matrice a 0 0 1} z_1, \omega(a) {\matrice {a^{-1}} 0 0 1} z_2)$, $a \in \zpe$.
\item If $z' = {\matrice p 0 0 1} z$, then $\mathrm{Res}_{p \zp} z' = {\matrice p 0 0 1} z_1$ and $\mathrm{Res}_{\zp} w z' = \omega(p) \psi(z_2)$.
\item If $b \in p \zp$ and $z' = {\matrice 1 b 0 1}\cdot z$ then $\mathrm{Res}_\zp z' = {\matrice 1 b 0 1} \cdot z_1$ and $\mathrm{Res}_{p \zp} wz' = u_b(\mathrm{Res}_{p \zp}(z_2))$, where \[ u_b = \omega(1 + b) {\matrice 1 {-1} 0 1} \circ \iota \circ {\matrice {(1 + b)^{-2}} {b(1 + b)^{-1}} 0 1} \circ \iota \circ {\matrice 1 {(1 + b)^{-1}} 0 1 } \text{ on } \Delta^+ \boxtimes p\zp. \]
\end{itemize}

\begin{lemma}
The functor $M \mapsto M \boxtimes_{\omega, \iota} \P^1$ is an exact functor from $P^+$-modules living on $\zp$ to $\tilde{G}$-modules living on $\P^1(\qp)$.
\end{lemma}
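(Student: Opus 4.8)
The plan is to verify exactness objectwise, by unwinding the definition of $M \boxtimes_{\omega,\iota} \P^1$ as the fibre product $\{(z_1,z_2) \in M \times M : \mathrm{Res}_{\zpe}(z_1) = \iota(\mathrm{Res}_{\zpe}(z_2))\}$, and checking that the $\tilde{G}$-action formulae are manifestly functorial in $M$. So the starting point is a short exact sequence $0 \to M' \to M \to M'' \to 0$ of $P^+$-modules living on $\zp$; since the functor $M \mapsto M \boxtimes_{\omega,\iota}\P^1$ is evidently additive and sends morphisms to morphisms (the maps $z_i \mapsto f(z_i)$ commute with $\mathrm{Res}_{\zpe}$ and with $\iota$, which is part of the data of a morphism of $P^+$-modules on $\zp$), it suffices to establish left-exactness and right-exactness separately.

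Left-exactness is the easy half: $M \boxtimes_{\omega,\iota}\P^1 \subseteq M \times M$, the functor $M \mapsto M \times M$ is exact, and the fibre-product condition is preserved under the injection $M' \hookrightarrow M$ (an element $(z_1,z_2) \in M' \times M'$ satisfying the compatibility in $M'$ satisfies it in $M$, and conversely if its image in $M \boxtimes_{\omega,\iota}\P^1$ lies in the fibre product and the components already lie in $M'$). For right-exactness, one takes $(\bar z_1, \bar z_2) \in M'' \boxtimes_{\omega,\iota}\P^1$; lift $\bar z_1$ and $\bar z_2$ to $z_1, z_2 \in M$ arbitrarily. The obstruction to $(z_1,z_2)$ lying in the fibre product is the element $c := \mathrm{Res}_{\zpe}(z_1) - \iota(\mathrm{Res}_{\zpe}(z_2)) \in M' \boxtimes \zpe$ (it maps to $0$ in $M'' \boxtimes \zpe$ by hypothesis, and $M' \boxtimes \zpe = \ker(M \boxtimes \zpe \to M'' \boxtimes \zpe)$ because $\mathrm{Res}_{\zpe}$ is a direct summand projection, hence exact — here one uses that $\zpe$ is open in $\zp$, so $\mathrm{Res}_{\zpe}$ is an idempotent and $M \boxtimes U$ depends exactly on $M$ for each fixed $U$). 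Now modify $z_1$ by a preimage: since $M' \boxtimes \zpe$ is a direct summand of $M'$ (again via $\mathrm{Res}_{\zpe}$) and $M' \subseteq M$, we may view $c \in M'$ with $\mathrm{Res}_{\zpe}(c) = c$, and replace $z_1$ by $z_1 - c$. Then $(z_1 - c, z_2)$ lies in $M \boxtimes_{\omega,\iota}\P^1$ and still lifts $(\bar z_1, \bar z_2)$, giving surjectivity.

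I expect the main (though still mild) obstacle to be bookkeeping around the two distinct roles of $\mathrm{Res}_{\zpe}$: as a map $M \boxtimes \zp = M \to M \boxtimes \zpe$ it must be checked to be a split surjection functorially in $M$ (this is the identity $z = \sum_{i \bmod p} \mathrm{Res}_{i+p\zp} z$ together with $\mathrm{Res}_{\zpe} = \sum_{i \in (\Z/p)^\times}\mathrm{Res}_{i+p\zp}$, all built from $\varphi, \psi$ and $(1+T)^i$ which are part of the $P^+$-action), so that $M \mapsto M \boxtimes \zpe$ is itself exact and $M' \boxtimes \zpe \hookrightarrow M \boxtimes \zpe$ is the honest kernel of $M\boxtimes\zpe \surj M''\boxtimes\zpe$. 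Once that is in place, everything reduces to the standard fact that a fibre product of exact functors along a split-surjective natural transformation is exact. One should also remark that $\iota$ being a morphism of $P^+$-modules (or at least compatible with the formation of $M' \subseteq M \surj M''$, which holds in all cases of interest since the involutions $w_*$, $m_{\delta^{-1}}\circ w_*$ are defined uniformly, cf. Lemmas \ref{lem3.50} and \ref{lem:preinvres}) is exactly what makes the compatibility condition functorial; this is implicitly part of the hypothesis that we are working with a fixed family of involutions, and should be stated as such. The verification that the $\tilde{G}$-action on $M \boxtimes_{\omega,\iota}\P^1$ restricts and descends correctly along $M' \subseteq M \surj M''$ is immediate from the explicit formulae, since each of them is expressed through the $P^+$-action, $\psi$, $\iota$, $u_b$ and multiplication by $\omega$, all $A$-linear and functorial.
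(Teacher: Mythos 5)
Your proof is correct and takes essentially the same route as the paper: left-exactness by viewing $M \boxtimes_{\omega,\iota}\P^1$ inside $M \times M$, and surjectivity by lifting $(\bar z_1,\bar z_2)$ arbitrarily and correcting $z_1$ by the obstruction $\mathrm{Res}_{\zpe}(z_1)-\iota(\mathrm{Res}_{\zpe}(z_2))$, which lies in $M'$ and is fixed by $\mathrm{Res}_{\zpe}$. Your extra remarks on the functoriality of $\mathrm{Res}_{\zpe}$ and of the involutions make explicit what the paper leaves implicit, but the argument is the same.
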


\begin{proof}
Let $0 \to M' \to M \to M'' \to 0$ be a short exact sequence of $P^+$-modules. We claim that we have an exact sequence $0 \to M' \boxtimes_{\omega, \iota} \P^1 \to M \boxtimes_{\omega, \iota} \P^1 \to M'' \boxtimes_{\omega, \iota} \P^1$. Let's show that the last arrow is surjective (for exactness in the middle and injectivity, the proof is similar). Let $(c, d) \in M'' \boxtimes_{\omega, \iota} \P^1$ and $(a, b) \in M \times M$ be any lifting. The element $\mathrm{Res}_\zpe a - \iota(\mathrm{Res}_\zpe b)$ maps to zero in $M''$ and so there exists an element $x \in M'$ such that $\mathrm{Res}_\zpe a - \iota(\mathrm{Res}_\zpe b) = x$. The element $(a - x, b) \in M \boxtimes_{\omega, \iota} \P^1$ maps then to $(c, d)$.
\end{proof}

For $\delta_1, \delta_2: \qpe \to A^\times$ continuous characters, recall that we have set $\delta = \delta_1 \delta_2^{-1} \chi^{-1}, \omega = \delta_1 \delta_2 \chi^{-1}$. We will soon be working with $\Delta \in \Phi\Gamma(\Robba_A)$ which is an extension of $\Robba_A(\delta_2)$ by $\Robba_A(\delta_1)$. Thus we need to \emph{twist} appropriately the current involution $w_*$, cf. Lemma \ref{lem3.50}, on $\Robba_A \boxtimes \zpe$.  We define an involution $\iota_{\delta_1, \delta_2}$\footnote{The fact that $i_{\delta_1, \delta_2}$ is an involution follows from Lemma \ref{lem:preinvres}.} acting on the module $\Robba_A(\delta_1) \boxtimes \zpe$ by the formula\footnote{By Proposition \ref{cor:stup}, this formula is well defined.} \[ \iota_{\delta_1, \delta_2}(f \otimes \delta_1) = (\delta_1(-1) w_* \circ m_{\delta^{-1}}(z)) \otimes \delta_1. \]

 We get in this way a module $\Robba_A(\delta_1) \boxtimes_{\omega, \iota_{\delta_1, \delta_2}} \P^1$, that we simply note $\Robba_A(\delta_1) \boxtimes_{\omega} \P^1$, equipped with an action of $\tilde{G}$. We show in what follows that this action of $\tilde{G}$ factorises through $G$.

Recall that, for a finite extension $L$ of $\qp$ and $\Delta \in \Phi\Gamma(\Robba)$ of rank 1 or 2 (if it is of rank 2, assume it is also trianguline), $\omega: \qpe \to L^\times$ locally analytic and $\iota$ an involution on $\Delta \boxtimes \zpe$, we have (cf. \cite{colmez2015}) a $G$-module $\Delta \boxtimes_\omega \P^1$. the following lemma shows that, for $\Delta = \Robba_A(\delta_1)$, our construction specializes to that of Colmez.

\begin{lemma}\label{lem:cast}
Let $\mathfrak{m} \subseteq A$ be a maximal ideal of $A$, $L = A/\mathfrak{m}$, $\Delta \in \Phi\Gamma(\Robba_A)$. Assume that $\Delta$ is either of rank 1 or 2 (if it is of rank 2, assume it is also trianguline) and $\omega$, $\iota$ be as above. Then the $\tilde{G}$-module $(\Delta \boxtimes_\omega \P^1) \otimes_A L$ is canonically isomorphic to $(\Delta \otimes_A L) \boxtimes_{\omega \otimes L} \P^1$.
\end{lemma}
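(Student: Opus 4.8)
The plan is to show that the construction $\Delta\boxtimes_{\omega,\iota}\P^1$ is assembled, in an entirely $A$-linear way, from data — the $(\varphi,\Gamma)$-module $\Delta$ and its operators $\varphi,\psi,\sigma_a$ and multiplication by $(1+T)^b$, the idempotents $\mathrm{Res}_U$, the involution $\iota$ on $\Delta\boxtimes\zpe$, and the character $\omega$ — each of which is compatible with base change along $A\to L=A/\mathfrak m$, and then to finish with a short splitting argument. I would first record that $\Delta\otimes_A L$ is naturally a $(\varphi,\Gamma)$-module over $\Robba_A\otimes_A L=\Robba_L$ (here $L/\qp$ finite is used, so that completed and algebraic tensor products agree and $\Robba_A/\mathfrak m\Robba_A=\Robba_L$; this is standard, cf. \cite{KedLiurelpa}), and that $\varphi,\psi,\sigma_a$ and multiplication by $(1+T)^b$ reduce modulo $\mathfrak m$ to the corresponding operators on $\Delta_L:=\Delta\otimes_A L$.

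Consequently, for every compact open $U\subseteq\zp$ the operator $\mathrm{Res}_U=\sum_i{\matrice 1 i 0 1}\varphi^n\psi^n{\matrice 1{-i}0 1}$ is an $A$-linear idempotent (it is a sum of conjugates of $\varphi\psi$, which is idempotent since $\psi\varphi=\mathrm{id}$) that reduces to the corresponding $\mathrm{Res}_U$ on $\Delta_L$. Since $\Delta\boxtimes U=\mathrm{Im}(\mathrm{Res}_U)$ is an $A$-module direct summand of $\Delta$, we get a canonical identification $(\Delta\boxtimes U)\otimes_A L=\Delta_L\boxtimes U$, compatible with the restriction maps $\mathrm{Res}^V_U$ (which are just restrictions of the idempotents) and with the $P^+$-action; I take $U=\zpe$ in particular.

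Next I would check the involution. For the involutions relevant to us — $\iota=\iota_{\delta_1,\delta_2}=(\delta_1(-1)\,w_*\circ m_{\delta^{-1}})\otimes\delta_1$ in the rank-$1$ case, and $\iota$ induced from $w_*$ in the rank-$2$ case — the operator $m_{\delta^{-1}}$ commutes with base change by Proposition \ref{cor:stup} (for fixed $f$, both sides are rigid analytic functions of the character agreeing on $x\mapsto x^k$, hence compatible with the ring map $A\to L$), while the involution $w_*$ of Lemma \ref{lem3.50} is characterised by being $\Robba_A(\Gamma)$-anti-linear for $\iota\colon\sigma_a\mapsto\sigma_a^{-1}$ together with its restriction to $\Robba_A^+\boxtimes\zpe=\mathscr D(\zpe,A)=\mathscr D(\zpe,\qp)\widehat\otimes_\qp A$, which is simply $w_*^{\qp}\widehat\otimes\mathrm{id}_A$; both properties pass to $L$ (using $\Robba_A(\Gamma)\otimes_A L=\Robba_L(\Gamma)$ compatibly with $\iota$), so by the uniqueness in Lemma \ref{lem3.50} the reduction of $w_*$ is the corresponding involution over $L$. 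Hence $\iota$ reduces to the corresponding involution over $L$ on $\Delta_L\boxtimes\zpe$, and likewise $\omega$ reduces to $\omega\otimes L$ (a general $\iota$ with this reduction property is handled identically).

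Finally I would assemble the pieces. By definition $\Delta\boxtimes_{\omega,\iota}\P^1=\ker f$, where $f\colon\Delta\oplus\Delta\to\Delta\boxtimes\zpe$ sends $(z_1,z_2)\mapsto\mathrm{Res}_{\zpe}(z_1)-\iota(\mathrm{Res}_{\zpe}(z_2))$; since $\iota$ preserves $\Delta\boxtimes\zpe=\mathrm{Im}(\mathrm{Res}_{\zpe})$ and is an involution, $a\mapsto(0,-\iota(a))$ is an $A$-linear continuous section of $f$, so
\[
0\to\Delta\boxtimes_{\omega,\iota}\P^1\to\Delta\oplus\Delta\xrightarrow{f}\Delta\boxtimes\zpe\to0
\]
is (topologically) split, hence remains exact after $\otimes_A L$. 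Combining this with the previous two paragraphs identifies $(\Delta\boxtimes_{\omega,\iota}\P^1)\otimes_A L$ canonically with the kernel of $(z_1,z_2)\mapsto\mathrm{Res}_{\zpe}z_1-\iota_L(\mathrm{Res}_{\zpe}z_2)$ on $\Delta_L\oplus\Delta_L$, i.e. with $(\Delta\otimes_A L)\boxtimes_{\omega\otimes L}\P^1$. As the $\tilde G$-action on $\Delta\boxtimes_{\omega,\iota}\P^1$ is given by explicit $A$-linear formulae in $\varphi,\psi,\sigma_a$, multiplication by $(1+T)^b$, multiplication by $\omega(a)$, the swap $w$, and the operator $u_b$ (built from $\iota$ and $\omega$) — all compatible with $\otimes_A L$ — this identification is $\tilde G$-equivariant; in the rank-$2$ case one also notes that triangulinity is preserved by $\otimes_A L$ and that Lemma \ref{lem3.50} holds over an arbitrary affinoid base, so the input needed from \cite{colmez2015} for $\Delta_L$ is available. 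The main obstacle is the bookkeeping of the topological base change — checking $\Robba_A/\mathfrak m\Robba_A=\Robba_L$ and that the completed tensor products defining $\mathscr D(\zpe,A)$, $\Robba_A(\Gamma)$, etc. behave well under $\otimes_A L$, and, relatedly, that the indirectly-defined $w_*$ of Lemma \ref{lem3.50} genuinely reduces to its counterpart over $L$; once these are in place the remainder is the formal splitting argument above.
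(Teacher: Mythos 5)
Your proof is correct and follows essentially the same route as the paper: the paper's (one-line) argument also rests on the uniqueness of the involution $w_*$ of Lemma \ref{lem3.50} (and of Colmez's involution over $L$), which forces the reduction of $\iota$ modulo $\mathfrak{m}$ to coincide with the involution over $L$ since they agree on $\Robba^+\boxtimes\zpe$, the remaining compatibilities with $\otimes_A L$ being regarded as immediate. Your explicit splitting of $0\to\Delta\boxtimes_{\omega,\iota}\P^1\to\Delta\oplus\Delta\to\Delta\boxtimes\zpe\to0$ and the idempotent description of $\Delta\boxtimes U$ simply make precise the bookkeeping the paper leaves implicit.
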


\begin{proof}
This is immediate. The uniqueness of both involutions $w_*$ defined in Lemma \ref{lem3.50} above and in \cite[Proposition 2.19]{colmez2015} shows that they both coincide (since they do on $\Robba^+ \boxtimes \P^1$).
\end{proof}

The following result provides a link between the $- \boxtimes_{\omega} \P^1$ construction and principal series, cf. \S \ref{subpinc}

\begin{lemma}\label{lem:princser}
We have\footnote{By $B_A(\delta_1, \delta_2)^*$ we mean $\mathrm{Hom}_{A,\mathrm{cont}}(B_A(\delta_1, \delta_2),A)$ equipped with the strong dual topology, cf. Defintition \ref{def:stdua}, where $(-)^*$ is denoted by $(-)'_b$ there.} 
\begin{itemize} 
\item $\Robba^+_A(\delta_1) \boxtimes_\omega \P^1 \cong B_A(\delta_2, \delta_1)^* \otimes \omega$
\item $\Robba^-_A(\delta_1) \boxtimes_\omega \P^1 \cong B_A(\delta_1, \delta_2)$.
\end{itemize}
Moreover $\Robba^+_A(\delta_1) \boxtimes_\omega \P^1$ and $\Robba^-_A(\delta_1) \boxtimes_\omega \P^1$ are $\qp$-analytic sheaves. 
\end{lemma}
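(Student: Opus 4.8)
The plan is to identify each of these sheaves, chart by chart, with objects already at our disposal. Recall that $\P^1$ is glued from two copies of $\zp$ along $\zpe$, and that by the Amice and Colmez transforms (Proposition \ref{amice}) we have $P^+$-equivariant isomorphisms $\Robba_A^+(\delta_1)\cong\mathscr{D}(\zp,A)\otimes\delta_1$ and $\Robba_A^-(\delta_1)\cong(\mathrm{LA}(\zp,A)\otimes\chi^{-1})\otimes\delta_1$. First I would treat the quotient sheaf. By construction $\Robba_A^-(\delta_1)\boxtimes_\omega\P^1$ is the fibre product of two copies of $\mathrm{LA}(\zp,A)\otimes\chi^{-1}\delta_1$ over $\mathrm{LA}(\zpe,A)\otimes\chi^{-1}\delta_1$ along $\iota_{\delta_1,\delta_2}$; on the $\mathrm{LA}$-side $w_*$ is the involution $\phi(x)\mapsto\phi(x^{-1})$ and $m_{\delta^{-1}}$ is multiplication by $\delta^{-1}$ (Propositions \ref{amice} and \ref{cor:stup}), so up to the $\chi^{-1}$-twist the gluing reads $\phi\mapsto\delta_1(-1)\,\delta^{-1}(x)\,\phi(x^{-1})$. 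Comparing this with the defining condition of $B_A(\delta_1,\delta_2)$ — that $\delta(x)\phi(1/x)$ extend analytically at $0$ — one sees that a compatible pair $(z_1,z_2)$ is exactly the datum of a function in $B_A(\delta_1,\delta_2)$, the $z_i$ being its restrictions to the two charts. It then remains to check that the $\tilde G$-action spelled out before Lemma \ref{lem:cast} (the formulas for $w$, for the centre $\matrice{a}{0}{0}{a}$, for $\matrice{a}{0}{0}{1}$ with $a\in\zpe$, for $\matrice{p}{0}{0}{1}$, and for $\matrice{1}{b}{0}{1}$ with $b\in p\zp$, together with the operator $u_b$) goes over under this dictionary to the principal-series action of \S \ref{subpinc}; this is the same computation as in \cite[\S 4.3]{colmez2015}, and since $B_A(\delta_1,\delta_2)$ is a bona fide $G$-representation it follows in passing that the $\tilde G$-action on $\Robba_A^-(\delta_1)\boxtimes_\omega\P^1$ factors through $G$.

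For the submodule sheaf I would argue by duality rather than repeat the computation. Since $\omega=\delta_1\delta_2\chi^{-1}$ is symmetric in $\delta_1$ and $\delta_2$, the previous step with $\delta_1$ and $\delta_2$ interchanged gives $\Robba_A^-(\delta_2)\boxtimes_\omega\P^1\cong B_A(\delta_2,\delta_1)$. Now the residue pairing of \S \ref{sec:dark} descends to a perfect $\Gamma$-equivariant pairing $\mathscr{D}(\zpe,A)=\Robba_A^+(\delta_1)\boxtimes\zpe$ against $\Robba_A^-(\delta_2)\boxtimes\zpe$ with values in $A(\omega)$ (note $\check{\Robba_A(\delta_1)}=\Robba_A(\chi\delta_1^{-1})=\Robba_A(\delta_2)\otimes\omega^{-1}$), and by Lemma \ref{lem:preinvres} it is compatible with the two gluing involutions $\iota_{\delta_1,\delta_2}$ and $\iota_{\delta_2,\delta_1}$; hence it globalises to a $G$-equivariant perfect duality $(\Robba_A^+(\delta_1)\boxtimes_\omega\P^1)\times(\Robba_A^-(\delta_2)\boxtimes_\omega\P^1)\to A(\omega)$ identifying each factor with the $\omega$-twisted strong dual of the other, in the manner of \cite[Proposition III.2.3]{colmirabol}. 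Combined with the previous paragraph this yields $\Robba_A^+(\delta_1)\boxtimes_\omega\P^1\cong B_A(\delta_2,\delta_1)^*\otimes\omega$. (Alternatively one can unwind this directly from the description of $B_A(\delta_2,\delta_1)^*$ as distributions on $\P^1$ recalled in \S \ref{subpinc} together with the Amice transform $\Robba_A^+(\delta_1)\cong\mathscr{D}(\zp,A)\otimes\delta_1$; the duality route is shorter.)

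For the $\qp$-analyticity, let $U\subseteq\P^1$ be a compact open. If $U$ lies in one of the two charts $\zp$, then via the transforms above the sections of $\Robba_A^-(\delta_1)\boxtimes_\omega\P^1$ (resp. of $\Robba_A^+(\delta_1)\boxtimes_\omega\P^1$) over $U$ are $\mathrm{LA}(U,A)$ (resp. $\mathscr{D}(U,A)$) tensored with a rank-one $A$-module: these are of $A$-LB-type and $A$-Fr\'echet-type respectively, hence of $A$-LF-type in the sense of Definition \ref{def:frech}, and an arbitrary compact open is a finite disjoint union of such pieces, so the sheaf axiom reduces us to this case. If $K\subseteq G$ is a compact open subgroup stabilising $U$, its action on $U$ is locally analytic, so $\mathrm{LA}(U,A)$ and $\mathscr{D}(U,A)$ acquire continuous $\mathscr{D}(K,A)$-module structures by the general theory of \S \ref{ap:lagrep} (the same mechanism as Remark \ref{analytic}), and tensoring by the rank-one module is harmless. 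This verifies Definition \ref{deflasheaf}.

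The step I expect to be the main obstacle is the action matching in the first paragraph: keeping track of the $\chi^{-1}$-twists coming from the Colmez/Amice transforms, of the central-character twist by $\omega$, and of the operator $u_b$, while checking that every one of the generating matrices of $\tilde G$ acts under the dictionary exactly as in the principal series. Everything downstream is formal — a perfect-pairing argument for the $\Robba^+$-isomorphism, and a routine chart-by-chart analysis for analyticity.
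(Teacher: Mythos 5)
Your proposal is correct and, in substance, it is the paper's own argument written out in detail: the paper disposes of both identifications by citing \cite[Corollaire 4.11]{colmez2015}, observing that the only thing to check is that the two $G$-actions coincide and that this is insensitive to replacing $L$ by $A$, and it deduces the analyticity statement from Lemma \ref{lem:prinbag}. Your first paragraph is exactly that verification for $\Robba_A^-(\delta_1)\boxtimes_\omega\P^1$; your duality route to the first bullet is a mild variant (Colmez reads $\Robba^+_A(\delta_1)\boxtimes_\omega\P^1$ directly as twisted distributions on $\P^1$ via the Amice transform, which is the alternative you mention), and it does go through here because the pairing of Example \ref{ex:afelrob} is compatible with the twisted involutions by Lemma \ref{lem:preinvres} --- the same mechanism the paper uses in Lemma \ref{lem:dualla}.

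The one place where your route genuinely differs, and is slightly loose, is the analyticity step. The paper gets the separately continuous $\mathscr{D}(G,A)$-structure on the global sections for free from Lemma \ref{lem:prinbag} (the principal series and its strong dual are objects of $\mathscr{G}_{G,A}$), and then $\mathscr{M}\boxtimes U=\mathrm{Res}_U(\mathscr{M}\boxtimes\P^1)$ is a $K$-stable direct summand for any compact open $K$ stabilising $U$, so it inherits the $\mathscr{D}(K,A)$-action. Your chart-by-chart reduction does prove the $A$-LF-type claim, but for the $\mathscr{D}(K,A)$-module structure the reduction ``to $U$ inside a chart'' is not automatic: a compact open subgroup $K$ stabilising $U$ need not preserve the decomposition of $U$ into chart pieces (already $U=\P^1$ and $K=\GL_2(\zp)$ mix the two charts), so the module structures on the pieces do not directly assemble. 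This is fixable --- either shrink to an open subgroup $K'\subseteq K$ preserving a fine enough decomposition and use that $\mathscr{D}(K,A)$ is free of finite rank over $\mathscr{D}(K',A)$, or argue globally as the paper does via Lemma \ref{lem:prinbag} --- but as written that step is the only real gap in your sketch; the rest is the same circle of ideas as the paper's reference to Colmez.
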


\begin{proof}
This is essentially \cite[Corollaire 4.11]{colmez2015}. The same proof carries over with $A$ in place of $L$ (since one only checks that both actions of $G$ coincide and the coefficient ring plays no role). The last part follows from Lemma \ref{lem:prinbag}. 
\end{proof}

For the rest of this paper we note $\Robba_A(\delta_1,\delta_2)$ to be the $\overline{P}^+$-module \footnote{We warn the reader that the module we call $\Robba_A(\delta_1,\delta_2)$ is not the one noted in the same way in \cite[\S 4.3.2]{colmez2015}. In our notation, $\Robba_A(\delta_1,\delta_2)$ corresponds to the module $\Robba_A(\delta_1,\delta_2, \eta)$ for $\eta = 1$ as defined in \cite[\S 5.6]{colmez2015}. }
\[
\Robba_A(\delta_1,\delta_2):= (\Robba_A(\delta_1) \boxtimes_{\omega} \zp) \otimes \delta_2^{-1}. 
\]
We set $\Robba_A^{+}(\delta_1, \delta_2)$ the sub-$\overline{P}^+$-module of $\Robba_A(\delta_1, \delta_2)$ corresponding to $\Robba_A^+$, and $\Robba_A^{-}(\delta_1, \delta_2)$ to be the quotient of $\Robba_A(\delta_1, \delta_2)$ by $\Robba_A^{+}(\delta_1, \delta_2)$. 

\begin{rema}
As $A^+$-modules, $\Robba_A(\delta_1, \delta_2)$, $\Robba_A^+(\delta_1, \delta_2)$ and $\Robba_A^-(\delta_1, \delta_2)$ are respectively isomorphic to $\Robba_A(\delta_1\delta_2^{-1})$, $\Robba_A^{+}(\delta_1\delta_2^{-1})$ and $\mathrm{LA}(\zp,A) \otimes \delta$. The technical heart of this paper is to compare the $\overline{P}^+$-cohomology of $\Robba_A(\delta_1, \delta_2)$ and the $A^+$-cohomology of $\Robba_A(\delta_1\delta_2^{-1})$, cf. \S \ref{sec:extgrpact}. 
\end{rema}

The following is the main result of this section, which is a relative version of \cite[Proposition 4.12]{colmez2015}.

\begin{prop} \label{actionG}
The action of $\tilde{G}$ on $\Robba_A(\delta_1) \boxtimes_\omega \P^1$ factorises through $G$ and we have an exact sequence of $G$-modules \[ 0 \to B_A(\delta_2, \delta_1)^* \otimes \omega \to \Robba_A(\delta_1) \boxtimes_\omega \P^1 \to B_A(\delta_1, \delta_2) \to 0. \] 
Moreover $\Robba_A(\delta_1) \boxtimes_\omega \P^1$ is a $\qp$-analytic sheaf. 
\end{prop}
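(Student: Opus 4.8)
The plan is to reduce everything to the corresponding statement over a point, which is Proposition 4.12 of \cite{colmez2015}, together with the relative identifications of the sub- and quotient sheaves with principal series already recorded in Lemma \ref{lem:princser}. The key geometric input is that the module $\Robba_A(\delta_1)\boxtimes_{\omega}\P^1$ fits, by construction and the exactness of $M\mapsto M\boxtimes_{\omega,\iota}\P^1$ (the lemma just before), in a short exact sequence of $\tilde G$-modules
\[
0\to \Robba_A^+(\delta_1)\boxtimes_{\omega}\P^1\to \Robba_A(\delta_1)\boxtimes_{\omega}\P^1\to \Robba_A^-(\delta_1)\boxtimes_{\omega}\P^1\to 0,
\]
and that by Lemma \ref{lem:princser} the outer terms are $B_A(\delta_2,\delta_1)^*\otimes\omega$ and $B_A(\delta_1,\delta_2)$, on which $\tilde G$ already acts through $G$ (being genuine $G$-representations). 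So the content is: (i) the $\tilde G$-action on the middle term factors through $G$; (ii) the exact sequence above, already known, is literally the asserted one; (iii) the middle term is $\qp$-analytic.

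For (i), I would argue exactly as in \cite[Proposition 4.12]{colmez2015}. The group $\tilde G$ is the free product amalgamating the generators $w$, $\tilde Z$, $\tilde A^0$, $\tilde U$ and ${\matrice p001}$, and $G$ is a quotient of $\tilde G$; the kernel is generated by a short explicit list of relations (the Iwahori-type relations and, crucially, the relation coming from $w{\matrice 1p01}w = {\matrice{1}{0}{p}{1}}\cdots$, i.e. that $w^2=1$ and the braid relation for the reflection). One checks that each such relator acts trivially on $\Robba_A(\delta_1)\boxtimes_\omega\P^1$. For all relators not involving $w$ this is formal from the $P^+$-equivariant sheaf structure on $\zp$. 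The only relation requiring work is the one controlling the two overlapping charts, and here the point is that the obstruction lives in a space of maps built from $w_*$, $m_{\delta^{-1}}$, $\psi^n$ and $\varphi^n$ — and each of these operators, as well as the whole relation, is defined over $\qp$ and commutes with the $A$-linear structure, being obtained by $\widehat\otimes_{\qp}A$ from the corresponding operators over $\qp$ (using $\Robba_A^{\psi=0}=\Robba_{\qp}^{\psi=0}\widehat\otimes_{\qp}A$, Proposition \ref{amice}, Proposition \ref{cor:stup}, and Lemma \ref{lem:preinvres}). Hence the relation holds over $A$ because it holds over $\qp$ by Colmez, and one does not need to redo his computation. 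Alternatively, one may verify the relation pointwise: by Lemma \ref{lem:cast}, $(\Robba_A(\delta_1)\boxtimes_\omega\P^1)\otimes_A A/\mathfrak m \cong (\Robba_{A/\mathfrak m}(\delta_1))\boxtimes_{\omega}\P^1$ on which $\tilde G$ acts through $G$ by \cite{colmez2015}, so each relator acts on $\Robba_A(\delta_1)\boxtimes_\omega\P^1$ by an endomorphism killing every maximal ideal; combined with the (flat, finitely generated over $\Robba_A(\Gamma)$, hence "enough points") structure of $\Robba_A^{\psi=0}$ recorded in Remark \ref{rem:che} and Lemma \ref{flatnessR}, this forces it to be the identity.

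For (iii), $\qp$-analyticity means that for every open compact $U\subseteq\P^1$ and every open compact subgroup $K\subseteq G$ stabilising $U$, $\Robba_A(\delta_1)\boxtimes_\omega U$ is of $A$-LF-type and a continuous $\mathscr D(K,A)$-module. The two chart pieces $\Robba_A(\delta_1)\boxtimes\zp$ and $w(\Robba_A(\delta_1)\boxtimes\zp)$ are, by the $(\varphi,\Gamma)$-module structure (Remark \ref{analytic}) and Proposition \ref{amice}, of $A$-LF-type and carry continuous distribution-algebra actions; $\Robba_A(\delta_1)\boxtimes_\omega\P^1$ is glued from these along $\Robba_A(\delta_1)\boxtimes\zpe$, and being an extension of the two $\qp$-analytic sheaves of Lemma \ref{lem:princser} (whose $\qp$-analyticity comes from Lemma \ref{lem:prinbag}), the $A$-LF-type and $\mathscr D(K,A)$-continuity pass to the middle term by the usual two-out-of-three for extensions of barrelled/LF spaces. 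Concretely one can invoke that an extension of $\qp$-analytic sheaves is $\qp$-analytic, which is the relative analogue of the corresponding fact in \cite[\S 1]{colmez2015}.

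The main obstacle is step (i), the verification that the $w$-relations are respected over $A$. In \cite{colmez2015} this is a nontrivial computation involving the operators $u_b$, $w_*$ and $m_\delta$ on the overlap $\Robba^+\boxtimes p\zp$, and transporting it to the affinoid setting requires knowing that all the analytic operators involved are obtained by completed base change along $\qp\to A$ and that the convergence estimates in Proposition \ref{cor:stup} are uniform — which they are, since the valuations there are controlled purely by $v_A(\delta(1+2p))$ and $v_A(\kappa(\delta))$ and do not see the finer structure of $A$. Once that bookkeeping is in place the relation over $A$ is a formal consequence of the relation over $\qp$ (or, via Lemma \ref{lem:cast}, of the relation at every closed point plus flatness), and the remaining assertions of the proposition are immediate from Lemma \ref{lem:princser} and the exactness of $M\mapsto M\boxtimes_{\omega,\iota}\P^1$.
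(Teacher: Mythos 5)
Your outline (reduce to Colmez at the points, get the exact sequence from Lemma \ref{lem:princser} and the exactness of $M\mapsto M\boxtimes_{\omega,\iota}\P^1$, then treat analyticity separately) is the right one, and the exact-sequence part is indeed formal; but both of the substantive steps have gaps. For the factorisation through $G$, your primary argument — that every operator involved is obtained from an operator over $\qp$ by $\widehat{\otimes}_{\qp}A$, so the relations hold over $A$ because they hold over $\qp$ — is not correct: the characters $\delta_1,\delta_2,\omega$ are genuinely $A$-valued, and the involution $\iota_{\delta_1,\delta_2}(f\otimes\delta_1)=(\delta_1(-1)\,w_*\circ m_{\delta^{-1}}(f))\otimes\delta_1$ as well as the formulas for the $\tilde G$-action involve $\omega(a)$, $\delta(\cdot)\in A^\times$, so nothing here is base-changed from the $\qp$-case; one can only specialise to closed points. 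Your fallback pointwise argument is essentially the paper's, but it only proves the claim when $A$ is \emph{reduced}: if $(g-1)z$ has Laurent coefficients lying in every maximal ideal, those coefficients lie in the nilradical, and flatness of $\Robba_A$ or finite generation of $\Robba_A^{\psi=0}$ over $\Robba_A(\Gamma)$ does nothing to kill nilpotents. The paper completes the argument by induction on the index of nilpotence of the nilradical $N$, using the sequence $0\to(\Robba_{A/N}\boxtimes_\omega\P^1)\otimes_{A/N}N^j\to\Robba_A\boxtimes_\omega\P^1\to\Robba_{A/N^j}\boxtimes_\omega\P^1\to0$; this dévissage is missing from your proposal.

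For $\qp$-analyticity, the assertion that ``an extension of $\qp$-analytic sheaves is $\qp$-analytic by two-out-of-three'' is not a formal fact and is exactly the point at issue: if $m\in M$ lifts $m''\in M''$, knowing that the orbit maps are locally analytic on $M'$ and $M''$ only tells you that $g\mapsto g\cdot m$ is locally analytic modulo $M'$, and the discrepancy with a locally analytic lift is a continuous $1$-cocycle with values in $M'$, not an orbit map, so some comparison of continuous and locally analytic cohomology is needed. The paper's proof reduces, via the decomposition of $K_m$, to producing a $\mathscr{D}(\overline{U}^m,A)$-action, and then shows that the orbit maps of $\tau$ on $\Robba_A(\delta_1,\delta_2)$ are locally analytic by invoking the isomorphism $H^1_{\an}(\overline{U}^1,\Robba_A(\delta_1,\delta_2))\cong H^1(\overline{U}^1,\Robba_A(\delta_1,\delta_2))$ of Proposition \ref{prop:lazcompcom} (itself deduced from the $\pm$-cases of Lemma \ref{lem:princser} by Hochschild--Serre) together with a Mahler-coefficient computation identifying the orbit map of $m$ with a shift of the continuous cocycle $a\mapsto\frac{\tau^a-1}{\tau-1}m$, which is cohomologous to a locally analytic one. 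Without this step (or an equivalent substitute), your item (iii) — in particular the claimed continuous $\mathscr{D}(K,A)$-structure on the middle term for compact open $K$ containing lower-unipotent elements — is unsupported.
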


\begin{proof}
We reduce the result to the case of a point, cf. \cite[Proposition 4.12]{colmez2015}, using an inductive argument on the index $i \geq 0$ of nilpotence of $A$.

Suppose first that $i = 0$, i.e that $A$ is reduced. Take $z \in \Robba_A(\delta_1) \boxtimes_\omega \P^1$ and $g$ in the kernel of $\tilde{G} \to G$. We need to show that $(g - 1) z = (z_1,z_2) = 0$. Let $\mathfrak{m} \subseteq A$ be any maximal ideal of $A$ and note $L = A / \mathfrak{m}$. Since $(\Robba_A(\delta_1) \boxtimes_\omega \P^1) \otimes_A L = \Robba_L(\delta_1) \boxtimes_\omega \P^1$, then we know by \cite[Proposition 4.12]{colmez2015} that $z_i = 0$ mod $\mathfrak{m}$. If we write $z_i = \sum_{n \in \Z} a_{n,i} T^n$, $i = 1, 2$, this means that $a_{n,i} = 0$ mod $\mathfrak{m}$ and hence, since this holds for every maximal ideal $\mathfrak{m}$ and since $A$ is reduced, we deduce that $a_{n,i} = 0$ for every $n$ and hence $z_i = 0$ as desired.

Suppose now the result is true for every affinoid algebra of index of nilpotence $\leq j$ and let $A$ be an affinoid algebra whose nilradical $N$ satisfies $N^{j + 1} = 0$ and $g$ be in the kernel of $\tilde{G} \to G$. We have the following short exact sequence \[ 0 \to (\Robba_{A / N} \boxtimes_\omega \P^1) \otimes_{A / N} N^j \to \Robba_A \boxtimes_\omega \P^1 \to \Robba_{A / N^j} \boxtimes_\omega \P^1 \to 0. \] By the base case of a reduced affinoid algebra and by the inductive hypothesis, the element $g - 1$ induces a linear endomorphism of the short exact sequence above which vanishes on $(\Robba_{A / N} \boxtimes_\omega \P^1) \otimes_{A / N} N^j$ and $\Robba_{A / N^j} \boxtimes_\omega \P^1$ respectively. Therefore it vanishes on $\Robba_{A} \boxtimes \P^1$, which shows the desired result.
 
%

For the second part, we first observe that, if we call $K_m = {\matrice {1 + p^m \zp} {p^m \zp} {p^m \zp} {1 + p^m \zp}}$, then the decomposition $K_m = {\matrice 1 0 {p^m \zp} {1 + p^m \zp}} {\matrice {1 + p^m \zp} {p^m \zp} 0 1}$ \footnote{This decomposition follows by noting that ${\matrice a b c d} = {\matrice 1 0 {c a^{-1}} {d - bc a^{-1}} } {\matrice a b 0 1}$, for ${\matrice a b c d} \in K_m$.} shows that it is enough to show the existence of an action of the distribution algebra of $\overline{U}^m = {\matrice 1 0 {p^m \zp} 1}$ for some $m \geq 0$.

We claim that, as a consequence of the identity $H^1_{\an}(\overline{U}^1, \Robba_A(\delta_1, \delta_2)) \cong H^1(\overline{U}^1, \Robba_A(\delta_1, \delta_2))$ of Proposition \ref{prop:lazcompcom}, the action of $\overline{U}^1$ on $\Robba_A(\delta)$ is locally analytic and hence extends to a separately continuous action of $\mathscr{D}(\overline{U}^1,A)$ (since $\Robba_A$ is barrelled it will also be jointly continuous, cf. \cite[\S 0.3.11]{Emertonred}). Indeed, calling $M = \Robba_A(\delta_1, \delta_2)$, which is an $LF$-space, we need to show that, for any $m \in M$, the orbit map $o_m \colon \overline{U}^1 \to M$ is locally analytic. By the definition of a locally analytic function, one reduces to the case where $M$ is Banach.

Consider now any continuous 1-coycle $c \colon \overline{U}^1 \to M$ such that $c(\tau) = m$. Then $c$ defines a function (which we continue to denote by $c$)
\[ c \colon \zp \to M : \;\;\; a \mapsto c(\tau^a) = \frac{\tau^a-1}{\tau-1}c(\tau) \]
which is, by Proposition \ref{prop:lazcompcom}, cohomologous to a locally analytic $1$-cocycle. Since 1-coboundaries are trivially locally analytic, it follows that $c$ is locally analytic. By expanding $\tau^a = \sum_{n \geq 0} \binom{a}{n}(\tau-1)^n$ one gets that \[ c(a) = \sum_{n \geq 1} \binom{a}{n} (\tau - 1)^{n-1} m, \] which shows that the Mahler coefficients of the function $c$ are given by $a_n(c) = (\tau - 1)^{n-1} m$. But the Mahler coefficients of the orbit map are nothing but $a_n(o_m) = (\tau - 1)^n m$, thus showing that $o_m$ is a locally analytic function, completing the proof.
\end{proof}

\section{Cohomology of $(\varphi, \Gamma)$-modules} \label{sec:cohphigammch}

In this section we recalculate some results of \cite{chen2013} using $(\varphi, \Gamma)$-cohomology. We calculate higher cohomology groups studied in \cite[\S 5]{colmez2015}, in preparation to extend the results in loc.cit. to the affinoid setting. Let us begin by recalling the definition of analytic cohomology. Let $H$ be a $\qp$-analytic semi-group (e.g. $A^+$, $\overline{P}^+$, $G$). Let $M$ be a complete, Hausdorff locally convex $A$-module (cf. Definition \ref{def:right}) with the structure of a seperately continuous $A$-linear $\mathscr{D}(H,A)$-module (i.e. $M$ is an object of the category $\mathscr{G}_{H,A}$, cf. Definition \ref{def:lacatlur}). We note $\mathrm{LA}^{\bullet}(H,M)$ to be the complex

\[
0 \to \mathrm{LA}^{0}(H,M) \xrightarrow{d_1} \mathrm{LA}^{1}(H,M) \xrightarrow{d_2} \cdots,
\]
where $\mathrm{LA}^{n}(H,M) := \mathrm{LA}(H^n,M)$ and $d_{n+1}$ is the differential

\[
d_{n+1}c(g_0,\ldots,g_n) = g_{0}\cdot c(g_1,\ldots,g_n) + \sum_{i=0}^{n-1} (-1)^{i+1}c(g_0, \ldots,g_{i}g_{i+1}, \ldots g_n) + (-1)^{n+1}c(g_0,\ldots,g_{n-1}).
\] 
Throughout $H^{i}_{\an}(H,M)$ will denote the $i$th cohomology group of this complex. For a detailed introduction to $H^{*}_{\an}$ (although in a slightly different setting) we refer the reader to the paper of Kohlhasse, cf. \cite{kohl2011}. Finally $H^i(H,M)$ will denote continuous (semi-)group cohomology.  

For $n \in  \N$, denote by $U^{n} = \begin{pmatrix}
                                           1 & p^{n}\zp \\
																					 0 & 1
																					 \end{pmatrix}$
\begin{lemma} \label{cohom1}
If $\Delta \in \Phi\Gamma(\mathscr{R}_A)$, then $H^{i}_{\an}(U^{n},\Delta) = 0 \hspace{2mm}\forall n \in \N$, if $i=0,1$. 
\end{lemma}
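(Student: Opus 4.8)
The plan is to use that $U^{n}$ is topologically isomorphic to $\zp$ and that the operator through which it acts on $\Delta$ is invertible. Set $\tau_{n}=\matrice{1}{p^{n}}{0}{1}$, so that $U^{n}$ is procyclic with topological generator $\tau_{n}$ and $\tau_{n}$ acts on $\Delta$ by multiplication by $(1+T)^{p^{n}}$. The one genuine input is that $(1+T)^{p^{n}}-1$ is a unit of $\mathscr{R}_{A}$: for $n\geq 1$ its zeros are the points $T=\zeta-1$ with $\zeta$ a nontrivial $p^{n}$-th root of unity, all of which satisfy $v_{p}(T)\geq r_{n}$ and so lie outside the locus $0<v_{p}(T)<r_{n}$; hence $(1+T)^{p^{n}}-1$ is already invertible in $\mathscr{R}_{A}^{]0,s]}$ for any $s<r_{n}$, a fortiori in $\mathscr{R}_{A}$. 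For $n=0$ the operator is simply $T$, which is obviously a unit of $\mathscr{R}_{A}$. In all cases multiplication by $(1+T)^{p^{n}}-1$, i.e. the operator $\tau_{n}-1$, is an automorphism of the $\mathscr{R}_{A}$-module $\Delta$.

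The case $i=0$ is then immediate: $H^{0}_{\an}(U^{n},\Delta)=\Delta^{U^{n}}$, and $\tau_{n}z=z$ forces $\bigl((1+T)^{p^{n}}-1\bigr)z=0$, whence $z=0$. For $i=1$ I would compute the analytic $1$-cocycles directly. A locally analytic cocycle $c\colon U^{n}\to\Delta$ is determined by $m:=c(\tau_{n})$, because $\{\tau_{n}^{k}:k\in\mathbf{Z}_{\geq 0}\}$ is dense in $U^{n}$ and the cocycle relation forces $c(\tau_{n}^{k})=\sum_{i=0}^{k-1}\tau_{n}^{i}m=\tfrac{\tau_{n}^{k}-1}{\tau_{n}-1}\,m$ there. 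Conversely, every $m\in\Delta$ occurs: since $\Delta$ is a $\qp$-analytic representation of $U^{n}$ (Remark \ref{analytic}), the orbit map $x\mapsto\tau_{n}^{x}m=(1+T)^{p^{n}x}m$ is locally analytic on $\zp\cong U^{n}$, hence so is $x\mapsto\bigl((1+T)^{p^{n}}-1\bigr)^{-1}\bigl((1+T)^{p^{n}x}m-m\bigr)$ — obtained by applying the continuous $\mathscr{R}_{A}$-linear operator $(\tau_{n}-1)^{-1}$ to the difference of the orbit map and a constant — and a short check shows this function is a cocycle extending the formula above. Thus evaluation at $\tau_{n}$ identifies the analytic $1$-cocycles with $\Delta$ and the coboundaries with $(\tau_{n}-1)\Delta$, so $H^{1}_{\an}(U^{n},\Delta)=\Delta/(\tau_{n}-1)\Delta=0$ by the first paragraph.

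The only step requiring any care is the local analyticity of the cochain $x\mapsto\tfrac{\tau_{n}^{x}-1}{\tau_{n}-1}m$, which is harmless once Remark \ref{analytic} is invoked. Alternatively one can avoid cocycles entirely: $U^{n}$ is uniform and torsion free, so Lazard's comparison identifies $H^{\ast}_{\an}(U^{n},\Delta)$ with the Lie algebra cohomology $H^{\ast}(\Lie(U^{n}),\Delta)$; since $\Lie(U^{n})=\qp\cdot\matrice{0}{1}{0}{0}$ is one-dimensional abelian and $\matrice{0}{1}{0}{0}$ acts on $\Delta$ by multiplication by $t=\log(1+T)$ — again a unit of $\mathscr{R}_{A}$, all its zeros having $v_{p}(T)\geq r_{1}$ — the two-term complex $[\Delta\xrightarrow{\,t\,}\Delta]$ computing it is acyclic. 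I do not anticipate a real obstacle; the whole argument rests on the unit-ness of $(1+T)^{p^{n}}-1$ in $\mathscr{R}_{A}$, everything else being bookkeeping.
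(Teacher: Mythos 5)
Your main argument (the first two paragraphs) is correct and is essentially the paper's own proof: everything pivots on $(1+T)^{p^{n}}-1$ being a unit of $\Robba_A$, the only difference being that for $i=1$ the paper injects $H^{1}_{\an}(U^{n},\Delta)$ into the continuous cohomology of the procyclic group $U^{n}$ and quotes $H^{1}(U^{n},\Delta)=\Delta/((1+T)^{p^{n}}-1)\Delta$, whereas you identify the analytic cocycles and coboundaries directly, using Remark \ref{analytic} to get the local analyticity of $x\mapsto\tfrac{\tau_{n}^{x}-1}{\tau_{n}-1}m$; both routes are fine and rest on the same invertibility.

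The ``alternative'' sketch in your last paragraph, however, is wrong on two counts and should be dropped. First, $t=\log(1+T)$ is \emph{not} a unit of $\Robba_A$: its zeros are the points $\zeta-1$ with $\zeta$ a primitive $p^{k}$-th root of unity for \emph{every} $k\geq 1$, of valuation $r_{k}=\tfrac{1}{p^{k-1}(p-1)}$, and $r_{k}\to 0$, so every annulus $0<v_{p}(T)\leq s$ contains infinitely many of them — you have the inequality backwards (the zeros of $(1+T)^{p^{n}}-1$ have valuation $\geq r_{n}$ precisely because only the finitely many $k\leq n$ occur there). Consequently the two-term complex $[\Delta\xrightarrow{\,t\,}\Delta]$ is not acyclic: already for $\Delta=\Robba_A$ the cokernel of $t$ is nonzero. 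Second, the comparison with Lie algebra cohomology does not take the form $H^{\ast}_{\an}(U^{n},\Delta)\cong H^{\ast}(\Lie(U^{n}),\Delta)$ in this setting: Lazard's classical theorem requires finiteness hypotheses that $\Delta$ does not satisfy (cf.\ the remark following Proposition \ref{prop:lazcompcom}), and the correct statement, as in Lemma \ref{lielem}, is $H^{i}_{\an}(U^{n},\Delta)\cong H^{0}\bigl(U^{n},H^{i}(\Lie(U^{n}),\Delta)\bigr)$. Since the Lie algebra $H^{1}$ is nonzero here, the needed vanishing is that of its $U^{n}$-invariants, which is genuine content (it is in effect what your cocycle computation proves), not bookkeeping.
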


\begin{proof}
For $i=0$, we note that $H^{0}_{\an}(U^{n},\Delta) = \Delta^{(1+T)^{p^n}=1}$. For $i=1$, we have a map
$$H^{1}_{\an}(U^{n},\Delta) \hookrightarrow H^{1}(U^{n},\Delta),$$
since the continuous 1-coboundaries are in correspondence with the locally analytic 1-coboundaries. Finally $U^n$ is procyclic and so
$$H^{1}(U^{n},\Delta) = \Delta/\left( (1+T)^{p^n}-1 \right) = 0,$$
as $(1+T)^{p^n}-1$ is invertible in $\mathscr{R}_A$. 
\end{proof}

Denote by $A^{0} = \begin{pmatrix}
																                             \zpe & 0\\
																														 0 & 1
																				
																											 \end{pmatrix}$.
and let $\Delta$ be a $(\varphi,\Gamma)$-module over $\mathscr{R}_A$. We construct a natural map
$$\Theta^{\Delta}: \Ext^{1}(\mathscr{R}_A,\Delta) \rightarrow H^{1}_{\an}(A^+,\Delta).$$
Let $\widetilde{\Delta}$ be an extension of $\mathscr{R}_A$ by $\Delta$ and let $e \in \widetilde{\Delta}$ be a lifting of $1 \in \mathscr{R}_A$. Then $g \mapsto (g-1)e$, $g \in A^+$, is an analytic 1-cocycle and induces an element of $H^{1}_{\an}(A^+,\Delta)$ independent of the choice of $e$. Thus we obtain the desired map. 

\begin{prop} \label{prop:ext1h1a}
For any $(\varphi,\Gamma)$-module $\Delta$ over $\mathscr{R}_A$, $\Theta^{\Delta}$ is an isomorphism. 
\end{prop}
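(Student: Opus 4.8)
The plan is to exhibit an explicit inverse to $\Theta^\Delta$, which is the standard strategy for "Ext equals $H^1$" statements but requires some care because here $H^1_{\mathrm{an}}(A^+, \Delta)$ is computed with respect to the semi-group $A^+ = \bigl(\begin{smallmatrix} \zp\setminus\{0\} & 0 \\ 0 & 1\end{smallmatrix}\bigr)$, which encodes both $\varphi$ and the action of $\Gamma$. Given an analytic $1$-cocycle $c \colon A^+ \to \Delta$, I would set $\widetilde{\Delta} := \Delta \oplus \Robba_A e$ as an $\Robba_A$-module and define a semilinear $A^+$-action by $g \cdot e := c(g) + g(1) e$ where $g(1)$ denotes the image of $1 \in \Robba_A$ under the $\varphi,\Gamma$-action; the cocycle relation is exactly what makes this an action, and one checks it extends $\Robba_A$-semilinearly and is continuous (here the analyticity of $c$, hence continuity, is used, together with the fact noted in Remark \ref{analytic} that the $\Gamma$-action on $(\varphi,\Gamma)$-modules is automatically locally analytic). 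This produces a class in $\Ext^1(\Robba_A, \Delta)$, and changing $c$ by a coboundary $g \mapsto (g-1)d$ amounts to the change of lift $e \mapsto e + d$, so the map $H^1_{\mathrm{an}}(A^+,\Delta) \to \Ext^1(\Robba_A,\Delta)$ is well defined. That this is a two-sided inverse to $\Theta^\Delta$ is then formal: on one side, the cocycle attached to $\widetilde\Delta$ via the lift $e$ is $g \mapsto (g-1)e = c(g)$ by construction; on the other, starting from an extension $\widetilde\Delta$, choosing a lift, forming the cocycle and rebuilding the extension recovers $\widetilde\Delta$ up to the obvious isomorphism.

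The one genuine point to verify — and the step I expect to be the main obstacle — is that the reconstructed $\widetilde\Delta$ is actually an object of $\Phi\Gamma(\Robba_A)$, i.e. that it is (the base change of) a $(\varphi,\Gamma)$-module over $\Robba_A^{]0,r]}$ for some $r$, rather than merely an abstract $\Robba_A$-module with commuting semilinear $\varphi$ and $\Gamma$. Since $\Delta$ descends to some $\Delta^{]0,r]}$ and the cocycle $c$ takes values in $\Delta$, one needs $c(\varphi)$ and $c(\sigma_a)$ to lie in $\Delta^{]0,r']}$ for a common $r'$ and to satisfy the requisite finiteness/projectivity; this follows because a $1$-cocycle on the finitely generated semi-group $A^+$ is determined by finitely many values (those on $\varphi$ and on topological generators of $\Gamma$), each of which lies in $\Delta = \varinjlim_r \Delta^{]0,r]}$, so all lie in a common $\Delta^{]0,r']}$, and the isomorphism $\varphi^*\widetilde\Delta^{]0,r']} \cong \widetilde\Delta^{]0,r'/p]}$ is built from that of $\Delta$ together with the formula for $\varphi \cdot e$. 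Projectivity is automatic since $\widetilde\Delta \cong \Delta \oplus \Robba_A$ as a module.

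Alternatively — and this may be cleaner to write — one can avoid reconstructing the module by hand and instead identify both sides with $(\varphi,\Gamma)$-cohomology: the Herr-type complex computing $H^\bullet(A^+, \Delta)$ in degrees $0,1$ gives $H^1(A^+,\Delta) = \Ext^1_{\Phi\Gamma(\Robba_A)}(\Robba_A,\Delta)$ by the usual Yoneda interpretation of the first cohomology of a semi-group acting on a module, and then the content of the proposition is the comparison $H^1_{\mathrm{an}}(A^+,\Delta) \xrightarrow{\sim} H^1(A^+,\Delta)$ between analytic and continuous cohomology. That comparison is precisely the type of statement flagged in the introduction (cf. Proposition \ref{prop:lazcompcom}) and can be invoked; the injectivity $H^1_{\mathrm{an}} \hookrightarrow H^1$ already appeared in the proof of Lemma \ref{cohom1}, and surjectivity reduces, after dévissage along $0 \to \Robba_A^+ \to \Robba_A \to \Robba_A^-\to 0$ and a twist, to the local analyticity of the relevant orbit maps — exactly the Mahler-coefficient argument used at the end of the proof of Proposition \ref{actionG}. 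Either route works; I would present the explicit inverse as the main argument since it is self-contained, and remark on the $(\varphi,\Gamma)$-cohomology interpretation as the conceptual reason.
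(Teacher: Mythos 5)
Your main argument is exactly the paper's proof: the paper establishes surjectivity by equipping $\widetilde{\Delta} = \Delta \oplus \Robba_A e$ with $\varphi(e) = e + c(\varphi)$, $\gamma(e) = e + c(\gamma)$, and injectivity by noting that a trivial class gives $(g-1)e = (g-1)d$, so $e-d$ is an $A^+$-invariant lift splitting the extension — your explicit two-sided inverse packages the same two checks. Your additional paragraph on descending the reconstructed extension to $\Robba_A^{]0,r']}$ (and the remark on the analytic-versus-continuous comparison) supplies detail the paper leaves implicit, but it is not a different approach.
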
 

\begin{proof}
For injectivity of $\Theta^{\Delta}$, let $\widetilde{\Delta}$ be an extension of $\mathscr{R}_A$ by $\Delta$ in the category of $(\varphi,\Gamma)$-modules whose image under $\Theta^{\Delta}$ is zero. Let $e \in \widetilde{\Delta}$ be a lifting of $1 \in \mathscr{R}_A$. Then there exists $d \in \Delta$, such that $(g-1)e = (g-1)d$ for all $g \in A^+$. Then $g(e-d) = e-d$ for all $g \in A^+$ and thus $\widetilde{\Delta} = \Delta \oplus \mathscr{R}_A$ as a $(\varphi,\Gamma)$-module. For surjectivty of $\Theta^{\Delta}$, given a 1-cocycle $g \mapsto c(g) \in \Delta$, we can extend the $(\varphi,\Gamma)$-module structure on $\Delta$ to the $\mathscr{R}_A$-module $\widetilde{\Delta} = \Delta \oplus \mathscr{R}_{A}e$, such that $\varphi(e) = e + c(\varphi)$ and $\gamma(e) = e + c(\gamma)$ for $\gamma \in \Gamma$. 
\end{proof}

Next we relate $H^{i}_{\an}(A^+,\Delta)$ to a Lie-algebra cohomology, where calculations can be made explicit. We denote by $\Phi^{+}$ the semi-group $\begin{pmatrix}
                                                  p^\N & 0 \\
																									0 & 1
																									\end{pmatrix}$, where $\varphi = \begin{pmatrix}
																									                              p & 0 \\
																																								0 & 1
																																								\end{pmatrix}$. We have that $A^+ = \Phi^{+} \times A^{0}$ (this decomposition breaks up the $\varphi$-action and the $\Gamma$-action). For $\Delta$ a $(\varphi,\Gamma)$-module over $\mathscr{R}_A$, we denote by $H^{i}_{\Lie}(A^+,\Delta)$ to be the cohomology groups of the complex:
																																												
$$0 \longrightarrow \Delta \xrightarrow{x \mapsto (\nabla x,(\varphi-1)x)} \Delta \oplus \Delta \xrightarrow{(a,b) \mapsto (\varphi-1)a-\nabla b} \Delta \longrightarrow 0.$$
We will be interested in the $A^{0}$-invariants, $H^{i}_{\Lie}(\Delta):=H^{0}(A^{0},H^{i}_{\Lie}(A^+,\Delta))$. By a simple calculation we see that:
$$H^{0}_{\an}(A^+,\Delta) = H^{0}_{\Lie}(\Delta) = \Delta^{\varphi=1,\Gamma=1}.$$

Now let $\widetilde{\Delta}$ be an extension of $\mathscr{R}_A$ by $\Delta$ and let $e \in \widetilde{\Delta}$be a lifting of $1 \in \mathscr{R}_A$. Then $(\nabla_{\widetilde{\Delta}}e,(\varphi-1)e)$ is a 1-cocycle in the above complex which is $\Gamma$-invariant, whose class does not depend on $e$. Thus we obtain a map:

$$\Theta^{\Delta}_{\Lie}: H^{1}_{\an}(A^+,\Delta) \rightarrow H^{1}_{\Lie}(\Delta).$$

\begin{lemm} \label{biglem}
For any $(\varphi,\Gamma)$-module over $\mathscr{R}_A$, $\Theta^{\Delta}_{\Lie}$ is an isomorphism.  
\end{lemm}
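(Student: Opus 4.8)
The plan is to prove the $H^{1}$ statement (the $H^{0}$ case being already recorded above) by computing both sides through the product decomposition $A^{+}=\Phi^{+}\times A^{0}$, where $A^{0}=\Gamma=\zpe$ and $\Phi^{+}$ is the free monoid generated by $\varphi$. On each side there is a Hochschild--Serre/K\"unneth spectral sequence for this decomposition. Since $\Phi^{+}$ is free, its analytic cohomology has dimension $1$ and is computed by $[\,N\xrightarrow{\varphi-1}N\,]$, so one gets a short exact sequence
\[ 0\to\coker\!\big(\varphi-1\colon\Delta^{\Gamma=1}\to\Delta^{\Gamma=1}\big)\to H^{1}_{\an}(A^{+},\Delta)\to\big(H^{1}_{\an}(A^{0},\Delta)\big)^{\varphi=1}\to 0. \]
On the Lie side, $\mathscr{C}_{\Lie}(A^{+},\Delta)$ is, up to signs, the total complex of the bicomplex obtained by applying $[\,\cdot\xrightarrow{\varphi-1}\cdot\,]$ to $[\,\Delta\xrightarrow{\nabla}\Delta\,]$; passing to cohomology and then taking $A^{0}$-invariants (which will be exact on the modules that occur, see below) gives a matching sequence
\[ 0\to\coker\!\big(\varphi-1\colon\Delta^{\Gamma=1}\to\Delta^{\Gamma=1}\big)\to H^{1}_{\Lie}(\Delta)\to\big(H^{0}(A^{0},\Delta/\nabla\Delta)\big)^{\varphi=1}\to 0. \]

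Using Proposition \ref{prop:ext1h1a}, which exhibits $\Theta^{\Delta}_{\Lie}$ as sending an extension $\widetilde{\Delta}$ with a lift $e$ of $1$ to the class of $(\nabla e,(\varphi-1)e)$ — whose two coordinates are precisely its images in the $A^{0}$- and $\Phi^{+}$-directions — I would check that $\Theta^{\Delta}_{\Lie}$ is a morphism between the two displayed sequences which is the identity on the sub-objects. By the five lemma the lemma then reduces to a single Lazard-type comparison for the one-dimensional group $A^{0}=\Gamma$: that the ``derivative at $1$'' map
\[ H^{1}_{\an}(A^{0},\Delta)\longrightarrow H^{0}\!\big(A^{0},H^{1}(\Lie A^{0},\Delta)\big)=H^{0}(A^{0},\Delta/\nabla\Delta) \]
is an isomorphism. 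Since $p>2$, I would split $\Gamma=\mu_{p-1}\times\Gamma_{1}$ with $\Gamma_{1}=1+p\zp\cong\zp$: the factor $\mu_{p-1}$ has order prime to $p$, so it only contributes by taking invariants, and $H^{>0}(A^{0},-)$ vanishes on any module on which $\Gamma_{1}$ acts trivially — in particular on $\Delta^{\nabla=0}$, since $\nabla x=0$ forces $x$ to be fixed by $\Gamma_{1}$ (expand $\sigma_{\lambda}=\exp((\log\lambda)\nabla)$, a finite sum on such $x$); this is the exactness invoked above. For $\Gamma_{1}$ I would combine the comparison between analytic and continuous cohomology (Proposition \ref{prop:lazcompcom}, or its analogue with $\Gamma_{1}$ in place of $\overline{U}^{1}$), which yields $H^{1}_{\an}(\Gamma_{1},\Delta)=H^{1}(\Gamma_{1},\Delta)=\Delta/(\gamma-1)\Delta$ for a topological generator $\gamma$, together with the identification of $\Delta/(\gamma-1)\Delta$ with $\Delta/\nabla\Delta$.

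The hard part will be this last identification. Formally $\gamma-1=\exp(c\nabla)-1$ with $c=\log_{p}\chi(\gamma)\in p\zp$, and one wants $\tfrac{\gamma-1}{c\nabla}=\sum_{n\geq 0}\tfrac{(c\nabla)^{n}}{(n+1)!}$ to be an invertible operator, so that $\gamma-1$ and $\nabla$ have the same kernel and image; but the estimates controlling $\nabla$ on $\mathscr{R}_{A}$ deteriorate as the inner radius of the annulus tends to $0$, so this cannot be argued directly over $\mathscr{R}_{A}$. The remedy — the technical heart — is to fix a model $\Delta^{]0,r]}$ over $\mathscr{R}_{A}^{]0,r]}$ and work on the Banach pieces $\mathscr{R}_{A}^{[r',r]}$, where $\nabla$ is a bounded operator and, after replacing $\gamma$ by a sufficiently deep power, the series above converges to an automorphism; one then has to control the passage between $\gamma$ and its powers and reassemble over all $r'$, which is where estimates of the type used in the proof of Proposition \ref{cor:stup} come in. Everything else — the two spectral sequences, the exactness of $H^{0}(A^{0},-)$ on the relevant pieces, and the identification of $\Theta^{\Delta}_{\Lie}$ with the restriction and derivative maps — should be routine.
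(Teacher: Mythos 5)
Your reduction has two genuine problems. First, the bookkeeping that is supposed to make the five-lemma ladder work is not justified: the claim that $H^{>0}(A^{0},-)$ vanishes on modules with trivial $\Gamma_{1}$-action is false (for the trivial module $A$ one has $H^{1}_{\an}(A^{0},A)\cong A$, generated by $\sigma_a\mapsto\log a$; and for $\Delta=\mathscr{R}_A$ the module you apply it to, $\coker(\varphi-1\colon\Delta^{\nabla=0})$, is exactly $A$ with trivial action). Likewise the assertion that $\nabla x=0$ forces $x$ to be fixed by $\Gamma_{1}$ is circular: expanding $\sigma_{\lambda}=\exp((\log\lambda)\nabla)$ presupposes the very operator identity whose convergence you defer to the end, and on its own a vanishing derivative only gives local constancy of the orbit map, i.e. fixedness by some open subgroup depending on $x$ (and after a finite-order twist of $\mathscr{R}_A$ the kernel of $\nabla$ really need not be $\Gamma_{1}$-fixed). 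Without these two claims you have not identified the left-hand terms of your two sequences, so neither injectivity nor surjectivity of $\Theta^{\Delta}_{\Lie}$ follows from the ladder; the comparison $\coker(\varphi-1\colon\Delta^{\Gamma=1})\to H^{0}(A^{0},\coker(\varphi-1\colon\Delta^{\nabla=0}))$ is itself a statement of the same nature as the lemma and needs proof.

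Second, even granting the dévissage, what you have "reduced" to — that $H^{1}_{\an}(A^{0},\Delta)\to H^{0}(A^{0},\Delta/\nabla\Delta)$ is bijective, compatibly with $\varphi$, via the invertibility of $(\gamma-1)/(c\nabla)$ together with the descent from $\gamma^{p^{n}}$ back to $\gamma$ and to all of $A^{0}$ — is not an auxiliary step but essentially the entire content of the lemma, and you only sketch it. The paper's proof (it simply carries over the proof of Lemme 5.6 of \cite{colmez2015}) does exactly this integration at the level of extensions: given a $\Gamma$-invariant Lie $1$-cocycle $(a,b)$ one forms $\widetilde{\Delta}=\Delta\oplus\mathscr{R}_A e$ with $\nabla e=a$, $(\varphi-1)e=b$, defines the action of a small subgroup $\Gamma_n$ by exponentiating $\nabla$ on the Banach pieces $\Delta^{[r',r]}$, and then uses the $A^{0}$-invariance of the class and commutation with $\varphi$ to extend the action to all of $A^{+}$; injectivity is handled by the same convergence statement. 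So your route does not bypass the analytic heart, it postpones it, and the surrounding spectral-sequence scaffolding as written rests on false vanishing statements. To salvage the approach you would need to prove the $\Gamma$-level comparison (kernel and cokernel of $\gamma-1$ versus $\nabla$, with the passage between $\gamma$ and its $p$-power roots made precise) and to redo the identification of the subobjects without the trivial-action vanishing claim.
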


\begin{proof}
Copy the proof of \cite[Lemme 5.6]{colmez2015}.
\end{proof}

\subsection{Continuous vs. analytic cohomology}

In this section we show the following comparison between locally analytic cohomology defined by Lazard, cf. \cite[Chapitre V, \S 2.3]{Lazardgrp} and continuous group cohomology.

\begin{prop}\label{prop:lazcompcom}
Let $\delta_1, \delta_2 \colon \qpe \to A^\times$ be continuous characters. If $$M \in \left\{ \mathscr{R}_A^{+}(\delta_1, \delta_2), \mathscr{R}_A^{-}(\delta_1, \delta_2), \mathscr{R}_A(\delta_1, \delta_2) \right\},$$ then the natural application
$$H^{i}_{\an}(\overline{P}^{+},M) \rightarrow H^{i}(\overline{P}^{+},M)$$
is an isomorphism for all $i \geq 0$ (here $H^{i}(\overline{P}^{+},M)$ denotes continuous cohomology).  
\end{prop}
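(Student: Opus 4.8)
The plan is to compare the two cohomology theories by a \emph{dévissage} on the semi-group $\overline{P}^+$, exploiting the decomposition of $\overline{P}^+$ into its "analytic" part and its "discrete" part. Recall that $\tilde{P} = {\matrice{\zpe} 0 {p \zp} 1}$ is a compact $\qp$-analytic \emph{subgroup} of $\overline{P}^+$, while the quotient semi-group generated by $\varphi$ is essentially a free monoid $\Phi^+ \cong p^{\mathbf N}$. The strategy is: (1) prove the comparison for the compact analytic group $\tilde{P}$, where both theories are genuinely those of a $\qp$-analytic group; (2) propagate along the $\varphi$-direction, where the cohomology of a single operator is just a two-term complex $M \xrightarrow{1-\varphi} M$ in both worlds; (3) assemble via the Hochschild–Serre/inflation–restriction machinery (or more concretely via the explicit Koszul-type complexes $\mathscr{C}_{\tau,\varphi,\gamma}$ and $\mathscr{C}_{u^-,\varphi,a^+}$ already available in the paper) and a five-lemma argument.

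Concretely, first I would treat the compact group $\tilde{P}$. For a compact $\qp$-analytic group $H$ and a $\mathscr{D}(H,A)$-module $M$ which is of $A$-LF-type, one wants $H^i_{\an}(H,M) \cong H^i(H,M)$. Over a field this is Lazard's comparison theorem (\cite[Chapitre V, \S 2.3]{Lazardgrp}); the content here is to check that for $M \in \{\mathscr{R}_A^{+}(\delta_1,\delta_2), \mathscr{R}_A^{-}(\delta_1,\delta_2),\mathscr{R}_A(\delta_1,\delta_2)\}$ the relevant continuous cochain complexes compute the same thing as the locally analytic ones. Here I would use that each such $M$ is an $A$-LF-space, hence an inductive limit of Banach modules over a cofinal system, and reduce to the Banach case; on a Banach module over $A$ the comparison for a single procyclic analytic group (such as $U^n$, or $A^0 \cong \zpe$) follows from the fact that a continuous cocycle on $\zp$ automatically has the Mahler-coefficient decay forcing local analyticity after subtracting a coboundary — exactly the argument already run in the proof of Proposition \ref{actionG} for $\overline{U}^1$. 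For a general $d$-dimensional compact $H$ one then dévisses through a chain of normal procyclic subgroups using Hochschild–Serre, the comparison being compatible with inflation–restriction, and the five lemma closes the induction on $\dim H$.

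Next I would incorporate the $\varphi$-direction. Using $A^+ = \Phi^+ \times A^0$ and the analogous semidirect-product structure relating $\overline{P}^+$, $\tilde P$ and $\Phi^+$, both $H^*_{\an}(\overline{P}^+,-)$ and $H^*(\overline{P}^+,-)$ are computed by a double complex whose "$\varphi$-direction" is the two-term complex $[M \xrightarrow{1-\varphi} M]$ (identical in the continuous and analytic settings, since $\varphi$ is a single continuous $A$-linear operator and there is no analyticity condition to impose in that variable) and whose "$\tilde P$-direction" is handled by step (1). Equivalently — and this is probably the cleanest route given what the paper has already set up — one observes that the explicit complex $\mathscr{C}_{\tau,\varphi,\gamma}(M)$ computes $H^*(\overline{P}^+,M)$ and the linearized complex $\mathscr{C}_{u^-,\varphi,a^+}(M)$ together with the identity $H^0(\tilde P, H^i(\mathscr{C}_{u^-,\varphi,a^+}(M))) = H^i(\overline{P}^+,M)$ computes the analytic side (via Lazard on $\tilde P$), and then matches the two by comparing cochains degree by degree, the only subtlety being the passage from the group operators $\delta_x = \frac{\tau^x-1}{\tau-1}$ to the Lie operators $u^-$, which is precisely where Lazard's isomorphism for $\tilde P$ enters.

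The main obstacle I anticipate is the analogue of completeness issues flagged in the introduction: the spaces $\mathrm{LA}(\tilde P, A)$ (equivalently, the coefficients $\mathrm{LA}(\zp,A)\otimes\delta$ appearing in $\mathscr{R}_A^-(\delta_1,\delta_2)$) are not known to be complete when $\dim \tilde P \geq 2$, so one cannot naively invoke Lazard's theorem verbatim over $A$; instead one must reduce to the Banach-coefficient case, and there one must ensure the continuous cochain complexes are strict enough (closed coboundaries, or at least that the cohomology is computed correctly up to the $A$-LF-colimit) so that the five-lemma arguments and the passage to the inductive limit are legitimate. A secondary technical point is checking that the comparison map is compatible with the restriction map $\overline{P}^+ \to A^+$, since that compatibility is what makes this proposition usable in the proof of Theorem \ref{thm:resmincomp}; this should be formal once the double-complex descriptions are in place, but it needs to be stated carefully because the restriction does not respect the product decompositions on the nose.
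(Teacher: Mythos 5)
Your core mechanism is in substance the paper's: dévissage of $\overline{P}^+$ into one-parameter directions, comparison on each compact one-dimensional piece via the Mahler-coefficient/orbit-map trick, and the trivial comparison on the discrete $\Phi^+$. The paper organizes it slightly differently: it first reduces to $M \in \{\Robba_A^{+}(\delta_1,\delta_2), \Robba_A^{-}(\delta_1,\delta_2)\}$ via the short exact sequence, then uses the spectral sequences $H^i_?(A^+, H^j_?(\overline{U}^1,M)) \Rightarrow H^{i+j}_?(\overline{P}^+,M)$ and $H^i_?(A^0, H^k_?(\Phi^+,-)) \Rightarrow H^{i+k}_?(A^+,-)$, so the normal piece is $\overline{U}^1$ and the quotient is $A^+ = \Phi^+ \times A^0$; no genuinely two-dimensional Lazard-type input for $\tilde P$ is ever needed. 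Your grouping $(\tilde P, \Phi^+)$ can be made to work, but note that $\varphi \tilde P \varphi^{-1} \not\subseteq \tilde P$ (only $\varphi^{-1}\tilde P \varphi \subseteq \tilde P$), so the Hochschild--Serre step in that direction requires spelling out the twisted $\Phi^+$-action on $H^*(\tilde P, M)$ --- exactly the bookkeeping done for $\overline{U}$ in Lemma \ref{lem:comcohg} --- and once you unwind $\tilde P$ through $\overline{U}^1$ and $A^0$ you are back to the paper's argument.

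One step as you have written it is not justified and is in fact circular. The Mahler decay is \emph{not} automatic for a continuous cocycle with values in a Banach $A$-module carrying a merely continuous action: the decay of $(\tau-1)^n c(\tau)$ holds because the orbit map $g \mapsto g \cdot c(\tau)$ is locally analytic, i.e.\ because $\Robba_A^{\pm}(\delta_1,\delta_2)$ carries a separately continuous $\mathscr{D}$-module structure, and this is supplied by the principal-series identifications of Lemma \ref{lem:princser} (which is the reason for the initial reduction to $\Robba_A^{\pm}$). You cannot cite the argument in the proof of Proposition \ref{actionG} as the source of this step, since that proof runs in the opposite direction, deducing local analyticity of the $\overline{U}^1$-action \emph{from} the present proposition. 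Your alternative route through $\mathscr{C}_{\tau,\varphi,\gamma}$ and $\mathscr{C}_{u^-,\varphi,a^+}$ has the same problem in aggravated form: the identity $H^0(\tilde P, H^i(\mathscr{C}_{u^-,\varphi,a^+}(M))) = H^i(\overline{P}^+,M)$ (Lemma \ref{lielem}) is only established over a finite extension $L$ and itself presupposes a Lazard-type comparison on $\tilde P$, so it cannot serve as the definition of the analytic side over $A$.
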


\begin{proof}
From the exact sequence of $\overline{P}^+$-modules
\[
0 \to \Robba_A^{+}(\delta_1,\delta_2) \to \Robba_A(\delta_1,\delta_2) \to \Robba_A^{-}(\delta_1,\delta_2) \to 0,
\]
it suffices to prove the result for $ \Robba_A^{+}(\delta_1,\delta_2)$ and $\Robba_A^{-}(\delta_1,\delta_2)$. So suppose $M \in {\mathscr{R}_A^{+}(\delta_1, \delta_2), \mathscr{R}_A^{-}(\delta_1, \delta_2)}$. 
Indeed for $? \in \{ \an, \emptyset\}$ we have spectral sequences
\begin{equation}\label{eq:spseqone}
H^{i}_{?}(A^+, H_{?}^{j}(\overline{U}^{1}, M)) \implies H_{?}^{i+j}(\overline{P}^{+}, M)
\end{equation}
and for $j \geq 0$ fixed
\begin{equation}\label{eq:spseqtwo}
H^{i}_{?}(A^0, H_{?}^{k}(\Phi^+, H_{?}^{j}(\overline{U}^{1}, M))) \implies H_{?}^{i+k}(A^{+}, H_{?}^{j}(\overline{U}^{1}, M)),
\end{equation}
where $\overline{U}^{1} = \begin{pmatrix} 1 & 0 \\ p\zp & 1 \end{pmatrix}$.
We claim that 
\begin{equation}\label{eq:isgef}
H_{\an}^j(\overline{U}^{1},M) \to H^j(\overline{U}^{1},M).
\end{equation}
is an isomorphism. 
Indeed this is true for $j=0$ and for $j = 1$ it is enough to prove that a continuous 1-cocycle $c \colon \overline{U}^{1} \to M$ satisfying
\[
c(\tau^a) = \frac{\tau^a-1}{\tau-1}c(\tau)
\] 
for all $a \in \zp$ is locally analytic. Indeed this follows from the fact that $M \boxtimes_{\omega} \P^1$ is a $\qp$-analytic sheaf, cf. Lemma \ref{lem:princser} (note that $c(\tau^a) = \sum_{n \geq 1} \binom{a}{n} (\tau-1)^{n-1}c(\tau)$ and $(\tau-1)^{n}c(\tau)$ are the Mahler coefficients of the locally analytic function $\overline{U}^{1} \to M$ given by $g \mapsto g \cdot c(\tau)$, cf. \cite[\S IV.2]{coldosunicomp}). Since $\Phi^+$ is discrete we have an isomorphism
\[
H_{\an}^{k}(\Phi^+, H_{\an}^{j}(\overline{U}^{1}, M)) \to H^{k}(\Phi^+, H^j(\overline{U}^{1}, M)).
\]
The same argument (for proving \eqref{eq:isgef} is an isomorphism) gives an isomorphism
\[
H^{i}_{\an}(A^0, H_{\an}^{k}(\Phi^+, H_{\an}^{j}(\overline{U}^{1}, M))) \to H^{i}(A^0, H^{k}(\Phi^+, H^{j}(\overline{U}^{1}, M))). 
\]
Spectral sequences \eqref{eq:spseqone} and \eqref{eq:spseqtwo} now give the result.

\end{proof}

\begin{rema}
In the setting of Proposition \ref{prop:lazcompcom}, one cannot apply Lazard's classical result \cite[Th\'eor\`eme 2.3.10]{Lazardgrp} because $M$ is not of finite type over $A$. Note also that a similar proof yields isomorphisms $H^{i}_{\an}(A^{+},M) \xrightarrow{\sim} H^{i}(A^{+},M)$ for all $i \geq 0$. 
\end{rema}

\subsection{The cohomology of $\Phi^+$}

Let $\delta: \qpe \rightarrow A^{\times}$ be a continuous character. We next compute explicitly some $\Phi^{+}$-cohomology. Note that as $\Phi^{+}$ is discrete, analytic cohomology coincides with standard (continuous) cohomology. In particular we will be interested in the groups $H^{i}(\Phi^{+},\mathscr{R}_{A}^{-} \otimes \delta)$ and $H^{i}(\Phi^{+},\mathscr{R}_{A}^{+} \otimes \delta)$ viewed as $A^{0}$-modules. Since $\Phi^+$ is infinite cyclic, these cohomology groups vanish for $i \geq 2$.

\subsubsection{The case of $\Robba^-_A$} 

We begin with some notation. If $N \geq 0$, we set $ \mathrm{Pol}_{\leq N}(\zp, A) \subset \mathrm{LA}(\zpe, A)$ to be the free sub $A$-module of rank $N+1$ consisting of polynomial functions of degree at most $N$ with coefficients in $A$. Observing that $ \mathrm{LA}(\zpe, A) \cap \mathrm{Pol}_{\leq N}(\zp, A) = \emptyset$ we set
$$T_N := \mathrm{LA}(\zpe, A) \oplus \mathrm{Pol}_{\leq N}(\zp, A).$$ Here is a lemma describing the kernel and cokernel of $1 - \alpha \varphi$ on the locally analytic functions:

\begin{lemm} \label{sum}
Let $\alpha \in A^\times$. Then 
\begin{enumerate}
\item $1-\alpha\varphi \colon \mathscr{R}_{A}^{-} \rightarrow \mathscr{R}_{A}^{-}$ is injective.
\item If $N \geq 0$ is large enough, then $T_N + (1-\alpha\varphi)\mathscr{R}_A^{-} = \mathscr{R}_A^{-}$ and $T_N \cap (1-\alpha\varphi)\mathscr{R}_A^{-} = (1 - \alpha \varphi) 
\mathrm{Pol}_{\leq N}(\zp, A)$.
\end{enumerate}

\end{lemm}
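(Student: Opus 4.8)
The plan is to work throughout via the Colmez transform identification $\Robba_A^{-} \cong \mathrm{LA}(\zp,A)\otimes\chi^{-1}$ from Proposition \ref{amice}, under which $\varphi$ acts on $\phi\in\mathrm{LA}(\zp,A)$ by $(\psi(\phi))(x)=\phi(px)$ — wait, more precisely $\varphi$ corresponds to the operator $\phi\mapsto(x\mapsto\phi(x/p))$ extended by zero off $p\zp$; in any case the key point is that $\varphi$ decreases support toward $0$. For part (1), suppose $f\in\Robba_A^{-}$ with $(1-\alpha\varphi)f=0$, i.e. $f=\alpha\varphi f$. Iterating gives $f=\alpha^n\varphi^n f$ for all $n$, so (transporting to $\mathrm{LA}(\zp,A)$) the function representing $f$ is supported on $p^n\zp$ for every $n$, hence supported on $\{0\}$; since a locally analytic function supported at a single point is zero (or: in $\Robba_A^{-}=\Robba_A/\Robba_A^{+}$ the only element killed by arbitrarily high powers of $\varphi$ followed by taking the quotient is $0$, because $\varphi^n\Robba_A\cap$ anything lands in higher and higher "positive" filtration), we conclude $f=0$. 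I would phrase this cleanly using the valuation estimate $v^{[r,s]}(\varphi^n g)=v^{[r/p^n,s/p^n]}(g)$ together with the fact that an element of $\Robba_A^{-}$ has a representative supported away from a neighborhood of $0$ eventually — this forces the class to be $0$.

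For part (2), the strategy is a Mahler/Amice-expansion argument. Identify $\Robba_A^{-}$ with $\mathrm{LA}(\zp,A)\otimes\chi^{-1}$ and note that $1-\alpha\varphi$ preserves the decomposition of a locally analytic function into its "value distribution near $0$" and its restriction to $\zpe$. Concretely, $\varphi$ maps (the image of) $\mathrm{LA}(\zpe,A)$ into functions supported on $p\zp\setminus\{0\}$, and on the polynomial part $\mathrm{Pol}_{\leq N}(\zp,A)$ the operator $\varphi$ acts (up to the twist) by a triangular unipotent-times-scalar matrix in the monomial basis $1,x,\dots,x^N$: indeed $\varphi$ sends $x^k$ to $p^{-k}x^k$ (or $p^k x^k$ depending on normalization), so $1-\alpha\varphi$ is diagonal with entries $1-\alpha p^{\pm k}$ on $\mathrm{Pol}_{\leq N}$. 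Choosing $N$ large enough that $\alpha p^{\pm k}\not\equiv 1$ — in fact for all $k\geq 0$ one of $1-\alpha p^k$, $1-\alpha p^{-k}$ is a unit, and we only need finitely many to behave — but the real content is surjectivity onto the full $\Robba_A^{-}$ modulo $T_N$. The key step: given $\phi\in\mathrm{LA}(\zp,A)$, I would solve $(1-\alpha\varphi)g=\phi$ by the Neumann series $g=\sum_{n\geq 0}\alpha^n\varphi^n\phi$, which converges in $\Robba_A^{-}$ because $\varphi^n\phi$ is supported on $p^n\zp$ and hence its image in $\Robba_A^{-}=\Robba_A/\Robba_A^{+}$ tends to $0$ (the positive part eats more and more of it); convergence is exactly the statement that in the LF-topology of $\Robba_A^{-}$ these tails go to zero, using the valuation estimates from the proof of Proposition \ref{cor:stup}. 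This shows $1-\alpha\varphi$ is already surjective on $\Robba_A^{-}$, which makes "$T_N+(1-\alpha\varphi)\Robba_A^{-}=\Robba_A^{-}$" trivial — but I suspect the intended statement is subtler (perhaps surjectivity fails and one genuinely needs $T_N$), so I would recheck the normalization of $\varphi$ on $\Robba_A^{-}$: under the Colmez transform $\varphi$ might correspond to $\phi\mapsto \phi(x/p)$ defined only where this makes sense, i.e. it goes the "wrong" way and is not contracting but expanding, in which case one solves by the series the other direction and $T_N$ collects the finitely many obstructions coming from low-degree polynomial behavior at $0$.

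The main obstacle, and where I would spend the most care, is pinning down the precise action of $\alpha\varphi$ on $\Robba_A^{-}$ under the Colmez transform and on the polynomial subspace $\mathrm{Pol}_{\leq N}(\zp,A)$, since the whole dichotomy in (2) (the complement $T_N$ and the intersection being $(1-\alpha\varphi)\mathrm{Pol}_{\leq N}$) hinges on $1-\alpha\varphi$ being an isomorphism on $\mathrm{LA}(\zpe,A)$ and on the "tail" but having a controlled finite-dimensional defect concentrated in the polynomial part near $0$. For the intersection statement $T_N\cap(1-\alpha\varphi)\Robba_A^{-}=(1-\alpha\varphi)\mathrm{Pol}_{\leq N}(\zp,A)$, the inclusion $\supseteq$ is clear since $\varphi$ preserves $\mathrm{Pol}_{\leq N}$; for $\subseteq$, suppose $(1-\alpha\varphi)f\in T_N$, write $f=f_0+f_1$ with $f_1\in\mathrm{LA}(\zpe,A)$ and $f_0$ "near $0$", use injectivity from part (1) plus the explicit triangular action to back out that $f$ itself must be polynomial of degree $\leq N$. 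I would model all of this on \cite[Lemme 2.8 or the analogous lemma]{colmez2015} if such exists, adapting the field argument verbatim since $A$ being Banach affinoid changes nothing essential — all the estimates are $A$-linear and the monomials $x^k$ form a genuine $A$-basis of $\mathrm{Pol}_{\leq N}(\zp,A)$.
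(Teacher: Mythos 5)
Your part (1) is fine and coincides with the paper's argument: iterate $f=\alpha^n\varphi^n f$ and use that $\varphi^n$ pushes the support into $p^n\zp$, so the locally analytic function representing $f$ vanishes on every $p^n\zpe$ and hence is zero.

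Part (2), however, rests on a step that fails. The Neumann series $\sum_{n\geq 0}\alpha^n\varphi^n\phi$ does \emph{not} converge in $\Robba_A^-\cong\mathrm{LA}(\zp,A)\otimes\chi^{-1}$: the term $\alpha^n\varphi^n\phi$ is $\phi(x/p^n)$ on $p^n\zp$ (extended by $0$), so its sup norm does not decay and, worse, its radius of analyticity shrinks by $p^{-n}$, so the partial sums never stabilize in any fixed $\mathrm{LA}_h(\zp,A)$; passing to the quotient by $\Robba_A^+$ does not make these terms small. In fact $1-\alpha\varphi$ is very far from surjective on $\Robba_A^-$ — its cokernel is $T_N/(1-\alpha\varphi)\mathrm{Pol}_{\leq N}(\zp,A)$, which contains (essentially) all of $\mathrm{LA}(\zpe,A)$; this is exactly the content of Corollary \ref{H1-}, and it is why $T_N$ appears in the statement at all. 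So the defect is not ``finite-dimensional and concentrated in the polynomial part'': your hedge about the normalization of $\varphi$ is pointing at a real problem, but the proposal never supplies the argument that replaces the series. The correct mechanism is a support decomposition: expand $\mathbf{1}_{p^n\zp}\phi=\sum_j a_j\mathbf{1}_{p^n\zp}x^j$ and invert $1-\alpha\varphi$ termwise on the monomials of degree $j>N$ using that $1-\alpha p^{-j}\in A^\times$ for $j>N$ (this is where ``$N$ large'' enters, and it leaves behind exactly a polynomial of degree $\leq N$), then treat the part of $\phi$ supported on $p^i\zpe$, $0\leq i<n$, by writing it as $\varphi^i\psi^i(\phi_{1,i})=(1-(1-\alpha\varphi))^i\psi^i(\alpha^{-i}\phi_{1,i})$ and expanding the binomial, which places it in $(1-\alpha\varphi)\Robba_A^-+\mathrm{LA}(\zpe,A)$.

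Two smaller points. First, $\varphi$ does \emph{not} preserve $\mathrm{Pol}_{\leq N}(\zp,A)$ inside $\mathrm{LA}(\zp,A)$: $\varphi(x^k)=p^{-k}\mathbf{1}_{p\zp}x^k$, so $(1-\alpha\varphi)x^k=\mathbf{1}_{\zpe}x^k+(1-\alpha p^{-k})\mathbf{1}_{p\zp}x^k$; this does land in $T_N$, which is what the easy inclusion needs, but not for the reason you give. Second, your sketch of the hard inclusion $T_N\cap(1-\alpha\varphi)\Robba_A^-\subseteq(1-\alpha\varphi)\mathrm{Pol}_{\leq N}(\zp,A)$ omits the key idea: apply $\psi$ to $(1-\alpha\varphi)\phi=\phi'+P$ to kill $\phi'\in\mathrm{LA}(\zpe,A)$, obtaining the functional equation $\phi(x)=\alpha^{-1}\bigl(\phi(px)-P(px)\bigr)$; iterating it shows $\phi$ is analytic on all of $\zp$, and comparing power-series coefficients (using again that $1-\alpha p^{-i}$ is a unit for $i>N$) forces $\phi\in\mathrm{Pol}_{\leq N}(\zp,A)$. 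Without the $\psi$ step and the resulting global analyticity, ``back out that $f$ must be polynomial'' does not go through.
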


\begin{proof}
We first prove injectivity. Note that this is already proved in \cite[Lemme 2.9(ii)]{chen2013}. We repeat the argument here. If $\phi \in \mathscr{R}_{A}^{-}$ is in the kernel of $1-\alpha\varphi$, then $\phi = \alpha^{n}\varphi^{n}(\phi) \hspace{1mm} \forall \hspace{1mm} n\in \mathbf{N}$. Recall the action of $\varphi$ on $\mathscr{R}_{A}^{-}$:
$$(\varphi \cdot \phi)(x) = \begin{cases}
                            \phi \left( \frac{x}{p} \right) & \text{if } x \in p\zp \\
														0 & \text{if } x \not\in p\zp
                            \end{cases}$$
Thus $\phi$ is zero on $p^{n}\zpe$ and hence $\phi = 0$, as desired.

We next prove the second assertion. Let $N \geq 0$ be such that $|\alpha^{-1} p^{N+1}| < 1$, so that in particular $1 - \alpha p^{-j} = - \alpha p^{-j} (1 - \alpha^{-1} p^j) \in A^\times$ for all $j > N$. Let $\phi \in \mathrm{LA}(\zp, A)$ and $n$ be such that $\phi$ is analytic on every ball $i + p^{n} \zp$ ($i \in \zpe$), then we can write $ \mathbf{1}_{p^n \zp} \phi = \sum_{j=0}^{+\infty} a_{j}\mathbf{1}_{p^{n} \zp}x^{j}$ and so the function
$$\phi - (1-\alpha\varphi)\left( \sum_{j=N+1}^{+\infty}\frac{a_j}{1-\alpha p^{-j}}\mathbf{1}_{p^{n-1}\zp}x^{j} \right), $$ which is well defined since the elements $1 - \alpha p^{-j} \in A^\times$, can be expressed as the sum of a polynomial of degree $N$ and a locally analytic function vanishing on $p^{n}\zp$. Hence every $\phi \in \mathscr{R}_A^{-}$ is of the form $\phi_1 + P_\phi + (1-\alpha\varphi)\phi_2$, with $P_\phi  \in \mathrm{Pol}_{\leq N}(\zp ,A)$ and  $\phi_1, \phi_2 \in \mathrm{LA}(\zp,A)$ such that $\phi_1$ vanishes in a neighbourhood of $0$. In particular $\phi_1$ is of the form $\sum_{i = 0}^{n-1} \phi_{1,i}$, with $\phi_{1,i} \in \mathrm{LA}(p^i \zpe,A)$. Writing $\varphi^i\psi^i (\phi_{1,i}) = (1-(1-\alpha\varphi))^{i}\psi^{i}(\alpha^{-i}\phi_{1,i})$ and upon expanding $(1-(1-\alpha\varphi))^i$ expresses $\phi_{1,i}$ as a sum of elements in $(1-\alpha\varphi)\mathscr{R}_A^{-}$ and $\psi^{i}(\alpha^{-i}\phi_{1,i}) \in \mathrm{LA}(\zpe, A)$.

We calculate next the intersection $T_N \cap (1-\alpha\varphi)\mathscr{R}_{A}^{-}$. If $(1-\alpha\varphi)\phi = \phi' + P$ for some $\phi \in \mathrm{LA}(\zp,A)$, $\phi' \in \mathrm{LA}(\zpe, A)$ and $P \in \mathrm{Pol}_{\leq _N}(\zp, A)$, then we have $\psi((1-\alpha\varphi)\phi) = \psi (P)$ (as $\mathrm{LA}(\zpe,A) =  \mathrm{LA}(\zp,A)^{\psi=0}$). Thus $(\psi-\alpha) \cdot \phi = \psi(P)$ and hence $\phi(x) = \alpha^{-1}( \phi(px) - P(px))$ for all $x$ (recalling that $(\psi \cdot \phi)(x) = \phi(px)$). Repeating gives $\phi(x) = \alpha^{-n}\phi(p^{n}x) - \alpha^{-n} P(p^nx) - \alpha^{n-1} P(p^{n-1}x) - \hdots - \alpha^{-1} P(px)$, which shows that $\phi$ is analytic on $\zp$. Writing $\phi(x) = \sum_{i \geq 0} a_{i}x^{i}$, $P(x) = \sum_{i = 0}^N b_n x^n$ on $\zp$ with $a_i, b_i \in A$, the equality $\phi(px) - \alpha \phi(x) = P(px)$ gives \[ \sum_{i=0}^{+\infty} (p^n - \alpha) a_{i} x^i = \sum_{i = 0}^N b_n x^n \] on $\zp$. This gives $(p^n - \alpha) a_{i} = 0$ for $i > N$ and thus $a_i = 0$ (since $(p^n - \alpha) = p^ n (1 - \alpha p^{-n}) \in A^\times$), which implies that $\phi \in \mathrm{Pol}_{\leq N}(\zp, A)$ and hence $T_N \cap (1-\alpha\varphi)\mathscr{R}_{A}^{-} \subseteq (1 - \alpha \varphi) \mathrm{Pol}_{\leq N}(\zp, A)$. To prove the reverse inclusion, we note that, if $P(x) = \sum_{i = 0}^N a_i x^i \in \mathrm{Pol}_{\leq N}(\zp, A)$, then
\begin{eqnarray*} 
(1 - \alpha \varphi) P &=& \mathbf{1}_\zpe \cdot P + \mathbf{1}_{p \zp} \cdot \sum_{i = 0}^N (1 - \alpha p^{-i}) a_i x^i \\
&=& \alpha \mathbf{1}_\zpe \cdot \sum_{i = 0}^N p^{-i} a_i x^i + \sum_{i = 0}^N (1 - \alpha p^{-i}) a_i x^i \in T_N
\end{eqnarray*}

\end{proof}

The following three statements are now immediate from the above lemma, the identity $H_{\rm an}^i(\Phi^+, M) = H^i(\Phi^+, M)$ and the description of the continuous cohomology of a cyclic group.

\begin{coro} \label{H0-}
$H^{0}(\Phi^{+},\mathscr{R}_{A}^{-} \otimes \delta) = 0$.
\end{coro}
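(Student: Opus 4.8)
The plan is simply to unwind $H^0(\Phi^+,-)$ and quote Lemma \ref{sum}(1). The semi-group $\Phi^+ = \matrice{p^{\N}}001$ is infinite cyclic, generated by $\varphi = \matrice p001$; since it is discrete, analytic and continuous cohomology agree (as noted in the excerpt), and in degree $0$ both compute the submodule of elements fixed by the generator. For the module $\mathscr{R}_A^- \otimes \delta$ the twist by $\delta$ only alters the $\varphi$-action — the generator now acts by $z \mapsto \delta(p)\varphi(z)$ — since $\Phi^+$ involves no elements $\sigma_a$, $a \in \zpe$. Hence
\[
H^0(\Phi^+, \mathscr{R}_A^- \otimes \delta) \;=\; \ker\bigl(1 - \delta(p)\varphi \colon \mathscr{R}_A^- \to \mathscr{R}_A^-\bigr).
\]

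Now $\alpha := \delta(p)$ is a unit of $A$, because $\delta \colon \qpe \to A^\times$ is a character, so Lemma \ref{sum}(1) applies verbatim and gives that $1 - \alpha\varphi$ is injective on $\mathscr{R}_A^-$. Therefore this kernel is $0$, which is the assertion. There is no real obstacle here: the whole content sits in the injectivity statement of Lemma \ref{sum}(1), which itself rests on the explicit formula for the $\varphi$-action on $\mathscr{R}_A^-$ (an element killed by $1 - \alpha\varphi$ is forced to vanish on each $p^n\zpe$, hence everywhere).
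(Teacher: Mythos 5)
Your proof is correct and is exactly the paper's argument: the paper likewise deduces Corollary \ref{H0-} immediately from Lemma \ref{sum}(1), using that $\Phi^+$ is infinite cyclic generated by $\varphi$ (so $H^0$ is the kernel of $1-\delta(p)\varphi$ on $\mathscr{R}_A^-$, and analytic equals continuous cohomology for a discrete group). Nothing is missing.
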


\begin{coro} \label{H1-} For $N$ large enough, we have a short exact sequence (of $A^0$-modules)
\[ 0 \to (1 - \alpha \varphi) \mathrm{Pol}_{\leq N}(\zp, A) \otimes \delta \to T_N \otimes \delta \to H^1(\Phi^{+},\mathscr{R}_{A}^{-} \otimes \delta) \to 0. \] 
\end{coro}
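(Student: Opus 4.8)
The plan is to identify $H^{1}(\Phi^{+},\mathscr{R}_A^{-}\otimes\delta)$ with a cokernel and then extract the exact sequence from Lemma \ref{sum}(2) via the second isomorphism theorem. Since $\Phi^{+}$ is the free monoid on the single generator $\varphi$, its continuous (equivalently, by discreteness, analytic) cohomology with coefficients in any topological $A$-module $M$ is computed by the two-term complex $0\to M\xrightarrow{1-\varphi}M\to 0$; thus $H^{0}(\Phi^{+},M)=\ker(1-\varphi)$, $H^{1}(\Phi^{+},M)=\coker(1-\varphi)$ and $H^{i}(\Phi^{+},M)=0$ for $i\geq 2$. Applying this with $M=\mathscr{R}_A^{-}\otimes\delta$, on which $\varphi$ acts as $\delta(p)\varphi$ for the operator $\varphi$ on $\mathscr{R}_A^{-}$ recalled in the proof of Lemma \ref{sum}, and setting $\alpha=\delta(p)\in A^{\times}$, we obtain
\[
H^{1}(\Phi^{+},\mathscr{R}_A^{-}\otimes\delta)\;\cong\;\bigl(\mathscr{R}_A^{-}/(1-\alpha\varphi)\mathscr{R}_A^{-}\bigr)\otimes\delta .
\]
(This is consistent with Corollary \ref{H0-}, which is precisely the injectivity statement of Lemma \ref{sum}(1).)

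Next I would invoke Lemma \ref{sum}(2): for $N$ large enough, $T_N+(1-\alpha\varphi)\mathscr{R}_A^{-}=\mathscr{R}_A^{-}$ and $T_N\cap(1-\alpha\varphi)\mathscr{R}_A^{-}=(1-\alpha\varphi)\mathrm{Pol}_{\leq N}(\zp,A)$. The second isomorphism theorem for $A$-modules then produces a short exact sequence
\[
0\to(1-\alpha\varphi)\mathrm{Pol}_{\leq N}(\zp,A)\to T_N\to \mathscr{R}_A^{-}/(1-\alpha\varphi)\mathscr{R}_A^{-}\to 0,
\]
where the surjection is the composite $T_N\hookrightarrow\mathscr{R}_A^{-}\twoheadrightarrow\mathscr{R}_A^{-}/(1-\alpha\varphi)\mathscr{R}_A^{-}$: it is onto by the first identity and has the asserted kernel by the second. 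Tensoring with the rank-one free $A$-module $A(\delta)$ — an exact operation — and using the identification of the previous paragraph yields exactly the sequence in the statement.

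Finally I would check $A^{0}$-equivariance. Since $A^{+}=\Phi^{+}\times A^{0}$ with the two factors commuting, $A^{0}$ acts on $H^{1}(\Phi^{+},-)$, and it suffices to know that $T_N$, $\mathrm{Pol}_{\leq N}(\zp,A)$ and the operator $1-\alpha\varphi$ are stable under, resp.\ commute with, the action of $A^{0}\cong\Gamma=\zpe$. This holds because $\varphi$ and $\Gamma$ commute and because $\Gamma$ preserves both the space of polynomial functions of degree $\leq N$ and the space of functions supported on $\zpe$. Hence every map above is $A^{0}$-equivariant, giving the claimed short exact sequence of $A^{0}$-modules.

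I do not anticipate any real obstacle here: the substantive content is entirely contained in Lemma \ref{sum}, and what remains is bookkeeping of the twist by $\delta$ (equivalently, pinning down $\alpha=\delta(p)$) together with the routine verification of $\Gamma$-equivariance.
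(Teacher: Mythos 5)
Your proposal is correct and follows essentially the same route as the paper: the paper derives Corollary \ref{H1-} as "immediate" from Lemma \ref{sum}, the identity $H^i_{\an}(\Phi^+,M)=H^i(\Phi^+,M)$, and the description of the cohomology of the cyclic (semi-)group $\Phi^+$ as kernel/cokernel of $1-\alpha\varphi$, which is exactly the argument you spell out. The additional details you supply (the second isomorphism theorem step, the twist by $\delta$, and the $A^0$-equivariance via commutation of $\varphi$ and $\Gamma$) are precisely the bookkeeping the paper leaves implicit.
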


%

\subsubsection{The case of $\Robba_A^+$.} \label{caseofR+}

As in the preceding section, the calculation of the cohomology of the group $\Phi^+$ acting on $\Robba_A^+$ will reduce to the following lemma

\begin{lemm} \label{sum2}
Let $\alpha \in A^\times$, consider $1-\alpha\varphi \colon \mathscr{R}_{A}^{+} \rightarrow \mathscr{R}_{A}^{+}$ and let $N$ be large enough. Then $$ \ker (1 - \alpha \varphi) = \ker (1 - \alpha \varphi \colon \mathrm{Pol}_{\leq N}(\zp, A)^*), $$ $$ \coker (1 - \alpha \varphi) = \coker (1 - \alpha \varphi \colon \mathrm{Pol}_{\leq N}(\zp, A)^*).$$ In particular:
\begin{enumerate}
\item $\ker (1 - \alpha \varphi)  = \bigoplus_{i = 0}^N \mathrm{Ann}(1 - \alpha p^i) t^i$ and $1 - \alpha \varphi$ is injective if $\alpha$ is such that $(1 - \alpha p^i)$ is not a zero divisor for any $i$.
\item $1 - \alpha \varphi$ is surjective if $1 - \alpha p^i \in A^\times$ for all $i \in \N$.
\end{enumerate} 
\end{lemm}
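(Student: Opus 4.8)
The plan is to relate $1-\alpha\varphi$ on $\mathscr{R}_A^+$ to its action on the free rank-$(N+1)$ module $\mathrm{Pol}_{\le N}(\zp,A)^*$ by showing that $1-\alpha\varphi$ becomes \emph{invertible} once one passes to a sufficiently deep $T$-adic piece of $\mathscr{R}_A^+$.

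First I would set up the relevant exact sequence. Via the Amice transform $\mathscr{R}_A^+\cong\mathscr{D}(\zp,A)$ of Proposition \ref{amice}, restriction of distributions along the $\varphi$-equivariant inclusion $\mathrm{Pol}_{\le N}(\zp,A)\hookrightarrow\mathrm{LA}(\zp,A)$ (on which $\varphi$ acts by $P(x)\mapsto P(px)$, hence diagonally by $p^i$ on the basis $x^0,\dots,x^N$) produces a $\varphi$-equivariant short exact sequence of $A$-modules
\[ 0\longrightarrow T^{N+1}\mathscr{R}_A^+\longrightarrow\mathscr{R}_A^+\longrightarrow\mathrm{Pol}_{\le N}(\zp,A)^*\longrightarrow 0. \]
Indeed, using $\partial\mathscr{A}_\mu=\mathscr{A}_{x\mu}$ and $\partial=(1+T)\tfrac{d}{dT}$, a distribution $\mu$ is orthogonal to $\mathrm{Pol}_{\le N}(\zp,A)$ exactly when $\mathscr{A}_\mu$ vanishes to order $\ge N+1$ at $T=0$, i.e. lies in $T^{N+1}\mathscr{R}_A^+$; and $T^{N+1}\mathscr{R}_A^+$ is $\varphi$-stable because $\varphi(T)\in T\mathscr{R}_A^+$. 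Applying the snake lemma to the endomorphism $1-\alpha\varphi$ of this sequence, the lemma reduces to the bijectivity of $1-\alpha\varphi$ on $T^{N+1}\mathscr{R}_A^+$ for $N$ large; the explicit assertions (1)--(2) will then be read off from the fact that $\varphi$ acts on $\mathrm{Pol}_{\le N}(\zp,A)^*\cong\mathscr{R}_A^+/T^{N+1}\mathscr{R}_A^+$ diagonally in the basis given by the images of $t^0,\dots,t^N$ (obtained from $\bar T^0,\dots,\bar T^N$ by a unipotent change of basis), namely $\varphi(\bar t^{\,i})=p^i\bar t^{\,i}$, since $\varphi(t)=pt$.

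The heart of the argument is therefore the claim that $1-\alpha\varphi$ is bijective on $T^{N+1}\mathscr{R}_A^+$ whenever $N>-v_A(\alpha)-1$. Writing $\mathscr{R}_A^+=\varprojlim_r\mathscr{R}_A^{[r,\infty]}$, I would first observe that $\varphi$ has operator norm $\le 1$ on each Banach piece $\mathscr{R}_A^{[r,\infty]}$, because $v^{[r,\infty]}((1+T)^p-1)\ge r$. Next, the classical identity $t=\lim_n p^{-n}((1+T)^{p^n}-1)$ in $\mathscr{R}_A^+$ and continuity of $v^{[r,\infty]}$ give $v^{[r,\infty]}((1+T)^{p^n}-1)=n+v^{[r,\infty]}\big(p^{-n}((1+T)^{p^n}-1)\big)\ge n-C_r$ for a constant $C_r$ depending only on $r$. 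Hence for $z=T^{N+1}g\in T^{N+1}\mathscr{R}_A^+$ one gets, using $\varphi^n(z)=((1+T)^{p^n}-1)^{N+1}\varphi^n(g)$,
\[ v^{[r,\infty]}\big(\alpha^n\varphi^n(z)\big)\ \ge\ n\big(v_A(\alpha)+N+1\big)-(N+1)C_r+v^{[r,\infty]}(g), \]
which tends to $+\infty$ with $n$ precisely when $v_A(\alpha)+N+1>0$, the rate being \emph{independent of} $r$. Thus $\sum_{n\ge 0}\alpha^n\varphi^n$ converges on the closed submodule $T^{N+1}\mathscr{R}_A^+$ of $\mathscr{R}_A^+$ and furnishes a two-sided inverse of $1-\alpha\varphi$ there.

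Finally I would run the snake lemma: bijectivity of $1-\alpha\varphi$ on $T^{N+1}\mathscr{R}_A^+$ collapses the six-term exact sequence to isomorphisms $\ker(1-\alpha\varphi\mid\mathscr{R}_A^+)\xrightarrow{\sim}\ker(1-\alpha\varphi\mid\mathrm{Pol}_{\le N}(\zp,A)^*)$ and likewise for cokernels, which is the first assertion. Since $1-\alpha\varphi$ acts on $\mathrm{Pol}_{\le N}(\zp,A)^*=\bigoplus_{i=0}^N A\,\bar t^{\,i}$ as $\mathrm{diag}(1-\alpha,1-\alpha p,\dots,1-\alpha p^N)$, its kernel is $\bigoplus_i\mathrm{Ann}_A(1-\alpha p^i)\,t^i$ and its cokernel is $\bigoplus_i A/(1-\alpha p^i)A$; moreover for $i>N$ one has $v_A(\alpha p^i)=v_A(\alpha)+i>0$, so $1-\alpha p^i\in A^\times$ automatically, giving (1) and (2) as stated. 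The one point demanding care is the uniformity in $r$ of the displayed estimate --- it is what allows the threshold for ``$N$ large'' to depend only on $v_A(\alpha)$ and not on the Fréchet semi-norm --- and it rests on the linear growth $v^{[r,\infty]}((1+T)^{p^n}-1)\sim n$, equivalently on $t$ having finite $v^{[r,\infty]}$-valuation for every $r$.
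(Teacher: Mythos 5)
Your proof is correct and follows essentially the same route as the paper: both arguments hinge on the invertibility of $1-\alpha\varphi$ on $T^{N+1}\Robba_A^{+}$ once $|\alpha p^{N+1}|<1$, together with the identification of the complementary finite piece $\bigoplus_{i=0}^{N}A\,t^{i}\cong\mathrm{Pol}_{\leq N}(\zp,A)^{*}$ on which $\varphi$ acts diagonally by $p^{i}$. Your short exact sequence plus snake lemma is just the unsplit rephrasing of the paper's $\varphi$-stable decomposition $\Robba_A^{+}=(\bigoplus_{i=0}^{N}A\,t^{i})\oplus T^{N+1}\Robba_A^{+}$, and the uniform-in-$r$ convergence estimate for $\sum_{n}\alpha^{n}\varphi^{n}$ that you spell out is precisely the ingredient the paper delegates to the cited lemmas of Chenevier and Colmez.
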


\begin{proof}
This is essentially \cite[Lemme 2.9 (ii)]{chen2013}, of which the idea of proof comes from \cite[Lemme A.1]{col2008}. We provide a sketch here. Choose $N \geq 0$ an integer large enough so that $| \alpha p^{N+1} | < 1$. Then $1 -\alpha \varphi$ is invertible on $T^{N+1} \mathscr{R}_{A}^{+}$. To conclude, it suffices to remark that there is a $\varphi$-stable decomposition:
$$\mathscr{R}_{A}^{+} = (\bigoplus_{i = 0}^N A t^i )\oplus T^{N+1} \mathscr{R}_{A}^{+} $$
and that $(1-\alpha \varphi)(t^j) = (1-\alpha p^j)t^j$. We get then the desired result for the kernel and cokernel of $1 - \alpha \varphi$ observing that Amice transform identifies $\bigoplus_{0 \leq i \leq N} A t^i$ with the dual of $\mathrm{Pol}_{\leq N}(\zp, A)$.      
\end{proof}

\begin{rema}
Observe that if $\alpha = p^{-i}$ for some $i \in \N$, then the kernel of $1-\alpha\varphi: \mathscr{R}_{A}^{+} \rightarrow \mathscr{R}_{A}^{+}$ contains $A \cdot t^i$, which is identified with the free $A$-module of rank one generated by the distribution sending a function $f$ to $f^{(i)}(0)$ via the Amice transformation.
\end{rema}

\begin{coro} \label{H+} Let $j \in \{0, 1\}$ and $N$ be large enough. We have $$ H^j(\Phi^{+},\mathscr{R}_{A}^{+} \otimes \delta) \cong H^j(\Phi^+, \mathrm{Pol}_{\leq N}(\zp, A)^* \otimes \delta). $$ 

In particular
\begin{itemize}
\item $H^0(\Phi^+, \Robba_A^+ \otimes \delta) \neq 0$ if and only if $1 - \delta(p) p^i$ divides zero for some $i \in \N$.
\item $H^1(\Phi^+, \Robba_A^+ \otimes \delta) \neq 0$ if and only if there exists an $i \in \N$ such that $1 - \delta(p) p^i$ vanishes at some point of $\mathrm{Sp}(A)$.
\end{itemize}
\end{coro}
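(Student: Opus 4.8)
The plan is to reduce both cohomology groups to the kernel and cokernel of a single operator, and then to quote Lemma~\ref{sum2}. Since $\Phi^+$ is the free monoid generated by $\varphi = \matrice p 0 0 1$, for any $A$-module $M$ carrying a $\Phi^+$-action the two-term complex $0 \to M \xrightarrow{\varphi-1} M \to 0$ computes the (continuous, hence abstract) cohomology $H^*(\Phi^+,M)$; thus $H^0(\Phi^+,M) = \ker(\varphi-1\mid M)$, $H^1(\Phi^+,M) = \coker(\varphi-1\mid M)$ and $H^i(\Phi^+,M)=0$ for $i\geq 2$ (this is the ``description of the continuous cohomology of a cyclic group'' already invoked above). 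Taking $M = \mathscr{R}_A^+ \otimes \delta$ and trivializing the twist so that the generator of $\Phi^+$ acts by $f \mapsto \delta(p)\varphi(f)$, the groups $H^j(\Phi^+,\mathscr{R}_A^+\otimes\delta)$ for $j\in\{0,1\}$ become respectively the kernel and cokernel of $1-\delta(p)\varphi$ on $\mathscr{R}_A^+$; identically, $H^j(\Phi^+,\mathrm{Pol}_{\leq N}(\zp,A)^*\otimes\delta)$ is the kernel, resp. cokernel, of $1-\delta(p)\varphi$ on $\mathrm{Pol}_{\leq N}(\zp,A)^*$.

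Set $\alpha = \delta(p)\in A^\times$. Lemma~\ref{sum2} is precisely the assertion that, once $N$ is large enough (so that $|\alpha p^{N+1}|<1$), the $\varphi$-equivariant inclusion $\mathrm{Pol}_{\leq N}(\zp,A)^* = \bigoplus_{i=0}^N A t^i \hookrightarrow \mathscr{R}_A^+$ induces isomorphisms on $\ker(1-\alpha\varphi)$ and on $\coker(1-\alpha\varphi)$ — the point being the $\varphi$-stable splitting $\mathscr{R}_A^+ = \big(\bigoplus_{i=0}^N A t^i\big)\oplus T^{N+1}\mathscr{R}_A^+$, on whose second summand $1-\alpha\varphi$ is invertible. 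Combined with the previous paragraph, this yields the isomorphism $H^j(\Phi^+,\mathscr{R}_A^+\otimes\delta)\cong H^j(\Phi^+,\mathrm{Pol}_{\leq N}(\zp,A)^*\otimes\delta)$ for $j\in\{0,1\}$.

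It then remains to read off the vanishing conditions. On $\bigoplus_{i=0}^N A t^i$ the operator $1-\delta(p)\varphi$ is diagonal, acting on $t^i$ by multiplication by $1-\delta(p)p^i$ (using $\varphi(t^i)=p^i t^i$), so $H^0(\Phi^+,\mathscr{R}_A^+\otimes\delta)\cong \bigoplus_{i=0}^N \mathrm{Ann}_A\!\big(1-\delta(p)p^i\big)\,t^i$ and $H^1(\Phi^+,\mathscr{R}_A^+\otimes\delta)\cong \bigoplus_{i=0}^N \big(A/(1-\delta(p)p^i)\big)\,t^i$. Hence $H^0\neq 0$ iff some $1-\delta(p)p^i$ with $0\leq i\leq N$ is a zero divisor, and $H^1\neq 0$ iff some such $1-\delta(p)p^i$ fails to be a unit, equivalently lies in some maximal ideal, equivalently vanishes at a point of $\mathrm{Sp}(A)$. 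To match the statement over all $i\in\N$, I would finally observe that for $i>N$ one has $|\delta(p)p^i|_A<1$, so $1-\delta(p)p^i\in A^\times$ and contributes neither a zero divisor nor a vanishing point; therefore the restriction $i\leq N$ is harmless.

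The argument is formal given Lemma~\ref{sum2}, where the genuine analytic input lives (the estimate $|\alpha p^{N+1}|<1$ and the resulting invertibility of $1-\alpha\varphi$ on $T^{N+1}\mathscr{R}_A^+$). The only mild subtlety in this corollary is the bookkeeping between ``for some $i\leq N$'' and ``for some $i\in\N$'', which the observation above dispatches.
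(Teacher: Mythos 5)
Your proof is correct and follows the same route the paper intends: identify $H^0$ and $H^1$ of $\Phi^+$ with the kernel and cokernel of $1-\delta(p)\varphi$ (possible since $\Phi^+$ is generated by a single element), invoke Lemma \ref{sum2} for the comparison with $\mathrm{Pol}_{\leq N}(\zp,A)^*$, and read off the zero-divisor and non-unit conditions from the diagonal action on the $t^i$. Your closing remark reconciling "$i\leq N$" with "$i\in\N$" (using that $1-\delta(p)p^i$ is a unit for $i>N$ by the norm estimate) is a detail the paper leaves implicit, and it is handled correctly.
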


\subsection{The $A^0$-cohomology}

We next compute some $A^{0}$-cohomology, for which we use the description of analytic cohomology in terms of the action of the Lie algebra. Fix a continuous character $\eta: \zpe \rightarrow A^\times$, viewing it naturally as a character of $A^{0}$.

\subsubsection{The case of $\mathrm{LA}(\zpe, A) \otimes \eta$.}

\begin{lemma} \label{newprop1}
$\nabla + \kappa(\eta)$ is surjective on $\mathrm{LA}(\zpe,A)$ and its kernel is generated by the set of $\phi \eta$, with $\phi$ locally constant.
\end{lemma}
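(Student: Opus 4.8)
The plan is to trivialise the twist by $\eta$ and thereby reduce both assertions to the surjectivity of the ordinary antiderivative on $\mathrm{LA}(\zpe,A)$. Recall that, with the normalisation of this section, $\nabla$ acts on $\mathrm{LA}(\zpe,A)$ as the operator $-x\frac{d}{dx}$; for any locally analytic character $\eta\colon\zpe\to A^{\times}$ the relation $\eta(x(1+\e))=\eta(x)\eta(1+\e)$ gives, upon differentiating in $\e$ at $0$, the identity $x\eta'(x)=\kappa(\eta)\eta(x)$. Since $\eta$ takes values in $A^{\times}$, multiplication by $\eta$ is an $A$-linear topological automorphism of $\mathrm{LA}(\zpe,A)$, and writing an element as $h\eta$ one computes, using $x\eta'=\kappa(\eta)\eta$,
\[
(\nabla+\kappa(\eta))(h\eta)=-x(h'\eta+h\eta')+\kappa(\eta)h\eta=-xh'(x)\,\eta(x),
\]
so that conjugation by multiplication by $\eta$ carries $\nabla+\kappa(\eta)$ to $-x\frac{d}{dx}$.

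The description of the kernel is then immediate: $(\nabla+\kappa(\eta))(h\eta)=0$ if and only if $xh'(x)=0$, and since $x$ is a unit on $\zpe$ this forces $h'\equiv 0$, i.e. $h$ locally constant (a locally analytic function with everywhere vanishing derivative is locally constant). Hence $\ker(\nabla+\kappa(\eta))=\{\,h\eta:\ h\in\mathrm{LC}(\zpe,A)\,\}$, which is the asserted generation statement.

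For surjectivity, given $g\in\mathrm{LA}(\zpe,A)$ I would look for $f=h\eta$ with $-xh'(x)\eta(x)=g(x)$, i.e. $h'=-g/(x\eta)$; as $x$ and $\eta$ are units, $g/(x\eta)\in\mathrm{LA}(\zpe,A)$, so it suffices to prove that $\frac{d}{dx}\colon\mathrm{LA}(\zpe,A)\to\mathrm{LA}(\zpe,A)$ is onto. Pick $m$ so that $g$ is analytic on every ball $b+p^{m}\zp\subseteq\zpe$, say $g=\sum_{n\geq0}d_{n}^{(b)}\big(\tfrac{x-b}{p^{m}}\big)^{n}$ with $d_{n}^{(b)}\to0$ in $A$. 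On the smaller ball $b+p^{m+1}\zp$, substituting $\tfrac{x-b}{p^{m}}=p\cdot\tfrac{x-b}{p^{m+1}}$ and integrating termwise produces the function with expansion $\sum_{n\geq0}\frac{d_{n}^{(b)}p^{m+n+1}}{n+1}\big(\tfrac{x-b}{p^{m+1}}\big)^{n+1}$, whose coefficients have valuation $\geq v_{A}(d_{n}^{(b)})+(m+1+n)-\log_{p}(n+1)$, which tends to $+\infty$; hence this is an element analytic on $b+p^{m+1}\zp$ with derivative $g$ there. The balls $b+p^{m+1}\zp$, $b\in(\zp/p^{m+1}\zp)^{\times}$, are pairwise disjoint and cover $\zpe$, so patching these local antiderivatives (fixing the constants of integration arbitrarily, e.g. value $0$ at each $b$) yields $F\in\mathrm{LA}(\zpe,A)$ with $F'=g$.

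The only point needing genuine care is the convergence estimate in the last step — that shrinking the radius by a factor $p$ produces a gain $p^{n}$ which dominates the denominator $n+1$; this is exactly why one must pass to $b+p^{m+1}\zp$ rather than stay on $b+p^{m}\zp$, and it is the sole place the argument uses more than formal manipulation. Everything else is bookkeeping with the conventions for $\nabla$, the sign of $\kappa(\eta)$, and the twist. If a topological (strict) surjection is required downstream, it is automatic, since $\mathrm{LA}(\zpe,A)$ is an $LF$-space and $\nabla+\kappa(\eta)$ is continuous with closed image by the above.
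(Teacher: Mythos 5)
Your argument is correct and follows essentially the same route as the paper: conjugating by multiplication by the unit-valued character $\eta$ (using $x\eta'=\kappa(\eta)\eta$) to reduce to the operator $-x\frac{d}{dx}$, reading off the kernel as the locally constant functions times $\eta$, and deducing surjectivity from surjectivity of differentiation on $\mathrm{LA}(\zpe,A)$. The only difference is that you spell out the termwise-integration convergence estimate (shrinking the ball radius by a factor of $p$), which the paper dismisses with ``as we can easily see integrating a power series.''
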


\begin{proof}
We compute for $\phi \in \mathrm{LA}(\zpe,A)$,
$$\nabla \phi(x) = \lim_{a \to 1} \frac{\phi(x/a)-\phi(x)}{a-1} = -x\phi'(x)$$
and thus
$$(\nabla + \kappa(\eta))(\phi \eta) = -x(\phi' \eta + \kappa(\eta)x^{-1}\phi \eta) + \kappa(\eta)\phi \eta = \nabla(\phi) \cdot \eta,$$
where the first equality follows from the relation $\eta'(x) = \eta'(1)\eta(x)x^{-1}$. We see that to show (1), multiplying by $\eta$ if necessary (recall that $\eta$ takes values in $A^\times$ so multiplication by $\eta$ is invertible), we can assume $\kappa(\eta) = 0$. The description of the kernel is clear and surjectivity follows from surjectivity of $\phi \mapsto \phi'$ (as we can easily see integrating a power series).  
\end{proof}

\begin{prop} \label{2.25} \leavevmode
\begin{enumerate} 
\item $H^{0}_{\an}(A^{0},\mathrm{LA}(\zpe,A) \otimes \eta)$ is a free $A$-module of rank 1 generated by $\mathbf{1}_{\zpe}\eta$. \item $H^{1}_{\an}(A^{0},\mathrm{LA}(\zpe,A) \otimes \eta) = 0$.
\end{enumerate}
\end{prop}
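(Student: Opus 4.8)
The plan is to compute the $A^0$-cohomology of $\mathrm{LA}(\zpe,A)\otimes\eta$ using the identification of analytic cohomology of the one-dimensional compact group $A^0\cong\zpe$ with the cohomology of the one-term Lie algebra complex. Recall $A^0 = \Gamma = \zpe$ is procyclic (as $p>2$), generated topologically by some $\gamma_0$, and the Lie algebra $\Lie(A^0)$ is one-dimensional, spanned by the operator $\nabla$. As in Lemma \ref{biglem} and the discussion preceding it, analytic cohomology $H^i_{\an}(A^0,M)$ for a suitable $\mathscr{D}(A^0,A)$-module $M$ is computed by the complex
\[
0 \to M \xrightarrow{\nabla} M \to 0,
\]
placed in degrees $0$ and $1$, after taking $A^0$-invariants of the Lie-algebra cohomology (so one must also check the $\Gamma$-action on the kernel and cokernel of $\nabla$ is trivial — but this is automatic here since $\nabla$ is the generator of the abelian Lie algebra and everything is $\Gamma$-equivariant, exactly as in the $H^i_{\Lie}(A^+,-)$ versus $H^i_{\an}(A^+,-)$ comparison above). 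Thus $H^0_{\an}(A^0,\mathrm{LA}(\zpe,A)\otimes\eta) = \ker\big(\nabla \colon \mathrm{LA}(\zpe,A)\otimes\eta \to \mathrm{LA}(\zpe,A)\otimes\eta\big)$ and $H^1_{\an}(A^0,\mathrm{LA}(\zpe,A)\otimes\eta) = \coker$ of the same map.

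Next I would invoke Lemma \ref{newprop1} directly. Twisting by $\eta$ (multiplication by $\eta$ is an isomorphism of $\mathrm{LA}(\zpe,A)$ onto $\mathrm{LA}(\zpe,A)\otimes\eta$ intertwining $\nabla$ on the target with $\nabla + \kappa(\eta)$ on the source, by the computation $(\nabla+\kappa(\eta))(\phi\eta) = (\nabla\phi)\cdot\eta$ in the proof of that lemma), the action of $\nabla$ on $\mathrm{LA}(\zpe,A)\otimes\eta$ corresponds to the action of $\nabla + \kappa(\eta)$ on $\mathrm{LA}(\zpe,A)$. Lemma \ref{newprop1} states that $\nabla+\kappa(\eta)$ is surjective on $\mathrm{LA}(\zpe,A)$, which gives $H^1_{\an}(A^0,\mathrm{LA}(\zpe,A)\otimes\eta)=0$, proving (2). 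For (1), the same lemma identifies the kernel of $\nabla+\kappa(\eta)$ with the $A$-span of $\{\phi\eta : \phi \text{ locally constant}\}$ inside $\mathrm{LA}(\zpe,A)$; transporting back through the twist, $H^0_{\an}(A^0,\mathrm{LA}(\zpe,A)\otimes\eta)$ is the space of $(\nabla+\kappa(\eta))(\phi) = 0$ scaled copies, i.e. spanned by locally constant functions times $\eta$.

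The only remaining point is to cut this kernel down to a free rank-one module: a priori "locally constant functions on $\zpe$" is infinite-dimensional, so I must explain why taking $A^0 = \Gamma$-invariants (which is what $H^0_{\an}$ really is — the Lie algebra computes $H^0$ on the nose, no extra invariants needed, but one should double-check against the group cohomology $H^0(A^0,-) = M^{A^0}$) collapses it. Concretely, $\nabla\phi = 0$ for $\phi$ locally constant, but $H^0_{\an}(A^0,M) = M^{\nabla=0}\cap M^{\Gamma}$ is not quite right; rather the $\Gamma$-action on the $1$-term complex in degree $0$ is the given one, and $H^0_{\an}(A^0,M)$ genuinely equals $M^{A^0}$, the full $\Gamma$-invariants. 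So I should instead argue: $\phi\eta$ is $\Gamma$-invariant in $\mathrm{LA}(\zpe,A)\otimes\eta$ iff $\sigma_a(\phi\eta) = \phi\eta$ for all $a\in\zpe$, and since $\sigma_a$ acts on $\mathrm{LA}(\zpe,A)\otimes\eta$ by $(\sigma_a f)(x) = \eta(a)^{-1}\cdot\eta\cdot(\text{translate})$... more cleanly: in the untwisted picture, $H^0(A^0,\mathrm{LA}(\zpe,A)\otimes\eta) = \{f\in\mathrm{LA}(\zpe,A) : f(x/a) = f(x)\ \forall a\in\zpe\}\cdot\eta = (\text{constant functions})\cdot\eta$, which is exactly $A\cdot\mathbf{1}_{\zpe}\eta$, a free $A$-module of rank one. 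This is the main (modest) obstacle: reconciling the Lie-algebra kernel description from Lemma \ref{newprop1} with the genuine group-invariants, and I expect it to dissolve once one notes that a $\nabla$-killed locally constant function is $\Gamma$-invariant only when it is genuinely constant, since $\Gamma=\zpe$ acts transitively on $\zpe$ up to scaling and topologically $\Gamma$ is generated by a single element of infinite order whose orbits on the locally constant functions are trivial only on constants. I would phrase the final write-up so that (1) follows from this invariants computation and (2) follows immediately from the surjectivity in Lemma \ref{newprop1} combined with the comparison $H^i_{\an}(A^0,-) = H^i(\text{Lie complex})^{A^0}$.
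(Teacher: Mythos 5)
Your proposal is correct and follows essentially the same route as the paper's proof: identify $H^i_{\an}(A^0,-)$ with the $A^0$-invariants of the cohomology of the one-term complex $0\to M\xrightarrow{\nabla} M\to 0$ (Lemma \ref{biglem} with the $\Phi^+$-action ignored), use the twist $\nabla(\phi\otimes\eta)=((\nabla+\kappa(\eta))\phi)\otimes\eta$, and then apply Lemma \ref{newprop1}, surjectivity giving (2) and the kernel description plus genuine $A^0$-invariance collapsing the locally constant functions to $A\cdot\mathbf{1}_{\zpe}\eta$ for (1). Your closing discussion of why group invariants (rather than just the Lie-algebra kernel) are what cut the space down is exactly the point the paper makes in its last sentence.
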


\begin{proof}
Note that for $i \in \{0,1 \}$ and $M \in \Phi\Gamma(\mathscr{R}_A)$, $H^{i}_{\an}(A^{0},M)$ is computed by the $A^{0}$-invariants of the cohomology of the complex
$$0 \rightarrow M \xrightarrow{\nabla} M \rightarrow 0.$$
To see this, it suffices to repeat Lemma \ref{biglem}, ignoring the action of $\Phi^{+}$. As $\nabla(\phi \otimes \eta) = ((\nabla + \kappa(\eta))\phi) \otimes \eta$, the two assertions follow from lemma \ref{newprop1}: the first cohomology group is trivial since $(\nabla + \kappa(\eta))$ is surjective, and if $\phi \eta \otimes \eta$ is killed by $\nabla$ and fixed by $A^0$, then $\phi(a x) \eta(x) = \phi(x) \eta(x)$ for every $a \in A^0$ and so $\phi$ is constant on $\zpe$, hence the result.
\end{proof}

\subsection{The $A^+$-cohomology}

In this section we denote $\delta \colon \qpe \to A^\times$ a continuous character, $\alpha = \delta(p)$ and $\beta = \delta(a)$. Let $N$ be large enough and set $\mathrm{Pol}_{\leq N} = \mathrm{Pol}_{\leq N}(\zp, A)$. We next calculate the $A^+$-cohomology of $\mathscr{R}_{A}^{-} \otimes \delta$, $\mathscr{R}_{A}^{+} \otimes \delta$ and hence that of $\Robba_A \otimes \delta$.

\subsubsection{The case of $\mathscr{R}_A^- \otimes \delta$.}

\begin{lemma} \label{H0A+R-}
We have $H^0(A^+, \Robba_A^- \otimes \delta) = 0$.
\end{lemma}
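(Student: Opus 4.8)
The plan is to reduce to the cohomology of $\Phi^{+}$ already computed in the previous subsection, exploiting the product decomposition $A^{+} = \Phi^{+} \times A^{0}$ (which breaks the $\varphi$-action apart from the $\Gamma$-action). Since degree-zero cohomology is nothing but the submodule of invariants, for any $A^{+}$-module $M$ we have $H^{0}(A^{+}, M) = (M^{\Phi^{+}})^{A^{0}} = H^{0}(A^{0}, H^{0}(\Phi^{+}, M))$. So it suffices to show that the inner term $H^{0}(\Phi^{+}, \Robba_A^{-} \otimes \delta)$ vanishes, after which $H^{0}(A^{+}, \Robba_A^{-} \otimes \delta) = H^{0}(A^{0}, 0) = 0$ is automatic.

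The vanishing of $H^{0}(\Phi^{+}, \Robba_A^{-} \otimes \delta)$ is precisely Corollary \ref{H0-}. Unwinding it: $\Phi^{+}$ is infinite cyclic generated by $\varphi$, which acts on the twist $\Robba_A^{-} \otimes \delta$ as $\delta(p)\varphi$, so $H^{0}(\Phi^{+}, \Robba_A^{-} \otimes \delta) = \ker\bigl(1 - \delta(p)\varphi \text{ on } \Robba_A^{-}\bigr)$. Because $\delta$ takes values in $A^{\times}$ we have $\delta(p) \in A^{\times}$, and Lemma \ref{sum}(1) supplies injectivity of $1 - \delta(p)\varphi$ on $\Robba_A^{-}$, so this kernel is $0$. (Concretely, an element in the kernel equals $\delta(p)^{n}\varphi^{n}$ of itself for every $n$, and since $\varphi$ on $\Robba_A^{-} \cong \mathrm{LA}(\zp, A) \otimes \chi^{-1}$ restricts functions to $p\zp$ up to rescaling, such an element is supported on $\bigcap_{n} p^{n}\zp = \{0\}$, hence is zero.)

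There is no genuine obstacle here: the statement is an immediate consequence of the $\Phi^{+}$-computation of the previous subsection. The only point requiring a moment's attention is to record that the $\varphi$-action on the twisted module $\Robba_A^{-} \otimes \delta$ is $\delta(p)\varphi$, so that Corollary \ref{H0-} — equivalently Lemma \ref{sum}(1) — applies verbatim; everything else is formal.
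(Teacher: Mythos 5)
Your proof is correct and follows the paper's own route: the paper simply cites Corollary \ref{H0-}, i.e. the vanishing of $H^0(\Phi^+, \Robba_A^- \otimes \delta)$ coming from the injectivity of $1-\delta(p)\varphi$ in Lemma \ref{sum}(1), which is exactly your reduction. Your extra unwinding of the inflation step $H^0(A^+,M) = H^0(A^0, H^0(\Phi^+,M))$ and of the support argument is fine but adds nothing beyond the paper's one-line proof.
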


\begin{proof}
This follows from Corollary \ref{H0-}.
\end{proof}

\begin{lemma}
The groups $H^i(A^+, \Robba_A^- \otimes \delta)$, $i = 1, 2$, live in an exact sequence of $A$-modules
\begin{eqnarray*} 
0 &\to& ((1 - \alpha \varphi) \mathrm{Pol}_{\leq N} \otimes \delta)^\Gamma \xrightarrow{f} (T_N \otimes \delta)^\Gamma \to H^1(A^+, \Robba_A^- \otimes \delta) \\
&\to& H^1(A^0, (1 - \alpha \varphi) \mathrm{Pol}_{\leq N} \otimes \delta) \xrightarrow{g} H^1(A^0, T_N \otimes \delta) \to H^2(A^+, \Robba_A^- \otimes \delta) \to 0
\end{eqnarray*}
\end{lemma}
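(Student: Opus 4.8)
The plan is to derive the exact sequence purely formally, from the Hochschild--Serre spectral sequence for the product decomposition $A^+ = \Phi^+ \times A^0$ together with the two computations of $\Phi^+$-cohomology in Corollaries \ref{H0-} and \ref{H1-}.

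First I would invoke the spectral sequence (the same one that appears in the proof of Proposition \ref{prop:lazcompcom})
\[ E_2^{i,j} = H^i\big(A^0, H^j(\Phi^+, \Robba_A^- \otimes \delta)\big) \implies H^{i+j}(A^+, \Robba_A^- \otimes \delta). \]
Since $\Phi^+$ is infinite cyclic, $H^j(\Phi^+, -) = 0$ for $j \geq 2$, so only the rows $j = 0, 1$ can be nonzero. The generator of $\Phi^+$ acts on $\Robba_A^- \otimes \delta$ as $\alpha\varphi$ with $\alpha = \delta(p)$, whence $H^0(\Phi^+, \Robba_A^- \otimes \delta) = \ker\big(1 - \alpha\varphi \colon \Robba_A^- \to \Robba_A^-\big) \otimes \delta = 0$ by Corollary \ref{H0-} (equivalently Lemma \ref{sum}(1), which uses $\alpha \in A^\times$). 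Thus the entire row $j = 0$ of $E_2$ vanishes; the spectral sequence degenerates at $E_2$ and yields, for all $n$,
\[ H^n(A^+, \Robba_A^- \otimes \delta) \;\cong\; H^{n-1}\big(A^0, H^1(\Phi^+, \Robba_A^- \otimes \delta)\big). \]
In particular $H^1(A^+, \Robba_A^- \otimes \delta) = H^1(\Phi^+, \Robba_A^- \otimes \delta)^\Gamma$ and $H^2(A^+, \Robba_A^- \otimes \delta) = H^1\big(A^0, H^1(\Phi^+, \Robba_A^- \otimes \delta)\big)$.

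Next I would apply the long exact sequence of $A^0$-cohomology to the $A^0$-equivariant short exact sequence of Corollary \ref{H1-},
\[ 0 \to (1-\alpha\varphi)\mathrm{Pol}_{\leq N}(\zp, A) \otimes \delta \to T_N \otimes \delta \to H^1(\Phi^+, \Robba_A^- \otimes \delta) \to 0. \]
Because $A^0 = \zpe$ has cohomological dimension $1$ for $A$-module coefficients (for $p$ odd one has $\zpe \cong \mu_{p-1} \times \zp$, and the prime-to-$p$ part contributes nothing to cohomology with $\qp$-linear coefficients; equivalently, $A^0$-cohomology is computed by the length-one $\nabla$-complex of the previous subsection, continuous and analytic $A^0$-cohomology agreeing as in the remark following Proposition \ref{prop:lazcompcom}), this long exact sequence reads
\[ 0 \to \big((1-\alpha\varphi)\mathrm{Pol}_{\leq N}\otimes\delta\big)^\Gamma \xrightarrow{f} (T_N \otimes \delta)^\Gamma \to H^1(\Phi^+, \Robba_A^- \otimes \delta)^\Gamma \xrightarrow{\partial} H^1\big(A^0, (1-\alpha\varphi)\mathrm{Pol}_{\leq N}\otimes\delta\big) \xrightarrow{g} H^1(A^0, T_N\otimes\delta) \to H^1\big(A^0, H^1(\Phi^+, \Robba_A^- \otimes \delta)\big) \to 0, \]
with $f$ induced by the inclusion $(1-\alpha\varphi)\mathrm{Pol}_{\leq N}\hookrightarrow T_N$ and $g$ the map it induces on $H^1(A^0, -)$. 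Substituting $H^1(\Phi^+, \Robba_A^- \otimes \delta)^\Gamma = H^1(A^+, \Robba_A^- \otimes \delta)$ and $H^1(A^0, H^1(\Phi^+, \Robba_A^- \otimes \delta)) = H^2(A^+, \Robba_A^- \otimes \delta)$ turns this into precisely the asserted six-term exact sequence.

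The argument is formal once Corollaries \ref{H0-} and \ref{H1-} are available, so I do not expect a genuine obstacle. The two points to keep in mind are that $N$ must be chosen large enough in terms of $\alpha = \delta(p)$ for Corollary \ref{H1-} to apply (this is already part of its hypothesis), and that the vanishing of $A^0$-cohomology in degrees $\geq 2$ is what truncates the long exact sequence on the right, making the last map $H^1(A^0, T_N \otimes \delta) \to H^1(A^0, H^1(\Phi^+, \Robba_A^- \otimes\delta)) = H^2(A^+, \Robba_A^-\otimes\delta)$ surjective. Everything else is bookkeeping with the spectral sequence and the connecting homomorphisms.
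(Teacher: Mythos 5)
Your argument is correct and is essentially the paper's own proof: the paper also uses inflation–restriction (Hochschild–Serre) together with the vanishing of $H^0(\Phi^+,\Robba_A^-\otimes\delta)$ from Corollary \ref{H0-} to identify $H^1(A^+,\Robba_A^-\otimes\delta)$ with $H^0(A^0,H^1(\Phi^+,\Robba_A^-\otimes\delta))$ (and likewise $H^2$ with $H^1(A^0,H^1(\Phi^+,\cdot))$), and then takes the long exact sequence of $A^0$-cohomology attached to the short exact sequence of Corollary \ref{H1-}. Your added remarks on the degeneration of the spectral sequence and on $A^0$ having cohomological dimension one merely make explicit what the paper leaves implicit.
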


\begin{proof}
Inflation-restriction and Corollary \ref{H0-} give \[ H^1(A^+, \Robba^-_A \otimes \delta) = H^0(A^0, H^1(\Phi^+, \Robba^-_A \otimes \delta)). \] The result follows then by taking the long exact sequence of $A^0$-cohomology associated to the short exact sequence of $A^0$-modules of Corollary \ref{H1-}.
\end{proof}

In order to calculate $H^i(A^+, \Robba^- \otimes \delta)$, we need to examine the cokernel of $f$ and the kernel of $g$. We first begin with a lemma stating some preliminary reductions. 

\begin{lemma} \label{2.29} \leavevmode
\begin{enumerate}
\item $(T_N \otimes \delta)^\Gamma = A \cdot \langle \mathbf{1}_\zpe \delta \rangle \oplus \bigoplus_{i = 0}^N \mathrm{Ann}(1 - \beta a^{-i}) \cdot x^i$.
\item $((1 - \alpha \varphi) \mathrm{Pol}_{\leq N} \otimes \delta)^\Gamma = (1 - \alpha \varphi) \bigoplus_{i = 0}^N \mathrm{Ann}(1 - \beta a^{-i}) \cdot x^i$
\item $H^1(A^0, T_N \otimes \delta) = H^1(A^0,  \mathrm{Pol}_{\leq N} \otimes \delta). $
\end{enumerate}
\end{lemma}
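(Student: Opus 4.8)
The plan is to reduce all three identities to Proposition~\ref{2.25}, to the $A^0$-equivariant splitting $T_N = \mathrm{LA}(\zpe,A) \oplus \mathrm{Pol}_{\leq N}(\zp,A)$, and to a small amount of linear algebra over $A$. First I would record the relevant $A^0$-action: for $b \in \zpe$ the operator $\sigma_b$ acts on $\mathrm{LA}(\zp,A)$ by $(\sigma_b\phi)(x) = \phi(b^{-1}x)$ (cf.\ the proof of Lemma~\ref{newprop1}), and this preserves both the summand $\mathrm{LA}(\zpe,A)$ (since $\zpe$ is stable under multiplication by $\zpe$) and the summand $\mathrm{Pol}_{\leq N}(\zp,A)$, scaling the monomial $x^i$ by $b^{-i}$. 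Hence the splitting is $A^0$-equivariant and remains so after twisting by $\delta$, and $\Gamma=\zpe$ acts diagonally on $\mathrm{Pol}_{\leq N}(\zp,A)\otimes\delta = \bigoplus_{i=0}^N A\cdot x^i$, the element $\sigma_b$ acting on $A\cdot x^i$ through the continuous character $b\mapsto\delta(b)b^{-i}$. Since $p>2$, $\zpe$ is topologically generated by $a$, so $(\mathrm{Pol}_{\leq N}(\zp,A)\otimes\delta)^{\Gamma} = \bigoplus_{i=0}^N \mathrm{Ann}(1 - \beta a^{-i})\cdot x^i$.

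Statements (1) and (3) then fall out of the splitting. Taking $\Gamma$-invariants of $T_N\otimes\delta = (\mathrm{LA}(\zpe,A)\otimes\delta)\oplus(\mathrm{Pol}_{\leq N}(\zp,A)\otimes\delta)$ and using Proposition~\ref{2.25}(1), which identifies $(\mathrm{LA}(\zpe,A)\otimes\delta)^{\Gamma}$ with the free rank-one $A$-module on $\mathbf{1}_\zpe\delta$, together with the formula above, gives (1). Applying $H^1(A^0,-)$ to the same splitting and using $H^1(A^0,\mathrm{LA}(\zpe,A)\otimes\delta)=0$ from Proposition~\ref{2.25}(2) gives (3); here the projection $T_N\to\mathrm{Pol}_{\leq N}(\zp,A)$ splits the inclusion, so only the vanishing of the $\mathrm{LA}(\zpe,A)\otimes\delta$-part is needed. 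To read off this vanishing for continuous cohomology I would invoke that continuous and analytic $A^0$-cohomology agree on $\mathrm{LA}(\zpe,A)\otimes\delta$, which is an $A^0$-direct summand of $\mathrm{LA}(\zp,A)\otimes\delta$, cf.\ the remark following Proposition~\ref{prop:lazcompcom}.

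Finally, for (2) I would note that $1-\alpha\varphi$ is $\Gamma$-equivariant (because $\varphi$ commutes with $\Gamma$ and $\alpha$ is central) and injective on $\Robba_A^-\otimes\delta$ by Lemma~\ref{sum}(1), hence injective on its $\Gamma$-submodule $\mathrm{Pol}_{\leq N}(\zp,A)\otimes\delta$; since $\Gamma$-invariants commute with injective $\Gamma$-maps ($f(M)^\Gamma = f(M^\Gamma)$), we get $((1-\alpha\varphi)\mathrm{Pol}_{\leq N}(\zp,A)\otimes\delta)^{\Gamma} = (1-\alpha\varphi)(\mathrm{Pol}_{\leq N}(\zp,A)\otimes\delta)^{\Gamma} = (1-\alpha\varphi)\bigoplus_{i=0}^N\mathrm{Ann}(1-\beta a^{-i})\cdot x^i$ from the first paragraph. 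None of this is deep: the only input that is not bare linear algebra over $A$ is Proposition~\ref{2.25}, which handles the infinite-dimensional piece $\mathrm{LA}(\zpe,A)$ via the Lie algebra, and the one technical point to watch is the continuous-versus-analytic $A^0$-cohomology comparison for $\mathrm{LA}(\zpe,A)\otimes\delta$ used in (3).
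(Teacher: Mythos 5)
Your proof is correct and follows essentially the same route as the paper: the $A^0$-equivariant splitting $T_N = \mathrm{LA}(\zpe,A)\oplus\mathrm{Pol}_{\leq N}$, Proposition \ref{2.25} for the $\mathrm{LA}(\zpe,A)$-part, the diagonal action $(1-\beta\gamma)(a_ix^i)=a_i(1-\beta a^{-i})x^i$ for the polynomial part, and the injectivity of $1-\alpha\varphi$ from Lemma \ref{sum}(1) together with its $\Gamma$-equivariance for point (2). The only difference is that you make explicit two points the paper leaves implicit (topological generation of $\zpe$ by $a$, and the continuous-versus-analytic identification of $H^1(A^0,-)$ needed to apply Proposition \ref{2.25}(2)), which is harmless.
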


\begin{proof}
Since $T_N = \mathrm{LA}(\zpe, A) \oplus \mathrm{Pol}_{\leq N}$ (as $A^0$-modules), the first point follows from Proposition \ref{2.25}(1) and the fact that $(1-  \beta \gamma) (a_i x^i) = a_i (1 - \beta a^{-i}) x^i$. 

The second point follows from the same calculation, the fact that $\gamma$ commutes with $\varphi$ and Lemma \ref{sum}(1). 

The third point is a consequence of Proposition \ref{2.25}(2).
\end{proof}

\begin{lemma} \label{H2A+R-}
$H^2(A^+, \Robba_A^- \otimes \delta) = \oplus_{i = 0}^N A / (1 - \alpha p^{-i}, 1 - \beta a^{-i})$.
\end{lemma}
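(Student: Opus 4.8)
The plan is to read off $H^2(A^+,\mathscr{R}_A^{-}\otimes\delta)$ as the cokernel of the map $g$ appearing in the exact sequence of the preceding lemma, after making $g$ completely explicit. Since $\Phi^+$ is infinite cyclic, $A^0=\zpe$ has cohomological dimension one (recall $p>2$), and $H^0(\Phi^+,\mathscr{R}_A^{-}\otimes\delta)=0$ by Corollary \ref{H0-}, the Hochschild--Serre spectral sequence for $A^+=\Phi^+\times A^0$ — exactly as in the proof of the preceding lemma — collapses to $H^2(A^+,\mathscr{R}_A^{-}\otimes\delta)\cong H^1(A^0,H^1(\Phi^+,\mathscr{R}_A^{-}\otimes\delta))$; applying $H^\bullet(A^0,-)$ to the short exact sequence of Corollary \ref{H1-} and using that $H^2(A^0,-)$ vanishes on $\mathbf{Q}_p$-vector spaces then identifies this with $\coker(g)$, where
\[
g\colon H^1\big(A^0,(1-\alpha\varphi)\mathrm{Pol}_{\leq N}\otimes\delta\big)\longrightarrow H^1\big(A^0,T_N\otimes\delta\big)
\]
is induced by the inclusion $(1-\alpha\varphi)\mathrm{Pol}_{\leq N}\otimes\delta\hookrightarrow T_N\otimes\delta$. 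So everything comes down to computing $g$.

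First I would trivialise source and target. For the target: $T_N=\mathrm{LA}(\zpe,A)\oplus\mathrm{Pol}_{\leq N}$ as $A^0$-modules and $H^1(A^0,\mathrm{LA}(\zpe,A)\otimes\delta)=0$ by Proposition \ref{2.25}(2), so projection onto the polynomial factor gives an isomorphism $H^1(A^0,T_N\otimes\delta)\cong H^1(A^0,\mathrm{Pol}_{\leq N}\otimes\delta)$ (this is Lemma \ref{2.29}(3)). For the source: $1-\alpha\varphi$ commutes with $\Gamma$ and is injective on $\mathscr{R}_A^{-}$ by Lemma \ref{sum}(1), hence is an $A^0$-isomorphism onto $(1-\alpha\varphi)\mathrm{Pol}_{\leq N}\otimes\delta$, so $H^1(A^0,(1-\alpha\varphi)\mathrm{Pol}_{\leq N}\otimes\delta)\cong H^1(A^0,\mathrm{Pol}_{\leq N}\otimes\delta)$. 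Finally $\mathrm{Pol}_{\leq N}\otimes\delta=\bigoplus_{i=0}^{N}Ax^i\otimes\delta$ is a direct sum of $A^0$-stable free rank-one $A$-modules on which the topological generator $a$ of $A^0=\zpe$ acts by the scalar $\beta a^{-i}$ — this is exactly the computation of $\Gamma$-fixed points in Lemma \ref{2.29}(1), which gives $H^0(A^0,Ax^i\otimes\delta)=\mathrm{Ann}(1-\beta a^{-i})$ — whence
\[
H^1(A^0,\mathrm{Pol}_{\leq N}\otimes\delta)=\bigoplus_{i=0}^{N}A/(1-\beta a^{-i}),
\]
and likewise for the source.

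Under these identifications $g$ becomes $H^1(A^0,-)$ applied to the composite $\mathrm{Pol}_{\leq N}\otimes\delta\xrightarrow{\,1-\alpha\varphi\,}T_N\otimes\delta\twoheadrightarrow\mathrm{Pol}_{\leq N}\otimes\delta$, and here I would invoke the formula already established inside the proof of Lemma \ref{sum}: for $P=\sum_i a_ix^i$ one has $(1-\alpha\varphi)P=\alpha\,\mathbf{1}_\zpe\sum_i p^{-i}a_ix^i+\sum_i(1-\alpha p^{-i})a_ix^i$, with the first term lying in $\mathrm{LA}(\zpe,A)$ and the second in $\mathrm{Pol}_{\leq N}$. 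Hence the above composite is the diagonal operator $x^i\mapsto(1-\alpha p^{-i})x^i$, so $g$ is multiplication by $1-\alpha p^{-i}$ on the $i$-th summand of $\bigoplus_{i=0}^{N}A/(1-\beta a^{-i})$, and
\[
H^2(A^+,\mathscr{R}_A^{-}\otimes\delta)=\coker(g)=\bigoplus_{i=0}^{N}A/\big(1-\alpha p^{-i},\,1-\beta a^{-i}\big),
\]
as claimed.

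The only genuinely substantive step is the last identification of $g$ with the diagonal map $x^i\mapsto(1-\alpha p^{-i})x^i$; but since the needed formula for $1-\alpha\varphi$ on monomials is already contained in the proof of Lemma \ref{sum}, even this reduces to bookkeeping with the isomorphisms supplied by Proposition \ref{2.25} and Lemma \ref{2.29}. I would also remark that the answer is independent of the auxiliary choice of $N$: enlarging $N$ only adds summands indexed by $i>N$, and for those $1-\alpha p^{-i}$ is already a unit in $A$, so the extra summands vanish.
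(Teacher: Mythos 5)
Your proposal is correct and follows essentially the same route as the paper: both reduce to computing the cokernel of $g$ via the exact sequence of the preceding lemma, identify the target with $H^1(A^0,\mathrm{Pol}_{\leq N}\otimes\delta)$ using Lemma \ref{2.29}(3) (i.e.\ Proposition \ref{2.25}(2)), and then use the explicit decomposition of $(1-\alpha\varphi)Q$ in $T_N$ from the proof of Lemma \ref{sum} to see that $g$ acts diagonally by $1-\alpha p^{-i}$ on the summands $A/(1-\beta a^{-i})\cdot x^i$. Your extra bookkeeping (injectivity of $1-\alpha\varphi$ to trivialise the source, the vanishing of $H^2(A^0,-)$, and the independence of the answer from $N$) is all consistent with what the paper leaves implicit.
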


\begin{proof}
As calculated in the proof of Lemma \ref{sum}, if $Q(x) = a_0 + a_1 x + \hdots + a_N x^N \in \mathrm{Pol}_{\leq N}$, then $(1 - \alpha \varphi) Q \in T_N = \mathrm{LA}(\zpe, A) \oplus \mathrm{Pol}_{\leq N}$ is given by 
\begin{equation}\label{ff}
(1 - \alpha \varphi)Q = \alpha \mathbf{1}_\zpe \cdot \sum_{i = 0}^N a_i p^{-i} x^i \oplus \sum_{i = 0}^N a_i (1 - \alpha p^{-i}) x^i. 
\end{equation}
Using Lemma \ref{2.29}(3), we need to calculate the cokernel of the following composition:
\[ H^1(A^0, (1 - \alpha \varphi) \mathrm{Pol}_{\leq N} \otimes \delta) \to H^1(A^0, T_N \otimes \delta) \xrightarrow{\sim} H^1(A^0, \mathrm{Pol}_{\leq N} \otimes \delta)). \] By equation \eqref{ff} above, this map sends the class of $(1 - \alpha \varphi)Q$ in  $H^1(A^0, (1 - \alpha \varphi) \mathrm{Pol}_{\leq N} \otimes \delta)$ to the class of $\sum_{i = 0}^N a_i (1 - \alpha p^{-i}) x^i$ in $H^1(A^0, (1 - \mathrm{Pol}_{\leq N} \otimes \delta))$. Since $\mathrm{Pol}_{\leq N} = \oplus_{i = 0}^N A \cdot x^i$ and since the action of $\varphi$ and $\gamma$ commute, we easily see that this cokernel is given by $\oplus_{i = 0}^N A / (1 - \alpha p^{-i}, 1 - \beta a^{-i})$ as claimed.
\end{proof}

\begin{coro} \label{H2A+R-cor}
$H^2(A^+, \Robba_A^- \otimes \delta) = 0$ if and only if $\delta$ is pointwise never of the form $x^i$ for any $i \geq 0$.
\end{coro}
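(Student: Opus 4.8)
The plan is to read the corollary straight off the explicit computation in Lemma \ref{H2A+R-}. Recall that there $H^2(A^+, \Robba_A^- \otimes \delta) = \bigoplus_{i=0}^{N} A/(1 - \alpha p^{-i}, 1 - \beta a^{-i})$, where $\alpha = \delta(p)$, $\beta = \delta(a)$ and $a$ is the fixed topological generator of $\zpe$. This direct sum vanishes exactly when, for every $0 \le i \le N$, the pair $1 - \alpha p^{-i}$, $1 - \beta a^{-i}$ generates the unit ideal of $A$. Since every proper ideal of $A$ is contained in a maximal ideal $\mathfrak{m}_x$ with $x \in \Sp(A)$, the ideal $(1 - \alpha p^{-i}, 1 - \beta a^{-i})$ is proper if and only if there is a point $x$ at which both elements vanish, i.e. $\alpha(x) = p^i$ and $\beta(x) = a^i$. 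So the first step is just this translation: $H^2(A^+, \Robba_A^- \otimes \delta) \neq 0$ iff there exist $x \in \Sp(A)$ and $i \in \{0,\dots,N\}$ with $\alpha(x) = p^i$ and $\beta(x) = a^i$.

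The second step is the elementary remark that, for $x \in \Sp(A)$ and $i \ge 0$, one has $\alpha(x) = p^i$ and $\beta(x) = a^i$ if and only if the specialization of $\delta$ at $x$, a continuous character $\qpe \to \kappa(x)^\times$ with $\kappa(x) := A/\mathfrak{m}_x$, is the character $y \mapsto y^i$. Indeed $\qpe = p^{\mathbf{Z}} \times \zpe$, the group $\zpe$ is topologically generated by $a$ (we assume $p > 2$), and a continuous homomorphism into the Hausdorff group $\kappa(x)^\times$ is determined by its values on $p$ and on $a$; both $\delta(x)$ and $y \mapsto y^i$ send $p \mapsto p^i$ and $a \mapsto a^i$ under the stated hypotheses, hence agree. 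Combining with the first step, $H^2(A^+, \Robba_A^- \otimes \delta) \neq 0$ precisely when $\delta$ is, at some point of $\Sp(A)$, of the form $x^i$ for some $i \in \{0,\dots,N\}$.

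It remains only to see that the restriction $0 \le i \le N$ above can be dropped, i.e. that if $\delta$ specializes at some $x$ to $y \mapsto y^i$ with $i \ge 0$, then automatically $i \le N$. This is where "$N$ large enough" enters, and it is inherited directly from Lemma \ref{sum}: that hypothesis forces $|\alpha^{-1} p^{N+1}|_A < 1$, so $|\alpha^{-1} p^{j}|_A < 1$ for all $j > N$, whence $1 - \alpha^{-1} p^{j}$ is a unit of $A$, and therefore so is $1 - \alpha p^{-j} = -\alpha p^{-j}(1 - \alpha^{-1} p^{j})$ (using $\alpha, p \in A^\times$), for every $j > N$; in particular it vanishes at no point of $\Sp(A)$. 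Hence $\delta(x) = (y \mapsto y^i)$ with $i > N$ would give $(1 - \alpha p^{-i})(x) = 0$, a contradiction, so $i \le N$. I do not foresee a serious obstacle: the content is entirely contained in Lemma \ref{H2A+R-}, and the only points requiring a little care are the commutative-algebra translation between "ideal is the unit ideal" and "no common zero", the determination of a character of $\qpe$ by its values on $p$ and $a$, and the harmless enlargement of the range of $i$ just sketched.
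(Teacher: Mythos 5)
Your proposal is correct and follows the same route as the paper: the corollary is read directly off the formula $H^2(A^+, \Robba_A^- \otimes \delta) = \oplus_{i=0}^N A/(1-\alpha p^{-i}, 1-\beta a^{-i})$ of Lemma \ref{H2A+R-}, with the observation that $\delta$ is pointwise never $x^i$ exactly when these ideals are all the unit ideal. Your extra details (the passage through maximal ideals, the determination of a character by its values on $p$ and $a$, and the remark that $N$ large enough rules out $i > N$) merely spell out what the paper's one-line proof leaves implicit.
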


\begin{proof}
This follows immediately from the last lemma, observing that, if $\delta$ is pointwise never of the form $x^i$ if and only $(1 - \alpha p^{-i}, 1 - \beta a^{-i}) = A$ for all $i \in \N$.
\end{proof}

An explicit description of the cokernel of $f$ and the kernel of $g$ seems a difficult task, but we can describe them in a particular case that will be of interest to us.

\begin{prop} \label{prop:LAfree}
Let $\delta \colon \qpe \to A^\times$ be such that $\delta$ is pointwise never of the form $x^i$ for any $i \geq 0$. Then $H^1(A^+, \Robba_A^- \otimes \delta)$ is a free $A$-module of rank $1$.
\end{prop}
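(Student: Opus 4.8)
```latex
\textbf{Proof proposal.}
The plan is to exploit the exact sequence for $H^1(A^+,\Robba_A^-\otimes\delta)$ established in the previous lemma together with the regularity hypothesis on $\delta$. Since $\delta$ is pointwise never of the form $x^i$, Corollary \ref{H2A+R-cor} forces $H^2(A^+,\Robba_A^-\otimes\delta)=0$, so the map $g\colon H^1(A^0,(1-\alpha\varphi)\mathrm{Pol}_{\leq N}\otimes\delta)\to H^1(A^0,T_N\otimes\delta)$ is surjective. Moreover, by Lemma \ref{2.29}(2), the regularity hypothesis also shows that $(1-\beta a^{-i})$ is a unit in $A$ for every $0\le i\le N$ (as $(1-\alpha p^{-i},1-\beta a^{-i})=A$ and one checks, looking at the generators, that in fact each $\mathrm{Ann}(1-\beta a^{-i})=0$ — here I would first record that regularity at all points of $\mathrm{Sp}(A)$ for the character $x\mapsto x^i$ is equivalent to $1-\beta a^{-i}\in A^\times$, since it does not vanish at any maximal ideal and $A$ is Jacobson). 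Consequently $((1-\alpha\varphi)\mathrm{Pol}_{\leq N}\otimes\delta)^\Gamma=0$ by Lemma \ref{2.29}(2), so the map $f$ in the exact sequence has zero source, and $H^1(A^0,(1-\alpha\varphi)\mathrm{Pol}_{\leq N}\otimes\delta)=0$ as well (the cohomology of $A^0\cong\zpe$ acting on a module which is a direct sum of copies of $x^i$ on which $\gamma-1$ acts by the unit $\beta a^{-i}-1$ vanishes in all degrees).

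Feeding this back into the exact sequence, the terms on either side of $(T_N\otimes\delta)^\Gamma$ and $H^1(A^+,\Robba_A^-\otimes\delta)$ collapse: we obtain
\[
0\to (T_N\otimes\delta)^\Gamma \to H^1(A^+,\Robba_A^-\otimes\delta)\to 0,
\]
i.e.\ $H^1(A^+,\Robba_A^-\otimes\delta)\cong (T_N\otimes\delta)^\Gamma$. It remains to identify the latter. By Lemma \ref{2.29}(1), $(T_N\otimes\delta)^\Gamma=A\cdot\langle\mathbf{1}_\zpe\delta\rangle\oplus\bigoplus_{i=0}^N\mathrm{Ann}(1-\beta a^{-i})\cdot x^i$, and under the regularity hypothesis every $\mathrm{Ann}(1-\beta a^{-i})$ vanishes, so $(T_N\otimes\delta)^\Gamma=A\cdot\langle\mathbf{1}_\zpe\delta\rangle$, which is free of rank $1$ over $A$ (the generator $\mathbf{1}_\zpe\delta$ is manifestly $A$-torsion-free inside $\mathrm{LA}(\zpe,A)\otimes\delta$). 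This gives the claim.

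The main obstacle I anticipate is the bookkeeping at the second step, namely translating the pointwise regularity condition ``$\delta$ is never of the form $x^i$'' into the algebraic statements $1-\beta a^{-i}\in A^\times$ and $\mathrm{Ann}(1-\beta a^{-i})=0$ for the relevant finite range of $i$, and making sure the ranges of $i$ appearing in Corollary \ref{H2A+R-cor} (where the pair $(1-\alpha p^{-i},1-\beta a^{-i})$ enters) and in Lemma \ref{2.29} are compatible after enlarging $N$. Everything else is a diagram chase through the displayed six-term exact sequence. One should also double-check that no hidden $\alpha$-dependence (via $1-\alpha p^{-i}$) obstructs the vanishing of $((1-\alpha\varphi)\mathrm{Pol}_{\leq N}\otimes\delta)^\Gamma$: this is fine because that $\Gamma$-invariant module is, by Lemma \ref{2.29}(2), already annihilated purely by the $\Gamma$-action through the factors $1-\beta a^{-i}$, independently of $\alpha$.
```
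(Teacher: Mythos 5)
There is a genuine gap, and it sits exactly at the step you yourself flagged as the main obstacle: the translation of the pointwise hypothesis into ``$1-\beta a^{-i}\in A^\times$'' (equivalently $\mathrm{Ann}(1-\beta a^{-i})=0$) is false. The hypothesis that $\delta$ is pointwise never of the form $x^i$ says that at no maximal ideal do $1-\alpha p^{-i}$ and $1-\beta a^{-i}$ \emph{both} vanish, i.e.\ precisely that $(1-\alpha p^{-i},\,1-\beta a^{-i})=A$; it does not prevent $1-\beta a^{-i}$ alone from vanishing. Concretely, take $\delta$ with $\delta|_{\zpe}=x^i$ and $\delta(p)=\lambda p^i$ for some $\lambda\neq 1$ (this is allowed even for $A=L$ a field): then $\delta$ is nowhere of the form $x^j$, yet $\beta=a^i$, so $1-\beta a^{-i}=0$ and $\mathrm{Ann}(1-\beta a^{-i})=A$. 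In this situation the collapse you perform does not happen: by Lemma \ref{2.29} the source of $f$ and the term $H^1(A^0,(1-\alpha\varphi)\mathrm{Pol}_{\leq N}\otimes\delta)$ are nonzero, and $(T_N\otimes\delta)^\Gamma$ is strictly larger than $A\cdot\mathbf{1}_{\zpe}\delta$, so the identification $H^1(A^+,\Robba_A^-\otimes\delta)\cong(T_N\otimes\delta)^\Gamma$ breaks down. (Your use of Corollary \ref{H2A+R-cor} to get surjectivity of $g$ is fine, but surjectivity of $g$ is not what is needed; the six-term sequence gives $0\to\coker(f)\to H^1(A^+,\Robba_A^-\otimes\delta)\to\ker(g)\to 0$, so what must be controlled is $\ker(g)$ and $\coker(f)$.)

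The statement is nevertheless true, and the correct argument only uses the weaker, genuinely available fact $(1-\alpha p^{-i},1-\beta a^{-i})=A$ together with the elementary observation that if $u,v\in A$ generate the unit ideal then multiplication by $u$ is bijective on $A/vA$ and on $\mathrm{Ann}(v)$ (if $uu'+vv'=1$, then $u'$ is an inverse on both). With this one shows, as in the proof of Lemma \ref{H2A+R-}, that the composite $H^1(A^0,(1-\alpha\varphi)\mathrm{Pol}_{\leq N}\otimes\delta)\to H^1(A^0,T_N\otimes\delta)\xrightarrow{\sim}H^1(A^0,\mathrm{Pol}_{\leq N}\otimes\delta)$ is injective, because multiplication by $1-\beta a^{-i}$ is bijective on $A/(1-\alpha p^{-i})A$; hence $\ker(g)=0$. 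Dually, using Lemma \ref{2.29}(1)--(2) and the bijectivity of multiplication by $1-\alpha p^{-i}$ on $\mathrm{Ann}(1-\beta a^{-i})$, the image of $f$ absorbs all the summands $\mathrm{Ann}(1-\beta a^{-i})\cdot x^i$, and $\coker(f)$ is free of rank one generated by the class of $\mathbf{1}_{\zpe}\delta$. This yields the proposition without ever asserting that the individual elements $1-\beta a^{-i}$ are units, which is the point your proposal needs but cannot have.
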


\begin{proof}
First note that as $\delta$ is pointwise never of the form $x^i$ for any $i \geq 0$, $(1 - \alpha p^{-i}, 1 - \beta a^{-i}) = A$.
The key observation, which appears in the proof of \cite[Th\'eor\`eme 2.29]{chen2013}, is that, if $a, b \in A$ are such that $(a, b) = A$, then multiplication by $a$ is bijective on $A / bA$ and on $\mathrm{Ann}(b)$. Indeed, if $u, v \in A$ are such that $a u + b v = 1$, then multiplication by $u$ provides an inverse for this map.

We first show that $\mathrm{ker}(g) = 0$. As in the proof of Lemma \ref{H2A+R-}, we consider the following composition \[ H^1(A^0, (1 - \alpha \varphi) \mathrm{Pol}_{\leq N} \otimes \delta) \to H^1(A^0, T_N \otimes \delta) \xrightarrow{\sim} H^1(A^0, \mathrm{Pol}_{\leq N} \otimes \delta)) \] and we show in this case that it is injective. It is easy to see, from the formulas describing the action of $\varphi$ and $\Gamma$, that the problem reduces to showing that, for every $i = 0, \hdots, N$, if $b_i \in A$ is such that $a_i (1 - \alpha p^{-i}) = (1 - \beta a^{-i}) b_i$, then $b_i \in (1 - \alpha p^{-i}) A$, which is a consequence of the fact that multiplication by $1 - \beta a^{-i}$ is bijective on $A / (1 - \alpha p^{-i}) A$.

In a similar way, one can show that $\mathrm{coker}(f)$ is a free $A$-module of rank one using Lemma \ref{2.29}(1) and (2), and the fact that multiplication by $(1 - \alpha p^{-i})$ is bijective on $\mathrm{Ann}(1 - \beta a^{-i})$. This shows that $H^1(A^+, \Robba_A \otimes \delta)$ is a free $A$-module of rank one and completes the proof.


 
\end{proof}

\subsubsection{The case of $\mathscr{R}_A^+ \otimes \delta$.}

\begin{lemma} \label{2.31} Let $N$ be large enough. Then
\begin{enumerate}
\item $H^0(A^0, H^0(\Phi^+,\mathscr{R}_A^+ \otimes \delta)) = \bigoplus_{i = 0}^N \mathrm{Ann}(1 - \alpha p^i, 1 - \beta a^i) \cdot t^i$.
\item $H^1(A^0, H^0(\Phi^+,\mathscr{R}_A^+ \otimes \delta)) = \bigoplus_{i = 0}^N \mathrm{Ann}(1 - \alpha p^i) / (1 - \beta a^i) \cdot t^i$.
\item $H^0(A^0, H^1(\Phi^+, \mathscr{R}_A^+ \otimes \delta)) = \bigoplus_{i = 0}^N \mathrm{Ann}(1 - \beta a^i \colon A / (1 - \alpha p^i)) \cdot t^i$.
\item $H^1(A^0, H^1(\Phi^+, \mathscr{R}_A^+ \otimes \delta)) = \bigoplus_{i = 0}^N A / (1 - \alpha p^i, 1 - \beta a^i) \cdot t^i$.
\end{enumerate}
\end{lemma}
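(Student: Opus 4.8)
The plan is to reduce the computation to the finite-rank pieces produced by Lemma \ref{sum2} and then feed them into the (elementary) cohomology of the procyclic group $A^0=\zpe$.

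Recall from the proof of Lemma \ref{sum2} that, for $N$ large enough (so that $1-\alpha p^i\in A^\times$ for every $i>N$), there is a $\varphi$-stable decomposition $\mathscr{R}_A^+=\bigl(\bigoplus_{i=0}^N At^i\bigr)\oplus T^{N+1}\mathscr{R}_A^+$; it is moreover $\Gamma$-stable, since $\sigma_b(t)=bt$ for $b\in\zpe$ and $\sigma_b$ preserves $T^{N+1}\mathscr{R}_A^+$ (because $\sigma_b(T)=(1+T)^b-1$ lies in $T\mathscr{R}_A^+$ with unit leading coefficient). Twisting by $\delta$ and applying $\Phi^+$-cohomology — which commutes with finite direct sums and annihilates $T^{N+1}\mathscr{R}_A^+\otimes\delta$, because $\Phi^+$ is infinite cyclic generated by $\varphi$ and $1-\alpha\varphi$ is invertible on that summand — we obtain, $A^0$-equivariantly,
\[
H^0(\Phi^+,\mathscr{R}_A^+\otimes\delta)=\bigoplus_{i=0}^N\mathrm{Ann}(1-\alpha p^i)\cdot t^i,\qquad
H^1(\Phi^+,\mathscr{R}_A^+\otimes\delta)=\bigoplus_{i=0}^N\bigl(A/(1-\alpha p^i)\bigr)\cdot t^i .
\]
The point to record is the \emph{residual} $A^0$-action: on the $t^i$-summand, $\sigma_b\in\zpe$ acts by the scalar $\delta(b)b^i$, so the fixed topological generator $a$ of $A^0$ acts by $\beta a^i$. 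Both summands are finitely generated $A$-modules and hence closed subquotients of $A$ (finitely generated ideals of an affinoid algebra are closed), so they are again Hausdorff complete modules to which the cohomology theory applies.

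For the second step, $A^0=\zpe$ is procyclic with topological generator $a$ — here $p>2$ is used: $\zpe\cong\mu_{p-1}\times(1+p\zp)$ and every finite quotient $\mu_{p-1}\times\Z/p^n$ is cyclic — so its continuous cohomology is computed by the two-term complex $[\,V\xrightarrow{a-1}V\,]$; the relevant input is that the augmentation ideal of $\zp[[A^0]]$ is topologically generated by $a-1$, which is a non-zero-divisor there since $1-\zeta$ is a unit in $\zp$ for $\zeta\in\mu_{p-1}\setminus\{1\}$ (this is also the resolution underlying the $\nabla$-description in the proof of Proposition \ref{2.25}, and it computes analytic cohomology as well when $\delta$ is locally analytic). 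Applying this to each $t^i$-summand, on which $a-1$ acts by multiplication by $\beta a^i-1$, gives: for $V=\mathrm{Ann}(1-\alpha p^i)$, kernel $\mathrm{Ann}(1-\alpha p^i)\cap\mathrm{Ann}(1-\beta a^i)=\mathrm{Ann}(1-\alpha p^i,\,1-\beta a^i)$ and cokernel $\mathrm{Ann}(1-\alpha p^i)/(1-\beta a^i)\mathrm{Ann}(1-\alpha p^i)$, which are (1) and (2); and for $V=A/(1-\alpha p^i)$, kernel $\mathrm{Ann}\bigl(1-\beta a^i:A/(1-\alpha p^i)\bigr)$ and cokernel $A/(1-\alpha p^i,\,1-\beta a^i)$, which are (3) and (4). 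Taking the direct sum over $i$ yields the four formulas.

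The argument is essentially bookkeeping; the two points demanding care are the identification of the residual $A^0$-action on the $\Phi^+$-cohomology as the scalar $\beta a^i$ on the $i$-th piece — immediate from $\sigma_a(t)=at$ once the $\Gamma$-stability of the Lemma \ref{sum2} decomposition is used — and the reduction of $A^0$-cohomology to the complex $[\,V\xrightarrow{a-1}V\,]$, which rests on the procyclicity of $\zpe$. I do not expect a genuine obstacle beyond keeping the conventions straight (the direction of the $\delta$-twist and the scalars by which $\varphi$ and $\sigma_a$ act on $t^i$).
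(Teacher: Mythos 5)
Your argument is correct and is essentially the paper's own proof: the paper likewise reduces to the $\Gamma$-stable decomposition $\mathscr{R}_A^+=(\bigoplus_{i=0}^N At^i)\oplus T^{N+1}\mathscr{R}_A^+$ of Lemma \ref{sum2} (via Corollary \ref{H+}), so that $H^j(\Phi^+,\mathscr{R}_A^+\otimes\delta)$ is the kernel/cokernel of $1-\alpha\varphi$ on $\bigoplus_i At^i$, and then takes kernel and cokernel of the generator of $A^0$, which acts on the $t^i$-piece by $\beta a^i$. Your extra remarks (justifying $\Gamma$-stability and the two-term complex for the procyclic group $A^0$) only spell out what the paper leaves implicit.
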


\begin{proof}
This follows easily from Lemma \ref{sum2} and Corollary \ref{H+}, observing that $(1 - \beta \sigma_a)(t^i) = (1 - \beta a^i) t^i$.
\end{proof}

\begin{prop} \label{H1R+}
Let $\delta \colon \qpe \to A^\times$ be such that $\delta$ is pointwise never of the form $x^{-i}$ for any $i \geq 0$. Then $H^i(A^+, \Robba_A^+ \otimes \delta) = 0$ for $i \in \{ 0,1,2 \}$.
\end{prop}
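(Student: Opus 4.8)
The plan is to run the Hochschild--Serre spectral sequence attached to the product decomposition $A^+ = \Phi^+ \times A^0$ (the same device used, e.g., in the proof of Proposition~\ref{prop:lazcompcom}),
\[
E_2^{i,j} = H^i\big(A^0,\, H^j(\Phi^+, \mathscr{R}_A^{+} \otimes \delta)\big) \implies H^{i+j}(A^+, \mathscr{R}_A^{+} \otimes \delta),
\]
and to combine it with the explicit computation of its four corner terms carried out in Lemma~\ref{2.31}. First I would observe that $E_2^{i,j} = 0$ whenever $i \geq 2$ or $j \geq 2$: the semi-group $\Phi^+ \cong \Z$ is infinite cyclic, so $H^j(\Phi^+, -) = 0$ for $j \geq 2$, while $H^i(A^0, -)$ is computed by the two-term complex given by the action of $\nabla$ (cf. the proof of Proposition~\ref{2.25}) and hence vanishes for $i \geq 2$. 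Thus the abutment in every degree is controlled by the square $i, j \in \{0,1\}$, and it suffices to show that each of the four groups of Lemma~\ref{2.31} vanishes; this will in fact give $H^n(A^+, \mathscr{R}_A^{+} \otimes \delta) = 0$ for all $n$.

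Next I would translate the regularity hypothesis. Writing $\alpha = \delta(p)$ and $\beta = \delta(a)$, exactly as in the proof of Corollary~\ref{H2A+R-cor} (with $x^{-i}$ in place of $x^{i}$), a point $\mathfrak{m} \in \mathrm{Sp}(A)$ satisfies $\delta \equiv x^{-i}$ modulo $\mathfrak{m}$ if and only if $1 - \alpha p^{i} \in \mathfrak{m}$ and $1 - \beta a^{i} \in \mathfrak{m}$. Hence ``$\delta$ is pointwise never of the form $x^{-i}$ for any $i \geq 0$'' says precisely that $(1 - \alpha p^{i},\, 1 - \beta a^{i}) = A$ for every $i \geq 0$. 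Moreover $|\alpha p^{i}|_A \to 0$, so $1 - \alpha p^{i}$ is a unit for $i$ large; only finitely many $i$ are relevant and Lemma~\ref{2.31} applies with a single $N$. I would then invoke the elementary fact already exploited in the proof of Proposition~\ref{prop:LAfree}: if $(a,b) = A$ in a ring $R$, then multiplication by $b$ is bijective on $R/aR$ and on $\mathrm{Ann}_R(a)$.

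With these inputs the four terms are killed one by one. Using Lemma~\ref{2.31} and $(1 - \alpha p^{i}, 1 - \beta a^{i}) = A$,
\[
E_2^{0,0} = \bigoplus_{i} \mathrm{Ann}(1 - \alpha p^{i},\, 1 - \beta a^{i})\, t^{i} = 0, \qquad E_2^{1,1} = \bigoplus_{i} A/(1 - \alpha p^{i},\, 1 - \beta a^{i})\, t^{i} = 0,
\]
since the annihilator of the unit ideal is $0$ and $A/A = 0$. For $E_2^{1,0} = \bigoplus_i \big(\mathrm{Ann}(1 - \alpha p^{i})/(1 - \beta a^{i})\mathrm{Ann}(1 - \alpha p^{i})\big)\, t^{i}$, multiplication by $1 - \beta a^{i}$ is surjective on $\mathrm{Ann}(1 - \alpha p^{i})$, so every summand is $0$; and for $E_2^{0,1} = \bigoplus_i \mathrm{Ann}\big(1 - \beta a^{i} \colon A/(1 - \alpha p^{i})\big)\, t^{i}$, multiplication by $1 - \beta a^{i}$ is injective on $A/(1 - \alpha p^{i})$, so every summand is $0$. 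As all four $E_2$-terms vanish, the spectral sequence forces $H^i(A^+, \mathscr{R}_A^{+} \otimes \delta) = 0$ for $i \in \{0,1,2\}$ (indeed for all $i$).

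I do not anticipate a real obstacle: the substantive computation is entirely contained in Lemma~\ref{2.31}, and the only step needing a little care is the bookkeeping identifying the regularity hypothesis with the unit-ideal conditions $(1 - \alpha p^{i}, 1 - \beta a^{i}) = A$, together with the remark that these hold automatically for $i \gg 0$ so that the direct sums involved are finite.
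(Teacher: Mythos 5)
Your proposal is correct and follows essentially the same route as the paper: decompose $A^+ = \Phi^+ \times A^0$, feed the four corner terms of Lemma~\ref{2.31} into Hochschild--Serre (the paper phrases the degree-one part as inflation-restriction), and kill them using the unit-ideal condition $(1-\alpha p^i, 1-\beta a^i)=A$ together with the bijectivity trick from the proof of Proposition~\ref{prop:LAfree}. The only cosmetic difference is that you dispatch $E_2^{0,0}$ and $E_2^{1,1}$ directly as annihilator/quotient of the unit ideal rather than via the bijectivity statement, which is fine.
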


\begin{proof}
First note that as $\delta$ is pointwise never of the form $x^{-i}$ for any $i \geq 0$, $(1 - \alpha p^i, 1 - \beta a^i) = A$. This implies, as in the proof of Proposition \ref{prop:LAfree}, that multiplication by $1 - \beta a^i$ is bijective on $\mathrm{Ann}(1 - \alpha p^i)$ and on $A / (1 - \alpha p^i)$.

The vanishing of $H^0(A^+, \Robba_A^+ \otimes \delta)$ follows from Lemma \ref{2.31}(1) and the fact that multiplication by $1 - \beta a^i$ on $\mathrm{Ann}(1-\alpha p^i)$ is injective. 

The vanishing of $H^2(A^+, \Robba_A^+ \otimes \delta)$ follows from Lemma \ref{2.31}(4) and the fact that multiplication by $1 - \beta a^i$ on $A/(1-\alpha p^i)$ is surjective. 

In what concerns the vanishing of $H^1(A^+, \Robba_A^+ \otimes \delta)$, the inflation-restriction sequence gives
\[ 0 \to H^1(A^0, H^0(\Phi^+,\mathscr{R}_A^+ \otimes \delta)) \to H^1(A^+, \mathscr{R}_A^+ \otimes \delta) \to H^0(A^0, H^1(\Phi^+, \mathscr{R}_A^+ \otimes \delta)) \to 0. \] The result is now an easy consequence of Lemma \ref{2.31}(2) and (3) and the fact that multiplication by $1 - \beta a^i$ is surjective on $\mathrm{Ann}(1 - \alpha p^i)$ and injective on $A / (1 - \alpha p^i)$, respectively.
\end{proof}

\subsubsection{The case of $\Robba_A \otimes \delta$}

The calculation of $H^i(A^+, \Robba_A \otimes \delta)$ is now formal using the (long exact sequence of $A^+$-cohomology associated to the) short exact sequence of $A^+$-modules
\[ 0 \to \Robba_A^+ \otimes \delta \to \Robba_A \to \Robba_A^- \otimes \delta \chi^{-1} \to 0. \] Moreover in the so-called regular case, we are able to compute it explicitly.  \footnote{This result is already proved by similar methods in \cite[Th\'eor\`eme 2.29]{chen2013}.}

\begin{prop} \label{HiRreg}
Let $\delta: \qpe \to A^\times$ be such that $\delta$ is pointwise never of the form $\chi x^i$ nor of the form $x^{-i}$ for any $i \geq 0$. Then $H^0(A^+, \Robba_A \otimes \delta) = H^2(A^+, \Robba_A \otimes \delta)= 0$ and $H^1(A^+, \Robba_A \otimes \delta)$ is a free $A$-module of rank $1$.
\end{prop}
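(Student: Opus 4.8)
The plan is to deduce everything formally from the long exact sequence in $A^+$-cohomology attached to the short exact sequence of $A^+$-modules
\[ 0 \to \Robba_A^+ \otimes \delta \to \Robba_A \otimes \delta \to \Robba_A^- \otimes \delta\chi^{-1} \to 0, \]
obtained by twisting the $(\varphi,\Gamma)$-equivariant sequence of Proposition \ref{amice} by $\delta$ (recall that $\mathscr{D}(\zp,A)\cong\Robba_A^+$ and $\mathrm{LA}(\zp,A)\otimes\chi^{-1}\cong\Robba_A^-$ via the Amice and Colmez transforms). First I would record which cohomology groups of the two outer terms vanish under the two halves of the regularity hypothesis. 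For the submodule $\Robba_A^+\otimes\delta$: since $\delta$ is pointwise never of the form $x^{-i}$, Proposition \ref{H1R+} gives $H^i(A^+,\Robba_A^+\otimes\delta)=0$ for $i\in\{0,1,2\}$. For the quotient $\Robba_A^-\otimes\delta\chi^{-1}$: Lemma \ref{H0A+R-} gives $H^0(A^+,\Robba_A^-\otimes\delta\chi^{-1})=0$ unconditionally; and since $\delta$ is pointwise never of the form $\chi x^i$, the twisted character $\delta\chi^{-1}$ is pointwise never of the form $x^i$, so Corollary \ref{H2A+R-cor} gives $H^2(A^+,\Robba_A^-\otimes\delta\chi^{-1})=0$, while Proposition \ref{prop:LAfree} gives that $H^1(A^+,\Robba_A^-\otimes\delta\chi^{-1})$ is a free $A$-module of rank $1$.

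Next I would feed these into the long exact sequence
\[ \cdots \to H^i(A^+,\Robba_A^+\otimes\delta) \to H^i(A^+,\Robba_A\otimes\delta) \to H^i(A^+,\Robba_A^-\otimes\delta\chi^{-1}) \to H^{i+1}(A^+,\Robba_A^+\otimes\delta) \to \cdots. \]
The segment $0=H^0(A^+,\Robba_A^+\otimes\delta)\to H^0(A^+,\Robba_A\otimes\delta)\to H^0(A^+,\Robba_A^-\otimes\delta\chi^{-1})=0$ forces $H^0(A^+,\Robba_A\otimes\delta)=0$; the segment $0=H^1(A^+,\Robba_A^+\otimes\delta)\to H^1(A^+,\Robba_A\otimes\delta)\to H^1(A^+,\Robba_A^-\otimes\delta\chi^{-1})\to H^2(A^+,\Robba_A^+\otimes\delta)=0$ identifies $H^1(A^+,\Robba_A\otimes\delta)$ with the rank-one free module $H^1(A^+,\Robba_A^-\otimes\delta\chi^{-1})$; and the segment $0=H^2(A^+,\Robba_A^+\otimes\delta)\to H^2(A^+,\Robba_A\otimes\delta)\to H^2(A^+,\Robba_A^-\otimes\delta\chi^{-1})=0$ forces $H^2(A^+,\Robba_A\otimes\delta)=0$. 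This completes the argument.

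Since every input has already been established, I do not expect a genuine obstacle here; the only point requiring care is to match the two halves of the regularity hypothesis to the two outer terms of the sequence — the ``no $x^{-i}$'' condition is precisely what kills the $\Robba_A^+$-cohomology through Proposition \ref{H1R+}, whereas the ``no $\chi x^i$'' condition, applied \emph{after} untwisting by $\chi^{-1}$, is precisely what makes $H^2$ of the $\Robba_A^-$-part vanish and $H^1$ of it free of rank one. It is worth remarking that the hypotheses are sharp: dropping either one reintroduces the corresponding torsion or rank contributions computed in Lemmas \ref{H2A+R-} and \ref{2.31}.
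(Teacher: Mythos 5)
Your argument is correct and follows essentially the same route as the paper: the same twisted short exact sequence $0 \to \Robba_A^+\otimes\delta \to \Robba_A\otimes\delta \to \Robba_A^-\otimes\delta\chi^{-1} \to 0$, with Proposition \ref{H1R+} killing the $\Robba_A^+$-terms, Lemma \ref{H0A+R-}, Corollary \ref{H2A+R-cor} and Proposition \ref{prop:LAfree} handling the $\Robba_A^-$-terms, and the long exact sequence doing the rest. Your write-up is in fact a bit more careful than the paper's in spelling out which half of the regularity hypothesis (applied to $\delta$ or to the untwisted $\delta\chi^{-1}$) feeds into which outer term.
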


\begin{proof}
The long exact sequence of $A^+$-cohomology associated to the short exact sequence of $A^+$-modules
\[ 0 \to \Robba_A^+ \otimes \delta \to \Robba_A \to \Robba_A^- \otimes \delta \chi^{-1} \to 0 \] and the fact that $H^0(A^+, \Robba_A^- \otimes \delta \chi^{-1}) = H^2(A^+, \Robba_A^+ \otimes \delta) = 0$, cf. Lemma \ref{H0A+R-} and Proposition \ref{H1R+} yields
\[ 0 \to H^1(A^+, \Robba^+_A \otimes \delta) \to H^1(A^+, \Robba_A \otimes \delta) \to H^1(A^+, \Robba^-_A \otimes \delta \chi^{-1}) \to 0, \] and so
\[ H^2(A^+, \Robba_A \otimes \delta) \cong H^2(\Robba^-_A \otimes \delta \chi^{-1}). \] 
The result follows then from Proposition \ref{prop:LAfree} and Proposition \ref{H1R+}.
\end{proof}

\begin{rema} \leavevmode \label{rem:recchenres}
\begin{itemize}
\item The calculations of this section show that, for $M \in \{ \Robba^+_A, \Robba^-_A, \Robba_A \}$, the $A$-modules $H^i(A^+, M \otimes \delta)$ are finite (as also proved in \cite{chen2013}).
\item $H^2(A^+, \Robba_A \otimes \delta) = 0$ if and only if $\delta$ is pointwise never of the form $\chi x^i$, $i \in \N$. Indeed, this is a necessary condition by Corollary \ref{H2A+R-cor}. For the converse first note that if $\delta$ is never of the form $\chi x^i$ nor $x^{-i}$ for any $i \in \N$, then $H^2(A^+, \Robba_A \otimes \delta) = 0$ by Proposition \ref{HiRreg}. On the other hand, if $\delta$ reduces to $x^{-i}$ for some $i \geq 0$ at some point of $\mathrm{Sp}(A)$, we use the following argument to reduce to the case of a point ($A$ a finite extension of $\qp$). The finiteness of the $A$-module $H^2(A^+, \Robba_A \otimes \delta)$, the vanishing of $H^3(A^+, \Robba_A \otimes \delta)$, the fact that $\Robba_A$ is a flat $A$-module (cf. Lemma \ref{flatnessR}) and the $\mathrm{Tor}$-spectral sequence 
\[ \mathrm{Tor}_{-p}(H^q(A^+, \Robba_A \otimes \delta), A / \mathfrak{m}) \Rightarrow H^{p + q}(A^+, \Robba_{A / \mathfrak{m}} \otimes \delta) \]
show that $H^2(A^+, \Robba_A \otimes \delta) \otimes A / \mathfrak{m} = H^2(A^+, \Robba_{A / \mathfrak{m}} \otimes \delta)$ for every maximal ideal $\mathfrak{m} \subseteq A$. Since $H^2(A^+, \Robba_{A / \mathfrak{m}} \otimes \delta) = 0$ (cf. \cite[Th\'eor\`eme 5.16]{colmez2015}), we can conclude that $H^2(A^+, \Robba_A \otimes \delta) = 0$ by Nakayama's lemma.
\end{itemize}
\end{rema}

\section{Relative cohomology} \label{sec:maincofinza}

Over the next few sections we prove an isomorphism between the $\overline{P}^+$-cohomology and the $A^+$-cohomology with coefficients in $\Robba_A(\delta_1, \delta_2)$ assuming $\delta_1\delta_2^{-1}$ is regular. This is a generalization of a result of Colmez, who proves it for the case where $A$ is a finite extension of $\qp$ and we indeed reduce the general result to that case using some arguments on derived categories inspired by \cite{kedlaya2014}.

\subsection{Formalism of derived categories}\label{sec:formdercat}

In this section we fix a noetherian ring $A$. Let $\mathcal{D}^{-}(A)$ denote the derived category of $A$-modules bounded above. We begin by recalling the notion of a pseudo-coherent complex. For a detailed explanation we refer the reader to \cite[\href{http://stacks.math.columbia.edu/tag/064N}{Tag 064N}]{stacks-project} 

\begin{defi} \label{pcoherent} \leavevmode
\begin{enumerate}
\item An object $K^{\bullet}$ of $\mathcal{D}^{-}(A)$ is pseudo-coherent if it is quasi-isomorphic to a bounded above complex of finite free $A$-modules. We denote by $\mathcal{D}_{\mathrm{pc}}^{-}(A) \subseteq \mathcal{D}^{-}(A)$ the full subcategory of pseudo-coherent objects of $\mathcal{D}^{-}(A)$.
\item An $A$-module $M$ is called pseudo-coherent if $M[0] \in \mathcal{D}_{\mathrm{pc}}^{-}(A)$.
\end{enumerate} 
\end{defi}

We have the following simple Lemma detecting when a module is pseudo-coherent.

\begin{lemm} \label{1.2}
An $A$-module $M$ is pseudo-coherent iff there exists an infinite resolution 
$$\cdots \rightarrow A^{\oplus n_1} \rightarrow A^{\oplus n_0} \rightarrow M \rightarrow 0$$
\end{lemm}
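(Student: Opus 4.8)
The statement is the standard characterization of pseudo-coherent modules over a Noetherian ring, so the plan is to prove both implications directly from Definition~\ref{pcoherent}(1). First, suppose $M$ admits an infinite resolution
\[
\cdots \to A^{\oplus n_2} \to A^{\oplus n_1} \to A^{\oplus n_0} \to M \to 0.
\]
Then the complex $P^\bullet = (\cdots \to A^{\oplus n_1} \to A^{\oplus n_0} \to 0)$, placed in non-positive degrees, is a bounded-above complex of finite free $A$-modules, and the augmentation $P^\bullet \to M[0]$ is a quasi-isomorphism by exactness of the resolution. Hence $M[0]$ is quasi-isomorphic to a bounded-above complex of finite free modules, i.e.\ $M[0] \in \mathcal{D}^{-}_{\mathrm{pc}}(A)$, which is exactly the definition of $M$ being pseudo-coherent.

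Conversely, suppose $M$ is pseudo-coherent, so there is a bounded-above complex $F^\bullet$ of finite free $A$-modules with a quasi-isomorphism $F^\bullet \xrightarrow{\sim} M[0]$. Since $F^\bullet$ is bounded above, we may assume $F^i = 0$ for $i > 0$, and since it computes $M$ concentrated in degree $0$, we have $H^i(F^\bullet) = 0$ for $i \neq 0$ and $H^0(F^\bullet) = M$. Thus
\[
\cdots \to F^{-2} \to F^{-1} \to F^{0} \to M \to 0
\]
is exact, where the last map is induced by the quasi-isomorphism; this is an infinite resolution of $M$ by finite free $A$-modules, as required. (If one prefers to stay strictly within "finite free", note no truncation is needed here; Noetherianity of $A$ is not even used in this direction, though it is what guarantees in practice that such resolutions can be built.)

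The argument is essentially formal and I do not expect a genuine obstacle; the only point requiring a little care is matching the homological indexing conventions (the resolution is naturally indexed in non-positive cohomological degrees, and one should check the exactness claim at the degree-$0$ spot, i.e.\ that $\operatorname{coker}(F^{-1} \to F^0) \cong M$ via the given quasi-isomorphism). Everything else follows immediately by unwinding Definition~\ref{pcoherent}. One could also simply cite \cite[\href{http://stacks.math.columbia.edu/tag/064N}{Tag 064N}]{stacks-project}, but giving the two-line argument in both directions is cleaner and self-contained.
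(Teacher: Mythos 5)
Your forward direction is fine: an infinite resolution by finite free modules, placed in non-positive degrees, is a bounded above complex of finite free $A$-modules quasi-isomorphic to $M[0]$, which is exactly Definition \ref{pcoherent}(2). (For comparison, the paper's own ``proof'' is the single sentence that the lemma rephrases Definition \ref{pcoherent}(2), so the real content of the statement is precisely the point discussed below.)

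The converse has a genuine gap at the step ``since $F^\bullet$ is bounded above, we may assume $F^i = 0$ for $i > 0$.'' Bounded above only means the terms vanish in sufficiently large degrees; the complex can have nonzero finite free terms in positive degrees even though its cohomology there vanishes, and removing them while keeping the terms finite \emph{free} is not automatic. Canonical truncation fails, since $\tau_{\leq 0}F^\bullet$ has $\ker(F^0 \to F^1)$ in degree $0$, which need not be free; and the splitting trick (if $t>0$ is the top nonzero degree, $F^{t-1}\to F^t$ is surjective onto a free module, so $F^{t-1}\cong \ker(d^{t-1}) \oplus F^t$) only yields finite \emph{projective} terms, which over an affinoid algebra need not be free. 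Your parenthetical remark that Noetherianity is not used in this direction is exactly where the gap shows: the cheap repair does use it. Since $A$ is noetherian, $M = H^0(F^\bullet)$ is a subquotient of the finite module $F^0$, hence a finite $A$-module, and one then builds the resolution directly by successive surjections $A^{\oplus n_0}\twoheadrightarrow M$, $A^{\oplus n_1}\twoheadrightarrow \ker(A^{\oplus n_0}\to M)$, etc., as in the first half of the proof of Lemma \ref{1.3}. Alternatively, over a general ring one must invoke the statement (proved via the inductive construction of Lemma \ref{1.4}, cf.\ Remark \ref{3.5}, or \cite{stacks-project}) that a pseudo-coherent complex with cohomology vanishing in degrees $>0$ is quasi-isomorphic to a complex of finite free modules vanishing in degrees $>0$. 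Either way, the reduction needs an argument rather than ``we may assume.''
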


\begin{proof}
This is just rephrasing part (2) of Definition \ref{pcoherent}.  
\end{proof}

Since $A$ is noetherian, Lemma \ref{1.2} can be further strenghened to the following.

\begin{lemm} \label{1.3}
An $A$-module $M$ is pseudo-coherent iff it is finite.  
\end{lemm}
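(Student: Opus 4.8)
The plan is to invoke Lemma \ref{1.2}, which already rephrases pseudo-coherence as the existence of an infinite resolution by finite free modules, and then to play this off against the noetherian hypothesis.

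For the ``only if'' direction I would argue as follows. If $M$ is pseudo-coherent, then by Lemma \ref{1.2} there is a resolution
\[
\cdots \rightarrow A^{\oplus n_1} \rightarrow A^{\oplus n_0} \rightarrow M \rightarrow 0;
\]
in particular the rightmost arrow $A^{\oplus n_0} \twoheadrightarrow M$ is surjective, so $M$ is generated by the images of the $n_0$ standard basis vectors, hence finite. This direction uses nothing about $A$ beyond being a ring.

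For the ``if'' direction, suppose $M$ is a finite $A$-module. I would build the required resolution by induction. Choose a surjection $p_0 \colon A^{\oplus n_0} \twoheadrightarrow M$ and put $K_0 = \ker p_0$. Since $A$ is noetherian, the free module $A^{\oplus n_0}$ is a noetherian module, so its submodule $K_0$ is finite; choose a surjection $p_1 \colon A^{\oplus n_1} \twoheadrightarrow K_0$, put $K_1 = \ker p_1 \subseteq A^{\oplus n_1}$, and repeat. At each stage $K_i$ is a submodule of the noetherian module $A^{\oplus n_i}$, hence finite, so the construction never terminates for lack of generators, and splicing the short exact sequences $0 \to K_i \to A^{\oplus n_i} \to K_{i-1} \to 0$ yields an infinite resolution of $M$ by finite free $A$-modules. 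By Lemma \ref{1.2}, $M$ is pseudo-coherent.

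There is no genuine obstacle here; the one fact doing the work is the standard observation that over a noetherian ring every submodule of a finite module is finite, which is precisely what keeps the inductive construction of the free resolution going and which fails for general $A$.
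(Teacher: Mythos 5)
Your proof is correct and follows essentially the same route as the paper: the ``only if'' direction reads off finiteness from the surjection in Lemma \ref{1.2}, and the ``if'' direction builds the infinite free resolution inductively, using that over the noetherian ring $A$ every submodule of a finite module (in particular each successive kernel inside $A^{\oplus n_i}$) is again finite.
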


\begin{proof}
We first show that a finite $A$-module $M$ is pseudo-coherent. Indeed since $M$ is finite, one may choose a surjection $A^{\oplus n_{0}} \rightarrow M$. Then having constructed an exact complex of finite free $A$-modules of length $t$, we can extend by choosing a surjection
$$A^{\oplus n_{t+1}} \rightarrow \ker(A^{\oplus n_{t}} \rightarrow A^{\oplus n_{t-1}}).$$
Here we have implicitly used that a submodule of a finite $A$-module is finite. Conversely, a pseudo-coherent module is finite by Lemma \ref{1.2}.   
\end{proof}

The following Lemma allows us to use induction-type arguments when trying to prove results concerning pseudo-coherent complexes.

\begin{lemm} \label{1.4}
Let $K^{\bullet} \in \mathcal{D}^{-}(A)$. The following are equivalent
\begin{enumerate}
\item $K^{\bullet} \in \mathcal{D}_{\mathrm{pc}}^{-}(A)$.
\item For every integer $m$, there exists a bounded complex $E^{\bullet}$ (depending on $m$) of finite free $A$-modules and a morphism $\alpha: E^{\bullet} \rightarrow K^{\bullet}$ such that $H^{i}(\alpha)$ is an isomorphism for $i > m$ and $H^{m}(\alpha)$ is surjective.
\end{enumerate}
\end{lemm}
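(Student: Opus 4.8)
The plan is to prove this by the standard "truncation and approximation" argument for pseudo-coherent complexes; this is essentially \cite[\href{http://stacks.math.columbia.edu/tag/064Q}{Tag 064Q}]{stacks-project}, so I will indicate the structure rather than reproduce every diagram chase. The implication (1) $\Rightarrow$ (2) is the easy direction: if $K^\bullet$ is quasi-isomorphic to a bounded above complex $P^\bullet$ of finite free $A$-modules, then for a given $m$ one simply takes the stupid (brutal) truncation $E^\bullet = \sigma_{\geq m} P^\bullet$, which is a bounded complex of finite free modules, together with the obvious map $E^\bullet \to P^\bullet \simeq K^\bullet$. One checks directly that $H^i(\alpha)$ is an isomorphism for $i > m$ and surjective in degree $m$, because truncating a complex below degree $m$ does not affect cohomology above $m$ and can only cut down the module of cocycles in degree $m$ (not the image of the differential out of degree $m-1$ seen inside $H^m$). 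Actually a small point: to get surjectivity in degree $m$ one should truncate at $m-1$ or argue with $\sigma_{\geq m-1}$; I would phrase it so that the approximating complex is allowed to start slightly below $m$, which is harmless since (2) only constrains degrees $\geq m$.

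For the converse (2) $\Rightarrow$ (1), the idea is to build the bounded-above complex of finite free modules degree by degree, going downward, by splicing together the approximations provided by (2) for $m = 0, -1, -2, \dots$. Concretely, I would run the following induction: suppose we have constructed a bounded-above complex $P^\bullet$ of finite free $A$-modules, defined in degrees $\geq n$, together with a map $P^\bullet \to K^\bullet$ which is a quasi-isomorphism in degrees $> n$ and surjective on $H^n$. Apply hypothesis (2) with $m = n - 2$ (say) to get a bounded complex $E^\bullet$ of finite free modules and $\alpha \colon E^\bullet \to K^\bullet$ with $H^i(\alpha)$ an isomorphism for $i > n-2$ and surjective for $i = n-2$. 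Now form the cone, or rather work with the mapping fibre, of the comparison between $E^\bullet$ and $P^\bullet$ over $K^\bullet$; using that both map to $K^\bullet$ compatibly in the overlapping range, one can adjust $E^\bullet$ in degree $n$ and $n-1$ (adding a finite free module to kill the discrepancy, which is finitely generated because everything in sight is finite over the noetherian ring $A$) so as to extend $P^\bullet$ by two more terms $P^{n-1}, P^{n-2}$ while improving the range of quasi-isomorphism. The key finiteness input — and this is where noetherianity of $A$ is used — is that kernels and images arising at each stage are finite $A$-modules, so that the newly attached modules can be taken finite free of finite rank.

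The main obstacle, and the only genuinely technical part, is the gluing/comparison step: given two finite-free approximations $E^\bullet$ and the partially-built $P^\bullet$ mapping to the same $K^\bullet$ and agreeing cohomologically in a range, one must produce an honest chain map between them (not just in the derived category) lifting the identity on $K^\bullet$ in that range, and then take a mapping cone to continue. This is a routine but careful homological-algebra argument: one lifts the map on cohomology to a map of complexes using freeness of the $E^i$ (projectivity gives the lifts, and the bounded-above hypothesis together with the quasi-isomorphism range lets the induction on degree terminate). I would cite \cite[\href{http://stacks.math.columbia.edu/tag/064N}{Tag 064N}]{stacks-project} for the general statement and only spell out the noetherian simplification (that "finite free of finite rank" suffices at each step, which is automatic here by Lemma \ref{1.3}), since in our applications $A$ is a $\qp$-affinoid algebra, hence noetherian.
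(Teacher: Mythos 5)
Your proposal is correct, and its skeleton (stupid truncation for one direction, descending induction attaching finite free modules for the other, with noetherianity supplying finiteness) is the same as the paper's; the difference lies in how the induction step is executed. For (1) $\Rightarrow$ (2) you do exactly what the paper does, and your hedge about shifting to $\sigma_{\geq m-1}$ is unnecessary: $H^m(\sigma_{\geq m}E^{\bullet})$ is the full module of $m$-cocycles $\ker(d^m)$, which visibly surjects onto $H^m(E^{\bullet}) = \ker(d^m)/\operatorname{im}(d^{m-1})$, so truncating at $m$ already gives the required surjectivity. For (2) $\Rightarrow$ (1) the paper never compares two approximations: having built $F_n^{\bullet} \to K^{\bullet}$, it forms the cone $C^{\bullet}$ of this map, observes that condition (2) passes to $C^{\bullet}$ because (2) is stable under extensions, proves by a separate stripping-off-free-summands argument that $H^{n-1}(C^{\bullet})$ is a finite $A$-module, and then attaches a single finite free module $F^{n-1}$ surjecting onto $H^{n-1}(C^{\bullet})$, concluding via the octahedron axiom. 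You instead invoke (2) afresh at a lower index and splice the new bounded finite-free approximation onto the partially built complex over $K^{\bullet}$, deferring the key technical point — producing an honest chain map (up to homotopy, and only after modifying the bottom degrees) between two approximations of $K^{\bullet}$ — to the Stacks project. The trade-off is real: in your version the finiteness issues are trivial, since both complexes being compared are bounded complexes of finite free modules over a noetherian ring, but the lifting/gluing lemma you appeal to is precisely the nontrivial step; in the paper's version there is no comparison of approximations to justify, but the finiteness of $H^{n-1}$ of the cone has to be proved by hand. Since the paper already cites the Stacks project for this circle of ideas, your route is a legitimate alternative rather than a gap.
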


\begin{proof}
Suppose (1) holds. Let $E^{\bullet}$ be a bounded above complex of finite free $A$-modules and let $E^{\bullet} \rightarrow K^{\bullet}$ be a quasi-isomorphism. Consider the naive truncation at place $m$
$$F_{m}^{\bullet}: \cdots \rightarrow 0 \rightarrow E^{m} \rightarrow E^{m+1} \rightarrow \cdots.$$
Then the induced maps $F_{m}^{\bullet} \rightarrow K^{\bullet}$ satisfy condition (2). 

Suppose (2) holds. We are going to construct our bounded above complex $E^{\bullet}$ of finite free $A$-modules (which will be quasi-isomorphic to $K^{\bullet}$) by descending induction. Since $K^{\bullet}$ is bounded above, there is an integer $a$, such that $K^{n} = 0$, $\forall n \geq a$. By descending induction on $n \in \Z$, we are going to construct a complex
$$F_{n}^{\bullet}: \cdots \rightarrow 0 \rightarrow F^{n} \rightarrow F^{n+1} \rightarrow \cdots \rightarrow F^{a-1} \rightarrow 0 \rightarrow \cdots$$
and a morphism $\alpha_{n}: F_{n}^{\bullet} \rightarrow K^{\bullet}$, such that $H^{i}(\alpha_{n})$ is an isomorphism for $i > n$ and a surjection for $i=n$. For the base case $n=a$, we can take $F^{i} = 0$ $\forall i$. Now consider the induction step. Let $C^{\bullet} = \mathrm{cone}(F_{n}^{\bullet} \xrightarrow{\alpha_n} K^{\bullet})$. The long exact sequence of cohomology coming from the triangle
$$F_{n}^{\bullet} \rightarrow K^{\bullet} \rightarrow C^{\bullet} \rightarrow F_{n}^{\bullet}[1]$$
gives $H^{i}(C^{\bullet}) = 0$ for $i \geq n$. It is easy to see that condition (2) is stable by extensions and so in particular $C^{\bullet}$ satisfies condition (2). We claim that $H^{n-1}(C^{\bullet})$ is a finite $A$-module. Indeed choose a bounded complex $D^{\bullet}$ of finite free $A$-modules and a morphism $\beta: D^{\bullet} \rightarrow C^{\bullet}$ inducing isomorphism on cohomology in degrees $\geq n$ and a surjection in degree $n-1$. It suffices to show $H^{n-1}(D^{\bullet})$ is a finite $A$-module. Let $t$ be the largest integer such that $E^{t} \neq 0$. If $t = n-1$, then the result is clear. If $t > n-1$, then $D^{t-1} \rightarrow D^{t}$ is surjective as $H^{t}(D^{\bullet}) = 0$. As $D^{t}$ is free, we see that $D^{t-1} = D' \oplus D^{t}$. It suffices to prove the result for the complex $(D')^{\bullet}$, which is the same as $D^{\bullet}$ except has $D'$ in degree $t-1$ and $0$ in degree $t$. The result follows by induction. Hence $H^{n-1}(C^{\bullet})$ is a finite $A$-module as claimed. 

Choose a finite free $A$-module $F^{n-1}$ and a map $p: F^{n-1} \rightarrow C^{n-1}$ such that the composition $F^{n-1} \rightarrow C^{n-1} \rightarrow C^{n}$ is zero and such that $F^{n-1}$ surjects onto $H^{n-1}(C^{\bullet})$. Since $C^{n-1} = K^{n-1} \oplus F^{n}$ (by definition of the cone), we can write $p = (\alpha^{n-1}, -d^{n-1})$. The vanishing of the composition $F^{n-1} \rightarrow C^{n-1} \rightarrow C^{n}$, implies these maps fit into a morphism of complexes:
$$
\begin{tikzcd}[row sep = large, column sep = large]
\cdots \arrow[r] & 0 \arrow[r] \arrow[d] & F^{n-1} \arrow[d, "\alpha^{n-1}"] \arrow[r, "d^{n-1}"] & F^{n} \arrow[d, "\alpha_{n}^{n}"] \arrow[r] & F^{n+1} \arrow[d, "\alpha_{n}^{n+1}"] \arrow[r] & \cdots \\
\cdots \arrow[r] & K^{n-2} \arrow[r] & K^{n-1} \arrow[r] & K^{n} \arrow[r] & K^{n+1} \arrow[r] & \cdots
\end{tikzcd}
$$

Moreover we obtain a morphism of triangles
$$
\begin{tikzcd}[row sep = large, column sep = large]
(F^n \rightarrow \cdots) \arrow[r] \arrow[d] & (F^{n-1} \rightarrow \cdots) \arrow[r] \arrow[d] & F^{n-1} \arrow[d, "p"] \\
(F^n \rightarrow \cdots) \arrow[r] & K^{\bullet} \arrow[r] & C^{\bullet}
\end{tikzcd}
$$
By the octaeder axiom for triangulated categories, our choice of $p$ implies that the map of complexes 
$$(F^{n-1} \rightarrow \cdots) \rightarrow K^{\bullet}$$
induces an isomorphism in degrees $\geq n$ and a surjection in degree $n-1$.
\end{proof}

\begin{rema} \label{3.5}
The above proof also shows the following useful fact. If a complex in $\mathcal{D}^{-}_{\mathrm{pc}}(A)$ has trivial cohomology in degrees strictly greater than $b$, then it is quasi-isomorphic to a complex $P^{\bullet} \in \mathcal{D}^{-}_{\mathrm{pc}}(A)$ with $P^{i} = 0$ $\forall i \geq b+1$ and each $P^{j}$ is finite free $\forall j \in \Z$. The proof also shows that $\mathcal{D}^{-}_{\mathrm{pc}}(A)$ is stable by extensions.  
\end{rema}

Since $A$ is noetherian, we have the following simple criterion for detecting whether an object in $\mathcal{D}^{-}(A)$ is pseudo-coherent. 

\begin{prop} \label{3.6}
An object $K^{\bullet} \in \mathcal{D}^{-}(A)$ is pseudo-coherent iff $H^{i}(K^{\bullet})$ is a finite $A$-module for all $i$. 
\end{prop}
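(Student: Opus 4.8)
The plan is to dispatch the ``only if'' direction by a one-line observation and to reduce the ``if'' direction to the construction already carried out in the proof of Lemma \ref{1.4}.

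For the forward implication: if $K^{\bullet}$ is pseudo-coherent, then by Definition \ref{pcoherent} it is quasi-isomorphic to a bounded above complex $P^{\bullet}$ of finite free $A$-modules, so each $H^{i}(K^{\bullet}) \cong H^{i}(P^{\bullet})$ is a subquotient of the finite free module $P^{i}$; since $A$ is noetherian, every submodule, hence every subquotient, of a finite $A$-module is finite, and we are done.

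For the converse, the idea is to verify condition (2) of Lemma \ref{1.4}: I would fix an integer $m$ and produce a bounded complex $E^{\bullet}$ of finite free $A$-modules together with $\alpha \colon E^{\bullet} \to K^{\bullet}$ inducing an isomorphism on $H^{i}$ for $i > m$ and a surjection on $H^{m}$. After replacing $K^{\bullet}$ by a quasi-isomorphic complex I may assume $K^{i} = 0$ for $i > a$ for some integer $a$. Then I would build finite free modules $E^{n}$ and the maps by descending induction on $n$ from $n = a+1$ down to $n = m$, maintaining the hypothesis: ``$E^{\geq n} \colon 0 \to E^{n} \to \cdots \to E^{a} \to 0$ is a complex of finite free modules and $\alpha_{n} \colon E^{\geq n} \to K^{\bullet}$ a chain map with $H^{i}(\alpha_{n})$ an isomorphism for $i > n$ and a surjection for $i = n$.'' The base case $n = a+1$ is the zero complex. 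For the step, form $C^{\bullet} = \mathrm{cone}(\alpha_{n+1})$; the long exact sequence of the triangle $E^{\geq n+1} \to K^{\bullet} \to C^{\bullet} \to$ gives $H^{i}(C^{\bullet}) = 0$ for $i \geq n+1$ and exhibits $H^{n}(C^{\bullet})$ as an extension of a submodule of $H^{n+1}(E^{\geq n+1})$ by a quotient of $H^{n}(K^{\bullet})$. The first is finite since it is a subquotient of the finite free module $E^{n+1}$ over the noetherian ring $A$, and the second is finite \emph{by the hypothesis of the proposition}; hence $H^{n}(C^{\bullet})$ is a finite $A$-module. This is precisely the point where the new hypothesis enters, replacing the auxiliary complex $D^{\bullet}$ used at the analogous stage of the proof of Lemma \ref{1.4}. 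I would then pick a finite free $E^{n}$ surjecting onto $H^{n}(C^{\bullet})$, lift through $Z^{n}(C^{\bullet}) \twoheadrightarrow H^{n}(C^{\bullet})$ to $p = (\alpha^{n}, -d^{n}) \colon E^{n} \to C^{n} = K^{n} \oplus E^{n+1}$, observe that $d_{C} \circ p = 0$ forces $\alpha^{n}$ and $d^{n}$ to assemble into a chain map $E^{\geq n} \to K^{\bullet}$ extending $\alpha_{n+1}$, and invoke the octahedron-axiom argument written out verbatim in the last paragraph of the proof of Lemma \ref{1.4} to see that this extended map is still an isomorphism on $H^{i}$ for $i > n$ and a surjection on $H^{n}$. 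Stopping at $n = m$ yields the complex required by Lemma \ref{1.4}, whence $K^{\bullet} \in \mathcal{D}^{-}_{\mathrm{pc}}(A)$.

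The only genuinely new input relative to Lemma \ref{1.4} is the finiteness of $H^{n}(C^{\bullet})$, which is now immediate; so I do not expect a real obstacle. The one thing to be careful about is the bookkeeping of the descending induction --- making sure $E^{\bullet}$ is concentrated in the finite range of degrees $[m,a]$ so that it is genuinely bounded --- and the sign conventions in the identification $C^{n} = K^{n} \oplus E^{n+1}$ when reading off the chain-map condition from $d_{C} \circ p = 0$; both are already handled in the proof of Lemma \ref{1.4}, which I would simply cite.
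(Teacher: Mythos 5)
Your proof is correct, but it takes a different route from the paper's. For the converse, the paper argues by d\'evissage on canonical truncations: since each $H^{i}(K^{\bullet})$ is finite, it is pseudo-coherent by Lemma \ref{1.3}, so $H^{n}(K^{\bullet})[-n]$ satisfies condition (2) of Lemma \ref{1.4}; using the distinguished triangle $\tau_{\leq n-1}K^{\bullet} \to K^{\bullet} \to H^{n}(K^{\bullet})[-n]$ and the fact (noted in the proof of Lemma \ref{1.4} and Remark \ref{3.5}) that condition (2) for a fixed $m$ is stable under extensions, the claim descends inductively through the truncations until it becomes trivial below degree $m$. You instead inline the descending-induction construction from the proof of Lemma \ref{1.4} itself, and replace the single step where hypothesis (2) was invoked there --- the finiteness of the relevant cohomology of the cone, obtained via the auxiliary complex $D^{\bullet}$ --- by a direct argument: $H^{n}(\mathrm{cone})$ is an extension of a submodule of $H^{n+1}(E^{\geq n+1})$ (finite, since $A$ is noetherian and $E^{n+1}$ is finite free) by a quotient of $H^{n}(K^{\bullet})$ (finite by the hypothesis of the proposition). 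Both arguments ultimately verify condition (2) of Lemma \ref{1.4}; the paper's is more modular, reusing Lemmas \ref{1.3} and \ref{1.4} and the extension-stability of condition (2) as black boxes, while yours avoids Lemma \ref{1.3} and the truncation d\'evissage altogether at the cost of repeating the cone/octahedron bookkeeping of Lemma \ref{1.4}, which you correctly handle (or defer to the cited proof). Your one-line treatment of the forward direction agrees with the paper's.
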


\begin{proof}
If $K^{\bullet} \in \mathcal{D}^{-}(A)$ is pseudo-coherent then every cohomology $H^{i}(K^{\bullet})$, is a finite $A$-module. For the converse suppose that $H^{i}(K^{\bullet})$ is a finite $A$-module for all $i$. By Lemmas \ref{1.3} and \ref{1.4}, $H^{i}(K^{\bullet})[0]$ satisfies condition (2) of Lemma \ref{1.4}. Let $n$ be the largest integer such that $H^{n}(K^{\bullet})$ is non-zero. We will prove the Proposition by induction on $n$. We have the distinguished triangle
$$\tau_{\leq n-1}K^{\bullet} \rightarrow K^{\bullet} \rightarrow H^{n}(K^{\bullet})[-n].$$
Fix an integer $k$. Now $H^{n}(K^{\bullet})[-n]$ satisfies condition (2) of Lemma \ref{1.4} for $m=k$. Since condition (2) for $m=k$ is stable under extensions, $K^{\bullet}$ satisfies condition (2) for $m=k$ if $\tau_{\leq n-1}K^{\bullet}$ satisfies condition (2) for $m=k$. The result follows by induction.    
\end{proof}

\subsection{The Koszul complex}\label{sec:koscomcoh}

Let $a$ be a generator of $\zpe$ and note $$ \gamma = {\matrice a 0 0 1}, \;\;\; \varphi = {\matrice p 0 0 1}, \;\;\; \tau = {\matrice 1 0 p 1 } $$ which are generators of the group $\overline{P}^+$ satisfying the following relations: $$ \varphi \gamma = \gamma \varphi, $$ $$ \gamma \tau = \tau^{a^{-1}} \gamma, $$ $$ \varphi \tau^p = \tau \varphi, $$ giving a finite presentation of the group generated by those elements (proof: using those relations we can write any other relation as $\varphi^x \tau^y \gamma^z = 1$ which in turn implies $x = y = z = 0$) We thus have the following result. We need a lemma describing the nilpotent nature of $\tau-1$

In this subsection we compute a complex that computes $\overline{P}^+$-cohomology, cf. Lemma \ref{lem:comcohg}. This is an analogue of the Koszul complex in a non-commutative setting. The reader can compare this with the complex constructed in \cite[\S 1.5.1]{Ribeiro2011} which calculates Galois cohomology. In loc.cit. the construction is somewhat simpler because of the non-triviality of the center (of the group under consideration). Let $M$ be a $\overline{P}^+$-module such that the action of $\overline{P}^+$ extends to an action of the Iwasawa algebra $\zp[[\overline{P}^+]]$ and define

\begin{equation} 
\mathscr{C}_{\tau,\varphi,\gamma}: 0 \rightarrow M \xrightarrow{X} M \oplus M \oplus M \xrightarrow{Y} M \oplus M \oplus M \xrightarrow{Z} M \rightarrow 0
\end{equation} 
where
\begin{align*}
X(x) &= ((1-\tau)x,(1-\varphi)x,(\gamma - 1)x) \\
Y(x,y,z) &= ((1-\varphi\delta_p)x+(\tau-1)y, (\gamma\delta_a-1)x + (\tau-1)z, (\gamma-1)y + (\varphi-1)z) \\
Z(x,y,z) &= (\gamma\delta_a-1)x +(\varphi\delta_p-1)y+ (1-\tau)z
\end{align*}

where, $$\delta_p = \frac{1 - \tau^p}{1 - \tau} = 1 + \tau + \hdots + \tau^{p-1}, $$ for $a \in \zpe, b \in \zp$ $$ \frac{\tau^{ba} - 1}{\tau^a  - 1} = \sum_{n \geq 1} {ba \choose n} (\tau^{a} - 1)^{n - 1} \in \zp[[\tau - 1]],$$ $$ \delta_a = \frac{\tau^{a} - 1}{\tau - 1} $$ which is a well defined element since, as $\tau^{p^n} \to 1$ as $n$ tends to $+\infty$, $\tau - 1$ is topologically nilpotent in the Iwasawa algebra $\zp[[ \tau - 1 ]] = \zp[[\overline{U}]] \subseteq \zp[[\overline{P}^+]]$.

The construction of $\mathscr{C}_{\tau,\varphi,\gamma}$ is obtained from taking successive fibers of smaller complexes. Define

\begin{equation} 
\mathscr{C}_{\tau}: 0 \rightarrow M \xrightarrow{D} M \rightarrow 0
\end{equation}

where 
$$D(x):= (\tau-1)x$$
and

\begin{equation} 
\mathscr{C}_{\tau, \varphi}: 0 \rightarrow M \xrightarrow{E} M \oplus M \xrightarrow{F} M \rightarrow 0
\end{equation}

where 
\begin{align*}
E(x) &= ((\tau-1)x,(\varphi-1)x) \\
F(x,y) &= (\varphi\delta_p-1)x + (1-\tau)y
\end{align*}

We now define morphisms between the complexes. We note by $[\varphi-1]: \mathscr{C}_{\tau} \rightarrow \mathscr{C}_{\tau}$ the morphism:

$$
\begin{tikzcd} [row sep = large, column sep = large] 
\mathscr{C}_{\tau}: 0 \arrow[r] &
M \arrow[r] \arrow[d, "\varphi-1"] &
M \arrow[r] \arrow[d, "\varphi\delta_p-1"] &
0 \\
\mathscr{C}_{\tau}: 0 \arrow[r] &
M \arrow[r] &
M \arrow[r] &
0
\end{tikzcd}
$$

and $[\gamma-1]: \mathscr{C}_{\tau, \varphi} \rightarrow \mathscr{C}_{\tau, \varphi}$ the morphism:

$$
\begin{tikzcd} [row sep = large, column sep = large] 
\mathscr{C}_{\tau, \varphi}:0 \arrow[r] &
M \arrow[r] \arrow[d, "\gamma-1"] &
M \oplus M \arrow[r] \arrow[d, "s"] &
M \arrow[r] \arrow[d, "\gamma\delta_a -1"] & 
0 \\
\mathscr{C}_{u^{-}, \varphi}:0 \arrow[r] &
M \arrow[r] &
M \oplus M \arrow[r] &
M \arrow[r] &
0
\end{tikzcd}
$$

where $s(x,y) = ((\gamma\delta_a-1)x,(\gamma-1)y)$

\begin{lemm} \label{triang}
There are distinguised triangles
$$\mathscr{C}_{\tau,\varphi} \rightarrow \mathscr{C}_{\tau} \xrightarrow{[\varphi-1]} \mathscr{C}_{\tau}$$
and
$$\mathscr{C}_{\tau,\varphi,\gamma} \rightarrow \mathscr{C}_{\tau,\varphi} \xrightarrow{[\gamma-1]} \mathscr{C}_{\tau,\varphi}$$
in $\mathcal{D}^{-}(A)$. 
\end{lemm}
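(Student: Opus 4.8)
The plan is to realize each of the two triangles as the tautological distinguished triangle $\mathrm{Fib}(f)\to C^\bullet\xrightarrow{f}D^\bullet$ attached to a morphism of complexes $f$, where $\mathrm{Fib}(f)=\mathrm{Cone}(f)[-1]$; recall that such a sequence is automatically distinguished already in the homotopy category $K^-(A)$, hence in $\mathcal{D}^-(A)$, with no hypothesis on $f$. So the real content is twofold: (i) to check that $[\varphi-1]\colon\mathscr{C}_{\tau}\to\mathscr{C}_{\tau}$ and $[\gamma-1]\colon\mathscr{C}_{\tau,\varphi}\to\mathscr{C}_{\tau,\varphi}$ are genuine morphisms of complexes (i.e. the displayed squares commute), and (ii) to identify the mapping fibres with $\mathscr{C}_{\tau,\varphi}$ and $\mathscr{C}_{\tau,\varphi,\gamma}$ on the nose.

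For (i) with $[\varphi-1]$, commutativity of the single square amounts to the identity $(\varphi\delta_p-1)(\tau-1)=(\tau-1)(\varphi-1)$ in $\zp[[\overline{P}^+]]$, which follows from $\delta_p(\tau-1)=\tau^p-1$ together with the relation $\varphi\tau^p=\tau\varphi$: one computes $\varphi\delta_p(\tau-1)=\varphi(\tau^p-1)=\tau\varphi-\varphi$, so both sides equal $\tau\varphi-\varphi-\tau+1$. For $[\gamma-1]$ there are two squares, and the relations needed are $\varphi\gamma=\gamma\varphi$ and $\gamma\tau=\tau^{a^{-1}}\gamma$ (equivalently $\gamma\tau^a=\tau\gamma$, whence $\gamma\delta_a(\tau-1)=\gamma(\tau^a-1)=(\tau-1)\gamma$), together with the fact that $\gamma\delta_a$ commutes with $\varphi\delta_p$ (as both $\gamma,\varphi$ commute and $\delta_a,\delta_p\in\zp[[\tau-1]]$). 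The squares then reduce to $(\gamma\delta_a-1)(\tau-1)=(\tau-1)(\gamma-1)$, $(\gamma-1)(\varphi-1)=(\varphi-1)(\gamma-1)$, and $(\gamma\delta_a-1)(\varphi\delta_p-1)=(\varphi\delta_p-1)(\gamma\delta_a-1)$, each a short manipulation in the Iwasawa algebra using the given finite presentation of $\overline{P}^+$.

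For (ii), one reads off the total differential of the two-term double complex $[\mathscr{C}_\tau\xrightarrow{[\varphi-1]}\mathscr{C}_\tau]$: it is concentrated in degrees $[0,2]$ with the same module $M$, $M\oplus M$, $M$ in each slot as $\mathscr{C}_{\tau,\varphi}$, and its differentials are $x\mapsto((\tau-1)x,(\varphi-1)x)=E(x)$ and $(x,y)\mapsto(\varphi\delta_p-1)x+(1-\tau)y=F(x,y)$ up to the sign conventions in forming $\mathrm{Cone}(f)[-1]$, so $\mathrm{Fib}([\varphi-1])\cong\mathscr{C}_{\tau,\varphi}$. Likewise the total complex of $[\mathscr{C}_{\tau,\varphi}\xrightarrow{[\gamma-1]}\mathscr{C}_{\tau,\varphi}]$ sits in degrees $[0,3]$ with the same modules as $\mathscr{C}_{\tau,\varphi,\gamma}$, and its differentials agree with $X$, $Y$, $Z$ after adjusting the sign of finitely many summands — an isomorphism of complexes, which does not change the class of the triangle. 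Granting (i) and (ii), the two displayed sequences are precisely the fibre triangles of $[\varphi-1]$ and $[\gamma-1]$ and hence distinguished. The only mildly delicate point is the sign bookkeeping in the three-term case of (ii); everything else is a routine verification of the chain-map property against the three defining relations of $\overline{P}^+$.
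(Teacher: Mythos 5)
Your proposal is correct and follows essentially the same route as the paper, which proves the lemma by observing that the two sequences are exactly the cone (fibre) triangles of $[\varphi-1]$ and $[\gamma-1]$ and that the chain-map property reduces to the relations $(\varphi\delta_p-1)(\tau-1)=(\tau-1)(\varphi-1)$, $(\gamma\delta_a-1)(\tau-1)=(\tau-1)(\gamma-1)$ and $\varphi\delta_p\cdot\gamma\delta_a=\gamma\delta_a\cdot\varphi\delta_p$ in $\zp[[\overline{P}^+]]$ — the same identities you verify. Only note that the last commutation is not purely because $\delta_a,\delta_p\in\zp[[\tau-1]]$ (neither $\varphi$ nor $\gamma$ commutes with $\tau$); it follows from $(\tau^x-1)\varphi=\varphi(\tau^{px}-1)$ and $(\tau^x-1)\gamma=\gamma(\tau^{ax}-1)$, which give $\varphi\delta_p\cdot\gamma\delta_a=\varphi\gamma\cdot\frac{\tau^{ap}-1}{\tau-1}=\gamma\delta_a\cdot\varphi\delta_p$.
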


\begin{proof}
This is evident from the definition of the cone of a morphism in $\mathcal{D}^{-}(A)$ and the relations $\varphi\delta_{p}\cdot\gamma\delta_a = \gamma\delta_a\cdot\varphi\delta_{p}$, $(\gamma\delta_a-1)(\tau-1) = (\tau-1)(\gamma-1)$ and $(\varphi\delta_p-1)(\tau-1)=(\tau-1)(\varphi-1)$.   
\end{proof}


We now show that $C_{\tau,\varphi,\gamma}$ computes $\overline{P}^+$-cohomology. Recall the complex

$$\mathscr{C}_{\varphi,\gamma}: 0 \rightarrow M \xrightarrow{E'} M \oplus M \xrightarrow{F'} M \rightarrow 0$$
where 
\begin{align*}
E'(x) &= ((1-\varphi)x,(\gamma-1)x) \\
F'(x,y) &= (\gamma-1)x + (\varphi-1)y
\end{align*}

calculates the $A^+$-cohomology of $M$. There is an obvious restriction morphism $\mathscr{C}_{\tau,\varphi,\gamma} \to \mathscr{C}_{\varphi,\gamma}$ whose kernel (as a morphism in the abelian category of chain complexes) is

$$\mathscr{C}_{\varphi, \gamma}^{\mathrm{twist}}: 0 \rightarrow M \xrightarrow{E''} M \oplus M \xrightarrow{F''} M \rightarrow 0$$
where
\begin{align*}
E''(x) &= ((1-\varphi\delta_p)x,(\gamma\delta_a-1)x) \\
F''(x,y) &= ((\gamma\delta_a-1)x + (\varphi\delta_p-1)y
\end{align*}

\begin{lemm} \label{lem:comcohg}
The complex $\mathscr{C}_{\tau,\varphi,\gamma}$ calculates the $\overline{P}^+$-cohomology groups. That is $H^{i}(\mathscr{C}_{\tau,\varphi,\gamma}) = H^i(\overline{P}^+, M)$.
\end{lemm}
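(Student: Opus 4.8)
The plan is to build up $\mathscr{C}_{\tau,\varphi,\gamma}$ from the inside out, peeling off the generators $\tau$, $\varphi$, $\gamma$ one at a time by means of the two distinguished triangles of Lemma~\ref{triang}, and matching the resulting cohomology exact sequences with standard exact sequences in group/semigroup cohomology. First I would dispose of the innermost case: the two term complex $\mathscr{C}_{\tau}\colon M\xrightarrow{\tau-1}M$ computes $H^{*}(\overline{U}^{1},M)$, where $\overline{U}^{1}=\overline{\langle\tau\rangle}\cong\zp$ is the procyclic pro-$p$ group topologically generated by $\tau$. Indeed, the hypothesis that the $\overline{P}^{+}$-action extends to $\zp[[\overline{P}^{+}]]$ restricts to an action of $\zp[[\overline{U}^{1}]]=\zp[[\tau-1]]$, so $\tau-1$ is topologically nilpotent on $M$ and $\mathscr{C}_{\tau}=\mathrm{R}\Hom_{\zp[[\overline{U}^{1}]]}(\zp,M)$ is continuous cohomology, with $H^{0}=\ker(\tau-1)$, $H^{1}=\coker(\tau-1)$ and $H^{i}=0$ for $i\geq2$.

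Next I would adjoin $\varphi$. The relation $\varphi\tau^{p}=\tau\varphi$ says $\varphi^{-1}\overline{U}^{1}\varphi=\overline{U}^{2}\subsetneq\overline{U}^{1}$, with $\varphi$ inducing the isomorphism $\overline{U}^{2}\xrightarrow{\sim}\overline{U}^{1}$, so $\langle\tau,\varphi\rangle$ is an ascending HNN-type extension of $\overline{U}^{1}$. Conjugation by $\varphi$ therefore induces an endomorphism $\varphi^{*}$ of $H^{*}(\overline{U}^{1},M)$ (restriction to $\overline{U}^{2}$ followed by the $\varphi$-twist isomorphism $H^{*}(\overline{U}^{2},M)\cong H^{*}(\overline{U}^{1},M)$), and there is a Wang/mapping-torus long exact sequence
\[
\cdots\to H^{n}(\langle\tau,\varphi\rangle,M)\to H^{n}(\overline{U}^{1},M)\xrightarrow{\varphi^{*}-1}H^{n}(\overline{U}^{1},M)\to H^{n+1}(\langle\tau,\varphi\rangle,M)\to\cdots,
\]
the shift by one reflecting that $\Phi^{+}\cong\mathbf{N}$ has cohomological dimension $\leq 1$. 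Comparing this with the exact sequence of the triangle $\mathscr{C}_{\tau,\varphi}\to\mathscr{C}_{\tau}\xrightarrow{[\varphi-1]}\mathscr{C}_{\tau}$, it remains to check that $[\varphi-1]$ realises $\varphi^{*}-1$ on cohomology: on $H^{0}$ one gets $m\mapsto\varphi m-m$ (and $\varphi m\in\ker(\tau-1)$ since $\tau\varphi m=\varphi\tau^{p}m=\varphi m$), matching $\varphi-1$; on $H^{1}=\coker(\tau-1)$ the restriction to $\overline{U}^{2}$ is $\bar m\mapsto\overline{\delta_{p}m}$ by the cocycle identity $c(\tau^{x})=\delta_{x}c(\tau)$, and the $\varphi$-twist $\coker(\tau^{p}-1)\xrightarrow{\sim}\coker(\tau-1)$ (well defined because $\varphi(\tau^{p}-1)=(\tau-1)\varphi$) is $\bar n\mapsto\overline{\varphi n}$, so $\varphi^{*}$ is multiplication by $\varphi\delta_{p}$, matching $\varphi\delta_{p}-1$. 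Hence $\mathscr{C}_{\tau,\varphi}$ computes $H^{*}(\langle\tau,\varphi\rangle,M)$.

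Finally I would adjoin $\gamma$, which is invertible in $\overline{P}^{+}$ ($\gamma^{-1}=\sigma_{a^{-1}}\in A^{0}$) and normalises $\langle\tau,\varphi\rangle$ via $\gamma\tau\gamma^{-1}=\tau^{a^{-1}}$ and $\gamma\varphi\gamma^{-1}=\varphi$; thus $\overline{P}^{+}=\langle\tau,\varphi\rangle\rtimes A^{0}$ is a genuine semidirect product with $A^{0}\cong\zpe$ of $\qp$-cohomological dimension $1$. The Hochschild--Serre spectral sequence degenerates into a Wang sequence for $\overline{P}^{+}$ over $\langle\tau,\varphi\rangle$, which I would compare with the exact sequence of the second triangle $\mathscr{C}_{\tau,\varphi,\gamma}\to\mathscr{C}_{\tau,\varphi}\xrightarrow{[\gamma-1]}\mathscr{C}_{\tau,\varphi}$. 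The induced operator $\gamma^{*}$ on $H^{*}(\langle\tau,\varphi\rangle,M)$ is again ``$\gamma$ together with the pull-back by $\gamma$-conjugation'': since $\gamma^{-1}\tau\gamma=\tau^{a}$ and $c(\tau^{a})=\delta_{a}c(\tau)$, the $\tau$-direction of $H^{1}$ picks up the factor $\delta_{a}$, whereas the $\varphi$-direction picks up nothing as $\gamma\varphi=\varphi\gamma$; this is exactly the middle map $s(x,y)=((\gamma\delta_{a}-1)x,(\gamma-1)y)$ and the operators occurring in $Y$ and $Z$. A five-lemma argument then yields $H^{i}(\mathscr{C}_{\tau,\varphi,\gamma})=H^{i}(\overline{P}^{+},M)$. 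The same computation can alternatively be organised through the short exact sequence of complexes $0\to\mathscr{C}^{\mathrm{twist}}_{\varphi,\gamma}\to\mathscr{C}_{\tau,\varphi,\gamma}\to\mathscr{C}_{\varphi,\gamma}\to0$ recorded above, combined with the spectral sequence $H^{i}(A^{+},H^{j}(\overline{U}^{1},M))\Rightarrow H^{i+j}(\overline{P}^{+},M)$ already used in Proposition~\ref{prop:lazcompcom}.

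The main obstacle is not the explicit algebra but making these Wang/mapping-torus exact sequences rigorous in the present setting: the Bass--Serre picture is for abstract discrete groups, while here one works with topological semigroups and continuous cochain cohomology, and in the step adjoining $\varphi$ the new generator is \emph{not} invertible, so there is no genuine extension $1\to\overline{U}^{1}\to\langle\tau,\varphi\rangle\to\Phi^{+}\to1$. One therefore has to argue at the cochain level --- establishing that the continuous semigroup cohomology of an ascending HNN-type extension is computed by the mapping cone of $\varphi^{*}-1$ on the small cochain complex of $\overline{U}^{1}$ --- or to fall back on the spectral sequences of Proposition~\ref{prop:lazcompcom}; in either case the delicate point is the precise matching of the induced operators with $[\varphi-1]$ and $[\gamma-1]$, keeping careful track of signs and of the elements $\delta_{p},\delta_{a}\in\zp[[\tau-1]]$ that encode the non-commutativity of $\overline{P}^{+}$.
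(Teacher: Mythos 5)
Your argument is correct in substance, but it organises the d\'evissage differently from the paper. You peel off the generators one at a time, following the two triangles $\mathscr{C}_{\tau,\varphi}\to\mathscr{C}_{\tau}\xrightarrow{[\varphi-1]}\mathscr{C}_{\tau}$ and $\mathscr{C}_{\tau,\varphi,\gamma}\to\mathscr{C}_{\tau,\varphi}\xrightarrow{[\gamma-1]}\mathscr{C}_{\tau,\varphi}$, which forces you through the cohomology of the intermediate semigroup $\langle\tau,\varphi\rangle$ and hence through a Wang/mapping-torus sequence for an ascending HNN-type step in which the new generator is not invertible --- exactly the point you yourself flag as delicate. The paper avoids that intermediate object altogether: it uses the single decomposition $\overline{P}^+=\overline{U}\rtimes A^+$ and the triangle $\mathscr{C}_{\tau,\varphi,\gamma}\to\mathscr{C}_{\varphi,\gamma}\xrightarrow{1-\tau}\mathscr{C}_{\varphi,\gamma}^{\mathrm{twist}}$, i.e.\ it first computes $R^{\overline{U}}(M)$ as the two-term complex $M\xrightarrow{1-\tau}M^{*}$ (with $M^{*}$ carrying the twisted action $\widetilde{\varphi}=\varphi\delta_p$, $\widetilde{\gamma}=\gamma\delta_a$) and then applies the already-known Koszul complex $\mathscr{C}_{\varphi,\gamma}$ for $A^+$ to both terms; this is precisely the ``alternative organisation'' you mention at the end, via the spectral sequence $H^{i}(A^{+},H^{j}(\overline{U},M))\Rightarrow H^{i+j}(\overline{P}^{+},M)$. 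The genuinely shared core is your identification of the induced operators: the cocycle identity $c_{\tau^{x}}=\delta_{x}c_{\tau}$ giving $\varphi^{*}=\varphi\delta_p$ and $\gamma^{*}=\gamma\delta_a$ on $H^{1}(\overline{U},M)$ is word for word the computation in the paper's proof. So your main route buys a more explicit, generator-by-generator picture matching Lemma~\ref{triang}, at the cost of having to justify the HNN-type Wang sequence for continuous semigroup cohomology; the paper's route buys rigour at that exact spot by quotienting out the compact group $\overline{U}$ first and never invoking cohomology of $\langle\tau,\varphi\rangle$ --- if you fall back on your stated alternative, your proof and the paper's coincide.
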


\begin{proof}
This is just a reinterpretation of the Hochschild-Serre spectral sequence. We have a distinguished triangle
\begin{equation}\label{eq:fedz}
\mathscr{C}_{\tau,\varphi,\gamma} \to \mathscr{C}_{\varphi,\gamma} \xrightarrow{1-\tau} \mathscr{C}_{\varphi, \gamma}^{\mathrm{twist}}
\end{equation}
in the derived category $\mathcal{D}^{-}(A)$, where the morphism 
\[
1- \tau \colon \mathscr{C}_{\varphi,\gamma} \to \mathscr{C}_{\varphi, \gamma}^{\mathrm{twist}}
\]
is component-wise just $1-\tau$. Let $\overline{U} = \begin{pmatrix} 1 & 0 \\ p\zp & 1 \end{pmatrix}$ so that $\overline{P}^+ = \overline{U} \rtimes A^+$. For a semi-group $G$ we denote by $R^G$ denote the derived functor of $(-)^{G}$. For $M$ a $\overline{P}^+$-module, we claim that\footnote{Here $R^{\overline{U}}$ is viewed as a function from $\mathcal{D}^+(\overline{P}^+-\mathrm{Mod})$ to $\mathcal{D}^{+}(A^+-\mathrm{Mod})$.}
\[
R^{\overline{U}}(M) = (0 \to M \xrightarrow{1 - \tau} M^* \to 0)
\]
where $M^*$ is isomorphic to $M$ as $\overline{U}$-modules, but is equipped with a \emph{twisted} $(\varphi, \gamma)$-action (which we denote by $(\widetilde{\varphi}, \widetilde{\gamma})$):
\[
\widetilde{\varphi} \cdot m := \varphi\delta_p \cdot m \text{ and } \widetilde{\gamma} \cdot m := \gamma\delta_a \cdot m.
\]
First note that $1 - \tau \colon M \to M^*$ is indeed a morphism of $\overline{P}^+$-modules (this follows from the relations $(\gamma\delta_a-1)(\tau-1) = (\tau-1)(\gamma-1)$ and $(\varphi\delta_p-1)(\tau-1)=(\tau-1)(\varphi-1)$). Now $H^1(\overline{U}, M)$ is equipped with a natural $(\varphi,\gamma)$-action (which we denote by $(\varphi',\gamma')$):
\[
\varphi' \cdot c_\tau:= \varphi \cdot c_{\tau^p} \text{ and } \gamma' \cdot c_\tau:= \gamma \cdot c_{\tau^a},
\] 
where $c_\tau$ is the value of the 1-cocycle $c$ with $[c] \in H^1(\overline{U}, M)$, at $\tau$. To prove the claim it suffices to show that $\varphi \cdot c_{\tau^p} = \varphi \delta_p \cdot c_{\tau}$ and $\gamma \cdot c_{\tau^a} = \gamma\delta_a \cdot c_{\tau}$. However these follow from the fact that $c$ is a 1-cocycle. Thus by the Hochschild-Serre spectral sequence we have
\[
R^{\overline{P}^+}(M) = R^{A^+}(0 \to M \xrightarrow{1 - \tau} M^* \to 0).
\]   
Therefore applying $R^{A^+}$ to the distinguished triangle
\[
(0 \to M \xrightarrow{1 - \tau} M^* \to 0) \to M \xrightarrow{1 - \tau} M^*
\]
gives the distinguished triangle
\begin{equation}\label{eq:festr}
R^{\overline{P}^+}(M) \to R^{A^+}(M) \xrightarrow{1 - \tau} R^{A^+}(M^*)
\end{equation}
and it is easy to see that $R^{A^+}(M) = \mathscr{C}_{\varphi,\gamma}$ and $R^{A^+}(M^*) = \mathscr{C}_{\varphi, \gamma}^{\mathrm{twist}}$. The result now follows from comparing the triangles \eqref{eq:fedz} and \eqref{eq:festr}.  
\end{proof}

\subsection{Finiteness of cohomology}
In this subsection we show that the cohomology groups 
$$H^{i}(\overline{P}^+, \mathscr{R}_A(\delta_1,\delta_2))$$
are finite-type $A$-modules. The idea is to reduce the problem to finiteness of $A^+$-cohomology, cf. \cite{chen2013}, \cite{kedlaya2014} and finiteness of \emph{twisted} $A^+$-cohomology. 

The first thing to note is that the complexes $\mathscr{C}_{\tau, \varphi, \gamma}$ are well defined for $M \in \{ \Robba^+_A(\delta), \Robba_A(\delta), \Robba_A^-(\delta) \}$, which is a consequence of the following lemma.

\begin{lemma}
Let $M \in \{ \Robba^-_A(\delta_1, \delta_2), \Robba_A(\delta_1, \delta_2), \Robba_A^+(\delta_1, \delta_2) \}$. The action of $\overline{P}^+$ extends by continuity to an action of the distribution algebra $\mathscr{D}(\overline{P}^+, A)$. In particular, $M$ is equipped with an action of the Iwasawa algebra $\zp[[\overline{P}^+]]$.
\end{lemma}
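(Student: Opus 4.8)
The plan is to establish the claim by first reducing to the case $M = \Robba_A(\delta_1,\delta_2)$ (since $\Robba_A^+(\delta_1,\delta_2)$ is a sub-$\overline{P}^+$-module and $\Robba_A^-(\delta_1,\delta_2)$ a quotient, and the distribution algebra acts through subquotients once the full module is handled), and then to exhibit the continuous action of $\mathscr{D}(\overline{P}^+,A)$. The key structural input is that, as a topological $A$-module, $\Robba_A(\delta_1,\delta_2)$ is an $LF$-space (of compact type after twisting), and moreover that by Proposition~\ref{actionG} together with Lemma~\ref{lem:princser} the sheaf $\Robba_A(\delta_1)\boxtimes_\omega\P^1$ is $\qp$-analytic, so that the sections $\Robba_A(\delta_1)\boxtimes_\omega\zp$ already carry a continuous $\mathscr{D}(K,A)$-action for every compact open subgroup $K\subseteq P^+$ stabilising $\zp$. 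Twisting by $\delta_2^{-1}$ and observing that $\overline{P}^+$ stabilises $\zp$ (via the extra action of $\tau={\matrice 1 0 p 1}$ coming from $\overline{P}^+$ being inside the stabiliser of $\zp$ in $G$, exactly as discussed in the introduction) shows that $\Robba_A(\delta_1,\delta_2)$ is a continuous $\mathscr{D}(K,A)$-module for every compact open subgroup $K$ of $\overline{P}^+$.

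Next I would upgrade this from compact open subgroups to the full semigroup $\overline{P}^+$. The point is that $\overline{P}^+$ is an increasing union of cosets of its compact open subgroups; concretely, writing $\overline{P}^+ = \overline{U}\rtimes A^+$ with $\overline{U}\cong p\zp$ and $A^+ = \Phi^+\times A^0$, the part $A^0\cong\zpe$ is already compact, $\Phi^+ = \varphi^{\mathbf{N}}$ is a discrete free monoid, and $\overline{U}$ is compact. So the "non-compact" direction is only the discrete one generated by $\varphi$, which contributes no distributions beyond the group algebra. Thus $\mathscr{D}(\overline{P}^+,A)$ is built out of $\mathscr{D}(K,A)$ for $K$ compact open together with the monoid algebra $A[\Phi^+]$, and the action assembles: one checks the separate continuity of the resulting pairing $\mathscr{D}(\overline{P}^+,A)\times M\to M$ by reducing to one compact open piece at a time, using that $M$ is barrelled (being $LF$) so that separately continuous bilinear maps out of it are automatically jointly continuous in the relevant sense, cf. the remark in the proof of Proposition~\ref{actionG}. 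For the last sentence of the statement, the Iwasawa algebra $\zp[[\overline{P}^+]]$ sits inside $\mathscr{D}(\overline{P}^+,\qp)\widehat\otimes_{\qp}A \cong \mathscr{D}(\overline{P}^+,A)$ (by the analogue of Proposition~\ref{prop:scbres} / Lemma~\ref{distbasechange}), so once the $\mathscr{D}(\overline{P}^+,A)$-action exists, restricting it gives the $\zp[[\overline{P}^+]]$-action, which is what makes the Koszul complex $\mathscr{C}_{\tau,\varphi,\gamma}(M)$ and in particular the operators $\delta_a,\delta_p$ well defined on $M$.

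The main obstacle I expect is the passage from the local (compact-open-subgroup) continuous $\mathscr{D}(K,A)$-module structure, which is essentially handed to us by the $\qp$-analyticity of the sheaf, to a genuinely \emph{coherent} separately continuous action of the global $\mathscr{D}(\overline{P}^+,A)$: one must verify that the various $\mathscr{D}(K,A)$-actions for different $K$ are compatible under the inclusions $\mathscr{D}(K',A)\hookrightarrow\mathscr{D}(K,A)$ for $K'\subseteq K$, and that they glue along the discrete $\Phi^+$-direction to a well-defined $\mathscr{D}(\overline{P}^+,A)$-module map — this is where the semidirect product decomposition $\overline{P}^+=\overline{U}\rtimes A^+$ and the explicit commutation relations $\varphi\tau^p=\tau\varphi$, $\gamma\tau=\tau^{a^{-1}}\gamma$ are used to see that the twisted convolution is continuous. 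Everything else (the reduction from $?\in\{+,-,\emptyset\}$ to the full module, and the final sentence about $\zp[[\overline{P}^+]]$) is formal.
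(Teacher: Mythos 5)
Your proposal is correct and rests on the same mechanism as the paper's proof: the $\qp$-analyticity of the sheaves $\Robba_A^{?}(\delta_1)\boxtimes_\omega\P^1$ constructed in \S\ref{RtimesP1}, the fact that $\overline{P}^+$ stabilizes $\zp$, and the twist by $\delta_2^{-1}$. The packaging differs, though: the paper never invokes Proposition \ref{actionG}; it handles $\Robba_A^{\pm}(\delta_1,\delta_2)$ directly through the identifications of Lemma \ref{lem:princser} with $B_A(\delta_2,\delta_1)^*\otimes\omega$ and $B_A(\delta_1,\delta_2)$, which already carry $\mathscr{D}(G,A)$-actions, and then obtains the full module $\Robba_A(\delta_1,\delta_2)$ as an extension of separately continuous $\mathscr{D}(G,A)$-modules; you instead start from the full module via Proposition \ref{actionG} and descend to the sub and quotient, which does work since $\Robba_A^+(\delta_1,\delta_2)$ is closed and a closed $\overline{P}^+$-stable submodule is $\mathscr{D}$-stable by density of Dirac distributions. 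What the paper's route buys is economy and logical independence: Proposition \ref{actionG} is itself proved via the comparison result Proposition \ref{prop:lazcompcom}, and the authors deliberately make this lemma depend only on the facts of \S\ref{RtimesP1}; your route is heavier but legitimate, since Proposition \ref{actionG} precedes the lemma and does not use it. Two small points to tighten: the definition of a $\qp$-analytic $G$-sheaf gives a $\mathscr{D}(K,A)$-action for $K\subset G$ compact \emph{open} and stabilizing $\zp$ (e.g.\ the Iwahori subgroup), from which one restricts to $\tilde{P}\subset\overline{P}^+$, because the compact subgroups of $\overline{P}^+$ are not open in $G$; and the assembly of a full $\mathscr{D}(\overline{P}^+,A)$-action across the discrete $\Phi^+$-direction, while reasonable, is more than is needed later — the Koszul complex only requires the $\zp[[\tilde{P}]]$-action (giving $\delta_a$, $\delta_p$) together with the semigroup elements, which both arguments deliver.
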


\begin{proof}
For the proof of this lemma, we use some facts of \S \ref{RtimesP1} (which is independent of the present section). For $M \in \{ \Robba_A^+(\delta_1, \delta_2), \Robba^-_A(\delta_1, \delta_2) \}$, the result is a consequence of the isomorphisms $\Robba^+_A(\delta_1) \boxtimes_\omega \P^1 \cong B_A(\delta_2, \delta_1)^* \otimes \omega$ and $\Robba^-_A(\delta_1) \boxtimes_\omega \P^1 \cong B_A(\delta_1, \delta_2)$ of Lemma \ref{lem:princser}, the fact that the locally analytic principal series are equipped with an action of the distribution algebra $\mathscr{D}(G, A)$  and the fact that, since $\overline{P}^+$ stabilizes $\zp$, then $\Robba_A^{(\pm)}(\delta_1, \delta_2) = (\Robba_A^{(\pm)}(\delta_1) \boxtimes_\omega \zp) \otimes \delta_2^{-1}$ inherits an action of the distribution algebra $\mathscr{D}(\overline{P}^+, A)$, and in particular an action of the Iwasawa algebra $\zp[[\overline{P}^+]]$. For $M = \Robba_A(\delta_1, \delta_2)$, the result follows by the same arguments noting that, since $\Robba(\delta_1) \boxtimes_\omega \P^1$ is an extension of $\Robba_A^-(\delta) \boxtimes_\omega \P^1$ by $\Robba_A^+(\delta) \boxtimes_\omega \P^1$ in the category of separately continuous $\mathscr{D}(G, A)$-modules, it is also equipped with an action of $\mathscr{D}(G, A)$.
\end{proof}

The main theorem of this subsection is the following. 

\begin{theo}
If $M = \Robba_A(\delta_1, \delta_2)$ then $\mathscr{C}_{\tau, \varphi, \gamma} \in \mathcal{D}^{-}_{\mathrm{pc}}(A)$. In particular, the $A$-modules $H^{i}(\overline{P}^{+},\mathscr{R}(\delta_1, \delta_2))$ are finite. 
\end{theo}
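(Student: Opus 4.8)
The plan is to use the distinguished triangles of Lemma \ref{triang} together with the fact, established in Section \ref{sec:cohphigammch}, that the $A^+$-cohomology of a $(\varphi,\Gamma)$-module over $\Robba_A$ is finite over $A$ (cf. Remark \ref{rem:recchenres}, following \cite{chen2013}, \cite{kedlaya2014}). By Proposition \ref{3.6}, since $A$ is noetherian, it suffices to show that all cohomology groups $H^i(\mathscr{C}_{\tau,\varphi,\gamma})$ are finite $A$-modules; concentration in degrees $[0,3]$ is clear from the shape of the complex. By Lemma \ref{lem:comcohg} we have $H^i(\mathscr{C}_{\tau,\varphi,\gamma}) = H^i(\overline P^+, M)$ with $M = \Robba_A(\delta_1,\delta_2)$, and by the Hochschild--Serre argument in the proof of that lemma these sit in the long exact sequence attached to the triangle \eqref{eq:festr}, namely $R^{\overline P^+}(M) \to R^{A^+}(M) \xrightarrow{1-\tau} R^{A^+}(M^*)$. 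So the point is reduced to: $(a)$ the ordinary $A^+$-cohomology $H^i(A^+,M)$ is finite over $A$, and $(b)$ the \emph{twisted} $A^+$-cohomology $H^i(A^+, M^*)$ is finite over $A$, where $M^*$ carries the $(\widetilde\varphi,\widetilde\gamma) = (\varphi\delta_p, \gamma\delta_a)$-action.

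For $(a)$: as an $A^+$-module, $\Robba_A(\delta_1,\delta_2) \cong \Robba_A(\delta_1\delta_2^{-1})$ (cf. the Remark after the definition of $\Robba_A(\delta_1,\delta_2)$), so $H^i(A^+,\Robba_A(\delta_1,\delta_2)) = H^i(A^+,\Robba_A(\delta_1\delta_2^{-1}))$, which is a finite $A$-module by the computations of Section \ref{sec:cohphigammch} (the first bullet of Remark \ref{rem:recchenres}; these are the results of \cite{chen2013}, recovered here via $(\varphi,\Gamma)$-cohomology, using flatness of $\Robba_A$ over $A$, cf. Lemma \ref{flatnessR}). Concretely, $R^{A^+}(M) = \mathscr{C}_{\varphi,\gamma}$, a three-term complex computing these finite groups. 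For $(b)$: the twist $\delta_p = 1 + \tau + \cdots + \tau^{p-1}$ and $\delta_a$ are units in the relevant completed local rings up to bounded denominators — more precisely, $\delta_p$ and $\delta_a$ lie in $\zp[[\tau-1]]$ and act on $M$ through the distribution algebra action, and on the $A^+$-level one checks that $\varphi\delta_p$ differs from $\varphi$ (resp. $\gamma\delta_a$ from $\gamma$) by an operator that does not affect finiteness. The cleanest route is to invoke Lemma \ref{finitecohom} (the statement in the excerpt says Corollary \ref{cor:fincohinto} ``is reduced to proving finiteness of a \emph{twisted} $(\varphi,\Gamma)$-cohomology of $\Robba_A(\delta_1,\delta_2)$, cf. Lemma \ref{finitecohom}''); so I would state and prove that lemma here, namely that $H^i$ of the complex $\mathscr{C}_{\varphi,\gamma}^{\mathrm{twist}}$ (i.e. $R^{A^+}(M^*)$) is finite over $A$. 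This in turn follows by the same strategy as \cite{chen2013}/\cite{kedlaya2014}: using the decomposition $M^{]0,r]}$ over small annuli and a crude estimate on the operators $\varphi\delta_p - 1$ and $\gamma\delta_a - 1$ showing they are strict (have closed image with finite-dimensional cokernel) on the Fréchet pieces, then passing to the limit.

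Once $(a)$ and $(b)$ are in hand, the long exact sequence of the triangle \eqref{eq:festr} shows each $H^i(\overline P^+, M)$ is squeezed between a subquotient of a finite $A$-module ($H^i(A^+,M)$) and a submodule of a finite $A$-module ($H^{i-1}(A^+,M^*)$, which is finite hence noetherian, so submodules are finite); since $A$ is noetherian all of these are finite $A$-modules. Then Proposition \ref{3.6} gives $\mathscr{C}_{\tau,\varphi,\gamma} \in \mathcal{D}^-_{\mathrm{pc}}(A)$, and boundedness in $[0,3]$ is immediate from the explicit four-term shape of the complex. The main obstacle I anticipate is $(b)$ — controlling the twisted operators $\varphi\delta_p - 1$ and $\gamma\delta_a - 1$ on $\Robba_A(\delta_1,\delta_2)$ well enough to run the Kedlaya--Liu style finiteness argument; the operator $\delta_a = (\tau^a-1)/(\tau-1)$ in particular is, as the introduction warns, ``difficult to comprehend,'' so the estimate must be done at the level of the completed group ring $\zp[[\tau-1]]$ acting through the $\qp$-analytic sheaf structure, exploiting that $\tau - 1$ is topologically nilpotent. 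Everything else is a formal consequence of the derived-category formalism of Section \ref{sec:formdercat} and the already-established $A^+$-cohomology computations.
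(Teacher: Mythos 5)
Your structural reduction is the same as the paper's: the triangle $\mathscr{C}_{\tau,\varphi,\gamma} \to \mathscr{C}_{\varphi,\gamma} \to \mathscr{C}_{\varphi,\gamma}^{\mathrm{twist}}$ from Lemma \ref{lem:comcohg}, finiteness of the untwisted $A^+$-cohomology (via \cite{chen2013}/\cite{kedlaya2014}), stability of pseudo-coherence under extensions, and Proposition \ref{3.6}. The gap is that the actual content of the theorem is point $(b)$ of your plan, the finiteness of $H^i(\mathscr{C}_{\varphi,\gamma}^{\mathrm{twist}})$, and this you do not prove: you say you ``would invoke Lemma \ref{finitecohom}'' and then sketch a strategy (strictness of $\varphi\delta_p-1$ and $\gamma\delta_a-1$ on the Fr\'echet pieces $M^{]0,r]}$, ``crude estimates,'' passage to the limit) which is not the paper's argument and whose key estimates are not supplied. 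In particular the assertion that $\varphi\delta_p$ (resp.\ $\gamma\delta_a$) ``differs from $\varphi$ (resp.\ $\gamma$) by an operator that does not affect finiteness'' is unjustified: the twist changes the kernels and cokernels in an essential way, and no general principle lets you transfer the Kedlaya--Liu finiteness theorem to these twisted operators.

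What the paper actually does at this point is a further d\'evissage that your sketch does not capture: it introduces the two-term complex $\mathscr{C}_{\varphi\delta_p}$ with the triangle $\mathscr{C}_{\varphi,\gamma}^{\mathrm{twist}} \to \mathscr{C}_{\varphi\delta_p} \xrightarrow{1-\gamma\delta_a} \mathscr{C}_{\varphi\delta_p}$, splits $\Robba_A(\delta_1,\delta_2)$ into $\Robba_A^{+}(\delta_1,\delta_2)$ and $\Robba_A^{-}(\delta_1,\delta_2)$, and then treats each operator on each piece by hand: on $\Robba_A^{+}(\delta_1,\delta_2)$ one shows $1-\varphi\delta_p$ is invertible on $T^N\Robba_A^{+}$ by proving convergence of $\sum_k(\varphi\delta_p)^k=\sum_k\varphi^k\delta_{p^k}$, using that the operators $\delta_{p^k}$ are bounded uniformly in $k$ (Lemma \ref{varphidelta1}); on $\Robba_A^{-}(\delta_1,\delta_2)$ one proves injectivity of $1-\varphi\delta_p$ together with the decomposition $\Robba_A^{-}(\delta_1,\delta_2)=(1-\varphi\delta_p)\Robba_A^{-}(\delta_1,\delta_2)+\mathrm{LA}(\zpe,A)+\mathrm{Pol}_{\leq N}(\zp,A)$ (Lemma \ref{varphidelta2}); and the finiteness of the kernel and cokernel of $1-\gamma\delta_a$ on $\mathrm{LA}(\zpe,A)$ (Lemma \ref{coldbean}) is obtained by comparing with $1-\gamma$ through the vertical map $1-\tau$, whose cokernel vanishes because $u^-$ is surjective on $\mathrm{LA}(\zpe,L)$, plus a density/functional-equation argument showing that the $\tau$-invariants form a finite $A$-module. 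None of these module-specific arguments follows from a generic strictness estimate on annuli, and without them (or a worked-out substitute) your proposal establishes only the formal skeleton, not the theorem.
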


\begin{proof}
Recall that we have the distinguished triangle
$$\mathscr{C}_{\tau,\varphi,\gamma} \to \mathscr{C}_{\varphi,\gamma} \to \mathscr{C}_{\varphi, \gamma}^{\mathrm{twist}}$$
in the derived category $\mathcal{D}^{-}(A)$. By Theorem 4.4.2, \cite{kedlaya2014}, $\mathscr{C}_{\varphi,\gamma} \in \mathcal{D}^{-}_{\mathrm{pc}}(A)$. Thus by Lemma \ref{lem:comcohg}, to prove the result it is enough to show $\mathscr{C}_{\varphi, \gamma}^{\mathrm{twist}} \in \mathcal{D}^{-}_{\mathrm{pc}}(A)$. This now follows from Lemma \ref{finitecohom}  
\end{proof}

\begin{lemm} \label{finitecohom}
For $M = \mathscr{R}(\delta_1, \delta_2)$, the $A$-modules $H^{i}(\mathscr{C}_{\varphi, \gamma}^{\mathrm{twist}})$ are finite.
\end{lemm}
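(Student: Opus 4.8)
The plan is to read $\mathscr{C}_{\varphi,\gamma}^{\mathrm{twist}}$, exactly as in the proof of Lemma~\ref{lem:comcohg}, as the complex computing the $A^{+}$-cohomology of $M^{*}$, where $M^{*}$ denotes $\mathscr{R}_A(\delta_1,\delta_2)$ equipped with the \emph{twisted} operators $\widetilde{\varphi}=\varphi\delta_p$ and $\widetilde{\gamma}=\gamma\delta_a$; so $H^{0}(\mathscr{C}_{\varphi,\gamma}^{\mathrm{twist}})=(M^{*})^{\widetilde{\varphi}=1,\widetilde{\gamma}=1}$, $H^{2}$ is the module of $(\widetilde{\varphi},\widetilde{\gamma})$-coinvariants, and $H^{1}$ is the middle cohomology. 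By Proposition~\ref{3.6} the assertion is equivalent to finiteness of all three modules, and the strategy is to run, for this twisted module, the finiteness machinery of \S\ref{sec:cohphigammch} and of \cite{chen2013}, \cite{kedlaya2014}, keeping track of the fact that the corrections $\delta_p-p$ and $\delta_a-a$ lie in the augmentation ideal $(\tau-1)\zp[[\tau-1]]$, which is topologically nilpotent in $\zp[[\overline{U}]]$.

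First I would d\'evisser. The $\overline{P}^{+}$-equivariant exact sequence
\[
0\to\mathscr{R}_A^{+}(\delta_1,\delta_2)\to\mathscr{R}_A(\delta_1,\delta_2)\to\mathscr{R}_A^{-}(\delta_1,\delta_2)\to 0
\]
induces a short exact sequence of twisted complexes, hence a long exact cohomology sequence; since $\mathcal{D}_{\mathrm{pc}}^{-}(A)$ is stable under extensions (Remark~\ref{3.5}), it suffices to treat $M\in\{\mathscr{R}_A^{+}(\delta_1,\delta_2),\mathscr{R}_A^{-}(\delta_1,\delta_2)\}$. Next, using $A^{+}=\Phi^{+}\times A^{0}$ with $\Phi^{+}$ infinite cyclic, and noting that $\delta_a\in 1+(\tau-1)\zp[[\tau-1]]$ is a unit of $\zp[[\tau-1]]$ so that $\widetilde{\gamma}-1$ behaves like $\gamma-1$ (in particular the relevant cohomological dimension in the $A^{0}$-direction is $1$ on these modules, by the argument of Proposition~\ref{2.25}), the Hochschild--Serre short exact sequences reduce matters to two points: (i) the kernel and cokernel of $\widetilde{\varphi}-1=\varphi\delta_p-1$ on $\mathscr{R}_A^{\pm}(\delta_1,\delta_2)$ carry a finite $\mathscr{R}_A^{]0,r]}(\Gamma)$-module structure extending the $A[\Gamma]$-structure --- a twisted analogue of Proposition~\ref{kpxgamma} and Remark~\ref{rem:che}; and (ii) the $\widetilde{\gamma}$-cohomology of a module finite over $\mathscr{R}_A^{]0,r]}(\Gamma)$ is finite over $A$, which is essentially \cite[Theorem 4.4.2]{kedlaya2014} once (i) holds.

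The crux is step (i), i.e. controlling $\widetilde{\varphi}=\varphi\delta_p$: unlike $\varphi$, it admits no evident $\psi$-type left inverse, since $\delta_p$ has constant term $p$ and is not a unit of $\zp[[\tau-1]]$. The way around this is a compact-perturbation argument in the spirit of the finiteness proofs for $(\varphi,\Gamma)$-cohomology over the Robba ring: on each Banach slice the operator $\delta_p$ differs from multiplication by $p$ by an operator factoring through $\tau-1$, which --- because $\tau-1$ is topologically nilpotent, hence completely continuous on the slices --- is a completely continuous correction, so that $\varphi\delta_p-1$ differs from $p\varphi-1$, and thence from the untwisted situation treated in \S\ref{sec:cohphigammch}, by a completely continuous operator, and Serre-type finiteness is insensitive to such perturbations. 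Concretely I would parallel Lemmas~\ref{sum} and~\ref{sum2} and Corollaries~\ref{H1-} and~\ref{H+} with $\alpha\varphi$ replaced by $\widetilde{\varphi}$, reducing as there to a finite-rank piece plus a part on which $\widetilde{\varphi}-1$ is invertible. The main obstacle, and the place where genuine work is needed, is making this precise in the affinoid setting: identifying the $\overline{U}$-action on $\mathscr{R}_A(\delta_1,\delta_2)$ through the sheaf-theoretic construction of \S\ref{RtimesP1} precisely enough to see that $\tau-1$ (equivalently $\delta_p$) is completely continuous on the Banach slices, and checking that the perturbation argument is uniform in the coefficients, so that one lands in $\mathcal{D}_{\mathrm{pc}}^{-}(A)$ and not merely proves finiteness fibrewise.
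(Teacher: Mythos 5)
Your skeleton agrees with the paper's: dévissage along $0 \to \Robba_A^{+}(\delta_1,\delta_2) \to \Robba_A(\delta_1,\delta_2) \to \Robba_A^{-}(\delta_1,\delta_2) \to 0$, then splitting the twisted complex into a $\varphi\delta_p$-part and a $\gamma\delta_a$-part. But the mechanism you propose for the crux is flawed. Topological nilpotence of $\tau-1$ in $\zp[[\overline{U}]]$ does \emph{not} imply that $\tau-1$ acts completely continuously on the Banach slices: topological nilpotence only says powers (or the operator norm) become small, while complete continuity is a different property (multiplication by $p$ is topologically nilpotent and never compact). Concretely, on $\Robba_A^{-}(\delta_1,\delta_2) = \mathrm{LA}(\zp,A)$ one has $(\tau f)(x) = \delta(1-px)f\left(\tfrac{x}{1-px}\right)$, so $\tau - 1$ is, to leading order, $f \mapsto px^2 f'(x) + (\cdots)f$, a bounded operator of small norm but not a compact one on the slices $\mathrm{LA}_h$; so the "completely continuous correction" and the appeal to Serre-type stability of finiteness under compact perturbation do not get off the ground. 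Moreover, even granting some perturbation principle, the untwisted operator $1-\alpha\varphi$ is not Fredholm on these modules: its cokernel on $\Robba_A^{-}$ is essentially $\mathrm{LA}(\zpe,A)$, which is infinite over $A$ (finiteness of $A^+$-cohomology in \S\ref{sec:cohphigammch} only appears after the $A^0$-action is brought in). Hence finiteness is not a property of the $\varphi$-operator alone that can be transported from the untwisted case by perturbing; after identifying the cokernel of $1-\varphi\delta_p$ up to $\mathrm{LA}(\zpe,A) + \mathrm{Pol}_{\leq N}(\zp,A)$ one still has to prove that $1-\gamma\delta_a$ has finite kernel and cokernel on $\mathrm{LA}(\zpe,A)$, and this step --- which in the paper is Lemma \ref{coldbean}, proved by comparing with $1-\gamma$ through the vertical maps $1-\tau$ (using surjectivity of $u^-$, Lemma \ref{cool}) and by a density argument showing $\mathrm{LA}(\zpe,A)^{\tau=1}$ is finite --- is absent from your outline; your assertion that $\widetilde{\gamma}-1$ "behaves like $\gamma-1$" because $\delta_a$ is a unit of $\zp[[\tau-1]]$ is exactly what needs proof there.

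For comparison, the paper's treatment of the $\varphi\delta_p$-part is elementary and avoids both a $\psi$-type inverse and any twisted analogue of Proposition \ref{kpxgamma}: one writes $(\varphi\delta_p)^k = \varphi^k\delta_{p^k}$, notes that $\delta_{p^k} = p^k + (\tau-1) + \cdots + (\tau^{p^k-1}-1)$ is bounded uniformly in $k$, and uses the strong contraction of $\varphi^k$ on $T^N\Robba_A^{+}$ to sum $\sum_{k\ge 0}(\varphi\delta_p)^k$ there, so $1-\varphi\delta_p$ has kernel and cokernel controlled by $\mathrm{Pol}_{\leq N}(\zp,A)^{*}$ on $\Robba_A^{+}(\delta_1,\delta_2)$ (Lemma \ref{varphidelta1}); on $\Robba_A^{-}(\delta_1,\delta_2)$ one proves injectivity and the decomposition of Lemma \ref{varphidelta2} directly. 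If you want to salvage your route, you would need to replace "compact perturbation" by these explicit norm estimates and supply the analogue of Lemma \ref{coldbean}.
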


\begin{proof}
To prove this lemma, we still proceed by a d\'evissage argument. We define a complex 
$$
\begin{tikzcd} [row sep = large, column sep = large] 
\mathscr{C}_{\varphi \delta_p}: 0 \arrow[r] &
M \arrow[r, "1 - \varphi \delta_p"] &
M \arrow[r] & 
0
\end{tikzcd}
$$
and we observe that we have a distinguished triangle  \[ \mathscr{C}_{\varphi, \gamma}^{\mathrm{twist}} \to \mathscr{C}_{\varphi \delta_p} \xrightarrow{1 - \gamma \delta_a} \mathscr{C}_{\varphi \delta_p}. \]

Moreover, by taking long exact sequences associated to the short exact sequence $0 \to \Robba^+_A(\delta_1, \delta_2) \to \Robba_A(\delta_1, \delta_2) \to \Robba^-_A(\delta_1, \delta_2) \to 0$, it is enough to show finiteness for $\Robba^+_A(\delta_1, \delta_2)$ and $\Robba^-_A(\delta_1, \delta_2)$. The lemma follows from \ref{cols}.
\end{proof}

\begin{lemm} \label{cols}
For $M \in \left\{ \mathscr{R}^{+}(\delta_1, \delta_2), \mathscr{R}^{-}(\delta_1, \delta_2) \right\}$, the $A$-modules $H^{i}(\mathscr{C}_{\varphi, \gamma}^{\mathrm{twist}})$ are finite.
\end{lemm}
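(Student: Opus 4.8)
The plan is to reduce, exactly as in the proof of Lemma \ref{finitecohom}, the whole statement to a finiteness property of the single operator $1-\varphi\delta_p$. Recall that $\mathscr{C}_{\varphi,\gamma}^{\mathrm{twist}}$ is the fibre of the morphism $1-\gamma\delta_a\colon\mathscr{C}_{\varphi\delta_p}\to\mathscr{C}_{\varphi\delta_p}$ in $\mathcal{D}^-(A)$, where $\mathscr{C}_{\varphi\delta_p}=(0\to M\xrightarrow{1-\varphi\delta_p}M\to 0)$. Since $\mathcal{D}^-_{\mathrm{pc}}(A)$ is stable under the formation of fibres (Remark \ref{3.5}) and, by Proposition \ref{3.6}, an object of $\mathcal{D}^-(A)$ is pseudo-coherent if and only if all its cohomology groups are finite $A$-modules, it suffices to prove: for $M\in\{\Robba_A^{+}(\delta_1,\delta_2),\Robba_A^{-}(\delta_1,\delta_2)\}$, the kernel and cokernel of $1-\varphi\delta_p$ acting on $M$ are finite $A$-modules.

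Next I would make $\varphi\delta_p$ explicit. Using the matrix identity $\matrice p 0 0 1\matrice 1 0 {ip}1=\matrice 1 0 i 1\matrice p 0 0 1$ one rewrites $\varphi\delta_p=\bigl(\sum_{i=0}^{p-1}\matrice 1 0 i 1\bigr)\circ\varphi$, a composite in which $\varphi$ carries $M$ into the sections over $p\zp$ and the $\matrice 1 0 i 1$ then act on those sections. Via Lemma \ref{lem:princser} (so that $\Robba_A^{-}(\delta_1,\delta_2)\cong\mathrm{LA}(\zp,A)$, with $\tau$ acting by $\phi(x)\mapsto\delta(1-px)\,\phi\bigl(\tfrac{x}{1-px}\bigr)$ and $\varphi$ by $\phi\mapsto\delta(p)\,\mathbf 1_{p\zp}\,\phi(x/p)$, and dually $\Robba_A^{+}(\delta_1,\delta_2)\cong\mathscr{D}(\zp,A)$) this becomes a workable formula. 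The two structural facts I would extract are: (i) $\delta_p\equiv p$ modulo $\tau-1$, while $\tau-1$ is topologically nilpotent on $M$, so that $\varphi\delta_p=\delta(p)\,p\,\varphi_{\mathrm{std}}+C$ with $C$ a ``small'' correction; and (ii) each $\matrice 1 0 i 1$ fixes $0\in\P^1$, hence preserves (on functions) the filtration by order of vanishing at $0$ and (on distributions) the finite free complement $\bigoplus_{i=0}^N A\,t^i\cong(\mathrm{Pol}_{\le N})^{*}$ of $T^{N+1}\Robba_A^{+}$, which $\varphi$ also preserves.

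I would then conclude as in the proofs of Lemmas \ref{sum} and \ref{sum2}. For $M=\Robba_A^{-}(\delta_1,\delta_2)$: $1-\varphi\delta_p$ is injective, because $(1-\varphi\delta_p)\phi=0$ forces $\phi=(\varphi\delta_p)^n\phi$ to be supported on $p^n\zp$ for every $n$, hence $\phi=0$ (the argument of Lemma \ref{sum}(1)); and its cokernel is finite by replaying Lemma \ref{sum}(2) with $\varphi\delta_p$ in place of $\alpha\varphi$, using that $\delta(p)p^{1-j}\in A^{\times}$ for $j$ large to absorb the high-degree Taylor data up to the correction $C$, leaving a quotient of the finite free module $T_N\otimes\delta$. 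For $M=\Robba_A^{+}(\delta_1,\delta_2)\cong\mathscr{D}(\zp,A)$: since $\varphi\delta_p$ preserves the finite free summand $\bigoplus_{i=0}^N A\,t^i$ and, for $N$ large, acts invertibly on the quotient identified with the tail $T^{N+1}\Robba_A^{+}$, the kernel and cokernel of $1-\varphi\delta_p$ are controlled by $\bigoplus_{i=0}^N A\,t^i$ and are therefore finite over $A$, exactly as in Lemma \ref{sum2}. Together with Lemma \ref{finitecohom} this also yields the finiteness of $H^i(\overline P^{+},\Robba_A(\delta_1,\delta_2))$.

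The main obstacle is making the ``$C$ is negligible'' and ``invertible on the tail'' claims rigorous: $C=\delta(p)\,\varphi_{\mathrm{std}}(\tau-1)w$ is a bounded operator composed with a topologically nilpotent one, so it is not itself obviously topologically nilpotent and one cannot simply invoke a Fredholm-plus-compact principle. The fix is to descend to the Banach building blocks $\Robba_A^{[r,s]}$ and carry out explicit valuation estimates of exactly the type used in the proof of Proposition \ref{cor:stup} (and mirroring \cite[\S 5]{colmez2015}): for $s$ small and $N$ large, $1-\varphi\delta_p$ is invertible on the tail subspace while the head remains finite free over $A$. A secondary subtlety is that on distributions the $\varphi\delta_p$-stable finite-rank piece is the \emph{annihilator} of the order-of-vanishing filtration rather than a step of it, so one must check that $\varphi\delta_p$ respects the decomposition $\mathscr{D}(\zp,A)=\bigl(\bigoplus_{i\le N}A\,t^i\bigr)\oplus T^{N+1}\Robba_A^{+}$ only up to operators landing in the finite-rank summand --- again a matter of the same valuation estimates.
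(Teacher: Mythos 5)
Your reduction fails at its first step for the case $M=\Robba_A^{-}(\delta_1,\delta_2)$. It is not true that the kernel and cokernel of $1-\varphi\delta_p$ on $\Robba_A^{-}(\delta_1,\delta_2)$ are finite $A$-modules: Lemma \ref{varphidelta2} only gives the decomposition $\Robba_A^{-}(\delta_1,\delta_2)=(1-\varphi\delta_p)\Robba_A^{-}(\delta_1,\delta_2)+\mathrm{LA}(\zpe,A)+\mathrm{Pol}_{\leq N}(\zp,A)$, so the cokernel is a quotient of $\mathrm{LA}(\zpe,A)\oplus\mathrm{Pol}_{\leq N}(\zp,A)$, and the $\mathrm{LA}(\zpe,A)$ part genuinely survives (compare Lemma \ref{sum} and Corollary \ref{H1-}, where $H^1(\Phi^+,\Robba_A^-\otimes\delta)$ is visibly infinite over $A$). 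In particular your phrase ``leaving a quotient of the finite free module $T_N\otimes\delta$'' rests on a misreading: $T_N=\mathrm{LA}(\zpe,A)\oplus\mathrm{Pol}_{\leq N}(\zp,A)$ is not finite over $A$. So for $\Robba^-$ the complex $\mathscr{C}_{\varphi\delta_p}$ has infinite $H^1$, and no amount of valuation estimates on $1-\varphi\delta_p$ alone can give the statement; the missing ingredient is the second operator.

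What actually makes the $\Robba^-$ case work is the interplay with the $\gamma$-direction, which your proposal discards. The paper uses the distinguished triangle $\mathscr{C}_{\varphi,\gamma}^{\mathrm{twist}}\to\mathscr{C}_{\varphi\delta_p}\xrightarrow{1-\gamma\delta_a}\mathscr{C}_{\varphi\delta_p}$ and the associated long exact sequence: the cohomology of $\mathscr{C}_{\varphi,\gamma}^{\mathrm{twist}}$ is computed from the kernel and cokernel of $1-\gamma\delta_a$ acting on the (infinite) $H^i(\mathscr{C}_{\varphi\delta_p})$, and finiteness comes from Lemma \ref{coldbean}, which shows that $1-\gamma\delta_a$ on $\mathrm{LA}(\zpe,A)$ has finite kernel and cokernel (via the surjectivity of $u^-$ from Lemma \ref{cool} for the cokernel, and a density argument on $\tau$-invariants for the kernel), combined with the injectivity statement of Lemma \ref{varphidelta2}. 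Your treatment of $\Robba^+_A(\delta_1,\delta_2)$ is essentially the paper's Lemma \ref{varphidelta1} (head--tail decomposition $\bigoplus_{i\leq N}A\,t^i\oplus T^{N+1}\Robba_A^+$ and convergence of $\sum_{k\geq 0}(\varphi\delta_p)^k$ on the tail), and that half is fine; but to repair the proof you must reinstate the long exact sequence in $1-\gamma\delta_a$ and prove a statement of the type of Lemma \ref{coldbean}, rather than trying to force finiteness of $\operatorname{coker}(1-\varphi\delta_p)$ on $\Robba^-$, which is false.
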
 

\begin{proof}
The case of $\mathscr{R}^{+}(\delta_1, \delta_2)$ follows directly from lemma \ref{varphidelta1} below, which shows that the cohomology of the complex $\mathscr{C}_{\varphi \delta_p}$ is already of finite type.

For $\mathscr{R}^{-}(\delta_1, \delta_2)$, the long exact sequence associated to the triangle $\mathscr{C}_{\varphi, \gamma}^{\mathrm{twist}} \to \mathscr{C}_{\varphi \delta_p} \xrightarrow{1 - \gamma \delta_a} \mathscr{C}_{\varphi \delta_p}$ yields 
\begin{align*}
0 &\to H^0(\mathscr{C}_{\varphi, \gamma}^{\mathrm{twist}}) \to H^0(\mathscr{C}_{\varphi \delta_p}) \xrightarrow{1 - \gamma \delta_a} H^0(\mathscr{C}_{\varphi \delta_p}) \to H^1(\mathscr{C}_{\varphi, \gamma}^{\mathrm{twist}}) \to H^1(\mathscr{C}_{\varphi \delta_p}) \xrightarrow{1 - \gamma \delta_a} H^0(\mathscr{C}_{\varphi \delta_p})\\ 
&\to H^2(\mathscr{C}_{\varphi, \gamma}^{\mathrm{twist}}) \to 0, 
\end{align*}
and the result follows then from lemmas \ref{varphidelta2}  and \ref{coldbean}
\end{proof}  

\begin{lemm} \label{varphidelta1}
The operator $1-\varphi\delta_p: \mathscr{R}_{A}^{+}(\delta_1, \delta_2) \rightarrow \mathscr{R}_{A}^{+}(\delta_1, \delta_2)$ has finite kernel and cokernel.
\end{lemm}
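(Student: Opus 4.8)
The plan is to recognise $1-\varphi\delta_p$ as $\mathrm{id}$ minus a completely continuous operator on a nuclear Fréchet $A$-module, and then apply Riesz--Fredholm theory together with the noetherianity of $A$ — the same mechanism that underlies Lemma \ref{sum2} and the finiteness theorems for $\varphi$-modules over the Robba ring (\cite{kedlaya2014}).

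First I would unwind the structure. As a topological $A$-module $\mathscr{R}_A^{+}(\delta_1,\delta_2)$ is just $\mathscr{R}_A^{+}=\varprojlim_{0<r}\mathscr{R}_A^{[r,\infty]}$, with $\varphi$ acting through $\delta_1\delta_2^{-1}(p)\cdot\varphi_0$ (here $\varphi_0$ is the standard Frobenius $T\mapsto(1+T)^{p}-1$) and $\tau=\matrice 1 0 p 1$ acting through the automorphism of $\mathscr{R}_A^{+}\cong\mathscr{D}(\zp,A)$ attached to the $G$-equivariant $\qp$-analytic sheaf $\mathscr{R}_A^{+}(\delta_1)\boxtimes_\omega\P^1$ of Lemma \ref{lem:princser}; in particular $\delta_p=1+\tau+\cdots+\tau^{p-1}$ is a continuous $A$-linear endomorphism of $\mathscr{R}_A^{+}$ which, by estimates of the same nature as those recalled in the proof of Proposition \ref{cor:stup}, preserves each Banach constituent $\mathscr{R}_A^{[r,\infty]}$ with bounded distortion. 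The crucial observation is that $\varphi_0$ shrinks the radius of convergence: it maps $\mathscr{R}_A^{[r,\infty]}$ isometrically into $\mathscr{R}_A^{[r/p,\infty]}$, and the inclusion $\mathscr{R}_A^{[r/p,\infty]}\hookrightarrow\mathscr{R}_A^{[r,\infty]}$ — restriction of analytic functions from a disc to a strictly smaller one — is completely continuous. Hence, for $r$ in a cofinal system of small radii, $\varphi\delta_p=\delta_1\delta_2^{-1}(p)\,\varphi_0\delta_p$ is a completely continuous endomorphism of the Banach $A$-module $\mathscr{R}_A^{[r,\infty]}$, so that $1-\varphi\delta_p$ has kernel and cokernel of finite type over $A$ on each $\mathscr{R}_A^{[r,\infty]}$.

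It then remains to pass to the limit $\mathscr{R}_A^{+}=\varprojlim_r\mathscr{R}_A^{[r,\infty]}$. The kernels $\ker\bigl(1-\varphi\delta_p\mid\mathscr{R}_A^{[r,\infty]}\bigr)$ form a decreasing family of finite $A$-modules as $r\downarrow 0$ and hence stabilise, which gives finiteness of $\ker(1-\varphi\delta_p\mid\mathscr{R}_A^{+})$; and since $\varphi\delta_p(\mathscr{R}_A^{[r,\infty]})\subseteq\mathscr{R}_A^{[r/p,\infty]}$ forces each transition map $\coker\bigl(1-\varphi\delta_p\mid\mathscr{R}_A^{[r/p,\infty]}\bigr)\to\coker\bigl(1-\varphi\delta_p\mid\mathscr{R}_A^{[r,\infty]}\bigr)$ to be surjective, a Mittag-Leffler argument yields finiteness of $\coker(1-\varphi\delta_p\mid\mathscr{R}_A^{+})$ as well. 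The hard part is really only the complete continuity in the previous paragraph — making precise the interaction between the radius-shrinking of $\varphi_0$ and the continuity of the sheaf automorphism $\tau$ on the Banach pieces; everything after that is formal. As a sanity check, the kernel finiteness can also be seen directly: iterating $\varphi\delta_p$ maps $\mathscr{R}_A^{+}(\delta_1,\delta_2)=\mathscr{R}_A^{+}(\delta_1,\delta_2)\boxtimes\zp$ into $\mathscr{R}_A^{+}(\delta_1,\delta_2)\boxtimes p^{n}\zp$ for every $n$ (as $\tau$ fixes $0$ and stabilises $p^{n}\zp$, while $\varphi$ maps everything into $\boxtimes p\zp$), so $\ker(1-\varphi\delta_p)$ sits inside the space of distributions supported at $0$, on which $1-\varphi\delta_p$ is triangular with diagonal entries $1-\delta_1\delta_2^{-1}(p)\,p^{l+1}$ ($l\geq 0$), almost all of which are units.
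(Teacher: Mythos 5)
Your route — interpret $\varphi\delta_p$ as a completely continuous endomorphism of each Banach piece $\Robba_A^{[r,\infty]}$ (for $r$ small enough that $\tau$, hence $\delta_p$, acts boundedly there), apply Riesz--Fredholm theory over the noetherian Banach algebra $A$, and then pass to $\Robba_A^+=\varprojlim_r\Robba_A^{[r,\infty]}$ — is genuinely different from the paper's argument, and the Banach-level part can be made to work: the restriction map $\Robba_A^{[r/p,\infty]}\hookrightarrow\Robba_A^{[r,\infty]}$ is a norm-limit of finite-rank operators, and for such an operator $u$ one gets $\ker(1-u)\subseteq\mathrm{im}$ of a finite-rank approximation and $\coker(1-u)$ a quotient of it, so both are finite over $A$. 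The kernel on $\Robba_A^+$ is then finite simply because it sits inside the kernel on one fixed $\Robba_A^{[r,\infty]}$ (in fact the kernels are independent of $r$, since $f=\varphi\delta_p f$ forces $f\in\Robba_A^{[r/p^k,\infty]}$ for all $k$); note, however, that your stated reason — a decreasing family of finite $A$-modules "stabilises" — is not valid, as noetherianity gives ACC, not DCC.

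The genuine gap is the cokernel step. Surjectivity of the transition maps $\coker(1-\varphi\delta_p\mid\Robba_A^{[r/p,\infty]})\to\coker(1-\varphi\delta_p\mid\Robba_A^{[r,\infty]})$ plus "Mittag-Leffler" does not yield finiteness of $\coker(1-\varphi\delta_p\mid\Robba_A^+)$: first, to identify this cokernel with (a submodule of) $\varprojlim_r C_r$ you must kill a $\varprojlim^1$ term coming from the images, and second — more seriously — an inverse limit of finite $A$-modules with surjective transition maps need not be finite over $A$ (already $\varprojlim_n A^n$ with the projections is not), so even $\varprojlim_r C_r$ being the target buys you nothing. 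What is missing is a uniform-in-$r$ statement, and that is exactly what the paper's proof supplies: adapting Lemma \ref{sum2}, one splits $\Robba_A^+(\delta_1,\delta_2)=\bigl(\oplus_{i=0}^{N}A\,t^i\bigr)\oplus T^{N}\Robba_A^+(\delta_1,\delta_2)$ and shows $1-\varphi\delta_p$ is invertible on the second summand \emph{as a Fr\'echet space}, by checking that $\sum_{k\geq 0}(\varphi\delta_p)^k=\sum_{k\geq 0}\varphi^k\delta_{p^k}$ converges for every norm $|\cdot|_{[0,r]}$ simultaneously: $\delta_{p^k}=p^k+(\tau-1)+\cdots+(\tau^{p^k-1}-1)$ is bounded independently of $k$, while $|\varphi^k(T^N)|_{[0,r]}\leq C_r\,p^{-Nk}$, so for $N$ with $|\delta(p)p^{-N}|<1$ the series converges. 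Kernel and cokernel are then a submodule and a quotient of the finite free module $\oplus_{i=0}^{N}A\,t^i\cong\mathrm{Pol}_{\leq N}(\zp,A)^*$, with no limit interchange needed. To salvage your approach you would have to prove an analogous uniform estimate (or invertibility on a Fr\'echet complement), at which point the compact-operator machinery becomes superfluous.
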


\begin{proof}
The proof of this lemma is an adaptation of lemma \ref{sum2}. Let $N$ be big enough such that $|\delta(p) p^N| < 1$. We show that $1 - \varphi \delta_p: T^N \mathscr{R}_{A}^{+}(\delta_1, \delta_2) \rightarrow T^N \mathscr{R}_{A}^{+}(\delta_1, \delta_2)$ is bijective. For that, we construct an inverse of this operator by proving that $\sum_{k \geq 0} (\varphi \delta_p)^k$ converges. Observe that $(\varphi \delta_p)^k = \varphi^k \delta_{p^k}$ and that the operator $\delta_{p^k} = 1 + \tau + \hdots + \tau^{p^k - 1} = p^k + (\tau - 1) + \hdots + (\tau^{p^k - 1} - 1)$ is bounded (independently of $k$) by a constant $C$.

By identifying $\mathscr{R}_{A}^{+}(\delta_1, \delta_2)$ with the space of analytic functions on the open unit ball equipped with the Fr\'echet topology given by the family of norms $(|\cdot|_{[0, r]})_{0 < r < 1}$ and the action of $\varphi$ twisted by $\delta(p)$, we have (cf. lemme 2.9.(ii), \cite{chen2013}) $|\varphi^k(T^N)|_{[0, r]} \leq C_r p^{-Nk}$ for some constant $C_r >0$ and hence, for $f \in T^N \Robba^+(\delta_1, \delta_2)$ and any $0 < r < 1$, \[ | (\varphi \delta_p)^k(f)|_{[0, r]} = | \varphi^k \delta_{p^k}(f)|_{[0, r]} \leq C C_r |f|_{[0, r]} \left( \frac{\lambda}{p^{N}} \right)^k, \] which shows that the expression $\sum_{k \geq 0} (\varphi \delta_p)^k$ converges. We deduce that the kernel and cokernel are, respectively, a submodule and a quotient of $\mathrm{Pol}_{\leq N}(\zp, A)$. This concludes the proof.
\end{proof}

\begin{lemm} \label{varphidelta2} \leavevmode
\begin{itemize} 
\item The operator $1 - \varphi \delta_p: \Robba^-_A(\delta_1, \delta_2) \to  \Robba^-_A(\delta_1, \delta_2)$ is injective.
\item If $N \geq 0$ is big enough, then $\mathscr{R}_A^{-}(\delta_1, \delta_2) = (1 - \varphi \delta_p)\mathscr{R}_A^{-}(\delta_1, \delta_2) + \mathrm{LA}(\zpe, A) + \mathrm{Pol}_{\leq N}(\zp, A)$.
\end{itemize}
\end{lemm}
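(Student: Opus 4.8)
The plan is to transport the statement across the Colmez transform isomorphism $\Robba_A^- \cong \mathrm{LA}(\zp, A) \otimes \chi^{-1}$ (Proposition \ref{amice}) and argue exactly as in the proof of Lemma \ref{sum}, keeping track of the twist by $\delta_1\delta_2^{-1}$. Recall that, as an $A^+$-module, $\Robba_A^-(\delta_1,\delta_2) \cong \mathrm{LA}(\zp,A) \otimes \delta$ where $\delta = \delta_1\delta_2^{-1}\chi^{-1}$, so that $\varphi$ acts on functions by $(\varphi\cdot\phi)(x) = \delta(p)\phi(x/p)$ if $x\in p\zp$ and $0$ otherwise; here the operator $\delta_p = 1 + \tau + \dots + \tau^{p-1}$ involves the extra action of $\tau = \matrice 1001 \cdot$ coming from $\overline{P}^+$, which on $\mathrm{LA}(\zp,A)\otimes\delta$ is multiplication by $(1+T)^p$, i.e.\ after Colmez transform an explicit unipotent operator. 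The key point is that $\varphi\delta_p$ is still, up to a bounded correction, the contraction $\phi(x)\mapsto \delta(p)\phi(x/p)$, so the same estimates apply.

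First I would prove injectivity. Suppose $\phi \in \Robba_A^-(\delta_1,\delta_2)$ satisfies $\varphi\delta_p(\phi) = \phi$. Then $\phi = (\varphi\delta_p)^n(\phi)$ for all $n$, and since $(\varphi\delta_p)^n = \varphi^n\delta_{p^n}$ with $\varphi^n$ mapping into functions supported on $p^n\zp$, we get that $\phi$ is supported on $\bigcap_n p^n\zp = \{0\}$; but the nonzero elements of $\Robba_A^- \cong \mathrm{LA}(\zp,A)\otimes\chi^{-1}$ are genuine locally analytic functions, which if supported at $\{0\}$ alone must vanish identically. This is the verbatim argument from the injectivity part of Lemma \ref{sum}, only with $\alpha\varphi$ replaced by $\varphi\delta_p$ and using that $\delta_{p^n}$ does not change the support.

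For the second assertion I would mimic the surjectivity-up-to-polynomials argument of Lemma \ref{sum}(2). Choose $N$ with $|\delta(p)p^{N+1}|_A < 1$; as in Lemma \ref{varphidelta1}, the operator $\varphi\delta_p$ is then topologically nilpotent on $T^{N+1}\Robba_A^+$, hence (translating through Amice/Colmez and the decomposition $\mathrm{LA}(\zp,A) = \mathrm{Pol}_{\leq N}(\zp,A) \oplus \{\text{functions vanishing to order} > N \text{ at } 0\}$) one shows that any $\phi \in \mathrm{LA}(\zp,A)$ can be written, after subtracting an element of $(1-\varphi\delta_p)\Robba_A^-(\delta_1,\delta_2)$, as the sum of an element of $\mathrm{Pol}_{\leq N}(\zp,A)$ and a locally analytic function $\phi_1$ vanishing on a neighbourhood $p^n\zp$ of $0$. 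Finally, writing $\phi_1 = \sum_{i=0}^{n-1}\phi_{1,i}$ with $\phi_{1,i} \in \mathrm{LA}(p^i\zpe, A)$ and using $\varphi^i\psi^i(\phi_{1,i}) = (1 - (1-\varphi\delta_p) + (\text{lower order in }\delta_p-1))^i\psi^i(\cdots)$ — i.e.\ expanding $\varphi^i = (1 - (1-\varphi\delta_p)\delta_p^{-1}\cdots)^i$ modulo the invertible correction coming from $\delta_p$ on the relevant piece — expresses each $\phi_{1,i}$ as a sum of an element of $(1-\varphi\delta_p)\Robba_A^-(\delta_1,\delta_2)$ and an element of $\mathrm{LA}(\zpe, A)$. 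Combining, $\Robba_A^-(\delta_1,\delta_2) = (1-\varphi\delta_p)\Robba_A^-(\delta_1,\delta_2) + \mathrm{LA}(\zpe,A) + \mathrm{Pol}_{\leq N}(\zp,A)$.

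The main obstacle is bookkeeping around the operator $\delta_p$: unlike the scalar $\alpha$ in Lemma \ref{sum}, $\delta_p$ is a genuine operator (multiplication by a unipotent element of the Iwasawa algebra), so one must check it interacts harmlessly with $\varphi$, $\psi$, and the restriction maps $\Res_{p^n\zp}$, and that on each graded piece $T^i$ the induced operator $1 - \delta(p)p^i(1 + (\tau-1)\text{-corrections})$ is either invertible or differs from $1 - \delta(p)p^i$ by a nilpotent term — so that the finiteness and the splitting go through. Since $\tau - 1$ is topologically nilpotent in $\zp[[\overline{U}]]$ (as used in \S \ref{sec:koscomcoh}), these corrections are harmless, and the estimates of Lemma \ref{varphidelta1} (valuation of $\varphi^k(T^N)$, boundedness of $\delta_{p^k}$) supply the needed convergence. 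I expect the argument to be, modulo these checks, a direct transcription of Lemma \ref{sum}.
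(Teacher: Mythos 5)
Your injectivity argument and your treatment of the part of $\phi$ supported away from $0$ (writing $\varphi^i\psi^i=(1-(1-\varphi\delta_p))^i(\delta_p^{-1}\psi)^i$ and expanding, modulo the check that $\delta_p$ is invertible on the relevant piece) are exactly the paper's proof, so most of your outline coincides with it. The one place where your route has a genuine gap is the key local statement at $0$: you propose to deduce that every $\phi$ is congruent, modulo $(1-\varphi\delta_p)\Robba_A^-(\delta_1,\delta_2)$, to a polynomial plus a function vanishing near $0$ by ``translating through Amice/Colmez'' the topological nilpotence of $\varphi\delta_p$ on $T^{N+1}\Robba_A^+$ from Lemma \ref{varphidelta1}. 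That translation does not work as stated: under the residue/Amice pairing between $\Robba_A^+\cong\mathscr{D}(\zp,A)$ and $\mathrm{LA}(\zp,A)$ the transpose of $\varphi$ is $\psi$ (and the character gets inverted), so Lemma \ref{varphidelta1} controls the transposed operator built from $\psi$ on the dual, not $1-\varphi\delta_p$ on $\Robba_A^-(\delta_1,\delta_2)$; moreover a statement of the form ``surjective modulo a finite-rank subspace'' does not pass through strong duals without an additional closed-image argument, which is not available for free here. The two sides are genuinely different spaces (a Fr\'echet space of power series versus an LB-space of locally analytic functions), and the plus-side estimate simply is not about the operator you need.

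The fix -- and what the paper actually does (``solving a differential equation locally'') -- is to argue directly on the minus side, exactly in the spirit of Lemma \ref{sum}(2): for $\phi$ analytic on a ball $p^n\zp$, expand at $0$ and use that both $\varphi$ and $\tau$ preserve the filtration of germs at $0$ by order of vanishing, with $\varphi$ acting on the graded piece $x^j$ by $\delta(p)p^{-j}$ and $\tau$ acting as the identity on graded pieces, so that $\delta_p$ acts as $p$ plus filtration-raising terms and $\varphi\delta_p$ acts on $\mathrm{gr}^j$ by $\delta(p)p^{1-j}$. For $j>N$ with $N$ large this is small, so the recursion (or geometric series) producing a local solution $\phi_2$ of $(1-\varphi\delta_p)\phi_2=\phi-P_\phi$ converges on a slightly smaller ball, leaving a remainder in $\mathrm{Pol}_{\leq N}(\zp,A)$ plus a function vanishing in a neighbourhood of $0$. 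With this replacement for your step (1), the rest of your proposal is a faithful transcription of the paper's argument.
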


\begin{proof}
For the first point, exactly as in the proof of lemma \ref{sum}, if $(1 - \varphi \delta_p) f = 0$ then $\varphi \delta_p f = f$ and, applying this and the identity $(\varphi \delta_p)^n = \varphi^n \delta_{p^n}$ successively \footnote{We denote $\delta_{p^n} = \frac{1-\tau^{p^n}}{1-\tau}$.}, we get $\varphi^n \delta_{p^n} f = f$ and so $f$ is supported on $p^n \zp$ for all $n \geq 0$ and hence vanishes everywhere. 

We now prove the second assertion. By a direct calculation solving a differential equation locally, we can show that every $\phi \in \mathscr{R}_A^{-}(\delta_1,\delta_2)$ is of the form $\phi_1 + P_\phi + (1-\alpha\varphi)\phi_2$, with $P_\phi  \in \mathrm{Pol}_{\leq N}(\zp ,A), \phi_2 \in \Robba_A^-$ and $\phi_1$ who is zero in a neighbourhood of $0$, and thus of the form $\sum_{i = 0}^{n-1} \phi_{1,i}$, with $\phi_{1,i} \in \mathrm{LA}(p^i \zpe,A)$. Writing $\varphi^i \psi^i (\phi_{1,i}) = \varphi^i \delta_{p^i} \cdot \delta_{p^i}^{-1} \psi^i = (\varphi \delta_p)^i (\delta_p^{-1} \psi)^i = (1 - (1 - \varphi \delta_p))^{i} (\delta_p^{-1} \psi)^{i}(\alpha^{-i}\phi_{1,i})$ and upon expanding $(1-(1-\varphi \delta_p))^i$ expresses $\phi_{1,i}$ as a sum of elements in $(1-\varphi \delta_p) \mathscr{R}_A^{-}(\delta_1, \delta_2)$ and $\psi^{i}(\alpha^{-i}\phi_{1,i}) \in \mathrm{LA}(\zpe, A)$.
\end{proof}

\begin{lemm} \label{coldbean}
The operator $1-\gamma\delta_a: \mathrm{LA}(\zpe,A) \to \mathrm{LA}(\zpe,A)$ has finite kernel and cokernel (as $A$-modules)
\end{lemm}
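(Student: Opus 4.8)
The plan is to deduce finiteness of the kernel and cokernel of $1-\gamma\delta_a$ on $M:=\mathrm{LA}(\zpe,A)$ from the corresponding (and nearly trivial) statements for the two operators $\tau-1$ and $1-\gamma$, by means of a single commutation identity. First I would record that on any $\zp[[\overline{P}^+]]$-module — in particular on $M$, which is a module over $\overline{U}\rtimes A^0=\left(\begin{smallmatrix}\zpe&0\\p\zp&1\end{smallmatrix}\right)$ since this group stabilises $\zpe$, so that $\delta_a\in\zp[[\overline{U}]]$ acts on it — one has $(1-\gamma\delta_a)(\tau-1)=(\tau-1)(1-\gamma)$. Indeed $\delta_a(\tau-1)=\tau^a-1$ by definition of $\delta_a$, and the relation $\gamma\tau=\tau^{a^{-1}}\gamma$ gives $\gamma\tau^a=\tau\gamma$, whence $\gamma\delta_a(\tau-1)=\gamma(\tau^a-1)=\tau\gamma-\gamma=(\tau-1)\gamma$.

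Next I would show that both $\tau-1$ and $1-\gamma$ are surjective on $M$ with finite kernels. The point is that $M$, restricted to a small enough open subgroup of $\overline{U}$, resp. of $A^0$, is coinduced from the trivial subgroup of a finite free $A$-module. For $A^0=\zpe$: it acts on $\zpe$ simply transitively by multiplication, twisted only by the genuine character $a\mapsto\delta(a)$, so for any open subgroup $\Gamma'\subseteq A^0$ one has $M|_{\Gamma'}\cong\mathrm{coInd}_{\{1\}}^{\Gamma'}(A^{[A^0:\Gamma']})$ up to a cohomologically inessential character twist, whence $H^1(\Gamma',M)=0$ and $H^0(\Gamma',M)$ is finite free. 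For $\overline{U}$: the element $\tau=\left(\begin{smallmatrix}1&0\\p&1\end{smallmatrix}\right)$ acts on $\zpe$ through the Möbius transformation $x\mapsto x/(1-px)$, whose orbits are exactly the $p-1$ residue classes modulo $p$, each a simply transitive $\overline{U}$-torsor; the twisting $1$-cocycle $\tau^b\mapsto\bigl(x\mapsto\delta(1-pbx)\bigr)$ takes values in $\delta(1+p^m\zp)$ once restricted to $\overline{U}^m$, hence for $m$ large is the exponential of a $\mathrm{LA}(\zpe,A)$-valued $1$-cocycle, hence a coboundary there, so that $M|_{\overline{U}^m}\cong\mathrm{coInd}_{\{1\}}^{\overline{U}^m}(A^{(p-1)p^{m-1}})$ and $H^1(\overline{U}^m,M)=0$, $H^0(\overline{U}^m,M)$ is finite free (Shapiro's lemma, cf. \S\ref{ap:lagrep}). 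Now if $N$ denotes the order of $A^0/\Gamma^m$ with $\Gamma^m:=1+p^m\zp$, then $1-\gamma^N$ is surjective (its cokernel being $H^1(\Gamma^m,M)=0$) and $1-\gamma$ is a left factor of it, so $1-\gamma$ is surjective, while $\ker(1-\gamma)\subseteq M^{\Gamma^m}=H^0(\Gamma^m,M)$ is finite; similarly, using $\tau^{p^{m-1}}-1$ and $H^1(\overline{U}^m,M)=0$, the operator $\tau-1$ is surjective with $\ker(\tau-1)\subseteq M^{\overline{U}^m}$ finite.

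Finally I would conclude. By the previous step $(\tau-1)(1-\gamma)$ is a composition of surjections, hence surjective, so by the identity $(1-\gamma\delta_a)(\tau-1)$ is surjective and therefore so is $1-\gamma\delta_a$, i.e. $\coker(1-\gamma\delta_a)=0$. For the kernel, $\ker(1-\gamma\delta_a)\subseteq\ker\bigl((1-\gamma\delta_a)(\tau-1)\bigr)=\ker\bigl((\tau-1)(1-\gamma)\bigr)$, and this last module sits in an exact sequence $0\to\ker(1-\gamma)\to\ker\bigl((\tau-1)(1-\gamma)\bigr)\xrightarrow{\,1-\gamma\,}\ker(\tau-1)$, exhibiting it as an extension of a submodule of the finite $A$-module $\ker(\tau-1)$ by the finite $A$-module $\ker(1-\gamma)$; since $A$ is noetherian this is a finite $A$-module, hence so is its submodule $\ker(1-\gamma\delta_a)$. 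The main obstacle is the middle step — in particular checking that the density-type cocycle twisting the $\overline{U}$-action on $\zpe$ is cohomologically trivial after restriction to a small open subgroup, which is exactly what makes the relevant cohomology computable by Shapiro's lemma; the rest is formal manipulation with the commutation identity together with noetherianity. (Alternatively one could first treat $A=L$ a finite extension of $\qp$, where the statement is contained in Colmez's analysis of the principal series $B_L(\delta_1,\delta_2)$, but finiteness over a general affinoid $A$ still requires the argument above, the $\mathrm{Tor}$ base-change spectral sequence used elsewhere in this section not yet being available.)
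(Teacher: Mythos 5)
Your skeleton is the paper's: the commutation identity $(1-\gamma\delta_a)(\tau-1)=(\tau-1)(1-\gamma)$, surjectivity of $\tau-1$ on $M=\mathrm{LA}(\zpe,A)$, finiteness of kernel and cokernel of $1-\gamma$, and finiteness of $M^{\tau=1}$ (the paper packages this as a snake-lemma argument on a morphism of two-term complexes, quoting Proposition \ref{2.25} for the $\gamma$-part, adapting Lemma \ref{cool} for the surjectivity of $1-\tau$, and proving finiteness of $M^{\tau=1}$ by density of $(1-p\Z)^{-1}$ in $1-p\zp$). But one step of your conclusion fails as written: the inclusion $\ker(1-\gamma\delta_a)\subseteq\ker\bigl((1-\gamma\delta_a)(\tau-1)\bigr)$ amounts to asserting that $\ker(1-\gamma\delta_a)$ is stable under $\tau-1$, and nothing gives this ($\tau$ does not commute with $\gamma\delta_a$; the identity you proved only says $(\tau-1)$ carries $\ker(1-\gamma)$ into $\ker(1-\gamma\delta_a)$). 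The repair uses exactly the surjectivity you already have: if $(1-\gamma\delta_a)y=0$, write $y=(\tau-1)x$; then $(\tau-1)(1-\gamma)x=(1-\gamma\delta_a)(\tau-1)x=0$, so $\ker(1-\gamma\delta_a)=(\tau-1)\bigl(W\bigr)$ with $W:=\ker\bigl((\tau-1)(1-\gamma)\bigr)$, i.e.\ it is a \emph{quotient}, not a submodule, of $W$; your exact sequence $0\to\ker(1-\gamma)\to W\xrightarrow{1-\gamma}\ker(\tau-1)$ then gives finiteness of $W$ and hence of its quotient over the noetherian ring $A$.

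The middle step also needs more care. What you must control are cokernels of single operators, i.e.\ \emph{continuous} cohomology of procyclic groups, whereas $\mathrm{LA}(\zpe,A)$ restricted to $\Gamma^m$ or $\overline{U}^m$ is the locally analytic, not the continuous, induction from the trivial subgroup; the Shapiro lemma of \S\ref{ap:lagrep} computes $\mathrm{Ext}$ in the category of separately continuous $\mathscr{D}(H,A)$-modules and does not by itself yield $H^1_{\mathrm{cont}}(\Gamma^m,M)=0$ or $H^1_{\mathrm{cont}}(\overline{U}^m,M)=0$. You need either a comparison in the style of Proposition \ref{prop:lazcompcom}, or a direct computation (a Mahler-shift argument, or the power-series argument of Lemma \ref{cool} which the paper adapts; for the $\gamma$-part the paper simply invokes Proposition \ref{2.25}). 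Finally, your exponentiation argument trivializing the $\delta$-twist of the $\tau$-action is only sketched; it can be replaced by the observation that $f\mapsto\delta(x)^{-1}f(x)$ is a topological isomorphism of $\mathrm{LA}(\zpe,A)$ intertwining the twisted and untwisted actions of both $\gamma$ and $\tau$ on sections over $\zpe$, which makes your torsor reduction clean. With these two repairs your proof is correct and essentially coincides with the paper's, differing only in how the surjectivity and finiteness inputs are produced.
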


\begin{proof}
For the sake of brevity write $M = \mathrm{LA}(\zpe,A)$. We have a morphism of complexes (in the abelian category of chain complexes)
$$
\begin{tikzcd} [row sep = large, column sep = large] 
0 \arrow[r] &
M \arrow[r, "1-\gamma"] \arrow[d, "1-\tau"] &
M \arrow[r] \arrow[d, "1 - \tau"] &
0 \\
0 \arrow[r] &
M \arrow[r, "1-\gamma\delta_a"] &
M \arrow[r] &
0
\end{tikzcd}
$$
Note that the cokernel of this morphism of complexes vanishes by Lemma \ref{cool} (the same proof carries over with $L$ replaced by $A$). Thus by Proposition \ref{2.25}, it suffices to show that $M^{\tau=1}$ is a finite $A$-module.  Take $f \in M^{\tau=1}$. Then by definition of the action of $\tau$ on $\mathscr{R}^{-}_{A}(\delta_1,\delta_2)$ we have
\[
f(x) = \delta(1-px)f\left(\frac{x}{1-px}\right).
\]
Repeating this procedure we see that the value of $f(x)$ determines the value of $f\left(\frac{x}{1-kpx} \right)$ for all $k\in \Z$. Now $1-p\Z$ is dense in $1-p\zp$ and so $(1-p\Z)^{-1}$ is dense in $1-p\zp$. By continuity of $f$, this implies that the values $f(1)$, $f(2)$, $\ldots$, $f(p-1)$ determine $f$ completely. This proves the result.   

\end{proof}

\section{The $\overline{P}^+$-cohomology} \label{sec:maincofinz}

In this section we fix a finite extension $L$ of $\qp$, two continuous characters $\delta_1,\delta_2: \zpe \rightarrow L^\times$ and we consider the modules $\Robba_L^+(\delta_1, \delta_2), \Robba_L(\delta_1, \delta_2)$ and $\Robba_L^-(\delta_1, \delta_2)$ (for brevity we will omit the subscript $L$). We systematically calculate all $\overline{P}^+$-cohomology groups of these modules, which will be essential in comparing them to their $A^+$-cohomology. This section is inspired by combining two observations. The first is that if $M$ is equipped with a continuous action of $\overline{P}^{+}$ such that this action induces an action of the Lie algebra of $\overline{P}^+$, then we can simplify cohomological calculations by passing to the Lie algebra. The second is that we have a good enough understanding of the infinitesimal action of the Lie algebra on a $(\varphi, \Gamma)$-module so as to be able to make explicit computations (cf. \cite{Berpaddif}, \cite{dos2012}). For the commodity of the reader, the main results of this section can be summarized as follows:

\begin{prop} \label{cohomfinal} \leavevmode
\begin{itemize}
\item Let $M_+ = \Robba^+(\delta_1, \delta_2)$.
\begin{enumerate}
\item If $\delta_1 \delta_2^{-1} \notin \{ x^{-i}, i \in \N \}$, then $H^j(\overline{P}^+, M_+) = 0$ for all $j$.
\item  If $\delta_1 \delta_2^{-1} = \mathbf{1}_\qpe$, then $\dim_L H^j(\overline{P}^+, M_+) = 1, 1, 1, 0$ for $j = 0, 1, 2, 3$.
\item  If $\delta_1 \delta_2^{-1} = x^{-i}$, $i \geq 1$, then $\dim_L H^j(\overline{P}^+, M_+) = 1, 3, 3 , 1$ for $j = 0, 1, 2, 3$.
\end{enumerate}
\item Let $M_- = \Robba^-(\delta_1, \delta_2)$.
\begin{enumerate}
\item  If $\delta_1 \delta_2^{-1} \notin \{ \chi x^i, i \in \N \}$,  then $\dim_L H^j(\overline{P}^+, M_-) = 0, 1, 1, 0$ for $j = 0, 1, 2, 3$.
\item If $\delta_1 \delta_2^{-1} = \chi x^i$, $i \in \N$, then $\dim_L H^j(\overline{P}^+, M_-) = 0, 2, 2, 1$ for $j = 0, 1, 2, 3$.
\end{enumerate}
\item Let $M = \Robba(\delta_1, \delta_2)$. 
\begin{enumerate}
\item If $\delta_1 \delta_2^{-1} \notin \{ x^{-i}, i \in \N \} \cup \{ \chi x^i, i \in \N \}$, then $\dim_L H^j(\overline{P}^+, M) = 0, 1, 1, 0$, for $j = 0, 1, 2, 3$.
\item If $\delta_1 \delta_2^{-1} = \mathbf{1}_\qpe$, then $\dim_L H^j(\overline{P}^+, M) = 1, 2, 2, 0$, for $j = 0, 1, 2, 3$.
\item If $\delta_1 \delta_2^{-1} = x^{-i}, i \geq 1$, then $\dim_L H^j(\overline{P}^+, M) = 1, 3, 2, 0$, for $j = 0, 1, 2, 3$.
\item If $\delta_1 \delta_2^{-1} =\chi x^i, i \in \N$, then $\dim_L H^j(\overline{P}^+, M) = 0, 2, 2, 1$, for $j = 0, 1, 2, 3$.
\end{enumerate}
\end{itemize}
\end{prop}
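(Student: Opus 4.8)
The strategy is to work entirely on the Lie-algebra side and exploit the complex $\mathscr{C}_{u^-,\varphi,a^+}(M)$ introduced above, together with the $\Phi^+$- and $A^0$-cohomology computations already carried out in Section \ref{sec:cohphigammch}. Recall from the displayed proposition that $H^i(\overline{P}^+,M) = H^0(\tilde P, H^i(\mathscr{C}_{u^-,\varphi,a^+}(M)))$; so the first task is to read off, for each of $M_+ = \Robba^+(\delta_1,\delta_2)$, $M_- = \Robba^-(\delta_1,\delta_2)$ and $M = \Robba(\delta_1,\delta_2)$, the cohomology of the three-term-in-the-middle complex $0 \to M \xrightarrow{X'} M^{\oplus 3} \xrightarrow{Y'} M^{\oplus 3} \xrightarrow{Z'} M \to 0$, and then take $\tilde P$-invariants. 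I would organize the computation by first disposing of $M_-$, since there $\mathscr{R}^-\cong \mathrm{LA}(\zp,A)\otimes\delta$ is the most concrete object and the operators $\varphi$, $a^+=\nabla$-type, $u^-$ act by explicit formulas on locally analytic functions; then $M_+$ via the Amice transform identification $\mathscr{R}^+\cong\mathscr{D}(\zp,A)$ (so one is really computing on power series, where $1-\alpha\varphi$ and the $t^i$-eigenspace decomposition of Lemma \ref{sum2} govern everything); and finally $M$ via the long exact sequence attached to $0\to M_+\to M\to M_-\to 0$.

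For the $M_-$ case: the key inputs are Corollary \ref{H0-} ($H^0(\Phi^+,\mathscr{R}^-\otimes\delta)=0$), Corollary \ref{H1-} (the exact sequence computing $H^1(\Phi^+,\mathscr{R}^-\otimes\delta)$ in terms of $T_N$ and $\mathrm{Pol}_{\le N}$), Proposition \ref{2.25} (the $A^0$-cohomology of $\mathrm{LA}(\zpe,A)\otimes\eta$ is $L$ in degree $0$, $0$ in degree $1$), and the Hochschild–Serre spectral sequences $H^i_?(A^0,H^k_?(\Phi^+,H^j_?(\overline U^1,M)))\Rightarrow H^{i+k}_?(\overline P^+,M)$ of Proposition \ref{prop:lazcompcom}, combined with the vanishing $H^j(\overline U^1,M)=0$ for $j\ge 2$ (procyclic group) and the structure of $H^0,H^1$ of $\overline U^1$. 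The discrepancy between the regular case (dimensions $0,1,1,0$) and the exceptional case $\delta_1\delta_2^{-1}=\chi x^i$ (dimensions $0,2,2,1$) will come precisely from the extra $A^0$-invariant classes produced when $1-\delta(p)p^{-i}$ or the matching $\Gamma$-factor vanishes — i.e. the content of Lemma \ref{H2A+R-} and Corollary \ref{H2A+R-cor} transported to $\overline P^+$. For $M_+$ the analogous bookkeeping uses Lemma \ref{2.31} and Proposition \ref{H1R+}: in the regular range $\delta_1\delta_2^{-1}\notin\{x^{-i}\}$ everything vanishes; at $\delta_1\delta_2^{-1}=\mathbf 1$ one picks up the single class $t^0$ and its translates through the Koszul-type differentials, giving $1,1,1,0$; at $\delta_1\delta_2^{-1}=x^{-i}$ with $i\ge 1$ the eigenvector $t^i$ contributes, and the non-commutative shape of the complex (the $\delta_p$, $\delta_a$ twists) triples the middle-degree contributions, giving $1,3,3,1$.

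For the combined module $M=\Robba(\delta_1,\delta_2)$ I would run the long exact sequence in $\overline P^+$-cohomology associated to $0\to M_+\to M\to M_-\to 0$, feeding in the dimensions just computed and analyzing the connecting maps. In the generic case ($\delta_1\delta_2^{-1}$ avoiding both $\{x^{-i}\}$ and $\{\chi x^i\}$) $M_+$ is acyclic so $H^\bullet(\overline P^+,M)\cong H^\bullet(\overline P^+,M_-)$ and we get $0,1,1,0$ directly. In the three exceptional cases one must pin down whether the boundary map $H^j(\overline P^+,M_-)\to H^{j+1}(\overline P^+,M_+)$ is zero or an isomorphism onto its image; I expect to resolve this by the same rigid-analyticity/Zariski-density trick used repeatedly above (the relevant classes vary rigid-analytically in $\delta$ and one evaluates at the finitely many $x^k$ where the answer is forced), or alternatively by a direct inspection of $\mathscr{C}_{u^-,\varphi,a^+}$ on the extension. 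The case $\delta_1\delta_2^{-1}=\chi x^i$ lies in the kernel of the $M_+$-exceptional locus, so $M_+$ stays acyclic and $M$ inherits $0,2,2,1$ from $M_-$; the cases $\delta_1\delta_2^{-1}=\mathbf 1$ and $=x^{-i}$ are the genuinely interactive ones, where the $0$-th and $1$-st cohomologies of $M_+$ feed into those of $M$ and the boundary map kills one dimension in top degree, producing $1,2,2,0$ and $1,3,2,0$ respectively.

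The main obstacle, and the step that will require the "long, tedious but direct calculation" alluded to before Corollary \ref{cor:cosdiminto}, is the explicit determination of $H^\bullet(\mathscr{C}_{u^-,\varphi,a^+}(M_\pm))$ as $\tilde P$-modules — i.e. not merely computing dimensions of the naive cohomology of the three-term complex, but identifying the residual $A^0$/$\tilde P$-action on each cohomology group so that the final $H^0(\tilde P,-)$ can be taken correctly, and checking that the Hochschild–Serre/spectral-sequence degenerations I am invoking really do degenerate (no surviving differentials off the relevant pages). In particular, distinguishing the exceptional subcases requires carefully tracking which of the quantities $1-\delta(p)p^{\pm i}$ and $1-\delta(a)a^{\pm i}$ vanish and solving the associated differential equations locally on $\zpe$, exactly as in the proof of Lemma \ref{coldbean}; the twisted operators $\varphi\delta_p$, $\gamma\delta_a$ (equivalently $p\varphi$ and $a^++1$ on the Lie side) make the middle-cohomology computations the delicate part. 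Everything else is assembling long exact sequences from results already in hand.
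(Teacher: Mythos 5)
Your skeleton does coincide with the paper's (pass to the Lie-algebra complex $\mathscr{C}_{u^-,\varphi,a^+}$, recover group cohomology as $H^0(\tilde P,-)$ via Lemma \ref{lielem}, then run the long exact sequence for $0\to M_+\to M\to M_-\to 0$), but the content of the proposition is precisely the exceptional-case dimension counts, and these your plan asserts rather than derives; moreover two of the shortcuts you propose would fail. For $M_-$ you want to feed the $\Phi^+$- and $A^0$-computations of Section \ref{sec:cohphigammch} into the Hochschild--Serre spectral sequence for the normal subgroup $\overline{U}^1$ of $\overline{P}^+$. Those computations concern the \emph{untwisted} $A^+$-action, whereas $A^+$ acts on $H^1(\overline{U}^1,M)$ through the twisted operators $\varphi\delta_p$ and $\gamma\delta_a$ (on the Lie side, $p\varphi$ and $a^++1$); neglecting this twist is exactly the slip in \cite[Lemme 5.21]{colmez2015} that the paper is at pains to correct, and it changes the answer in the exceptional cases. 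The paper therefore does not quote Section \ref{sec:cohphigammch} here: it computes, inside the Lie complex, the kernel and cokernel of $u^-$ on $\mathrm{LA}(\zp,L)$ explicitly (the spaces $X_{\kappa(\delta)}$, obtained by solving $u^-f=0$ locally, together with the decompositions of \cite[Lemme 5.9]{colmez2015}), and for $M_+$ it first reduces to the finite-dimensional module $\mathrm{Pol}_{\leq N}(\zp,L)^*(\delta_1,\delta_2)=\oplus_i L\cdot t^i$ (Lemma \ref{compacohom}), where the action $\tau(t^j)=\sum_{h}\binom{\kappa-h}{j-h}p^{j-h}t^h$ can be tracked term by term. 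Your sentence that the non-commutative twists "triple the middle-degree contributions, giving $1,3,3,1$" is the conclusion to be proved by exactly this bookkeeping, not an argument for it.

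The passage from $M_\pm$ to $M$ is also not formal at the exceptional characters. When $\delta_1\delta_2^{-1}=x^{-i}$ the paper needs two non-trivial vanishing statements to extract the answer from the long exact sequence: the restriction $H^1(\overline{P}^+,M)\to H^1(\overline{P}^+,M_-)$ is zero (\cite[Corollaire 5.23(ii)]{colmez2015}) and $H^3(\overline{P}^+,M_+)\to H^3(\overline{P}^+,M)$ is zero (as in the proof of \cite[Th\'eor\`eme 5.16]{colmez2015}); only then do $\dim H^2(M)=2$ and $H^3(M)=0$ follow. Your proposed rigid-analyticity/Zariski-density argument cannot decide these connecting maps: here one works at a single fixed character over $L$, the exceptional characters $x^{-i}$ and $\chi x^i$ form a discrete set in the character variety, and the cohomology dimensions jump exactly at these points, so knowing the maps at the dense family of generic points imposes no constraint at the point in question. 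Finally, when you do take $\tilde P$-invariants, note that on several constituents of the exact sequences of Lemma \ref{iso1} the $A^0$-action is twisted by $\chi$ and the $\tau$-action mixes terms (Remarks \ref{actioncohom} and \ref{actioncohom1}); since the final dimensions depend on these twists, deferring the identification of the residual $\tilde P$-action defers the entire proof.
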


\begin{rema} 
Observe that the result about $H^1(\overline{P}^+, \Robba^+(\delta_1, \delta_2))$ when $\delta_1 \delta_2^{-1} = x^{-i}$, $i \geq 1$, is in contradiction with \cite[Lemme 5.21]{colmez2015}. There seems to be a mistake in loc.cit., where the twisted action of $A^+$ on $H^1(\overline{U}, M)$ is not taken into account. This changes slightly the results of \cite{colmez2015}, getting unicity of the correspondence only for the non-pathological case $\delta_1 \delta_2^{-1} \notin \{ x^{-i}, i \geq 1 \}$ (indeed, the restriction $H^1(\overline{P}^+, \Robba(\delta_1, \delta_2)) \to H^1(A^+, \Robba(\delta_1, \delta_2))$ turns out to be only surjective, but not injective). The authors plan to study this supplementary extensions in more detail in the near future.
\end{rema}

As an immediate corollary from the results of this proposition, we can deduce the following.

\begin{prop} \label{cohomfinal2}
The restriction morphism \[ H^1(\overline{P}^+, \Robba(\delta_1, \delta_2)) \to H^1(A^+, \Robba(\delta_1, \delta_2)) \] is a surjection. Moreover, if $\delta_1 \delta_2^{-1} \notin \{ x^{-i}, i \geq 1\}$, then it is an isomorphism.
\end{prop}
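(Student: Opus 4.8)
The plan is to realise the restriction morphism as one of the arrows of a long exact sequence, and then feed in the dimension computations already at hand. Write $M = \Robba_L(\delta_1,\delta_2)$; recall that as an $A^+$-module $M \cong \Robba_L(\delta_1\delta_2^{-1})$, so the target of the restriction map is exactly the object computed in \S\ref{sec:cohphigammch}. By \S\ref{sec:maincofinza}, the ``obvious'' morphism of complexes $\mathscr{C}_{\tau,\varphi,\gamma}(M) \to \mathscr{C}_{\varphi,\gamma}(M)$ has mapping fibre $\mathscr{C}_{\varphi,\gamma}^{\mathrm{twist}}(M)[-1]$; since $\mathscr{C}_{\tau,\varphi,\gamma}$, $\mathscr{C}_{\varphi,\gamma}$ and $\mathscr{C}_{\varphi,\gamma}^{\mathrm{twist}}$ compute respectively $H^\ast(\overline{P}^+,M)$, $H^\ast(A^+,M)$ and the twisted cohomology $H^\ast(A^+,M^\ast)$ (here $M^\ast$ is $M$ with $A^+$ acting through $\widetilde{\varphi}=\varphi\delta_p$, $\widetilde{\gamma}=\gamma\delta_a$, as in the proof of Lemma~\ref{lem:comcohg}), and the displayed morphism induces the restriction map, one obtains a long exact sequence
\[
\cdots \to H^0(A^+,M^\ast) \xrightarrow{\partial} H^1(\overline{P}^+,M) \xrightarrow{\mathrm{res}} H^1(A^+,M) \to H^1(A^+,M^\ast) \to H^2(\overline{P}^+,M) \to \cdots .
\]

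First I would dispose of the degree-zero terms. Since $H^0(\overline{P}^+,M) \subseteq H^0(A^+,M)$ always, comparing the value of $\dim_L H^0(\overline{P}^+,\Robba_L(\delta_1,\delta_2))$ given by Proposition~\ref{cohomfinal} with the direct computation of $H^0(A^+,M) = \Robba_L(\delta_1\delta_2^{-1})^{\varphi=1,\Gamma=1}$ shows that $H^0(\overline{P}^+,M)$ and $H^0(A^+,M)$ have the same dimension, hence coincide. Exactness of the sequence then identifies $\ker(\mathrm{res})$ with $H^0(A^+,M^\ast)$, i.e.\ with the \emph{twisted} invariants $M^{\widetilde{\varphi}=1,\widetilde{\gamma}=1}$, and gives
\[
\dim_L \Ima(\mathrm{res}) = \dim_L H^1(\overline{P}^+,M) - \dim_L H^0(A^+,M^\ast).
\]

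The remaining input is the computation of $H^0(A^+,M^\ast)$. I would carry it out exactly as the ordinary invariants are computed in \S\ref{sec:cohphigammch}, using the (twist-compatible) short exact sequence $0 \to \Robba_L^+(\delta_1,\delta_2) \to \Robba_L(\delta_1,\delta_2) \to \Robba_L^-(\delta_1,\delta_2) \to 0$ together with the analysis of $1-\varphi\delta_p$ and $1-\gamma\delta_a$ on $\Robba_L^\pm(\delta_1,\delta_2)$ from Lemmas~\ref{varphidelta1}, \ref{varphidelta2}, \ref{coldbean}; the expected outcome is that $H^0(A^+,M^\ast) = 0$ unless $\delta_1\delta_2^{-1} = x^{-i}$ for some $i \geq 1$, in which case it is one-dimensional. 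Granting this and plugging in $\dim_L H^1(\overline{P}^+,\Robba_L(\delta_1,\delta_2))$ from Proposition~\ref{cohomfinal} (equal to $1,2,3,2$ according as $\delta_1\delta_2^{-1}$ is regular, is $\mathbf{1}$, is $x^{-i}$ with $i\geq 1$, or is of the form $\chi x^i$) and $\dim_L H^1(A^+,\Robba_L(\delta_1\delta_2^{-1}))$ from Proposition~\ref{HiRreg} and Remark~\ref{rem:recchenres} (equal to $1,2,2,2$ in the same four cases, the regular case being Proposition~\ref{HiRreg}, the others the d\'evissage of \S\ref{sec:cohphigammch}), the displayed dimension formula yields $\Ima(\mathrm{res}) = H^1(A^+,M)$ in every case — this is the surjectivity — while $\ker(\mathrm{res}) = H^0(A^+,M^\ast)$ vanishes precisely when $\delta_1\delta_2^{-1}$ is not of the form $x^{-i}$ with $i \geq 1$ — this is the isomorphism statement (and for $\delta_1\delta_2^{-1} = x^{-i}$ with $i \geq 1$ the kernel is one-dimensional, matching the remark following the proposition).

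The hard part will be the computation of $H^0(A^+,M^\ast)$: the whole subtlety of Proposition~\ref{cohomfinal2} — and the point that was overlooked in \cite[Lemme 5.21]{colmez2015} — is that the $A^+$-action on $H^1(\overline{U},M)$, hence the action on $M^\ast$, is genuinely twisted by $\delta_p$ and $\delta_a$, so the untwisted calculations of \S\ref{sec:cohphigammch} cannot simply be quoted; one must redo the relevant injectivity and surjectivity statements for $1-\varphi\delta_p$ and $1-\gamma\delta_a$ with the twist in place. A smaller technical point to check is that $\mathscr{C}_{\tau,\varphi,\gamma}(M) \to \mathscr{C}_{\varphi,\gamma}(M)$ really induces the restriction homomorphism on group cohomology, and that the filtration $\Robba_L^+ \subset \Robba_L$ is compatible with the twisted $A^+$-module structure.
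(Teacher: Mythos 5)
Your reduction via the distinguished triangle $\mathscr{C}_{\tau,\varphi,\gamma}\to\mathscr{C}_{\varphi,\gamma}\to\mathscr{C}_{\varphi,\gamma}^{\mathrm{twist}}$ is legitimate (it is exactly the triangle of Lemma \ref{lem:comcohg}), and your bookkeeping in degree $0$ is fine: the dimensions $0,1,1,0$ of $H^0(\overline{P}^+,\Robba_L(\delta_1,\delta_2))$ from Proposition \ref{cohomfinal} do match the untwisted invariants $H^0(A^+,\Robba_L(\delta_1\delta_2^{-1}))$, so $\ker(\mathrm{res})\cong H^0(A^+,M^\ast)$ and the dimension formula for $\Ima(\mathrm{res})$ is correct. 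The problem is that at this point the entire content of the proposition — both surjectivity and the injectivity criterion — has been transferred into the single assertion that $H^0(A^+,M^\ast)=M^{\varphi\delta_p=1,\,\gamma\delta_a=1}$ vanishes unless $\delta_1\delta_2^{-1}=x^{-i}$ with $i\geq 1$, where it is one-dimensional, and you do not prove this; you state it as the ``expected outcome'' and defer it. It cannot be quoted from the paper: Lemmas \ref{varphidelta1}, \ref{varphidelta2} and \ref{coldbean} only give finiteness (over $A$) of kernels and cokernels of $1-\varphi\delta_p$ and $1-\gamma\delta_a$ \emph{separately}, not the simultaneous twisted invariants over $L$, and the operators $\delta_p,\delta_a$ involve $\tau$ in an essential way, so computing $M^{\varphi\delta_p=1,\gamma\delta_a=1}$ requires controlling the $\overline{U}$-action on the relevant finite-dimensional pieces (e.g.\ the $\tau$-action on $\mathrm{Pol}_{\leq N}(\zp,L)^\ast(\delta_1,\delta_2)$ written out in \S\ref{sec:maincofinz}) — that is, work of the same nature as the $\overline{P}^+$-computations themselves. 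As it stands, the argument is therefore incomplete at its crucial step; note also that inferring the value of $H^0(A^+,M^\ast)$ backwards from the statement you are proving would be circular.

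By way of comparison, the paper does not introduce the twisted complex over $L$ at all here: Proposition \ref{cohomfinal2} is read off from the explicit computations underlying Proposition \ref{cohomfinal}, where the groups $H^1(\overline{P}^+,\cdot)$ are produced together with explicit generating classes (e.g.\ $\mathbf{1}_{\zpe}\delta\otimes\delta$ in Lemma \ref{cohom1-}, the classes $[t^i]$ in Lemma \ref{cohom1+}), whose restrictions to $A^+$ are visibly the generators of the $A^+$-cohomology computed in \S\ref{sec:cohphigammch}; the extra classes appearing when $\delta_1\delta_2^{-1}=x^{-i}$, $i\geq 1$, account for the failure of injectivity. If you want to salvage your route, you must actually carry out the computation of $M^{\varphi\delta_p=1,\gamma\delta_a=1}$ in all four cases (or, equivalently, prove directly that $H^1(A^+,M)\to H^1(A^+,M^\ast)$ vanishes), which in practice amounts to redoing a twisted version of the \S\ref{sec:cohphigammch} d\'evissage with the $\tau$-action taken into account.
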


\subsection{The Lie algebra complex} \label{sublieal}

We note

$$a^{+} =
\begin{pmatrix}
1 & 0 \\
0 & 0
\end{pmatrix}, \hspace{1mm}
a^{-} =
\begin{pmatrix}
0 & 0 \\
0 & 1
\end{pmatrix}, \hspace{1mm}
u^{+} =
\begin{pmatrix}
0 & 1 \\
0 & 0
\end{pmatrix}, \hspace{1mm}
u^{-} = 
\begin{pmatrix}
0 & 0 \\
1 & 0
\end{pmatrix}
$$
the usual generators of the Lie algebra $\mathfrak{gl}_2$ of $\mathrm{GL}_2$. We note that $[a^{+},u^{-}] = -u^{-}$ and $p\varphi u^{-} = u^{-}\varphi$.

Denote by $H^{i}_{\Lie}(\overline{P}^{+},M)$ the cohomology groups of the complex
\begin{equation} \label{exactp}
\mathscr{C}_{u^{-},\varphi,a^{+}}: 0 \rightarrow M \xrightarrow{X'} M \oplus M \oplus M \xrightarrow{Y'} M \oplus M \oplus M \xrightarrow{Z'} M \rightarrow 0
\end{equation} 
where
\[ X'(x) = ((\varphi-1)x,a^{+}x,u^{-}x) \]
\[ Y'(x,y,z) = (a^{+}x - (\varphi-1)y, u^{-}y - (a^{+} +1)z, (p\varphi-1)z - u^{-}x) \]
\[ Z'(x,y,z) = u^{-}x + (p\varphi -1)y + (a^{+} + 1)z \]


Let $\tilde{P}:= \begin{pmatrix} \zpe & 0 \\ p \zp & 1 \end{pmatrix}$. Note that $\tilde{P}$ is a $p$-pro-subgroup of $\overline{P}^{+}$. 

\begin{lemm} \label{lielem}
If $M \in \left\{ \mathscr{R}^{+}(\delta_1, \delta_2), \mathscr{R}^{-}(\delta_1, \delta_2), \mathscr{R}(\delta_1, \delta_2) \right\}$, the natural application
$$H^{i}(\overline{P}^{+},M) \rightarrow H^{0}(\tilde{P}, H^{i}_{\Lie}(\overline{P}^{+},M))$$
is an isomorphism. 
\end{lemm}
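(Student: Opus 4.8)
The statement to prove is Lemma \ref{lielem}: for $M \in \{\mathscr{R}^+(\delta_1,\delta_2), \mathscr{R}^-(\delta_1,\delta_2), \mathscr{R}(\delta_1,\delta_2)\}$, the natural map $H^i(\overline{P}^+, M) \to H^0(\tilde P, H^i_{\mathrm{Lie}}(\overline{P}^+, M))$ is an isomorphism. Let me think about the structure of the argument.

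The group $\overline{P}^+$ has $\tilde P = \matrice{\zpe}0{p\zp}1$ as a $p$-adic analytic open subgroup (a $p$-valuable pro-$p$ group, once we pass to $1+p\zpe$ say, and $\tilde P$ sits inside $\overline{P}^+$ with $\overline{P}^+ = \langle \varphi \rangle \ltimes \tilde P$ roughly speaking — more precisely $\Phi^+$ is the discrete monoid $\matrice{p^{\mathbf N}}0 01$ acting). So the natural thing is a Hochschild–Serre / inflation type argument. I would proceed in two stages: first identify $H^\bullet(\tilde P, M)$ with the invariants/cohomology of the Lie algebra complex, and then incorporate the $\varphi$-direction.

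The plan is as follows. \textbf{Step 1: reduce continuous cohomology of $\overline{P}^+$ to that of $\tilde P$ together with the $\Phi^+$-action.} Since $\overline{P}^+ \cong \Phi^+ \ltimes \tilde P$ and $\Phi^+ \cong \mathbf N$ is a free discrete monoid, there is a two-term complex $R\Gamma(\overline{P}^+, M) = \bigl(R\Gamma(\tilde P, M) \xrightarrow{\varphi - 1} R\Gamma(\tilde P, M)\bigr)$ (the mapping fibre of $\varphi-1$, exactly as in the proof of Lemma \ref{lem:comcohg}), where $\varphi$ acts on $R\Gamma(\tilde P, M)$ by functoriality using $\varphi \tilde P \varphi^{-1} \subseteq \tilde P$. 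Taking cohomology gives short exact sequences $0 \to H^0(\Phi^+, H^i(\tilde P, M)) \to H^i(\overline{P}^+, M) \to H^1(\Phi^+, H^{i-1}(\tilde P, M)) \to 0$, i.e. $0 \to (H^i(\tilde P, M))^{\varphi = 1} \to H^i(\overline{P}^+, M) \to H^{i-1}(\tilde P, M)/(\varphi-1) \to 0$. \textbf{Step 2: compute $H^\bullet(\tilde P, M)$ via Lazard.} The group $\tilde P$ (or an open uniform subgroup) is a compact $p$-adic analytic group, and by the comparison of continuous, analytic and Lie-algebra cohomology — here I invoke Proposition \ref{prop:lazcompcom} (continuous $=$ analytic on these modules) together with Lazard's isomorphism between analytic cohomology of a uniform pro-$p$ group and the Lie algebra cohomology of its $\qp$-Lie algebra, the relevant Lie algebra being the lower-triangular Borel $\mathfrak b^- = \langle a^+, u^-\rangle$ — one gets $H^i(\tilde P, M) \cong H^i(\mathfrak b^-, M)$, $\mathfrak b^-$-cohomology being computed by the Chevalley–Eilenberg complex $0 \to M \xrightarrow{(a^+, u^-)} M^{\oplus 2} \xrightarrow{} M \to 0$ with the differential built from $[a^+, u^-] = -u^-$. (One must be careful: the $\tilde P$-module $M$ is the restriction of a $\mathscr D(G,A)$- or at least $\mathscr D(\overline P^+)$-module by the lemma preceding this section, so the action is genuinely locally analytic and Lazard applies; this is the content that Proposition \ref{prop:lazcompcom} is designed to supply, so one feeds its proof through verbatim with $\tilde P$ or $\overline U^1$ in place of the semigroups there.)

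\textbf{Step 3: assemble the Lie algebra complex for $\overline{P}^+$.} Combining the two-term $\varphi$-complex of Step 1 with the Chevalley–Eilenberg complex of Step 2 gives a double complex whose total complex is precisely $\mathscr C_{u^-, \varphi, a^+}$ — one checks that the maps $X', Y', Z'$ written down are exactly the total differential, using $p\varphi u^- = u^- \varphi$ and $[a^+,u^-] = -u^-$ (the $+1$ shifts in the formulas for $Y'$ and $Z'$ come precisely from this commutator, just as the $\delta_a, \delta_p$ twists appeared in $\mathscr C_{\tau,\varphi,\gamma}$). Hence $H^i_{\mathrm{Lie}}(\overline P^+, M) = H^i(\mathscr C_{u^-,\varphi,a^+}(M))$ carries a residual $\varphi$-action, and taking the mapping-fibre spectral sequence / the short exact sequences of Step 1 together with $H^i(\tilde P, M) = H^i(\mathfrak b^-, M)$ and the observation that $\tilde P$-invariants of $H^i_{\mathrm{Lie}}$ recover the $\varphi=1$ part, we get the claimed isomorphism $H^i(\overline P^+, M) \cong H^0(\tilde P, H^i_{\mathrm{Lie}}(\overline P^+, M))$. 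The footnote to the Proposition in the introduction already flags this form of the statement, so I would phrase the conclusion as: the Lie algebra complex computes $H^\bullet(\tilde P, M)$ after taking $\tilde P$-invariants, and the $\Phi^+$-layer is the remaining ingredient.

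\textbf{Main obstacle.} The delicate point is the passage in Step 2 from $\tilde P$-group cohomology to $\mathfrak b^-$-Lie algebra cohomology when $M$ is an infinite-dimensional $LF$-space over $A$ rather than a finitely generated $\zp$-module; classical Lazard (\cite[Th\'eor\`eme 2.3.10]{Lazardgrp}) does not apply directly, exactly as the remark after Proposition \ref{prop:lazcompcom} warns. The way around it is the same as in Proposition \ref{prop:lazcompcom}: use that $M$ comes from a $\qp$-analytic sheaf, so the relevant cocycles are automatically locally analytic (via their Mahler expansions, $c(\tau^a) = \sum_{n\geq 1}\binom an (\tau-1)^{n-1} c(\tau)$), reducing everything to analytic cohomology where Lazard's comparison with the Lie algebra is available degree by degree, and then splicing together the one-parameter subgroups $\overline U^1 = \matrice 10{p\zp}1$ and $A^0 = \matrice{\zpe}001$ that generate $\tilde P$. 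One also has to verify the combinatorial identity that the total complex of the $(\varphi) \otimes (\mathfrak b^-)$ double complex is literally $\mathscr C_{u^-,\varphi,a^+}$ with the stated $X', Y', Z'$ — this is the "long, tedious but direct calculation" the introduction alludes to, and I would relegate it to a direct check using the three commutation relations.
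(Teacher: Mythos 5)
Your skeleton (split off the $\varphi$-direction as a mapping fibre, treat the compact part $\tilde P$ by a Lazard-type comparison, reassemble into $\mathscr{C}_{u^-,\varphi,a^+}$) is in the same spirit as the paper, which runs the Hochschild--Serre argument of Lemma \ref{lem:comcohg} at the Lie-algebra level and then concludes by invoking the locally analytic Lazard isomorphism \cite[Corollary 21]{georgana}, taking $\tilde P$-invariants on both sides. But your Step 2 has a genuine gap exactly at the point where that citation does the work: the passage from (locally analytic) group cohomology of the compact group $\tilde P$ (or of $\overline{U}^1$ and $A^0$) to Lie-algebra cohomology of $\mathfrak{b}^-=\langle a^+,u^-\rangle$ is a theorem about infinite-dimensional coefficients, and nothing you propose supplies it. Proposition \ref{prop:lazcompcom} and the Mahler-expansion trick only identify \emph{continuous} with \emph{locally analytic} group cohomology; they give no bridge from locally analytic group cohomology to Chevalley--Eilenberg cohomology, and classical Lazard is unavailable for these LF-modules, as you yourself concede. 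Without an analytic Lazard isomorphism of the form $H^i_{\an}(K,M)\cong H^0(K,H^i(\mathfrak{k},M))$ for compact $K$ and such $M$, your Step 2 is an assertion, not an argument.

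Moreover, Step 2 is false as stated: you write $H^i(\tilde P,M)\cong H^i(\mathfrak{b}^-,M)$ with no $\tilde P$-invariants, but already in degree $0$ one has $M^{\tilde P}\subsetneq M^{u^-=0,\,a^+=0}$ in general (e.g. for $\mathrm{LA}(\zpe,L)$ the kernel of $\nabla$ consists of all locally constant multiples of the character, Lemma \ref{newprop1}, while the $A^0$-invariants form a line, Proposition \ref{2.25}); the invariants $H^0(\tilde P,-)$ appearing in the statement of the lemma come precisely from this Lazard comparison, not from anywhere else. Your final assembly is correspondingly muddled: $\varphi\notin\tilde P$, so ``$\tilde P$-invariants of $H^i_{\Lie}$ recover the $\varphi=1$ part'' is not meaningful; what actually has to be checked is that $H^0(\tilde P,-)$ commutes with forming the kernel and, more seriously, the cokernel of the twisted operators ($p\varphi-1$, $a^++1$, resp. $\varphi\delta_p-1$) arising in the fibre sequence --- left exactness handles kernels, but the cokernel interchange needs a justification (in the paper this kind of step is either packaged into the cited comparison or justified by local constancy of the relevant actions). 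Two smaller slips: in your Step 1 the coinvariant term $H^1(\Phi^+,H^{i-1}(\tilde P,M))$ is the subobject and $H^0(\Phi^+,H^i(\tilde P,M))$ the quotient, not the reverse; and since only $\varphi^{-1}\tilde P\varphi\subseteq\tilde P$ holds, the operator induced on $H^\bullet(\tilde P,M)$ is the twisted one, which must be built into the fibre sequence from the start rather than noted afterwards.
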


\begin{proof}
The same proof as Lemma \ref{lem:comcohg} shows that there is a spectral sequence
\[
H^{i}_{\Lie}(A^+,H^{j}_{\Lie}(\overline{U},M)) \Rightarrow H^{i+j}_{\Lie}(\overline{P}^{+},M)
\]
where $H^{i}_{\Lie}(\overline{U},M)$ is defined to be the cohomology of the complex
\[
0 \to M \xrightarrow{u^{-}} M \to 0.
\]
For the definition of $H^{j}_{\Lie}(A^{+},-)$, cf. \cite[\S 5.2]{colmez2015}. The result now follows from \cite[Corollary 21]{georgana} by taking $\tilde{P}$-invariants on both sides. 
\end{proof}

\begin{rema} \label{actioncohom} For future calculations, we need to explicit the action of $\tilde{P}$ on the different Lie algebra cohomology groups. Recall that this group acts naturally on the module and by its adjoint action on the Lie algebra. Take for instance $(x, y, z) \in M^{\oplus 3}$ a 1-cocycle on the Lie algebra complex $\mathscr{C}_{u^-, \varphi, a^+}$ representing some cohomology class. An easy calculation shows that, if $\sigma_a \in A^0$, then, as cohomology classes \[\sigma_a \cdot (x, y, z) = (\sigma_a x, \sigma_a y, a \sigma_a z). \] If we want to calculate the action of $\tau$, say, on 1-coycles, in the same way, we get \[ \tau (x, y, z) = (\tau x + \tau\varphi \frac{\tau^{1 - p} - 1}{\log(\tau)} z , \tau \, y - p \tau \, z, \tau z).\] The formula for the first coordinate is obtained by using the fact that Lie algebra cohomology is calculated by `differentiating locally analytic cocycles at the identity' (cf. \cite{georgana}), and it can be taken as a formal formula (since there might be some convergence problems) but it will be enough for us (in general, one should replace $\tau$ by $\tau^n$ for some $n$ big enough).
\end{rema}

\subsection{Deconstructing cohomology} In order to compute cohomology we build the complex $\mathscr{C}_{u^{-}, \varphi, a^{+}}$ from smaller complexes. Define

\[ \mathscr{C}_{u^{-}}: 0 \rightarrow M \xrightarrow{D} M \rightarrow 0, \]
where 
$$D(x):= u^{-}x$$
and

\[ \mathscr{C}_{u^{-}, \varphi}: 0 \rightarrow M \xrightarrow{E} M \oplus M \xrightarrow{F} M \rightarrow 0, \]
where 
\begin{align*}
E(x) &= (u^{-}x,(\varphi-1)x) \\
F(x,y) &= (p\varphi-1)x - u^{-}y.
\end{align*}

We now define morphisms between the complexes. We note by $[\varphi-1]: \mathscr{C}_{u^{-}} \rightarrow \mathscr{C}_{u^{-}}$ the morphism:

$$
\begin{tikzcd} [row sep = large, column sep = large] 
\mathscr{C}_{u^{-}}: 0 \arrow[r] &
M \arrow[r] \arrow[d, "\varphi-1"] &
M \arrow[r] \arrow[d, "p\varphi-1"] &
0 \\
\mathscr{C}_{u^{-}}: 0 \arrow[r] &
M \arrow[r] &
M \arrow[r] &
0
\end{tikzcd}
$$

and $[a^{+}]: \mathscr{C}_{u^{-}, \varphi} \rightarrow \mathscr{C}_{u^{-}, \varphi}$ the morphism:

$$
\begin{tikzcd} [row sep = large, column sep = large] 
\mathscr{C}_{u^{-}, \varphi}:0 \arrow[r] &
M \arrow[r] \arrow[d, "a^{+}"] &
M \oplus M \arrow[r] \arrow[d, "s"] &
M \arrow[r] \arrow[d, "a^{+}+1"] & 
0 \\
\mathscr{C}_{u^{-}, \varphi}:0 \arrow[r] &
M \arrow[r] &
M \oplus M \arrow[r] &
M \arrow[r] &
0
\end{tikzcd}
$$

where $s(x,y) = ((a^{+}+1)x,a^{+}y)$

\begin{lemm} \label{triang}
We have the following distinguished triangles in $\mathcal{D}^{-}(L)$:
$$\mathscr{C}_{u^{-},\varphi} \rightarrow \mathscr{C}_{u^{-}} \xrightarrow{[\varphi-1]} \mathscr{C}_{u^{-}}, $$
$$\mathscr{C}_{u^{-},\varphi,a^{+}} \rightarrow \mathscr{C}_{u^{-},\varphi} \xrightarrow{[a^{+}]} \mathscr{C}_{u^{-},\varphi}. $$ 
\end{lemm}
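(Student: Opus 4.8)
The statement to prove is Lemma~\ref{triang}: the existence of two distinguished triangles
\[
\mathscr{C}_{u^{-},\varphi} \rightarrow \mathscr{C}_{u^{-}} \xrightarrow{[\varphi-1]} \mathscr{C}_{u^{-}}, \qquad
\mathscr{C}_{u^{-},\varphi,a^{+}} \rightarrow \mathscr{C}_{u^{-},\varphi} \xrightarrow{[a^{+}]} \mathscr{C}_{u^{-},\varphi}
\]
in $\mathcal{D}^{-}(L)$.

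\textbf{Plan.} The proof is entirely formal and parallels the proof of the identically-named Lemma~\ref{triang} in the $\overline{P}^+$-group setting (the one with $\mathscr{C}_{\tau,\varphi,\gamma}$), only now working with the Lie-algebra operators $u^-$, $\varphi$, $a^+$ in place of $\tau$, $\varphi\delta_p$, $\gamma\delta_a$. The strategy is to exhibit each of the two bigger complexes as the mapping cone (shifted appropriately) of the displayed chain map between copies of the smaller complex, and then invoke the standard fact that $K \to \mathrm{Cone}(f)[-1]$ — equivalently $\mathrm{Cone}(f)[-1] \to \mathrm{dom}(f) \xrightarrow{f} \mathrm{cod}(f)$ — is a distinguished triangle in $\mathcal{D}^-(L)$. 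So first I would spell out the mapping cone of $[\varphi-1]\colon \mathscr{C}_{u^-} \to \mathscr{C}_{u^-}$: since $\mathscr{C}_{u^-}$ is concentrated in degrees $0,1$ with differential $D = u^-$, its cone has terms $M$ in degree $0$, $M\oplus M$ in degree $1$, $M$ in degree $2$, and differentials built out of $u^-$, $\varphi-1$ and $p\varphi-1$; after the shift by $[-1]$ this is visibly $\mathscr{C}_{u^-,\varphi}$ with the maps $E$ and $F$ as defined. One then just has to check the signs match the stated $E,F$ (up to an automorphism of the complex that flips a sign on one summand, which is an isomorphism in $\mathcal{D}^-(L)$), and note that the square
\[
\begin{tikzcd}
M \arrow[r,"u^-"] \arrow[d,"\varphi-1"] & M \arrow[d,"p\varphi-1"]\\
M \arrow[r,"u^-"] & M
\end{tikzcd}
\]
commutes precisely because $p\varphi\, u^- = u^-\varphi$, hence $(p\varphi-1)u^- = u^-(\varphi-1)$ — this is the relation recorded just before the lemma.

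\textbf{Second triangle.} Likewise, $[a^+]\colon \mathscr{C}_{u^-,\varphi}\to\mathscr{C}_{u^-,\varphi}$ is a genuine chain map: one must verify the two squares commute, i.e. that $s\circ E = E\circ a^+$ and $F\circ s = (a^+{+}1)\circ F$, which amounts to the commutation relations $a^+ u^- = u^-(a^+{+}1)$ (equivalently $[a^+,u^-]=-u^-$) and $(a^+{+}1)\varphi = \varphi(a^+{+}1)$ together with $(a^+{+}1)(p\varphi-1)=(p\varphi-1)(a^+{+}1)$; here $a^+$ commutes with $\varphi$ since $a^+$ is diagonal and $\varphi$ is the scalar-type matrix $\binom{p\ 0}{0\ 1}$ conjugation, so the only nontrivial input is $[a^+,u^-]=-u^-$, again recorded before the lemma. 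Then the mapping cone of $[a^+]$, concentrated in degrees $0$ through $3$ with terms $M$, $M^{\oplus 3}$, $M^{\oplus 3}$, $M$, is — after identifying summands and shifting — exactly $\mathscr{C}_{u^-,\varphi,a^+}$ with its differentials $X',Y',Z'$; matching signs is a bookkeeping exercise using the Koszul sign rule for the cone differential, and any residual sign discrepancies are absorbed by an isomorphism of complexes rescaling individual summands by $\pm 1$. Having presented both complexes as cones, the distinguished triangles follow immediately from the definition of a distinguished triangle in the derived category.

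\textbf{Main obstacle.} There is no conceptual difficulty here; the only thing that requires care is the sign bookkeeping in the cone differentials and confirming that the $X',Y',Z'$ as literally written in \eqref{exactp} agree on the nose (not just up to isomorphism) with the cone construction, or, if they do not, identifying the explicit sign-rescaling isomorphism between the two. I would handle this by computing $\mathrm{Cone}([\varphi-1])[-1]$ and $\mathrm{Cone}([a^+])[-1]$ once, carefully, writing down their differentials, and then comparing term by term with $E,F$ and $X',Y',Z'$; the commutativity checks for the chain maps reduce, as noted, to the three relations $[a^+,u^-]=-u^-$, $p\varphi u^-=u^-\varphi$, and $[a^+,\varphi]=0$, all of which are either stated in the text or immediate. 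This mirrors verbatim the argument given for the analogous Lemma in the group setting, where the required relations were $(\gamma\delta_a-1)(\tau-1)=(\tau-1)(\gamma-1)$ and $(\varphi\delta_p-1)(\tau-1)=(\tau-1)(\varphi-1)$, so I would simply reference that proof and indicate the substitutions.
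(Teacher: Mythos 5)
Your proposal is correct and matches the paper's argument, which likewise just exhibits $\mathscr{C}_{u^{-},\varphi}$ and $\mathscr{C}_{u^{-},\varphi,a^{+}}$ as (shifted) cones of $[\varphi-1]$ and $[a^{+}]$ and notes that the chain-map property follows from $p\varphi u^{-}=u^{-}\varphi$, $[a^{+},u^{-}]=-u^{-}$ and $[a^{+},\varphi]=0$. One small slip: the identity you need is $(a^{+}+1)u^{-}=u^{-}a^{+}$, whereas you wrote $a^{+}u^{-}=u^{-}(a^{+}+1)$ (which would mean $[a^{+},u^{-}]=+u^{-}$); since you correctly invoke $[a^{+},u^{-}]=-u^{-}$ in the same breath, this is only a sign typo and does not affect the argument.
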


\begin{proof}
This is evident from the definition of the cone of a morphism in $\mathcal{D}^{-}(L)$.  
\end{proof}

The following lemma will be the cornerstone of our cohomology calculations.

\begin{lemm} \label{iso1} \label{3.1} \leavevmode 
\begin{enumerate}
\item $H^0(\mathscr{C}_{u^-, \varphi, a^+}) = H^0([a^+] \colon H^0(\mathscr{C}_{u^-, \varphi}))$.
\item We have the following exact sequences in cohomology:
\begin{equation} \label{linee1}
0 \to H^1([a^+] \colon H^0(\mathscr{C}_{u^-, \varphi})) \to H^1(\mathscr{C}_{u^-, \varphi, a^+}) \to H^0([a^+] \colon H^1(\mathscr{C}_{u^-, \varphi})) \to 0,
\end{equation}
\begin{equation} \label{linee2}
0 \to H^1([\varphi - 1] \colon H^0(\mathscr{C}_{u^-})) \to H^1(\mathscr{C}_{u^-, \varphi}) \to H^0([\varphi - 1] \colon H^1(\mathscr{C}_{u^-})) \to 0.
\end{equation}
\item We have the following exact sequences in cohomology:
\begin{equation} \label{line1} 
0 \rightarrow H^1([a^{+}]: H^1(\mathscr{C}_{u^{-},\varphi})) \to H^2(\mathscr{C}_{u^{-},\varphi,a^{+}}) \to H^0([a^{+}]: H^2(\mathscr{C}_{u^{-},\varphi})) \rightarrow 0,
\end{equation}
\begin{equation} \label{line3} H^{2}(\mathscr{C}_{u^{-},\varphi}) \cong H^1([\varphi-1]: H^1(\mathscr{C}_{u^-})).
\end{equation}
\item $H^3(\mathscr{C}_{u^-, \varphi, a^+}) = H^1([a^+] \colon H^2(\mathscr{C}_{u^-, \varphi}))$.
\end{enumerate}
\end{lemm}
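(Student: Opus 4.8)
The plan is to deduce all four statements formally from the two distinguished triangles of Lemma~\ref{triang} by passing to long exact sequences in cohomology. Recall that a distinguished triangle $A^\bullet \to B^\bullet \xrightarrow{f} C^\bullet \xrightarrow{+1}$ in $\mathcal{D}^{-}(L)$ (where, by convention, $A^\bullet$ is the fibre of $f$, so the connecting map $C^\bullet \to A^\bullet[1]$ is available) gives rise to a long exact sequence
\[ \cdots \to H^{i-1}(C^\bullet) \to H^i(A^\bullet) \to H^i(B^\bullet) \xrightarrow{H^i(f)} H^i(C^\bullet) \to H^{i+1}(A^\bullet) \to \cdots. \]
Since the image of $H^i(A^\bullet) \to H^i(B^\bullet)$ is $\ker\big(H^i(f)\big)$ and its kernel is the image of the connecting map, which by exactness is $\coker\big(H^{i-1}(f)\big)$, this long exact sequence breaks up, for every $i$, into a short exact sequence
\[ 0 \to \coker\big(H^{i-1}(f)\big) \to H^i(A^\bullet) \to \ker\big(H^i(f)\big) \to 0. \]

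First I would apply this to the triangle $\mathscr{C}_{u^{-},\varphi,a^{+}} \to \mathscr{C}_{u^{-},\varphi} \xrightarrow{[a^{+}]} \mathscr{C}_{u^{-},\varphi}$. Writing, as in the statement, $H^0([a^+]\colon N)$ and $H^1([a^+]\colon N)$ for the kernel and cokernel of the endomorphism of $N$ induced by $[a^+]$, the displayed short exact sequence becomes
\[ 0 \to H^1\big([a^+]\colon H^{i-1}(\mathscr{C}_{u^{-},\varphi})\big) \to H^i(\mathscr{C}_{u^{-},\varphi,a^{+}}) \to H^0\big([a^+]\colon H^i(\mathscr{C}_{u^{-},\varphi})\big) \to 0. \]
Because $\mathscr{C}_{u^{-},\varphi}$ is concentrated in degrees $[0,2]$, the case $i=0$ has vanishing left term and yields part~(1); the case $i=1$ is exactly \eqref{linee1} and the case $i=2$ is exactly \eqref{line1} in part~(3); and the case $i=3$ has vanishing right term, which gives part~(4). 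Next I would run the same procedure on the triangle $\mathscr{C}_{u^{-},\varphi} \to \mathscr{C}_{u^{-}} \xrightarrow{[\varphi-1]} \mathscr{C}_{u^{-}}$, obtaining
\[ 0 \to H^1\big([\varphi-1]\colon H^{i-1}(\mathscr{C}_{u^{-}})\big) \to H^i(\mathscr{C}_{u^{-},\varphi}) \to H^0\big([\varphi-1]\colon H^i(\mathscr{C}_{u^{-}})\big) \to 0; \]
since $\mathscr{C}_{u^{-}}$ is concentrated in degrees $[0,1]$, the case $i=1$ is \eqref{linee2} and the case $i=2$, the right term vanishing, is \eqref{line3}, completing part~(3) and hence the lemma.

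This argument is purely formal once Lemma~\ref{triang} is granted, so I do not expect a genuine obstacle; the only two points worth a remark are the following. First, one should note that the endomorphism induced on each cohomology group $H^i(\mathscr{C}_{u^{-},\varphi})$, resp.\ $H^i(\mathscr{C}_{u^{-}})$, by the chain map $[a^+]$, resp.\ $[\varphi-1]$, is precisely the multiplication operator appearing in the two-term complexes that define $H^0([a^+]\colon -)$ and $H^1([a^+]\colon -)$ (resp.\ for $[\varphi-1]$); this is read off directly from the explicit commutative diagrams introduced just before Lemma~\ref{triang}, where on the relevant cohomology the vertical maps are $\varphi-1$, $p\varphi-1$, $a^+$ and $a^++1$. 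Second, one uses the standard homological fact that $H^i(-)$ sends distinguished triangles in $\mathcal{D}^{-}(L)$ to long exact sequences. No finiteness or convergence input is needed here: the bookkeeping is entirely bounded, the modules $M$ being fixed and the complexes finite.
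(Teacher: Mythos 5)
Your proof is correct and is exactly the paper's argument: the paper's own proof simply says the lemma "follows from taking long exact sequences in cohomology from the triangles in Lemma \ref{triang}", and your write-up spells out that bookkeeping (the splitting into short exact sequences $0 \to \coker \to H^i \to \ker \to 0$ and the degree bounds on $\mathscr{C}_{u^-,\varphi}$ and $\mathscr{C}_{u^-}$) in full detail. Nothing further is needed.
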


\begin{proof}
This follows from taking long exact sequences in cohomology from the triangles in Lemma \ref{triang}.
\end{proof}

The module from which we are taking cohomology should be clear from context, and whenever it is not specified, it means that the result holds for any such module. Notations should be clear, for instance, we have $H^0([a^+] \colon H^0(\mathscr{C}_{u^-, \varphi})) = M^{u^- = 0, \varphi = 1, a^+ = 0}$, $H^1([a^+] \colon H^0(\mathscr{C}_{u^-, \varphi})) = \coker([a^+] \colon H^0(\mathscr{C}_{u^-, \varphi})) = \coker(a^+ \colon M^{u^- = 0, \varphi = 1} \to M^{u^- = 0, \varphi = 1})$, et cetera desunt.

\begin{rema} \label{actioncohom1}
From the definition of the action of the group on the different terms of the Hochschild-Serre spectral sequence, or on the Lie algebra cohomology, as the composition of the natural action on the module with inner automorphisms on the group or on the Lie algebra, we can calculate the explicit action of $\tilde{P}$ on each constituent component of the exact sequences appearing in Lemma \ref{iso1} and Lemma \ref{3.1}. For instance, the action of $A^0$ on the third term of Equation \eqref{linee2}, on the terms of Equation \eqref{line3} and on those of Lemma \ref{iso1}(4) is twisted by $\chi$ (this comes from the identity $\sigma_a^{-1} u^- \sigma = a u^-$), while it acts as usual on the other terms (since $A^0$ commutes with itself and with $\varphi$). We can check that $\tau$ acts as usual on each separate term (but the sequences do not split as a sequence of $\overline{U}$-modules).
\end{rema}

\subsection{The Lie algebra cohomology of $\Robba^-(\delta_1, \delta_2)$} \label{Liecohom1}

The following conglomerate of technical lemmas on the action of the Lie algebra on $\Robba^-(\delta_1, \delta_2)$ will culminate in the main Proposition \ref{prop:H2LieR^-} of this section, calculating the $\overline{P}^+$-cohomology on this module, following the strategy suggested by Lemma \ref{3.1}. 

\begin{lemm}
Call $M = \Robba^-(\delta_1, \delta_2)$ and let $f \in M$. Under the identification (as modules) $M = \mathrm{LA}(\zp, L)$, the infinitesimal actions of $a^{+}$, $u^{-}$ and $\varphi$ are given by \footnote{Observe that, in the formula for $\varphi$ below, $f\left(\frac{x}{p}\right)$ is taken to be zero whenever $z \in \zpe$, so the precise formula should be $(\varphi f)(x) = \mathbf{1}_{p \zp}(x) \delta(p) f\left(\frac{x}{p}\right)$.}
\[ (a^{+}f)(x) = \kappa(\delta)f(x) - xf'(x), \]
\[ (u^{-}f)(x) = \kappa(\delta)xf(x) - x^{2}f'(x), \]
\[ (\varphi f)(x) = \delta(p)f\left(\frac{x}{p}\right).\]
\end{lemm}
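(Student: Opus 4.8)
The plan is to unwind the identification $M\cong\mathrm{LA}(\zp,L)$ and then differentiate the (already explicit) action of the generators of $\overline{P}^+$. First I would recall that the Colmez transform of Proposition \ref{amice} gives a $(\varphi,\Gamma)$-equivariant isomorphism $\Robba^-_A\cong\mathrm{LA}(\zp,A)\otimes\chi^{-1}$; twisting by $\delta_1$, restricting the associated $G$-sheaf to $\zp$ and twisting by $\delta_2^{-1}$ (as in the definition of $\Robba_A(\delta_1,\delta_2)$ and Lemma \ref{lem:princser}) identifies $M=\Robba^-(\delta_1,\delta_2)$ with $\mathrm{LA}(\zp,L)\otimes\delta$, where $\delta=\delta_1\delta_2^{-1}\chi^{-1}$; forgetting the rank-one twist yields the identification $M=\mathrm{LA}(\zp,L)$ of the statement.

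Next I would write down, under this identification, the action of the three generators $\sigma_a$ ($a\in\zpe$), $\varphi$ and $\tau$ of $\overline{P}^+$. The action of $\sigma_a$ and of $\varphi$ comes directly from Proposition \ref{amice} together with the $\delta$-twist, namely $(\sigma_af)(x)=\delta(a)\,f(a^{-1}x)$ and $(\varphi f)(x)=\mathbf{1}_{p\zp}(x)\,\delta(p)\,f(x/p)$ --- this last identity is precisely the claimed formula for $\varphi$, so nothing more is needed there. The action of $\tau$ is not part of the $(\varphi,\Gamma)$-structure but of the $G$-sheaf structure, and via the principal-series description of Lemma \ref{lem:princser} it is given on the procyclic subgroup generated by $\tau$ by a formula of the form $(\tau^af)(x)=\delta(1+pax)\,f\!\big(\tfrac{x}{1+pax}\big)$ for $a\in\zp$ (compare the formula already used in the proof of Lemma \ref{coldbean}, up to the sign of $p$ fixed by the chart embedding $\zp$ into $\P^1$).

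Finally I would differentiate. Since $a^+$ is the infinitesimal generator of the one-parameter subgroup $t\mapsto\sigma_{\exp(t)}$, and since $\tau=\exp(p\,u^-)$ so that $u^-=\tfrac1p\,\tfrac{d}{da}\big|_{a=0}\tau^a$, applying $\tfrac{d}{dt}\big|_{0}$ and $\tfrac1p\,\tfrac{d}{da}\big|_{0}$ to the formulas above --- using $\delta(1+u)=1+\kappa(\delta)u+O(u^2)$, the chain rule, and $\tfrac{d}{da}\big|_{0}\tfrac{x}{1+pax}=-px^2$ --- produces $(a^+f)(x)=\kappa(\delta)f(x)-xf'(x)$ and $(u^-f)(x)=\kappa(\delta)xf(x)-x^2f'(x)$, as claimed; the $\varphi$-formula requires no differentiation.

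The computation itself is routine one-variable calculus; the only genuinely delicate point is the bookkeeping, i.e.\ making sure the twist by $\delta$ is carried correctly through the chain of identifications (Colmez transform, $\delta_1$-twist, restriction to $\zp$, $\delta_2^{-1}$-twist) and that the normalisation relating $u^-$ to $\tau$ --- the factor $p$ in $\tau=\exp(p\,u^-)$, and the sign appearing in the $\tau$-action --- is the one compatible with the conventions of \S\ref{RtimesP1}.
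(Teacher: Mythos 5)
Your treatment of $\varphi$ and $a^{+}$ is essentially the paper's: the proof there also starts from the explicit formula $\big(\matrice a 0 b 1 \cdot f\big)(x)=\delta(a-bx)\,f\big(\tfrac{x}{a-bx}\big)$, reads off $\varphi$ (the case $b=0$, $a=p$) and obtains $a^{+}$ by differentiating the $\sigma_a$-action, exactly as you do. Where you genuinely diverge is $u^{-}$: you differentiate the explicit $\tau^{a}$-action using $\tau=\exp(p\,u^{-})$, whereas the paper does not differentiate the group action at all but quotes Dospinescu's operator identity $u^{-}=-t^{-1}\nabla(\nabla+\kappa(\delta_2\delta_1^{-1}))$ (\cite[Th\'eor\`eme 1.1]{dos2012}) and translates it through the dictionary of functional analysis (multiplication by $t$ and $\tfrac{d}{dt}$ becoming derivation and multiplication by $x$ on $\mathrm{LA}(\zp,L)$). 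Your route is more elementary and self-contained --- it needs nothing beyond the principal-series description of Lemma \ref{lem:princser} --- while the paper's route buys normalisation-safety, since the quoted identity already fixes the convention relating $u^{-}$ to the module structure.

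The one point you must actually nail down is precisely the one you defer: the sign in the $\tau$-action. With the convention the paper itself states (in this very proof, and again in the proof of Lemma \ref{coldbean}) one has $(\tau^{a}f)(x)=\delta(1-pax)\,f\big(\tfrac{x}{1-pax}\big)$, and then $\tfrac1p\tfrac{d}{da}\big|_{a=0}$ yields $-\kappa(\delta)xf(x)+x^{2}f'(x)$, i.e.\ the negative of the stated formula; it is your choice $\delta(1+pax)$ that produces the stated sign. So this is not a harmless chart ambiguity: you must either pin down which of $\pm u^{-}$ the normalisations of \S\ref{RtimesP1} actually give (for instance by cross-checking against \cite{dos2012}, as the paper does), or note explicitly that only the operator $u^{-}$ up to a unit enters the subsequent kernel/cokernel computations, so a global sign is immaterial for the applications. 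As written, your argument proves the $u^{-}$ formula only up to this sign.
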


\begin{proof}
First note tat, for $\begin{pmatrix} a & 0 \\ b & 1 \end{pmatrix} \in \overline{P}^{+}$ and $f \in \mathscr{R}^{-}(\delta_1, \delta_2)$, the action of $\overline{P}^{+}$ on $\mathscr{R}^{-}(\delta_1, \delta_2)$ is given by 
$$\bigg( \begin{pmatrix} a & 0 \\ b & 1 \end{pmatrix} \cdot f \bigg) (x)= \delta(a-bx)f \bigg( \frac{x}{a-bx} \bigg).$$
The action of $\varphi$ is now evident and that of $a^+$ follows from a direct calculation.


Viewing $\mathscr{R}^{-}(\delta_1, \delta_2)$ as the module $\mathscr{R}^{-}$ equipped with action of $\overline{P}^{+}$, we have, by \cite[Th\'eor\`eme 1.1]{dos2012},
$$u^{-} = -t^{-1}\nabla(\nabla+\kappa(\delta_2\delta_{1}^{-1})),$$
where here $\nabla = t\frac{d}{dt}$. Recall that, by the dictionary of functional analysis, multiplication by $t$ and $\frac{d}{dt}$ become, respectively, the operations of derivation and multiplication by $x$ on $\mathrm{LA}(\zp, L)$. In particular we have $\nabla(f) = f + xf'$. The description of the action of $u^-$ follows now from a direct computation.

\end{proof}

By an inoffensive abuse of language, we will talk in the sequel about the action of the elements $a^+$ and $u^-$ on $\mathrm{LA}(\zp, L)$ (resp. $\mathrm{LA}(\zpe, L)$), by which we mean their action on $\Robba^-(\delta_1, \delta_2)$ under the identification $\Robba^-(\delta_1, \delta_2) = \mathrm{LA}(\zp, L)$ (resp. $\Robba^-(\delta_1, \delta_2) \boxtimes \zpe = \mathrm{LA}(\zpe, L)$).

\subsubsection{Calculation of $H^0(\mathscr{C}_{u^-, \varphi, a^+})$:}

\begin{prop} \label{Lie0-}
Let $M = \Robba^-(\delta_1, \delta_2)$. We have $H^0(\mathscr{C}_{u^-, \varphi, a^+}) = 0$.
\end{prop}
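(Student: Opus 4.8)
The plan is to compute $H^0(\mathscr{C}_{u^-,\varphi,a^+})$ directly using Lemma \ref{3.1}(1), which identifies it with $H^0([a^+]\colon H^0(\mathscr{C}_{u^-,\varphi}))$, i.e. with the subspace $M^{u^-=0,\,\varphi=1,\,a^+=0}$ of $M = \Robba^-(\delta_1,\delta_2)$. So the problem reduces to showing that no nonzero $f\in\mathrm{LA}(\zp,L)$ is simultaneously killed by $u^-$, by $\varphi-1$, and by $a^+$.

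First I would analyze the equation $\varphi f = f$. By the formula $(\varphi f)(x)=\delta(p)f(x/p)$ (with the convention that this vanishes on $\zpe$), if $f=\varphi f$ then $f$ vanishes on $\zpe$, and iterating, $f$ vanishes on $p^n\zpe$ for every $n\geq 0$; since these cover $\zp\setminus\{0\}$ and $f$ is continuous, we get $f=0$. (This is exactly the injectivity argument for $1-\alpha\varphi$ on $\Robba^-$ appearing in the proof of Lemma \ref{sum}(1), with $\alpha=1$.) Thus already $H^0([\varphi-1]\colon M)=M^{\varphi=1}=0$, hence a fortiori $M^{u^-=0,\varphi=1,a^+=0}=0$.

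Therefore the cleanest route is: invoke Lemma \ref{3.1}(1) to get $H^0(\mathscr{C}_{u^-,\varphi,a^+})=M^{u^-=0,\varphi=1,a^+=0}\subseteq M^{\varphi=1}$, and then observe $M^{\varphi=1}=0$ by the support argument above. Alternatively, one can phrase it even more economically through the sub-complex structure: $H^0(\mathscr{C}_{u^-,\varphi})=H^0([\varphi-1]\colon H^0(\mathscr{C}_{u^-}))=M^{u^-=0,\varphi=1}$, which is contained in $M^{\varphi=1}=0$, and then $H^0(\mathscr{C}_{u^-,\varphi,a^+})=H^0([a^+]\colon 0)=0$.

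There is no real obstacle here — the statement is a warm-up. The only point requiring a line of care is the convention (already flagged in the footnote to the preceding lemma) that $(\varphi f)(x)=\mathbf{1}_{p\zp}(x)\delta(p)f(x/p)$, so that $\varphi$ genuinely forces vanishing off $p\zp$; once that is made explicit the support/continuity argument is immediate and the proof is two lines.
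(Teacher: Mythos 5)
Your proof is correct and takes essentially the same route as the paper: the paper's proof is the one-line observation that $1-\delta(p)\varphi$ is injective on $\mathrm{LA}(\zp,L)$ (Lemma \ref{sum}(1)), which is exactly your support-plus-continuity computation showing $M^{\varphi=1}=0$, and the reduction via Lemma \ref{3.1}(1) is just making the obvious inclusion $H^0(\mathscr{C}_{u^-,\varphi,a^+})\subseteq M^{\varphi=1}$ explicit. (Minor notational point: since the twisted action of $\varphi$ on $\Robba^-(\delta_1,\delta_2)$ already carries the factor $\delta(p)$, the relevant $\alpha$ in Lemma \ref{sum} is $\delta(p)$ rather than $1$; your argument is unaffected, as it works for any unit.)
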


\begin{proof}
This follows immediately from the injectivity of $1 - \delta(p) \varphi$ on $\mathrm{LA}(\zp, L)$.
\end{proof}

\subsubsection{Calculation of $H^2(\mathscr{C}_{u^-, \varphi})$:}

\begin{lemm} \label{cool}
The operator $u^{-}$ restricted to $\mathrm{LA}(\zpe, L)$ is surjective on $\mathrm{LA}(\zpe, L)$. 
\end{lemm}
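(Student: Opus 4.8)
The claim is that $u^{-}$ is surjective on $\mathrm{LA}(\zpe, L)$, where the action on $M = \Robba^-(\delta_1,\delta_2) = \mathrm{LA}(\zp,L)$ is given by the differential operator $(u^{-}f)(x) = \kappa(\delta)xf(x) - x^{2}f'(x)$, restricted to the subspace $\mathrm{LA}(\zpe, L) = \Robba^-(\delta_1,\delta_2)\boxtimes\zpe$ of functions supported on $\zpe$. The first thing I would do is rewrite the operator. Setting $g = x^{-\kappa(\delta)}f$ formally (which makes sense locally on each ball $i + p^n\zp$ with $i\in\zpe$, since there $x$ is a unit and $x^{s}$ is a well-defined locally analytic function of $x$ for any $s\in L$), one computes $u^{-}f = -x^{2}(x^{-\kappa(\delta)})^{-1}\frac{d}{dx}(x^{-\kappa(\delta)}f)\cdot$(up to sign bookkeeping), so that solving $u^{-}f = h$ amounts to solving a first-order linear ODE $x^{2}g' = -x^{\kappa(\delta)}h$, i.e. $g' = -x^{\kappa(\delta)-2}h$, on each residue ball separately. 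Equivalently, and more cleanly, I would use the identity (already recorded in the excerpt) $u^{-} = -t^{-1}\nabla(\nabla + \kappa(\delta_2\delta_1^{-1}))$ on $\Robba^-$, together with the fact, from Lemma \ref{newprop1}, that $\nabla + \kappa(\eta)$ is surjective on $\mathrm{LA}(\zpe,A)$ for any continuous $\eta$.

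The key point is that $\zpe$ is a disjoint union of the balls $i + p\zp$, $i = 1,\dots,p-1$, each a compact ball \emph{not containing $0$}, so the obstruction to integrating a power series — the constant term, which on a ball around $0$ would force a $\log$ — is harmless: on a ball $B = i + p^n\zp$ with $i \in \zpe$, the operator $f \mapsto xf'$ (hence $\nabla = 1 + xf'$ after the twist, and similarly $\partial$ on the Robba side) is surjective onto analytic functions on $B$, because antidifferentiation of a convergent power series in the local parameter converges on the same ball, and there is no constant-term obstruction since we may choose the constant of integration freely. Concretely, I would argue as follows: given $h \in \mathrm{LA}(\zpe, L)$, choose $n$ so that $h$ is analytic on every $i + p^n\zp$, $i\in\zpe$; on each such ball solve $u^{-}f_i = h|_{i+p^n\zp}$ by the local ODE computation above (the relevant linear recursion on Taylor coefficients has invertible leading coefficients because on $i + p^n\zp$ the variable $x$ is a unit, so no division by $p$-power issue arises from $\kappa(\delta)$); then set $f = \sum_i f_i$, extended by zero, which lies in $\mathrm{LA}(\zpe, L)$ and satisfies $u^{-}f = h$. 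One must check that $u^{-}$ preserves $\mathrm{LA}(\zpe,L)$ inside $\mathrm{LA}(\zp,L)$ — clear, since it is a differential operator with polynomial coefficients and hence preserves the support condition.

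Alternatively — and this is probably the slickest write-up — just invoke Lemma \ref{newprop1} twice: by the factorization $u^{-} = -t^{-1}\nabla(\nabla + \kappa(\delta_2\delta_1^{-1}))$, and since multiplication by $t^{-1}$ is an isomorphism on the $\psi=0$ part (cf. the bijectivity of $\partial$ on $\Robba_A^{\psi=0}$ and the relation between $t$, $\nabla$, $\partial$), surjectivity of $u^{-}$ on $\mathrm{LA}(\zpe,L)$ reduces to surjectivity of $\nabla$ and of $\nabla + \kappa$ on $\mathrm{LA}(\zpe,L)$, both of which are Lemma \ref{newprop1}. I would present this reduction as the main line and relegate the local ODE computation to a parenthetical sanity check. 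The one genuine subtlety to be careful about — the main (minor) obstacle — is making sure the twist by $\kappa(\delta)$ does not introduce a denominator: on $\zpe$ this is fine because $x\in\zpe$ and the binomial coefficients $\binom{\kappa(\delta)}{n}$ (or the coefficients $\kappa(\delta)-j$ appearing in the recursion) are bounded, exactly as in the convergence estimates already used in the proof of Proposition \ref{cor:stup}; the point that would fail on a ball containing $0$ (the possible $\log$ or the vanishing leading coefficient when $\kappa(\delta)\in\Z_{\le 0}$) simply does not occur here. So I expect the proof to be short, with the only care needed being this boundedness bookkeeping, which is routine given the earlier estimates in the paper.
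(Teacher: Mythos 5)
Your proposal is correct, and it is worth noting that the paper offers no written argument here: the proof of Lemma \ref{cool} is literally ``an easy exercise on power series that we leave to the reader'', and your first route --- expand on each ball $i+p^{n}\zp$ with $i\in\zpe$, solve the resulting first-order recursion on Taylor coefficients, and use that $0$ is not in the ball so no logarithmic/constant-term obstruction appears --- is exactly that intended exercise. Two small corrections. First, in the local computation the recursion \emph{does} involve a division, namely by $n+1$ when solving for the $(n+1)$-st coefficient (nothing problematic comes from $\kappa(\delta)$, since $i\in\zpe$ is a unit); this costs $v_p(n+1)$, which is harmless because $v_p(n+1)=o(n)$, so the solution is still analytic on the same (or a marginally smaller) ball --- this is precisely the ``routine bookkeeping'' you allude to, and it is the only real content of the exercise. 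Second, your ``slick'' reduction is a genuinely different and cleaner route, but as stated it has a gap: multiplication by $t$ is \emph{not} invertible on $\Robba^{-}$ or on its $\psi=0$ part (under the dictionary it corresponds to differentiation in $x$, whose kernel consists of the locally constant functions), so you cannot peel off ``$t^{-1}$ is an isomorphism'' and then invoke surjectivity of $\nabla$ and of $\nabla+\kappa$ separately; arguing that way only shows $u^{-}f-h$ is locally constant, and you would still have to hit the locally constant functions. The correct form of the reduction is to use $t^{-1}\nabla=\partial$, i.e.\ (with the convention of Lemma \ref{newprop1} that $\nabla\phi=-x\phi'$) the identity $u^{-}f=x\bigl(\kappa(\delta)f-xf'\bigr)$ on $\mathrm{LA}(\zpe,L)$: multiplication by $x$ is bijective on functions supported on $\zpe$ (the locally analytic incarnation of the bijectivity of $\partial$ on $\Robba^{\psi=0}$), and $\nabla+\kappa(\delta)$ is surjective by Lemma \ref{newprop1}, so $u^{-}$ is surjective, with Lemma \ref{newprop1} invoked only once and no statement about $t$ needed. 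Either route, so corrected, gives a complete proof where the paper gives none.
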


\begin{proof}
This is an easy exercise on power series that we leave to the reader.
\end{proof}
\begin{lemm} \label{cool1} \label{banalized} 
If $M = \mathscr{R}^{-}(\delta_1, \delta_2)$ then:
\begin{enumerate}
\item If $\delta(p) \not\in \left\{ p^{i} \lvert \text{ } i\geq -1 \right\}$, or if $\delta(p) = p^i$ for some $i \geq 0$ and $\kappa(\delta) \not= i$, then $H^{2}(\mathscr{C}_{u^{-},\varphi}) = 0$.
\item Otherwise \footnote{i.e if $\delta(p) = p^{-1}$, or if $\delta(p) = p^i$ for some $i \geq 0$ and $\kappa(\delta) = i$.} $H^{2}(\mathscr{C}_{u^{-},\varphi})$ is of dimension $1$ naturally generated by $[x^{i+1}]$.
\end{enumerate}
\end{lemm}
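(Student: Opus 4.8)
By Lemma \ref{iso1}(3), Equation \eqref{line3}, we have $H^{2}(\mathscr{C}_{u^{-},\varphi}) \cong H^1([\varphi-1] \colon H^1(\mathscr{C}_{u^-}))$, so the strategy is to first compute $H^1(\mathscr{C}_{u^-}) = \coker(u^- \colon M \to M)$ and then compute the cokernel of $p\varphi - 1$ acting on it. The first step is where Lemma \ref{cool} enters: since $u^-$ is surjective on $\mathrm{LA}(\zpe, L)$, the obstruction to surjectivity of $u^-$ on all of $M = \mathrm{LA}(\zp, L)$ is concentrated ``near $0$''. Concretely, using the formula $(u^- f)(x) = \kappa(\delta) x f(x) - x^2 f'(x)$ and writing a general $f$ as an analytic function near $0$ plus a function supported away from $0$, one computes that $u^-$ maps the space of germs at $0$ into itself, and on a power series $f = \sum_{n \geq 0} a_n x^n$ one has $u^- f = \sum_{n \geq 0} (\kappa(\delta) - n) a_{n-1} x^n$ (with $a_{-1} = 0$), i.e. $u^-$ shifts degree up by one and multiplies the coefficient of $x^{n-1}$ by $\kappa(\delta) - (n-1)$. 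Hence the cokernel of $u^-$ on $M$ is spanned by the class of $x^0 = 1$ together with the class of $x^{i+1}$ whenever $\kappa(\delta) = i$ for some $i \geq 0$; combined with Lemma \ref{cool} this gives $H^1(\mathscr{C}_{u^-}) = L \cdot [1]$ if $\kappa(\delta) \notin \N$, and $H^1(\mathscr{C}_{u^-}) = L \cdot [1] \oplus L \cdot [x^{i+1}]$ if $\kappa(\delta) = i \in \N$. (One must also check $H^1(\mathscr{C}_{u^-})$ receives no further contribution from the non-analytic-at-$0$ part, which is exactly the content of Lemma \ref{cool}.)

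\textbf{Second step.} Now one computes the action of $\varphi$ on $H^1(\mathscr{C}_{u^-})$ and then $H^1([\varphi - 1] \colon H^1(\mathscr{C}_{u^-})) = \coker(p\varphi - 1 \colon H^1(\mathscr{C}_{u^-}) \to H^1(\mathscr{C}_{u^-}))$. Using $(\varphi f)(x) = \delta(p) f(x/p)$, one sees $\varphi$ sends $x^j \mapsto \delta(p) p^{-j} x^j$, so on the class $[x^j]$ the operator $p\varphi - 1$ acts by multiplication by $\delta(p) p^{1-j} - 1$. Applying this to the generators found in Step 1: the class $[1]$ (i.e. $j = 0$) is killed in the cokernel unless $\delta(p) p - 1 = 0$, i.e. $\delta(p) = p^{-1}$; and the class $[x^{i+1}]$ (present only when $\kappa(\delta) = i$), with $j = i+1$, survives in the cokernel exactly when $\delta(p) p^{-i} - 1 = 0$, i.e. $\delta(p) = p^i$. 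Assembling: $H^2(\mathscr{C}_{u^-,\varphi}) = 0$ unless either $\delta(p) = p^{-1}$, or $\delta(p) = p^i$ with $\kappa(\delta) = i$ for some $i \geq 0$; and in those exceptional cases it is one-dimensional. In the case $\delta(p) = p^{-1}$ the generator is $[1] = [x^0]$ and, since here $j-1 = -1 = i$ with the convention $i = -1$, one can uniformly write the generator as $[x^{i+1}]$; in the case $\delta(p) = p^i$, $\kappa(\delta) = i$ the generator is visibly $[x^{i+1}]$. This matches the statement: case (1) is the vanishing case, case (2) (with the footnote's two sub-cases) is the one-dimensional case with generator $[x^{i+1}]$.

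\textbf{Main obstacle.} The routine algebra (shift operator on power-series coefficients, eigenvalue computation for $\varphi$) is straightforward; the genuinely delicate point is justifying that the cokernel of $u^-$ on $M = \mathrm{LA}(\zp, L)$ is \emph{exactly} spanned by the germ-at-$0$ contributions, i.e. that every locally analytic function can be written, modulo $\mathrm{im}(u^-)$, as a polynomial of bounded degree. This is a topological surjectivity statement (one needs the partial sums of the formal inverse of $u^-$ to converge in the LF-topology of $\mathrm{LA}(\zp,L)$, away from the finitely many ``resonant'' degrees $n = \kappa(\delta)$), and it is precisely the role played by Lemma \ref{cool} together with the decomposition of $\mathrm{LA}(\zp,L)$ into functions analytic near $0$ and functions supported on $\zpe$, $p\zpe$, \ldots; one argues as in the proof of Lemma \ref{sum}, peeling off the behaviour on $p^n\zpe$ via $\varphi^n\psi^n$ and using surjectivity of $u^-$ on each $\mathrm{LA}(p^n\zpe, L)$. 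I would carry out Step 1 carefully with this convergence bookkeeping, then Step 2 is a short finite-dimensional computation, then conclude by quoting \eqref{line3}.
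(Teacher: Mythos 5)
Your starting point is the same as the paper's (via Lemma \ref{iso1}, Equation \eqref{line3}, one must compute $\coker(p\varphi-1)$ acting on $H^1(\mathscr{C}_{u^-})=M/u^-M$), and your Step 2 would be fine if Step 1 were true. But Step 1 --- the claim that $M/u^-M$ is spanned by $[1]$, together with $[x^{i+1}]$ when $\kappa(\delta)=i\in\N$ --- is a genuine gap, and is in fact false in general. Inverting $u^-$ on the germ at $0$ forces you to divide the coefficient of $x^{n+1}$ by $\kappa(\delta)-n$ for every non-resonant $n\geq 0$, and there is no bound of the shape $v_p(\kappa(\delta)-n)=o(n)$: the weight can be a $p$-adic ``Liouville'' number with respect to $\N$. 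For instance take $\kappa(\delta)=\sum_K p^{c_K}\in\zp$ with $c_{K+1}\geq m_K^2$, where $m_K=\sum_{k\leq K}p^{c_k}$, so that $v_p(\kappa(\delta)-m_K)\geq m_K^2$. Then $f=\sum_K p^{\lceil m_K^2/2\rceil}x^{m_K+1}$ is analytic on all of $\zp$, but $f-c\mathbf{1}_{\zp}-dx^{i+1}\notin u^-\mathrm{LA}(\zp,L)$ for any $c,d$: if $u^-h$ equalled it, comparing power series on any ball $p^M\zp$ on which $h$ is analytic forces $b_{m_K}=a_{m_K+1}/(\kappa(\delta)-m_K)$, whose valuation tends to $-\infty$ quadratically, contradicting convergence on $p^M\zp$. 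So $M/u^-M$ need not be the one- or two-dimensional space you describe, the convergence bookkeeping you flag as the ``main obstacle'' cannot be carried out (the resonant degrees are not the only problem; the near-resonant ones are fatal), and the finite-dimensional eigenvalue computation of Step 2 collapses with it. Since the lemma imposes no Diophantine condition on $\kappa(\delta)$, this case must be covered.

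This is precisely why the paper's proof never computes $M/u^-M$. It invokes Colmez's decomposition (\cite[Lemme 5.9]{colmez2015}) $M=\mathrm{LA}(\zpe)\oplus(p\varphi-1)M$, resp. $M=\bigl(\mathrm{LA}(\zpe)+(p\varphi-1)M\bigr)\oplus L\cdot x^{i+1}$ in the exceptional cases: any class in $M/u^-M$ has its $\mathrm{LA}(\zpe)$-component killed because $u^-$ is surjective on $\mathrm{LA}(\zpe,L)$ (Lemma \ref{cool}), so the composite $M\xrightarrow{p\varphi-1}M\to M/u^-M$ is surjective, giving $H^2(\mathscr{C}_{u^-,\varphi})=0$ up to the single line $L\cdot x^{i+1}$ in the exceptional cases, where one only has to decide whether $x^{i+1}$ (or $\mathbf{1}_{\zp}$ when $i=-1$) lies in the image of $u^-$, which is a local expansion at $0$. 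The small-denominator issue never appears on that route: the relevant ``eigenvalues'' of $p\varphi-1$, namely $1-p\delta(p)p^{-j}$, are uniformly invertible for $j$ large, and $u^-$ is only inverted on $\zpe$, away from the singular point $x=0$ where the resonances $\kappa(\delta)-n$ live. To repair your argument you would have to replace Step 1 by this weaker surjectivity-of-the-composite statement (or impose a non-Liouville hypothesis on $\kappa(\delta)$, which the lemma does not have); your final dimension count and the identification of the generator $[x^{i+1}]$ do agree with the statement, but the route you propose to them does not go through.
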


\begin{proof}
Suppose first  $\delta(p) \not\in \left\{ p^{i} \lvert \text{ } i\geq -1 \right\}$. Note that, by \cite[Lemme 5.9]{colmez2015},
$$M = \mathrm{LA}(\zpe) \oplus (p\varphi-1)M.$$

By Lemma \ref{3.1},
$$H^{2}(\mathscr{C}_{u^{-},\varphi}) = \coker([\varphi-1]: M/u^{-}M).$$
Thus it suffices to show that the map $p\varphi-1: M/u^{-}M \to M/u^{-}M$ is surjective. This follows from the fact that
$$M \xrightarrow{p\varphi-1} M \to M/u^-M$$
is surjective (where the second map is the natural quotient map) since $u^-$ is surjective on $\mathrm{LA}(\zpe,L)$ by Lemma \ref{cool}.

Suppose now  $\delta(p)\in \left\{ p^{i} \lvert \text{ } i\geq -1 \right\}$. In this case, by \cite[Lemme 5.9]{colmez2015}, we have
\begin{equation} \label{eq:splgeh}
M = \left(\mathrm{LA}(\zpe) + (p\varphi-1)M \right) \oplus L \cdot x^{i+1},
\end{equation}
where $\mathrm{LA}(\zpe) \cap (p\varphi-1)M = L \cdot \mathbf{1}_{\zpe}x^{i+1}$. 

Suppose first $i \geq 0$. If $\kappa(\delta) \not=i$ then $(\kappa(\delta)-i)^{-1}u^{-}x^{i} = x^{i+1}$. Thus in this case the map $M \xrightarrow{p\varphi-1} M \to M/u^-M$ is surjective and the result follows. On the other hand if $\kappa(\delta) = i$ then $x^{i+1}$ is not in the image of $u^{-}$ and in this case $H^{2}(\mathscr{C}_{u^{-},\varphi})=L \cdot x^{i+1}$.

Finally consider the case $i=-1$. In this case, as $\mathbf{1}_{\zp}$ is never in the image of $u^{-}$ the result follows.
\end{proof}

\subsubsection{Calculation of $H^3(\mathscr{C}_{u^-, \varphi, a^+})$:}

At this stage, we can already deduce the following.

\begin{prop} \label{Lie3-} Let $M = \Robba^-(\delta_1, \delta_2)$.
\begin{itemize}
\item If $\delta(p) = p^i$, $i \geq -1$, and $\kappa(\delta) = i$, then $H^3(\mathscr{C}_{u^-, \varphi, a^+})$ is of dimension $1$ naturally generated by $[x^{i + 1}]$.
\item Otherwise $H^3(\mathscr{C}_{u^-, \varphi, a^+}) = 0$.
\end{itemize}
\end{prop}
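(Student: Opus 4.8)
The plan is to use the machinery of Lemma \ref{3.1}(4), which identifies $H^3(\mathscr{C}_{u^-,\varphi,a^+})$ with $H^1([a^+]\colon H^2(\mathscr{C}_{u^-,\varphi}))$, i.e. with the cokernel of the operator $a^+ + 1$ acting on $H^2(\mathscr{C}_{u^-,\varphi})$. Everything then reduces to the explicit description of $H^2(\mathscr{C}_{u^-,\varphi})$ already obtained in Lemma \ref{cool1}.

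First I would dispose of the case where $\delta(p)\notin\{p^i : i\geq -1\}$, or $\delta(p)=p^i$ with $\kappa(\delta)\neq i$: in that range Lemma \ref{cool1}(1) gives $H^2(\mathscr{C}_{u^-,\varphi})=0$, hence its cokernel under $[a^+]$ vanishes, so $H^3(\mathscr{C}_{u^-,\varphi,a^+})=0$. In the remaining case, $\delta(p)=p^i$ for some $i\geq -1$ and $\kappa(\delta)=i$, Lemma \ref{cool1}(2) gives that $H^2(\mathscr{C}_{u^-,\varphi})$ is $1$-dimensional, generated by the class $[x^{i+1}]$. Then I would compute the action of $a^+$ (or rather $a^++1$, per the morphism $[a^+]$) on this class: using the formula $(a^+f)(x)=\kappa(\delta)f(x)-xf'(x)$ from the preceding lemma, we get $a^+\cdot x^{i+1}=\kappa(\delta)x^{i+1}-(i+1)x^{i+1}=(i-(i+1))x^{i+1}=-x^{i+1}$ since $\kappa(\delta)=i$. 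Hence $(a^++1)\cdot x^{i+1}=0$, so $a^++1$ acts as zero on the one-dimensional space $H^2(\mathscr{C}_{u^-,\varphi})$, and therefore $H^1([a^+]\colon H^2(\mathscr{C}_{u^-,\varphi}))=H^2(\mathscr{C}_{u^-,\varphi})$ is $1$-dimensional, generated by $[x^{i+1}]$.

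I would then invoke Lemma \ref{3.1}(4) to conclude that $H^3(\mathscr{C}_{u^-,\varphi,a^+})$ is $1$-dimensional, naturally generated by $[x^{i+1}]$, in this case, completing the proof. The main subtlety — more bookkeeping than obstacle — is keeping track of the twist by $1$ in the morphism $[a^+]$ appearing in the definition of $\mathscr{C}_{u^-,\varphi,a^+}$ (the term $(a^++1)z$ in $Y'$ and $Z'$), which is exactly what makes the computation $\kappa(\delta)=i$ force $(a^++1)$ to kill $x^{i+1}$; this is the analogue of the "twisted action" phenomenon flagged in the remark after Proposition \ref{cohomfinal}. One should also note that the identification of the generator with $[x^{i+1}]$ is compatible with the $\tilde P$-action as recorded in Remark \ref{actioncohom1}, though for the dimension count over $L$ this compatibility is not needed.
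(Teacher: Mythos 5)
Your overall route is the same as the paper's: identify $H^3(\mathscr{C}_{u^-,\varphi,a^+})$ with the cokernel of $[a^+]$ (acting as $a^++1$) on $H^2(\mathscr{C}_{u^-,\varphi})$ via Lemma \ref{iso1}(4), describe $H^2(\mathscr{C}_{u^-,\varphi})$ by Lemma \ref{banalized}, and compute $(a^++1)$ on the generator. However, your case division misreads Lemma \ref{banalized} and leaves one subcase unproved. Lemma \ref{banalized}(1) gives $H^2(\mathscr{C}_{u^-,\varphi})=0$ only when $\delta(p)\notin\{p^i,\ i\geq -1\}$, or when $\delta(p)=p^i$ with $i\geq 0$ and $\kappa(\delta)\neq i$; the subcase $\delta(p)=p^{-1}$ with $\kappa(\delta)\neq -1$ falls under Lemma \ref{banalized}(2), where $H^2(\mathscr{C}_{u^-,\varphi})$ is one-dimensional, generated by $[x^{0}]=[\mathbf{1}_{\zp}]$ (indeed $\mathbf{1}_{\zp}$ is never in the image of $u^-$). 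So your claim that in the whole ``otherwise'' range the cokernel vanishes because $H^2(\mathscr{C}_{u^-,\varphi})$ vanishes is false as stated, and that subcase is genuinely missing from your argument.

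The repair uses exactly the tool you already wrote down: from $(a^+f)(x)=\kappa(\delta)f(x)-xf'(x)$ one gets $(a^++1)[x^{i+1}]=(\kappa(\delta)-i)[x^{i+1}]$ for every $i$, so when $\delta(p)=p^{-1}$ and $\kappa(\delta)\neq -1$ the operator $a^++1$ acts on the line $L\cdot[\mathbf{1}_{\zp}]$ by the nonzero scalar $\kappa(\delta)+1$, hence surjectively, and the cokernel (i.e. $H^3(\mathscr{C}_{u^-,\varphi,a^+})$) again vanishes. This is precisely how the paper's proof handles it: it applies the formula $(a^++1)[x^{i+1}]=(\kappa(\delta)-i)[x^{i+1}]$ uniformly for all $i\geq -1$ with $\delta(p)=p^i$, rather than splitting according to whether $H^2(\mathscr{C}_{u^-,\varphi})$ is zero. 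With this one correction your argument coincides with the paper's.
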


\begin{proof}
This follows from Lemma \ref{iso1}(4) and Lemma \ref{banalized}, by observing that the action of $[a^+]$ on $H^2(\mathscr{C}_{u^-, \varphi}) = M / (u^-, p \varphi - 1)$ is given by $a^+ + 1$, and using the formula $(a^+ + 1)[x^{i + 1}] = (\kappa(\delta) - i) [x^{i + 1}]$.
\end{proof}

\subsubsection{Calculation of $H^1(\mathscr{C}_{u^-, \varphi})$:}

The following lemma describes the kernel of $u^-$ acting on $\Robba^-(\delta_1, \delta_2)$ in the appropriate way so as to calculate (cf. Corollary \ref{cokernonr}) the left term of Equation \eqref{linee2} of Lemma \ref{3.1}. Define \footnote{Observe that, upon developing $(x / i)^{\kappa(\delta)} = \sum_{k \geq 0} {\kappa(\delta) \choose k} (x/i - 1)^n$ and observing that $v_p({\kappa(\delta) \choose k}) \geq k (\min(\kappa(\delta), 0) - \frac{1}{p - 1})$, we see that $(x / i)^{\kappa(\delta)}$ is a well defined analytic function on $i + p^n \zp$ for $n$ big enough. }
$$X_{\kappa(\delta)}:= \bigg\{ f \in \mathrm{LA(\zpe)} \lvert \hspace{1mm} f(x) = \sum_{i \in (\Z/p^{n}\Z)^{\times}} c_{i}\left(\frac{x}{i}\right)^{\kappa(\delta)}\mathbf{1}_{i+p^{n}\zp} \text{ for some } n > 0 \bigg\}. $$

\begin{lemm} \label{3.2} \label{nonregcase}
If $M = \mathscr{R}^{-}(\delta_1, \delta_2) = \mathrm{LA}(\zp, L)$. Then
\begin{enumerate}
\item If $\delta(p) \not\in \left\{ p^{i} \lvert \text{ } i\geq 0 \right\}$, or if $\delta(p) = p^i$ for some $i \geq 0$ and $\kappa(\delta) \not= i$, then
$$ M^{u^{-} = 0} = X_{\kappa(\delta)} \oplus (1-\varphi) M^{u^{-} = 0}. $$
\item Otherwise
\[ M^{u^{-} = 0} = \big ( X_{\kappa(\delta)} + (1 - \varphi)M^{u^- = 0} \big ) \oplus L\cdot x^i, \]
Furthermore $X_{\kappa(\delta)} \cap  (1 - \varphi)M^{u^- = 0}$ is the line $L \cdot \mathbf{1}_{\zpe}x^i$.
\end{enumerate}
\end{lemm}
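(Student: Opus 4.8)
\textbf{Proof proposal for Lemma \ref{3.2}.}

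The plan is to compute $M^{u^-=0}$ very explicitly, using the description $M = \mathrm{LA}(\zp,L)$ and the formula $(u^-f)(x) = \kappa(\delta)xf(x) - x^2 f'(x)$. First I would solve the differential equation $u^-f = 0$ locally: on a ball $i + p^n\zp$ with $i \in \zpe$, the equation $\kappa(\delta)xf(x) = x^2 f'(x)$ reads $f'(x)/f(x) = \kappa(\delta)/x$, whose solutions are scalar multiples of $(x/i)^{\kappa(\delta)}$ (this is a well-defined analytic function on $i + p^n\zp$ for $n$ large, as noted in the footnote defining $X_{\kappa(\delta)}$). Hence a locally analytic function on $\zpe$ killed by $u^-$ is exactly an element of $X_{\kappa(\delta)}$; that is, $(M \boxtimes \zpe)^{u^-=0} = X_{\kappa(\delta)}$. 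For functions on all of $\zp$ one must additionally analyze the local solution near $0$: on a ball $p^n\zp$ the same ODE forces $f$ to be a monomial $c\,x^{\kappa(\delta)}$, which is locally analytic at $0$ only when $\kappa(\delta) = i \in \N$, in which case the solution is $c\,x^i$. So $M^{u^-=0} = X_{\kappa(\delta)}$ if $\kappa(\delta) \notin \N$, and $M^{u^-=0} = X_{\kappa(\delta)} \oplus L\cdot x^i$ if $\kappa(\delta) = i \in \N$ (the sum being direct since $x^i$ does not vanish near $0$ while elements of $X_{\kappa(\delta)}$ do, being supported on $\zpe$).

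Next I would compute the action of $1 - \varphi$ on $M^{u^-=0}$ and extract the quotient $M^{u^-=0}/(1-\varphi)M^{u^-=0}$, i.e. $H^0(\Phi^+, M^{u^-=0} )$ twisted appropriately, which is exactly the content of \eqref{linee2}. The operator $\varphi$ acts on $\mathrm{LA}(\zp,L)$ by $(\varphi f)(x) = \delta(p) f(x/p)$ (zero off $p\zp$), and it maps $X_{\kappa(\delta)}$ into $\mathrm{LA}(\zp,L)$. The cleanest route is to imitate \cite[Lemme 5.9]{colmez2015}: decompose $M^{u^-=0}$ into the part supported on $\zpe$ (which is $X_{\kappa(\delta)}$) and iterate the identity $f = \varphi\psi f + (1-\varphi\psi)f$ to move the support toward $0$. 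One shows that every $f \in M^{u^-=0}$ is congruent modulo $(1-\varphi)M^{u^-=0}$ to an element of $X_{\kappa(\delta)}$ plus, in the resonant case, a multiple of $x^i$. When $\delta(p) \notin \{p^i : i \geq 0\}$, or $\delta(p) = p^i$ but $\kappa(\delta) \neq i$, the element $x^{i+1}$-type obstruction does not appear and the decomposition $M^{u^-=0} = X_{\kappa(\delta)} \oplus (1-\varphi)M^{u^-=0}$ holds; this amounts to checking that $1 - \varphi$ is injective with the right cokernel on the "near-zero" part, using that $1 - \delta(p)p^{-i}$ (equivalently $1-\delta(p)p^{i}$ after the twist on the kernel of $u^-$) is invertible. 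In the remaining (resonant) case one must carefully track the extra line: $\mathbf{1}_{\zpe}x^i \in X_{\kappa(\delta)}$ lands in $(1-\varphi)M^{u^-=0}$ because $(1-\varphi)x^i = \mathbf{1}_{\zpe}x^i + \mathbf{1}_{p\zp}(1 - \delta(p)p^{-i})x^i$ and $1 - \delta(p)p^{-i} = 0$ there, giving the stated intersection $X_{\kappa(\delta)} \cap (1-\varphi)M^{u^-=0} = L\cdot\mathbf{1}_{\zpe}x^i$.

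Concretely the steps are: (1) solve $u^-f = 0$ locally and globally to identify $M^{u^-=0}$; (2) record the $\varphi$-action and compute $(1-\varphi)$ on the monomials $x^j$; (3) run the Colmez-style support-shrinking dévissage to show $X_{\kappa(\delta)} + (1-\varphi)M^{u^-=0}$ exhausts $M^{u^-=0}$ (possibly up to $L\cdot x^i$); (4) determine the intersection $X_{\kappa(\delta)} \cap (1-\varphi)M^{u^-=0}$ and the directness of the sums by a support/vanishing-order argument at $0$. The main obstacle I anticipate is step (3): making the dévissage converge and verifying exactly which resonances produce an extra one-dimensional contribution requires a delicate bookkeeping of the factors $1 - \delta(p)p^{j}$ across all $j$ and an honest convergence estimate for the geometric-type series in $\varphi\psi$, precisely in the spirit of \cite[Lemme 5.9]{colmez2015} and Lemma \ref{sum2} above. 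Everything else is routine power-series manipulation, but one should double-check the twist by $\chi$ in the $A^0$-action (cf. Remark \ref{actioncohom1}) when later feeding this into \eqref{linee2}.
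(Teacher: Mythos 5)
Your step (1) — the identification $M^{u^-=0} = X_{\kappa(\delta)}$ (resp.\ $X_{\kappa(\delta)} \oplus L\cdot x^i$) — is false, and it is the foundation of the whole proposal. The operator $u^-$ is a local differential operator (equivalently, $u^-\varphi = p\varphi u^-$), so the kernel of $u^-$ on $\mathrm{LA}(\zp,L)$ contains $\varphi^m X_{\kappa(\delta)}$ for every $m\geq 0$: a function supported on the circle $p^m\zpe$ which is locally a branch of $x^{\kappa(\delta)}$ is killed by $u^-$ but lies in neither $X_{\kappa(\delta)}$ nor $L\cdot x^i$. Your argument "near $0$" implicitly assumes $f$ is analytic on the whole ball $p^n\zp$, but a locally analytic function need only be analytic on \emph{some} small ball around $0$; on the intermediate circles $p\zpe,\dots,p^{n-1}\zpe$ it can be an arbitrary local solution of the ODE. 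In fact your identification contradicts the very statement you are proving: since $u^-(1-\varphi) = (1-p\varphi)u^-$, we have $(1-\varphi)M^{u^-=0}\subseteq M^{u^-=0}$, so if $M^{u^-=0}$ were $X_{\kappa(\delta)}$ one would need $\varphi X_{\kappa(\delta)}\subseteq X_{\kappa(\delta)}+X_{\kappa(\delta)}$, which fails because $\varphi X_{\kappa(\delta)}$ is supported on $p\zpe$. The entire content of the lemma is precisely that the components of a kernel element living on the deeper circles are absorbed by $(1-\varphi)M^{u^-=0}$; with your step (1) the lemma would be either trivial or false.

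The remaining steps are the right tools but are not carried out. The paper's proof is exactly your "step (3) dévissage", made precise: write $f = f_1 + (1-\varphi)f_2$ (plus $a x^i$ in the case $\delta(p)=p^i$) using Colmez's decomposition $M = \mathrm{LA}(\zpe)\oplus(1-\alpha\varphi)M$ with $\alpha = \delta(p)$, then apply $u^-$ and use $u^-(1-\varphi) = (1-p\varphi)u^-$ together with the \emph{same} decomposition for $\alpha = p\,\delta(p)$ to conclude $u^-f_1 = u^-f_2 = 0$ (or, in the resonant cases, to solve $u^-f_2 = -b\,x^{i+1}$ explicitly, which is possible iff $\kappa(\delta)\neq i$, and to see that the set of $f_1 - (1-\varphi)\frac{x^i}{\kappa(\delta)-i}$ recovers $X_{\kappa(\delta)}$); the directness and the intersection $X_{\kappa(\delta)}\cap(1-\varphi)M^{u^-=0} = L\cdot\mathbf{1}_{\zpe}x^i$ are then extracted by the same bookkeeping. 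Your computation that $(1-\varphi)x^i = \mathbf{1}_{\zpe}x^i$ when $\delta(p)=p^i$, $\kappa(\delta)=i$ is correct and matches the paper's final remark, but by itself it does not repair the gap: as written, the proposal rests on an incorrect description of $M^{u^-=0}$ and leaves the exhaustion argument, which is the actual proof, unexecuted.
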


\begin{proof}
(1): First suppose that $\delta(p) \neq p^{-1}$. Take $f \in M^{u^{-} = 0}$. Since $\delta(p) \not\in \left\{ p^{i} \lvert \text{ } i\geq -1 \right\}$, by \cite[Lemme 5.9]{colmez2015}, we can uniquely write $f = f_{1} + (1-\varphi)f_{2}$ where $f_{1}$ is supported on $\zpe$ and $f_2 \in M$. Thus $0 = u^{-}f = u^{-}f_1 + u^{-}(1-\varphi)f_{2} = u^{-}f_1 + (1-p\varphi)u^{-}f_2$. We deduce, again using \cite[Lemme 5.9]{colmez2015}, that $u^{-}f_1 = u^{-}f_2 = 0$. Solving the differential equation $u^{-}f_1 = 0$ gives precisely $f_1 \in X_{\kappa(\delta)}$. 

Suppose now $\delta(p) = p^{-1}$. Repeating the same procedure as above, since $p \delta(p) = 1$, in this case \cite[Lemme 5.9]{colmez2015} gives   $u^{-}f_{1}=b\mathbf{1}_{\zpe}$ and $(1-p\varphi)u^{-}f_2 = -b\mathbf{1}_{\zpe}$ for some $b \in L$. This implies $u^{-}f_2 = -b \mathbf{1}_\zp$. This equation has no solution unless $b = 0$ (as can be easily seen upon expanding in power series around zero), in which case we obtain again $u^- f_1 = u^- f_2 = 0$.

Suppose finally that $\delta(p) = p^{i}$ for some $i\geq 0$ and $\kappa(\delta) \neq i$. By \cite[Lemme 5.9]{colmez2015}, we can write $f = f_{1} + (1-\varphi)f_{2} + ax^i$, where $f_{1}$ is supported on $\zpe$, $f_2 \in M$ and $a \in L$. Thus $0 = u^{-}f = u^{-}f_1 + u^{-}(1-\varphi)f_{2} + au^{-}x^i = u^{-}f_1 + (1-p\varphi)u^{-}f_2 + a(\kappa(\delta)-i)x^{i+1}$. The latter implies
\begin{align}
\label{eqq1} 0 &= u^{-}f_1 + (1-p\varphi)u^{-}f_2 \\
\label{eqq2} 0 &= a(\kappa(\delta)-i)x^{i+1}
\end{align} 
Again by \cite[Lemme 5.9]{colmez2015} (this time with $\alpha = p \delta(p) = p^{i + 1}$), the first equation implies $u^{-}f_1 = b\mathbf{1}_{\zpe}x^{i+1}$ and $(1-p\varphi)u^{-}f_2 = -b\mathbf{1}_{\zpe}x^{i+1}$ for some $b \in L$. This implies $u^{-}f_2 = -b x^{i+1}$. But all the solutions to the equation $u^{-}f_2 = -b x^{i+1}$ are of the form $ f_2 = - b \frac{x^i}{\kappa(\delta) - i} + f_2'$ where $f_2' \in M^{u^- = 0}$ is any element. Observe finally that the set \[ \{ b \cdot \big ( f_1 - (1 - \varphi) \frac{x^i}{\kappa(\delta) - i} \big ) \lvert \hspace{1mm} b \in L, \hspace{1mm} f_1 \in \mathrm{LA}(\zpe, L), \hspace{1mm} u^- f_1 = \mathbf{1}_\zpe x^{i + 1} \big \} \] is exactly $X_{\kappa(\delta)}$. On the other hand, since $\kappa(\delta) \not= i$, equation \eqref{eqq2} forces $a=0$. This shows that $M^{u^- = 0}$ is the sum of $X_{\kappa(\delta)}$ and $(1 - \varphi) M^{u^- = 0}$.

We now show that it is indeed a direct sum: if $\delta(p)$ is not equal to $p^i$ for some $i \geq 0$ then this is immediate. Suppose then that $\delta(p) = p^i$, $i \geq 0$ and $\kappa(\delta) \neq i$ and suppose that $f_1 = (1 - \varphi) g$ for some $g \in M^{u^- = 0}, f_1 \in X_{\kappa(\delta)}$. Then by the same lemma \cite[Lemme 5.9]{colmez2015} we get
\begin{equation} \label{eqq3}
f_1 = b' \mathbf{1}_\zpe x^i = (1 - \varphi) g
\end{equation} for some $b' \in L$. Then $0 = u^- f_1 = b' (\kappa(\delta) - 1)\mathbf{1}_\zpe x^{i + 1}$ so that $b' =0$ and hence $g = 0$ and $f_1 = 0$ as well.


(2): This follows from the same arguments as last two paragraphs, noting the following differences. On the one hand, if $\kappa(\delta) = i$, then the value $a$ in equation \eqref{eqq2} is free to take any value in $L$. On the other hand, if $\kappa(\delta) = i$, then there is no solution to the equation $u^{-}f_2 = -b x^{i+1}$ unless $b = 0$ (expand around zero). In that case equation \eqref{eqq1} gives $u^- f_1 = u^- f_2 = 0$. Finally, note that $L \cdot \mathbf{1}_{\zpe}x^i \in (1-\varphi)M^{u^{-} = 0}$, whence the result.
\end{proof}

Observing that $\mathrm{coker}([\varphi - 1] \colon H^0(\mathscr{C}_{u^-}) = \mathrm{coker}(\varphi - 1 \colon M^{u^- = 0})$, we get the following:

\begin{coro} \label{cokernonr}
If $M = \mathscr{R}^{-}(\delta_1, \delta_2)$ then
\begin{enumerate}
\item If $\delta(p) \not\in \left\{ p^{i} \lvert \text{ } i\geq 0 \right\}$, or if $\delta(p) = p^i$ for some $i \geq 0$ and $\kappa(\delta) \not= i$, then
\[ H^1([\varphi - 1] \colon H^0(\mathscr{C}_{u^-})) = X_{\kappa(\delta)}. \]
\item Otherwise
\[ H^1([\varphi - 1] \colon H^0(\mathscr{C}_{u^-})) = \big ( X_{\kappa(\delta)} / L \cdot \mathbf{1}_{\zpe}x^i \big )  \oplus L\cdot [x^i]. \]
\end{enumerate}
\end{coro}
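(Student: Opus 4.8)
The plan is to deduce this directly from Lemma \ref{3.2}, once the left-hand side is identified with a concrete cokernel. Recall from the conventions recorded after Lemma \ref{iso1} that $H^1([\varphi-1]\colon H^0(\mathscr{C}_{u^{-}}))$ denotes the cokernel of the endomorphism of $H^0(\mathscr{C}_{u^{-}})$ induced by the chain map $[\varphi-1]$. Now $H^0(\mathscr{C}_{u^{-}}) = \ker(u^{-}\colon M\to M) = M^{u^{-}=0}$, and the degree-zero component of $[\varphi-1]$ is $\varphi-1$; the commutation relation $p\varphi u^{-} = u^{-}\varphi$ shows that $\varphi$, hence $\varphi-1$, stabilizes $M^{u^{-}=0}$. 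Therefore
\[
H^1([\varphi-1]\colon H^0(\mathscr{C}_{u^{-}})) \;=\; M^{u^{-}=0}\big/(1-\varphi)M^{u^{-}=0},
\]
which is exactly the observation stated just above the corollary.

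For part (1), I would invoke Lemma \ref{3.2}(1): under its hypotheses one has $M^{u^{-}=0} = X_{\kappa(\delta)} \oplus (1-\varphi)M^{u^{-}=0}$, so quotienting by the second summand gives $H^1([\varphi-1]\colon H^0(\mathscr{C}_{u^{-}})) = X_{\kappa(\delta)}$ at once. For part (2), Lemma \ref{3.2}(2) provides the decomposition $M^{u^{-}=0} = \bigl(X_{\kappa(\delta)} + (1-\varphi)M^{u^{-}=0}\bigr)\oplus L\cdot x^i$ together with the identification $X_{\kappa(\delta)}\cap(1-\varphi)M^{u^{-}=0} = L\cdot\mathbf{1}_{\zpe}x^i$. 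Dividing by $(1-\varphi)M^{u^{-}=0}$, the summand $X_{\kappa(\delta)} + (1-\varphi)M^{u^{-}=0}$ contributes $X_{\kappa(\delta)}\big/\bigl(X_{\kappa(\delta)}\cap(1-\varphi)M^{u^{-}=0}\bigr) = X_{\kappa(\delta)}/L\cdot\mathbf{1}_{\zpe}x^i$, while $L\cdot x^i$ maps isomorphically onto the class $L\cdot[x^i]$, yielding the stated description.

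There is no real obstacle in this corollary: all the work is contained in Lemma \ref{3.2} (which itself rests on solving the differential equation $u^{-}f=0$ locally and on \cite[Lemme 5.9]{colmez2015}). The only points requiring a moment's care are that the endomorphism induced on $H^0(\mathscr{C}_{u^{-}})$ is $\varphi-1$, not the degree-one component $p\varphi-1$, and that it does preserve $M^{u^{-}=0}$ — both immediate from the shape of $[\varphi-1]$ and the relation $p\varphi u^{-}=u^{-}\varphi$. If the $\tilde P$-action on these groups is needed later it is read off from Remark \ref{actioncohom1}, but that is not part of the present statement.
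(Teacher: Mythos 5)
Your proposal is correct and follows the paper's own route: the paper likewise identifies $H^1([\varphi-1]\colon H^0(\mathscr{C}_{u^-}))$ with $\coker(\varphi-1\colon M^{u^-=0})$ and then reads off both cases as an immediate consequence of Lemma \ref{nonregcase}. Your extra remarks (that the induced map in degree zero is $\varphi-1$ and that it preserves $M^{u^-=0}$ via $p\varphi u^-=u^-\varphi$) are accurate but add nothing beyond the paper's one-line argument.
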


\begin{proof}
This is an immediate consequence of Lemma \ref{nonregcase}.
\end{proof}

We now proceed to calculate the right side term of Equation \eqref{linee2} of Lemma \ref{3.1}. Note that the action of $[\varphi - 1]$ on $H^1(\mathscr{C}_{u^-}) = M / u^- M$ is given by $p \varphi - 1$.

\begin{lemm} \label{3.3}  \label{injnonreg}
If $M = \mathscr{R}^{-}(\delta_1, \delta_2)$ then
\begin{enumerate}
\item If $\delta(p) \not\in \left\{ p^{i} \lvert \text{ } i\geq -1 \right\}$, or if $\delta(p) = p^i$, $i \geq 0$, and $\kappa(\delta) \neq i$, then $H^0([\varphi - 1] \colon H^1(\mathscr{C}_{u^-})) = 0$.
\item Otherwise $H^0([\varphi-1] \colon H^1(\mathscr{C}_{u^-})) = L \cdot [x^{i+1}]$.
\end{enumerate}
\end{lemm}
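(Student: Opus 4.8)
The plan is to compute $H^0([\varphi-1]\colon H^1(\mathscr{C}_{u^-})) = \ker\!\big(p\varphi-1\colon M/u^- M\to M/u^- M\big)$ by reducing the whole quotient $M/u^- M$ to a computation of germs at $0$. First, since the operator $u^- = \kappa(\delta)x - x^2\frac{d}{dx}$ on $M = \mathrm{LA}(\zp,L)$ acts locally, the decomposition $M = \mathrm{LA}(p^n\zp,L)\oplus \mathrm{LA}(\zp\setminus p^n\zp,L)$ (extension by zero) is $u^-$-stable for every $n\geq 0$; moreover $u^-$ is surjective on $\mathrm{LA}(p^n\zpe,L)$ for all $n$, by Lemma \ref{cool} together with the rescaling $x\mapsto p^n x$, under which $u^-$ becomes $p^n u^-$ and $p^n\in L^\times$. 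Hence the part of any $f\in M$ supported away from $0$ lies in $u^- M$, so $M/u^-M$ is identified with the direct limit over $n$ of $\mathrm{LA}(p^n\zp,L)/u^-\mathrm{LA}(p^n\zp,L)$, i.e. with germs of analytic functions at $0$ modulo $u^-$.

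Second, on such a germ, writing $f=\sum_{k\geq 0}a_k x^k$, one computes $u^- f = \sum_{k\geq 0}(\kappa(\delta)-k)\,a_k\,x^{k+1}$; the only subtlety is that, when solving $u^- g = h$ for $h$ with vanishing constant term, dividing by $\kappa(\delta)-k$ produces a series that converges only on the slightly smaller ball $p^{n+1}\zp$ (the gained $k$-worth of valuation absorbs $v_p(\kappa(\delta)-k)$, which grows at most logarithmically), after which one patches with surjectivity of $u^-$ on $\mathrm{LA}(p^n\zpe,L)$ — exactly as in the proof of Lemma \ref{nonregcase} and in \cite[Lemme 5.9]{colmez2015}. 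Consequently: if $\kappa(\delta)$ is not a non-negative integer, then $u^-$ is injective on germs with image the germs vanishing at $0$, so $M/u^- M = L\cdot[x^0]$; while if $\kappa(\delta)$ is a non-negative integer $j$, then $u^- x^j = 0$ and $x^{j+1}$ is not in the image, so $M/u^- M = L\cdot[x^0]\oplus L\cdot[x^{j+1}]$. Finally, $p\varphi$ acts on $[x^k]$ by multiplication by $p\delta(p)\,p^{-k}$, since $(p\varphi)(\mathbf{1}_{p^m\zp}x^k) = p\delta(p)\,p^{-k}\,\mathbf{1}_{p^{m+1}\zp}x^k$ and $\mathbf{1}_{p^{m+1}\zp}x^k\equiv x^k$ modulo $u^- M$.

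It then remains only to read off $\ker(p\varphi-1)$. The eigenvalue of $p\varphi$ on $[x^0]$ is $p\delta(p)$, equal to $1$ iff $\delta(p)=p^{-1}$; the eigenvalue on $[x^{j+1}]$ (present precisely when $\kappa(\delta)=j$ is a non-negative integer) is $p\delta(p)p^{-(j+1)}$, equal to $1$ iff $\delta(p)=p^{j}=p^{\kappa(\delta)}$. These two conditions are mutually exclusive, so the kernel is at most one-dimensional; it is nonzero exactly when $\delta(p)=p^{-1}$ (then $i=-1$ and $\ker = L\cdot[x^0]=L\cdot[x^{i+1}]$) or when $\delta(p)=p^i$ with $i\geq 0$ and $\kappa(\delta)=i$ (then $\ker = L\cdot[x^{i+1}]$), which is case (2); in every other case — precisely the hypotheses of case (1) — the kernel vanishes.

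The main obstacle is the bookkeeping in identifying $M/u^- M$ with germs at $0$, specifically the convergence estimate that forces one to solve $u^- g = h$ on the smaller ball $p^{n+1}\zp$ and then glue using the surjectivity of $u^-$ away from $0$; everything else is a short diagonalization of $p\varphi$ on an explicit one- or two-dimensional space.
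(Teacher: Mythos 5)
Your reduction of $M/u^-M$ to germs at $0$ is sound as far as it goes ($u^-$ is a local operator, and it is surjective on $\mathrm{LA}(p^m\zpe,L)$ for every $m$ by rescaling Lemma \ref{cool}), but the next step -- that germs modulo $u^-$ are spanned by $[x^0]$, plus $[x^{j+1}]$ when $\kappa(\delta)=j\in\N$ -- has a genuine gap. Solving $u^-g=h$ on power series forces $a_k=b_{k+1}/(\kappa(\delta)-k)$, and for $g$ to converge on some ball you need roughly $k-v_p(\kappa(\delta)-k)\to\infty$; your assertion that $v_p(\kappa(\delta)-k)$ grows at most logarithmically is false for a general weight. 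Take for instance $\kappa(\delta)=\lim_j k_j\in\zp\setminus\N$ with $k_1=1$ and $k_{j+1}=k_j+p^{k_j^2}$, so that $v_p(\kappa(\delta)-k_j)=k_j^2$; choosing $h=\sum_{k\geq 1}b_kx^k$ convergent on $p^n\zp$ with $v_p(b_{k+1})$ growing only linearly, the unique formal solution of $u^-g=h$ (unique because $\kappa(\delta)\notin\N$ makes $u^-$ injective on power series) converges on no ball $p^N\zp$, so $[h]$ is a class of $M/u^-M$ outside $L\cdot[x^0]$. Hence $M/u^-M$ can be infinite-dimensional, and diagonalising $p\varphi$ on a one- or two-dimensional subquotient does not compute $\ker(p\varphi-1\colon M/u^-M)$: precisely in case (1), where the weight is unconstrained, you must still rule out kernel classes coming from this extra part. (Your argument is fine when $\kappa(\delta)\in\N$, since then the denominators $\kappa(\delta)-k$ are nonzero integers and the logarithmic bound holds; your eigenvalue computations $p\delta(p)p^{-k}$ on $[x^k]$ and the verification that $[x^{i+1}]$ lies in the kernel in case (2) are also correct.)

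The paper sidesteps the structure of $M/u^-M$ entirely: given $(1-p\varphi)f=u^-h$, it decomposes $h=h_1+(1-\varphi)h_2$ (plus $ax^i$ when $\delta(p)=p^i$) with $h_1$ supported on $\zpe$, using \cite[Lemme 5.9]{colmez2015} applied to $1-\delta(p)\varphi$ on $\mathrm{LA}(\zp,L)$, then applies $u^-$ and exploits the injectivity of $1-p\delta(p)\varphi$ to conclude $f\in u^-M$ in case (1), respectively $f\in u^-M+L\cdot x^{i+1}$ in case (2). To repair your proof you would need an analogous argument controlling the interaction of $\varphi$ with the ``non-convergent'' part of $M/u^-M$, i.e.\ exactly the step your approach currently omits.
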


\begin{proof}
(1): Suppose first that $\delta(p) \not\in \left\{ p^{i} \lvert \text{ } i\geq -1 \right\}$. Take $f \in M$ and suppose that $(1-p\varphi)f = u^{-}h$ for some $h \in M$. We write $h = h_1 + (1-\varphi)h_{2}$ with $h_1$ supported on $\zpe$. Then $(1-p\varphi)f = u^{-}h = u^{-}h_1 + (1-p\varphi)u^{-}h_2$. By uniqueness and by injectivity of $(1 - p \varphi)$ we obtain $f = u^{-}h_2$.

Now suppose $\delta(p) = p^{i}$ for some $i \geq 0$ and $\kappa(\delta) \not= i$. Take $f \in M$ and suppose that $(1-p\varphi)f = u^{-}h$ for some $h \in M$. We write $h = h_1 + (1-\varphi)h_{2} + ax^i$ with $h_1$ supported on $\zpe$ and $a \in L$. Then $(1-p\varphi)f = u^{-}h = u^{-}h_1 + (1-p\varphi)u^{-}h_2 + a(\kappa(\delta)-i)x^{i+1}$. This forces $a=0$ and $(1-p\varphi)(f-u^{-}h_2) = b\mathbf{1}_{\zpe}x^{i+1}$ for some $b \in L$. Hence $f-u^{-}h_2 = b x^{i+1}$ and thus $f = u^{-}\left(h_{2}+ \frac{b}{\kappa(\delta)-i}x^i\right)$.

(2): Suppose first $\delta(p) = p^{-1}$. Take $f \in M$ and suppose that $(1-p\varphi)f = u^{-}h$ for some $h \in M$. We write $h = h_1 + (1-\varphi)h_{2}$ with $h_1$ supported on $\zpe$. Then $(1-p\varphi)f = u^{-}h = u^{-}h_1 + (1-p\varphi)u^{-}h_2$. Then $u^{-}h_{1} = b\mathbf{1}_{\zpe} = (1-p\varphi)(f-u^{-}h_2)$. The last equality implies that $f-u^{-}h_{2} = b \mathbf{1}_\zp$ and hence $f =  b \mathbf{1}_\zp$ modulo $u^-$. In this case, if $b \not= 0$, then $b \mathbf{1}_\zp$ is not in the image of $u^{-}$. Hence 
$$\ker(p\varphi-1: M/u^{-}M) = L \cdot [\mathbf{1}_{\zp}].$$

Finally if $\delta(p) = p^{i}$ for some $i \geq 0$ and $\kappa(\delta) = i$, then the question is whether $x^{i+1}$ is in the image of $u^{-}$. Indeed taking $f(x) \in \mathrm{LA}(\zp,L)$ and expanding $u^{-}f$ in a small ball around $x=0$, we see that the coefficient of $x^{i+1}$ is $0$. Hence $x^{i+1}$ is not in the image of $u^{-}$, as we have already pointed out. This completes the proof. 
\end{proof}

We can at this stage easily deduce the following corollary, which gives a complete description of $H^1(\mathscr{C}_{u^-, \varphi})$:

\begin{prop} \label{Hline2}
Let $M = \mathscr{R}^{-}(\delta_1, \delta_2)$. Then
\begin{enumerate}
\item If $\delta(p) \not\in \left\{ p^{i} \lvert \text{ } i\geq -1 \right\}$, or if $\delta(p) = p^i$, $i \geq 0$, and $\kappa(\delta) \neq i$, then $H^{1}(\mathscr{C}_{u^{-},\varphi}) = X_{\kappa(\delta)}$.
\item If $\delta(p) = p^{-1}$, then $H^{1}(\mathscr{C}_{u^{-},\varphi})$ lives in an exact sequence
\[ 0 \to X_{\kappa(\delta)} \to H^{1}(\mathscr{C}_{u^{-},\varphi}) \to L \cdot [\mathbf{1}_\zp] \to 0. \]
\item If $\delta(p) = p^i$, $i \geq 0$, and $\kappa(\delta) = i$ then $H^{1}(\mathscr{C}_{u^{-},\varphi})$ lives in an exact sequence
\[ 0 \to \big ( X_{\kappa(\delta)} / L \cdot \mathbf{1}_\zpe x^i \big ) \oplus L \cdot [x^i] \to H^{1}(\mathscr{C}_{u^{-},\varphi}) \to L \cdot [x^{i + 1}] \to 0. \]
\end{enumerate}
\end{prop}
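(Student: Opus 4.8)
The plan is to feed the two computations already carried out into the short exact sequence \eqref{linee2} of Lemma \ref{3.1},
\[ 0 \to H^1([\varphi - 1] \colon H^0(\mathscr{C}_{u^-})) \to H^1(\mathscr{C}_{u^-, \varphi}) \to H^0([\varphi - 1] \colon H^1(\mathscr{C}_{u^-})) \to 0, \]
and simply match up the case divisions. Recall that the left-hand term is $\mathrm{coker}(\varphi - 1 \colon M^{u^- = 0})$, which is described by Corollary \ref{cokernonr}, and that $H^1(\mathscr{C}_{u^-}) = M/u^- M$ with $[\varphi-1]$ acting as $p\varphi - 1$, so the right-hand term is $\ker(p\varphi - 1 \colon M/u^- M)$, described by Lemma \ref{injnonreg}. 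Thus no new computation is needed; the only work is bookkeeping, plus keeping track of the identification $[x^0] = [\mathbf{1}_\zp]$ when $i = -1$.

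First I would treat case (1), i.e.\ $\delta(p) \notin \{ p^i : i \geq -1 \}$, or $\delta(p) = p^i$ with $i \geq 0$ and $\kappa(\delta) \neq i$. This lies inside case (1) of both Corollary \ref{cokernonr} (the hypothesis $\delta(p) \notin \{ p^i : i \geq 0 \}$ there is weaker than $\delta(p) \notin \{ p^i : i \geq -1 \}$) and Lemma \ref{injnonreg}, so the left term of \eqref{linee2} is $X_{\kappa(\delta)}$ and the right term vanishes, giving $H^1(\mathscr{C}_{u^-,\varphi}) = X_{\kappa(\delta)}$.

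Next, for case (2), $\delta(p) = p^{-1}$, Corollary \ref{cokernonr}(1) still applies (as $p^{-1} \notin \{ p^i : i \geq 0 \}$) and yields left term $X_{\kappa(\delta)}$, while Lemma \ref{injnonreg}(2) with $i = -1$ gives right term $L \cdot [x^{0}] = L \cdot [\mathbf{1}_\zp]$; so \eqref{linee2} becomes $0 \to X_{\kappa(\delta)} \to H^1(\mathscr{C}_{u^-,\varphi}) \to L \cdot [\mathbf{1}_\zp] \to 0$. Finally, for case (3), $\delta(p) = p^i$ with $i \geq 0$ and $\kappa(\delta) = i$, Corollary \ref{cokernonr}(2) gives left term $(X_{\kappa(\delta)} / L \cdot \mathbf{1}_\zpe x^i) \oplus L \cdot [x^i]$ and Lemma \ref{injnonreg}(2) gives right term $L \cdot [x^{i+1}]$, which is exactly the asserted exact sequence.

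Since every ingredient is already established, there is no genuine obstacle here; the only point requiring attention is verifying that the coarser dichotomy at $\delta(p) = p^{-1}$ in Corollary \ref{cokernonr} and Lemma \ref{injnonreg} is compatible with isolating it as its own case in the statement, and the harmless index shift $[x^{i+1}] = [\mathbf{1}_\zp]$ for $i = -1$.
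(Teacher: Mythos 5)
Your proof is correct and coincides with the paper's own argument, which deduces Proposition \ref{Hline2} directly from the exact sequence \eqref{linee2} of Lemma \ref{3.1} together with Corollary \ref{cokernonr} and Lemma \ref{injnonreg}; your case-matching (including the observation that $\delta(p)=p^{-1}$ falls under Corollary \ref{cokernonr}(1) but Lemma \ref{injnonreg}(2), and the identification $[x^{0}]=[\mathbf{1}_{\zp}]$) is exactly the bookkeeping the paper leaves implicit.
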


\begin{proof}
This is an immediate consequence of Lemmas \ref{3.1}, \ref{injnonreg} and Corollary \ref{cokernonr}.
\end{proof}

\subsubsection{Calculation of $H^1(\mathscr{C}_{u^-, \varphi, \gamma})$.}

We have already explicitly calculated the second exact sequence \eqref{linee2} of Lemma \ref{iso1} in Proposition \ref{Hline2}. The left hand side term of Equation \eqref{linee1} is easy to deal with:


\begin{lemm}\label{lem:triv}
If $M = \Robba^-(\delta_1, \delta_2)$ then $H^0(\mathscr{C}_{u^-, \varphi}) = 0$.
\end{lemm}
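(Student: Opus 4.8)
The statement $H^0(\mathscr{C}_{u^-,\varphi}) = 0$ for $M = \Robba^-(\delta_1,\delta_2)$ unwinds, by the definition of the complex $\mathscr{C}_{u^-,\varphi}$ (where $E(x) = (u^- x, (\varphi-1)x)$), to the claim that $M^{u^- = 0,\ \varphi = 1} = 0$, i.e.\ that there is no nonzero $f \in \mathrm{LA}(\zp,L)$ annihilated by $u^-$ and fixed by the twisted $\varphi$. The plan is to combine the description of $M^{u^- = 0}$ obtained in Lemma \ref{3.2}/Lemma \ref{nonregcase} with the injectivity of $1 - \delta(p)\varphi$ on $\mathrm{LA}(\zp,L)$ already recorded in the proof of Proposition \ref{Lie0-} and in Lemma \ref{sum}(1).

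Concretely, first I would observe that $f \in M^{u^- = 0, \varphi = 1}$ forces $(1 - \delta(p)\varphi) f = 0$, but the action of $\varphi$ on $\mathscr{R}^-$ sends $f$ to $\mathbf 1_{p\zp}\, \delta(p) f(x/p)$, so iterating $\varphi^n f = f$ shows $f$ is supported on $p^n\zp$ for every $n \geq 0$; since $\bigcap_n p^n\zp = \{0\}$, continuity gives $f = 0$. This is exactly the injectivity argument from Lemma \ref{sum}(1) (with $\alpha = \delta(p)$), and it is insensitive to the value of $\delta(p)$ or $\kappa(\delta)$, so it covers all cases uniformly.

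Alternatively, if one prefers to route through the structural results already proved: $H^0(\mathscr{C}_{u^-,\varphi}) = H^0([\varphi-1]\colon H^0(\mathscr{C}_{u^-})) = \ker(\varphi - 1 \colon M^{u^- = 0})$, and by Lemma \ref{nonregcase} every element of $M^{u^- = 0}$ lies in $X_{\kappa(\delta)} \oplus (1-\varphi)M^{u^- = 0}$ (or the analogous decomposition in the exceptional cases). A $\varphi$-fixed vector in this space, being in the kernel of $1-\varphi$ hence of $1 - \delta(p)\varphi$ up to the scalar adjustment, must again vanish by the support argument; the point is simply that $1 - \varphi$ acting on $\mathrm{LA}(\zp,L)$ (with the $\delta(p)$-twist absorbed) has trivial kernel on locally analytic functions.

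I do not expect a genuine obstacle here: the lemma is a one-line consequence of injectivity of $1 - \delta(p)\varphi$, which has already been established. The only mild care needed is to make sure the statement is phrased for the \emph{twisted} $\varphi$-action on $\Robba^-(\delta_1,\delta_2)$ (so the relevant operator is $1 - \varphi$ in the internal normalization, equivalently $1 - \delta(p)\varphi$ on the underlying $\mathrm{LA}(\zp,L)$), and to cite Proposition \ref{Lie0-} or Lemma \ref{sum}(1) rather than reprove the support argument. So the proof will read: ``This follows immediately from the injectivity of $1 - \delta(p)\varphi$ on $\mathrm{LA}(\zp,L)$, exactly as in the proof of Proposition \ref{Lie0-}, since $H^0(\mathscr{C}_{u^-,\varphi}) \subseteq M^{\varphi = 1} = 0$.''
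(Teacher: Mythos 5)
Your proposal is correct and matches the paper's argument: the paper proves Lemma \ref{lem:triv} by noting that $H^0(\mathscr{C}_{u^-,\varphi}) \subseteq M^{\varphi=1}$ and invoking the injectivity of $\delta(p)\varphi - 1$ on $\mathrm{LA}(\zp,L)$ from Lemma \ref{sum}(1), exactly as in your main route. The extra support argument and the alternative via Lemma \ref{nonregcase} are harmless but unnecessary.
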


\begin{proof}
This follows immediately from the injectivity of $\delta(p) \varphi - 1$ on $\mathrm{LA}(\zp, L)$, cf. Lemma \ref{sum}(1).
\end{proof}

We calculate the kernel of $[a^+]$ on $\ker([\varphi-1] \colon H^1(\mathscr{C}_{u^-}))$:

\begin{lemm} \label{lem:kera}
If $M = \mathscr{R}^{-}(\delta_1, \delta_2)$ then
\begin{enumerate}
\item If $\delta(p) \not\in \left\{ p^{i} \lvert \text{ } i\geq - 1 \right\}$, or if $\delta(p) = p^{i}$ for some $i \geq -1$ and $\kappa(\delta) \not= i$, then $[a^+]$ on $H^0([\varphi-1] \colon H^1(\mathscr{C}_{u^-}))$ is injective.
\item Otherwise the kernel of $[a^+]$ on $H^0([\varphi-1] \colon H^1(\mathscr{C}_{u^-}))$ is (naturally isomorphic to) $L \cdot [x^{i+1}]$.
\end{enumerate}
\end{lemm}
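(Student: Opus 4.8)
\textbf{Proof plan for Lemma \ref{lem:kera}.}

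The plan is to make the action of $[a^+]$ on $H^0([\varphi-1]\colon H^1(\mathscr{C}_{u^-}))$ completely explicit and then read off the kernel from Lemma \ref{3.3}. Recall from Lemma \ref{3.1}(2), equation \eqref{linee2}, and Lemma \ref{3.3} that $H^0([\varphi-1]\colon H^1(\mathscr{C}_{u^-}))$ is the submodule $\ker(p\varphi-1 \colon M/u^-M)$ of $H^1(\mathscr{C}_{u^-}) = M/u^-M$, and that it is either $0$ (case (1)) or the line $L\cdot[x^{i+1}]$ (case (2), i.e. $\delta(p)=p^{-1}$ with generator $[\mathbf{1}_\zp]=[x^0]$, or $\delta(p)=p^i$ with $i\geq 0$ and $\kappa(\delta)=i$ with generator $[x^{i+1}]$). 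In case (1) the module is zero, so $[a^+]$ is trivially injective and there is nothing to prove. So the entire content is the analysis of case (2).

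In case (2) I would use the formula $(a^+f)(x) = \kappa(\delta)f(x) - xf'(x)$, which gives $a^+(x^j) = (\kappa(\delta)-j)x^j$ for every $j\geq 0$. For $\delta(p)=p^{-1}$ (so $i=-1$, generator $[x^0]=[\mathbf{1}_\zp]$): since $\kappa(\delta) = i = -1$ in the relevant subcase being handled here (note the hypothesis in (2) is precisely that $\kappa(\delta)=i$, or $\delta(p)=p^{-1}$), we must be slightly careful --- the statement lumps $\delta(p)=p^{-1}$ together, and there $a^+(\mathbf{1}_\zp) = \kappa(\delta)\mathbf{1}_\zp$, so $[a^+]$ kills $[\mathbf{1}_\zp] = [x^{i+1}]$ exactly when $\kappa(\delta)=0=i+1$, i.e. when the weight matches. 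I would verify that in all the subcases grouped under (2) one has $a^+\cdot(\text{generator}) = (\kappa(\delta)-(i+1))\cdot(\text{generator})$ modulo $u^-M$, and that the scalar $\kappa(\delta)-(i+1)$ vanishes: for $\delta(p)=p^i$, $i\geq 0$, with $\kappa(\delta)=i$, the generator is $[x^{i+1}]$ and $a^+(x^{i+1}) = (\kappa(\delta)-i-1)x^{i+1} = -x^{i+1}$ --- wait, this is $-x^{i+1}\neq 0$. So here I must instead check whether $x^{i+1}$, or rather $a^+(x^{i+1})=-x^{i+1}$, lies in $u^-M$: but we showed in Lemma \ref{3.3} that $x^{i+1}\notin u^-M$ precisely in this case, so $[a^+]$ would \emph{not} kill $[x^{i+1}]$. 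This tension signals that the generator of $\ker(p\varphi-1\colon M/u^-M)$ must be adjusted by an element of $M^{u^-=0}$ on which $a^+$ acts nontrivially, so that the adjusted representative is $[a^+]$-closed; concretely I expect the right representative is $[x^{i+1}]$ itself but with the $[a^+]$-action on the spectral-sequence term twisted by $\chi$ (cf. Remark \ref{actioncohom1}, where the $A^0$-action on the terms of \eqref{line3} and related terms is twisted by $\chi$ via $\sigma_a^{-1}u^-\sigma_a = au^-$), so that the effective operator on $H^1(\mathscr{C}_{u^-})$ relevant to $[a^+]$ is $a^++1$ rather than $a^+$, giving $(a^++1)(x^{i+1}) = (\kappa(\delta)-i)x^{i+1}=0$ when $\kappa(\delta)=i$.

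Thus the key steps, in order, are: (i) dispatch case (1) trivially since the module is zero; (ii) in case (2), identify the one-dimensional module $H^0([\varphi-1]\colon H^1(\mathscr{C}_{u^-}))$ and its explicit generator from Lemma \ref{3.3} and its proof; (iii) compute the relevant operator induced by $[a^+]$ on this line, being careful to use the \emph{correct} twisted action coming from the Hochschild--Serre / Lie-algebra bookkeeping (Remark \ref{actioncohom1}), namely $a^++1$ on the $H^1(\mathscr{C}_{u^-})$-factor; (iv) evaluate the resulting scalar $\kappa(\delta)-i$ (resp. the analogous scalar for $\delta(p)=p^{-1}$) and observe it is $0$ under the hypothesis of (2), hence the kernel is the whole line $L\cdot[x^{i+1}]$, while in case (1) the relevant scalar is a unit, hence injectivity. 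The main obstacle I anticipate is step (iii): getting the twist right, i.e. being sure whether the operator is $a^+$ or $a^++1$ on the relevant graded piece, since an off-by-one here changes injectivity into non-injectivity; I would pin this down by tracing through the morphism $[a^+]\colon \mathscr{C}_{u^-,\varphi}\to\mathscr{C}_{u^-,\varphi}$ of Lemma \ref{triang}, whose third vertical arrow is exactly $a^++1$, and cross-checking against the consistency of Proposition \ref{cohomfinal} (the claimed dimensions of $H^j(\overline{P}^+, M_-)$ in the cases $\delta_1\delta_2^{-1}=\chi x^i$).
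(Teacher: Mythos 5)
Your plan, once the twist is sorted out, is exactly the paper's argument: identify $H^0([\varphi-1]\colon H^1(\mathscr{C}_{u^-}))=\ker(p\varphi-1\colon M/u^-M)$ via Lemma \ref{injnonreg}, note that $[a^+]$ acts on this piece through $a^++1$ (the $+1$ coming from $u^-a^+=(a^++1)u^-$, i.e. the twist recorded in Remark \ref{actioncohom1}), and evaluate the scalar: $(a^++1)x^{i+1}=(\kappa(\delta)-i)x^{i+1}$ when $\delta(p)=p^i$, $i\geq 0$, and $(a^++1)\mathbf{1}_{\zp}=(\kappa(\delta)+1)\mathbf{1}_{\zp}$ when $\delta(p)=p^{-1}$. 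Two small corrections to your bookkeeping: the component of $[a^+]$ relevant to the right-hand term of \eqref{linee2} is the first coordinate of the middle map $s$ (equivalently, the degree-one arrow induced on $\mathscr{C}_{u^-}$), not the third vertical arrow of Lemma \ref{triang} — though both happen to equal $a^++1$, so your conclusion is unaffected; and your untwisted trial computation for $\delta(p)=p^{-1}$, which located the kernel at $\kappa(\delta)=0$, is a red herring — with the correct operator the cutoff is $\kappa(\delta)=-1$, as the statement requires.

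There is, however, one genuine gap you must close: your opening claim that "in case (1) the module is zero, so there is nothing to prove" is false. The case division of this lemma does not match that of Lemma \ref{injnonreg}: the subcase $\delta(p)=p^{-1}$ with $\kappa(\delta)\neq-1$ belongs to case (1) of the present lemma, yet Lemma \ref{injnonreg}(2) (which covers all of $\delta(p)=p^{-1}$, irrespective of $\kappa$) says the underlying module is the nonzero line $L\cdot[\mathbf{1}_{\zp}]$. Injectivity there is not automatic; it requires the computation $(a^++1)\mathbf{1}_{\zp}=(\kappa(\delta)+1)\mathbf{1}_{\zp}\neq 0$, which is precisely the final step of the paper's proof. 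Your step (iv) ("in case (1) the relevant scalar is a unit") gestures at this, but it contradicts your earlier "nothing to prove" dispatch; executed as literally written, the plan would skip this subcase. Make the case analysis follow Lemma \ref{injnonreg} (module zero versus module $=L\cdot[x^{i+1}]$), and only then separate according to whether the scalar $\kappa(\delta)-i$ (resp. $\kappa(\delta)+1$) vanishes.
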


\begin{proof}
We use Lemma \ref{injnonreg}. If $\delta(p) \not\in \left\{ p^{i} \lvert \text{ } i\geq 0 \right\}$ or if $\delta(p) = p^i$ for some $i \geq 0$ and $\kappa(\delta) \not= i$, then $\ker(p\varphi-1 \colon M/u^{-}M) = 0$ and the result is obvious. If $\kappa(\delta) = i$ then
\[
\ker(p\varphi-1 \colon M/u^{-}M) = L \cdot [x^{i+1}].
\]
In this case $(a^+ +1)[x^{i+1}] = 0$.

Suppose now $\delta(p) = p^{-1}$. In this case
$$\ker(p\varphi-1 \colon M/u^{-}M) = L \cdot [\mathbf{1}_{\zp}]$$
and, since $(a^{+}+1)\mathbf{1}_{\zp} = (\kappa(\delta)+1)\mathbf{1}_{\zp}$, the result now follows depending on whether $\kappa(\delta) = -1$ or not.
\end{proof}

The last needed ingredient is the kernel of $[a^+]$ on $H^1([\varphi - 1] \colon H^0(\mathscr{C}_{u^-}))$.

\begin{lemm} \label{lem:kera2}
If $M = \mathscr{R}^{-}(\delta_1, \delta_2)$ then
\begin{enumerate}
\item If $\delta(p) \not\in \left\{ p^{i} \lvert \text{ } i\geq 0 \right\}$, or if $\delta(p) = p^i$ for some $i \geq 0$ and $\kappa(\delta) \not= i$, then
\[ \ker([a^+] \colon H^1([\varphi - 1] \colon H^0(\mathscr{C}_{u^-}))) = X_{\kappa(\delta)}. \]
\item Otherwise
\[ \ker([a^+] \colon H^1([\varphi - 1] \colon H^0(\mathscr{C}_{u^-}))) = \big ( X_{\kappa(\delta)} / L \cdot \mathbf{1}_{\zpe}x^i \big )  \oplus L\cdot [x^i]. \]
\end{enumerate}
\end{lemm}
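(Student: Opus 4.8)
The plan is to reduce the statement to the single assertion that $a^+$ acts as zero on $H^1([\varphi-1]\colon H^0(\mathscr{C}_{u^-})) = \coker(\varphi-1\colon M^{u^-=0})$. Granting this, $\ker([a^+])$ is the whole cokernel, and Corollary \ref{cokernonr} already identifies that cokernel with $X_{\kappa(\delta)}$ in case (1) and with $(X_{\kappa(\delta)}/L\cdot\mathbf{1}_{\zpe}x^i)\oplus L\cdot[x^i]$ in case (2), which is exactly what is claimed.

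First I would justify that $a^+$ descends to an endomorphism of $\coker(\varphi-1\colon M^{u^-=0})$. It preserves $M^{u^-=0}$ because the relation $[a^+,u^-]=-u^-$ gives $u^-(a^+x)=a^+(u^-x)+u^-x=0$ whenever $u^-x=0$; and it commutes with $\varphi$, either from $\mathrm{Ad}(\varphi)(a^+)=a^+$, or by the direct check $a^+(\varphi f)=\varphi(a^+f)$ using $(\varphi f)(x)=\delta(p)f(x/p)$ and $(a^+f)(x)=\kappa(\delta)f(x)-xf'(x)$. Next I would pin down the induced action: tracing the long exact sequence \eqref{linee2} of Lemma \ref{iso1}, the subspace $H^1([\varphi-1]\colon H^0(\mathscr{C}_{u^-}))\subseteq H^1(\mathscr{C}_{u^-,\varphi})$ is represented by cocycles $(0,y)$ with $y\in M^{u^-=0}$, and the degree-one component $s(x,y)=((a^++1)x,a^+y)$ of the morphism $[a^+]$ sends $(0,y)$ to $(0,a^+y)$; hence the action on the cokernel is the naive one $[y]\mapsto[a^+y]$. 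This matching of the two descriptions of the $a^+$-action is the only genuinely delicate point.

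It then remains to compute that $a^+$ kills the relevant representatives. For a generator $f(x)=(x/i)^{\kappa(\delta)}\mathbf{1}_{i+p^n\zp}(x)$ of $X_{\kappa(\delta)}$, the formula $a^+f=\kappa(\delta)f-xf'$ gives, on $i+p^n\zp$, $(a^+f)(x)=\kappa(\delta)(x/i)^{\kappa(\delta)}-x\kappa(\delta)(x/i)^{\kappa(\delta)-1}i^{-1}=0$, while $a^+f$ visibly vanishes off that ball; by $L$-linearity $a^+$ annihilates $X_{\kappa(\delta)}$. In case (2), where $\delta(p)=p^i$ and $\kappa(\delta)=i$, one also has $a^+x^i=(\kappa(\delta)-i)x^i=0$. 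By Lemma \ref{3.2}, in case (1) every class of $\coker(\varphi-1\colon M^{u^-=0})$ has a representative in $X_{\kappa(\delta)}$, and in case (2) the cokernel is spanned by the classes of $X_{\kappa(\delta)}$ and of $x^i$; in both cases $a^+$ is therefore zero on the cokernel, so $\ker([a^+])$ is the whole space, and Corollary \ref{cokernonr} finishes the identification. Beyond the bookkeeping of which $a^+$-action is in play, the argument is just this short computation with $a^+f=\kappa(\delta)f-xf'$.
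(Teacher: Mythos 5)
Your proposal is correct and follows essentially the same route as the paper: the paper's proof is simply "immediate from Corollary \ref{cokernonr}, noting that $a^+ X_{\kappa(\delta)} = 0$ and $a^+ x^i = 0$ when $\kappa(\delta) = i$". You merely spell out the details the paper leaves implicit (that $a^+$ descends to $\coker(\varphi-1\colon M^{u^-=0})$, that the induced action via $s(0,y)=(0,a^+y)$ is the untwisted one, and the computation $a^+f=\kappa(\delta)f-xf'=0$ on $X_{\kappa(\delta)}$), all of which are accurate.
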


\begin{proof}
This is an immediate consequence of Lemma \ref{cokernonr}, noting that $a^+ X_{\kappa(\delta)} = 0$ and that $a^+ x^i = 0$ whenever $\kappa(\delta) = i$.
\end{proof}

Now we can calculate the first Lie algebra cohomology group with values in $\Robba^-(\delta_1, \delta_2)$.

\begin{prop} \label{prop:H1Lie}
Let $M = \mathscr{R}^{-}(\delta_1, \delta_2)$. Then 
\begin{enumerate}
\item If $\delta(p) \not\in \left\{ p^{i} \lvert \text{ } i\geq -1 \right\}$, or if $\delta(p) = p^i$ for some $i \geq -1$ and $\kappa(\delta) \not= i$,  then 
\[ H^1(\mathscr{C}_{u^-, \varphi, a^+}) = X_{\kappa(\delta)}. \]
\item If $\delta(p) = p^{i}$ for some $i\geq 0$ and $\kappa(\delta) = i$, then $H^1(\mathscr{C}_{u^-, \varphi, a^+})$ lives in an exact sequence
\[ 0 \to X_{\kappa(\delta)} / L \cdot \mathbf{1}_\zpe x^i \oplus L \cdot [x^i] \to H^1(\mathscr{C}_{u^-, \varphi, a^+}) \to L \cdot [x^{i + 1}] \to 0 \]
\item If $\delta(p) = p^{-1}$ and $\kappa(\delta) = -1$ then $H^1(\mathscr{C}_{u^-, \varphi, a^+})$ lives in an exact sequence
\[ 0 \to X_{\kappa(\delta)} \to H^1(\mathscr{C}_{u^-, \varphi, a^+}) \to L \cdot [\mathbf{1}_\zp] \to 0. \]
\end{enumerate}
\end{prop}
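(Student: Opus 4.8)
The plan is to use the short exact sequences \eqref{linee1} and \eqref{linee2} of Lemma \ref{iso1}(2), combined with the explicit calculations already carried out. The key point is that $H^1(\mathscr{C}_{u^-, \varphi, a^+})$ sits in
\[ 0 \to H^1([a^+] \colon H^0(\mathscr{C}_{u^-, \varphi})) \to H^1(\mathscr{C}_{u^-, \varphi, a^+}) \to H^0([a^+] \colon H^1(\mathscr{C}_{u^-, \varphi})) \to 0, \]
so it suffices to compute the two outer terms. The left-hand term is $\coker(a^+ \colon H^0(\mathscr{C}_{u^-, \varphi}))$; but $H^0(\mathscr{C}_{u^-, \varphi}) = 0$ by Lemma \ref{lem:triv}, so the left-hand term vanishes identically, and therefore $H^1(\mathscr{C}_{u^-, \varphi, a^+}) \cong H^0([a^+] \colon H^1(\mathscr{C}_{u^-, \varphi}))$, i.e. the kernel of $a^+$ acting on $H^1(\mathscr{C}_{u^-, \varphi})$.

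\textbf{Case (1).} Here $H^1(\mathscr{C}_{u^-, \varphi}) = X_{\kappa(\delta)}$ by Proposition \ref{Hline2}(1) (noting the three sub-cases: $\delta(p)\notin\{p^i : i\geq -1\}$, or $\delta(p)=p^{-1}$ with $\kappa(\delta)\neq -1$, or $\delta(p)=p^i$, $i\geq 0$, with $\kappa(\delta)\neq i$ — in each of these Proposition \ref{Hline2} gives $H^1(\mathscr{C}_{u^-, \varphi}) = X_{\kappa(\delta)}$). Since $a^+$ annihilates $X_{\kappa(\delta)}$ (an element $f\in X_{\kappa(\delta)}$ satisfies $u^- f = 0$ and the explicit form $f = \sum c_i (x/i)^{\kappa(\delta)}\mathbf{1}_{i+p^n\zp}$ gives $a^+ f = \kappa(\delta) f - x f' = 0$ by direct computation, as already used in Lemma \ref{lem:kera2}), we get $H^0([a^+]\colon H^1(\mathscr{C}_{u^-, \varphi})) = X_{\kappa(\delta)}$, hence $H^1(\mathscr{C}_{u^-, \varphi, a^+}) = X_{\kappa(\delta)}$.

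\textbf{Cases (2) and (3).} In case (3), $\delta(p) = p^{-1}$ and $\kappa(\delta) = -1$: Proposition \ref{Hline2}(2) gives $0 \to X_{\kappa(\delta)} \to H^1(\mathscr{C}_{u^-, \varphi}) \to L\cdot[\mathbf{1}_\zp] \to 0$. I would apply the left-exact functor $H^0([a^+]\colon -)$, i.e. take $\ker(a^+)$ of the whole sequence. We already know $a^+$ kills $X_{\kappa(\delta)}$ (Lemma \ref{lem:kera2}), and $a^+$ kills $[\mathbf{1}_\zp]$ since $(a^+)\mathbf{1}_\zp = \kappa(\delta)\mathbf{1}_\zp = -\mathbf{1}_\zp$ — wait, one must be careful: the relevant operator on $H^1([\varphi-1]\colon H^1(\mathscr{C}_{u^-}))$-type terms is sometimes $a^+$ and sometimes $a^++1$ depending on placement in the complex; here Lemma \ref{lem:kera} already records that on the quotient term $H^0([\varphi-1]\colon H^1(\mathscr{C}_{u^-})) = L\cdot[\mathbf{1}_\zp]$ the relevant operator is $a^++1$, which vanishes precisely when $\kappa(\delta) = -1$. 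So in case (3) both ends of the sequence survive taking kernels, and a small diagram chase (the connecting map $L\cdot[\mathbf{1}_\zp] \to \coker(a^+\colon X_{\kappa(\delta)}) = 0$ is forced to be zero since $a^+$ is already zero on $X_{\kappa(\delta)}$) gives the desired exact sequence $0 \to X_{\kappa(\delta)} \to H^1(\mathscr{C}_{u^-, \varphi, a^+}) \to L\cdot[\mathbf{1}_\zp] \to 0$. Case (2), $\delta(p) = p^i$ ($i\geq 0$), $\kappa(\delta) = i$: Proposition \ref{Hline2}(3) gives $0 \to (X_{\kappa(\delta)}/L\cdot\mathbf{1}_\zpe x^i) \oplus L\cdot[x^i] \to H^1(\mathscr{C}_{u^-, \varphi}) \to L\cdot[x^{i+1}] \to 0$; taking $\ker(a^+)$, I use that $a^+$ kills $X_{\kappa(\delta)}/L\cdot\mathbf{1}_\zpe x^i$ and kills $[x^i]$ (since $a^+ x^i = (\kappa(\delta)-i)x^i = 0$, cf. Lemma \ref{lem:kera2}), and on the last term $L\cdot[x^{i+1}]$ the operator $a^++1$ acts by $(\kappa(\delta)-i) = 0$ (cf. Lemma \ref{lem:kera}), so again both ends survive and the connecting map vanishes, yielding the stated sequence.

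\textbf{Main obstacle.} The delicate point throughout is bookkeeping which avatar of $a^+$ (namely $a^+$ versus $a^++1$) acts on each graded piece of the iterated fibre sequences, since the twist by the character $\chi$ on the $u^-$-cohomology (Remark \ref{actioncohom1}) shifts the weight; this is exactly the subtlety flagged in the remark following Proposition \ref{cohomfinal}. Once that is handled — and Lemmas \ref{lem:kera} and \ref{lem:kera2} have already done the work — the argument is a routine application of the long/short exact sequences and the vanishing $H^0(\mathscr{C}_{u^-, \varphi}) = 0$. I would also double-check that the natural maps respect the $\tilde P$-action so that the sequences are sequences of $\tilde P$-modules, which matters for the subsequent computation of $H^1(\overline{P}^+, \Robba^-(\delta_1,\delta_2))$ via Lemma \ref{lielem}.
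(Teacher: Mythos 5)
Your reduction to $H^1(\mathscr{C}_{u^-,\varphi,a^+}) \cong \ker([a^+]\colon H^1(\mathscr{C}_{u^-,\varphi}))$ via Lemma \ref{lem:triv} and \eqref{linee1} is exactly the paper's first step, and your case (1) reaches the right conclusion. Note, however, a misattribution there: for $\delta(p)=p^{-1}$ and $\kappa(\delta)\neq -1$ you are in Proposition \ref{Hline2}(2), not (1), so $H^1(\mathscr{C}_{u^-,\varphi})$ is an extension of $L\cdot[\mathbf{1}_\zp]$ by $X_{\kappa(\delta)}$ rather than $X_{\kappa(\delta)}$ itself; the answer is unchanged only because $a^++1$ acts injectively on the quotient (Lemma \ref{lem:kera}(1)), so left-exactness of $\ker$ still gives $X_{\kappa(\delta)}$.

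The genuine gap is in cases (2) and (3). There you must show that $\ker([a^+])$ on the middle term surjects onto $\ker(a^++1)$ on the quotient $L\cdot[x^{i+1}]$ (resp. $L\cdot[\mathbf{1}_\zp]$), i.e. that the snake-lemma connecting map vanishes. Your justification --- that the connecting map is forced to be zero, or that $\coker(a^+\colon X_{\kappa(\delta)})=0$, ``since $a^+$ is already zero on $X_{\kappa(\delta)}$'' --- does not work: precisely because $a^+$ kills the subobject, its cokernel is the whole (nonzero) subobject, and the vanishing of the connecting map is equivalent to the surjectivity you are trying to establish, so the argument as written is circular. The paper closes this point by observing that the sequence \eqref{linee2} splits as a sequence of $a^+$-modules, so applying $\ker([a^+])$ preserves exactness of the short exact sequence; alternatively you would need to exhibit an explicit class of $H^1(\mathscr{C}_{u^-,\varphi})$ lifting $[x^{i+1}]$ (resp. $[\mathbf{1}_\zp]$) and annihilated by $[a^+]$. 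Without one of these inputs the right-exactness of your sequences in (2) and (3) --- and hence the dimension counts fed into Proposition \ref{cohomfinal} and Lemma \ref{cohom1-} --- is not established.
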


\begin{proof}
By Lemma \ref{lem:triv} and the short exact sequence \eqref{linee1} of Lemma \ref{iso1} we have
\[ H^1(\mathscr{C}_{u^-, \varphi, a^+}) \cong \ker([a^+] \colon H^1(\mathscr{C}_{u^-, \varphi})). \]

Since the short exact sequence \eqref{linee2} splits as a sequence of $a^+$-modules, we have
\[ 0 \to \ker(a^+ \colon M^{u^- = 0} / (\varphi - 1)) \to \ker([a^+] \colon H^1(\mathscr{C}_{u^-, \varphi})) \to \ker(a^+ + 1 \colon (M / u^- M)^{p \varphi = 1}) \to 0. \]

Now (1) (resp. (2), resp. (3)) follows from (1) (resp. (2), resp. (1)) of Lemma \ref{lem:kera2} and (1) (resp. (2), resp. (2)) of Lemma \ref{lem:kera}.
\end{proof}

\subsubsection{Calculation of $H^2(\mathscr{C}_{u^-, \varphi, \gamma})$:}

We start by calculating the left side term of equation \eqref{line1} of Lemma \ref{3.1}:

\begin{lemm} \label{blackdot} \label{3.16}
If $M = \mathscr{R}^{-}(\delta_1, \delta_2)$ then 
\begin{enumerate}
\item If $\delta(p) \not\in \left\{ p^{i} \lvert \text{ } i\geq -1 \right\}$, or if $\delta(p) = p^i$ for some $i \geq -1$ and $\kappa(\delta) \not= i$,  then \[ \coker([a^{+}]: H^{1}(\mathscr{C}_{u^{-},\varphi})) = X_{\kappa(\delta)}. \]
\item If $\delta(p) = p^{i}$ for some $i\geq 0$ and $\kappa(\delta) = i$, we have a short exact sequence
\[ 0 \to X_{\kappa(\delta)}/L \cdot \mathbf{1}_{\zpe}x^i \oplus L\cdot [x^i] \to \coker([a^{+}]: H^{1}(\mathscr{C}_{u^{-},\varphi})) \to L \cdot [x^{i+1}] \to 0. \] 
\item If $\delta(p) = p^{-1}$ and $\kappa(\delta) = -1$ then we have a short exact sequence
\[ 0 \to X_{\kappa(\delta)} \to \coker([a^{+}]: H^{1}(\mathscr{C}_{u^{-},\varphi})) \to L \cdot [\mathbf{1}_{\zp}] \to 0. \] 

\end{enumerate}
\end{lemm}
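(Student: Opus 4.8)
The plan is to apply the snake lemma to the endomorphism $[a^+]$ of the short exact sequence \eqref{linee2},
\[
0 \to A \to H^1(\mathscr{C}_{u^{-},\varphi}) \to B \to 0, \qquad A := \coker(\varphi - 1 \colon M^{u^{-} = 0}), \quad B := \ker(p\varphi - 1 \colon M/u^{-}M),
\]
on which, as recorded in Remark \ref{actioncohom1} and already exploited in the proof of Proposition \ref{prop:H1Lie}, $[a^+]$ induces the operator $a^{+}$ on $A$ and the operator $a^{+} + 1$ on $B$. The snake lemma then yields an exact sequence
\[
\ker(a^{+} + 1 \colon B) \xrightarrow{\ \partial\ } \coker(a^{+} \colon A) \to \coker\big([a^{+}] \colon H^1(\mathscr{C}_{u^{-},\varphi})\big) \to \coker(a^{+} + 1 \colon B) \to 0,
\]
so the task reduces to identifying the first, second and fourth terms and showing that $\partial = 0$.

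First I would pin down the relevant scalar actions. From $a^{+} f = \kappa(\delta) f - x f'$ one sees that $a^{+}$ annihilates $X_{\kappa(\delta)}$ (there $x f' = \kappa(\delta) f$ on each ball $i + p^n\zp$) and annihilates the line $L \cdot [x^i]$ precisely when $\kappa(\delta) = i$, since $a^{+} x^i = (\kappa(\delta) - i) x^i$; dually $a^{+} + 1$ acts on $[x^{i+1}]$ by $\kappa(\delta) - i$ and on $[\mathbf{1}_{\zp}]$ by $\kappa(\delta) + 1$. Inserting the descriptions of $A$ from Corollary \ref{cokernonr} and of $B$ from Lemma \ref{injnonreg}, this gives: in case (1) that $a^{+}$ is zero on $A = X_{\kappa(\delta)}$ whereas $B = 0$ or $a^{+} + 1$ is invertible on $B = L \cdot [\mathbf{1}_{\zp}]$, so both $\ker(a^{+} + 1 \colon B)$ and $\coker(a^{+} + 1 \colon B)$ vanish and $\coker([a^{+}]) = X_{\kappa(\delta)}$ at once; in case (2) that $A = X_{\kappa(\delta)}/L\cdot\mathbf{1}_{\zpe} x^i \oplus L\cdot[x^i]$ with $a^{+}$ acting as $0$, and $B = L\cdot[x^{i+1}]$ with $a^{+} + 1$ acting as $0$; in case (3) that $A = X_{\kappa(\delta)}$ with $a^{+}$ acting as $0$, and $B = L\cdot[\mathbf{1}_{\zp}]$ with $a^{+} + 1$ acting as $0$.

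It then remains to see that $\partial$ vanishes in cases (2) and (3) (in case (1) it is automatic, its source or target being zero). For this I would invoke Proposition \ref{prop:H1Lie}: the exact sequences stated there assert exactly that $H^1(\mathscr{C}_{u^{-},\varphi,a^{+}})$ surjects onto the $B$-factor, and via the identification $H^1(\mathscr{C}_{u^{-},\varphi,a^{+}}) = \ker\big([a^{+}] \colon H^1(\mathscr{C}_{u^{-},\varphi})\big)$ (coming from Lemma \ref{lem:triv} together with the sequence \eqref{linee1}) this surjection is precisely the snake-lemma map $\ker([a^{+}]) \to \ker(a^{+} + 1 \colon B)$, whose surjectivity is equivalent to $\partial = 0$. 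Feeding $\partial = 0$ back into the snake sequence produces, in cases (2) and (3), the asserted short exact sequences
\[
0 \to \coker(a^{+} \colon A) \to \coker\big([a^{+}] \colon H^1(\mathscr{C}_{u^{-},\varphi})\big) \to \coker(a^{+} + 1 \colon B) \to 0,
\]
which are exactly the claimed statements. The main obstacle, such as it is, will be the bookkeeping of the $\tilde P$- (equivalently $a^{+}$-) module structure on the constituents of \eqref{linee2} — justifying that $[a^{+}]$ is $a^{+}$ on $A$ and $a^{+} + 1$ on $B$, and that $\partial$ is forced to vanish by compatibility with Proposition \ref{prop:H1Lie} — the remaining scalar computations being routine.
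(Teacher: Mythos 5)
Your argument is correct and follows essentially the paper's route: both work with the filtration of $H^1(\mathscr{C}_{u^{-},\varphi})$ given by \eqref{linee2} (equivalently Proposition \ref{Hline2}), on whose two graded pieces $[a^{+}]$ acts as $a^{+}$ and $a^{+}+1$ respectively, and then reduce to the scalar computations recorded in Corollary \ref{cokernonr} and Lemma \ref{wrongfoot}. The only difference is presentational: you make the exactness on the left explicit via the snake lemma, deducing the vanishing of the connecting map from the surjectivity in Proposition \ref{prop:H1Lie}, whereas the paper obtains the same point from the splitting of \eqref{linee2} as a sequence of $a^{+}$-modules used in the proof of that proposition.
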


\begin{proof} \leavevmode
\begin{enumerate}
\item  Suppose first that $\delta(p) \not\in \left\{ p^{i} \lvert \text{ } i\geq -1 \right\}$, or that $\delta(p) = p^{i}$ for some $i\geq 0$ and $\kappa(\delta) \neq i$. By Proposition \ref{Hline2}(1), we have $H^{1}(\mathscr{C}_{u^{-},\varphi}) = X_{\kappa(\delta)}$. The result follows then by noting that the action of $[a^+]$ on this space is given by $a^+$, and that $a^+ X_{\kappa(\delta)} = 0$ (since $x a^+ f = u^- f$).

We now deal with the case where $\delta(p) = p^{-1}$ and $\kappa(\delta) \neq -1$. In this case $H^{1}(\mathscr{C}_{u^{-},\varphi})$ is described by (2) of Proposition \ref{Hline2} and the result follows from Lemma \ref{wrongfoot}(1) below.

\item If $\delta(p) = p^{i}$ for some $i\geq 0$ and $\kappa(\delta) = i$, then $H^{1}(\mathscr{C}_{u^{-},\varphi})$ is described by (3) of Proposition \ref{Hline2}. Note that $[a^+]$ acts as $a^+$ on the left term of the exact sequence, and as $a^+ + 1$ on the right hand side term. The result follows then by Lemma \ref{wrongfoot}(2) and by noting that $a^+ x^i = 0$.

\item This case follows in the same way, using Proposition \ref{Hline2}(2) and Lemma \ref{wrongfoot}(2).
\end{enumerate}
\end{proof}

The next lemma describes the cokernel of $[a^{+}]$ on $H^0([\varphi-1] \colon H^1(\mathscr{C}_{u^-}))$.

\begin{lemm} \label{wrongfoot}
If $M = \mathscr{R}^{-}(\delta_1, \delta_2)$ and  $\delta(p) = p^{i}$ for some $i \geq -1$ then
\begin{enumerate}
\item If $\kappa(\delta) \not= i$ then $\coker([a^{+}] \colon H^0([\varphi-1] \colon H^1(\mathscr{C}_{u^-}))) = 0$.
\item If $\kappa(\delta) = i$ then $\coker([a^+] \colon H^0([\varphi-1] \colon H^1(\mathscr{C}_{u^-})))$ is (naturally isomorphic to) $L \cdot [x^{i+1}]$.
\end{enumerate}
\end{lemm}

\begin{proof}
We use Lemma \ref{injnonreg}. Suppose first $i \geq 0$. If $\kappa(\delta) \not=i$ then $\ker(p\varphi-1: M/u^{-}M)=0$ and the result is obvious. If $\kappa(\delta) = i$ then
$$\ker(p\varphi-1: M/u^{-}M) = L \cdot [\mathbf{1}_{\zp}x^{i+1}].$$
In this case $(a^{+}+1)x^{i+1} = 0$. 

Suppose now $i = -1$. In this case
$$\ker(p\varphi-1: M/u^{-}M) = L \cdot [\mathbf{1}_{\zp}]$$
and so $(a^{+}+1)\mathbf{1}_{\zp} = (\kappa(\delta)+1)\mathbf{1}_{\zp}$. The result now follows depending on whether $\kappa(\delta) = -1$ or not.  
\end{proof}

We finally calculate the right hand side term $\ker([a^+] \colon H^2(\mathscr{C}_{u^-, \varphi}))$ of equation \eqref{line1} of Proposition \ref{3.1}.

\begin{lemm} \label{bladot}
If $M = \mathscr{R}^{-}(\delta_1, \delta_2)$. Then
\begin{enumerate}
\item If $\delta(p) \not\in \left\{ p^{i} \lvert \text{ } i\geq -1 \right\}$, or if $\delta(p) = p^{i}$ for some $i\geq -1$ and $\kappa(\delta) \not=i$, then $\ker([a^{+}] \colon H^2(\mathscr{C}_{u^-,\varphi}))=0$.
\item Otherwise $\ker([a^+] \colon H^2(\mathscr{C}_{u^{-},\varphi})) = L \cdot [x^{i+1}]$. 
\end{enumerate}
\end{lemm}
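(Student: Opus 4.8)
The plan is to deduce $\ker([a^+]\colon H^2(\mathscr{C}_{u^-,\varphi}))$ straight from the computation of $H^2(\mathscr{C}_{u^-,\varphi})$ carried out in Lemma \ref{banalized}, together with the fact that $[a^+]$ acts on $H^2(\mathscr{C}_{u^-,\varphi}) = M/\big((p\varphi-1)M + u^- M\big)$ through the rightmost vertical map of the morphism of complexes $[a^+]$, i.e.\ through $a^+ + 1$. First I would record that, under the identification $M = \Robba^-(\delta_1,\delta_2) = \mathrm{LA}(\zp,L)$, one has $(a^+ f)(x) = \kappa(\delta)f(x) - xf'(x)$, so that $(a^+ + 1)x^{i+1} = (\kappa(\delta) - i)x^{i+1}$ with the convention $x^{0} = \mathbf{1}_\zp$ when $i = -1$; thus $[a^+]$ acts on the distinguished generator of $H^2(\mathscr{C}_{u^-,\varphi})$ by the scalar $\kappa(\delta) - i \in L$.

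After this, the statement breaks into exactly the cases of Lemma \ref{banalized}. If $\delta(p) \notin \{p^i : i \geq -1\}$, or $\delta(p) = p^i$ for some $i \geq 0$ with $\kappa(\delta) \neq i$, then $H^2(\mathscr{C}_{u^-,\varphi}) = 0$ by Lemma \ref{banalized}(1) and the kernel is trivially $0$. In the one remaining subcase of part (1), namely $\delta(p) = p^{-1}$ with $\kappa(\delta) \neq -1$, we have $i = -1$ and Lemma \ref{banalized}(2) gives $H^2(\mathscr{C}_{u^-,\varphi}) = L\cdot[\mathbf{1}_\zp]$, on which $[a^+]$ acts by $\kappa(\delta) - (-1) = \kappa(\delta) + 1 \in L^\times$; hence the kernel is again $0$, proving (1). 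Finally, in the cases grouped under ``otherwise'', i.e.\ $\delta(p) = p^i$ for some $i \geq -1$ with $\kappa(\delta) = i$, Lemma \ref{banalized}(2) gives $H^2(\mathscr{C}_{u^-,\varphi}) = L\cdot[x^{i+1}]$, and the scalar $\kappa(\delta) - i$ vanishes, so $[a^+]$ is identically zero on $H^2(\mathscr{C}_{u^-,\varphi})$ and $\ker([a^+]\colon H^2(\mathscr{C}_{u^-,\varphi})) = L\cdot[x^{i+1}]$, proving (2).

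I do not expect a genuine obstacle here: the substantive inputs --- the dimension and explicit generator of $H^2(\mathscr{C}_{u^-,\varphi})$ via \cite[Lemme 5.9]{colmez2015}, and the infinitesimal formula for $a^+$ on $\Robba^-(\delta_1,\delta_2)$ --- are already established, so the argument is pure bookkeeping around the action $a^+ + 1$ on top cohomology. The only delicate point is the degenerate index $i = -1$, where $x^{i+1}$ must be read as the constant function $\mathbf{1}_\zp$; I would make this convention explicit so that the uniform formula $(a^+ + 1)x^{i+1} = (\kappa(\delta)-i)x^{i+1}$ and the resulting case analysis both stay valid.
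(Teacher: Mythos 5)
Your proof is correct and follows essentially the same route as the paper: both arguments read off $H^2(\mathscr{C}_{u^-,\varphi})$ from Lemma \ref{banalized}, note that $[a^+]$ acts there as $a^+ + 1$, and check the action on the generator $[x^{i+1}]$ (with $\mathbf{1}_{\zp}$ in the case $i=-1$), using $(a^+ +1)x^{i+1} = (\kappa(\delta)-i)x^{i+1}$. Your explicit treatment of the degenerate index $i=-1$ is just a spelled-out version of what the paper does.
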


\begin{proof} \leavevmode
\begin{enumerate}
\item If $\delta(p) \neq p^{-1}$ the result follows since, by Lemma \ref{banalized}, we know that $H^{2}(\mathscr{C}_{u^{-},\varphi}) = 0$. If $\delta(p) = p^{-1}$ (and hence $\kappa(\delta) \neq - 1$) then $H^{2}(\mathscr{C}_{u^{-},\varphi})=L \cdot \mathbf{1}_{\zp}$ and, since $(a^{+}+1)\mathbf{1}_{\zp} = (\kappa(\delta)+1)\mathbf{1}_{\zp}$, it is injective.
\item Note first that $[a^{+}]$ acts on $H^{2}(\mathscr{C}_{u^{-},\varphi})$ as $a^{+}+1$. By (2) of Lemma \ref{banalized}, $H^{2}(\mathscr{C}_{u^{-},\varphi})=L \cdot x^{i+1}$ and the result follows since $(a^{+}+1)x^{i+1} = 0$.
\end{enumerate}
\end{proof}

We are now ready to compute $H^{2}(\mathscr{C}_{u^-, \varphi, a^+})$:

\begin{prop} \label{prop:H2LieR^-}
If $M = \mathscr{R}^{-}(\delta_1, \delta_2)$ then 
\begin{enumerate}
\item If $\delta(p) \not\in \left\{ p^{i} \lvert \text{ } i\geq -1 \right\}$, or if $\delta(p) = p^i$ for some $i \geq -1$ and $\kappa(\delta) \not= i$,  then 
\[ H^2(\mathscr{C}_{u^-, \varphi, a^+}) = X_{\kappa(\delta)}. \]
\item If $\delta(p) = p^{i}$ for some $i\geq 0$ and $\kappa(\delta) = i$, then $H^2(\mathscr{C}_{u^-, \varphi, a^+})$ lives in an exact sequence
\[ 0 \to Y_i \to H^2(\mathscr{C}_{u^-, \varphi, a^+}) \to L \cdot [x^{i + 1}] \to 0, \]
where  
\[ 0 \to X_{\kappa(\delta)}/L \cdot \mathbf{1}_{\zpe}x^i \oplus L\cdot [x^i] \to Y_i \to L \cdot [x^{i+1}] \to 0. \]
\item If $\delta(p) = p^{-1}$ and $\kappa(\delta) = -1$ then $H^2(\mathscr{C}_{u^-, \varphi, a^+})$ lives in an exact sequence
\[ 0 \to Y_{-1} \to H^2(\mathscr{C}_{u^-, \varphi, a^+}) \to L \cdot [\mathbf{1}_\zp] \to 0, \]
where
\[ 0 \to X_{\kappa(\delta)} \to Y_{-1} \to L \cdot [\mathbf{1}_{\zp}] \to 0. \]
\end{enumerate}
\end{prop}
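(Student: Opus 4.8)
The plan is to exploit the machinery already assembled in Lemma \ref{iso1} (= Lemma \ref{3.1}), specifically parts (3) and (1), which express $H^2(\mathscr{C}_{u^-,\varphi,a^+})$ via the short exact sequence
\[
0 \to H^1([a^+] \colon H^1(\mathscr{C}_{u^-,\varphi})) \to H^2(\mathscr{C}_{u^-,\varphi,a^+}) \to H^0([a^+] \colon H^2(\mathscr{C}_{u^-,\varphi})) \to 0.
\]
So the computation splits into two pieces: the cokernel of $[a^+]$ acting on $H^1(\mathscr{C}_{u^-,\varphi})$, which is exactly $\coker([a^+] \colon H^1(\mathscr{C}_{u^-,\varphi}))$ and has been computed in Lemma \ref{blackdot} (= Lemma \ref{3.16}); and the kernel of $[a^+]$ acting on $H^2(\mathscr{C}_{u^-,\varphi})$, which is $\ker([a^+] \colon H^2(\mathscr{C}_{u^-,\varphi}))$ and has been computed in Lemma \ref{bladot}. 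The proof is therefore essentially a bookkeeping exercise: assemble these two inputs according to the trichotomy on $\delta(p)$ and $\kappa(\delta)$.

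First I would treat case (1): when $\delta(p) \notin \{p^i : i \geq -1\}$, or $\delta(p) = p^i$ with $\kappa(\delta) \neq i$. By Lemma \ref{bladot}(1), $\ker([a^+] \colon H^2(\mathscr{C}_{u^-,\varphi})) = 0$, so the surjection $H^2(\mathscr{C}_{u^-,\varphi,a^+}) \to H^0([a^+]\colon H^2(\mathscr{C}_{u^-,\varphi}))$ has trivial target, and hence $H^2(\mathscr{C}_{u^-,\varphi,a^+}) = \coker([a^+]\colon H^1(\mathscr{C}_{u^-,\varphi}))$, which by Lemma \ref{blackdot}(1) equals $X_{\kappa(\delta)}$. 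This gives the first bullet.

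Next I would treat case (2): $\delta(p) = p^i$, $i \geq 0$, $\kappa(\delta) = i$. Here Lemma \ref{bladot}(2) gives $\ker([a^+]\colon H^2(\mathscr{C}_{u^-,\varphi})) = L \cdot [x^{i+1}]$, so the short exact sequence from Lemma \ref{iso1}(3) reads
\[
0 \to \coker([a^+]\colon H^1(\mathscr{C}_{u^-,\varphi})) \to H^2(\mathscr{C}_{u^-,\varphi,a^+}) \to L \cdot [x^{i+1}] \to 0,
\]
and substituting the description of $\coker([a^+]\colon H^1(\mathscr{C}_{u^-,\varphi}))$ from Lemma \ref{blackdot}(2) (which itself is an extension of $L \cdot [x^{i+1}]$ by $X_{\kappa(\delta)}/L\cdot \mathbf{1}_{\zpe}x^i \oplus L \cdot [x^i]$) yields precisely the two nested exact sequences in the statement, with $Y_i := \coker([a^+]\colon H^1(\mathscr{C}_{u^-,\varphi}))$. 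Case (3), $\delta(p) = p^{-1}$, $\kappa(\delta) = -1$, is handled identically: Lemma \ref{bladot}(2) with $i = -1$ gives $\ker([a^+]\colon H^2(\mathscr{C}_{u^-,\varphi})) = L\cdot[\mathbf{1}_\zp]$, and Lemma \ref{blackdot}(3) describes $\coker([a^+]\colon H^1(\mathscr{C}_{u^-,\varphi})) =: Y_{-1}$ as an extension of $L\cdot[\mathbf{1}_\zp]$ by $X_{\kappa(\delta)}$, giving the stated sequences.

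The main obstacle I anticipate is not in the assembly but in being careful about the $[a^+]$-action throughout: each occurrence of $[a^+]$ on the relevant cohomology group acts either as $a^+$ or as $a^+ + 1$ depending on which graded piece it sits on (this is the source of the discrepancy with \cite[Lemme 5.21]{colmez2015} noted in the remark), and one must verify that the extension classes appearing in Lemma \ref{blackdot} are genuinely non-split (or track them as extensions rather than direct sums). Concretely, the one delicate point is to confirm that $H^1$ of the cokernel functor applied to the exact sequence \eqref{linee2} splitting as $a^+$-modules is compatible with the $[a^+]$-module structures as claimed, so that the nested description of $Y_i$ is correct. Once the action of $a^+$ resp. $a^+ + 1$ on each generator $[x^i]$, $[x^{i+1}]$, $[\mathbf{1}_\zp]$, and on $X_{\kappa(\delta)}$ is pinned down (using $x a^+ f = u^- f$ and $(a^+ + 1)[x^{i+1}] = (\kappa(\delta) - i)[x^{i+1}]$), everything falls into place, and the proof reduces to citing Lemmas \ref{iso1}, \ref{blackdot} and \ref{bladot}.
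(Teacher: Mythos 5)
Your proposal is correct and follows exactly the paper's own route: the paper proves the proposition by combining the exact sequence \eqref{line1} of Lemma \ref{3.1} with Lemma \ref{blackdot} (for the cokernel term) and Lemma \ref{bladot} (for the kernel term), case by case, just as you do. Your case bookkeeping, including using Lemma \ref{bladot}(2) with $i=-1$ for the third case, matches the paper's argument.
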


\begin{proof}
(1) (resp. (2), resp. (3)) follows from (1) (resp. (2), resp (3)) of Lemma \ref{blackdot} and (1) (resp. (2), resp. (2)) of Lemma \ref{bladot}.
\end{proof}

\subsection{The $\overline{P}^+$-cohomology of $\Robba^-(\delta_1, \delta_2)$}

We can now just calculate the $\tilde{P}$-invariants of the Lie algebra cohomology to calculate the $\overline{P}^+$-cohomology of $\Robba^-(\delta_1, \delta_2)$.

\subsubsection{Calculation of $H^0(\overline{P}^+, \Robba^-(\delta_1, \delta_2))$:}

\begin{lemm} \label{cohom0-}
Let $M = \Robba^-(\delta_1, \delta_2)$. Then $H^0(\overline{P}^+, M) = 0$.
\end{lemm}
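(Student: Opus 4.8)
The statement is an immediate consequence of material already in place, so the plan is essentially bookkeeping. First I would unwind the definitions: $H^0(\overline{P}^+, M) = M^{\overline{P}^+}$, and since $\varphi = \matrice p 0 0 1$ lies in $\overline{P}^+$, we have $M^{\overline{P}^+} \subseteq M^{\varphi = 1}$. It therefore suffices to show that $\varphi - 1$ is injective on $M = \mathscr{R}^{-}(\delta_1, \delta_2)$.

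To see this, I would use the identification of $\mathscr{R}^{-}(\delta_1, \delta_2)$ with $\mathrm{LA}(\zp, L)$ from the opening lemma of \S\ref{Liecohom1}, under which $\varphi$ acts by $(\varphi f)(x) = \delta(p) f\!\left(\frac{x}{p}\right)$ (with the convention that $f(x/p) = 0$ for $x \in \zpe$). In other words, the operator $1 - \varphi$ on $M$ is exactly $1 - \alpha\varphi$ for $\alpha = \delta(p) \in L^\times$, with $\varphi$ the standard action on $\mathscr{R}^{-}$. By Lemma \ref{sum}(1) this operator is injective, so $M^{\varphi = 1} = 0$ and hence $H^0(\overline{P}^+, M) = 0$.

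Alternatively, and perhaps more cleanly within the flow of this section, I would invoke Lemma \ref{lielem} together with Proposition \ref{Lie0-}: the former gives $H^0(\overline{P}^+, M) \cong H^0(\tilde{P}, H^0_{\mathrm{Lie}}(\overline{P}^+, M)) = H^0(\tilde{P}, H^0(\mathscr{C}_{u^-, \varphi, a^+}))$, and the latter says $H^0(\mathscr{C}_{u^-, \varphi, a^+}) = 0$; hence the group vanishes. (This is again ultimately the injectivity of $1 - \delta(p)\varphi$, as noted in the proof of Proposition \ref{Lie0-}.) There is no real obstacle to overcome here; the only point worth flagging is to make sure the $\varphi$-action used is the twisted one $\delta(p)\varphi$, so that Lemma \ref{sum}(1) applies with the correct $\alpha$.
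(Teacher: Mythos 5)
Your proposal is correct and, in its second paragraph, is exactly the paper's proof: the paper deduces the lemma from Proposition \ref{Lie0-} (via Lemma \ref{lielem}), which in turn rests on the injectivity of $1-\delta(p)\varphi$ on $\mathrm{LA}(\zp,L)$. Your first, more direct argument via $M^{\overline{P}^+}\subseteq M^{\varphi=1}=0$ uses the same underlying fact (Lemma \ref{sum}(1)) and is equally valid.
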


\begin{proof}
Obvious from Proposition \ref{Lie0-}.
\end{proof}

\subsubsection{Calculation of $H^1(\overline{P}^+, \Robba^-(\delta_1, \delta_2))$:}

\begin{lemm} \label{cohom1-}\label{cor:incr} \label{iso2} \label{lem:finge}
If $M = \mathscr{R}^{-}(\delta_1, \delta_2)$ then
\begin{enumerate}
\item If $\delta(x) \not= x^i$ for any $i \geq 0$ then $H^{1}(\overline{P}^{+}, M)$ is of dimension 1 and generated by $\mathbf{1}_{\zpe}\delta \otimes \delta$.
\item If $\delta(x) = x^i$ for some $i \geq 0$ then $H^{1}(\overline{P}^{+}, M)$ is of dimension 2.
\end{enumerate}
\end{lemm}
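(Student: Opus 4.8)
The plan is to deduce the statement from the Lie-algebra computations of the previous subsection. By Lemma~\ref{lielem} one has $H^{1}(\overline{P}^{+},M)\cong H^{0}(\tilde{P},H^{1}(\mathscr{C}_{u^{-},\varphi,a^{+}}))$, so it suffices to compute the $\tilde{P}$-invariants of the first Lie-algebra cohomology group, whose structure is described case by case in Proposition~\ref{prop:H1Lie}. The $\tilde{P}$-action on the pieces appearing there is the one dictated by Remarks~\ref{actioncohom} and~\ref{actioncohom1}: $\sigma_{a}\in A^{0}$ acts on the subquotients $X_{\kappa(\delta)}$, $X_{\kappa(\delta)}/L\mathbf{1}_{\zpe}x^{i}$ and $L\cdot[x^{i}]$ through the plain module action, and on the quotient pieces $L\cdot[x^{i+1}]$, $L\cdot[\mathbf{1}_{\zp}]$ through the module action twisted by $\chi$, while $\tau$ acts ``as usual'' on each graded piece (with the correction terms of Remark~\ref{actioncohom} entering when one lifts classes to actual Lie-cocycles).

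The crucial step is the identification $H^{0}(\tilde{P},X_{\kappa(\delta)})=L\cdot(\mathbf{1}_{\zpe}\delta)$. Writing a general element of $X_{\kappa(\delta)}$ on a level-$n$ partition as $\sum_{i}c_{i}(x/i)^{\kappa(\delta)}\mathbf{1}_{i+p^{n}\zp}$, the condition of being fixed by $A^{0}$ translates into $c_{ai}=\delta(a)c_{i}$ for all $a$, hence forces the element to be a scalar multiple of $\mathbf{1}_{\zpe}\delta=\sum_{i}\delta(i)(x/i)^{\kappa(\delta)}\mathbf{1}_{i+p^{n}\zp}$; and one checks directly that $\mathbf{1}_{\zpe}\delta$ is moreover fixed by $\tau$ (using that $x\mapsto x/(1-px)$ maps $\zpe$ to $\zpe$ and $p\zp$ to $p\zp$). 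Granting this, assertion (1) in the subcase $\delta(p)\notin\{p^{i}:i\geq-1\}$, or $\delta(p)=p^{i}$ with $\kappa(\delta)\neq i$, is immediate from Proposition~\ref{prop:H1Lie}(1): $H^{1}(\overline{P}^{+},M)=H^{0}(\tilde{P},X_{\kappa(\delta)})=L\cdot(\mathbf{1}_{\zpe}\delta\otimes\delta)$ is one-dimensional.

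In the remaining cases one feeds the short exact sequences of Proposition~\ref{prop:H1Lie}(2) and~(3) into the long exact sequence of $\tilde{P}$-cohomology and computes term by term, tracking the $\chi$-twists. For assertion~(2), $\delta(x)=x^{i}$ with $i\geq0$, the piece $X_{\kappa(\delta)}/L\mathbf{1}_{\zpe}x^{i}$ contributes nothing (the coefficient relations above leave only $\mathbf{1}_{\zpe}x^{i}=\mathbf{1}_{\zpe}\delta$, which has been quotiented out, and $\Hom((\Z/p^{n})^{\times},L)=0$ produces no new classes), the summand $L\cdot[x^{i}]$ is $\tilde{P}$-fixed, and the class $[x^{i+1}]$ is $\tilde{P}$-fixed after the $\chi$-twist; one then checks that $[x^{i+1}]$ lifts to a $\tilde{P}$-invariant of $H^{1}(\mathscr{C}_{u^{-},\varphi,a^{+}})$, which yields dimension $2$. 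For the degenerate subcase of (1) with $\delta(p)=p^{-1}$ and $\kappa(\delta)=-1$ one runs the same analysis, now showing that the extra quotient class $[\mathbf{1}_{\zp}]$ does \emph{not} lift, so the dimension stays equal to $1$. (Alternatively, once $H^{0}$, $H^{2}$ and $H^{3}$ have been computed, $\dim_{L}H^{1}$ can be pinned down from the Euler characteristic of $\mathscr{C}_{\tau,\varphi,\gamma}(M)$.)

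The main obstacle is precisely this last point: deciding, in the non-regular cases, whether the extra quotient classes ($[x^{i+1}]$ in case (2), $[\mathbf{1}_{\zp}]$ in the degenerate case of (1)) lift to genuine $\tilde{P}$-invariants. This amounts to computing the connecting homomorphisms of the long exact $\tilde{P}$-cohomology sequences attached to Proposition~\ref{prop:H1Lie}, equivalently to understanding the $\tilde{P}$-module extension class of those short exact sequences; the explicit formulas for the $\tilde{P}$-action on Lie-cocycles in Remark~\ref{actioncohom}, in particular the correction term appearing in the $\tau$-action, are what make these computations tractable.
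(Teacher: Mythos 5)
Your overall route is the same as the paper's: reduce via Lemma \ref{lielem} to computing $H^{0}(\tilde{P},H^{1}(\mathscr{C}_{u^{-},\varphi,a^{+}}))$, feed in the case-by-case description of Proposition \ref{prop:H1Lie}, and use the explicit $\tilde{P}$-action of Remarks \ref{actioncohom} and \ref{actioncohom1}; your computation of $H^{0}(\tilde{P},X_{\kappa(\delta)})=L\cdot\mathbf{1}_{\zpe}\delta$ is exactly the paper's, so the generic subcase of assertion (1) is complete and correct.

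In the exceptional cases, however, there is a concrete flaw and the decisive steps are only announced. You dismiss the contribution of $X:=X_{\kappa(\delta)}/L\cdot\mathbf{1}_{\zpe}x^{i}$ in case (2) on the grounds that $\Hom((\Z/p^{n}\Z)^{\times},L)=0$, but the acting group is $A^{0}\cong\zpe$, not a finite quotient: since $\sigma_{a}$ fixes $\mathbf{1}_{\zpe}x^{i}$ when $\delta=x^{i}$, an $A^{0}$-invariant class of $X$ is represented by $f\in X_{\kappa(\delta)}$ with $(\sigma_{a}-1)f=c\,\log(a)\,\mathbf{1}_{\zpe}x^{i}$ for some $c\in L$, and $\Hom_{\mathrm{cont}}(\zpe,L)$ is one-dimensional (spanned by $\log$), not zero. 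So you must actually prove that $a\mapsto\log(a)\,\mathbf{1}_{\zpe}x^{i}$ is not a coboundary in $X_{\kappa(\delta)}$; the paper does this (and also gets $H^{1}(A^{0},X)=0$, which your sketch needs for exactness of invariants) from the long exact sequence of $0\to L\cdot\mathbf{1}_{\zpe}x^{i}\to X_{\kappa(\delta)}\to X\to 0$ together with the computation $H^{1}(A^{0},X_{\kappa(\delta)})\cong L\cdot\mathbf{1}_{\zpe}\delta$ via Lie-algebra cohomology, which shows the connecting map is an isomorphism. Likewise, the two lifting statements you yourself single out as the main obstacle (that $[x^{i+1}]$ lifts to a $\tilde{P}$-invariant when $\delta=x^{i}$, and that $[\mathbf{1}_{\zp}]$ does not lift when $\delta(p)=p^{-1}$, $\kappa(\delta)=-1$) are asserted rather than proved; in the paper the first comes from the exactness of $A^{0}$-invariants just described together with $\tau[x^{i+1}]=[x^{i+1}]\bmod u^{-}$, and the second is precisely the content of its final case. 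Finally, the Euler-characteristic fallback you propose is not available: the terms of $\mathscr{C}_{\tau,\varphi,\gamma}(M)$ are infinite dimensional, no Euler--Poincar\'e formula is established for them, and the paper's computation of $H^{2}$ in Lemma \ref{cohom2-} itself uses the $H^{1}$ invariant computation, so that route would be circular. Until these points are supplied, case (2) and the degenerate subcase of (1) are not established.
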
 

\begin{proof} Start observing that the action of $\tilde{P}$ on the Lie algebra cohomology is explicitly given in Remark \ref{actioncohom} (cf. also Remark \ref{actioncohom1}): the action of $\tau$ on each of the extremities of the exact sequences of Lemma \ref{iso1} is the usual one, while the action of $A^0$ is given by the usual one, except for the term $\ker([\varphi - 1] \colon H^1(\mathscr{C}_{u^-}))$, on which its action is given by the usual action twisted by $\chi$.

- Suppose we are under the hypothesis of Proposition \ref{prop:H1Lie}(1). Then \[ H^1(\mathscr{C}_{u^-, \varphi, a^+}) = X_{\kappa(\delta)}. \] Note $\gamma = \sigma_a \in A^0$ a topological generator. Let $f \in H^0(A^0, X_{\kappa(\delta)})$ and write
$$f(x) = \sum_{i \in (\Z/p^{n}\Z)^{\times}} c_{i}\left(\frac{x}{i}\right)^{\kappa(\delta)}\mathbf{1}_{i+p^{n}\zp}, \;\;\; n \geq 0.$$
Since $\gamma f = f$, we have
\begin{align*}
\sum_{i \in (\Z/p^{n}\Z)^{\times}} c_{i}\left(\frac{x}{i}\right)^{\kappa(\delta)}\mathbf{1}_{i+p^{n}\zp} &= \delta(a)\sum_{i \in (\Z/p^{n}\Z)^{\times}} c_{i}\left(\frac{x}{ia}\right)^{\kappa(\delta)}\mathbf{1}_{ia+p^{n}\zp} \\
&= \delta(a)\sum_{i \in (\Z/p^{n}\Z)^{\times}} c_{ia^{-1}}\left(\frac{x}{i}\right)^{\kappa(\delta)}\mathbf{1}_{i+p^{n}\zp}. 
\end{align*}
Thus $\delta(a)c_{ia^{-1}} = c_{i}$ which implies $c_{a} = c_{1}\delta(a)$, for any $a \in \zpe$. This implies 
$$f(x) = c_{1}\delta(x)\mathbf{1}_{\zpe}.$$
Since $\delta \mathbf{1}_\zpe$ is fixed by $\tau$, the result follows from Lemma \ref{lielem}.

- We now place ourselves under the hypothesis of Proposition \ref{prop:H1Lie}(2). We have
\begin{equation} \label{eq:lala} 0 \to X_{\kappa(\delta)} / L \cdot \mathbf{1}_\zpe x^i \oplus L \cdot [x^i] \to H^1(\mathscr{C}_{u^-, \varphi, a^+}) \to L \cdot [x^{i + 1}] \to 0. \end{equation}
To calculate the $A^0$-invariants of $X := X_{\kappa(\delta)}/L \cdot \mathbf{1}_{\zpe}x^i$, we just consider the short exact sequence of $\Gamma$-modules
\[ 0 \to L \cdot \mathbf{1}_{\zpe}x^i \to X_{\kappa(\delta)} \to X \to 0 \] and take the associated long exact sequence. One easily sees that
\begin{itemize} 
\item if $\delta(x) \neq x^i$, then $H^0(A^0, L \cdot \mathbf{1}_{\zpe}x^i) = H^1(A^0, L \cdot \mathbf{1}_{\zpe}x^i) = 0$ so that $H^0(A^0, X) = H^0(A^0, X_{\kappa(\delta)}) = L \cdot \mathbf{1}_\zpe \delta$. 
\item If $\delta(x) = x^i$, then $A^0$ fixes $\mathbf{1}_{\zpe}x^i$, so we get a long exact sequence
\[ 0 \to L \cdot \mathbf{1}_\zpe \delta \to L \cdot \mathbf{1}_\zpe \delta \to H^0(A^0, X) \to L \cdot \mathbf{1}_\zpe \delta \xrightarrow{\alpha} H^1(A^0, X_{\kappa(\delta)}) \to H^1(A^0, X) \to 0. \]
Now $H^1_{\Lie}(A^0, X_{\kappa(\delta)}) = X_{\kappa(\delta)}$ and so $H^1(A^0, X_{\kappa(\delta)}) = H^0(A^0, X_{\kappa(\delta)}) = L \cdot 1_{\zpe} \delta$. It follows that $\alpha$ is an isomorphism and so $H^0(A^0, X) = 0$. Note also that this implies $H^1(A^0, X) = 0$. 

\end{itemize}

Thus, if $\delta(x) \not= x^i$, $H^1(\mathscr{C}_{u^-, \varphi, a^+})^{\tilde{P}} = L \cdot \mathbf{1}_{\zpe}\delta$. Suppose now $\delta(x) = x^i$. Since the action of $A^0$ on each of the terms of Equation \eqref{eq:lala} is locally constant, taking invariants is exact and we obtain
\[ 0 \to L \cdot [x^i] \to H^1(\mathscr{C}_{u^-, \varphi, a^+})^{A^0} \to L \cdot [x^{i+1}] \to 0. \]
taking $\overline{U}$-invariants of this exact sequence (note that $\tau([x^{i+1}]) = [\frac{x^{i+1}}{1-px}] = [x^{i+1}] + p [x^{i+2}] + p [x^{i+3}] + \hdots = [x^{i+1}] \mod u^-$ if $\delta(x) = x^i$), we obtain the desired result. 

- Finally, suppose that hypothesis of Proposition \ref{prop:H1Lie}(3) hold. We have
\[ 0 \to X_{\kappa(\delta)} \to H^1(\mathscr{C}_{u^-, \varphi, a^+}) \to L \cdot \mathbf{1}_\zp \to 0, \] and the result follows similarly.
\end{proof}

\subsubsection{Calculation of $H^2(\overline{P}^+, \Robba^-(\delta_1, \delta_2))$:}

\begin{lemm} \label{cohom2-} \label{ana} \label{benja}
If $M = \mathscr{R}^{-}(\delta_1, \delta_2)$ then 
\begin{enumerate}
\item If $\delta \neq x^i$ for any $i \geq 0$, then $H^{2}(\overline{P}^{+}, M)$ is of dimension 1 and generated by $\mathbf{1}_{\zpe}\delta \otimes \delta$. 
\item If $i \geq 0$ and $\delta(x) = x^i$ then $H^{2}(\overline{P}^{+}, M)$ is of dimension 3.
\end{enumerate}
\end{lemm}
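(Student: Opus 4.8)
The plan is to deduce the $\overline{P}^+$-cohomology of $M = \Robba^-(\delta_1,\delta_2)$ by taking $\tilde P$-invariants of the Lie-algebra cohomology groups computed in Proposition~\ref{prop:H2LieR^-}, exactly as in the proof of Lemma~\ref{cohom1-}. By Lemma~\ref{lielem} we have $H^2(\overline{P}^+,M) = H^0(\tilde P, H^2(\mathscr{C}_{u^-,\varphi,a^+}))$, and $\tilde P = \overline U \rtimes A^0$, so I will compute $H^0(A^0,-)$ first and then take $\overline U$-invariants. The action of $\tilde P$ on each graded piece is the one spelled out in Remark~\ref{actioncohom} and Remark~\ref{actioncohom1}: $A^0$ acts in the usual (untwisted) way on all the terms that occur in the $H^2$ exact sequences coming from $H^1(\mathscr{C}_{u^-,\varphi})$ and $H^2(\mathscr{C}_{u^-,\varphi})$ except that on the subquotients built from $H^1(\mathscr{C}_{u^-})$ (i.e. the $[x^{i+1}]$ and $[\mathbf{1}_\zp]$ lines) the action is twisted by $\chi$, while $\tau$ acts term-by-term as usual.

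For case (1) ($\delta(x) \neq x^i$ for all $i \ge 0$), the cases of Proposition~\ref{prop:H2LieR^-} that apply are (1) and (3), and in both the relevant group is built from $X_{\kappa(\delta)}$ (plus, when $\delta(p)=p^{-1}$ and $\kappa(\delta)=-1$, a $\chi$-twisted copy of $L\cdot[\mathbf{1}_\zp]$). Exactly the computation in Lemma~\ref{cohom1-} shows $H^0(A^0,X_{\kappa(\delta)}) = L\cdot \mathbf{1}_\zpe\delta$, one-dimensional, with $\tau$ fixing $\mathbf{1}_\zpe\delta$; and the extra $\chi$-twisted line contributes nothing to $A^0$-invariants precisely because $\delta \neq x^{-1}\cdot(\text{anything making }\chi\delta\text{ trivial})$ — here one checks that $\chi$-twisting $L\cdot[\mathbf{1}_\zp]$ gives a character of $A^0$ that is nontrivial under the standing hypothesis, so $H^0$ and $H^1$ of it over $A^0$ both vanish and the extension for $Y_{-1}$ contributes only $H^0(A^0,X_{\kappa(\delta)})$. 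Taking $\overline U$-invariants is then harmless (the surviving class is represented by $\mathbf{1}_\zpe\delta\otimes\delta$, which is $\tau$-fixed), giving dimension $1$.

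For case (2) ($\delta(x) = x^i$ for some $i \ge 0$, so in particular $\delta(p)=p^i$ and $\kappa(\delta)=i$, unless $i$ can also be realized with $\delta(p)=p^{-1},\kappa(\delta)=-1$ — but $\delta(x)=x^{-1}$ gives $\delta(p)=p^{-1}$, $\kappa(\delta)=-1$, which is the $i=-1$ instance and is excluded by "$i\ge 0$"), we are in case (2) of Proposition~\ref{prop:H2LieR^-}: $H^2(\mathscr{C}_{u^-,\varphi,a^+})$ sits in $0 \to Y_i \to H^2 \to L\cdot[x^{i+1}] \to 0$ with $0 \to X_{\kappa(\delta)}/L\mathbf{1}_\zpe x^i \oplus L[x^i] \to Y_i \to L[x^{i+1}] \to 0$. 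Since $\delta(x)=x^i$, the $A^0$-action on every term here is locally constant: on $X_{\kappa(\delta)}$, on $L[x^i]$, and on the $\chi$-twisted lines $L[x^{i+1}]$ (the character $\chi\cdot x^i\cdot(x^{i+1})^{-1}\cdot\delta$-bookkeeping works out to a trivial action on the relevant invariants, as in Lemma~\ref{cohom1-}). Hence $H^0(A^0,-)$ is exact on these sequences. I will show $H^0(A^0,X_{\kappa(\delta)}/L\mathbf{1}_\zpe x^i)=0$ (done in Lemma~\ref{cohom1-}: the connecting map $\alpha$ is an isomorphism there), $H^0(A^0,L[x^i]) = L[x^i]$, and $H^0(A^0,L[x^{i+1}]) = L[x^{i+1}]$ for each of the two copies. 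Thus $H^0(A^0,Y_i)$ has dimension $2$ and $H^0(A^0,H^2)$ has dimension $3$. Finally $\overline U$-invariants: $\tau$ fixes $[x^i]$ and fixes the classes $[x^{i+1}]$ modulo $u^-$ (same computation $\tau[x^{i+1}] = [x^{i+1}/(1-px)] \equiv [x^{i+1}] \bmod u^-$ used in Lemma~\ref{cohom1-}), and $\tau$ acts on each extremity as usual, so taking $\overline U$-invariants preserves all three dimensions, giving $\dim_L H^2(\overline{P}^+,M)=3$.

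The main obstacle I anticipate is the careful bookkeeping of the $A^0$-action on the lines built from $H^1(\mathscr{C}_{u^-})$: one must track the $\chi$-twist of Remark~\ref{actioncohom1} together with the intrinsic weight of the generator (e.g. $[x^{i+1}]$ carries weight $i+1$, $\delta$ carries weight $\kappa(\delta)=i$) to be sure the resulting $A^0$-character is trivial — which it is exactly in the "pathological" case $\delta(x)=x^i$ — and nontrivial otherwise, so that the invariants have the claimed dimensions. A secondary point requiring care is confirming exactness of $H^0(A^0,-)$ and of $(-)^{\overline U}$ on the relevant extensions, i.e. that the connecting maps vanish; this follows because the $\tilde P$-action on all the pieces in question is (in the pathological case) locally constant and the sequences split $a^+$-equivariantly, but it should be stated explicitly. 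I do not expect genuinely new calculations beyond reusing Lemma~\ref{cohom1-}.
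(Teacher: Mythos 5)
Your overall strategy -- computing $H^2(\overline{P}^+,M)$ as the $\tilde{P}$-invariants of the Lie-algebra cohomology of Proposition \ref{prop:H2LieR^-} via Lemma \ref{lielem}, with the action bookkeeping of Remarks \ref{actioncohom} and \ref{actioncohom1}, and reusing the invariant computations of Lemma \ref{cohom1-} -- is exactly the one used in the text, and your count in case (2) agrees with it. There is, however, a genuine gap in your case partition. The trichotomy of Proposition \ref{prop:H2LieR^-} is governed by the pair $(\delta(p),\kappa(\delta))$, not by whether $\delta$ equals $x^i$ as a character of $\qpe$. Under the hypothesis of part (1) (i.e.\ $\delta\neq x^i$ for all $i\geq 0$), case (2) of Proposition \ref{prop:H2LieR^-} can still occur: take $\delta=x^i\eta$ with $\eta$ a nontrivial locally constant character satisfying $\eta(p)=1$; then $\delta(p)=p^i$ and $\kappa(\delta)=i$ although $\delta\neq x^i$. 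You assert that only cases (1) and (3) of the proposition occur under the hypothesis of (1), and your treatment of case (2) of the proposition assumes $\delta=x^i$ on the nose (using triviality/local constancy of the $A^0$-action), so this sub-case is proved nowhere in your argument. The proof in the text avoids this by running Lie case (2) uniformly: $(Y_i)^{\tilde{P}}$ is imported from Lemma \ref{cohom1-} (it is $L\cdot\mathbf{1}_{\zpe}\delta$, of dimension $1$, when $\delta\neq x^i$, and of dimension $2$ when $\delta=x^i$), while $[x^{i+1}]$ is $\tilde{P}$-invariant if and only if $\delta=x^i$; this gives dimension $1$ in the sub-case you missed and $3$ in case (2). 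Your argument can be repaired the same way, but as written it is incomplete.

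A second, smaller issue concerns Lie case (3) inside your part (1). Proposition \ref{prop:H2LieR^-}(3) produces two $\chi$-twisted lines $L\cdot[\mathbf{1}_{\zp}]$ (one inside $Y_{-1}$ and one as the outer quotient), not one; and your claim that the twisted $A^0$-character on such a line is nontrivial ``under the standing hypothesis'' is not correct in general: that character is $a\mapsto\chi(a)\delta(a)=a\,\delta(a)$, which is trivial precisely when $\delta|_{\zpe}=x^{-1}$, and since case (3) forces $\delta(p)=p^{-1}$ this happens for $\delta=x^{-1}$, a character the hypothesis of (1) does not exclude. In that sub-case the invariants of the twisted lines do not vanish for the reason you give, and keeping the dimension equal to $1$ requires analysing the extensions themselves (for instance showing that the relevant connecting map into $H^1(A^0,X_{\kappa(\delta)})$ is injective, in the spirit of the $\alpha$-argument in the proof of Lemma \ref{cohom1-}), rather than invoking nontriviality of the character; as written, your argument leaves the dimension undetermined there.
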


\begin{proof}
- Suppose first that we are in the case of Proposition \ref{prop:H2LieR^-}(1). Then $H^{2}(\mathscr{C}_{u^{-},\varphi,a^{+}}) = X_{\kappa(\delta)},$ and the result follows as in Lemma \ref{cohom1-} above.

- We now deal with the case of Proposition \ref{prop:H2LieR^-}(2). We have
\begin{equation} \label{eaea} 0 \to Y_i \to H^2(\mathscr{C}_{u^-, \varphi, a^+}) \to L \cdot [x^{i + 1}] \to 0,
\end{equation} 
\[ 0 \to X_{\kappa(\delta)}/L \cdot \mathbf{1}_{\zpe}x^i \oplus L\cdot x^i \to Y_i \to L \cdot [x^{i+1}] \to 0. \]
Again, the action of $\tilde{P}$ on each component is described in Remark \ref{actioncohom1}. As before, since the action of $\tilde{P}$ is locally constant, taking $\tilde{P}$-invariants of the short exact sequence \eqref{eaea} gives
\[
0 \to (Y_i)^{\tilde{P}} \to H^2(\mathscr{C}_{u^-, \varphi, a^+})^{\tilde{P}} \to (L \cdot [x^{i+1}])^{\tilde{P}} \to 0.
\]
Now, $[x^{i+1}]$ is invariant under the action of $A^0$ and $\tau$ if and only if $\delta(x) = x^i$ for some $i \geq 0$, and the $\tilde{P}$-invariants of $Y_i$ were calculated in Lemma \ref{cohom1-}. This allows us to conclude.

- The case of Proposition \ref{prop:H2LieR^-}(3) is treated similarly.
\end{proof}

\subsubsection{Calculation of $H^3(\overline{P}^+, \Robba^-(\delta_1, \delta_2))$:}

\begin{lemm} \label{cohom3-} Let $M = \Robba^-(\delta_1, \delta_2)$.
\begin{itemize}
\item If $\delta = x^i$, $i \geq 0$, then $H^3(\overline{P}^+, \Robba^-(\delta_1, \delta_2))$ is of dimension $1$ naturally generated by $[x^{i + 1}]$.
\item Otherwise $H^3(\overline{P}^+, \Robba^-(\delta_1, \delta_2)) = 0$.
\end{itemize}
\end{lemm}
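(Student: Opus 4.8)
As in the proofs of Lemmas \ref{cohom1-} and \ref{cohom2-}, the plan is to compute the $\tilde{P}$-invariants of the top Lie-algebra cohomology group. By Lemma \ref{lielem} there is a canonical isomorphism
\[
H^3(\overline{P}^+, M) \cong H^0\bigl(\tilde{P},\, H^3(\mathscr{C}_{u^-, \varphi, a^+})\bigr),
\]
and by Proposition \ref{Lie3-} the group $H^3(\mathscr{C}_{u^-, \varphi, a^+})$ vanishes unless $\delta(p) = p^i$ and $\kappa(\delta) = i$ for some $i \geq -1$, in which case it is the line $L \cdot [x^{i+1}]$. In the vanishing case one gets $H^3(\overline{P}^+, M) = 0$, and here $\delta$ is not of the form $x^j$ with $j \geq 0$ (such a $\delta$ would satisfy $\delta(p) = p^j$ and $\kappa(\delta) = j$), so this is consistent with the second bullet. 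It remains to treat the case $\delta(p) = p^i$, $\kappa(\delta) = i$ for some $i \geq -1$, and to decide when the generator $[x^{i+1}]$ is fixed by $\tilde{P}$.

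First I would treat the action of $A^0$. Since $H^3(\mathscr{C}_{u^-, \varphi, a^+})$ is a subquotient of $H^1(\mathscr{C}_{u^-}) = M/u^- M$, the action of $\sigma_a$ on it is the module action twisted by $\chi$ (Remark \ref{actioncohom1}); as $\sigma_a$ acts on the monomial $x^{i+1} \in \mathrm{LA}(\zp, L)$ by the scalar $\delta(a) a^{-(i+1)}$, it acts on $[x^{i+1}]$ by $\chi(a)\delta(a) a^{-(i+1)} = \delta(a) a^{-i}$. This scalar equals $1$ for all $a \in \zpe$ if and only if $\delta|_{\zpe} = (x^i)|_{\zpe}$, which together with $\delta(p) = p^i$ is equivalent to $\delta = x^i$. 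Hence, if $\delta \neq x^i$ (while still $\delta(p) = p^i$, $\kappa(\delta) = i$), then $A^0$ acts nontrivially on $[x^{i+1}]$ and $H^3(\overline{P}^+, M) = 0$, again consistent with the second bullet. If $\delta = x^i$ with $i \geq 0$, one checks (as in the proof of Lemma \ref{cohom1-}, using that $\tau x^{i+1} \equiv x^{i+1}$ modulo the image of $u^-$) that $\tau$ also fixes $[x^{i+1}]$, so that $H^3(\overline{P}^+, M) = L\cdot[x^{i+1}]$; this is the first bullet. Finally, in the single remaining case $\delta = x^{-1}$ (i.e. $i = -1$), although $A^0$ acts trivially on $[x^0]$, the action of $\tau$ is \emph{not} the naive module action: one must retain the correction term appearing because $\tau$ and $\varphi$ do not commute (cf. Remark \ref{actioncohom}), and this makes the $\tau$-action on $[x^0]$ nontrivial, so that $H^3(\overline{P}^+, M) = 0$ — consistent with $x^{-1}$ not being among the $x^j$, $j \geq 0$.

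The only genuine computation is the determination of the residual $\tilde{P}$-action on $[x^{i+1}]$, and I expect the delicate point — exactly as in Lemmas \ref{cohom1-} and \ref{cohom2-} — to be that the $\tilde{P}$-module structure on $H^\bullet(\mathscr{C}_{u^-, \varphi, a^+})$ is not the naive structure on the terms of the complex; in particular the $\tau$-action in the exceptional case $\delta = x^{-1}$ must be handled with the correction term of Remark \ref{actioncohom}. The requisite twists by $\chi$ and correction terms are exactly those recorded in Remarks \ref{actioncohom} and \ref{actioncohom1}, and once they are taken into account the statement follows by bookkeeping.
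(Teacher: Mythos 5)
Your main line is the paper's: reduce via Lemma \ref{lielem} to the $\tilde{P}$-invariants of $H^3(\mathscr{C}_{u^-,\varphi,a^+})$ as computed in Proposition \ref{Lie3-}, let $A^0$ act through the $\chi$-twisted module action and $\tau$ through the natural one, and read off that invariants can only survive when $\delta|_{\zpe}=x^i|_{\zpe}$; for $\delta=x^i$, $i\geq 0$, your check that $\tau$ fixes $[x^{i+1}]$ modulo $u^-$ is the same as in Lemma \ref{cohom1-}. Up to that point the proposal coincides with the paper's proof.

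The gap is the residual case $\delta=x^{-1}$ (i.e. $\delta(p)=p^{-1}$, $\kappa(\delta)=-1$, $\delta|_{\zpe}=x^{-1}$), where Proposition \ref{Lie3-} gives $H^3(\mathscr{C}_{u^-,\varphi,a^+})=L\cdot[\mathbf{1}_{\zp}]$ and, as you note, the $\chi$-twisted $A^0$-action on this class is trivial. You dispose of it by asserting that here the $\tau$-action on the class is \emph{not} the natural one but acquires a nontrivial correction from the relation $\varphi\tau^p=\tau\varphi$. This is pure assertion: you write down neither the corrected action on degree-$3$ classes nor a verification that it actually moves $[\mathbf{1}_{\zp}]$, and the claim is in direct conflict with the description the paper itself uses (Remark \ref{actioncohom1}, and the paper's proof of this very lemma, state that $\tau$ acts on these terms by the usual module action; the correction in Remark \ref{actioncohom} only feeds the $u^-$-component into the other components of lower-degree cocycles). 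Worse, with the natural action the vanishing genuinely does not follow from the bookkeeping you describe: one computes $\tau\cdot\mathbf{1}_{\zp}-\mathbf{1}_{\zp}=\frac{px}{1-px}=u^-\bigl(x^{-1}\log(1-px)\bigr)\in u^-M$, so $\tau$ fixes $[\mathbf{1}_{\zp}]$ in the quotient, while $\sigma_a$ acts by $\chi(a)\delta(a)=1$. So this exceptional character is precisely where a genuine argument is needed — either an honest determination of the group action on the top-degree cohomology in this case, or an independent computation (e.g. of $H^2(A^+,H^1(\overline{U}^{1},M))$ for this $\delta$) — and your proposal supplies neither; as written, the second bullet is not proved for $\delta=x^{-1}$.
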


\begin{proof}
This follows by taking $\tilde{P}$-invariants to the results of Proposition \ref{Lie3-}, by observing that the action of $\tau$ is the natural one, and that of $A^0$ is twisted by $\chi$.
\end{proof}

\subsection{The $\overline{P}^+$-cohomology of $\Robba^+(\delta_1, \delta_2)$: a first reduction}

In this section we calculate all $\overline{P}^+$-cohomology groups of $\mathscr{R}^{+}(\delta_1, \delta_2)$ as described in Proposition \ref{cohomfinal}.

We first start with a lemma that allows us to reduce, as we have already done before for the $A^+$-cohomology (cf. \S \ref{caseofR+}), the calculation of $H^i(\overline{P}^+, \Robba^+(\delta_1, \delta_2))$ to that of $H^i(\overline{P}^+, \mathrm{Pol}_{\leq N}(\zp, L)^*(\delta_1, \delta_2))$ for $N \geq 0$ big enough, where $\mathrm{Pol}_{\leq N}(\zp, L)^*(\delta_1, \delta_2)$ denotes the sub-module of $\Robba^+(\delta_1, \delta_2)$ corresponding to $\mathrm{Pol}_{\leq N}(\zp, L)^*$ under the identification (as $L$-vector spaces) $\Robba^+(\delta_1, \delta_2) = \Robba^+$. We also recall that, under the Amice transform, we have an identification $\mathrm{Pol}_{\leq N}(\zp, L)^*(\delta_1, \delta_2) = \oplus_{i = 0}^N L \cdot t^i$. This module is stable under the action of $\overline{P}^+$ by \cite[Lemme 5.20]{colmez2015} \footnote{The first $\overline{P}^+$-cohomology group of this module is calculated in \cite[Lemme 5.21]{colmez2015} but, as we mentioned earlier, there are some small mistakes there, whence the incompatibility with our results.}, and the action of $\overline{P}^+$ is explicitly given by
\[ \sigma_a(t^j) = \delta_1 \delta_2^{-1}(a) a^j t^j, \;\;\; \varphi(t^j) = \delta_1 \delta_2^{-1}(p) p^j t^j, \;\;\; \tau(t^j) = \sum_{h = 0}^j {{\kappa - h} \choose {j - h}} p^{j - h} \, t^h, \] where we have set $\kappa = - \kappa(\delta_1 \delta_2^{-1}) - 1$. Observe that, if $\kappa \in \{ 0, 1, \hdots, N - 1 \}$ and $j = \kappa + 1$, then $\tau t^j = t^j$.

\begin{lemm} \label{compacohom}
We have, for every $i$ and for $N$ big enough, \[ H^i(\overline{P}^+, \Robba^+(\delta_1, \delta_2)) = H^i(\overline{P}^+, \mathrm{Pol}_{\leq N}(\zp, L)^*(\delta_1, \delta_2)).\]
\end{lemm}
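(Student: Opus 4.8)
The plan is to mimic the reduction already carried out for $A^+$-cohomology in \S\ref{caseofR+} (Lemma \ref{sum2} and Corollary \ref{H+}), now for the larger group $\overline{P}^+$. First I would choose $N \geq 0$ large enough that $|\delta_1\delta_2^{-1}(p)\,p^{N+1}| < 1$, exactly as in the proof of Lemma \ref{sum2}. The key structural input is the $\overline{P}^+$-stable decomposition
\[
\mathscr{R}^+(\delta_1,\delta_2) = \mathrm{Pol}_{\leq N}(\zp,L)^*(\delta_1,\delta_2) \oplus T^{N+1}\mathscr{R}^+(\delta_1,\delta_2),
\]
where $\mathrm{Pol}_{\leq N}(\zp,L)^*(\delta_1,\delta_2) = \bigoplus_{i=0}^N L\cdot t^i$ under the Amice transform. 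That this is a decomposition of $\varphi$- and $\Gamma$-modules is classical (it is what underlies Lemma \ref{sum2}); that it is moreover $\tau$-stable follows from the explicit formula $\tau(t^j) = \sum_{h=0}^j \binom{\kappa-h}{j-h} p^{j-h} t^h$ recalled just before the statement, which visibly preserves both $\bigoplus_{i \leq N} L\cdot t^i$ (it is ``lower triangular'' in the $t^h$) and, by the same token together with $\varphi$-stability, the complement $T^{N+1}\mathscr{R}^+$. Here one uses that $t = \log(1+T)$ has the same $T$-adic valuation as $T$, so $T^{N+1}\mathscr{R}^+ = t^{N+1}\mathscr{R}^+$ as $\overline{P}^+$-submodules. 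Since $\overline{P}^+$-cohomology is additive in the module, this gives
\[
H^i(\overline{P}^+, \mathscr{R}^+(\delta_1,\delta_2)) = H^i(\overline{P}^+, \mathrm{Pol}_{\leq N}(\zp,L)^*(\delta_1,\delta_2)) \oplus H^i(\overline{P}^+, T^{N+1}\mathscr{R}^+(\delta_1,\delta_2)),
\]
so it remains to show the second summand vanishes for all $i$.

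For the vanishing of $H^i(\overline{P}^+, T^{N+1}\mathscr{R}^+(\delta_1,\delta_2))$ I would use the Koszul-type complex $\mathscr{C}_{\tau,\varphi,\gamma}$ of Lemma \ref{lem:comcohg} (which applies since $T^{N+1}\mathscr{R}^+(\delta_1,\delta_2)$, being a closed $\overline{P}^+$-submodule of $\mathscr{R}^+(\delta_1,\delta_2)$, inherits an action of the Iwasawa algebra $\zp[[\overline{P}^+]]$), or, more conveniently, its presentation as iterated cones: $\mathscr{C}_{\tau,\varphi,\gamma}$ sits in a distinguished triangle $\mathscr{C}_{\tau,\varphi,\gamma}\to \mathscr{C}_{\varphi,\gamma}\to \mathscr{C}_{\varphi,\gamma}^{\mathrm{twist}}$, and each of $\mathscr{C}_{\varphi,\gamma}$, $\mathscr{C}_{\varphi,\gamma}^{\mathrm{twist}}$ is itself built by taking cones of the two-term complexes $\mathscr{C}_{\varphi\delta_p}$, $0\to M\xrightarrow{1-\varphi}M\to 0$, etc. So it suffices to check that $1-\varphi$, $1-\varphi\delta_p$, $1-\gamma$ and $1-\gamma\delta_a$ are all \emph{bijective} on $T^{N+1}\mathscr{R}^+(\delta_1,\delta_2)$. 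For $1-\varphi$ (twisted by $\delta_1\delta_2^{-1}(p)$) and $1-\varphi\delta_p$ this is precisely the convergence-of-$\sum(\varphi\delta_p)^k$ argument from Lemma \ref{varphidelta1} together with the choice of $N$; for $1-\gamma$ and $1-\gamma\delta_a$ one notes $\gamma - 1$ (resp.\ $\gamma\delta_a - 1$) acts invertibly on $T^{N+1}\mathscr{R}^+$ because modulo the $t^i$, $i\leq N$, there is no ``weight'' obstruction — equivalently, one can invoke that $\Gamma$ acts with finite cohomology and the only classes live in the $\mathrm{Pol}_{\leq N}^*$ part, or run the same geometric-series estimate using topological nilpotence of $\tau - 1$ in $\zp[[\overline{U}]]$. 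With all four operators invertible on $T^{N+1}\mathscr{R}^+(\delta_1,\delta_2)$, every complex in the tower is acyclic, hence so is $\mathscr{C}_{\tau,\varphi,\gamma}$ evaluated on this module, giving $H^i(\overline{P}^+, T^{N+1}\mathscr{R}^+(\delta_1,\delta_2)) = 0$ for all $i$ and the lemma.

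The main obstacle I anticipate is checking that $1-\gamma$ and $1-\gamma\delta_a$ (equivalently the full $A^+$-action appearing in $\mathscr{C}_{\varphi,\gamma}$ and in the twisted complex) act bijectively on $T^{N+1}\mathscr{R}^+(\delta_1,\delta_2)$: unlike $\varphi$, the operator $\gamma$ does not contract the $T$-adic filtration, so one cannot use a naive geometric series and must instead argue weight-by-weight — writing an element of $T^{N+1}\mathscr{R}^+$ in its $t$-expansion and solving $(\gamma - 1)$ (or $(\gamma\delta_a - 1)$) coefficientwise, using that $1 - \delta_1\delta_2^{-1}(a)a^j \in L^\times$ for $j \geq N+1$ by the choice of $N$, plus a routine estimate to see that the resulting series still converges in $\mathscr{R}^+$. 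This is entirely parallel to the bijectivity statements implicit in \cite[Th\'eor\`eme 2.29]{chen2013} and in the proof of Proposition \ref{H1R+}, so I expect it to go through, but it is the one point requiring genuine (if standard) estimation rather than formal manipulation. Everything else — the $\overline{P}^+$-stability of the decomposition and the additivity of cohomology — is immediate.
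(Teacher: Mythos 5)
There is a genuine gap at the very first step of your plan. The decomposition $\Robba^+(\delta_1,\delta_2)=\mathrm{Pol}_{\leq N}(\zp,L)^*(\delta_1,\delta_2)\oplus t^{N+1}\Robba^+(\delta_1,\delta_2)$ is stable under $\varphi$ and $\Gamma$, but it is \emph{not} stable under $\tau$: the formula $\tau(t^j)=\sum_{h=0}^{j}\binom{\kappa-h}{j-h}p^{j-h}t^h$, or infinitesimally $u^-t^j=j\bigl(\kappa(\delta_2)-\kappa(\delta_1)-j\bigr)t^{j-1}$ (Lemma \ref{hatbag}), strictly lowers the degree in $t$. Lower triangularity cuts the other way from what you claim: it makes the finite piece $\bigoplus_{i\leq N}L\cdot t^i$ a $\overline{P}^+$-submodule, but it pushes $t^{N+1}\Robba^+$ into degrees $\leq N$ (for instance $u^-(t^{N+1})$ is a nonzero multiple of $t^N$ for all but at most one $N$). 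So there is no direct sum of $\overline{P}^+$-modules and no additivity of cohomology; what you actually have is a short exact sequence with submodule $\mathrm{Pol}_{\leq N}^*(\delta_1,\delta_2)$ and quotient $Q$ (isomorphic to $t^{N+1}\Robba^+$ only as an $A^+$-module, with a quotient action of $\tau$), and you must prove $H^i(\overline{P}^+,Q)=0$.

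The second gap is the step you yourself flag as the main obstacle: inverting $1-\gamma$ and $1-\gamma\delta_a$ on the tail cannot be done by a "routine estimate". The $\Gamma$-eigenvalue on $t^j$ is $\delta_1\delta_2^{-1}(a)a^j$, and although $1-\delta_1\delta_2^{-1}(a)a^j\neq 0$ for all $j\geq N+1$ once $N$ is large, these scalars need not be bounded away from zero as $j\to\infty$ (e.g. whenever $\delta_1\delta_2^{-1}(a)\in\zpe$, since the $a^j$ are dense in $\zpe$); hence the coefficientwise solution of $(\gamma-1)f=g$ on power series need not converge, and no choice of $N$ repairs this because the obstruction sits at $j\to\infty$, not at finitely many weights. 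This is exactly why the paper never inverts in the $\Gamma$- or $\tau$-direction: its proof is Hochschild--Serre plus the arguments of Lemma \ref{sum2}, where only the $\varphi$-direction is inverted (there $\varphi$ genuinely contracts the $T$-adic filtration, under the condition $|\delta(p)|<p^N$, with $\delta_{p^k}$ merely bounded as in Lemma \ref{varphidelta1}), and the remaining $\Gamma$- and $\tau$-cohomology is computed on the finite-dimensional module $\mathrm{Pol}_{\leq N}^*$. Note also that inverting the $\gamma$-operators is unnecessary for your Koszul-complex strategy: if the two $\varphi$-type maps $1-\varphi$ and $1-\varphi\delta_p$ are bijective on the quotient $Q$ (which is what the contraction arguments of Lemmas \ref{sum2} and \ref{varphidelta1} are designed to give), then $[\varphi-1]$ is an isomorphism of complexes, so $\mathscr{C}_{\tau,\varphi}(Q)$ and hence $\mathscr{C}_{\tau,\varphi,\gamma}(Q)$ are acyclic. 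Replacing your direct sum by the short exact sequence and your four-operator inversion by this $\varphi$-only argument essentially brings you back to the paper's proof.
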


\begin{proof}
This follows from the Hochschild-Serre spectral sequence and the same arguments of Lemma \ref{sum2}. Observe that it suffices to take $N$ such that $|\delta(p)| < p^N$.
\end{proof}

\subsection{The Lie algebra cohomology of $\mathrm{Pol}_{\leq N}(\zp, L)^*(\delta_1, \delta_2)$}

From now on until the end of this section, call $M = \mathrm{Pol}_{\leq N}(\zp, L)^*(\delta_1, \delta_2)$, which we identify with $ \oplus_{i = 0}^N L \cdot t^i$ equipped with the corresponding action of $\overline{P}^+$. Let us now calculate the Lie algebra action on the module $M$.

\begin{lemm} \label{hatbag}
For $f \in M$, the infinitesimal actions of $a^{+}$ and $u^{-}$ and the action of $\varphi$ are given by
\[ (a^{+}f)(t) = (\kappa(\delta_1)-\kappa(\delta_2))f(t) + tf'(t),\]
\[ (u^{-}f)(t) = (\kappa(\delta_2)-\kappa(\delta_1)-1)f'(t) - tf''(t), \]
\[ (\varphi f)(t) = \delta_1\delta_2^{-1}(p)f(pt). \]
\end{lemm}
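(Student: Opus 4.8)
The strategy is to transport the known infinitesimal formulas for the action of $\overline{P}^+$ on $\Robba^-(\delta_1,\delta_2)$ (established in \S \ref{Liecohom1}) through the Amice transform to the module $M = \mathrm{Pol}_{\leq N}(\zp, L)^*(\delta_1,\delta_2) = \bigoplus_{i=0}^N L \cdot t^i \subseteq \Robba^+(\delta_1,\delta_2)$. Recall that under the Amice transform, $M$ sits inside $\mathscr{D}(\zp,L)$ as the span of the distributions $\mu_i \colon f \mapsto f^{(i)}(0)$ (up to constants), the variable $t$ being dual to the polynomial variable $x$ on $\zp$. The plan is to differentiate the group action at the identity. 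By the presentation recalled just before Lemma \ref{compacohom}, the action of $\overline{P}^+$ on $\bigoplus L \cdot t^i$ is
\[
\sigma_a(t^j) = \delta_1\delta_2^{-1}(a)\,a^j\, t^j, \qquad \varphi(t^j) = \delta_1\delta_2^{-1}(p)\,p^j\, t^j,
\]
so the $\varphi$-formula $(\varphi f)(t) = \delta_1\delta_2^{-1}(p) f(pt)$ is immediate on monomials and extends by $L$-linearity.

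For $a^+$: the element $a^+ = {\matrice 1 0 0 0}$ is the derivative at $a = 1$ of the one-parameter subgroup $a \mapsto {\matrice a 0 0 1}$, i.e. of $\sigma_a$ twisted by the central behaviour. Concretely, $a^+ = \frac{d}{ds}\big|_{s=0}\,\sigma_{\exp(s)}$ acting through the character $\delta_1\delta_2^{-1}$ and the monomial scaling. Differentiating $\sigma_a(t^j) = \delta_1\delta_2^{-1}(a) a^j t^j$ with respect to $\log a$ at $a = 1$ gives $\big(\kappa(\delta_1\delta_2^{-1}) + j\big) t^j$. Since $\kappa(\delta_1\delta_2^{-1}) = \kappa(\delta_1) - \kappa(\delta_2)$, and since on $M = \bigoplus L t^i$ multiplication by $j$ on $t^j$ is exactly the operator $f \mapsto t f'(t)$, this yields $(a^+ f)(t) = (\kappa(\delta_1) - \kappa(\delta_2)) f(t) + t f'(t)$, as claimed. (Here one must be careful about the normalization of $a^+$ versus the character twist — I would pin this down by comparing with the formula $a^+ = \kappa(\delta) - x\frac{d}{dx}$ on $\Robba^-(\delta_1,\delta_2)$ from the earlier lemma in \S \ref{Liecohom1} and using duality, since $t$ is dual to $x$, so that $x\frac{d}{dx}$ dualizes to $-1 - t\frac{d}{dt}$; this accounts for the sign flip and the shift by $1$ in the $u^-$ formula below.)

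For $u^-$: the cleanest route is to use the identity $u^- = -t^{-1}\nabla(\nabla + \kappa(\delta_2\delta_1^{-1}))$ from \cite[Th\'eor\`eme 1.1]{dos2012} (already invoked in the proof of the corresponding lemma for $\Robba^-$), where here $\nabla$ acts on the $t$-variable side. Alternatively, and perhaps more transparently, I would differentiate at $b = 0$ the action of ${\matrice 1 0 b 1}$, which is $\tau$-type; from $\tau(t^j) = \sum_{h=0}^j \binom{\kappa - h}{j - h} p^{j-h} t^h$ one extracts the infinitesimal generator by replacing $p$ by a formal parameter $b \to 0$ and taking $\frac{d}{db}\big|_{b=0}$, which picks out the $h = j-1$ term and gives a coefficient involving $\kappa - (j-1) = -\kappa(\delta_1\delta_2^{-1}) - j$. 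Bookkeeping then shows $(u^- f)(t) = (\kappa(\delta_2) - \kappa(\delta_1) - 1) f'(t) - t f''(t)$ on monomials, hence on all of $M$ by linearity. The main obstacle is purely the sign and normalization bookkeeping: matching the conventions for $u^-$, $a^+$, the character $\delta = \delta_1\delta_2^{-1}\chi^{-1}$ versus $\delta_1\delta_2^{-1}$, and the duality pairing between $\Robba^+$ and locally analytic functions (where $\partial$, i.e. $(1+T)\frac{d}{dT}$, corresponds to multiplication by $x$, so its dual corresponds to $\frac{d}{dt}$). Once the conventions are fixed, the verification is a one-line computation on each monomial $t^j$, and since $[a^+, u^-] = -u^-$ and the commutation $p\varphi u^- = u^- \varphi$ can be checked directly against the proposed formulas, this serves as a consistency check that the normalization has been chosen correctly.
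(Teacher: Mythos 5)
Your treatment of $\varphi$ and $a^{+}$ is correct and is essentially the paper's argument: one identifies $M$ inside $\Robba^{+}(\delta_1\delta_2^{-1})$ as an $A^{+}$-module, so that $(\sigma_a f)(t) = \delta_1\delta_2^{-1}(a)f(at)$ and $(\varphi f)(t)=\delta_1\delta_2^{-1}(p)f(pt)$, and differentiates the former at $a=1$. The $u^{-}$ formula, however, is not established by either of your routes, and the problem is exactly the normalization you defer. In route (a) you quote Dospinescu's identity in the form $u^{-}=-t^{-1}\nabla(\nabla+\kappa(\delta_2\delta_1^{-1}))$, the form used on the $\Robba^{-}$ side; with $\nabla = t\frac{d}{dt}$ this gives $(u^{-}f)(t)=(\kappa(\delta_1)-\kappa(\delta_2)-1)f'(t)-tf''(t)$, i.e.\ the wrong sign on the first-order coefficient. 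The paper's proof uses $u^{-}=-t^{-1}\nabla(\nabla-\kappa(\delta_2\delta_1^{-1}))$ here, and only that version yields the stated formula. Your consistency checks cannot detect the error: any operator sending $t^{j}$ to a multiple of $t^{j-1}$ satisfies both $[a^{+},u^{-}]=-u^{-}$ (given $a^{+}t^{j}=(\kappa(\delta_1)-\kappa(\delta_2)+j)t^{j}$) and $p\varphi u^{-}=u^{-}\varphi$, so both candidates pass. Similarly, the dual of $\Robba^{+}(\delta_1,\delta_2)$ is not $\Robba^{-}(\delta_1,\delta_2)$ on the nose (the residue pairing introduces a $\chi$-twist), so the heuristic ``$t$ is dual to $x$'' does not by itself pin down the constants — that is precisely the bookkeeping at stake.

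Route (b) is also defective as written. Differentiating the displayed $\tau$-formula with $p$ replaced by a parameter gives $u^{-}(t^{j})=(\kappa-j+1)\,t^{j-1}$ with $\kappa=-\kappa(\delta_1\delta_2^{-1})-1$, whereas the stated formula — and the one used later, e.g.\ in Lemma \ref{Lie0+}, namely $u^{-}t^{j}=j(-\kappa(\delta_1\delta_2^{-1})-j)t^{j-1}$ — requires $u^{-}(t^{j})=j(\kappa-j+1)\,t^{j-1}$. The missing factor $j$ cannot be recovered by ``bookkeeping'': $\kappa-j+1$ is in general not divisible by $j$, and no operator of the form $\alpha f'+\beta t f''$ produces it. (The discrepancy comes from the fact that the displayed coefficients for $\tau(t^{j})$ are those of the divided-power basis $t^{j}/j!$; in the basis $t^{j}$ they carry an extra factor $j!/h!$, with which your differentiation does give $j(\kappa-j+1)t^{j-1}$.) To close the gap you should either invoke the Lie-algebra identity with the correct sign, as the paper does, or compute $u^{-}$ directly as $\frac{d}{dc}\big|_{c=0}$ of the action of ${\matrice 1 0 c 1}$ on $B_A(\delta_2,\delta_1)^{*}\otimes\omega$ restricted to $\zp$ and transport through the Amice transform (multiplication by $x$ becomes $\frac{d}{dt}$, and $\phi\mapsto\phi'$ becomes multiplication by $t$), which yields $(u^{-}f)(t)=(\kappa(\delta_2)-\kappa(\delta_1)-1)f'(t)-tf''(t)$ directly.
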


\begin{proof}
First note that as a $A^+$-module, $\mathscr{R}^{+}(\delta_1, \delta_2)$ is identified with $\mathscr{R}^{+}(\delta_1\delta_2^{-1})$. The action of $\varphi$ is now evident. For the action of $a^{+}$ note that $(\sigma_af)(t) = \delta_{1}\delta_{2}^{-1}(a)f(at)$ for any $a \in \zpe$. The derivative of the function $g(a):= \delta_{1}\delta_{2}^{-1}(a)f(at)$ evaluated at $a=1$ is precisely $(\kappa(\delta_1)-\kappa(\delta_2))f(t) + tf'(t)$.

Finally viewing $\mathscr{R}^{-}(\delta_1, \delta_2)$ as the module $\mathscr{R}^{-}$ equipped with action of $\overline{P}^{+}$, we have by, \cite[Th\'eor\`eme 1.1]{dos2012},
$$u^{-} = -t^{-1}\nabla(\nabla-\kappa(\delta_2\delta_{1}^{-1})),$$
where here $\nabla = t\frac{d}{dt}$. Thus
\begin{align*}
(u^{-}f)(t) &= -\frac{d}{dt}(\nabla-\kappa(\delta_2\delta_{1}^{-1}))(f)(t) \\
&= -\frac{d}{dt}(tf'(t)-\kappa(\delta_2\delta_1^{-1})f(t)) \\
&= (\kappa(\delta_2)-\kappa(\delta_1)-1)f'(t) - tf''(t) 
\end{align*}  
\end{proof}
 
 \subsubsection{Calculation of $H^0(\mathscr{C}_{u^-, \varphi, a^+})$:}

Call $\kappa = \kappa(\delta_1 \delta_2^{-1})$.

\begin{lemm} \label{Lie0+} \leavevmode
\begin{enumerate} 
\item If $\delta_1 \delta_2^{-1}(p) = p^{-i}$, $i \geq 0$, and $\kappa = -i$, then $H^0(\mathscr{C}_{u^-, \varphi, a^+}) = L \cdot t^i$.
\item Otherwise $H^0(\mathscr{C}_{u^-, \varphi, a^+}) = 0$.
\end{enumerate}
\end{lemm}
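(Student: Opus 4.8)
The plan is to compute $H^0(\mathscr{C}_{u^-,\varphi,a^+})$ directly from the definition of the complex, using the explicit formulas for the Lie algebra action just established in Lemma \ref{hatbag}. By definition, $H^0(\mathscr{C}_{u^-,\varphi,a^+})$ is the kernel of $X'$, i.e. the set of $f \in M = \oplus_{i=0}^N L\cdot t^i$ with $(\varphi-1)f = 0$, $a^+ f = 0$ and $u^- f = 0$ simultaneously. So first I would write $f = \sum_{j=0}^N c_j t^j$ and impose each of the three equations in turn, exploiting that $M$ is graded by the powers of $t$ and that $\varphi$, $a^+$ and (suitably) $u^-$ interact well with this grading.

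Concretely: the operator $\varphi$ acts on $t^j$ by the scalar $\delta_1\delta_2^{-1}(p)p^j$, so $(\varphi-1)f = 0$ forces $c_j = 0$ unless $\delta_1\delta_2^{-1}(p)p^j = 1$, i.e. $\delta_1\delta_2^{-1}(p) = p^{-j}$. The operator $a^+$ acts on $t^j$ by $(\kappa(\delta_1)-\kappa(\delta_2)) + j = \kappa + j$ (note $\kappa = \kappa(\delta_1\delta_2^{-1}) = \kappa(\delta_1)-\kappa(\delta_2)$, after checking the sign conventions against Lemma \ref{hatbag} — there $a^+ t^j = (\kappa(\delta_1)-\kappa(\delta_2))t^j + t\cdot j t^{j-1} = (\kappa+j)t^j$), so $a^+ f = 0$ forces $c_j = 0$ unless $\kappa = -j$. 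Finally, from Lemma \ref{hatbag}, $u^- t^j = (\kappa(\delta_2)-\kappa(\delta_1)-1)jt^{j-1} - j(j-1)t^{j-1} = (-\kappa-1-j+1)jt^{j-1} = -j(\kappa+j)t^{j-1}$, so $u^- f = 0$ is automatically satisfied once $a^+ f = 0$ (each surviving term has $\kappa + j = 0$, killing the $u^-$-image too); hence the $u^-$ condition is redundant here. Combining the $\varphi$ and $a^+$ conditions: a nonzero $f$ can only be a multiple of a single $t^i$ with $0 \le i \le N$, and this requires both $\delta_1\delta_2^{-1}(p) = p^{-i}$ and $\kappa = -i$. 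When such an $i$ exists (necessarily unique, and $i \le N$ since $N$ is large), $H^0 = L\cdot t^i$; otherwise $H^0 = 0$. This is exactly the dichotomy in the statement.

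I expect no serious obstacle here — the only thing that needs care is bookkeeping of sign and normalization conventions in the weights $\kappa$, $\kappa(\delta_1)$, $\kappa(\delta_2)$ and the precise form of the $u^-$-action from \cite{dos2012} as recorded in Lemma \ref{hatbag}, together with making sure that the condition "$\delta_1\delta_2^{-1}(p) = p^{-i}$" is phrased consistently with the rest of the section (which uses $\kappa = -\kappa(\delta_1\delta_2^{-1})-1$ in the description of $\tau$, a different $\kappa$ — I must not conflate the two). Once the formulas are correctly in hand, the verification that $u^-$ imposes nothing new and that only a single graded piece can survive is a one-line graded-module argument, so the proof will be short.
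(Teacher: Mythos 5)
Your proposal is correct and follows essentially the same route as the paper: a direct computation on the graded module $\oplus_{j=0}^N L\cdot t^j$ using the explicit formulas $\varphi(t^j)=\delta_1\delta_2^{-1}(p)p^j t^j$, $a^+t^j=(\kappa+j)t^j$, $u^-t^j=-j(\kappa+j)t^{j-1}$ from Lemma \ref{hatbag}, the only cosmetic difference being that the paper first determines $M^{u^-=0}$ and then imposes $a^+$ and $\varphi-1$, whereas you impose $\varphi-1$ and $a^+$ first and observe the $u^-$-condition is then automatic. Your care about the two conflicting uses of $\kappa$ and the bound $i<N$ is exactly the right bookkeeping.
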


\begin{proof}
The formula $u^-t^j = j (- \kappa - j) t^{j - 1}$ shows that $M^{u^- = 0} = L \cdot t^0$ if $\kappa \notin \{ -i, N \geq i \geq 1 \}$, and $M^{u^- = 0} = L \cdot t^0 \oplus L \cdot t^i$ if $\kappa = - i, N \geq i \geq 1$. 

Suppose $\kappa = - i, i \geq 1$. In this case, $t^0$ is not killed by $a^+$ and,since we have $a^+ t^j = (\kappa + j) t^j$ and $\varphi(t^j) = \delta_1 \delta_2^{-1}(p) p^j t^j$, we see that the term $t^i$ is in the kernel of $a^+$ and $\varphi - 1$ if and only if $\delta_1 \delta_2^{-1}(p) = p^{-i}$.

On the other cases, the term $t^0$ is in the kernel of $a^+$ and $\varphi - 1$ if and only if $\kappa = 0$ and $\delta_1 \delta_2^{-1}(p) = 1$. This completes the proof.
\end{proof}
 
\subsubsection{Calculation of $H^1(\mathscr{C}_{u^-, \varphi, a^+})$:}

In the following we note $\kappa = \kappa(\delta_1 \delta_2^{-1})$. Observe also, for the following statements, that $\delta_1 \delta_2^{-1}(p) = \delta(p)$ (since $\chi(p) = 1$).

\begin{lemm} \label{H1u-phi} \leavevmode
\begin{enumerate}
\item If $\delta(p) \notin \left\{p^{-i}, i \geq 0 \right\}$, or if $\delta(p) = p^{-i}$ for some $N \geq i \geq 1$ and $\kappa \neq -i$, then $H^1(\mathscr{C}_{u^-, \varphi}) = 0$.
\item If $\delta(p) = 1$ then $H^1(\mathscr{C}_{u^-, \varphi}) = L \cdot [t^0]$.
\item If $\delta(p) = p^{-i}$ for some $i \geq 1$ and $\kappa = - i$ then $H^1(\mathscr{C}_{u^-, \varphi})$ lives in an exact sequence
\[ 0 \to L\cdot [t^i] \to H^1(\mathscr{C}_{u^-, \varphi}) \to L \cdot [t^{i-1}] \to 0. \]
\end{enumerate}
\end{lemm}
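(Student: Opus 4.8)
The plan is to compute the cohomology of the complex $\mathscr{C}_{u^-, \varphi}$ directly, exploiting the fact that $M = \oplus_{i=0}^N L \cdot t^i$ is now a finite-dimensional $L$-vector space with completely explicit operators, as provided by Lemma \ref{hatbag}. Recall that $\mathscr{C}_{u^-, \varphi}$ is the complex $0 \to M \xrightarrow{E} M \oplus M \xrightarrow{F} M \to 0$ with $E(x) = (u^- x, (\varphi - 1)x)$ and $F(x,y) = (p\varphi - 1)x - u^- y$, so that $H^1(\mathscr{C}_{u^-,\varphi}) = \ker F / \operatorname{Im} E$. By Lemma \ref{3.1}(2), we have the short exact sequence \eqref{linee2}
\[ 0 \to H^1([\varphi - 1] \colon H^0(\mathscr{C}_{u^-})) \to H^1(\mathscr{C}_{u^-, \varphi}) \to H^0([\varphi - 1] \colon H^1(\mathscr{C}_{u^-})) \to 0, \]
so it suffices to compute $\operatorname{coker}(\varphi - 1 \colon M^{u^- = 0})$ and $\ker(p\varphi - 1 \colon M / u^- M)$ and then assemble.

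First I would record the action of $u^-$: since $u^- t^j = (-\kappa - j)\, j\, t^{j-1}$ (here using $\kappa(\delta_2\delta_1^{-1}) = -\kappa$ in the formula $(u^-f)(t) = (-\kappa - 1)f'(t) - tf''(t)$), the kernel $M^{u^- = 0}$ is spanned by $t^0$ together with $t^i$ precisely when $\kappa = -i$ for some $1 \le i \le N$; and the image $u^- M$ is $\oplus_{j : \, (-\kappa - j)j \neq 0,\ 1 \le j \le N}\, L \cdot t^{j-1}$, so $M / u^- M$ is spanned by $[t^N]$ together with $[t^{i-1}]$ when $\kappa = -i$. On these spaces $\varphi$ acts by $\varphi t^j = \delta(p) p^j t^j$ (using $\delta_1\delta_2^{-1}(p) = \delta(p)$). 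From this, $\operatorname{coker}(\varphi - 1 \colon M^{u^-=0})$ is nonzero exactly when $\delta(p) = 1$ (giving $L \cdot [t^0]$), or when $\kappa = -i$ and $\delta(p) p^i = 1$, i.e. $\delta(p) = p^{-i}$ (giving $L \cdot [t^i]$); and $\ker(p\varphi - 1 \colon M/u^-M)$ is nonzero only when $p\delta(p) p^N = 1$ — impossible for $N$ large since $\delta$ is fixed — or when $\kappa = -i$ and $p \delta(p) p^{i-1} = 1$, i.e. $\delta(p) = p^{-i}$ (giving $L \cdot [t^{i-1}]$). Matching these against the three cases stated, case (1) gives both terms zero; case (2), $\delta(p) = 1$, forces $\kappa$ generic (we are not in the $\kappa = -i$ subcase with $i \ge 1$, since that would need $\delta(p) = p^{-i} \ne 1$), so only the left term survives and $H^1 = L \cdot [t^0]$; case (3), $\delta(p) = p^{-i}$ and $\kappa = -i$, gives $L \cdot [t^i]$ on the left and $L \cdot [t^{i-1}]$ on the right, yielding the claimed exact sequence $0 \to L \cdot [t^i] \to H^1(\mathscr{C}_{u^-,\varphi}) \to L \cdot [t^{i-1}] \to 0$.

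The main thing to be careful about is bookkeeping: one must confirm that the subcases are genuinely exhaustive and mutually exclusive for $N$ large, in particular that $\delta(p) = p^{-i}$ with $1 \le i \le N$ and $\kappa = -i$ cannot coincide with $\delta(p) = 1$ unless $i = 0$ (handled separately), and that the potential contribution $[t^N]$ to $\ker(p\varphi - 1 \colon M / u^- M)$ never actually occurs because $p \delta(p) p^N = 1$ cannot hold once $N > -v_p(\delta(p)) - 1$. I expect no serious obstacle here — everything reduces to finite linear algebra over $L$ once the operator formulas of Lemma \ref{hatbag} are in hand; the only mild subtlety is keeping the sign conventions in $\kappa$ versus $\kappa(\delta_1\delta_2^{-1})$ versus $\kappa = -\kappa(\delta_1\delta_2^{-1}) - 1$ straight, since the excerpt uses the symbol $\kappa$ in two slightly different normalizations (the $\tau$-action normalization $\kappa = -\kappa(\delta_1\delta_2^{-1}) - 1$ versus the Lie-algebra normalization just above Lemma \ref{Lie0+}).
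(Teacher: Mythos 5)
Your computation is correct and takes essentially the paper's own route: both rest on the short exact sequence \eqref{linee2} of Lemma \ref{3.1}, the explicit formulas $u^- t^j = j(-\kappa - j)t^{j-1}$ and $\varphi\, t^j = \delta(p)p^j t^j$ on $M = \oplus_{i=0}^N L\cdot t^i$, and the observation that the class $[t^N]$ is always killed once $N$ is large (the paper's citation of \eqref{linee1} at that point is a typo for \eqref{linee2}). The only blemish is the wording in case (2): $\delta(p)=1$ does not force $\kappa \notin \{-i,\, i\geq 1\}$; rather, even when $\kappa=-i$ the extra generators $t^i \in M^{u^-=0}$ and $[t^{i-1}] \in M/u^-M$ contribute nothing because $\delta(p)p^i-1 = p^i-1 \neq 0$ — a point your general criterion for nonvanishing of the two outer terms already covers, so the conclusion stands.
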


\begin{proof}
We use equation \eqref{linee1} of Lemma \ref{iso1} to calculate this group. First observe that, if $\kappa \notin \{ -i, N \geq i \geq 1 \}$, then $H^1(\mathscr{C}_{u^-}) = L \cdot [t^N]$ and $H^1(\mathscr{C}_{u^-}) = L \cdot [t^{i - 1}] \oplus L \cdot [t^N]$ if $\kappa = -i$, $N \geq i \geq 1$. The result now follows by considering the kernel of $[\varphi - 1]$ on $H^1(\mathscr{C}_{u^-})$ and the cokernel of $[\varphi - 1]$ on $H^0(\mathscr{C}_{u^-})$. Observe that $N$ being chosen big enough, the term $[t^N]$ ends up always being killed.
\end{proof}

We now calculate the extremities of the short exact sequence of \eqref{linee1} of Lemma \ref{iso1}.

\begin{lemma} \label{LHSH1} \leavevmode
\begin{enumerate}
\item If $\delta(p) \notin \{ p^{-i}, i \geq 0 \}$, or if $\delta(p) = p^{-i}$, $i \geq 0$, and $\kappa \neq -i$, then $H^1([a^+] \colon H^0(\mathscr{C}_{u^-, \varphi})) = 0$.
\item Otherwise $H^1([a^+] \colon H^0(\mathscr{C}_{u^-, \varphi})) = L \cdot [t^i]$.
\end{enumerate}
\end{lemma}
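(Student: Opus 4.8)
The plan is to reduce the statement to an explicit eigenvalue computation on the finite-dimensional module $M = \bigoplus_{i=0}^{N} L\cdot t^i$. First recall, as recorded right after Lemma \ref{iso1}, that $H^{0}(\mathscr{C}_{u^-,\varphi}) = M^{u^-=0,\,\varphi=1}$, and that since $a^+$ and $\varphi$ act diagonally on the basis $\{t^j\}_{j=0}^{N}$ while $u^-$ lowers degree by one, the operator $a^+$ preserves $M^{u^-=0,\,\varphi=1}$ and $[a^+]$ induces on $H^{0}(\mathscr{C}_{u^-,\varphi})$ simply the action of $a^+$. Hence
\[ H^{1}\big([a^+]\colon H^{0}(\mathscr{C}_{u^-,\varphi})\big) = \coker\big(a^+\colon M^{u^-=0,\,\varphi=1}\to M^{u^-=0,\,\varphi=1}\big). \]

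Next I would record the action of the three operators on the monomial basis, using Lemma \ref{hatbag} and the identity $\delta(p)=\delta_1\delta_2^{-1}(p)$ (valid because $\chi(p)=1$): writing $\kappa=\kappa(\delta_1\delta_2^{-1})$, for $0\le j\le N$ one has
\[ a^+ t^j = (\kappa+j)\,t^j, \qquad u^- t^j = j(-\kappa-j)\,t^{j-1}, \qquad \varphi\, t^j = \delta(p)\,p^{j}\,t^j. \]
From the formula for $u^-$ one reads off that $M^{u^-=0}=L\cdot t^0$ when $\kappa\notin\{-1,\dots,-N\}$, while $M^{u^-=0}=L\cdot t^0\oplus L\cdot t^{i}$ when $\kappa=-i$ for some $1\le i\le N$; from the formula for $\varphi$ one sees that $t^j\in M^{\varphi=1}$ exactly when $\delta(p)=p^{-j}$.

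Combining these two observations yields the case analysis. If $\delta(p)\notin\{p^{-i}: i\ge 0\}$ then $M^{\varphi=1}=0$, so the cokernel vanishes. If $\delta(p)=p^{-i}$ with $i\ge 0$ but $\kappa\ne -i$, then the unique exponent $j$ with $\delta(p)=p^{-j}$ is either $j=0$ (and then $t^0\in M^{u^-=0,\,\varphi=1}$, on which $a^+$ acts by the scalar $\kappa\ne 0$) or $j=i\ge 1$ with $t^{i}\notin M^{u^-=0}$ (since $\kappa\ne -i$); in both situations $a^+$ is invertible on $M^{u^-=0,\,\varphi=1}$ and the cokernel vanishes, which is case (1). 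Finally, if $\delta(p)=p^{-i}$ and $\kappa=-i$, then $t^{i}\in M^{u^-=0,\,\varphi=1}$, and in fact $M^{u^-=0,\,\varphi=1}=L\cdot t^{i}$ (using that $p^{-i}\ne 1$ for $i\ge 1$ rules out $t^0$, and the case $i=0$ being immediate); since $a^+ t^{i}=(\kappa+i)t^{i}=0$, the cokernel is $L\cdot[t^{i}]$, which is case (2).

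The argument is entirely elementary; the only point demanding a little care — and the nearest thing to an obstacle — is checking that for each admissible value of $\delta(p)$ at most one monomial $t^j$ can belong to $M^{u^-=0,\,\varphi=1}$, together with the observation that $a^+$ acts by the scalar $0$ on that monomial precisely in the configuration $\delta(p)=p^{-i}$, $\kappa=-i$, which is exactly the dichotomy in the statement.
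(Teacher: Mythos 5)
Your proof is correct and follows essentially the same route as the paper: an explicit eigenvalue computation of $a^+$, $u^-$ and $\varphi$ on the monomial basis of $\mathrm{Pol}_{\leq N}(\zp,L)^*(\delta_1,\delta_2)$, identifying $H^1([a^+]\colon H^0(\mathscr{C}_{u^-,\varphi}))$ with the cokernel of $a^+$ on $M^{u^-=0,\varphi=1}$. In fact you handle the subcase $\delta(p)=1$, $\kappa\neq 0$ (where $H^0(\mathscr{C}_{u^-,\varphi})=L\cdot t^0$ is nonzero but $a^+$ acts by the invertible scalar $\kappa$) more carefully than the paper's two-line argument, which glosses over it.
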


\begin{proof} Observe first he formula
\[ a^{+}t^i = (\kappa + i) t^i. \] 
\begin{enumerate}
\item In the first case we have $H^0(\mathscr{C}_{u^-, \varphi})$ so in particular $H^1([a^+]: H^0(\mathscr{C}_{u^-, \varphi})) = 0$.
\item If $\delta(p) = p^{-i}$ for some $i \geq 0$ and $\kappa = -i$, then $H^0(\mathscr{C}_{u^-, \varphi}) = L \cdot t^i$ and $a^+ t^i = 0$ giving the result.
\end{enumerate}
\end{proof}

\begin{lemm} \label{RHSH1} \leavevmode
\begin{enumerate}
\item If $\delta(p) \notin \left\{p^{-i} \text{ } | \text{ } i \geq 0 \right\}$, or if $\delta(p) = p^{-i}$, $i \geq 0$, and $\kappa \neq -i$, then $H^0([a^+] \colon H^1(\mathscr{C}_{u^-, \varphi})) = 0$.
\item If $\delta(p) = 1$ and $\kappa = 0$, then $H^0([a^+] \colon H^1(\mathscr{C}_{u^-, \varphi})) = L \cdot [t^0]$.
\item Otherwise we have an exact sequence
\[ 0 \to L\cdot [t^i] \to H^0([a^+] \colon H^1(\mathscr{C}_{u^-, \varphi})) \to L \cdot [t^{i-1}] \to 0.\]
\end{enumerate}
\end{lemm}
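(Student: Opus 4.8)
The plan is to compute $H^0([a^+] \colon H^1(\mathscr{C}_{u^-, \varphi}))$ directly from the explicit description of $H^1(\mathscr{C}_{u^-, \varphi})$ obtained in Lemma \ref{H1u-phi}, together with the formula $a^+ t^j = (\kappa + j) t^j$ derived from Lemma \ref{hatbag}. Recall that $H^0([a^+] \colon H^1(\mathscr{C}_{u^-, \varphi}))$ denotes $\ker\bigl(a^+ \colon H^1(\mathscr{C}_{u^-, \varphi}) \to H^1(\mathscr{C}_{u^-, \varphi})\bigr)$, where the action of $[a^+]$ on $H^1(\mathscr{C}_{u^-}) = M / u^- M$ is via $a^+$ itself (not $a^+ + 1$, since in the second step of the $\mathscr{C}_{u^-, \varphi}$ deconstruction the relevant twist lands on the $H^0$ side; one checks the precise normalization against the definition of the morphism $[a^+] \colon \mathscr{C}_{u^-, \varphi} \to \mathscr{C}_{u^-, \varphi}$ in \S \ref{sublieal}, whose middle component $s(x,y) = ((a^+ + 1)x, a^+ y)$ shows that on the $H^1$-part one gets $a^+$). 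So the computation splits into the three cases of Lemma \ref{H1u-phi}.

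In case (1) of Lemma \ref{H1u-phi} we have $H^1(\mathscr{C}_{u^-, \varphi}) = 0$, hence the kernel is $0$, giving case (1) of the present lemma (note that the condition $\delta(p) = p^{-i}$ with $i \geq 0$ and $\kappa \neq -i$ is subsumed here for $i \geq 1$; for $i = 0$, i.e.\ $\delta(p) = 1$ with $\kappa \neq 0$, one uses case (2) of Lemma \ref{H1u-phi}, where $H^1(\mathscr{C}_{u^-, \varphi}) = L \cdot [t^0]$ and $a^+ [t^0] = \kappa [t^0] \neq 0$, so again the kernel vanishes). This disposes of (1). For case (2) of the present lemma, $\delta(p) = 1$ and $\kappa = 0$: then $H^1(\mathscr{C}_{u^-, \varphi}) = L \cdot [t^0]$ by Lemma \ref{H1u-phi}(2), and $a^+ [t^0] = \kappa [t^0] = 0$, so the whole space is in the kernel, giving $H^0([a^+] \colon H^1(\mathscr{C}_{u^-, \varphi})) = L \cdot [t^0]$.

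The remaining case (3) — $\delta(p) = p^{-i}$ for some $i \geq 1$ and $\kappa = -i$ — is where the exact sequence of Lemma \ref{H1u-phi}(3),
\[ 0 \to L \cdot [t^i] \to H^1(\mathscr{C}_{u^-, \varphi}) \to L \cdot [t^{i-1}] \to 0, \]
comes into play. Here $a^+ [t^i] = (\kappa + i)[t^i] = 0$, so $L \cdot [t^i]$ is contained in the kernel of $a^+$. On the quotient, $a^+$ acts on $[t^{i-1}]$ by $(\kappa + i - 1) = -1 \neq 0$, so $a^+$ is injective on $L \cdot [t^{i-1}]$. A small diagram chase then shows $\ker(a^+ \colon H^1(\mathscr{C}_{u^-, \varphi}))$ surjects onto nothing beyond what is forced: given a class $c$ with image $\lambda [t^{i-1}]$ in the quotient, if $a^+ c = 0$ then the image of $a^+ c$ in the quotient is $-\lambda [t^{i-1}] = 0$, forcing $\lambda = 0$ and hence $c \in L \cdot [t^i]$; conversely $L \cdot [t^i] \subseteq \ker a^+$. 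Therefore $H^0([a^+] \colon H^1(\mathscr{C}_{u^-, \varphi})) = L \cdot [t^i]$, and the exact sequence in the statement degenerates to an isomorphism onto its left term (with the right term $L \cdot [t^{i-1}]$ only appearing as the ambient quotient). I expect the only delicate point to be pinning down the correct twist of the $[a^+]$-action on each term (i.e.\ $a^+$ versus $a^+ + 1$), which must be read off carefully from the morphisms defined in \S \ref{sublieal}; once that is fixed, the rest is the routine linear algebra sketched above.
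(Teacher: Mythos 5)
Your cases (1) and (2) are fine and agree with the paper's argument, but your normalization of the $[a^+]$-action is backwards, and this breaks case (3). In the identification of $\mathscr{C}_{u^-,\varphi}$ as the fibre of $[\varphi-1]\colon \mathscr{C}_{u^-}\to\mathscr{C}_{u^-}$, the differential $F(x,y)=(p\varphi-1)x-u^-y$ shows that the first coordinate $x$ of the degree-one term is the degree-one part of the \emph{source} copy, hence carries $H^1(\mathscr{C}_{u^-})=M/u^-M$, i.e.\ the right-hand term of \eqref{linee2}, while the second coordinate $y$ is the degree-zero part of the \emph{target} copy, carrying the cokernel term on the left of \eqref{linee2}. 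Since $s(x,y)=((a^++1)x,\,a^+y)$, the induced action of $[a^+]$ on the right-hand term $H^0([\varphi-1]\colon H^1(\mathscr{C}_{u^-}))$ is $a^++1$, and on the left-hand term it is $a^+$ --- exactly the opposite of what you assert. This is also what Remark \ref{actioncohom1} records (the third term of \eqref{linee2} is twisted by $\chi$, reflecting $[a^+,u^-]=-u^-$), and it is used consistently elsewhere, e.g.\ in Lemma \ref{lem:kera} and Proposition \ref{Lie3-}, where the operator acting on the $M/u^-M$-type terms is $a^++1$.

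Concretely, in case (3) ($\delta(p)=p^{-i}$, $i\geq 1$, $\kappa=-i$) the class $[t^{i-1}]$ in the quotient of the sequence of Lemma \ref{H1u-phi}(3) is annihilated by the correct operator, since $(a^++1)t^{i-1}=(\kappa+i)t^{i-1}=0$; it is not acted on by the nonzero scalar $\kappa+i-1=-1$ as in your computation with the untwisted $a^+$. With the correct twist both extremities of the sequence are killed and one obtains the two-term exact sequence of the statement (a two-dimensional kernel), whereas your argument yields $\ker=L\cdot[t^i]$, one-dimensional, which contradicts the very lemma you are proving and would propagate to wrong dimension counts in Lemma \ref{cohom1+} and Proposition \ref{cohomfinal}. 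So the ``delicate point'' you flagged is precisely where the argument fails; once the twist is fixed, the remaining reduction to Lemma \ref{H1u-phi} and the formula $a^+t^j=(\kappa+j)t^j$ is the same as the paper's.
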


\begin{proof} \leavevmode
\begin{enumerate}
\item If $\delta(p) \notin \left\{p^{-i} \text{ } | \text{ } i \geq 0 \right\}$, or if $\delta(p) = p^{-i}$ for some $i \geq 1$ and $\kappa \neq -i$, the group $H^1(\mathscr{C}_{u^-, \varphi})$ is already zero by Lemma \ref{H1u-phi}(i). If $\delta(p) = 1$ and $\kappa \neq 0$, then, by Lemma \ref{H1u-phi}(ii), $H^1(\mathscr{C}_{u^-, \varphi}) = L \cdot [t^0]$ and $a^+$ acts via multiplication by $\kappa$, so it is injective.

\item This follows easily from Lemma \ref{H1u-phi}(ii).

\item If $\delta(p) = p^{-i}$ for some $i \geq 1$ and $\kappa = -i$, then by Lemma \ref{H1u-phi}(3) we have an exact sequence
\[ 0 \to L\cdot [t^i] \to H^1(\mathscr{C}_{u^-, \varphi}) \to L \cdot [t^{i-1}] \to 0, \] where $[a^+]$ acts as $a^+$ on the LHS term and as $a^+ + 1$ on the RHS term. We deduce the result since $a^+ t^i = (a^+ + 1) t^{i - 1} = 0$.
\end{enumerate}
\end{proof}

We can now put everything together to calculate the Lie algebra cohomology.

\begin{prop} \label{H1LiePR+} Let $M = \mathrm{Pol}_{\leq N}(\zp, L)^*(\delta_1, \delta_2)$. Then
\begin{enumerate}
\item If $\delta(p) \notin \{p^{-i}, i \geq 0 \}$, or if $\delta(p) = p^{-i}$, $i \geq 0$, and $\kappa \neq -i$, then $H^1(\mathscr{C}_{u^-, \varphi, a^+}) = 0$.
\item If $\delta(p) = 1$ and $\kappa = 0$, then $H^1(\mathscr{C}_{u^-, \varphi, a^+})$ lives in a short exact sequence \[ 0 \to L \cdot [t^0] \to H^1(\mathscr{C}_{u^-, \varphi, a^+}) \to L \cdot [t^0] \to 0.\]
\item Otherwise we have short exact sequences
\[ 0 \to L \cdot [t^i] \to H^1(\mathscr{C}_{u^-, \varphi, a^+}) \to Z_i \to 0, \]
\[ 0 \to L \cdot [t^i] \to Z_i \to L \cdot [t^{i - 1}] \to 0. \]
\end{enumerate}
\end{prop}

\begin{proof}
This is an immediate consequence of Lemma \ref{LHSH1} and Lemma \ref{RHSH1}.
\end{proof}

\subsubsection{Calculation of $H^2(\mathscr{C}_{u^-, \varphi, a^+})$:}

Using the same methods based on Lemma \ref{3.1}, we calculate $H^2(\mathscr{C}_{u^-, \varphi, a^+})$ for the module $M = \mathrm{Pol}_{\leq N}(\zp, L)^*(\delta_1, \delta_2)$. The following series of lemmas systematically calculate each term appearing on Lemma \ref{3.1}. Since the calculations are in the same spirit, we leave the easy proofs to the reader. Call $\kappa = \kappa(\delta_1 \delta_2^{-1})$ as before.

The following two lemmas calculate $H^2(\mathscr{C}_{u^-, \varphi})$ and the kernel of $[a^+]$ acting on it.


\begin{lemm} \label{Lie2tmp} \leavevmode
\begin{enumerate}
\item If $\kappa = - i$, $i \geq 1$, and $\delta(p) = p^{-i}$, then $H^2(\mathscr{C}_{u^+, \varphi}) = L \cdot [t^{i-1}]$.
\item Otherwise $H^2(\mathscr{C}_{u^+, \varphi}) = 0$.
\end{enumerate}
\end{lemm}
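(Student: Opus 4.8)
The plan is to feed the module $M = \mathrm{Pol}_{\leq N}(\zp,L)^*(\delta_1,\delta_2) = \bigoplus_{i=0}^N L\cdot t^i$ into the structural isomorphism \eqref{line3} of Lemma \ref{3.1}, which identifies $H^2$ of the complex $\mathscr{C}_{u^-,\varphi}$ (which is the complex meant in the statement) with $H^1([\varphi-1]\colon H^1(\mathscr{C}_{u^-}))$. Since $H^1(\mathscr{C}_{u^-}) = M/u^-M$ and the morphism $[\varphi-1]$ acts on this cokernel as $p\varphi-1$, the group to be computed is simply $\mathrm{coker}\bigl(p\varphi-1\colon M/u^-M\to M/u^-M\bigr)$, so everything reduces to understanding the finite-dimensional space $M/u^-M$ together with its $\varphi$-action.

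For that I would use the explicit formulas of Lemma \ref{hatbag}. Writing $\kappa = \kappa(\delta_1\delta_2^{-1})$ and evaluating on the basis $t^0,\dots,t^N$ one gets $u^-t^j = -j(\kappa+j)\,t^{j-1}$ and $\varphi\, t^j = \delta(p)\,p^j t^j$ (using $\delta_1\delta_2^{-1}(p)=\delta(p)$, as $\chi(p)=1$). Hence, if $\kappa\notin\{-1,\dots,-N\}$ every $u^-t^j$ with $1\le j\le N$ is a nonzero scalar multiple of $t^{j-1}$, so $u^-M=\langle t^0,\dots,t^{N-1}\rangle$ and $M/u^-M=L\cdot[t^N]$; and if $\kappa=-i$ with $1\le i\le N$ then $u^-t^i=0$ while the remaining $u^-t^j$ are nonzero multiples of $t^{j-1}$, so $M/u^-M=L\cdot[t^{i-1}]\oplus L\cdot[t^N]$. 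One checks that under the hypothesis $\kappa=-i$, $\delta(p)=p^{-i}$ of (1) the choice of $N$ large enough (with $|\delta(p)|<p^N$, as in Lemma \ref{compacohom}) forces $i<N$, so this second case genuinely applies.

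On $M/u^-M$ the operator $p\varphi-1$ is diagonal in the classes $[t^j]$, acting on $[t^j]$ by $\delta(p)p^{j+1}-1$. Because $|\delta(p)|<p^N$ one has $v_p\bigl(\delta(p)p^{N+1}\bigr)\ge 1$, so $\delta(p)p^{N+1}-1\in L^\times$ and the summand $L\cdot[t^N]$ never contributes to the cokernel. Consequently, in the generic case $H^2(\mathscr{C}_{u^-,\varphi})=0$, while when $\kappa=-i$ ($i\ge1$) the cokernel equals $\mathrm{coker}\bigl(\delta(p)p^i-1\colon L\cdot[t^{i-1}]\bigr)$, which vanishes unless $\delta(p)p^i=1$, i.e. $\delta(p)=p^{-i}$, in which case it is the line $L\cdot[t^{i-1}]$; this is exactly the dichotomy asserted. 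I do not anticipate any real obstacle: the only points requiring attention are the correct transcription of the $u^-$- and $\varphi$-actions on the $t^j$ from Lemma \ref{hatbag}, and the bookkeeping that the choice of $N$ large enough simultaneously kills the top class $[t^N]$ and ensures $i\le N$ in the exceptional case — both immediate from the normalization $|\delta(p)|<p^N$ — with the rest being the formal machinery of Lemma \ref{3.1} already available.
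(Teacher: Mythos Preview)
Your proposal is correct and follows exactly the approach the paper intends: the paper leaves this proof to the reader (``Since the calculations are in the same spirit, we leave the easy proofs to the reader''), but the method is precisely to invoke the isomorphism \eqref{line3} of Lemma \ref{3.1}, compute $M/u^-M$ from the formula $u^-t^j=-j(\kappa+j)t^{j-1}$ of Lemma \ref{hatbag}, and then take the cokernel of $p\varphi-1$ using $\varphi\,t^j=\delta(p)p^jt^j$. Your bookkeeping on the top class $[t^N]$ and the constraint $i<N$ coming from $|\delta(p)|<p^N$ is exactly what is needed; there is nothing to add.
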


\begin{lemm} \leavevmode
\begin{enumerate}
\item If $\kappa = -i$, $i \geq 1$, and $\delta(p) = p^{-i}$, then $H^0([a^+] \colon H^2(\mathscr{C}_{u^-, \varphi})) = L \cdot [t^{i-1}]$.
\item Otherwise $H^0([a^+] \colon H^2(\mathscr{C}_{u^-, \varphi})) = 0$.
\end{enumerate}
\end{lemm}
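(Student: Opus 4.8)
The statement to be proven computes the kernel of $[a^+]$ acting on $H^2(\mathscr{C}_{u^-, \varphi})$ for the module $M = \mathrm{Pol}_{\leq N}(\zp, L)^*(\delta_1, \delta_2) = \oplus_{i=0}^N L \cdot t^i$. The plan is to combine the explicit description of $H^2(\mathscr{C}_{u^-, \varphi})$ obtained in the preceding Lemma \ref{Lie2tmp} with the explicit formula for the action of $a^+$ on the module, namely $a^+ t^j = (\kappa + j) t^j$ (from Lemma \ref{hatbag}, since $\chi$ contributes nothing to the weight computation), while being careful about the twist in the $[a^+]$-action on $H^2$.

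First I would recall from Lemma \ref{Lie2tmp} that $H^2(\mathscr{C}_{u^-, \varphi})$ is zero unless $\kappa = -i$ for some $i \geq 1$ and $\delta(p) = p^{-i}$, in which case it is the line $L \cdot [t^{i-1}]$. This immediately disposes of case (2): whenever we are not in the exceptional situation, the group $H^2(\mathscr{C}_{u^-, \varphi})$ itself vanishes, so its $[a^+]$-kernel is trivially $0$. For case (1), the only remaining work is to identify how $[a^+]$ acts on the one-dimensional space $L \cdot [t^{i-1}]$. Here one must use Remark \ref{actioncohom1}: in the exact sequence \eqref{line1} of Lemma \ref{3.1}, $[a^+]$ acts on the $H^2(\mathscr{C}_{u^-})$-piece (from which the class $[t^{i-1}]$ originates, passing through $H^2(\mathscr{C}_{u^-,\varphi}) = \coker([\varphi-1] \colon M/u^-M)$) as the operator $a^+ + 1$ rather than $a^+$, since $H^2(\mathscr{C}_{u^-, \varphi})$ sits in the slot where the adjoint action contributes a shift. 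Applying $a^+ + 1$ to $t^{i-1}$ gives $(\kappa + (i-1) + 1) t^{i-1} = (\kappa + i) t^{i-1} = 0$ because $\kappa = -i$. Hence $[a^+]$ acts as zero on $L \cdot [t^{i-1}]$, so the whole group is the kernel, which is exactly $L \cdot [t^{i-1}]$ as claimed.

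The main (and really only) obstacle is getting the bookkeeping of the $[a^+]$-twist correct: one must verify that on $H^2(\mathscr{C}_{u^-, \varphi})$, viewed via the description in Lemma \ref{3.1}(3) and \eqref{line3}, the operator $[a^+]$ acts via $a^+ + 1$ and not $a^+$. This follows from tracing through the morphisms $[a^+] \colon \mathscr{C}_{u^-, \varphi} \to \mathscr{C}_{u^-, \varphi}$ defined just before Lemma \ref{triang}, where the rightmost vertical arrow (acting on the degree-$2$ term that survives into $H^2$) is precisely $a^+ + 1$. Once this is settled, the computation is the one-line verification above, entirely parallel to the corresponding step in the proof of Lemma \ref{bladot}(2) where exactly the same cancellation $(a^+ + 1)[x^{i+1}] = 0$ occurs for $\Robba^-(\delta_1,\delta_2)$. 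I would therefore present the proof as: invoke Lemma \ref{Lie2tmp} to reduce to case (1); recall that $[a^+]$ acts as $a^+ + 1$ on $H^2(\mathscr{C}_{u^-,\varphi})$ by the definition of the morphism $[a^+]$; and conclude by the weight computation $(a^+ + 1) t^{i-1} = (\kappa + i) t^{i-1} = 0$.
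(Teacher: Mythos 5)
Your proof is correct and is exactly the argument the paper intends: the paper leaves this lemma's proof to the reader as one of the "easy" verifications, and the intended route is precisely yours — invoke Lemma \ref{Lie2tmp}, note that $[a^+]$ acts on the degree-two term (hence on $H^2(\mathscr{C}_{u^-,\varphi})$) as $a^++1$ by the definition of the morphism $[a^+]$, and conclude from $(a^++1)t^{i-1}=(\kappa+i)t^{i-1}=0$ when $\kappa=-i$. Your bookkeeping of the twist matches the paper's own statements (e.g.\ in the proofs of Lemma \ref{bladot} and Proposition \ref{Lie3-}), so nothing further is needed.
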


We calculate now the cokernel of $[a^+]$ on $H^1(\mathscr{C}_{u^-, \varphi})$.

\begin{lemm} \leavevmode
\begin{enumerate}
\item If $\delta(p) \notin \left\{p^{-i}, i \geq 0 \right\}$, or if $\delta(p) = p^{-i}$ for some $N \geq i \geq 0$ and $\kappa \neq -i$, then $H^1([a^+] \colon H^1(\mathscr{C}_{u^-, \varphi})) = 0$.
\item If $\delta(p) = 1$ and $\kappa = 0$, then $H^1([a^+] \colon H^1(\mathscr{C}_{u^-, \varphi})) = L \cdot [t^0]$.
\item If $\delta(p) = p^{-i}$ for some $i \geq 1$ and $\kappa = - i$ then $H^1([a^+] \colon H^1(\mathscr{C}_{u^-, \varphi}))$ lives in an exact sequence
\[ 0 \to L\cdot [t^i] \to H^1([a^+] \colon H^1(\mathscr{C}_{u^-, \varphi})) \to L \cdot [t^{i-1}] \to 0. \]
\end{enumerate}
\end{lemm}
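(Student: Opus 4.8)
The plan is to read off $H^1([a^+]\colon H^1(\mathscr{C}_{u^{-},\varphi}))$ from the description of $H^1(\mathscr{C}_{u^{-},\varphi})$ obtained in Lemma \ref{H1u-phi}, while keeping track of the $[a^+]$-action on the two graded pieces. Recall from Lemma \ref{iso1} the short exact sequence \eqref{linee2}, which here reads
\[ 0 \to \coker(\varphi-1\colon M^{u^{-}=0}) \to H^1(\mathscr{C}_{u^{-},\varphi}) \to \ker(p\varphi-1\colon M/u^{-}M) \to 0, \]
and which splits as a sequence of $a^+$-modules (as recorded in the proof of Proposition \ref{prop:H1Lie}). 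The first step is to identify the $[a^+]$-action: the chain map $[a^+]\colon\mathscr{C}_{u^{-},\varphi}\to\mathscr{C}_{u^{-},\varphi}$ is $a^+$ in degree $0$, $s(x,y)=((a^++1)x,a^+y)$ in degree $1$, and $a^++1$ in degree $2$; unwinding the construction of \eqref{linee2} (via the triangle $\mathscr{C}_{u^{-},\varphi}\to\mathscr{C}_{u^{-}}\xrightarrow{[\varphi-1]}\mathscr{C}_{u^{-}}$, noting that $[a^+]$ acts on $H^0(\mathscr{C}_{u^{-}})$ by $a^+$ and on $H^1(\mathscr{C}_{u^{-}})$ by $a^++1$ because $[a^+,u^-]=-u^-$) shows that $[a^+]$ acts through $a^+$ on the subobject $\coker(\varphi-1\colon M^{u^{-}=0})$ and through $a^++1$ on the quotient $\ker(p\varphi-1\colon M/u^{-}M)$. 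Taking cokernels of $[a^+]$ in the (split) sequence above then yields
\[ 0 \to \coker\big(a^+\colon \coker(\varphi-1\colon M^{u^{-}=0})\big) \to H^1([a^+]\colon H^1(\mathscr{C}_{u^{-},\varphi})) \to \coker\big(a^++1\colon \ker(p\varphi-1\colon M/u^{-}M)\big) \to 0. \]

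The second step is a direct computation on $M=\bigoplus_{j=0}^N L\cdot t^j$ using the formulas $a^+t^j=(\kappa+j)t^j$, $u^-t^j=-j(\kappa+j)t^{j-1}$ and $\varphi t^j=\delta(p)p^jt^j$ of Lemma \ref{hatbag}, where $\kappa=\kappa(\delta_1\delta_2^{-1})$ and $\delta(p)=\delta_1\delta_2^{-1}(p)$. The two pieces $\coker(\varphi-1\colon M^{u^{-}=0})$ and $\ker(p\varphi-1\colon M/u^{-}M)$ are exactly the groups already computed inside the proof of Lemma \ref{H1u-phi}: both vanish in case (1); in case (2) (where $\delta(p)=1$) one has $\coker(\varphi-1\colon M^{u^{-}=0})=L\cdot[t^0]$ and $\ker(p\varphi-1\colon M/u^{-}M)=0$; in case (3) (where $\delta(p)=p^{-i}$, $i\geq 1$, $\kappa=-i$) one has $\coker(\varphi-1\colon M^{u^{-}=0})=L\cdot[t^i]$ and $\ker(p\varphi-1\colon M/u^{-}M)=L\cdot[t^{i-1}]$. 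Feeding this into the sequence of Step 1 gives the claim: in case (1) either $H^1(\mathscr{C}_{u^{-},\varphi})=0$ or it equals $L\cdot[t^0]$ on which $a^+$ acts by the nonzero scalar $\kappa$, so the cokernel is $0$; in case (2), $\kappa=0$, so $a^+$ acts by $0$ on $L\cdot[t^0]$ and the cokernel is $L\cdot[t^0]$; in case (3), $\kappa=-i$, so $a^+$ acts by $\kappa+i=0$ on $L\cdot[t^i]$ and $a^++1$ acts by $\kappa+i=0$ on $L\cdot[t^{i-1}]$, whence the stated extension $0\to L\cdot[t^i]\to H^1([a^+]\colon H^1(\mathscr{C}_{u^{-},\varphi}))\to L\cdot[t^{i-1}]\to 0$.

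The main obstacle is Step 1: correctly matching the subobject and quotient of $H^1(\mathscr{C}_{u^{-},\varphi})$ with the $a^+$- and $(a^++1)$-actions by carefully unwinding the identifications of \eqref{linee2} together with the degreewise definition of the chain map $[a^+]$. Once this bookkeeping is in place, everything reduces to the diagonal scalar computations on $\bigoplus_j L\cdot t^j$ that are already implicit in Lemma \ref{H1u-phi}; the only remaining subtlety is that in case (3) the answer is a genuine (non-split in general) extension rather than a direct sum, which is why we invoke the $a^+$-splitting of \eqref{linee2}.
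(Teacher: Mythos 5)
Your proof is correct and is essentially the argument the paper intends here (the paper leaves this computation to the reader, and your steps mirror the proven kernel analogue, Lemma \ref{RHSH1}): read off $H^1(\mathscr{C}_{u^-,\varphi})$ from Lemma \ref{H1u-phi}, let $[a^+]$ act as $a^+$ on the sub coming from $\coker(\varphi-1\colon M^{u^-=0})$ and as $a^++1$ on the quotient $\ker(p\varphi-1\colon M/u^-M)$, and evaluate the scalars $\kappa+j$. The only point to tighten is your appeal, in case (3), to the $a^+$-equivariant splitting of \eqref{linee2} "as recorded in the proof of Proposition \ref{prop:H1Lie}": that splitting is asserted there for $M=\Robba^-(\delta_1,\delta_2)$, not for $\mathrm{Pol}_{\leq N}(\zp,L)^*(\delta_1,\delta_2)$, so justify it (or the vanishing of the snake connecting map) directly — for instance, the cocycles $(0,t^i)$ and $(t^{i-1},0)$ represent generators of the sub and of a lift of the quotient and are both killed by the degree-one component $s(x,y)=((a^++1)x,a^+y)$ of $[a^+]$, so $[a^+]$ is in fact zero on all of $H^1(\mathscr{C}_{u^-,\varphi})$ and its cokernel is the whole group with the stated extension structure.
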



We conclude

\begin{prop} \label{Lie2+} \leavevmode
\begin{enumerate}
\item If $\delta(p) \notin \left\{p^{-i}, i \geq 0 \right\}$, or if $\delta(p) = p^{-i}$ for some $N \geq i \geq 0$ and $\kappa \neq -i$, then $H^2(\mathscr{C}_{u^-, \varphi, a^+}) = 0$.
\item If $\delta(p) = 1$ and $\kappa = 0$, then $H^2(\mathscr{C}_{u^-, \varphi, a^+}) = L \cdot [t^0]$.
\item If $\delta(p) = p^{-i}$ for some $i \geq 1$ and $\kappa = - i$ then $H^2(\mathscr{C}_{u^-, \varphi, a^+})$ lives in an exact sequence
\[ 0 \to Z \to H^2(\mathscr{C}_{u^-, \varphi, a^+}) \to L \cdot [t^{i - 1}], \]
\[ 0 \to L\cdot [t^i] \to Z \to L \cdot [t^{i-1}] \to 0. \]
\end{enumerate}
\end{prop}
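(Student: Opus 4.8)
The plan is to deduce Proposition \ref{Lie2+} formally from the short exact sequence \eqref{line1} of Lemma \ref{3.1},
\[
0 \to H^1([a^{+}]\colon H^1(\mathscr{C}_{u^{-},\varphi})) \to H^2(\mathscr{C}_{u^{-},\varphi,a^{+}}) \to H^0([a^{+}]\colon H^2(\mathscr{C}_{u^{-},\varphi})) \to 0,
\]
combined with Lemma \ref{Lie2tmp} (giving $H^2(\mathscr{C}_{u^{-},\varphi})$) and the two lemmas immediately preceding the proposition, which describe the kernel $H^0([a^{+}]\colon H^2(\mathscr{C}_{u^{-},\varphi}))$ and the cokernel $H^1([a^{+}]\colon H^1(\mathscr{C}_{u^{-},\varphi}))$ of the relevant $[a^+]$-maps. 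So the substance has already been extracted; what remains is to assemble the pieces regime by regime, using the explicit formulas $a^{+}t^{j}=(\kappa+j)t^{j}$, $\varphi(t^{j})=\delta_1\delta_2^{-1}(p)p^{j}t^{j}$ of Lemma \ref{hatbag} and the fact (immediate from the diagram defining $[a^{+}]$ in Lemma \ref{triang}) that $[a^{+}]$ acts on $H^2(\mathscr{C}_{u^{-},\varphi})=M/((p\varphi-1)M+u^{-}M)$ through $a^{+}+1$.

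First I would treat the generic regime: $\delta(p)\notin\{p^{-i}:i\ge 0\}$, or $\delta(p)=p^{-i}$ with $\kappa\ne -i$. Then Lemma \ref{Lie2tmp}(2) gives $H^2(\mathscr{C}_{u^{-},\varphi})=0$, so the right-hand term of \eqref{line1} vanishes, and the cokernel lemma gives $H^1([a^{+}]\colon H^1(\mathscr{C}_{u^{-},\varphi}))=0$; hence $H^2(\mathscr{C}_{u^{-},\varphi,a^{+}})=0$. When $\delta(p)=1$ and $\kappa=0$ one is not in the case $\kappa=-i$, $i\ge 1$, so again $H^2(\mathscr{C}_{u^{-},\varphi})=0$, while the cokernel lemma gives $H^1([a^{+}]\colon H^1(\mathscr{C}_{u^{-},\varphi}))=L\cdot[t^{0}]$, and \eqref{line1} yields $H^2(\mathscr{C}_{u^{-},\varphi,a^{+}})=L\cdot[t^{0}]$. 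In the remaining regime $\delta(p)=p^{-i}$, $\kappa=-i$, $i\ge 1$: Lemma \ref{Lie2tmp}(1) gives $H^2(\mathscr{C}_{u^{-},\varphi})=L\cdot[t^{i-1}]$, on which $[a^{+}]=a^{+}+1$ acts trivially because $(a^{+}+1)t^{i-1}=(\kappa+i)t^{i-1}=0$, so $H^0([a^{+}]\colon H^2(\mathscr{C}_{u^{-},\varphi}))=L\cdot[t^{i-1}]$; and the cokernel lemma provides $Z:=H^1([a^{+}]\colon H^1(\mathscr{C}_{u^{-},\varphi}))$ sitting in $0\to L\cdot[t^{i}]\to Z\to L\cdot[t^{i-1}]\to 0$. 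Feeding these into \eqref{line1} produces precisely the two short exact sequences of part (3).

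I do not expect a genuine obstacle here: the real work — the explicit descriptions of $H^1(\mathscr{C}_{u^{-},\varphi})$ and $H^2(\mathscr{C}_{u^{-},\varphi})$ and of the $[a^{+}]$-action on their generators $[t^{j}]$ — has been carried out in Lemma \ref{H1u-phi}, Lemma \ref{Lie2tmp} and the two lemmas preceding the statement, and Proposition \ref{Lie2+} is their mechanical combination through \eqref{line1}. The one point meriting a line of care is the compatibility of the identifications of the classes $[t^{j}]$ across \eqref{line1} and the defining triangle $\mathscr{C}_{u^{-},\varphi,a^{+}}\to\mathscr{C}_{u^{-},\varphi}\xrightarrow{[a^{+}]}\mathscr{C}_{u^{-},\varphi}$ of Lemma \ref{triang}; this is immediate from the explicit shape of the connecting maps, and it is what guarantees that the generators named in part (3) are the correct ones.
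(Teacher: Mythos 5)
Your proposal is correct and is precisely the argument the paper intends: Proposition \ref{Lie2+} is stated without proof as the mechanical combination, through the sequence \eqref{line1} of Lemma \ref{3.1}, of Lemma \ref{Lie2tmp} and the two preceding lemmas on $H^0([a^{+}]\colon H^2(\mathscr{C}_{u^-,\varphi}))$ and $H^1([a^{+}]\colon H^1(\mathscr{C}_{u^-,\varphi}))$, exactly as you assemble it regime by regime. Your verification that $[a^{+}]$ acts as $a^{+}+1$ on $H^2(\mathscr{C}_{u^-,\varphi})$ and kills $[t^{i-1}]$ when $\kappa=-i$ matches the conventions used for the analogous computation in the $\Robba^-$ case (Lemma \ref{bladot}), so nothing is missing.
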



\subsubsection{Calculation of $H^3(\mathscr{C}_{u^-, \varphi, a^+})$:}

\begin{prop} \label{Lie3+} \leavevmode
\begin{enumerate}
\item If $\kappa = - i$, $i \geq 1$, and $\delta(p) = p^{-i}$, then $H^3(\mathscr{C}_{u^+, \varphi, a^+}) = L \cdot [t^{i-1}]$.
\item Otherwise $H^3(\mathscr{C}_{u^+, \varphi, a^+}) = 0$.
\end{enumerate}
\end{prop}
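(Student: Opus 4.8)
The plan is to apply part (4) of Lemma \ref{iso1}, which identifies $H^3(\mathscr{C}_{u^-, \varphi, a^+})$ with $H^1([a^+] \colon H^2(\mathscr{C}_{u^-, \varphi}))$, that is, with the cokernel of the action of $[a^+]$ on $H^2(\mathscr{C}_{u^-, \varphi})$ (the symbol $u^+$ in the statement is a typo for $u^-$, the complex on which the whole section operates). Since the chain map $[a^+] \colon \mathscr{C}_{u^-, \varphi} \to \mathscr{C}_{u^-, \varphi}$ defined before Lemma \ref{triang} is given by $a^+ + 1$ in top degree, the induced endomorphism of $H^2(\mathscr{C}_{u^-, \varphi})$ is multiplication by $a^+ + 1$. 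Thus the computation reduces entirely to two inputs already available: the description of $H^2(\mathscr{C}_{u^-, \varphi})$ from Lemma \ref{Lie2tmp}, and the formula $a^+ t^j = (\kappa + j) t^j$ on $M = \mathrm{Pol}_{\leq N}(\zp, L)^*(\delta_1, \delta_2)$ from Lemma \ref{hatbag}, where $\kappa = \kappa(\delta_1\delta_2^{-1})$.

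First I would treat the generic case. If we are not in the situation $\kappa = -i$ for some $i \geq 1$ with $\delta(p) = p^{-i}$, then $H^2(\mathscr{C}_{u^-, \varphi}) = 0$ by Lemma \ref{Lie2tmp}, so its cokernel under $a^+ + 1$ is zero and $H^3(\mathscr{C}_{u^-, \varphi, a^+}) = 0$; this is part (2). In the remaining case $\kappa = -i$, $i \geq 1$, $\delta(p) = p^{-i}$, Lemma \ref{Lie2tmp} gives $H^2(\mathscr{C}_{u^-, \varphi}) = L \cdot [t^{i-1}]$, and one computes directly $(a^+ + 1)[t^{i-1}] = (\kappa + (i-1) + 1)[t^{i-1}] = (\kappa + i)[t^{i-1}] = 0$ since $\kappa = -i$. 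Hence $[a^+]$ acts by zero on $H^2(\mathscr{C}_{u^-, \varphi})$, its cokernel is the whole line $L \cdot [t^{i-1}]$, and $H^3(\mathscr{C}_{u^-, \varphi, a^+}) = L \cdot [t^{i-1}]$, which is part (1).

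There is essentially no real obstacle: the substance has already been packaged into Lemma \ref{iso1}(4) and Lemma \ref{Lie2tmp}, and the only new ingredient is the one-line verification that $a^+ + 1$ annihilates the generator $[t^{i-1}]$ exactly under the stated numerical constraints. The single point that deserves a moment of care is confirming that the degree-$2$ component of $[a^+]$ is $a^+ + 1$ rather than $a^+$ — this is read off from the definition of the morphism $[a^+]$ on $\mathscr{C}_{u^-, \varphi}$ preceding Lemma \ref{triang}, and is the same bookkeeping already carried out in the proof of Proposition \ref{Lie3-}.
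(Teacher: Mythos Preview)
Your argument is correct and matches the paper's approach: the paper's proof is the one-liner ``Immediate from Lemma \ref{Lie2tmp}'', which unpacks to exactly the computation you wrote out via Lemma \ref{iso1}(4) and the action of $a^+ + 1$ on the generator.
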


\begin{proof}
Immediate from Lemma \ref{Lie2tmp}.
\end{proof}

\subsection{The $\overline{P}^+$-cohomology of $\Robba^+(\delta_1, \delta_2)$}

We take $\tilde{P}$-invariants to calculate group cohomology from the Lie algebra cohomology.

\subsubsection{Calculation of $H^0(\overline{P}^+, \Robba^+(\delta_1, \delta_2))$ }

\begin{lemm} \label{cohom0+} \leavevmode
\begin{enumerate} 
\item If $\delta_1 \delta_2^{-1} = x^{-i}$, $i \geq 0$, then $H^0(\mathscr{C}_{u^-, \varphi, a^+}) = L \cdot t^i$.
\item Otherwise $H^0(\mathscr{C}_{u^-, \varphi, a^+}) = 0$.
\end{enumerate}
\end{lemm}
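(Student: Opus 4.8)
The plan is to deduce this from the Lie-algebra computation of Lemma~\ref{Lie0+} together with the two reduction steps already set up in this section. First I would apply Lemma~\ref{compacohom} to replace $\Robba^+(\delta_1,\delta_2)$ by the finite-dimensional module $M=\mathrm{Pol}_{\leq N}(\zp,L)^*(\delta_1,\delta_2)=\bigoplus_{j=0}^N L\cdot t^j$ for $N$ large, so that $H^0(\overline{P}^+,\Robba^+(\delta_1,\delta_2))=H^0(\overline{P}^+,M)$. By Lemma~\ref{lielem} this equals $H^0(\tilde{P},H^0(\mathscr{C}_{u^-,\varphi,a^+}))$, and Lemma~\ref{Lie0+} says that $H^0(\mathscr{C}_{u^-,\varphi,a^+})$ is zero unless $\delta_1\delta_2^{-1}(p)=p^{-i}$ and $\kappa(\delta_1\delta_2^{-1})=-i$ for some $i\geq 0$, in which case it is the line $L\cdot t^i$. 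In the vanishing cases there is nothing more to do, so the only remaining task is to compute the $\tilde{P}$-invariants of the line $L\cdot t^i$ in the remaining case.

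For this I would use the explicit action of $\overline{P}^+$ on $M$ recalled just before Lemma~\ref{compacohom}, namely $\sigma_a(t^i)=\delta_1\delta_2^{-1}(a)\,a^i\,t^i$ and $\tau(t^i)=\sum_{h=0}^i\binom{\kappa'-h}{i-h}p^{i-h}t^h$ with $\kappa'=-\kappa(\delta_1\delta_2^{-1})-1=i-1$. Since $\tilde{P}$ is topologically generated by $A^0$ (the $\sigma_a$) and $\overline{U}^1$ (powers of $\tau$), it suffices to inspect these two actions on $t^i$. For $h<i$ the coefficient of $t^h$ in $\tau(t^i)$ is $\binom{(i-h)-1}{i-h}p^{i-h}$, which vanishes because $0\leq (i-h)-1<i-h$; the coefficient of $t^i$ is $\binom{-1}{0}=1$. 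Hence $\tau(t^i)=t^i$ unconditionally, i.e. $t^i$ is always $\overline{U}^1$-fixed. On the other hand $\sigma_a(t^i)=t^i$ for all $a\in\zpe$ if and only if $\delta_1\delta_2^{-1}(a)=a^{-i}$ on $\zpe$, which combined with $\delta_1\delta_2^{-1}(p)=p^{-i}$ is exactly the assertion that $\delta_1\delta_2^{-1}=x^{-i}$ as a character of $\qpe$. Therefore $H^0(\tilde{P},L\cdot t^i)=L\cdot t^i$ when $\delta_1\delta_2^{-1}=x^{-i}$ and $0$ otherwise, which together with the vanishing cases yields the statement.

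The one genuinely substantive point — which is precisely what distinguishes this lemma from Lemma~\ref{Lie0+} — is the gap between the hypothesis "$\delta_1\delta_2^{-1}(p)=p^{-i}$ and $\kappa(\delta_1\delta_2^{-1})=-i$" occurring at the Lie-algebra level and the sharper hypothesis "$\delta_1\delta_2^{-1}=x^{-i}$" here. For $p>2$ there exist characters (for instance $x^{-i}$ times a nontrivial finite-order character of $\zpe$, extended by $1$ on $p$) satisfying the former but not the latter, and for such characters the Lie cohomology $L\cdot t^i$ is nonzero while its $A^0$-invariants vanish. So the proof amounts to checking that passing to $\tilde{P}$-invariants exactly collapses this discrepancy; this is the binomial-coefficient computation above together with the elementary remark about characters, and I do not anticipate any deeper obstacle — the $\tau$-invariance is automatic and the $\sigma_a$-invariance is a direct unwinding of the formulas.
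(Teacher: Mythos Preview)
Your proposal is correct and follows exactly the same approach as the paper: the paper's one-line proof reads ``This follows by taking $\tilde{P}$-invariants on Lemma~\ref{Lie0+} and observing that $\tau$ fixes $t^i$,'' and you have simply unpacked both halves of this sentence in detail (the $\tau$-invariance via the binomial formula already noted just before Lemma~\ref{compacohom}, and the $A^0$-invariance that forces $\delta_1\delta_2^{-1}|_{\zpe}=x^{-i}$). Your closing paragraph about the gap between the Lie-level hypothesis and the group-level hypothesis is precisely the content the paper leaves implicit.
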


\begin{proof}
This follows by taking $\tilde{P}$-invariants on Lemma \ref{Lie0+} and observing that $\tau$ fixes $t^i$.
\end{proof}

\subsubsection{Calculation of $H^1(\overline{P}^+, \Robba^+(\delta_1, \delta_2))$ }

\begin{lemm} \label{cohom1+} \label{lem:karat}
If $M = \Robba^+(\delta_1, \delta_2)$ then 
\begin{enumerate}
\item If $\delta_1\delta_{2}^{-1} \notin \left\{x^{-i} \text{ } | \text{ } i\geq 0 \right\}$ then $H^{1}(\overline{P}^{+}, M) = 0$.
\item If $\delta_1\delta_{2}^{-1} = x^{-i}$ for some $i \geq 0$ then $H^{1}(\overline{P}^{+}, M)$ is of dimension $2$.
\end{enumerate}
\end{lemm}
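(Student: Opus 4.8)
The strategy is to compute $H^{1}(\overline{P}^{+}, M)$ with $M = \Robba^{+}(\delta_{1},\delta_{2})$ by first reducing to $H^{1}(\overline{P}^{+}, \mathrm{Pol}_{\leq N}(\zp,L)^{*}(\delta_{1},\delta_{2}))$ via Lemma \ref{compacohom} (valid once $N$ is chosen with $|\delta(p)| < p^{N}$), and then using Lemma \ref{lielem} to pass to Lie algebra cohomology and take $\tilde{P}$-invariants. Concretely, $H^{1}(\overline{P}^{+}, M) = H^{0}(\tilde{P}, H^{1}(\mathscr{C}_{u^{-},\varphi,a^{+}}))$, so the input is Proposition \ref{H1LiePR+}, which describes $H^{1}(\mathscr{C}_{u^{-},\varphi,a^{+}})$ case by case in terms of $\delta(p)$ and $\kappa = \kappa(\delta_{1}\delta_{2}^{-1})$, together with the explicit $\tilde{P}$-action recorded in Remark \ref{actioncohom} and Remark \ref{actioncohom1}.

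For part (1), suppose $\delta_{1}\delta_{2}^{-1} \notin \{x^{-i} : i \geq 0\}$. I will check that in every case of Proposition \ref{H1LiePR+} the $\tilde{P}$-invariants vanish. In case (1) of that proposition $H^{1}(\mathscr{C}_{u^{-},\varphi,a^{+}}) = 0$ already. In case (2) (i.e.\ $\delta(p)=1$, $\kappa = 0$), the condition $\delta_{1}\delta_{2}^{-1} \neq x^{-i}$ forces $\delta_{1}\delta_{2}^{-1} \neq \mathbf{1}_{\qpe}$, so on each copy of $L\cdot[t^{0}]$ the group $A^{0}$ acts through the nontrivial character $\delta_{1}\delta_{2}^{-1}$ (recall $\sigma_{a}(t^{0}) = \delta_{1}\delta_{2}^{-1}(a)\,t^{0}$); taking $A^{0}$-invariants of the short exact sequence kills both ends, hence the middle. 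In case (3), the terms involved are $L\cdot[t^{i}]$ and $L\cdot[t^{i-1}]$ with $i \geq 1$ and $\delta(p) = p^{-i}$, $\kappa = -i$; here $A^{0}$ acts on $t^{j}$ by $\delta_{1}\delta_{2}^{-1}(a)a^{j}$, so invariance of $t^{j}$ would force $\delta_{1}\delta_{2}^{-1}(a) = a^{-j}$ for all $a$, i.e.\ $\delta_{1}\delta_{2}^{-1} = x^{-j}$ pointwise — excluded by hypothesis (for $j = i$ and for $j = i-1$). Since $\tilde{P}$-invariance is in particular $A^{0}$-invariance, all pieces vanish and $H^{1}(\overline{P}^{+}, M) = 0$.

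For part (2), suppose $\delta_{1}\delta_{2}^{-1} = x^{-i}$ for some $i \geq 0$. Then $\delta(p) = \delta_{1}\delta_{2}^{-1}(p)\chi(p)^{-1} = p^{-i}$ and $\kappa = \kappa(x^{-i}) = -i$, so we are in case (2) of Proposition \ref{H1LiePR+} when $i = 0$ and in case (3) when $i \geq 1$. In the case $i=0$ one has the extension $0 \to L\cdot[t^{0}] \to H^{1}(\mathscr{C}_{u^{-},\varphi,a^{+}}) \to L\cdot[t^{0}] \to 0$; since $\delta_{1}\delta_{2}^{-1} = \mathbf{1}_{\qpe}$, the element $t^{0} = \mathbf{1}_{\zpe}$ is fixed by $A^{0}$, and it is also fixed by $\tau$ (the $\tau$-action on $[t^{j}]$ is unipotent, $\tau(t^{0}) = t^{0}$), so taking $\tilde{P}$-invariants is exact on this sequence and yields a $2$-dimensional space. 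For $i \geq 1$ one has the two extensions of Proposition \ref{H1LiePR+}(3) built out of copies of $L\cdot[t^{i}]$ (twice) and $L\cdot[t^{i-1}]$; with $\kappa = -i$ one checks $a^{+}t^{i} = (\kappa+i)t^{i} = 0$ and the formula $\tau(t^{j}) = \sum_{h=0}^{j}\binom{\kappa-h}{j-h}p^{j-h}t^{h}$ shows, since $\kappa = -i$ and we work modulo lower-order terms / the relevant subquotients, that $[t^{i}]$ and $[t^{i-1}]$ represent $\tau$-invariant classes, while $A^{0}$ fixes $[t^{i}]$ and $[t^{i-1}]$ precisely because $\delta_{1}\delta_{2}^{-1}(a)a^{i} = 1$. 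Thus $\tilde{P}$-invariance is exact throughout, killing one of the two $L\cdot[t^{i}]$'s against $L\cdot[t^{i-1}]$ in the inner extension for $Z_{i}$ only if the connecting map is nonzero — here I will verify that the bookkeeping in fact leaves a $2$-dimensional invariant space (the subtle point being whether the inner sequence $0 \to L\cdot[t^{i}] \to Z_{i} \to L\cdot[t^{i-1}]\to 0$ contributes $1$ or $2$ dimensions after invariants, which is settled by the explicit $A^{0}$-action being trivial on both ends, hence $\dim (Z_{i})^{\tilde P} = 2$, and then the outer sequence $0 \to L\cdot[t^{i}] \to H^{1} \to Z_{i} \to 0$ needs a dimension count using that $H^{1}(A^{0},-)$ of the trivial module is again $1$-dimensional). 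Carrying this out gives $\dim_{L} H^{1}(\overline{P}^{+}, M) = 2$.

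The main obstacle is the last dimension count in part (2): one must track the $\tilde{P}$-action — in particular the potentially nonzero connecting homomorphisms in the long exact sequences associated to $0\to L\cdot[t^{i}]\to Z_{i}\to L\cdot[t^{i-1}]\to 0$ and $0\to L\cdot[t^{i}]\to H^{1}(\mathscr{C}_{u^{-},\varphi,a^{+}})\to Z_{i}\to 0$ — carefully enough to be sure the invariants assemble to exactly dimension $2$ rather than $1$ or $3$. This is the analogue of the $\tilde{P}$-invariant computations already performed in Lemmas \ref{cohom1-} and \ref{cohom2-}, and the same techniques (exactness of $(-)^{\tilde P}$ when the action is locally constant on the relevant subquotients, plus the explicit formulas $a^{+}t^{j} = (\kappa+j)t^{j}$, $\sigma_{a}(t^{j}) = \delta_{1}\delta_{2}^{-1}(a)a^{j}t^{j}$, $\tau(t^{j}) = \sum_{h\leq j}\binom{\kappa-h}{j-h}p^{j-h}t^{h}$) will resolve it; I expect no genuinely new difficulty beyond this bookkeeping.
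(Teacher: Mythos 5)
Your overall route is the same as the paper's: reduce to $\mathrm{Pol}_{\leq N}(\zp,L)^{*}(\delta_1,\delta_2)$ via Lemma \ref{compacohom}, pass to Lie algebra cohomology via Lemma \ref{lielem}, and take $\tilde{P}$-invariants in the exact sequences of Proposition \ref{H1LiePR+} using the explicit actions. Your part (1) and the case $\delta_1\delta_2^{-1}=\mathbf{1}_{\qpe}$ of part (2) are essentially the paper's argument (with a small imprecision: $A^{0}$ only sees the restriction of the character to $\zpe$, and on the $[t^{i-1}]$-term the paper uses the $\chi$-twisted action of Remark \ref{actioncohom1} rather than the naive one you quote; in part (1) the hypothesis $\kappa=-i$ saves your conclusion anyway).

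The genuine gap is the case $\delta_1\delta_2^{-1}=x^{-i}$, $i\geq 1$, which is the whole content of part (2) and exactly the point where the paper claims to correct \cite[Lemme 5.21]{colmez2015}. You assert that both copies of $L\cdot[t^{i}]$ and $L\cdot[t^{i-1}]$ are $\tilde{P}$-invariant and that ``$\tilde{P}$-invariance is exact throughout''; if that were literally true the answer would be $3$, not $2$, and you then concede in your closing paragraph that you have not decided between $1$, $2$ and $3$, deferring it to ``bookkeeping''. But that undecided step is the lemma. The paper's proof gets dimension $2$ by the opposite mechanism to the one you sketch: it argues that the class coming from $t^{i-1}$ in the second short exact sequence of Proposition \ref{H1LiePR+}(3) is \emph{not} fixed by $\tau$, so that $(Z_i)^{\tilde{P}}$ is one-dimensional, whereas you claim $[t^{i-1}]$ is $\tau$-invariant and $\dim (Z_i)^{\tilde{P}}=2$; and your proposed way of finishing (``$H^{1}(A^{0},-)$ of the trivial module is $1$-dimensional'') does not by itself pin down whether the invariants of the outer sequence have dimension $2$ or $3$ — that requires identifying the image of $(H^{1})^{\tilde P}$ in $(Z_i)^{\tilde P}$, i.e.\ an actual (non)vanishing of a connecting map. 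Note moreover that the naive computation on graded pieces cuts the other way from the paper's assertion: $\tau(t^{i-1})=\sum_{h\le i-1}p^{\,i-1-h}t^{h}$ is congruent to $t^{i-1}$ modulo the image of $u^{-}$ (and literally equals $t^{0}$ when $i=1$), so deciding the invariance forces you to work with cocycle representatives and the correction terms of Remark \ref{actioncohom}, not just with the subquotients; and the paper's own summary, Proposition \ref{cohomfinal}, records dimensions $1$ and $3$ rather than $2$ for these cases, which underlines how delicate this count is. So the decisive computation is missing from your proposal, and the partial claims you do make point away from the stated answer rather than toward it.
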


\begin{proof} \leavevmode
To calculate the fixed points by $\tilde{P}$ on the Lie algebra cohomology after all the identifications we have made, recall that we have rendered these actions explicit in Remark \ref{actioncohom}.
\begin{enumerate}
\item Suppose first that we are in the situation of Proposition \ref{H1LiePR+}(1). In this case $H^1(\mathscr{C}_{u^-, \varphi, a^+}) = 0$ and the result is immediate.

Now assume we are in the case of Proposition \ref{H1LiePR+}(2), then we have
\[ 0 \to L \cdot [t^0] \to H^1(\mathscr{C}_{u^-, \varphi, a^+}) \to L \cdot [t^0] \to 0.\] Note also that, by Remark \ref{actioncohom}, $A^0$ acts on each term on the extremities via multiplication by $\delta_1 \delta_2^{-1}$ (which is not trivial by hypothesis). We conclude by taking $A^0$ invariants of this sequence.

Finally, suppose that the hypothesis of Proposition \ref{H1LiePR+}(3) are satisfied. Then we have exact sequences
\[ 0 \to L \cdot [t^i] \to H^1(\mathscr{C}_{u^-, \varphi, a^+}) \to Z_i \to 0, \]
\[ 0 \to L \cdot [t^i] \to Z_i \to L \cdot [t^{i - 1}] \to 0. \]  Again, by Remark \ref{actioncohom}, $A^0$ acts on the first term of the two SES's as multiplication by $\delta_1 \delta_2^{-1} |_\zpe \neq \chi^{-i}$ and on the second term of the second SES via multiplication by $\chi \delta_1 \delta_2^{-1} |_\zpe \neq \chi^{-i + 1}$. The result also follows by taking $A^0$-invariants.

\item If $\delta_1 \delta_2^{-1} = \mathbf{1}_\qpe$, then we are in the situation of Proposition \ref{H1LiePR+}(2), and the result follows since everything is $\tilde{P}$-invariant.

If $\delta_1 \delta_2^{-1} = x^{-i}$ for some $i \geq 1$, we are in the situation of Proposition \ref{H1LiePR+}(3). Note that, by Remark \ref{actioncohom}, $\tau$ acts on each of the extremities of the long exact sequences of $\ref{H1LiePR+}(3)$ just as $\tau$. In this case, the term $t^{i - 1}$ of the second short exact sequence is not fixed by the action of $\tau$, so taking $\tilde{P}$-invariants in the short exact sequences gives the result.
\end{enumerate}
\end{proof}

\subsubsection{Calculation of $H^2(\overline{P}^+, \Robba^+(\delta_1, \delta_2))$ }

\begin{lemm} \label{cohom2+} \leavevmode
\begin{enumerate}
\item If $\delta_1 \delta_2^{-1} \notin \{x^{-i}, i \geq 0 \}$, then $H^2(\overline{P}^+, M) = 0$.
\item If $\delta_1 \delta_2^{-1} = \mathbf{1}_\qpe$ then $H^2(\overline{P}^+, M)$ is of dimension one naturally isomorphic to $L \cdot [t^0]$.
\item If $\delta_1 \delta_2^{-1} = x^{-i}$, $i \geq 1$, then $H^2(\overline{P}^+, M)$ is of dimension $3$.
\end{enumerate}
\end{lemm}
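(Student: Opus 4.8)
\textbf{Proof plan for Lemma \ref{cohom2+}.}
The plan is to proceed exactly in the same spirit as the proof of Lemma \ref{cohom1+}, namely by taking $\tilde{P}$-invariants in the Lie-algebra cohomology computed in Proposition \ref{Lie2+}, where $\tilde{P} = {\matrice{\zpe} 0 {p \zp} 1}$ acts on the terms as rendered explicit in Remark \ref{actioncohom}. Recall from Lemma \ref{lielem} (applied to $M = \Robba^+(\delta_1,\delta_2)$) that $H^2(\overline{P}^+, M) = H^0(\tilde{P}, H^2(\mathscr{C}_{u^-,\varphi,a^+}))$, and recall from Lemma \ref{compacohom} that we may and do replace $\Robba^+(\delta_1,\delta_2)$ by $\mathrm{Pol}_{\leq N}(\zp, L)^*(\delta_1,\delta_2) = \oplus_{i=0}^N L\cdot t^i$ for $N$ large, whose $\overline{P}^+$-action is given by the explicit formulas $\sigma_a(t^j) = \delta_1\delta_2^{-1}(a)a^j t^j$, $\varphi(t^j) = \delta_1\delta_2^{-1}(p)p^j t^j$ and $\tau(t^j) = \sum_{h=0}^j \binom{\kappa-h}{j-h}p^{j-h}t^h$ with $\kappa = -\kappa(\delta_1\delta_2^{-1})-1$. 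The three cases of the statement correspond precisely (via $\chi(p)=1$, so $\delta(p) = \delta_1\delta_2^{-1}(p)$, and $\kappa(\delta_1\delta_2^{-1})=-i$ iff $\delta_1\delta_2^{-1}=x^{-i}$ \emph{pointwise on weights}) to the three cases of Proposition \ref{Lie2+}.

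First, for (1): if $\delta_1\delta_2^{-1} \notin \{x^{-i}, i \geq 0\}$, then either $\delta(p) \notin \{p^{-i}\}$ or $\delta(p) = p^{-i}$ with $\kappa \neq -i$, so Proposition \ref{Lie2+}(1) gives $H^2(\mathscr{C}_{u^-,\varphi,a^+}) = 0$ and hence $H^2(\overline{P}^+,M) = 0$. Second, for (2): if $\delta_1\delta_2^{-1} = \mathbf{1}_\qpe$, then $\delta(p)=1$ and $\kappa=0$, and Proposition \ref{Lie2+}(2) gives $H^2(\mathscr{C}_{u^-,\varphi,a^+}) = L\cdot [t^0]$; one checks directly from the explicit formulas that $t^0$ is fixed by $A^0$ (since $\delta_1\delta_2^{-1}(a)=1$) and by $\tau$ (since $\tau(t^0)=t^0$), so the $\tilde{P}$-invariants are all of $L\cdot[t^0]$, giving dimension $1$. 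Third, for (3): if $\delta_1\delta_2^{-1} = x^{-i}$ for $i\geq 1$, then $\delta(p)=p^{-i}$ and $\kappa=-i$, and Proposition \ref{Lie2+}(3) gives the two short exact sequences
\[ 0 \to Z \to H^2(\mathscr{C}_{u^-,\varphi,a^+}) \to L\cdot[t^{i-1}], \qquad 0 \to L\cdot[t^i] \to Z \to L\cdot[t^{i-1}] \to 0. \]
As in the proof of Lemma \ref{cohom1+}, Remark \ref{actioncohom} tells us how $\tilde{P}$ acts on each of the three graded pieces $L\cdot[t^i]$, $L\cdot[t^{i-1}]$ (inside $Z$) and $L\cdot[t^{i-1}]$ (the quotient): the action of $\tau$ on each extremity is the \emph{natural} one (i.e.\ by $\tau$ itself, not a twist), and the action of $A^0$ is the natural one on the terms coming from $H^0$ or $H^1$ of $\mathscr{C}_{u^-}$ and twisted by $\chi$ on the term coming from $H^2$ of $\mathscr{C}_{u^-}$ — but in all cases, since $\delta_1\delta_2^{-1}=x^{-i}$ and $i\geq 1$, the class $[t^i]$ (which is $a^+$- and $\tau$-fixed because $\kappa+i=0$ and $\tau t^{i}=t^i$ when $\kappa=-i$) is genuinely $\tilde{P}$-invariant, whereas the classes $[t^{i-1}]$ are \emph{not} $\tau$-fixed (by the same computation $\tau(t^{i-1})=t^{i-1}+p i\, t^i+\dots \neq t^{i-1}$ when $\kappa=-i$, $i\geq 1$). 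Since the $\tilde{P}$-action on each one-dimensional piece is by a (possibly trivial) character of $\tilde{P}$ and the extensions above are extensions in the category of $\tilde{P}$-modules, taking $\tilde{P}$-invariants is left-exact; combining the long exact sequences of $\tilde{P}$-cohomology for the two short exact sequences with the vanishing of $H^0(\tilde{P},L\cdot[t^{i-1}])$ and a dimension count (one contribution from $[t^i]\subset Z$, one from the copy of $[t^i]$ coming back via the connecting map from the non-split $\tau$-extensions, and one more from the full two-step filtration) yields $\dim_L H^2(\overline{P}^+, M) = 3$, exactly as claimed.

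The main (and really the only) obstacle here is bookkeeping: one must track, through both short exact sequences of Proposition \ref{Lie2+}(3), precisely which one-dimensional graded pieces survive passage to $\tilde{P}$-invariants and which contribute via connecting homomorphisms, and be careful that the $\chi$-twist flagged in Remark \ref{actioncohom} on the $H^2(\mathscr{C}_{u^-})$-component does not accidentally kill or revive a piece. This is the same subtlety that (as noted in the Remark after Proposition \ref{cohomfinal}) was mishandled in \cite[Lemme 5.21]{colmez2015}; the safe way to do it is to compute $H^i(\tilde{P},-)$ of each piece separately using that $\tilde{P}$ is a $p$-adic analytic group of dimension $2$ acting through a character, assemble via the two long exact sequences, and read off the dimensions — no genuinely new input beyond Proposition \ref{Lie2+}, Lemma \ref{lielem}, Lemma \ref{compacohom} and Remark \ref{actioncohom} is needed.
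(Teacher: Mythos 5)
Your overall route is the paper's own: reduce via Lemma \ref{compacohom} to $\mathrm{Pol}_{\leq N}(\zp,L)^*(\delta_1,\delta_2)$, use Lemma \ref{lielem} to write $H^2(\overline{P}^+,M)=H^0(\tilde{P},H^2(\mathscr{C}_{u^-,\varphi,a^+}))$, and take $\tilde{P}$-invariants in Proposition \ref{Lie2+} with the twists of Remarks \ref{actioncohom} and \ref{actioncohom1}; cases (1) and (2) are fine. Case (3), however, does not work as you have written it. Since $H^0(\tilde{P},-)$ is left exact and $H^2(\overline{P}^+,M)$ is literally the subspace of $\tilde{P}$-invariants of the (at most three-dimensional) space $H^2(\mathscr{C}_{u^-,\varphi,a^+})$, the two short exact sequences of Proposition \ref{Lie2+}(3) give $\dim_L H^2(\overline{P}^+,M)\leq \dim H^0(\tilde{P},L\cdot[t^i])+2\dim H^0(\tilde{P},L\cdot[t^{i-1}])$, and connecting homomorphisms land in $H^1(\tilde{P},-)$ of the sub-objects, so they can only lower this bound, never raise it. Hence, if the pieces $L\cdot[t^{i-1}]$ really had no $\tilde{P}$-invariants, as you assert, you would be forced to $\dim_L H^2(\overline{P}^+,M)\leq 1$, contradicting the answer $3$; your count of "one more contribution coming back via the connecting map" is not a valid mechanism. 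To get dimension $3$ you must prove the opposite of your claim: the induced $\tilde{P}$-action on each of the three one-dimensional graded pieces $[t^i],[t^{i-1}],[t^{i-1}]$ (equivalently on all of $H^2(\mathscr{C}_{u^-,\varphi,a^+})$) is trivial.

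The source of the confusion is your explicit $\tau$-computation, which is also wrong in direction: from $\tau(t^j)=\sum_{h=0}^{j}\binom{\kappa-h}{j-h}p^{j-h}t^h$ with $\kappa=-\kappa(\delta_1\delta_2^{-1})-1=i-1$, the operator $\tau$ lowers the degree, so $\tau(t^{i-1})=t^{i-1}+p\,t^{i-2}+\cdots+p^{i-1}t^0$, not $t^{i-1}+pi\,t^{i}+\cdots$ (and for $i=1$ the element $t^0$ is literally $\tau$-fixed). The point to check is that in the subquotients occurring in Proposition \ref{Lie2+}(3) (the cokernel of $[a^+]$ on $H^1(\mathscr{C}_{u^-,\varphi})$ and the kernel of $[a^+]$ on $H^2(\mathscr{C}_{u^-,\varphi})$, both of which sit inside quotients of $M/u^-M$) the defect $\tau(t^{i-1})-t^{i-1}$ vanishes: since $u^-t^j=j(i-j)t^{j-1}$ when $\kappa(\delta_1\delta_2^{-1})=-i$, every $t^h$ with $h\leq i-2$ lies in $u^-M$, so $[t^{i-1}]$ \emph{is} $\tau$-fixed in these quotients; likewise the $A^0$-action is trivial on each piece once the $\chi$-twist of Remark \ref{actioncohom1} is taken into account ($\sigma_a t^{i-1}=a^{-1}t^{i-1}$ untwisted, hence fixed after twisting, and $\sigma_a t^i=t^i$). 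With that, all three pieces survive and the dimension is $3$; contrast this with $H^1$ (Lemma \ref{cohom1+}), where the analogous $t^{i-1}$-term genuinely fails to be $\tau$-fixed and the dimension drops to $2$ — the two situations are not interchangeable, which is exactly the bookkeeping your write-up elides.
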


\begin{proof}
This follows by taking $\overline{P}^+$-invariants in Lemma \ref{Lie2+}.
\end{proof}

\subsubsection{Calculation of $H^3(\overline{P}^+, \Robba^+(\delta_1, \delta_2))$ }

\begin{lemm} \label{cohom3+} \leavevmode
\begin{enumerate}
\item If $\delta_1 \delta_2^{-1} = x^{-i}$, $i \geq 1$, then $H^3(\overline{P}^+, M)$ is of dimension $1$ naturally generated by $[t^{i-1}]$.
\item Otherwise $H^3(\overline{P}^+, M) = 0$.
\end{enumerate}
\end{lemm}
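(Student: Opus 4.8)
The plan is to follow the pattern of the preceding lemmas of this subsection (cf.\ Lemmas~\ref{cohom0+}--\ref{cohom2+}): reduce to a finite module, pass to the Lie algebra cohomology, and take $\tilde{P}$-invariants. First I would apply Lemma~\ref{compacohom} to replace $M = \Robba^+(\delta_1,\delta_2)$ by $\mathrm{Pol}_{\leq N}(\zp,L)^*(\delta_1,\delta_2) = \bigoplus_{i=0}^N L\cdot t^i$ for $N$ large enough, so that $H^3(\overline{P}^+,\Robba^+(\delta_1,\delta_2)) = H^3(\overline{P}^+,\mathrm{Pol}_{\leq N}(\zp,L)^*(\delta_1,\delta_2))$. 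By Lemma~\ref{lielem} the latter is $H^0(\tilde{P}, H^3(\mathscr{C}_{u^{-},\varphi,a^{+}}))$, so the computation is reduced to Proposition~\ref{Lie3+} together with the explicit description of the $\tilde{P}$-action recorded in Remark~\ref{actioncohom1}.

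Writing $\kappa = \kappa(\delta_1\delta_2^{-1})$ and noting $\delta(p) = \delta_1\delta_2^{-1}(p)$ (since $\chi(p)=1$), Proposition~\ref{Lie3+} gives two cases. If $\kappa$ is not of the form $-i$ with $i\geq 1$, or if $\kappa = -i$ but $\delta_1\delta_2^{-1}(p)\neq p^{-i}$, then $H^3(\mathscr{C}_{u^{-},\varphi,a^{+}}) = 0$, hence $H^3(\overline{P}^+,M)=0$; this is the ``otherwise'' case. In the remaining case $\kappa = -i$, $i\geq 1$, and $\delta_1\delta_2^{-1}(p) = p^{-i}$, Proposition~\ref{Lie3+} gives that $H^3(\mathscr{C}_{u^{-},\varphi,a^{+}})$ is the one-dimensional space $L\cdot[t^{i-1}]$, and it remains to compute the $\tilde{P}$-invariants of this line.

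For this I would argue exactly as in the proof of Lemma~\ref{cohom3-}. By Remark~\ref{actioncohom1}, the unipotent generator $\tau$ acts on $[t^{i-1}]$ through its ordinary action and fixes the class: in $\tau(t^{i-1}) = t^{i-1} + p\,t^{i-2} + \dots + p^{i-1}t^0$ the lower-degree terms vanish in the quotient $H^2(\mathscr{C}_{u^{-},\varphi}) = L\cdot[t^{i-1}]$ (Lemma~\ref{Lie2tmp}), out of which $H^3(\mathscr{C}_{u^{-},\varphi,a^{+}})$ is built. On the other hand $A^0$ acts on $H^3$ through its natural action twisted by $\chi$ (the adjoint $\chi$-twist coming from $\sigma_a^{-1}u^-\sigma_a = a\,u^-$), so for $a\in\zpe$,
\[ \sigma_a\cdot[t^{i-1}] = \chi(a)\,\delta_1\delta_2^{-1}(a)\,a^{i-1}\,[t^{i-1}] = \delta_1\delta_2^{-1}(a)\,a^{i}\,[t^{i-1}]. \]
Hence $[t^{i-1}]$ is $A^0$-fixed precisely when $\delta_1\delta_2^{-1}|_{\zpe} = x^{-i}$, and combined with $\delta_1\delta_2^{-1}(p) = p^{-i}$ this is equivalent to $\delta_1\delta_2^{-1} = x^{-i}$ as a character of $\qpe$. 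Therefore $H^0(\tilde{P}, H^3(\mathscr{C}_{u^{-},\varphi,a^{+}}))$ is one-dimensional, generated by $[t^{i-1}]$, when $\delta_1\delta_2^{-1} = x^{-i}$ with $i\geq 1$, and is zero otherwise, which is exactly the assertion.

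The only delicate point --- and the step where errors of the type flagged in the remark after Proposition~\ref{cohomfinal} could creep in --- is getting the direction of the adjoint $\chi$-twist in the $A^0$-action on the top cohomology right, and confirming that the unipotent part acts trivially on the surviving one-dimensional space; once these are settled the lemma follows immediately.
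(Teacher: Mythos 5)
Your proof is correct and is essentially the paper's own argument: the paper disposes of this lemma with "Immediate from Lemma \ref{Lie3+}", i.e.\ by taking $\tilde{P}$-invariants of the Lie-algebra cohomology computed there, which is exactly what you carry out. Your explicit verification that $\tau$ fixes $[t^{i-1}]$ in the quotient and that $A^0$ acts through $\chi\cdot\delta_1\delta_2^{-1}\cdot x^{i-1}$ (the adjoint $\chi$-twist of Remark \ref{actioncohom1}) just spells out the details the paper leaves implicit, and you get the twist in the correct direction.
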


\begin{proof}
Immediate from Lemma \ref{Lie3+}.
\end{proof}

\subsection{The $\overline{P}^+$-cohomology of $\Robba(\delta_1, \delta_2)$}

Call, for $* \in \{ +, -, \emptyset\}$, $M_*$ the module $\Robba^*(\delta_1, \delta_2)$. We use the short exact sequence
\begin{equation} \label{shorteaea} 0 \to M_+ \to M \to M_- \to 0 \end{equation}
in order to calculate the $\overline{P}^+$-cohomology of $M$. We restate one of the propositions announced at the beginning of this section.

\begin{prop} Let $M = \Robba(\delta_1, \delta_2)$. Then
\begin{enumerate}
\item If $\delta_1 \delta_2^{-1} \notin \{ x^{-i}, i \in \N \} \cup \{ \chi x^i, i \in \N \}$, then $\dim_L H^j(\overline{P}^+, M) = 0, 1, 1, 0$, for $j = 0, 1, 2, 3$.
\item If $\delta_1 \delta_2^{-1} = \mathbf{1}_\qpe$, then $\dim_L H^j(\overline{P}^+, M) = 1, 2, 2, 0$, for $j = 0, 1, 2, 3$.
\item If $\delta_1 \delta_2^{-1} = x^{-i}, i \in \N$, then $\dim_L H^j(\overline{P}^+, M) = 1, 3, 2, 0$, for $j = 0, 1, 2, 3$.
\item If $\delta_1 \delta_2^{-1} =\chi x^i, i \in \N$, then $\dim_L H^j(\overline{P}^+, M) = 0, 2, 2, 1$, for $j = 0, 1, 2, 3$.
\end{enumerate}
\end{prop}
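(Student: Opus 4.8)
The plan is to deduce this $\overline{P}^+$-cohomology computation for $M = \Robba(\delta_1,\delta_2)$ from the already-established computations for $M_+ = \Robba^+(\delta_1,\delta_2)$ and $M_- = \Robba^-(\delta_1,\delta_2)$ (Lemmas \ref{cohom0-}, \ref{cohom1-}, \ref{cohom2-}, \ref{cohom3-} and \ref{cohom0+}, \ref{cohom1+}, \ref{cohom2+}, \ref{cohom3+}), by running the long exact sequence in $\overline{P}^+$-cohomology associated to the short exact sequence of $\overline{P}^+$-modules \eqref{shorteaea}. First I would write out the full long exact sequence
\[
0 \to H^0(\overline{P}^+, M_+) \to H^0(\overline{P}^+, M) \to H^0(\overline{P}^+, M_-) \to H^1(\overline{P}^+, M_+) \to \cdots \to H^3(\overline{P}^+, M_-) \to 0,
\]
and then, in each of the four regularity regimes for $\delta_1\delta_2^{-1}$, plug in the known dimensions of the outer terms. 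Note that the hypotheses split cleanly: in case (1) both $M_\pm$ are in the generic regime; in case (2) $\delta_1\delta_2^{-1} = \mathbf{1}_\qpe = x^0$ is of the form $x^{-i}$ with $i=0$ but never of the form $\chi x^i$; in case (3) $\delta_1\delta_2^{-1} = x^{-i}$, $i\geq 1$, is never $\chi x^j$; in case (4) $\delta_1\delta_2^{-1} = \chi x^i$ is never $x^{-j}$. In particular, in cases (1)--(3) the module $M_-$ is always in its generic regime (dimensions $0,1,1,0$), and in case (4) the module $M_+$ is always in its generic regime (dimensions all $0$), which already collapses half of the long exact sequence.

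Concretely, in case (4), since $H^j(\overline{P}^+, M_+) = 0$ for all $j$, the connecting maps give $H^j(\overline{P}^+, M) \cong H^j(\overline{P}^+, M_-)$ for all $j$, yielding $0,2,2,1$ directly from Lemma \ref{cohom2-}(2)/\ref{cohom3-}. In case (1), the outer dimensions are $(0,0,0,0)$ for $M_+$ and $(0,1,1,0)$ for $M_-$, so $H^j(\overline{P}^+,M) \cong H^j(\overline{P}^+, M_-)$ again, giving $0,1,1,0$. In case (3), $M_+$ has dimensions $1,3,3,1$ and $M_-$ has $0,1,1,0$; the long exact sequence reads
\[
0 \to H^0(M_+) \to H^0(M) \to 0 \to H^1(M_+) \to H^1(M) \to H^1(M_-) \xrightarrow{\partial} H^2(M_+) \to H^2(M) \to H^2(M_-) \to H^3(M_+) \to H^3(M) \to 0,
\]
(suppressing $\overline{P}^+$), so $H^0(M)$ has dimension $1$, and the remaining dimensions $3,2,0$ will follow once one pins down the rank of the connecting map $\partial\colon H^1(M_-) \to H^2(M_+)$ and of $H^2(M_-) \to H^3(M_+)$. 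Similarly in case (2) with $M_+$ of dimensions $1,1,1,0$ and $M_-$ of dimensions $0,1,1,0$: $H^0(M)$ is $1$-dimensional, and one needs the rank of $\partial\colon H^1(M_-)\to H^2(M_+)$.

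The main obstacle is therefore determining these connecting homomorphisms in cases (2) and (3): dimension count alone leaves an ambiguity (e.g. in case (3), $\dim H^1(M)$ could a priori be $3$ or $4$, and $\dim H^2(M)$ correspondingly $2$ or $3$). To resolve this I would argue that the relevant connecting maps are as injective/surjective as the target dimensions allow, using the explicit descriptions of the cohomology classes: the top classes of $H^2(\overline{P}^+, M_+)$ and $H^3(\overline{P}^+, M_+)$ are generated by $[t^{i-1}]$-type elements (Lemmas \ref{cohom2+}, \ref{cohom3+}), coming from the submodule $\mathrm{Pol}_{\leq N}(\zp,L)^*(\delta_1,\delta_2) = \oplus_j L\cdot t^j$, whereas the classes in $H^1(\overline{P}^+, M_-)$, $H^2(\overline{P}^+, M_-)$ are represented by locally analytic function classes like $\mathbf 1_\zpe\delta\otimes\delta$ and $[x^{i+1}]$ (Lemmas \ref{cohom1-}, \ref{cohom2-}, \ref{cohom3-}). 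One checks via the boundary map of \eqref{shorteaea} — lift a cocycle valued in $M_-$ to $M$, apply $d$, land in $M_+$ — that the image lands in the ``polynomial dual'' part and matches up with the generators of $H^2(\overline{P}^+, M_+)$, $H^3(\overline{P}^+,M_+)$; this is exactly the kind of explicit cocycle chase already performed in \cite[\S 5]{colmez2015} and can be imported. Alternatively, and more robustly, I would invoke that the Euler characteristic is additive: $\sum_j (-1)^j \dim H^j(\overline{P}^+, M) = \sum_j (-1)^j \dim H^j(\overline{P}^+, M_+) + \sum_j(-1)^j \dim H^j(\overline{P}^+, M_-)$, which in case (3) forces $-(\text{unknown}) $ with the stated answer $0-1+3-2 = 0 = (1-3+3-1)+(0-1+1-0)$, pinning down $\dim H^1(M)-\dim H^2(M) = 1$; combined with the surjectivity $H^2(\overline{P}^+,M_-)\twoheadrightarrow H^3(\overline{P}^+,M_+)$ (both one-dimensional, and the connecting map nonzero since otherwise $H^3(\overline{P}^+,M)\neq 0$ would contradict... — here one does need one genuine input), this determines everything. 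I expect the cleanest writeup combines the Euler characteristic constraint with a single explicit verification that the connecting map $H^2(\overline{P}^+, M_-) \to H^3(\overline{P}^+, M_+)$ is an isomorphism in cases (3) (and that $H^3(\overline{P}^+, M) = 0$), which is the one place the abstract long exact sequence genuinely needs supplementing.
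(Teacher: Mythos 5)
Your overall strategy is the same as the paper's: run the long exact sequence in $\overline{P}^+$-cohomology attached to $0 \to \Robba^+(\delta_1,\delta_2) \to \Robba(\delta_1,\delta_2) \to \Robba^-(\delta_1,\delta_2) \to 0$ and feed in the dimensions already computed for the two outer modules; your handling of cases (1) and (4), where all $H^j(\overline{P}^+,\Robba^+(\delta_1,\delta_2))$ vanish so that $H^j(\overline{P}^+,M)\cong H^j(\overline{P}^+,\Robba^-(\delta_1,\delta_2))$, is exactly the paper's argument.

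The gap is in cases (2) and (3), and your proposed ``robust'' shortcut does not close it. Writing $r_1$ and $r_2$ for the ranks of the connecting maps $\partial_1\colon H^1(\overline{P}^+,M_-)\to H^2(\overline{P}^+,M_+)$ and $\partial_2\colon H^2(\overline{P}^+,M_-)\to H^3(\overline{P}^+,M_+)$, in case (3) the sequence gives $\dim H^1(\overline{P}^+,M)=4-r_1$, $\dim H^2(\overline{P}^+,M)=4-r_1-r_2$ and $\dim H^3(\overline{P}^+,M)=1-r_2$. The Euler characteristic is automatically additive along any long exact sequence, so it adds no information; and verifying only that $\partial_2$ is an isomorphism (equivalently $H^3(\overline{P}^+,M)=0$) fixes $r_2=1$ but still leaves $(\dim H^1,\dim H^2)\in\{(4,3),(3,2)\}$. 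So the ``single explicit verification'' you end with is not sufficient: you also need $r_1=1$, i.e.\ that the restriction map $H^1(\overline{P}^+,M)\to H^1(\overline{P}^+,M_-)$ vanishes. This is precisely the second input the paper imports, citing \cite[Corollaire 5.23(ii)]{colmez2015} for the vanishing of $H^1(M)\to H^1(M_-)$ and the arguments of \cite[Th\'eor\`eme 5.16]{colmez2015} for the vanishing of $H^3(M_+)\to H^3(M)$. Your first suggestion (an explicit boundary-map computation identifying the image with the polynomial-dual generators) is the right kind of argument, but it is only sketched, and the guiding heuristic that the connecting maps are ``as injective/surjective as the target dimensions allow'' cannot be taken as a principle: in case (2), with the dimensions recorded in Proposition \ref{cohomfinal} for $\Robba^+(\delta_1,\delta_2)$ at $\delta_1\delta_2^{-1}=\mathbf{1}_\qpe$, the announced answer $1,2,2,0$ forces $\partial_1=0$ rather than $\partial_1$ injective, so each connecting map genuinely has to be computed (or quoted), not guessed from dimension counts.
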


\begin{proof}
For a $\overline{P}^+$-module $N$ we note, for simplicity $H^i(N) = H^i(\overline{P}^+, N)$. Consider the long exact sequence on cohomology associated to the sequence of Equation \ref{shorteaea}
\begin{align*} 0 &\to H^0(M_+) \to H^0(M) \to H^0(M_-) \to H^1(M_+) \to H^1(M) \to H^1(M_-) \\
& \to H^2(M_+) \to H^2(M) \to H^2(M_-) \to H^3(M_+) \to H^3(M) \to H^3(M_-) \to 0. \end{align*} Recall that we have already calculated (cf. Lemmas \ref{cohom0-}, \ref{cohom1-}, \ref{cohom2-}, \ref{cohom3-}, \ref{cohom0+}, , \ref{cohom1+}, \ref{cohom2+}, \ref{cohom3+})all of the $H^j(M_\pm)$.

If $\delta_1 \delta_2^{-1} \notin  \{ x^{-i}, i \in \N \}$, then $H^j(M_+)$ for all $j$, which implies that $H^j(M) = H^j(M_-)$.

If $\delta_1 \delta_2^{-1} = x^{-i}$, $i \in \N$, then $H^0(M_-) = H^3(M_-) = 0$ and the map $H^1(M) \to H^1(M_-)$ is the zero map (cf. \cite[Corollaire 5.23(ii)]{colmez2015}, which is independent of \cite[Lemme 5.21]{colmez2015}). We hence have $H^1(M) = H^1(M_+)$, and an exact sequence
\[ 0 \to H^1(M_-) \to H^2(M_+) \to H^2(M) \to H^2(M_-) \to H^3(M_+) \to H^3(M) \to 0. \] By the same arguments as the ones in the proof of \cite[Th\'eor\`eme 5.16]{colmez2015}, we can show that in this case the map $H^3(M_+) \to H^3(M)$ is the zero map, hence we deduce $H^3(M) = 0$ and $dim_L H^2(M) = 2$, which completes the proof.
\end{proof}

\section{A relative cohomology isomorphism} \label{sec:extgrpact}

\begin{defi}\label{def:qausreg}
We call a character $\delta: \qpe \to A^{\times}$ regular if pointwise (meaning the reduction for each maximal ideal $\mathfrak{m} \subset A$) it is never of the form $\chi x^{i}$ or $x^{-i}$ for some $i \geq 0$. 
\end{defi}

\begin{rema}\label{rem:qausreg}
By \cite[Corollaire 2.11]{chen2013}, if $\delta$ is regular then $H^{2}(A^+, \mathscr{R}_{A}(\delta)) = 0$. Moreover in the setting of a point ($A$ is a finite extension of $\qp$), $\delta_1\delta_2^{-1}: \qpe \to L^\times$ is regular implies the pair $(\delta_1,\delta_2)$ is generic in the sense of \cite{colmez2015}. 
\end{rema}

The following is a relative version of \cite[Proposition 5.18]{colmez2015}. 

\begin{prop} \label{3.25}
Suppose $A$ is reduced. Let $\delta_{1}$, $\delta_{2}: \qpe \rightarrow A^{\times}$ such that $\delta_{1}\delta_{2}^{-1}$ is regular. Then the restriction morphism from $\overline{P}^{+}$ to $A^+$, induces a surjection:
$$H^{1}(\overline{P}^{+}, \mathscr{R}_{A}(\delta_1, \delta_2)) \rightarrow H^{1}(A^+, \mathscr{R}_{A}(\delta_{1}\delta_{2}^{-1})).$$
\end{prop}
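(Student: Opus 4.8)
The plan is to build the candidate extension class in $H^1(\overline{P}^+, \mathscr{R}_A(\delta_1,\delta_2))$ directly as the image of a given class in $H^1(A^+, \mathscr{R}_A(\delta_1\delta_2^{-1}))$, and then verify that the restriction map sends it back to the class we started with. Since $A$ is reduced and $\delta_1\delta_2^{-1}$ is regular, Proposition \ref{HiRreg} (together with Remark \ref{rem:recchenres}) tells us that $H^1(A^+, \mathscr{R}_A(\delta_1\delta_2^{-1}))$ is a free $A$-module of rank $1$; fix a generator $c$, represented by an analytic $1$-cocycle on $A^+$ with values in $\mathscr{R}_A(\delta_1\delta_2^{-1}) = \mathscr{R}_A(\delta_1,\delta_2)$ (as $A^+$-modules). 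The first task is to extend $c$ to a $1$-cocycle on all of $\overline{P}^+$, i.e. to specify its value on $\tau = \matrice 1 0 p 1$ compatibly with the group relations $\varphi\gamma = \gamma\varphi$, $\gamma\tau = \tau^{a^{-1}}\gamma$, $\varphi\tau^p = \tau\varphi$. Using the Koszul-type complex $\mathscr{C}_{\tau,\varphi,\gamma}$ of Lemma \ref{lem:comcohg}, extending $c$ amounts to solving, for the pair $(c(\varphi), c(\gamma)) \in \mathscr{R}_A(\delta_1,\delta_2)^{\oplus 2}$ satisfying the $A^+$-cocycle relation, for an element $c(\tau)$ making $(c(\tau), c(\varphi), c(\gamma))$ a $1$-cocycle; equivalently one must show that a certain obstruction in the twisted complex $\mathscr{C}_{\varphi,\gamma}^{\mathrm{twist}}$ vanishes.

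The cleanest way to organize this is through the distinguished triangle $\mathscr{C}_{\tau,\varphi,\gamma} \to \mathscr{C}_{\varphi,\gamma} \xrightarrow{1-\tau} \mathscr{C}_{\varphi,\gamma}^{\mathrm{twist}}$ from the proof of Lemma \ref{lem:comcohg}, which after identification reads $R^{\overline{P}^+}(M) \to R^{A^+}(M) \xrightarrow{1-\tau} R^{A^+}(M^*)$, where $M^* = \mathscr{R}_A(\delta_1,\delta_2)$ with the twisted $(\widetilde\varphi,\widetilde\gamma)$-action. The long exact sequence then gives
\[
H^1(\overline{P}^+, M) \to H^1(A^+, M) \xrightarrow{1-\tau} H^1(A^+, M^*),
\]
so surjectivity of restriction is equivalent to the vanishing of the connecting map $1-\tau\colon H^1(A^+, M) \to H^1(A^+, M^*)$, i.e. to showing that $1-\tau$ kills the generator $c$ in the appropriate sense. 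Here the twisted module $M^* = \mathscr{R}_A(\delta_1,\delta_2)^*$, as an $A^+$-module, is $\mathscr{R}_A^{\mathrm{twist}}$ with $\widetilde\varphi = \varphi\delta_p$ and $\widetilde\gamma = \gamma\delta_a$; I would identify it and compute $H^1(A^+, M^*)$ — the regularity hypothesis should force this group to be small (ideally forcing the relevant component to vanish, or at least forcing the image of $1-\tau$ to land in a torsion-free quotient where the reduction-to-points argument applies).

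The main obstacle is precisely controlling the connecting map $1-\tau\colon H^1(A^+,M) \to H^1(A^+,M^*)$. My strategy here is reduction to the case of a point, exactly as in the surrounding sections: all the groups involved are finite $A$-modules (by the finiteness theorem of \S\ref{sec:maincofinza}), $\mathscr{R}_A$ is flat over $A$ (Lemma \ref{flatnessR}), and the $\mathrm{Tor}$-spectral sequence $\mathrm{Tor}_{-p}(H^q(A^+, \mathscr{R}_A \otimes -), A/\mathfrak m) \Rightarrow H^{p+q}(A^+, \mathscr{R}_{A/\mathfrak m} \otimes -)$ lets one compare fibers; since $A$ is reduced one then concludes from the pointwise statement, which is \cite[Proposition 5.18]{colmez2015} (whose surjectivity assertion, in the regular/generic case, is exactly what is invoked in Proposition \ref{cohomfinal2}). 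Concretely: by freeness of $H^1(A^+,M)$ and Nakayama it suffices to check that for every maximal ideal $\mathfrak m$, the reduction $c \bmod \mathfrak m$ lies in the image of restriction from $H^1(\overline{P}^+, \mathscr{R}_{A/\mathfrak m}(\delta_1,\delta_2))$, which is \cite[Proposition 5.18]{colmez2015} applied to the (automatically generic, by Remark \ref{rem:qausreg}) pair of characters obtained by reduction; compatibility of all the Koszul complexes with base change $-\otimes_A^{\mathbf L} A/\mathfrak m$ then upgrades this to surjectivity over $A$. The subtle point requiring care is that the reduction-compatibility must be checked at the level of the triangle $\mathscr{C}_{\tau,\varphi,\gamma}\to\mathscr{C}_{\varphi,\gamma}\to\mathscr{C}_{\varphi,\gamma}^{\mathrm{twist}}$, and that the cokernel of restriction, a priori a finite $A$-module, vanishes after every reduction — hence vanishes since $A$ is reduced — which is where reducedness of $A$ is genuinely used (this hypothesis will be removed later by the nilpotence-index induction of Theorem \ref{thm:resmincomp}).
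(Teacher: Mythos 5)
Your reformulation of surjectivity via the triangle $\mathscr{C}_{\tau,\varphi,\gamma}\to\mathscr{C}_{\varphi,\gamma}\to\mathscr{C}_{\varphi,\gamma}^{\mathrm{twist}}$ is fine and is essentially interchangeable with the paper's setup (the paper instead takes the cone of the restriction morphism $C^\bullet_{\overline{P}^+}(\Robba_A(\delta_1,\delta_2))\to C^\bullet_{A^+}(\Robba_A(\delta_1\delta_2^{-1}))$). The gap is in the reduction-to-points step. Nakayama reduces surjectivity of $f\colon H^1(\overline{P}^+,\Robba_A(\delta_1,\delta_2))\to H^1(A^+,\Robba_A(\delta_1\delta_2^{-1}))$ to surjectivity of $f\otimes A/\mathfrak m$, i.e.\ of $H^1(\overline{P}^+,\Robba_A(\delta_1,\delta_2))\otimes A/\mathfrak m \to H^1(A^+,\Robba_A(\delta_1\delta_2^{-1}))\otimes A/\mathfrak m$; what \cite[Proposition 5.18]{colmez2015} gives is surjectivity of $H^1(\overline{P}^+,\Robba_{A/\mathfrak m}(\delta_1,\delta_2))\to H^1(A^+,\Robba_{A/\mathfrak m}(\delta_1\delta_2^{-1}))$. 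These two statements are linked by base-change maps, and they are not the same. On the $A^+$ side the base-change map is an isomorphism because $H^2(A^+,\Robba_A(\delta_1\delta_2^{-1}))=0$ in the regular case (Proposition \ref{HiRreg}), but on the $\overline{P}^+$ side the map $H^1(\overline{P}^+,\Robba_A(\delta_1,\delta_2))\otimes A/\mathfrak m\to H^1(\overline{P}^+,\Robba_{A/\mathfrak m}(\delta_1,\delta_2))$ is a priori only injective: its cokernel is governed by $\Tor_1(H^2(\overline{P}^+,\Robba_A(\delta_1,\delta_2)),A/\mathfrak m)$, and this $H^2$ is \emph{not} zero — its fibers are $1$-dimensional (Proposition \ref{cohomfinal}). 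So knowing that $c\bmod\mathfrak m$ lifts to a $\overline{P}^+$-class over the residue field does not tell you it is the reduction of a $\overline{P}^+$-class over $A$, and your Nakayama argument does not close. Relatedly, your hope that regularity makes the twisted group $H^1(A^+,M^*)$ small is false: at every point it is $1$-dimensional (the same long exact sequence identifies it with $H^2(\overline{P}^+,\Robba_L(\delta_1,\delta_2))$), so the whole difficulty is concentrated in showing that the connecting map into it vanishes.

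This is exactly where the paper's extra ingredients come in, and where reducedness is genuinely used — not where you place it. The paper forms the cone $C^\bullet$ of the restriction morphism, shows $H^2(C^\bullet)=H^3(C^\bullet)=0$ (regularity kills $H^2$ of the $A^+$-complex, and $H^3(\overline{P}^+,\Robba_A(\delta_1,\delta_2))=0$), so that $H^1(C^\bullet)$ commutes with reduction; at each point the boundary map $\beta_1$ is an isomorphism between $1$-dimensional spaces, the target being $H^2(\overline{P}^+,\Robba_{A/\mathfrak m}(\delta_1,\delta_2))$ — this uses the hard computation $\dim_L H^2(\overline{P}^+,\Robba_L(\delta_1,\delta_2))=1$ of Proposition \ref{cohomfinal}, which your proposal never invokes. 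Then, since $A$ is reduced and the finite modules $H^1(C^\bullet)$ and $H^2(\overline{P}^+,\Robba_A(\delta_1,\delta_2))$ have constant fiber rank $1$, they are locally free of rank $1$ (\cite[Lemma 2.1.8(1)]{kedlaya2014}); a Nakayama surjection between locally free rank-$1$ modules is an isomorphism, so $\gamma_1$ is injective and hence $\gamma=0$, which is the surjectivity. Your remark that "the cokernel vanishes after every reduction, hence vanishes since $A$ is reduced" misattributes the role of reducedness: finite module with all fibers zero is zero by Nakayama alone; reducedness is needed to pass from constant fiber dimension to local freeness, and that passage (together with the dimension-one computation of $H^2(\overline{P}^+,\cdot)$ at points) is the missing step in your argument.
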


\begin{proof}
We work at the derived level. For the sake of brevity let $C^{\bullet}_{\overline{P}^+}$ denote the Koszul complex $\mathscr{C}_{\tau, \varphi, \gamma}$ of \S \ref{sec:koscomcoh}. Similarly let $C^{\bullet}_{A^+}$ denote the complex $\mathscr{C}_{\varphi, \gamma}$. We have a canonical morphism:
$$C^{\bullet}_{\overline{P}^{+}}(\mathscr{R}_{A}(\delta_1, \delta_2)) \rightarrow C^{\bullet}_{A^+}(\mathscr{R}_{A}(\delta_{1}\delta_{2}^{-1}))$$
in $\mathcal{D}^{-}_{\mathrm{pc}}(A)$. Let 
$$C^{\bullet}:= \mathrm{Cone}(C^{\bullet}_{\overline{P}^{+}}(\mathscr{R}_{A}(\delta_1, \delta_2)) \rightarrow C^{\bullet}_{A^+}(\mathscr{R}_{A}(\delta_{1}\delta_{2}^{-1})))$$
and note that $C^{\bullet} \in \mathcal{D}^{-}_{\mathrm{pc}}(A)$ by Remark \ref{3.5}. The distinguished triangle
$$C^{\bullet}_{\overline{P}^{+}}(\mathscr{R}_{A}(\delta_1, \delta_2)) \rightarrow C^{\bullet}_{A^+}(\mathscr{R}_{A}(\delta_{1}\delta_{2}^{-1})) \rightarrow C^{\bullet}$$
induces a long exact sequence in cohomology
$$\cdots \rightarrow H^{1}(C^{\bullet}_{\overline{P}^{+}}(\mathscr{R}_{A}(\delta_1, \delta_2))) \rightarrow H^{1}(C^{\bullet}_{A^+}(\mathscr{R}_{A}(\delta_{1}\delta_{2}^{-1}))) \rightarrow H^{1}(C^{\bullet}) \rightarrow \cdots$$
Moreover since $\mathscr{R}_{A}(\delta_1, \delta_2)$ is a flat $A$-module and $\mathscr{R}_{A}(\delta_1, \delta_2) \otimes_{A} A/\mathfrak{m} \cong \mathscr{R}_{A/\mathfrak{m}}(\delta_1, \delta_2)$ for any maximal ideal $\mathfrak{m} \subset A$, it follows that
$$C^{\bullet}_{\overline{P}^{+}}(\mathscr{R}_{A}(\delta_1, \delta_2)) \otimes^{\mathbf{L}} A/\mathfrak{m} \cong C^{\bullet}_{\overline{P}^{+}}(\mathscr{R}_{A/\mathfrak{m}}(\delta_1, \delta_2)).$$
Similarly we have
$$C^{\bullet}_{A^+}(\mathscr{R}_{A}(\delta_{1}\delta_{2}^{-1})) \otimes^{\mathbf{L}} A/\mathfrak{m} \cong C^{\bullet}_{A^+}(\mathscr{R}_{A/\mathfrak{m}}(\delta_{1}\delta_{2}^{-1})).$$
Hence the morphism $A \rightarrow A/\mathfrak{m}$ induces a morphism of distinguished triangles
which by the functoriality of the truncation operators gives a morphism of long exact sequences
$$
\begin{tikzcd} [row sep = large, column sep = small]
\cdots \arrow[r] &
H^{1}(C^{\bullet}_{\overline{P}^{+}}(\mathscr{R}_{A}(\delta_1, \delta_2))) \arrow[r] \arrow[d] &
H^{1}(C^{\bullet}_{A^+}(\mathscr{R}_{A}(\delta_{1}\delta_{2}^{-1}))) \arrow[r, "\gamma"] \arrow[d] &
H^{1}(C^{\bullet}) \arrow[d] \arrow[r, "\gamma_1"] &
\cdots \\
\cdots \arrow[r] &
H^{1}(C^{\bullet}_{\overline{P}^{+}}(\mathscr{R}_{A/\mathfrak{m}}(\delta_1, \delta_2))) \arrow[r, "\alpha"] &
H^{1}(C^{\bullet}_{A^+}(\mathscr{R}_{A/\mathfrak{m}}(\delta_{1}\delta_{2}^{-1}))) \arrow[r, "\beta"] &
H^{1}(C^{\bullet} \otimes^{\mathbf{L}} A/\mathfrak{m}) \arrow[r, "\beta_1"] &
\cdots \end{tikzcd}
$$

By \cite[Proposition 5.18]{colmez2015} (see also Proposition \ref{cohomfinal}), $\alpha$ is an isomorphism and so $\beta$ is the zero morphism. We claim that $\gamma$ is the zero morphism as well. To do this, we take advantage of the spectral sequence:
$$\Tor_{-p}(H^{q}(C^{\bullet}), A/\mathfrak{m}) \implies H^{p+q}(C^{\bullet} \otimes^{\mathbf{L}} A/\mathfrak{m}),$$
whose 2nd page takes the form

$$
\begin{tikzcd} [row sep = large, column sep = large] 
0 \arrow[rrd] & 
0 &  
0 \\
\Tor_{2}(H^{2}(C^{\bullet}),A/\mathfrak{m}) \arrow[rrd] &
\Tor_{1}(H^{2}(C^{\bullet}),A/\mathfrak{m}) &
H^{2}(C^{\bullet}) \otimes_{A} A/\mathfrak{m} \\
\Tor_{2}(H^{1}(C^{\bullet}),A/\mathfrak{m}) \arrow[rrd] &
\Tor_{1}(H^{1}(C^{\bullet}),A/\mathfrak{m}) &
H^{1}(C^{\bullet}) \otimes_{A} A/\mathfrak{m} \\
\Tor_{2}(H^{0}(C^{\bullet}),A/\mathfrak{m}) &
\Tor_{1}(H^{0}(C^{\bullet}),A/\mathfrak{m}) &
H^{0}(C^{\bullet}) \otimes_{A} A/\mathfrak{m}
\end{tikzcd}
$$

The long exact sequence in cohomology
$$\cdots \rightarrow H^{3}(C^{\bullet}_{\overline{P}^{+}}(\mathscr{R}_{A}(\delta_1, \delta_2))) \rightarrow H^{3}(C^{\bullet}_{A^+}(\mathscr{R}_{A}(\delta_{1}\delta_{2}^{-1}))) = 0 \rightarrow H^{3}(C^{\bullet}) \rightarrow 0 \rightarrow \cdots$$
implies that $H^{3}(C^{\bullet}) = 0$ hence explaining the top row. Moreover since $\delta_{1}\delta_{2}^{-1}$ is regular, $H^{2}(C^{\bullet}_{A^+}(\mathscr{R}_{A}(\delta_{1}\delta_{2}^{-1}))) = 0$ by \cite[Corollaire 2.11]{chen2013} (see also Remark \ref{rem:recchenres}). By Proposition \ref{cohomfinal}, $H^{3}(C^{\bullet}_{\overline{P}^{+}}(\mathscr{R}_{A}(\delta_1, \delta_2))) = 0$ and thus from the long exact sequence
$$\cdots \rightarrow 0 \rightarrow H^{2}(C^{\bullet}) \rightarrow  H^{3}(C^{\bullet}_{\overline{P}^{+}}(\mathscr{R}_{A}(\delta_1, \delta_2))) = 0 \rightarrow 0 \rightarrow \cdots$$
we deduce that $H^{2}(C^{\bullet}) = 0$. Hence the spectral sequence degenerates at the 2nd page in degree $1$ cohomology and so $H^{1}(C^{\bullet} \otimes^{\mathbf{L}} A/\mathfrak{m}) = H^{1}(C^{\bullet}) \otimes_{A} A/\mathfrak{m}$. Similarly the spectral sequence
$$\Tor_{-p}(H^{q}(C^{\bullet}_{\overline{P}^{+}}(\mathscr{R}_{A}(\delta_{1}\delta_{2}^{-1}))), A/\mathfrak{m}) \implies H^{p+q}(C^{\bullet}_{\overline{P}^{+}}(\mathscr{R}_{A/\mathfrak{m}}(\delta_{1}\delta_{2}^{-1})))$$
implies $H^{2}(C^{\bullet}_{\overline{P}^{+}}(\mathscr{R}_{A/\mathfrak{m}}(\delta_{1}\delta_{2}^{-1}))) = H^{2}(C^{\bullet}_{\overline{P}^{+}}(\mathscr{R}_{A}(\delta_{1}\delta_{2}^{-1}))) \otimes_{A} A/\mathfrak{m}$. Thus in the diagram
$$
\begin{tikzcd} [row sep = large, column sep = small]
\cdots \arrow[r] &
H^{1}(C^{\bullet}_{A^+}(\mathscr{R}_{A}(\delta_{1}\delta_{2}^{-1}))) \arrow[r, "\gamma"] \arrow[d] &
H^{1}(C^{\bullet}) \arrow[d] \arrow[r, "\gamma_1"] &
H^{2}(C^{\bullet}_{\overline{P}^{+}}(\mathscr{R}_{A}(\delta_1, \delta_2))) \arrow[r] \arrow[d] &
0 \\
\cdots \arrow[r] &
H^{1}(C^{\bullet}_{A^+}(\mathscr{R}_{A/\mathfrak{m}}(\delta_{1}\delta_{2}^{-1}))) \arrow[r, "\beta"] &
H^{1}(C^{\bullet} \otimes^{\mathbf{L}} A/\mathfrak{m}) \arrow[r, "\beta_1"] &
H^{2}(C^{\bullet}_{\overline{P}^{+}}(\mathscr{R}_{A/\mathfrak{m}}(\delta_1, \delta_2))) \arrow[r] &
0 
\end{tikzcd}
$$
we have $\beta_{1} = \gamma_{1} \otimes A/\mathfrak{m}$. By Proposition \ref{cohomfinal2}, $\beta_1$ is an isomorphism of dimension 1 vector spaces over $A/\mathfrak{m}$ for every $\mathfrak{m} \subset A$. Thus $\gamma_{1}$ is a surjective morphism of locally free $A$-modules, cf. \cite[Lemma 2.1.8(1)]{kedlaya2014}, which are locally of dimension 1. Hence it is an isomorphism and so $\gamma$ is the zero morphism, as desired. This completes the proof.   
\end{proof}

\begin{prop} \label{3.26}
Suppose $A$ is reduced. Let $\delta_{1}$, $\delta_{2}: \qpe \rightarrow A^{\times}$ such that $\delta_{1}\delta_{2}^{-1}$ is regular. Then the restriction morphism from $\overline{P}^{+}$ to $A^+$, induces an injection:
$$H^{1}(\overline{P}^{+}, \mathscr{R}_{A}(\delta_1, \delta_2)) \rightarrow H^{1}(A^+, \mathscr{R}_{A}(\delta_{1}\delta_{2}^{-1})).$$
\end{prop}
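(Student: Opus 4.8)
The plan is to mirror the proof of Proposition \ref{3.25}, but now extracting \emph{injectivity} instead of surjectivity from the same commutative ladder of long exact sequences. So I would again work at the derived level with the Koszul complexes $C^\bullet_{\overline{P}^+}(\mathscr{R}_A(\delta_1,\delta_2))$ and $C^\bullet_{A^+}(\mathscr{R}_A(\delta_1\delta_2^{-1}))$ of \S\ref{sec:koscomcoh}, set $C^\bullet = \mathrm{Cone}(C^\bullet_{\overline{P}^+}(\mathscr{R}_A(\delta_1,\delta_2)) \to C^\bullet_{A^+}(\mathscr{R}_A(\delta_1\delta_2^{-1})))$, which lies in $\mathcal{D}^-_{\mathrm{pc}}(A)$ by Remark \ref{3.5}, and reduce modulo each maximal ideal $\mathfrak{m} \subset A$ using flatness of $\mathscr{R}_A(\delta_1,\delta_2)$ (Lemma \ref{flatnessR}) and the base-change identities $\mathscr{R}_A(\delta_1,\delta_2) \otimes_A A/\mathfrak{m} \cong \mathscr{R}_{A/\mathfrak{m}}(\delta_1,\delta_2)$ etc., exactly as in Proposition \ref{3.25}.

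The key input I would isolate is the vanishing $H^0(C^\bullet) = H^1(C^\bullet)=H^2(C^\bullet)=0$ (the last two were already established in the proof of Proposition \ref{3.25} from regularity, Proposition \ref{cohomfinal}, and Remark \ref{rem:recchenres}; the vanishing of $H^0(C^\bullet)$ follows from the fact that $H^0(\overline{P}^+,\mathscr{R}_A(\delta_1,\delta_2)) \to H^0(A^+,\mathscr{R}_A(\delta_1\delta_2^{-1}))$ is an isomorphism, both sides being computed by $\varphi$-, $\gamma$- and $\tau$-invariants, and $\tau$ acting trivially on the $(\varphi,\gamma)$-invariants when $\delta_1\delta_2^{-1}$ is regular — or, more robustly, from Proposition \ref{cohomfinal} at each point together with the $\mathrm{Tor}$-spectral sequence). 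Granting $H^i(C^\bullet)=0$ for $i\le 2$, the long exact sequence
\[
\cdots \to H^0(C^\bullet) \to H^1(C^\bullet_{\overline{P}^+}(\mathscr{R}_A(\delta_1,\delta_2))) \xrightarrow{\ \mathrm{res}\ } H^1(C^\bullet_{A^+}(\mathscr{R}_A(\delta_1\delta_2^{-1}))) \to H^1(C^\bullet) \to \cdots
\]
forces $\mathrm{res}$ to be injective, since it is sandwiched between the zero groups $H^0(C^\bullet)$ (whose image is the kernel of $\mathrm{res}$) — indeed exactness at $H^1(C^\bullet_{\overline{P}^+})$ reads $\ker(\mathrm{res}) = \mathrm{im}(H^0(C^\bullet) \to H^1(C^\bullet_{\overline{P}^+})) = 0$. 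By Lemma \ref{lem:comcohg} this is precisely the claimed injection $H^1(\overline{P}^+,\mathscr{R}_A(\delta_1,\delta_2)) \to H^1(A^+,\mathscr{R}_A(\delta_1\delta_2^{-1}))$.

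The main obstacle is establishing $H^1(C^\bullet)=0$ rigorously. Note that Proposition \ref{cohomfinal2} only gives surjectivity (not injectivity) of $H^1(\overline{P}^+,\mathscr{R}(\delta_1,\delta_2)) \to H^1(A^+,\mathscr{R}(\delta_1,\delta_2))$ at a point, so it alone does not pin down $H^1(C^\bullet \otimes^{\mathbf{L}} A/\mathfrak{m})$; the dimension count in the regular case from Proposition \ref{cohomfinal} shows $\dim_L H^1(\overline{P}^+,\mathscr{R}_L(\delta_1,\delta_2)) = \dim_L H^1(A^+,\mathscr{R}_L(\delta_1\delta_2^{-1})) = 1$, so the surjection of Proposition \ref{cohomfinal2} is in fact an isomorphism at every point; hence the pointwise long exact sequence of the cone gives $H^1(C^\bullet \otimes^{\mathbf{L}} A/\mathfrak{m}) = 0$ for all $\mathfrak{m}$ (using in addition $H^0(C^\bullet\otimes^{\mathbf{L}}A/\mathfrak{m})=0$ and $H^2(C^\bullet_{A^+}(\mathscr{R}_{A/\mathfrak{m}}(\delta_1\delta_2^{-1})))=0$ by regularity via Remark \ref{rem:recchenres}). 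Then the $\mathrm{Tor}$-spectral sequence $\mathrm{Tor}_{-p}(H^q(C^\bullet), A/\mathfrak{m}) \Rightarrow H^{p+q}(C^\bullet \otimes^{\mathbf{L}} A/\mathfrak{m})$, together with the already-proven $H^2(C^\bullet)=0$ which kills the potential $\mathrm{Tor}_1$-contribution in total degree $1$, yields $H^1(C^\bullet) \otimes_A A/\mathfrak{m} = H^1(C^\bullet \otimes^{\mathbf{L}} A/\mathfrak{m}) = 0$ for every maximal $\mathfrak{m}$; since $H^1(C^\bullet)$ is a finite $A$-module, Nakayama gives $H^1(C^\bullet)=0$. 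The analogous, easier argument handles $H^0(C^\bullet)$. I would present these reductions carefully, reusing verbatim the spectral-sequence bookkeeping already set up in the proof of Proposition \ref{3.25}.
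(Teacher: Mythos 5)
Your reduction of the statement to the vanishing of $H^0(C^{\bullet})$ is exactly the paper's strategy, but the way you propose to control the Tor-terms rests on a false vanishing. You claim that the pointwise long exact sequence of the cone gives $H^1(C^{\bullet}\otimes^{\mathbf{L}}A/\mathfrak{m})=0$ and hence, by Nakayama, $H^1(C^{\bullet})=0$. This is not what the sequence gives: since $\alpha$ is surjective and $H^2(C^{\bullet}_{A^+}(\mathscr{R}_{A/\mathfrak{m}}(\delta_1\delta_2^{-1})))=0$ by regularity, the sequence identifies $H^1(C^{\bullet}\otimes^{\mathbf{L}}A/\mathfrak{m})$ with $H^2(\overline{P}^{+},\mathscr{R}_{A/\mathfrak{m}}(\delta_1,\delta_2))$, which in the regular case is \emph{one-dimensional}, not zero (Proposition \ref{cohomfinal}; this is precisely Corollary \ref{cor:cosdiminto}). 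So $H^1(C^{\bullet})$ is not zero --- it is locally free of rank one, as was in fact already established in the proof of Proposition \ref{3.25}, where $\gamma_1\colon H^1(C^{\bullet})\to H^2(C^{\bullet}_{\overline{P}^{+}}(\mathscr{R}_A(\delta_1,\delta_2)))$ is shown to be an isomorphism of locally free rank-one modules via \cite[Lemma 2.1.8(1)]{kedlaya2014} and reducedness of $A$; your parenthetical ``the last two were already established in the proof of Proposition \ref{3.25}'' is therefore wrong for $H^1(C^{\bullet})$. Your first suggested argument for $H^0(C^{\bullet})=0$ (from the degree-zero restriction map being an isomorphism) is also circular: exactness of $H^0(C^{\bullet}_{A^+})\to H^0(C^{\bullet})\to H^1(C^{\bullet}_{\overline{P}^{+}})\xrightarrow{\mathrm{res}} H^1(C^{\bullet}_{A^+})$ shows that the vanishing of $H^0(C^{\bullet})$ already encodes the injectivity of $\mathrm{res}$ that you are trying to prove, so it cannot follow from degree-zero information alone.

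The repair is that your Tor/Nakayama mechanism only needs $\Tor_1(H^1(C^{\bullet}),A/\mathfrak{m})=0$, not $H^1(C^{\bullet})=0$, and this is exactly how the paper argues: since $H^1(C^{\bullet})$ is locally free (proof of Proposition \ref{3.25}) and $H^2(C^{\bullet})=0$, the only contribution of $\Tor_{-p}(H^q(C^{\bullet}),A/\mathfrak{m})\Rightarrow H^{p+q}(C^{\bullet}\otimes^{\mathbf{L}}A/\mathfrak{m})$ in total degree $0$ is $H^0(C^{\bullet})\otimes_A A/\mathfrak{m}$, while the abutment in degree $0$ vanishes because at each point $\alpha$ is injective and $H^0(A^+,\mathscr{R}_{A/\mathfrak{m}}(\delta_1\delta_2^{-1}))=0$ in the regular case (Proposition \ref{HiRreg}). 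Nakayama then gives $H^0(C^{\bullet})=0$, and injectivity follows from the long exact sequence exactly as you state. So replace your claimed vanishing of $H^1(C^{\bullet})$ by its local freeness and the argument closes; as written, that key step fails.
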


\begin{proof}
Keeping the notation used in the proof of Proposition \ref{3.25}, since $H^{1}(C^{\bullet})$ is locally free, $\Tor_{1}(H^{1}(C^{\bullet}),A/\mathfrak{m}) = 0$. But the spectral sequence 
$$\Tor_{-p}(H^{q}(C^{\bullet}), A/\mathfrak{m}) \implies H^{p+q}(C^{\bullet} \otimes^{\mathbf{L}} A/\mathfrak{m})$$
abuts to 0 in degree 0 as $\alpha$ is an isomorphism. This implies that $H^{0}(C^{\bullet}) \otimes A/\mathfrak{m} = 0$ for every maximal ideal $m \subset A$. By Nakayama's Lemma it follows that $H^{0}(C^{\bullet})$ is 0 and we deduce the result.  
\end{proof}

\begin{theo} \label{3.27}
Suppose $A$ is reduced. Let $\delta_{1}$, $\delta_{2}: \qpe \rightarrow A^{\times}$ such that $\delta_{1}\delta_{2}^{-1}$ is regular. Then the restriction morphism from $\overline{P}^{+}$ to $A^+$, induces an isomorphism:
$$H^{1}(\overline{P}^{+}, \mathscr{R}_{A}(\delta_1, \delta_2)) \rightarrow H^{1}(A^+, \mathscr{R}_{A}(\delta_{1}\delta_{2}^{-1})).$$
\end{theo}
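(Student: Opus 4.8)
The plan is to combine the two preceding propositions. Proposition \ref{3.25} shows that, under the hypotheses that $A$ is reduced and $\delta_1\delta_2^{-1}$ is regular, the restriction map $H^1(\overline{P}^+, \mathscr{R}_A(\delta_1,\delta_2)) \to H^1(A^+, \mathscr{R}_A(\delta_1\delta_2^{-1}))$ is surjective, while Proposition \ref{3.26} shows, under the same hypotheses, that it is injective. Since both statements refer to the same morphism — the one induced on first cohomology by the canonical map of pseudo-coherent Koszul complexes $C^\bullet_{\overline{P}^+}(\mathscr{R}_A(\delta_1,\delta_2)) \to C^\bullet_{A^+}(\mathscr{R}_A(\delta_1\delta_2^{-1}))$ of \S\ref{sec:koscomcoh} — nothing further needs to be checked: the map is bijective, hence an isomorphism of $A$-modules.

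For the reader's convenience I would recall the mechanism behind the two inputs, since it makes clear that no extra argument is needed. Writing $C^\bullet = \mathrm{Cone}(C^\bullet_{\overline{P}^+}(\mathscr{R}_A(\delta_1,\delta_2)) \to C^\bullet_{A^+}(\mathscr{R}_A(\delta_1\delta_2^{-1})))$, which lies in $\mathcal{D}^-_{\mathrm{pc}}(A)$ by Remark \ref{3.5}, the isomorphism $\alpha$ at every residue field $A/\mathfrak{m}$ (coming from \cite[Proposition 5.18]{colmez2015}, cf. Proposition \ref{cohomfinal}) together with the vanishing $H^2(C^\bullet) = H^3(C^\bullet) = 0$ (which uses the regularity of $\delta_1\delta_2^{-1}$, via \cite[Corollaire 2.11]{chen2013} and Proposition \ref{cohomfinal}) forces, through the $\mathrm{Tor}$-spectral sequence, both $H^0(C^\bullet)\otimes_A A/\mathfrak{m} = 0$ and $H^1(C^\bullet \otimes^{\mathbf{L}} A/\mathfrak{m}) = H^1(C^\bullet)\otimes_A A/\mathfrak{m}$. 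Nakayama then yields $H^0(C^\bullet) = 0$, which is exactly Proposition \ref{3.26}; and the identification of the connecting map $\gamma_1$ with a surjection of rank-one locally free $A$-modules that is an isomorphism at every point of $\mathrm{Sp}(A)$ (via Proposition \ref{cohomfinal2} and \cite[Lemma 2.1.8(1)]{kedlaya2014}) yields $H^1(C^\bullet) = 0$, which is Proposition \ref{3.25}. With $H^0(C^\bullet) = H^1(C^\bullet) = 0$, the long exact cohomology sequence of the defining triangle gives the asserted isomorphism.

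There is essentially no obstacle remaining at this stage: all the substance — the finiteness of $\overline{P}^+$-cohomology ensuring pseudo-coherence of $\mathscr{C}_{\tau,\varphi,\gamma}$, the explicit Lie-algebra computation of $\dim_L H^j(\overline{P}^+, \mathscr{R}_L(\delta_1,\delta_2))$ in the generic case, and the base-change/$\mathrm{Tor}$ formalism of \S\ref{sec:formdercat} — has already been carried out in the earlier sections and in Propositions \ref{3.25}–\ref{3.26}. The only point requiring a moment's care is that both propositions genuinely concern the \emph{same} restriction morphism and are applied under identical hypotheses ($A$ reduced, $\delta_1\delta_2^{-1}$ regular), which is immediate from their statements, so the present theorem is their formal conjunction.
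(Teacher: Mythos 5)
Your proof is correct and is exactly the paper's argument: Theorem \ref{3.27} is deduced there in one line as the conjunction of Proposition \ref{3.25} (surjectivity) and Proposition \ref{3.26} (injectivity), both applied to the same restriction morphism under the same hypotheses ($A$ reduced, $\delta_1\delta_2^{-1}$ regular). One small caveat in your expository recap only: Proposition \ref{3.25} is obtained from the vanishing of $\gamma$, i.e.\ from $\gamma_1$ being an isomorphism onto $H^{2}(C^{\bullet}_{\overline{P}^{+}}(\mathscr{R}_{A}(\delta_1,\delta_2)))$, not from the vanishing of $H^{1}(C^{\bullet})$ itself, which need not be zero.
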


\begin{proof}
This is a consequence of Propositions \ref{3.25} and \ref{3.26}.
\end{proof}

We are now ready to handle the case when $A$ is non-reduced. We begin by proving a slightly enhanced version of Theorem \ref{3.27}.  

\begin{prop} \label{3.28}
Suppose $A$ is reduced and $M$ a finite $A$-module (equipped with trivial $\overline{P}^{+}$-action). Let $\delta_{1}$, $\delta_{2}: \qpe \rightarrow A^{\times}$ such that $\delta_{1}\delta_{2}^{-1}$ is regular. Then the restriction morphism from $\overline{P}^{+}$ to $A^+$, induces an isomorphism:
$$H^{1}(\overline{P}^{+}, \mathscr{R}_{A}(\delta_1, \delta_2) \otimes_{A} M) \rightarrow H^{1}(A^+, \mathscr{R}_{A}(\delta_{1}\delta_{2}^{-1}) \otimes_{A} M).$$
\end{prop}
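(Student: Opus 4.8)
The plan is to deduce the statement from Theorem \ref{3.27} (the case $M=A$) by applying $-\otimes_A M$ to the whole derived-category picture used in the proofs of Propositions \ref{3.25} and \ref{3.26}. The first, elementary, point is that since $M$ carries the trivial $\overline{P}^+$-action and all the structure operators ($\tau$, $\varphi$, $\gamma$, $\delta_a$, $\delta_p$) entering the Koszul complexes are $A$-linear and act only through the first tensor factor, one has identifications of complexes $\mathscr{C}_{\tau,\varphi,\gamma}(\mathscr{R}_A(\delta_1,\delta_2)\otimes_A M)=\mathscr{C}_{\tau,\varphi,\gamma}(\mathscr{R}_A(\delta_1,\delta_2))\otimes_A M$ and, similarly, $\mathscr{C}_{\varphi,\gamma}(\mathscr{R}_A(\delta_1\delta_2^{-1})\otimes_A M)=\mathscr{C}_{\varphi,\gamma}(\mathscr{R}_A(\delta_1\delta_2^{-1}))\otimes_A M$. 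By Lemma \ref{lem:comcohg} and its $A^+$-analogue the left-hand sides compute $H^\ast(\overline{P}^+,\mathscr{R}_A(\delta_1,\delta_2)\otimes_A M)$ and $H^\ast(A^+,\mathscr{R}_A(\delta_1\delta_2^{-1})\otimes_A M)$, and the restriction morphism in the statement is the one induced by $\phi\otimes_A\mathrm{id}_M$, where $\phi\colon C^\bullet_{\overline{P}^+}(\mathscr{R}_A(\delta_1,\delta_2))\to C^\bullet_{A^+}(\mathscr{R}_A(\delta_1\delta_2^{-1}))$ is the morphism from the proof of Proposition \ref{3.25}. Hence its cone is $C^\bullet\otimes_A M$ with $C^\bullet=\mathrm{Cone}(\phi)$.

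Next I would extract from the proof of Theorem \ref{3.27} the full structure of $C^\bullet$ in the regular, reduced case: restriction is injective on $H^0$ (as $A^+\subset\overline{P}^+$), so $H^{-1}(C^\bullet)=0$; $H^0(C^\bullet)=0$ by Proposition \ref{3.26}; $H^2(C^\bullet)=H^3(C^\bullet)=0$; and the connecting morphism $\gamma_1\colon H^1(C^\bullet)\xrightarrow{\ \sim\ }H^2(C^\bullet_{\overline{P}^+}(\mathscr{R}_A(\delta_1,\delta_2)))$ of that proof is an \emph{isomorphism} of locally free $A$-modules of rank $1$ (here reducedness of $A$ enters, through the local-freeness criterion used in the proof of Proposition \ref{3.25}). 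By Lemma \ref{flatnessR} the modules $\mathscr{R}_A(\delta_1,\delta_2)$ and $\mathscr{R}_A(\delta_1\delta_2^{-1})$ are flat over $A$, so $C^\bullet_{\overline{P}^+}(\mathscr{R}_A(\delta_1,\delta_2))$, $C^\bullet_{A^+}(\mathscr{R}_A(\delta_1\delta_2^{-1}))$ and therefore $C^\bullet$ are bounded complexes of flat $A$-modules; consequently $-\otimes_A M$ on each of them computes $-\otimes^{\mathbf L}_A M$. Since moreover $C^\bullet$ is quasi-isomorphic to $H^1(C^\bullet)[-1]$ with $H^1(C^\bullet)$ flat, we get $C^\bullet\otimes_A M\simeq (H^1(C^\bullet)\otimes_A M)[-1]$, so $H^0(C^\bullet\otimes_A M)=H^2(C^\bullet\otimes_A M)=0$ and $H^1(C^\bullet\otimes_A M)=H^1(C^\bullet)\otimes_A M$.

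Finally I would run the long exact cohomology sequence of the triangle $C^\bullet_{\overline{P}^+}(\mathscr{R}_A(\delta_1,\delta_2))\otimes_A M\to C^\bullet_{A^+}(\mathscr{R}_A(\delta_1\delta_2^{-1}))\otimes_A M\to C^\bullet\otimes_A M\to[1]$. Injectivity of the restriction map on $H^1$ is immediate from $H^0(C^\bullet\otimes_A M)=0$. For surjectivity it suffices that the connecting map $\partial\colon H^1(C^\bullet\otimes_A M)\to H^2(C^\bullet_{\overline{P}^+}(\mathscr{R}_A(\delta_1,\delta_2))\otimes_A M)$ be injective. As $H^3(C^\bullet_{\overline{P}^+}(\mathscr{R}_A(\delta_1,\delta_2)))=0$ by Proposition \ref{cohomfinal}, the spectral sequence $\mathrm{Tor}^A_{-p}(H^q(C^\bullet_{\overline{P}^+}(\mathscr{R}_A(\delta_1,\delta_2))),M)\Rightarrow H^{p+q}(C^\bullet_{\overline{P}^+}(\mathscr{R}_A(\delta_1,\delta_2))\otimes_A M)$ degenerates in degree $2$ to $H^2(C^\bullet_{\overline{P}^+}(\mathscr{R}_A(\delta_1,\delta_2)))\otimes_A M$, with no $\mathrm{Tor}$ contribution, and likewise $H^1(C^\bullet\otimes_A M)=H^1(C^\bullet)\otimes_A M$ as above; by functoriality of the connecting morphism under $-\otimes_A M$, $\partial$ is then identified with $\gamma_1\otimes_A\mathrm{id}_M$, which is an isomorphism because $\gamma_1$ is. Hence $\partial$ is injective and the restriction map is an isomorphism. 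The only step requiring genuine care is the second one: one must verify that the proof of Theorem \ref{3.27} really delivers the sharper assertion that $C^\bullet$ is a shifted line bundle with $\gamma_1$ an \emph{isomorphism} (not merely that the previous connecting map $\gamma$ vanishes), since this is precisely what is needed for the picture to survive the non-exact functor $-\otimes_A M$.
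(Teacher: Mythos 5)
Your proof is correct, but it is organized differently from the paper's. The paper proves the proposition by re-running the fiberwise arguments of Propositions \ref{3.25} and \ref{3.26} with $C^{\bullet}$ replaced by $C^{\bullet}\otimes^{\mathbf{L}}_{A}M$: the only genuinely new input there is the check that at each maximal ideal $\mathfrak{m}$ the restriction map with coefficients in $M':=M\otimes_{A}A/\mathfrak{m}$ is still an isomorphism, which follows from flatness of $M'$ over the field $A/\mathfrak{m}$, the Tor spectral sequence and Colmez's result at a point; the rest of the Tor/Nakayama/local-freeness template is then repeated verbatim. You instead bypass any new point-level verification and deduce the statement formally from the $M=A$ case, by extracting from the proofs of Propositions \ref{3.25}--\ref{3.26} the sharper facts that $C^{\bullet}$ has cohomology concentrated in degree $1$, that $H^{1}(C^{\bullet})$ is locally free of rank $1$ (this is where reducedness enters, via constancy of the fibre dimensions and \cite[Lemma 2.1.8(1)]{kedlaya2014}), and that $\gamma_{1}$ is an isomorphism; flatness of $\Robba_{A}$ (Lemma \ref{flatnessR}) then lets you tensor the whole triangle with $M$ and conclude from the long exact sequence, identifying the new connecting map with $\gamma_{1}\otimes\mathrm{id}_{M}$ (legitimate, since the connecting map of a cone is induced by an honest chain map $C^{\bullet}\to C^{\bullet}_{\overline{P}^{+}}(\Robba_{A}(\delta_1,\delta_2))[1]$, so the comparison square commutes, and the vanishing of $H^{3}$ of the source kills the Tor terms). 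Both arguments share the same implicit identification of $H^{i}(\overline{P}^{+},\Robba_{A}(\delta_1,\delta_2)\otimes_{A}M)$ with the cohomology of the Koszul complex tensored with $M$, using that $\overline{P}^{+}$ acts trivially on $M$, so there is no discrepancy on that front. You also correctly flag the one delicate point: the statement of Theorem \ref{3.27} alone does not suffice, and one must quote the isomorphy of $\gamma_{1}$ established inside the proof of Proposition \ref{3.25}. What your route buys is that no residue-field computation with $M$-coefficients is needed at all; what the paper's route buys is that it never has to articulate the finer structure of $C^{\bullet}$, at the modest cost of one extra, easy check at each point of $\mathrm{Sp}(A)$.
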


\begin{proof}
To prove surjectivity, we follow the proof of Proposition \ref{3.25}. In fact the only thing that needs to be checked is that 
$$H^{1}(\overline{P}^{+}, \mathscr{R}_{A/\mathfrak{m}}(\delta_1, \delta_2) \otimes_{A} M) \rightarrow H^{1}(A^+, \mathscr{R}_{A/\mathfrak{m}}(\delta_{1}\delta_{2}^{-1}) \otimes_{A} M)$$
is an isomorphism.
Denote by $M':= M \otimes_A A/\mathfrak{m}$, so that the above is equivalent to showing
\begin{equation} \label{cold}
H^{1}(\overline{P}^{+}, \mathscr{R}_{A/\mathfrak{m}}(\delta_1, \delta_2) \otimes_{A/\mathfrak{m}} M') \rightarrow H^{1}(A^+, \mathscr{R}_{A/\mathfrak{m}}(\delta_{1}\delta_{2}^{-1}) \otimes_{A/\mathfrak{m}} M'),
\end{equation} 
is an isomorphism. Now $M'$ is flat over $A/\mathfrak{m}$ and so by the Tor-spectral sequence 
$$ H^{1}(\overline{P}^{+}, \mathscr{R}_{A/\mathfrak{m}}(\delta_1, \delta_2) \otimes_{A/\mathfrak{m}} M') \cong H^{1}(\overline{P}^{+}, \mathscr{R}_{A/\mathfrak{m}}(\delta_1, \delta_2)) \otimes_{A/\mathfrak{m}} M'$$
and similarly
$$ H^{1}(A^+, \mathscr{R}_{A/\mathfrak{m}}(\delta_1, \delta_2) \otimes_{A/\mathfrak{m}} M') \cong H^{1}(A^+, \mathscr{R}_{A/\mathfrak{m}}(\delta_1, \delta_2)) \otimes_{A/\mathfrak{m}} M'.$$
Thus the morphism \ref{cold} is an isomorphism by \cite[Proposition 5.18]{colmez2015}. The rest of the proof of Proposition \ref{3.25} goes through with $C^{\bullet}$ replaced by $C^{\bullet} \otimes^{\mathbf{L}} M$. For injectivity the proof of Proposition \ref{3.26} remains unchanged except with $C^{\bullet}$ replaced by $C^{\bullet} \otimes^{\mathbf{L}} M$. This completes the proof.   
\end{proof}

We now need a lemma that guarantees the connection morphisms are $0$ in a certain long exact sequence.

\begin{lemm} \label{3.29}
Let $A$ be an $\qp$-affinoid algebra and $I \subset A$ an ideal of $A$. Let $\delta_{1}$, $\delta_{2}: \qpe \rightarrow A^{\times}$ such that $\delta_{1}\delta_{2}^{-1}$ is regular. The short exact sequence
\[ 0 \to I \to A \to A / I \to 0 \]
induces an injective morphism
$$H^2(\overline{P}^+,\mathscr{R}_{A}(\delta_1,\delta_2) \otimes_A I) \to H^2(\overline{P}^+,\mathscr{R}_{A}(\delta_1,\delta_2)). $$
\end{lemm}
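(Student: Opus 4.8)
The plan is as follows. Since $\Robba_A$, and hence $\Robba_A(\delta_1,\delta_2)$, is a flat $A$-module (Lemma \ref{flatnessR}), tensoring the sequence $0 \to I \to A \to A/I \to 0$ with $\Robba_A(\delta_1,\delta_2)$ gives a short exact sequence of $\overline{P}^+$-modules
\[ 0 \to \Robba_A(\delta_1,\delta_2) \otimes_A I \to \Robba_A(\delta_1,\delta_2) \to \Robba_{A/I}(\delta_1,\delta_2) \to 0. \]
Its long exact sequence in $\overline{P}^+$-cohomology (or, equivalently, the long exact sequence attached to the short exact sequence of Koszul complexes $\mathscr{C}_{\tau,\varphi,\gamma}(\Robba_A(\delta_1,\delta_2))\otimes_A(I\to A\to A/I)$) shows that the morphism in the statement is injective if and only if the connecting map $H^1(\overline{P}^+,\Robba_{A/I}(\delta_1,\delta_2)) \to H^2(\overline{P}^+,\Robba_A(\delta_1,\delta_2)\otimes_A I)$ vanishes, equivalently if and only if the restriction $H^1(\overline{P}^+,\Robba_A(\delta_1,\delta_2)) \to H^1(\overline{P}^+,\Robba_{A/I}(\delta_1,\delta_2))$ is surjective. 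So I would reduce to proving this surjectivity.

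To this end I would work with $\mathscr{C}_{\tau,\varphi,\gamma}(\Robba_A(\delta_1,\delta_2))$, which by Corollary \ref{cor:fincohinto} is pseudo-coherent, concentrated in degrees $[0,3]$, and termwise a finite direct sum of copies of the flat $A$-module $\Robba_A(\delta_1,\delta_2)$, so that its formation commutes with derived base change along $A\to A/I$ (as in the proof of Proposition \ref{3.25}). First one records that $H^0(\overline{P}^+,\Robba_A(\delta_1,\delta_2)) = H^3(\overline{P}^+,\Robba_A(\delta_1,\delta_2)) = 0$: the first because $A^+\subset\overline{P}^+$ gives an injection into $H^0(A^+,\Robba_A(\delta_1\delta_2^{-1})) = 0$ (Proposition \ref{HiRreg}, $\delta_1\delta_2^{-1}$ being regular), the second because the group is finite over $A$, its formation commutes with $\otimes_A A/\mathfrak m$ (right-exactness at the top degree of a complex of flats), and $H^3(\overline{P}^+,\Robba_{A/\mathfrak m}(\delta_1,\delta_2)) = 0$ for every maximal $\mathfrak m$ (Proposition \ref{cohomfinal}), so Nakayama applies. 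Since moreover $\mathscr{C}_{\tau,\varphi,\gamma}(\Robba_{A/\mathfrak m}(\delta_1,\delta_2))$ has cohomology concentrated in degrees $1,2$, each one-dimensional (Proposition \ref{cohomfinal}), the complex $\mathscr{C}_{\tau,\varphi,\gamma}(\Robba_A(\delta_1,\delta_2))$ is perfect of Tor-amplitude in $[1,2]$, hence quasi-isomorphic to a complex $[P^1 \xrightarrow{d} P^2]$ of finite locally free $A$-modules in degrees $1,2$, with $H^2(\overline{P}^+,\Robba_A(\delta_1,\delta_2)) = P^2/d(P^1)$. As $P^1,P^2$ are flat, $P^i\otimes_A I = IP^i\subseteq P^i$, and a direct chain-level computation (equivalently, the universal-coefficients spectral sequence together with $H^3 = 0$) identifies the kernel of $IP^2/d(IP^1) \to P^2/d(P^1)$ with $(IP^2\cap d(P^1))/I\,d(P^1) \cong \Tor^A_1\!\big(H^2(\overline{P}^+,\Robba_A(\delta_1,\delta_2)),A/I\big)$. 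Thus the lemma is equivalent to the vanishing of this $\Tor$ group, and it suffices to prove that $H^2(\overline{P}^+,\Robba_A(\delta_1,\delta_2))$ is a flat, hence (being finite) locally free of rank $1$, $A$-module.

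For this I would use the cone $C^\bullet = \mathrm{Cone}\!\left(C^\bullet_{\overline{P}^+}(\Robba_A(\delta_1,\delta_2)) \to C^\bullet_{A^+}(\Robba_A(\delta_1\delta_2^{-1}))\right)$ of the restriction, exactly as in the proof of Proposition \ref{3.25}, where $C^\bullet_{\overline{P}^+} = \mathscr{C}_{\tau,\varphi,\gamma}$ and $C^\bullet_{A^+} = \mathscr{C}_{\varphi,\gamma}$. Both complexes are perfect (the second by \cite[Theorem 4.4.2]{kedlaya2014}), so $C^\bullet$ is perfect, and $C^\bullet\otimes^{\mathbf L}_A A/\mathfrak m = \mathrm{Cone}\!\left(C^\bullet_{\overline{P}^+}(\Robba_{A/\mathfrak m}(\delta_1,\delta_2)) \to C^\bullet_{A^+}(\Robba_{A/\mathfrak m}(\delta_1\delta_2^{-1}))\right)$. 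Over the field $A/\mathfrak m$ the character $\delta_1\delta_2^{-1}$ is again regular, so $H^2(A^+,\Robba_{A/\mathfrak m}(\delta_1\delta_2^{-1})) = 0$ (Proposition \ref{HiRreg}, or \cite[Corollaire 2.11]{chen2013}), the restriction $H^1(\overline{P}^+,\Robba_{A/\mathfrak m}(\delta_1,\delta_2)) \to H^1(A^+,\Robba_{A/\mathfrak m}(\delta_1\delta_2^{-1}))$ is an isomorphism (\cite[Proposition 5.18]{colmez2015}), and $H^2(\overline{P}^+,\Robba_{A/\mathfrak m}(\delta_1,\delta_2))$ is one-dimensional (Proposition \ref{cohomfinal}); the long exact sequence of the cone then shows that $C^\bullet\otimes^{\mathbf L}_A A/\mathfrak m$ has cohomology concentrated in degree $1$, of dimension $1$, for every maximal $\mathfrak m$. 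Hence $C^\bullet$ has Tor-amplitude $\{1\}$, i.e. $C^\bullet\simeq\mathcal L[-1]$ with $\mathcal L$ finite locally free of rank $1$. Finally, the long exact sequence of the cone over $A$, together with $H^2(A^+,\Robba_A(\delta_1\delta_2^{-1})) = 0$ (Proposition \ref{HiRreg}) and the surjectivity of the restriction $H^1(\overline{P}^+,\Robba_A(\delta_1,\delta_2)) \to H^1(A^+,\Robba_A(\delta_1\delta_2^{-1}))$, identifies $\mathcal L = H^1(C^\bullet)$ with $H^2(\overline{P}^+,\Robba_A(\delta_1,\delta_2))$, which is therefore locally free; this gives the $\Tor$-vanishing and hence the lemma.

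The main obstacle is the last input used above: the surjectivity of $H^1(\overline{P}^+,\Robba_A(\delta_1,\delta_2)) \to H^1(A^+,\Robba_A(\delta_1\delta_2^{-1}))$ over a possibly non-reduced $A$. For $A$ reduced this is Proposition \ref{3.25}, but for general $A$ it is not available a priori — indeed the fiberwise dimension count alone is not enough to force $H^2(\overline{P}^+,\Robba_A(\delta_1,\delta_2))$ to be locally free over a non-reduced base. The resolution is that the present lemma, this surjectivity, and the non-reduced case of Theorem \ref{thm:resmincomp} are proven together by induction on the index of nilpotence of the nilradical $N$ of $A$: in the inductive step one applies the short exact sequences of the first paragraph with $I = N^j$ (so that $A/I$ has smaller nilpotence index and $\Robba_A(\delta_1,\delta_2)\otimes_A N^j = \Robba_{A/N}(\delta_1,\delta_2)\otimes_{A/N}N^j$ falls under Proposition \ref{3.28}) and combines these with the present lemma to conclude the step. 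Thus, although the statement is phrased for an arbitrary ideal $I$, in the argument $I$ is a power of the nilradical, and the delicate point is simply to organise the induction so that at each stage the flatness of $H^2(\overline{P}^+,\Robba_A(\delta_1,\delta_2))$ — equivalently, the surjectivity above — is in place before it is invoked, the reduced base case being provided by Proposition \ref{3.25}.
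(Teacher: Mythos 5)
Your reduction of the statement to the vanishing of $\Tor_1^A\bigl(H^2(\overline{P}^+,\Robba_A(\delta_1,\delta_2)),A/I\bigr)$, i.e.\ to flatness of $H^2(\overline{P}^+,\Robba_A(\delta_1,\delta_2))$, is fine as far as it goes, but the way you propose to obtain that flatness is circular. To identify $H^2(\overline{P}^+,\Robba_A(\delta_1,\delta_2))$ with the invertible module $H^1(C^\bullet)$ you need the restriction map $H^1(\overline{P}^+,\Robba_A(\delta_1,\delta_2))\to H^1(A^+,\Robba_A(\delta_1\delta_2^{-1}))$ to be surjective over $A$ itself; for non-reduced $A$ this is exactly the content of Theorem \ref{3.31}, whose proof proceeds by induction on the nilpotence index and invokes the present lemma (with $I=N^j$) precisely in order to kill the connecting map and only then deduces the surjectivity over $A$ by the five lemma. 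Your ``joint induction'' does not break this circle: in the step for $A$ with $N^{j+1}=0$, the induction hypothesis gives the surjectivity only over rings of strictly smaller nilpotence index (such as $A/N^j$, which is what Proposition \ref{3.28} and the inductive step use), whereas your proof of the lemma for the pair $(A,N^j)$ already presupposes the surjectivity, equivalently the flatness of $H^2$, over $A$. Fiberwise data cannot substitute for it: a perfect complex over $\qp[\epsilon]$ with the same fiber dimensions in degrees $1,2$ as in Proposition \ref{cohomfinal} can have non-flat $H^2$ (e.g.\ multiplication by $\epsilon$ placed in degrees $1,2$), so Proposition \ref{cohomfinal} plus base change alone cannot force the flatness, and restricting attention to $I$ a power of the nilradical does not help.

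The paper's proof is of a completely different, elementary nature, valid for arbitrary $A$ and arbitrary $I$ with no flatness input and no reduction to the reduced case --- which is exactly what allows it to be fed into the induction of Theorem \ref{3.31}. Via Lemma \ref{lielem} one works with the Lie-algebra complex $\mathscr{C}_{u^-,\varphi,a^+}$ and shows the cochain-level statement that if $(x,y,z)\in\Robba_A^{\oplus3}$ has $Y'(x,y,z)\in\Robba_A^{\oplus3}\otimes_AI$ then $(x,y,z)\in\Robba_A^{\oplus3}\otimes_AI$: the $A^+$-part of the complex (using $H^2_{\Lie}(A^+,\Robba_L(\delta_1,\delta_2))=0$ at every point) forces $(x,y)\in\Robba_A^{\oplus2}\otimes_AI$, and everything reduces to showing that $f\in\Robba_A$ with $(p\delta(p)\varphi-1)f\in\Robba_A\otimes_AI$ must lie in $\Robba_A\otimes_AI$. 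This saturation property is checked by splitting $\Robba_A$ into $\Robba_A^+$ and the negative part via the residue map: on $\mathrm{LA}(\zp,A)$ one reduces modulo $I$ and uses injectivity of $p\delta(p)\varphi-1$ on $\mathrm{LA}(\zp,A/I)$, and on $\Robba_A^+$ one argues coefficientwise on power series. If you want to rescue your homological approach you would need an independent proof of the surjectivity (or of the flatness of $H^2(\overline{P}^+,\Robba_A(\delta_1,\delta_2))$) over non-reduced $A$, which is not available at this stage; as written, your argument proves the lemma only when $A$ is reduced, i.e.\ in the case already covered by Proposition \ref{3.25} and not the one needed for the induction.
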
 

\begin{proof}
By Lemma \ref{lielem}, it suffices to show that the morphism
$$H_{\Lie}^{2}(\overline{P}^+, \mathscr{R}_{A}(\delta_1,\delta_2) \otimes_A I) \rightarrow H_{\Lie}^{2}(\overline{P}^{+}, \mathscr{R}_{A}(\delta_1,\delta_2))$$
coming from the long exact sequence in Lie algebra cohomology of $\overline{P}^{+}$ is injective. Recall the Lie algebra complex from \S \ref{sublieal}:
\[
\mathscr{C}_{u^{-},\varphi,a^{+}}: 0 \rightarrow M \xrightarrow{A} M \oplus M \oplus M \xrightarrow{B} M \oplus M \oplus M \xrightarrow{C} M \rightarrow 0
\]
where
\begin{align*}
A(x) &= ((\varphi-1)x,a^{+}x,u^{-}x) \\
B(x,y,z) &= (a^{+}x - (\varphi-1)y, u^{-}y - (a^{+} +1)z, (p\varphi-1)z - u^{-}x) \\
C(x,y,z) &= u^{-}x + (p\varphi -1)y + (a^{+} + 1)z. 
\end{align*}
It suffices to show that if $(x,y,z) \in \Robba_A^{\oplus 3}$ such that $B(x,y,z) \in \Robba_A^{\oplus 3} \otimes_A I$, then $(x,y,z) \in \Robba_A^{\oplus 3} \otimes_A I$. Since
$$H_{\Lie}^{2}(A^+, \mathscr{R}_{A}(\delta_1,\delta_2) \otimes_A I) \rightarrow H_{\Lie}^{2}(A^{+}, \mathscr{R}_{A}(\delta_1,\delta_2))$$
is injective (this follows from the fact that $H^2_{\Lie}(A^+, \mathscr{R}_{L}(\delta_1,\delta_2))=0$ for every finite extension $L$ of $\qp$), it follows that $(x,y) \in \Robba_A^{\oplus 2} \otimes_A I$. Thus the problem is the following: $f \in \Robba_A$ such that
\[
(a^+ +1)f \in \Robba_A \otimes_A I\text{ and } (p\varphi-1)f \in \Robba_A \otimes_A I
\] 
and one must show that $f \in \Robba_A \otimes_A I$. 


Call $F = p \delta(p) \varphi - 1$ and denote by $\Robba_I := \Robba_A \otimes_A I$. We will show that, if $f \in \Robba_A$ is such that $F(f) \in \Robba_I$, then $f \in \Robba_I$. We first observe that this statement is true for $\Robba_A^\pm$. Indeed: 
\begin{itemize}
\item If $\phi \in \mathrm{LA}(\zp, A)$ is such that $F(\phi) \in \mathrm{LA}(\zp, I)$, then call $\overline{\phi} \in \mathrm{LA}(\zp, A/I)$ the reduction of $\phi$ modulo $I$. We have then $F(\overline{\phi}) = 0$, but $F$ is injective on $\mathrm{LA}(\zp, A/I)$, so $\overline{\phi} = 0$, which translates into $\phi \in \mathrm{LA}(\zp, I)$.
\item The claim that, for $\mu \in \Robba_A^+$, then $F(\mu) \in \Robba_I^+$ implies $\mu \in \Robba_I^+$ follows from a direct calculation by looking at the coefficients of the power series expression of $\mu$.
\end{itemize}

Consider now the following commutative diagram given by the residue map $f = \sum_{n \in \Z} a_n T^n \mapsto [x \mapsto \phi_f(x) = \text{res}_0((1 + T)^{-x} f(T) \frac{dT}{1 + T}) = \sum_{n \geq 0} a_{-n-1} {-x - 1 \choose n}]$:
\[
\begin{tikzcd} [row sep = large, column sep = small]
0 \arrow[r] &
\Robba_A^+ \arrow[r] \arrow["F", d] &
\Robba_A \arrow[r] \arrow[d, "F"] &
\mathrm{LA}(\zp, A) \arrow[d, "F"] \arrow[r] &
0  \\
0 \arrow[r]&
\Robba_A^+ \arrow[r] &
\Robba_A \arrow[r] &
\mathrm{LA}(\zp, A) \arrow[r] &
0
\end{tikzcd}
\]
Let $f = \sum_{n \in \Z} a_n T^n \in \Robba_A$ be such that $F(f) \in \Robba_I$. Then we have $F(\phi_f) \in \mathrm{LA}(\zp, I)$ and this implies by the first point of the last paragraph that $\phi_f \in \mathrm{LA}(\zp, I)$ and hence $a_n \in I$ for all $n < 0$. Hence, if we write $f = f^- + f^+$, where $f^- = \sum_{n < 0} a_n T^n$ and $f^+ = \sum_{n \geq 0} a_n T^n$, we get that $F(f^-) \in \Robba_I$ (because $f^- \in \Robba_I$!) and hence $F(f^+) = F(f) - F(f^-) \in \Robba_I$, which implies that $f^+ \in \Robba_I$ and allows us to conclude that $f \in \Robba_I$. This completes the proof.
%

\end{proof}

\begin{rema}
The statement of Lemma \ref{3.29} is not surprising as $\overline{P}^{+}$ acts trivially on the coefficient algebra $A$ in $\mathscr{R}_{A}(\delta_1, \delta_2)$.  
\end{rema} 

\begin{theo} \label{3.31}
Let $\delta_{1}$, $\delta_{2}: \qpe \rightarrow A^{\times}$ such that $\delta_{1}\delta_{2}^{-1}$ is regular. Then the restriction morphism from $\overline{P}^{+}$ to $A^+$, induces an isomorphism:
$$H^{1}(\overline{P}^{+}, \mathscr{R}_{A}(\delta_1, \delta_2)) \rightarrow H^{1}(A^+, \mathscr{R}_{A}(\delta_{1}\delta_{2}^{-1})).$$
\end{theo}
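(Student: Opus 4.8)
The plan is to bootstrap from the reduced case (Theorem \ref{3.27}, equivalently its module-enhanced form Proposition \ref{3.28}) to an arbitrary $\qp$-affinoid algebra $A$ by induction on the index of nilpotence of the nilradical $N$ of $A$. Write $n$ for the smallest integer with $N^{n}=0$; the base case $n=1$ is Theorem \ref{3.27}. For the inductive step, set $I = N^{n-1}$, so that $I$ is a finite $A$-module annihilated by $N$, hence a module over the reduced affinoid algebra $A/N$, and $A/I$ has nilradical of index $\leq n-1$. The short exact sequence of $\overline{P}^{+}$-modules $0 \to \mathscr{R}_A(\delta_1,\delta_2)\otimes_A I \to \mathscr{R}_A(\delta_1,\delta_2) \to \mathscr{R}_A(\delta_1,\delta_2)\otimes_A A/I \to 0$ (using flatness of $\mathscr{R}_A$ over $A$, Lemma \ref{flatnessR}) and the corresponding one for $A^+$-cohomology with coefficients $\mathscr{R}_A(\delta_1\delta_2^{-1})$ fit into a commutative ladder of long exact sequences linked by the restriction maps from $\overline{P}^+$ to $A^+$.

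The key input is that the restriction map is an isomorphism on $H^1$ for the two outer terms: for $\mathscr{R}_A(\delta_1,\delta_2)\otimes_A I$ this is Proposition \ref{3.28} (applied over $A/N$, noting that $\mathscr{R}_A(\delta_1,\delta_2)\otimes_A I = \mathscr{R}_{A/N}(\delta_1,\delta_2)\otimes_{A/N} I$ as $\overline{P}^+$-modules, and that $\delta_1\delta_2^{-1}$ remains regular after reduction mod $N$), and for $\mathscr{R}_A(\delta_1,\delta_2)\otimes_A A/I$ it is the inductive hypothesis (again together with its $\otimes M$-enhancement, or rather directly the statement over the affinoid $A/I$). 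So in the ladder, the $H^1$-columns for the sub- and quotient-modules are isomorphisms, and a diagram chase (five-lemma style) will give that the middle $H^1$-column is an isomorphism, \emph{provided} the long exact sequences split appropriately at the relevant spot — concretely, provided the boundary map $H^1(\overline{P}^+,\mathscr{R}_A(\delta_1,\delta_2)\otimes_A A/I) \to H^2(\overline{P}^+, \mathscr{R}_A(\delta_1,\delta_2)\otimes_A I)$ and its $A^+$-analogue behave well. The point is to arrange that the relevant rows reduce to short exact sequences on $H^0$ and $H^1$.

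This is where Lemma \ref{3.29} enters: it shows that the connecting map $H^1(\overline{P}^+, \mathscr{R}_A(\delta_1,\delta_2)\otimes_A A/I) \to H^2(\overline{P}^+,\mathscr{R}_A(\delta_1,\delta_2)\otimes_A I)$ is zero (equivalently, $H^2(\overline{P}^+,\mathscr{R}_A(\delta_1,\delta_2)\otimes_A I) \hookrightarrow H^2(\overline{P}^+,\mathscr{R}_A(\delta_1,\delta_2))$ is injective), and the analogous statement for $A^+$-cohomology holds because $H^2(A^+,\mathscr{R}_{A}(\delta_1\delta_2^{-1})\otimes_A A/I)$ and even $H^2(A^+, \mathscr{R}_{A/\mathfrak m}(\delta_1\delta_2^{-1}))$ vanish in the regular case (Remark \ref{rem:qausreg}, Remark \ref{rem:recchenres}). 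Combined with the vanishing of $H^0$ for these modules when $\delta_1\delta_2^{-1}$ is regular (so that there is no $H^0$-obstruction either), both long exact sequences break into short exact sequences $0 \to H^1(-\otimes I) \to H^1(-) \to H^1(-\otimes A/I) \to 0$ in the respective cohomologies. The restriction maps give a morphism between these two short exact sequences which is an isomorphism on the outer terms, so by the five lemma it is an isomorphism on the middle term, completing the induction.

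The main obstacle is bookkeeping in the diagram chase: one must be careful that the maps needed to invoke the five lemma are genuinely exact rows, which forces the use of Lemma \ref{3.29} on the $\overline{P}^+$ side and the regularity-driven vanishing $H^0 = H^2 = 0$ on the $A^+$ side, and one must double-check that Proposition \ref{3.28} really applies with $I$ as coefficient module (it was stated for $A$ reduced and $M$ a finite $A$-module with trivial action, which is exactly the situation of $A/N$ and $I$). A secondary subtlety is that Proposition \ref{3.28} as stated handles only $H^1$; since the only row positions we need are $H^0$, $H^1$, $H^2$, and the $H^0$ and $H^2$ terms are controlled by regularity, this suffices, but it is worth remarking explicitly that no finiteness or comparison statement beyond those already established (finiteness of $\overline{P}^+$-cohomology, Proposition \ref{cohomfinal}, and the continuous-vs-analytic comparison) is invoked.
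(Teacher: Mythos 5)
Your proof is correct and is essentially the paper's own argument: induction on the index of nilpotence with base case Theorem \ref{3.27}, the short exact sequence $0 \to \Robba_A(\delta_1,\delta_2)\otimes_A I \to \Robba_A(\delta_1,\delta_2) \to \Robba_A(\delta_1,\delta_2)\otimes_A A/I \to 0$ for $I$ the last nonzero power of the nilradical, the commutative ladder of long exact sequences linked by restriction, Lemma \ref{3.29} to kill the $\overline{P}^+$-connecting map into $H^2$, Proposition \ref{3.28} over $A/N$ for the sub-term, the inductive hypothesis for the quotient term, and the (short) five lemma. The only deviations are cosmetic: you dispose of the $H^0$-edge via the regularity vanishing $H^0(A^+,\Robba_{A/I}(\delta_1\delta_2^{-1}))=0$ (valid, since maximal ideals of $A/I$ are those of $A$), where the paper instead invokes surjectivity of $H^0(A^+,\Robba_{A}(\delta_1\delta_2^{-1}))\to H^0(A^+,\Robba_{A/I}(\delta_1\delta_2^{-1}))$ from Chenevier, and the $A^+$-side $H^2$ term whose vanishing makes the bottom row end in zero is the one with coefficients tensored with $I$, not with $A/I$ as you wrote --- both vanish by regularity, so the argument is unaffected.
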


\begin{proof}
Since $A$ is in particular noetherian its nilradical is nilpotent. Thus it is natural to proceed via induction on the index of nilpotence $i \geq 0$. The base case $i = 0$ (meaning $A$ is reduced) is Theorem \ref{3.27}. Suppose by induction the result is true for index $i$ and suppose now $N^{i+1}=0$. For the sake of brevity denote $X_{N^i}:=\mathscr{R}_{A}(\delta_1,\delta_2) \otimes_A N^i$, $X_{A}:=\mathscr{R}_{A}(\delta_1,\delta_2)$ and $X_{A/N^i}:=\mathscr{R}_{A}(\delta_1,\delta_2) \otimes_A A / N^i$. The short exact sequence
$$0 \to X_{N^i} \to X_{A} \to X_{A/N^i} \rightarrow 0$$
gives a commutative diagram
$$
\begin{tikzcd} [row sep = large, column sep = small]
H^{0}(\overline{P}^+, X_{A/N^i}) \arrow[r, "\beta'"] \arrow[d] &
H^{1}(\overline{P}^+, X_{N^i}) \arrow[r] \arrow[d, "\alpha_1"] &
H^{1}(\overline{P}^+, X_{A}) \arrow[d, "\alpha_2"] \arrow[r] &
H^{1}(\overline{P}^+, X_{A/N^i}) \arrow[r, "\rho"] \arrow[d, "\alpha_3"] &
H^{2}(\overline{P}^+, X_{N^i})  \\
H^{0}(A^+, X_{A/N^i}) \arrow[r, "\beta"]&
H^{1}(A^+, X_{N^i}) \arrow[r] &
H^{1}(A^+, X_{A}) \arrow[r] &
H^{1}(A^+, X_{A/N^i}) \arrow[r] &
0. 
\end{tikzcd}
$$
The two rows come from long exact sequences in cohomology and commutativity comes from functoriality of the restriction morphism $H^{i}(\overline{P}^+, -) \rightarrow H^{i}(A^+, -)$. By Lemma \ref{3.29}, the connecting morphism $\rho$ is the zero morphism. Identifying $X_{A/N^i}$ with $\mathscr{R}_{A/N^i}(\delta_1,\delta_2)$ and $X_{N^i}$ with $\mathscr{R}_{A/N}(\delta_1,\delta_2) \otimes_{A/N} N^{i}$; we see that $\alpha_1$ is an isomorphism by Proposition \ref{3.28} and $\alpha_3$ is an isomorphism by the inductive step. On the other hand by \cite[Proposition 2.10]{chen2013} the morphism 
\[
H^{0}(A^+, X_A) \to H^{0}(A^+, X_{A/N^i})
\]
is surjective. Thus $\beta$ is the zero morphism. By the commutativity of the first square, this implies that $\beta'$ is the zero morphism. Thus by the 5-Lemma, $\alpha_2$ is an isomorphism and this proves the result.    
\end{proof}

\section{Construction of the correspondence} \label{sec:cooftcor}

In this section we construct, following \cite[Chapter 6]{colmez2015}, the correspondence $\Delta \mapsto \Pi(\Delta)$ for a regular $(\varphi, \Gamma)$-module $\Delta$ over the relative Robba ring $\Robba_A$, interpolating the analogous construction of loc. cit. at the level of points. The construction is inspired from the calculation of the locally analytic vectors in the unitary principal series case (corresponding to the case when $\Delta$ is trianguline and \'etale), cf. \cite{col2008}, \cite{colmezunit}, \cite{colmezunitlat} and involves a detailed study of the Jordan-H\"older components of the $G$-module $\Delta \boxtimes_\omega \P^1$. We will see that the sought-after representation $\Pi(\Delta)$ is cut out from these constituents. 

There are a couple of differences between the approach taken in \cite{colmez2015} and the one taken in the present paper that merits to be pointed out. On the one hand, in constrast to loc. cit. where the construction is carried out more generally for the case of a Lubin-Tate $(\varphi, \Gamma)$-module, we work in the usual cyclotomic context, which simplifies many of the proofs and steps. This is mainly due to the rich structure of the $\Robba_A(\Gamma)$-module $\Delta^{\psi = 0}$ for $\Delta \in \Phi \Gamma(\Robba_A)$. Secondly, we are only able to carry out the construction for \textit{regular} trianguline $(\varphi, \Gamma)$-modules (cf. Definition \ref{def:qaregpmd}). Indeed it is only for those objects that our cohomology comparison theorem of the last chapter works. The last modification between our approach and the one taken in loc. cit. is found in the argument showing that the middle extension of $B_A(\delta_1, \delta_2)^* \otimes \omega$ by $B_A(\delta_1, \delta_2)$ in the proof of Theorem \ref{thm:quas} splits, where we give a more direct method.

\subsection{The main result}

We begin with a definition. 

\begin{defi} \label{def:qaregpmd}
Let $\Delta$ be a trianguline $(\varphi,\Gamma)$-module over $\Robba_A$, which is an extension of $\Robba_A(\delta_2)$ by $\Robba_A(\delta_1)$. We say that $\Delta$ is regular if $\delta_1\delta_2^{-1} \colon \qpe \to A^{\times}$ is regular in the sense of Definition \ref{def:qausreg}.  
\end{defi}

The following theorem is a relative version of \cite[Theorem 6.11]{colmez2015} in the case when the pair $(\delta_1,\delta_2)$ is generic and also pointwise not of the form $x^{-i}$ for some $i \geq 0$, cf. Remark \ref{rem:qausreg}. 

\begin{theo}\label{thm:quas}
Suppose $\Delta$ is a regular $(\varphi,\Gamma)$-module over $\Robba_A$ such that 
\[
0 \to \Robba_A(\delta_1) \to \Delta \to \Robba_A(\delta_2) \to 0. 
\]
Then there exists a locally analytic $A$-representation $\Pi(\Delta)$\footnote{It is probably unique but we are unable to show this. This comes down to knowing that the $\mathrm{Ext}^1$ of certain principal series is a free $A$-module of rank 1.} of $\mathrm{GL}_2(\qp)$, with central character $\omega$, such that we have an exact sequence \[ 0 \to \Pi(\Delta)^* \otimes \omega \to \Delta \boxtimes_\omega \P^1 \to \Pi(\Delta) \to 0. \]
Moreover $\Pi(\Delta)$ is an extension of $B_A(\delta_2, \delta_1)$ by $B_A(\delta_1, \delta_2)$. Furthermore if $\Delta$ is a non-trivial extension of $\Robba_A(\delta_2)$ by $\Robba_A(\delta_1)$ then $\Pi(\Delta)$ is a non-trivial extension of $B_A(\delta_2,\delta_1)$ by $B_A(\delta_1,\delta_2)$. 
\end{theo}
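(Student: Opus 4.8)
The plan is to prove Theorem~\ref{thm:quas} by following the blueprint of \cite[Chapter 6]{colmez2015}, replacing the pointwise cohomological inputs by the relative ones obtained above. The first task is to produce the $G$-equivariant $\qp$-analytic sheaf $\Delta\boxtimes_\omega\P^1$. By Proposition~\ref{prop:ext1h1a} the class of $\Delta$ lives in $\Ext^1_{\Phi\Gamma(\Robba_A)}(\Robba_A(\delta_2),\Robba_A(\delta_1))\cong H^1(A^+,\Robba_A(\delta_1\delta_2^{-1}))$. Since $\overline{P}^+$ stabilizes $\zp$, the identifications of Lemma~\ref{lem:princser} show that extensions of $\Robba_A^+(\delta_2)\boxtimes_\omega\P^1$ by $\Robba_A(\delta_1)\boxtimes_\omega\P^1$ in the category of $\qp$-analytic $G$-sheaves are classified by $H^1(\overline{P}^+,\Robba_A(\delta_1,\delta_2))$; by Theorem~\ref{3.31} (this is where regularity of $\delta_1\delta_2^{-1}$ first enters) restriction identifies this with $H^1(A^+,\Robba_A(\delta_1\delta_2^{-1}))$, so the class of $\Delta$ yields a canonical such extension, which then prolongs uniquely to an extension of $\Robba_A(\delta_2)\boxtimes_\omega\P^1$ by $\Robba_A(\delta_1)\boxtimes_\omega\P^1$, exactly as in \cite{colmez2015}; this is $\Delta\boxtimes_\omega\P^1$, and it is $\qp$-analytic by Proposition~\ref{actionG} together with stability of the relevant properties under extension. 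Splicing the three defining exact sequences gives a $G$-stable filtration $M_1\subset M_2\subset M_3\subset M_4=\Delta\boxtimes_\omega\P^1$ whose graded pieces are, in order, $B_A(\delta_2,\delta_1)^*\otimes\omega$, $B_A(\delta_1,\delta_2)$, $B_A(\delta_1,\delta_2)^*\otimes\omega$, $B_A(\delta_2,\delta_1)$ (the last two from Lemma~\ref{lem:princser} applied to the pair $(\delta_2,\delta_1)$, $\omega$ being symmetric).

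Next I would set up the self-duality. The pairing $\lbrace\,,\,\rbrace$ of \S\ref{sec:dark}, combined with the isomorphism $\check\Delta\cong\Delta\otimes\omega^{-1}$ coming from $\wedge^2\Delta\cong\Robba_A(\delta_1\delta_2)$, extends (as in \cite{colmez2015}) to a perfect $G$-invariant pairing $\Delta\boxtimes_\omega\P^1\times\Delta\boxtimes_\omega\P^1\to A(\omega\circ\det)$; checking that it extends and is perfect is routine given the analogous pointwise statement and the relative functional analysis of \S\ref{sec:prelimz}. Under this pairing the filtration is self-dual in the sense $M_i^\perp=M_{4-i}$, so in particular $M_2$ is Lagrangian and the pairing induced on the two-step subquotient $W:=M_3/M_1$ is perfect, with $M_2/M_1$ a Lagrangian line-worth of it; in fact $W$ is an extension of $B_A(\delta_1,\delta_2)^*\otimes\omega$ by $B_A(\delta_1,\delta_2)$, the two being dual to one another.

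The crux is to show that this extension $W$ splits $G$-equivariantly; this is the step where we depart from \cite{colmez2015} and argue directly. Being a pushout–pullback of $\Delta\boxtimes_\omega\P^1$, the class of $W$ lies in $\Ext^1_{\mathscr{G}_{G,A}}(B_A(\delta_1,\delta_2)^*\otimes\omega,\,B_A(\delta_1,\delta_2))$; via the relative Shapiro's Lemma and the realization of the principal series as (dual) induced modules, I would identify this $\Ext$ group with an $\overline{P}^+$-cohomology group of an $\Robba_A$-twist, then with an $A^+$-cohomology group by Theorem~\ref{3.31} and Proposition~\ref{prop:lazcompcom}, and regularity of $\delta_1\delta_2^{-1}$ makes it vanish (via the computations behind Proposition~\ref{HiRreg}, Proposition~\ref{cohomfinal} and Remark~\ref{rem:recchenres}). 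Granting the splitting, fix a $G$-stable complement to $M_2/M_1$ in $W$; since $2\in A^\times$, modifying it by the polarization attached to the induced symmetric form we may take a $G$-stable \emph{Lagrangian} complement $W'\subset W$. Let $M'\subset M_3$ be its preimage: then $M'$ is Lagrangian in $\Delta\boxtimes_\omega\P^1$, an extension of $B_A(\delta_1,\delta_2)^*\otimes\omega$ by $B_A(\delta_2,\delta_1)^*\otimes\omega$, and I set $\Pi(\Delta):=(\Delta\boxtimes_\omega\P^1)/M'$, an extension of $M_4/M_3=B_A(\delta_2,\delta_1)$ by $M_3/M'\cong B_A(\delta_1,\delta_2)$. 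The Lagrangian property gives $M'=(M')^\perp=((\Delta\boxtimes_\omega\P^1)/M')^*\otimes\omega=\Pi(\Delta)^*\otimes\omega$ — using that $B_A(\delta_1,\delta_2)$ and its strong dual are reflexive in $\mathrm{LCS}_A$ (Lemma~\ref{lem:magdimo}) — whence the exact sequence $0\to\Pi(\Delta)^*\otimes\omega\to\Delta\boxtimes_\omega\P^1\to\Pi(\Delta)\to0$. Finally $\Pi(\Delta)$ is, by Theorem~\ref{thm:maiapen}, a locally analytic $G$-representation in $A$-modules with central character $\omega=\delta_1\delta_2\chi^{-1}$: it is a $\mathscr{D}(G,A)$-module extension of two principal series, hence of $A$-LB-type, complete, Hausdorff, barrelled, and by construction an extension of $B_A(\delta_2,\delta_1)$ by $B_A(\delta_1,\delta_2)$.

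For the last assertion, suppose $\Delta$ is a non-split extension; I would show $\Pi(\Delta)$ is non-split by tracking the extension class. The assignment $[\Delta]\mapsto[\Pi(\Delta)]$ factors through the isomorphisms $\Ext^1_{\Phi\Gamma(\Robba_A)}(\Robba_A(\delta_2),\Robba_A(\delta_1))\cong H^1(A^+,\Robba_A(\delta_1\delta_2^{-1}))\cong H^1(\overline{P}^+,\Robba_A(\delta_1,\delta_2))$ of Proposition~\ref{prop:ext1h1a} and Theorem~\ref{3.31}, composed with the (Shapiro) identification of the target with $\Ext^1_{\mathscr{G}_{G,A}}(B_A(\delta_2,\delta_1),B_A(\delta_1,\delta_2))$; since each of these maps is injective, $[\Delta]\neq0$ forces $[\Pi(\Delta)]\neq0$. (A base-change reduction to \cite[Theorem 6.11]{colmez2015} would not suffice, since $A$ may be non-reduced and the class of $\Delta$ in the free rank-one module $H^1(A^+,\Robba_A(\delta_1\delta_2^{-1}))$ may be nilpotent, hence vanish modulo every maximal ideal.) I expect the main obstacle to be exactly the splitting of $W$: matching the abstract $\Ext^1$ obtained through Shapiro's Lemma with the concrete $(\varphi,\Gamma)$-cohomology of \S\ref{sec:maincofinz} requires careful bookkeeping of the various character twists and of the passage between analytic and continuous cohomology, and it is precisely here that the regularity hypothesis on $\delta_1\delta_2^{-1}$ is indispensable.
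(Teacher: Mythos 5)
Your construction of $\Delta\boxtimes_\omega\P^1$, the four-step filtration with graded pieces $B_A(\delta_2,\delta_1)^*\otimes\omega$, $B_A(\delta_1,\delta_2)$, $B_A(\delta_1,\delta_2)^*\otimes\omega$, $B_A(\delta_2,\delta_1)$, and the use of duality to relate the sub and the quotient all follow the paper's route. The genuine gap is at the step you yourself identify as the crux: the splitting of the middle extension $W$. You propose to prove that the whole group $\mathrm{Ext}^1_{\mathscr{G}_{G,A}}(B_A(\delta_1,\delta_2)^*\otimes\omega,\,B_A(\delta_1,\delta_2))$ vanishes, by pushing it through Shapiro's Lemma to an $\overline{P}^+$-cohomology group, then to an $A^+$-cohomology group via Theorem \ref{3.31}, and invoking regularity. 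This fails twice. First, Shapiro plus the central-character reduction only bring you to $H^1_{\rm an}(\overline{P}^+,\Robba_A^-(\delta_1,\delta_2)\boxtimes\P^1)$, i.e. sections over $\P^1$ of the \emph{minus} part; the further descent from $\P^1$ to $\zp$ used in Step 3 of Theorem \ref{thm:alps} is not available here, because it rests on Lemma \ref{cohom1} (invertibility of $(1+T)^{p^n}-1$ on $\Robba_A$), and this operator has a nonzero kernel on $\Robba_A^-\cong\mathrm{LA}(\zp,A)\otimes\chi^{-1}$ (functions factoring through $\zp/p^n\zp$), so the cohomology of the complementary piece does not vanish; moreover Theorem \ref{3.31} concerns $\Robba_A(\delta_1,\delta_2)$ itself, not $\Robba_A^-$ and not the $\boxtimes\P^1$ modules. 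Second, even where such identifications do hold, regularity does not kill the relevant $H^1$: the references you invoke (Proposition \ref{HiRreg}, Proposition \ref{cohomfinal}, Remark \ref{rem:recchenres}) give vanishing of $H^0$ and $H^2$ only, while $H^1(A^+,\Robba_A^-\otimes\delta)$ and $H^1(\overline{P}^+,\Robba_A^-(\delta_1,\delta_2))$ are free of rank one in the regular case (Proposition \ref{prop:LAfree}, Lemma \ref{cohom1-}, Proposition \ref{prop:detg}). So the Ext group you want to annihilate has no reason to be zero, and the paper never claims it is.

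What is actually needed, and what the paper proves, is not the vanishing of a group but of a specific class: the image of the class of $\Delta$ under the composite $H^1_{\rm an}(\overline{P}^+,\Robba_A(\delta_1,\delta_2))\xrightarrow{\sim}H^1_{\rm an}(\overline{P}^+,\Robba_A(\delta_1,\delta_2)\boxtimes\P^1)\to H^1_{\rm an}(\overline{P}^+,\Robba_A^-(\delta_1,\delta_2)\boxtimes\P^1)$. The paper obtains this from a commutative square whose bottom arrow $H^1_{\rm an}(\overline{P}^+,\Robba_A^-(\delta_1,\delta_2))\to H^1_{\rm an}(\overline{P}^+,\Robba_A^-(\delta_1,\delta_2)\boxtimes\P^1)$ is shown to be the zero map, by combining the rank-one freeness of its source (Proposition \ref{prop:detg}, which requires the separate relative comparison of Lemma \ref{lem:isogeh}) with the explicit argument of \cite[Remarque 5.26]{colmez2015}; since the top arrow is an isomorphism, the right vertical arrow vanishes and $W$ splits. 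This zero-map statement is the ingredient missing from your proposal. Your Lagrangian-complement refinement of the duality step is not needed (the paper gets $\Pi_1=\Pi_2$ directly from Lemma \ref{lem:dualla}), and your closing non-splitting argument is in the same spirit as the paper's; but without the zero-map input the splitting of $W$, and hence the construction of $\Pi(\Delta)$, does not go through as written.
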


\begin{rema}
Contrary to \cite[Theorem 6.11]{colmez2015}, unless $A$ is a finite extension of $\qp$, then $\Pi(\Delta)$ will not be of compact type (this is because $A$ is not of compact type as a locally convex $\qp$-vector space). Thus $\Pi(\Delta)$ is almost never an admissible $G$-representation in the sense of \cite{schtelaldisad}. It is however of $A$-LB-type, cf. Definition \ref{def:frech}. 
\end{rema}

Before we begin to prove Theorem \ref{thm:quas} we need to define and construct the $G$-module $\Delta \boxtimes_{\omega} \P^1$. 


\subsection{Notations}

We let $\mathrm{Ext}^1(\Robba_A(\delta_2), \Robba_A(\delta_1))$ denote the group of extensions of $\Robba_{A}(\delta_2)$ by $\Robba_{A}(\delta_1)$ in the category of $(\varphi,\Gamma)$-modules over $\Robba_A$. Note that, since every $(\varphi, \Gamma)$-module over $\Robba_A$ is analytic (cf. Remark \ref{analytic}), this last group coincides with the extension group $\mathrm{Ext}^1_{\mathrm{an}}(\Robba_A(\delta_2), \Robba_A(\delta_1))$ in the category of analytic $(\varphi, \Gamma)$-modules over $\Robba_A$ \footnote{This fact can also be seen by using the bijections $\mathrm{Ext}^1(\Robba_A(\delta_2), \Robba_A(\delta_1)) = H^1(A^+, \Robba_A(\delta_1 \delta_2^{-1})) = H^1_{\mathrm{an}}(A^+, \Robba_A(\delta_1 \delta_2^{-1})) = \mathrm{Ext}^1_{\mathrm{an}}(\Robba_A(\delta_2), \Robba_A(\delta_1))$ where the equalities follow from \cite[Lemme 2.2]{chen2013}, Proposition \ref{prop:lazcompcom} and Proposition \ref{prop:ext1h1a}, respectively.}. 

Let $H$ be a finite dimensional locally $\qp$-analytic group. We refer the reader to the appendix for the necessary definitions and properties of the theory of locally analytic $H$-representations in $A$-modules. We just recall that $\mathscr{G}_{H, A}$ (cf. Definition \ref{def:lacatlur}) denotes the category of complete Hausdorff locally convex $A$-modules equipped with a separately continuous $A$-linear $\mathscr{D}(H , A)$-module structure and we let $ \mathrm{Ext}^1_{H}(M,N)$ \footnote{Note that this is called $ \mathrm{Ext}^1_{\mathscr{G}_{H, A}}(M,N)$ in the appendix. We warn the reader that $\mathscr{G}_{H,A}$ is not an abelian category and so one needs to define precisely what the group of extensions means, cf. Definition \ref{def:goodextnoab}.} denote the group of extensions of $M$ by $N$ in the category $\mathscr{G}_{H, A}$.

\begin{exa}
From Lemma \ref{lem:princser} and Proposition \ref{actionG}, it follows that, if $? \in \{ +, -, \emptyset\}$, then the spaces $\Robba^?_A(\delta_i) \boxtimes_\omega \P^1$ are objects of the category $\mathscr{G}_{G, A}$.
\end{exa}

If $H_2$ is a closed locally $\qp$-analytic subgroup of a locally $\qp$-analytic group $H_1$, we have an induction functor $\mathrm{ind}_{H_1}^{H_2} \colon \mathscr{G}_{H_2, A} \to \mathscr{G}_{H_1, A}$, cf. Lemma \ref{lem:predjkz}. We cite the following fact from the appendix that will be of much use to us, cf. Proposition \ref{thm:relshaplem}. 


\begin{prop} [Relative Shapiro's Lemma] \label{Shap}
Let $H_1$ be a locally $\qp$-analytic group and let $H_2$ be a closed locally $\qp$-analytic subgroup. If $M$ and $N$ are objects of $\mathscr{G}_{H_2,A}$ and $\mathscr{G}_{H_1,A}$, respectively, then there are $A$-linear bijections
\[
\Ext^{q}_{H_1}(\mathrm{ind}_{H_2}^{H_1}(M), N) \to \Ext^{q}_{H_2}(M, N)
\]
for all $q \geq 0$. 
\end{prop}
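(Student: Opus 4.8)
The plan is to mimic the classical argument of Kohlhaase (and its non-relative counterpart in \cite{kohl2011}), working throughout in the category $\mathscr{G}_{H_i, A}$ of complete Hausdorff locally convex $A$-modules with separately continuous $\mathscr{D}(H_i, A)$-action. First I would establish the adjunction at the level of $\Hom$: for $M \in \mathscr{G}_{H_2, A}$ and $N \in \mathscr{G}_{H_1, A}$ there is a natural $A$-linear isomorphism
\[
\Hom_{\mathscr{G}_{H_1,A}}(\mathrm{ind}_{H_2}^{H_1}(M), N) \;\cong\; \Hom_{\mathscr{G}_{H_2,A}}(M, N|_{H_2}),
\]
where $\mathrm{ind}_{H_2}^{H_1}$ is the functor of Lemma \ref{lem:predjkz} (the dual of the standard smooth-vectors induction $\Ind_{H_2}^{H_1}$). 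Concretely, $\mathrm{ind}_{H_2}^{H_1}(M) = \mathscr{D}(H_1, A) \widehat{\otimes}_{\mathscr{D}(H_2, A)} M$ in an appropriate completed sense, so this is just the tensor–hom adjunction for the ring map $\mathscr{D}(H_2, A) \to \mathscr{D}(H_1, A)$, and the only thing to check is that the usual unit/counit maps are continuous and $A$-linear — which follows from the relative distribution algebra formalism of the appendix, in particular Proposition \ref{prop:scbres} and the fact that $\mathscr{D}(H_1,A)$ is an $A$-regular (good) module over $\mathscr{D}(H_2,A)$ after base change from $\qp$.

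Next I would upgrade this to higher $\Ext$ by a $\delta$-functor / effaceability argument. Since $\mathscr{G}_{H_i,A}$ is not abelian (as flagged in the text), the groups $\Ext^q_{H_i}$ are defined via the explicit construction of Definition \ref{def:goodextnoab} and computed by resolutions built from the standard complexes $\mathrm{LA}^\bullet(H_i, -)$ (equivalently, by $\mathscr{D}(H_i,A)$-injective-type resolutions à la Kohlhaase). The two sides $\Ext^\bullet_{H_1}(\mathrm{ind}_{H_2}^{H_1}(-), N)$ and $\Ext^\bullet_{H_2}(-, N)$ are both cohomological $\delta$-functors on $\mathscr{G}_{H_2,A}$ in the first variable; they agree in degree $0$ by the adjunction above; hence it suffices to show that the source $\delta$-functor $\Ext^\bullet_{H_1}(\mathrm{ind}_{H_2}^{H_1}(-), N)$ is effaceable in positive degrees, i.e. that $\mathrm{ind}_{H_2}^{H_1}$ carries a class of acyclic (e.g. "coinduced"/$\mathscr{D}(H_2,A)$-free-type) objects of $\mathscr{G}_{H_2,A}$ to $\Ext_{H_1}(-,N)$-acyclic objects of $\mathscr{G}_{H_1,A}$. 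The key input here is exactness of $\mathrm{ind}_{H_2}^{H_1}$ on the relevant short exact sequences — which reduces, via Proposition \ref{prop:scbres}, to the corresponding statement over $\qp$ from \cite{kohl2011} base-changed along the flat (indeed, topologically free) map $\qp \to A$ — together with the compatibility $\mathrm{ind}_{H_2}^{H_1} \circ (\text{coinduction from }\{e\}) \cong \text{coinduction from }\{e\}$, i.e. transitivity of induction. Transitivity of $\mathrm{ind}$ is itself a formal consequence of associativity of the completed tensor product over the tower $\mathscr{D}(\{e\},A) \to \mathscr{D}(H_2,A) \to \mathscr{D}(H_1,A)$.

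The main obstacle I anticipate is not the homological bookkeeping but the functional-analytic subtlety of the completed tensor products: one must make sure that $\mathrm{ind}_{H_2}^{H_1}(M)$ genuinely lands in $\mathscr{G}_{H_1,A}$ (complete, Hausdorff, separately continuous $\mathscr{D}(H_1,A)$-action) for every $M \in \mathscr{G}_{H_2,A}$, and that all the adjunction and transitivity isomorphisms are topological isomorphisms, not merely algebraic ones. This is where the $A$-regularity of $\mathrm{LA}(H,A)$ (Lemma \ref{lem:LAgoodfu}) and the base-change description $\mathscr{D}(H,A) = \mathscr{D}(H,\qp) \widehat{\otimes}_{\qp} A$ do the real work: they let me transport the corresponding properties from Kohlhaase's field case, where $\mathrm{ind}$ and the $\Ext$-formalism are already known to behave well, and deduce the relative statement by a flat (topologically free) base change along $\qp \to A$ together with the noetherianity of $A$. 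Once these topological points are in place, the argument that the two $\delta$-functors are universal — hence isomorphic in all degrees — is standard, and I would conclude by invoking it degree by degree, tracking that the isomorphisms are $A$-linear (immediate, since every map in sight is $A$-linear by construction).
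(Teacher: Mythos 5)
Your proposal is correct in substance, but it reaches the conclusion by a slightly different (and more roundabout) route than the paper. The paper's proof is direct: take the bar resolution $B_\bullet(H_2,M)\to M$, observe that $\mathrm{ind}_{H_2}^{H_1}$ sends it to an s-projective s-resolution of $\mathrm{ind}_{H_2}^{H_1}(M)$ (Lemma \ref{lem:predjkz}), and apply relative Frobenius reciprocity (Lemma \ref{lem:relfrob}) termwise to identify the two $\Hom$-complexes; taking cohomology finishes the proof in three lines. You instead package the same inputs -- degree-zero adjunction, exactness of $\mathrm{ind}$ on s-exact sequences, and the fact that $\mathrm{ind}$ of an s-free is s-free (transitivity of induction) -- into a $\delta$-functor/effaceability and dimension-shifting argument. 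That works, but only if you are careful to set up the $\delta$-functor formalism relative to the s-exact structure (the categories $\mathscr{G}_{H_i,A}$ are not abelian, and $\Ext$ here is defined via s-projective resolutions in the \emph{first} variable, not by injective-type resolutions as you suggest in passing), so the comparison must be run as dimension shifting along s-split short exact sequences rather than by invoking Grothendieck universality off the shelf; the direct resolution argument avoids this bookkeeping entirely. Two smaller corrections: the structural fact doing the real work is not ``$A$-regularity'' of $\mathscr{D}(H_1,A)$ over $\mathscr{D}(H_2,A)$ nor noetherianity of $A$, but the s-freeness isomorphism $\mathscr{D}(H_1,A)\cong \mathscr{D}(H_1/H_2,A)\,\widehat{\otimes}_{A,\iota}\,\mathscr{D}(H_2,A)$ as right $\mathscr{D}(H_2,A)$-modules (Lemma \ref{lem:dsfrs}), which is obtained from Kohlhaase's field case by the base-change identity $\mathscr{D}(H,A)=\mathscr{D}(H,\qp)\,\widehat{\otimes}_{\qp}\,A$ of Proposition \ref{prop:scbres}; this is the precise form of the ``flat base change from $\qp$'' you gesture at, and it is also what makes $\mathrm{ind}_{H_2}^{H_1}(M)\cong\mathscr{D}(H_1/H_2,A)\,\widehat{\otimes}_{A,\iota}\,M$ land in $\mathscr{G}_{H_1,A}$, resolving the topological concerns you raise.
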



\subsection{Extensions of $\Robba_A^+(\delta_2) \boxtimes_\omega \P^1$ by $\Robba_A(\delta_1) \boxtimes_{\omega} \P^1$}

Denote by $\overline{P} = \begin{pmatrix} \qpe & 0 \\ \qp & 1 \end{pmatrix}$ the lower-half mirabolic subgroup of the lower-half Borel $\overline{B} = \begin{pmatrix} \qpe & 0 \\ \qp & \qpe \end{pmatrix}$ and $\overline{U}^{1} = \begin{pmatrix} 1 & 0 \\ p\zp & 1 \end{pmatrix}$.  We are now ready to state the first result toward a proof of Theorem \ref{thm:quas}, which is essentially a formal consequence of Theorem \ref{3.31}.   

\begin{theo}\label{thm:alps}
Let $\delta_{1}$, $\delta_{2} \colon \qpe \rightarrow A^{\times}$ such that $\delta_{1}\delta_{2}^{-1}$ is regular. Then there is a natural isomorphism \[ \mathrm{Ext}^1_G(\Robba_A^+(\delta_2) \boxtimes_\omega \P^1, \Robba_A(\delta_1) \boxtimes_\omega \P^1) \cong \mathrm{Ext}^1(\Robba_A(\delta_2), \Robba_A(\delta_1)). \]
\end{theo}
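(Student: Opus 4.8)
The plan is to deduce Theorem~\ref{thm:alps} from the relative Shapiro's Lemma (Proposition~\ref{Shap}) together with the cohomology comparison isomorphism of Theorem~\ref{3.31}. First I would recall the identifications of Lemma~\ref{lem:princser} and Proposition~\ref{actionG}: the object $\Robba_A^+(\delta_2) \boxtimes_\omega \P^1$ is isomorphic to $B_A(\delta_1,\delta_2)^* \otimes \omega$ (up to swapping the roles of the $\delta_i$, with the precise bookkeeping being $\Robba^+_A(\delta_2)\boxtimes_\omega\P^1 \cong B_A(\delta_1,\delta_2)^*\otimes\omega$ once one matches conventions), which in turn is (the strong dual twisted by $\omega$ of) a locally analytic principal series, i.e. an induced representation $\mathrm{ind}_{\overline B}^G$ of a character of the lower Borel. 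The point is that $\Robba^+_A(\delta_2)\boxtimes_\omega\P^1$ is, as an object of $\mathscr{G}_{G,A}$, of the form $\mathrm{ind}_{\overline{B}}^{G}(\text{rank one character})\otimes\omega$, and more precisely, peeling off the center, of the form $\mathrm{ind}_{\overline{P}}^{G}(-)$ for an appropriate object over the mirabolic $\overline{P}$; I would make this reduction explicit so that Proposition~\ref{Shap} applies with $H_1 = G$ (or $\overline B$) and $H_2 = \overline{P}$ (resp.\ $\overline B$).

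Next I would apply Proposition~\ref{Shap} with $q=1$ to get
\[
\mathrm{Ext}^1_G(\Robba^+_A(\delta_2)\boxtimes_\omega\P^1,\ \Robba_A(\delta_1)\boxtimes_\omega\P^1) \;\cong\; \mathrm{Ext}^1_{\overline P}(\mathbf{1}\otimes(\cdots),\ \Robba_A(\delta_1)\boxtimes_\omega\P^1|_{\overline P}),
\]
thereby reducing the computation of the $G$-equivariant $\mathrm{Ext}^1$ to an $\mathrm{Ext}^1$ over the mirabolic $\overline P$ — equivalently, after untwisting by $\delta_2$, to the cohomology $H^1(\overline P^+, \Robba_A(\delta_1,\delta_2))$. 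Here the key input is the definition $\Robba_A(\delta_1,\delta_2) = (\Robba_A(\delta_1)\boxtimes_\omega\zp)\otimes\delta_2^{-1}$ together with the fact that $\overline P^+$ stabilizes $\zp\subset\P^1$, so restricting the $G$-sheaf to $\zp$ and twisting converts $\mathrm{Ext}^1_{\overline P}$ into $H^1(\overline P^+, \Robba_A(\delta_1,\delta_2))$; one also has to check that the (semigroup) cohomology $H^1(\overline P^+,-)$ computes the relevant $\mathscr{G}_{\overline P,A}$-extensions, which follows from the comparison between analytic and continuous cohomology (Proposition~\ref{prop:lazcompcom}) and the analogue of Proposition~\ref{prop:ext1h1a}. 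Then Theorem~\ref{3.31} gives $H^1(\overline P^+, \Robba_A(\delta_1,\delta_2)) \cong H^1(A^+, \Robba_A(\delta_1\delta_2^{-1}))$, and finally $H^1(A^+,\Robba_A(\delta_1\delta_2^{-1})) \cong \mathrm{Ext}^1(\Robba_A(\delta_2),\Robba_A(\delta_1))$ by Proposition~\ref{prop:ext1h1a} (or the cited \cite[Lemme 2.2]{chen2013}). Chaining these isomorphisms yields the claim.

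The main obstacle I anticipate is \emph{not} the cohomology comparison itself (that is Theorem~\ref{3.31}, assumed), but rather the careful verification that the abstract $\mathrm{Ext}^1$ in the non-abelian category $\mathscr{G}_{G,A}$ — where extensions must be defined with care, cf.\ Definition~\ref{def:goodextnoab} — is correctly matched, step by step, with semigroup cohomology: one must show that an extension of $\Robba^+_A(\delta_2)\boxtimes_\omega\P^1$ by $\Robba_A(\delta_1)\boxtimes_\omega\P^1$ in $\mathscr{G}_{G,A}$ is the same datum as a $G$-equivariant sheaf extension over $\P^1$, and that Shapiro's lemma is compatible with the sheaf-theoretic restriction to $\zp$ and with the $\delta_2$-twist. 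A secondary, more bookkeeping-type obstacle is keeping track of the twists by $\omega$, $\chi$, and $\delta_2$ and the asymmetry between $B_A(\delta_1,\delta_2)$ and $B_A(\delta_2,\delta_1)$ in the identifications of Lemma~\ref{lem:princser}, so that the statement comes out with $\mathrm{Ext}^1(\Robba_A(\delta_2),\Robba_A(\delta_1))$ on the nose rather than with the indices swapped. I would handle the naturality assertion by checking that each isomorphism in the chain is induced by an explicit (restriction or boundary) map and hence functorial in $\Delta$, which is needed for the subsequent gluing argument producing $\Delta\boxtimes_\omega\P^1$.
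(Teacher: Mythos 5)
Your outline is essentially the paper's own proof: identify $\Robba_A^+(\delta_2)\boxtimes_\omega\P^1$ with $\mathrm{ind}_{\overline{B}}^G(\delta_2\otimes\chi^{-1}\delta_1)$ via Lemma \ref{lem:princser} and Lemma \ref{lem:prinbag}, apply the relative Shapiro lemma (Proposition \ref{Shap}) to descend to $\overline{B}$, pass to $\overline{P}$ using the common central character $\omega$, reduce to $H^1_{\an}(\overline{P}^+,\Robba_A(\delta_1,\delta_2))$, and conclude with Theorem \ref{3.31}, Proposition \ref{prop:lazcompcom} and Proposition \ref{prop:ext1h1a}; the twist bookkeeping works out exactly as you anticipate.

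The one point where your justification does not match what is actually needed is the middle reduction. You present the passage from $\mathrm{Ext}^1_{\overline{P}}(\delta_2\otimes\chi^{-1}\delta_1,\ \Robba_A(\delta_1)\boxtimes_\omega\P^1)=H^1_{\an}(\overline{P},\Robba_A(\delta_1,\delta_2)\boxtimes\P^1)$ down to $H^1_{\an}(\overline{P}^+,\Robba_A(\delta_1,\delta_2))$ as an untwisting plus restriction of the sheaf to $\zp$, to be checked via Proposition \ref{prop:lazcompcom} and an analogue of Proposition \ref{prop:ext1h1a}. Those results do not perform this step (they only enter at the very end, to compare analytic with continuous cohomology and $A^+$-cohomology with extensions of $(\varphi,\Gamma)$-modules). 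What is genuinely required are two separate descent arguments: first, that restriction from $\overline{P}$ to the semigroup $\overline{P}^+$ is an isomorphism on $H^1_{\an}$ with coefficients in $\Robba_A(\delta_1,\delta_2)\boxtimes\P^1$ (as in \cite[Lemme 6.4]{colmez2015}); second, that the inclusion $\Robba_A(\delta_1,\delta_2)\subset\Robba_A(\delta_1,\delta_2)\boxtimes\P^1$ induces an isomorphism on $H^1_{\an}(\overline{P}^+,-)$, which amounts to proving $H^0_{\an}(\overline{P}^+,Q)=H^1_{\an}(\overline{P}^+,Q)=0$ for the quotient $Q=\Robba_A(\delta_1,\delta_2)\boxtimes(\P^1-\zp)$. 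The latter is a concrete cocycle computation: the vanishing of $H^i_{\an}$ for the unipotent direction follows from Lemma \ref{cohom1} after conjugating by $w$, and cocycles in the remaining directions are killed using the injectivity of ${\matrice 1 0 p 1}-1$ on $Q$. With these two steps supplied, the rest of your chain is exactly the argument in the paper, including the final identifications $H^1(\overline{P}^+,\Robba_A(\delta_1,\delta_2))\cong H^1(A^+,\Robba_A(\delta_1\delta_2^{-1}))\cong\mathrm{Ext}^1(\Robba_A(\delta_2),\Robba_A(\delta_1))$.
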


\begin{proof} Denote by $\Robba_A(\delta_1, \delta_2) \boxtimes \P^1$ the $\overline{P}$-module\footnote{Here $\delta_2$ is seen as a character of $\overline{P}$, by setting $\delta_2\begin{pmatrix} a & 0 \\ b & 1 \end{pmatrix} = \delta_2(a)$.} $(\Robba_A(\delta_1) \boxtimes_\omega \mathbf{P}^{1}) \otimes \delta_2^{-1}$, so that $\Robba_A(\delta_1,\delta_2)$ is identified with the sub-$\overline{P}^{+}$-module $(\Robba(\delta_1) \boxtimes_{\omega} \zp) \otimes \delta_2^{-1}$ of $\Robba_A(\delta_1, \delta_2) \boxtimes \P^1$. The proof is done in several steps and follows the proof of \cite[Th\'eor\`eme 6.1]{colmez2015} in the case where $A$ is a finite extension of $\qp$. 
\begin{itemize}
\item [(Step 1)] We first descend from $G$ to $\overline{P}$ using Shapiro's Lemma. Since $\Robba_A^+(\delta_2) \boxtimes_\omega \P^1 \cong \mathrm{Ind}_{\overline{B}}^G(\delta_1 \chi^{-1} \otimes \delta_2)^* \otimes \omega$, cf. Lemma \ref{lem:princser}, using Lemma \ref{lem:prinbag} we get that
\[ \Robba_A^+(\delta_2) \boxtimes_\omega \P^1 \cong \mathrm{Ind}_{\overline{B}}^G( \delta_2^{-1} \otimes \delta_1^{-1} \chi)^* \cong \mathrm{ind}_{\overline{B}}^G(\delta_2 \otimes \chi^{-1} \delta_1). \] So by Proposition \ref{Shap} (for $q=1$) we get
\[ \mathrm{Ext}^1_G(\Robba_A^+(\delta_2) \boxtimes_\omega \P^1, \Robba_A(\delta_1) \boxtimes_\omega \P^1) \cong \mathrm{Ext}^1_{\overline{B}}(\delta_2 \otimes \chi^{-1} \delta_1, \Robba_A(\delta_1) \boxtimes_\omega \P^1). \] 
Since we are only interested in locally analytic representations with a central character $\omega$, we don't loose any information by passing from $\overline{B}$ to $\overline{P}$ (since both $\delta_2 \otimes \chi^{-1}\delta_1$ and $\Robba_A(\delta_1) \boxtimes_\omega \P^1$ have the same central character, namely $\omega$) and thus we have 
\[ \mathrm{Ext}^1_{\overline{B}}(\delta_2 \otimes \chi^{-1} \delta_1, \Robba_A(\delta_1) \boxtimes_\omega \P^1) \cong \mathrm{Ext}^1_{\overline{P}}(\delta_2 \otimes \chi^{-1} \delta_1, \Robba_A(\delta_1) \boxtimes_\omega \P^1). \] 
Then, since (as $\overline{P}$-modules) $\Robba_A(\delta_1, \delta_2) \boxtimes \P^1 \cong ( \Robba_A(\delta_1) \boxtimes_\omega \P^1) \otimes (\delta_2^{-1} \otimes \chi \delta_1^{-1})$, the RHS in the above equality is also equal to \[ H^1_{\rm an}(\overline{P}, \Robba_A(\delta_1, \delta_2) \boxtimes \P^1). \]

\item [(Step 2)] We now descend from $\overline{P}$ to $\overline{P}^{+}$. That is the restriction of $\Robba_A(\delta_1, \delta_2) \boxtimes \P^1$ to a $\overline{P}^{+}$-module induces an isomorphism  
$$ H^1_{\rm an}(\overline{P}, \Robba_A(\delta_1, \delta_2) \boxtimes \P^1) = H^1_{\rm an}(\overline{P}^+, \Robba_A(\delta_1, \delta_2) \boxtimes \P^1). $$
This is shown in the exact same way as in \cite[Lemme 6.4]{colmez2015}.

\item [(Step 3)] Finally we descend from $\Robba_A(\delta_1, \delta_2) \boxtimes \P^1$ to $\Robba_A(\delta_1, \delta_2)$. More precisely we show that the inclusion $\Robba_A(\delta_1, \delta_2) \subset \Robba_A(\delta_1, \delta_2) \boxtimes \P^1$ (as $\overline{P}^{+}$-modules) induces an isomorphism
\[ H^1_{\rm an}(\overline{P}^+, \Robba_A(\delta_1, \delta_2)) \cong H^1_{\rm an}(\overline{P}^+, \Robba_A(\delta_1, \delta_2) \boxtimes \P^1). \] 
Indeed by the long exact sequence in cohomology associated to the short exact sequence 
\begin{equation}\label{eq:sdf}
0 \to \Robba_A(\delta_1, \delta_2) \to \Robba_A(\delta_1, \delta_2) \boxtimes \P^1 \to Q \to 0,
\end{equation}
where we define $Q := (\Robba_A(\delta_1,\delta_2) \boxtimes \P^1)/\Robba_A(\delta_1,\delta_2) \boxtimes \zp)$ as a $\overline{P}^+$-module, it suffices to show that $H^0_{\rm an}(\overline{P}^+, Q) = H^1_{\rm an}(\overline{P}^+, Q) = 0$. 

First observe that $Q=\Robba_A(\delta_1, \delta_2) \boxtimes (\P^1 - \zp)$ as $\overline{U}^1$-modules and that $H^1_{\rm an}(\overline{U}^1, Q) = 0$. Indeed, since $\overline{U}^1 = \begin{pmatrix} 0 & 1 \\ p & 0 \end{pmatrix} U^0 \begin{pmatrix} 0 & p^{-1} \\ 1 & 0 \end{pmatrix}$, it is enough to show that $H^1_{\rm an}(U^0, \Robba_A) = 0$, which follows from Lemma \ref{cohom1}. For the same reason $H^0_{\rm an}(\overline{U}^1, Q) = 0$ and so $H^0_{\rm an}(\overline{P}^+, Q) = 0$.  

Finally, let $c \mapsto c_g$ be a locally analytic $1$-cocycle over $\overline{P}^+$ with values in $Q$. By adding a coboundary we can assume that $c_g = 0$ for every $g \in \overline{U}^1$. For $a \in \zp \backslash \{0\}$, let $\alpha(a) = {\matrice a 0 0 1}$. By the relation $\alpha(a) {\matrice 1 0 {ap} 1} = {\matrice 1 0 p 1}\alpha(a)$ we get $c_{\alpha(a)} = {\matrice 1 0 p 1} c_{\alpha(a)}$ and thus $c_{\alpha(a)} = 0$ for every $a \in \zp \backslash \{0\}$ since ${ \matrice 1 0 p 1} - 1$ is injective on $Q$ (since $w \left({ \matrice 1 0 p 1} - 1\right) w = { \matrice 1 p 0 1} - 1$ is injective on $ \Robba(\delta_1, \delta_2) \boxtimes_\omega p \zp$ ). Thus $c_g = 0$ for all $g \in \overline{P}^+$.
\end{itemize}

Steps 1-3 show that
\[
\mathrm{Ext}^1_G(\Robba_A^+(\delta_2) \boxtimes_\omega \P^1, \Robba_A(\delta_1) \boxtimes_\omega \P^1) \cong H^1_{\rm an}(\overline{P}^+, \Robba_A(\delta_1, \delta_2)).
\]
The result now follows from Theorem \ref{3.31} and Proposition \ref{prop:lazcompcom}. 

\end{proof}

\subsection{The $G$-module $\Delta \boxtimes_{\omega} \P^1$}

In this section we show, following \cite[\S 6.3]{colmez2015}, the existence of an unique extension $\Delta \boxtimes_\omega \P^1$ extending that of $\Robba_A^+(\delta_2) \boxtimes_\omega$ by $\Robba_A(\delta_1) \boxtimes_\omega \P^1$ associated to a trianguline $(\varphi, \Gamma)$-module $\Delta \in \mathrm{Ext}^1(\Robba_A(\delta_2), \Robba_A(\delta_1))$ over $\Robba_A$ by Theorem \ref{thm:alps}. As in the introduction, we observe that working in the context of cyclotomic $(\varphi, \Gamma)$ simplifies considerably several proofs and constructions.

We begin by a lemma permitting to extend the involution.

\begin{prop}\label{prop:invext}
Let $\Delta, \Delta_1 \in \Phi\Gamma(\Robba_A)$ be in an exact sequence $0 \to \Delta_1 \to \Delta \xrightarrow{\alpha} \Robba_A(\delta) \to 0$, for $\delta \colon \qpe \to A^\times$ locally analytic, and let $\Delta_+ = \alpha^{-1}(\Robba_A^+(\delta)) \subseteq \Delta$. Let  $j_+ \colon \Robba_A^+(\Gamma) \to \Robba_A^+(\Gamma)$ and $j \colon \Robba_A(\Gamma) \to \Robba_A(\Gamma)$ be the involutions defined by $\sigma_a \mapsto \delta(a)\sigma_a^{-1}$.  The any $\Robba_A^+(\Gamma)$-anti-linear involution $\iota \colon \Delta_+ \boxtimes \zpe \to \Delta_+ \boxtimes \zpe$ with respect to $j_+$\footnote{i.e. satisfying $\iota \circ \lambda = j_+(\lambda) \circ \iota$} stabilizing $\Delta_1 \boxtimes \zpe$ extends uniquely to an $\Robba_A(\Gamma)$-anti-linear involution with respect to $j$ on $\Delta \boxtimes \zpe$.
\end{prop}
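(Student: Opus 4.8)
The statement is the relative analogue of \cite[Proposition 2.19]{colmez2015} (and its twisted variants used in \S\ref{RtimesP1}), adapted to an arbitrary short exact sequence of $(\varphi,\Gamma)$-modules. The strategy is to reduce the existence and uniqueness of the extended involution to the two ``extreme'' cases already available: the involution $w_*$ on $\Robba_A \boxtimes \zpe$ (Lemma \ref{lem3.50}) together with the multiplication operators $m_\delta$ (Proposition \ref{cor:stup}), which jointly give the involution $\iota_{\delta_1,\delta_2}$ on $\Robba_A(\delta_1)\boxtimes\zpe$, and the behaviour of these operators under $\Robba_A(\Gamma)$-(anti-)linearity. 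The key structural input is Proposition \ref{kpxgamma} / Remark \ref{rem:che}: after passing to $\Delta^{]0,r]}$ for $r$ small, $(\Delta^{]0,r]})^{\psi=0}=\Delta^{]0,r]}\boxtimes\zpe$ is finite projective over $\Robba_A^{]0,r]}(\Gamma)$, and similarly for $\Delta_1$, $\Delta_+$ and $\Robba_A(\delta)$; the exact sequence $0\to\Delta_1\to\Delta\to\Robba_A(\delta)\to 0$ remains exact after applying $(-)^{\psi=0}$ because $\psi$ is surjective (the snake lemma applied to multiplication by $\gamma-1$, which is invertible on all three $\psi=0$-parts). So $\Delta\boxtimes\zpe$ sits in a short exact sequence of finite projective $\Robba_A(\Gamma)$-modules.

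\textbf{Uniqueness.} First I would treat uniqueness, which also dictates the construction. Since $\Delta_+\boxtimes\zpe$ generates $\Delta\boxtimes\zpe$ over $\Robba_A(\Gamma)$ in the following sense: $\Robba_A(\delta)=\Robba_A^+(\delta)+(\text{stuff})$, more precisely $\Delta\boxtimes\zpe = (\Delta_+\boxtimes\zpe)\cdot\Robba_A(\Gamma)$ because already $\Robba_A^{\psi=0}=\Robba_A^{+,\psi=0}\cdot\Robba_A(\Gamma)$ (the $\Robba_A(\Gamma)$-module $\Robba_A^{\psi=0}$ is generated by $(1+T)\in\Robba_A^{+,\psi=0}$, cf. Remark \ref{rem:che}), one lifts a set of $\Robba_A(\Gamma)$-generators of $\Delta\boxtimes\zpe$ lying inside $\Delta_+\boxtimes\zpe$; the $j$-anti-linearity relation $\iota(\lambda\cdot e)=j(\lambda)\cdot\iota(e)$ then forces the value of $\iota$ on all of $\Delta\boxtimes\zpe$ from its values on these generators, which already lie in $\Delta_+\boxtimes\zpe$ and are prescribed. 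This gives uniqueness, and reduces existence to showing that the formula ``$\iota(\sum\lambda_i e_i):=\sum j(\lambda_i)\iota(e_i)$'' is well-defined (independent of the chosen presentation) and $\Robba_A(\Gamma)$-anti-linear, that it is an involution, and that it is continuous.

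\textbf{Existence.} For well-definedness one must check that any $\Robba_A(\Gamma)$-linear relation $\sum\lambda_i e_i=0$ among the chosen generators $e_i\in\Delta_+\boxtimes\zpe$ maps under $j$ to a relation $\sum j(\lambda_i)\iota(e_i)=0$. Here I would argue as in \cite{colmez2015}: the relations among the $e_i$ are controlled by the kernel of $\Robba_A(\Gamma)^{\oplus m}\to\Delta\boxtimes\zpe$, a finite projective module (since $\Delta\boxtimes\zpe$ is), and one checks the compatibility first on $\Delta_+\boxtimes\zpe$, where $\iota$ is genuinely defined and $j_+$-anti-linear by hypothesis, and the point is that $j$ restricts to $j_+$ on $\Robba_A^+(\Gamma)$; density of $\Robba_A^+(\Gamma)$ in $\Robba_A(\Gamma)$ (and continuity) propagates the identity. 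Concretely: given a relation over $\Robba_A(\Gamma)$, scale by an element of $\Robba_A^+(\Gamma)$ to bring coefficients into $\Robba_A^+(\Gamma)$, apply the known identity on $\Delta_+\boxtimes\zpe$, then divide back — using that $\Delta\boxtimes\zpe$ is torsion-free over $\Robba_A(\Gamma)$ (being finite projective), or more carefully, using flatness of $\Robba_A(\Gamma)\to\Robba_A(\Gamma)$ to cancel the auxiliary factor. The involution property $\iota^2=\mathrm{id}$ follows because $j^2=\mathrm{id}$ and $\iota^2=\mathrm{id}$ on $\Delta_+\boxtimes\zpe$, then spreads by anti-linearity. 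Continuity follows from the description of the topology on $\Delta\boxtimes\zpe\cong\Delta^{]0,r]}\boxtimes\zpe$ via finitely many $\Robba_A^{]0,r]}(\Gamma)$-generators, exactly as in Lemma \ref{lem3.50}, together with continuity of $j$ on $\Robba_A(\Gamma)$ (which reduces to continuity of $\sigma_a\mapsto\sigma_a^{-1}$ and of multiplication by $\delta$, i.e. of $m_\delta$, Proposition \ref{cor:stup}).

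\textbf{Main obstacle.} The delicate point is the well-definedness step: checking that $j$ sends the $\Robba_A(\Gamma)$-module of relations among a lift of generators to the module of relations among their images. Over a field this is Colmez's argument via uniqueness of $w_*$ on $\Robba^+\boxtimes\zpe$ and extension by continuity; in the relative setting one must ensure the scaling-and-dividing manoeuvre is legitimate, which is where finite projectivity of $\Delta\boxtimes\zpe$ over $\Robba_A(\Gamma)$ (hence its being a flat, torsion-free $\Robba_A(\Gamma)$-module) and the fact that $\Robba_A^{]0,r]}(\Gamma)$-elements like the image of $(1+T)$ are non-zero-divisors acting invertibly after inverting suitable elements, both due to Kedlaya's structure theorem Proposition \ref{kpxgamma}, are essential. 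Once this is in place, the stability of $\Delta_1\boxtimes\zpe$ is automatic since $\iota$ stabilizes it on $\Delta_+\boxtimes\zpe$ and $\Delta_1\boxtimes\zpe$ is an $\Robba_A(\Gamma)$-submodule, and the compatibility with $j$ versus $j_+$ is built into the construction.
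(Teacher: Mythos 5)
Your uniqueness argument is fine: $\Delta \boxtimes \zpe$ is indeed generated over $\Robba_A(\Gamma)$ by elements of $\Delta_+ \boxtimes \zpe$, so $j$-anti-linearity pins down any extension. The gap is in your existence/well-definedness step, and it is a real one. You propose to verify compatibility with relations by (i) scaling an arbitrary $\Robba_A(\Gamma)$-linear relation so that its coefficients land in $\Robba_A^+(\Gamma)$, and (ii) invoking density of $\Robba_A^+(\Gamma)$ in $\Robba_A(\Gamma)$ to propagate the $j_+$-anti-linearity identity by continuity. Neither is available: $\Robba_A(\Gamma)$ is not a localization of $\Robba_A^+(\Gamma)$ (a general element of a Robba ring is not a quotient of two "plus-part" elements, so you cannot clear denominators into $\Robba_A^+(\Gamma)$), and $\Robba_A^+(\Gamma)$ is not dense in $\Robba_A(\Gamma)$ — compare Example \ref{ex:afelrob}, where $\Robba_A^+$ is exhibited as a \emph{closed}, proper, topologically complemented submodule of $\Robba_A$, so continuity cannot upgrade $j_+$-anti-linearity on a non-dense subring to $j$-anti-linearity. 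Flatness/torsion-freeness of $\Delta\boxtimes\zpe$ over $\Robba_A(\Gamma)$ does not repair this, because the identity you want to "divide back" from was never established for the full coefficient ring.

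The paper's proof sidesteps all of this with one structural observation you did not use: $\Robba_A(\delta)^{\psi=0}$ is \emph{free of rank one} over $\Robba_A(\Gamma)$, generated by $(1+T)\otimes\delta$, a $\Gamma$-eigenvector (for the character $\delta$) lying in the plus part (Remark \ref{rem:che}); likewise $\Robba_A^+(\delta)^{\psi=0}$ is free of rank one over $\Robba_A^+(\Gamma)$ on the same generator. Hence the exact sequences of $\psi=0$-parts split, giving
\[
\Delta_+ \boxtimes \zpe \cong (\Delta_1 \boxtimes \zpe) \oplus \Robba_A^+(\Gamma)\cdot e_2, \qquad
\Delta \boxtimes \zpe \cong (\Delta_1 \boxtimes \zpe) \oplus \Robba_A(\Gamma)\cdot e_2,
\]
with one and the same $e_2 \in \Delta_+ \boxtimes \zpe$ satisfying $\sigma_a(e_2)=\delta(a)e_2$. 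Every $z \in \Delta\boxtimes\zpe$ then has a \emph{unique} expression $z = z_1 + \lambda e_2$ with $z_1 \in \Delta_1\boxtimes\zpe$ and $\lambda \in \Robba_A(\Gamma)$, and one simply sets $\iota'(z) = \iota(z_1) + j(\lambda)\iota(e_2)$: well-definedness is automatic from the uniqueness of the decomposition, anti-linearity forces this formula (whence uniqueness), and no analysis of relation modules, no clearing of denominators, and no density argument is needed. If you want to salvage your write-up, replace the relation-checking step by this splitting; the rest of your outline (stability of $\Delta_1\boxtimes\zpe$, the involution property, continuity) then goes through essentially as you describe.
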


\begin{proof}
As $\Robba_A^+(\Gamma)$-modules we have
\[
\Delta_+ \boxtimes \zpe \cong (\Delta_1 \boxtimes \zpe) \oplus \Robba_A^+(\Gamma) \cdot e_2,
\] 
for some $e_2 \in \Delta_+ \boxtimes \zpe$, with $\sigma_a(e_2) = \delta(a) e_2$, $a \in \zpe$; and similarly, as $\Robba_A(\Gamma)$-modules we have
\[
\Delta \boxtimes \zpe \cong (\Delta_1 \boxtimes \zpe) \oplus \Robba_A(\Gamma) \cdot e_2,
\]
so that, in particular, the module $\Delta_+ \boxtimes \zpe$ contains a basis of $\Delta \boxtimes \zpe$ as a $\Robba_A(\Gamma)$-module. Since the involution $\iota'$ we are looking for (on $\Delta \boxtimes \zpe$) extends $\iota$ and is $\Robba_A(\Gamma)$-anti-linear, we are forced to set, for any $z = z_1 + \lambda e_2$, $z_1 \in \Delta_1$, $\lambda \in \Robba_A(\Gamma)$,
\[
\iota'(z) = \iota(z_1) + j(\lambda) \iota(e_2).
\]
Since every element of $\Delta$ can be uniquely written in this way, we deduce the result.

%
%
%

\end{proof}

Denote by $G^{+} = \begin{pmatrix} \zp & \zp \\ p\zp & \zpe \end{pmatrix} \cap G$, $\overline{B}^{+} = \begin{pmatrix} \zp \backslash \{0 \} & 0 \\ p\zp & \zpe  \end{pmatrix}$ and note that $P^+ \subset G^{+}$, $\overline{B}^{+} \subset G^{+}$ and that $G^+$ stabilizes $\zp$ so that, if $M$ is a $G$-equivariant sheaf over $\P^1$, then $M \boxtimes \zp$ inherits an action of $G^+$. The next result explicitly describes the isomorphism of Theorem \ref{thm:alps} and gives the construction of the $G$-module $\Delta \boxtimes_\omega \P^1$ for a regular $(\varphi, \Gamma)$-module $\Delta$ over $\Robba_A$.

\begin{prop}\label{prop:nextal}
Let $M$ be a non-trivial extension of $\Robba_A^+(\delta_2) \boxtimes_\omega \P^1$ by $\Robba_A(\delta_1) \boxtimes_\omega \P^1$. Then:
\begin{enumerate}
\item $M$ constains a unique $G^+$-submodule $\Delta_+$ which is an extension of $\Robba_A^+(\delta_2) \boxtimes_\omega \zp$ by $\Robba_A(\delta_1) \boxtimes_\omega \zp$.
\item There exists a unique $\Delta \in \Phi\Gamma(\Robba_A)$ which is an extension of $\Robba_A(\delta_2)$ by $\Robba_A(\delta_1)$ such that $\Delta_+$ is identified with the inverse image of $\Robba^+_A(\delta_2)$ in $\Delta$. 
\item $\Delta_+ \boxtimes \zpe$ is stable under $w$ and, if we denote by $\iota$ the involution of $\Delta_+ \boxtimes \zpe$ induced by $w$, then $M = \Delta_+ \boxtimes_{\omega,\iota} \P^1$.
\item The involution $\iota$ extends uniquely to a $\Robba_A(\Gamma)$-anti-linear involution (with respect to $j$ defined above) on $\Delta \boxtimes \zpe$ and $\Delta \boxtimes_{w,\iota} \P^1$ is a $G$-module which is an extension of $\Robba_A(\delta_2) \boxtimes_\omega \P^1$ by $\Robba_A(\delta_1) \boxtimes_\omega \P^1$.
\end{enumerate}
\end{prop}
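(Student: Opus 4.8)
The plan is to prove the four assertions in the order stated, each being essentially a ``relative'' version of the corresponding step in \cite[\S 6.3]{colmez2015}, with the non-reduced case handled by the flatness of $\Robba_A$ over $A$ together with a $\Tor$-spectral sequence argument as in \S \ref{sec:extgrpact}.

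\textbf{Step 1 (construction of $\Delta_+$).} First I would note that $M \boxtimes \zp$ inherits an action of $G^+$ (since $G^+$ stabilizes $\zp$), and that the exact sequence $0 \to \Robba_A(\delta_1) \boxtimes_\omega \P^1 \to M \to \Robba_A^+(\delta_2) \boxtimes_\omega \P^1 \to 0$ restricts to an exact sequence of $G^+$-modules after applying $\Res_{\zp}$, giving a candidate $\Delta_+ := M \boxtimes \zp$ which is an extension of $\Robba_A^+(\delta_2) \boxtimes_\omega \zp$ by $\Robba_A(\delta_1) \boxtimes_\omega \zp$ as $\overline{P}^+$-modules (and in fact as $G^+$-modules). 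For uniqueness: if $\Delta_+'$ is another such $G^+$-submodule, then the composite $\Delta_+' \hookrightarrow M \to \Robba_A^+(\delta_2) \boxtimes_\omega \P^1$ must factor through $\Robba_A^+(\delta_2) \boxtimes_\omega \zp$ (by looking at supports / applying $\Res_{\P^1 - \zp}$ and using that the kernel $\Robba_A(\delta_1)\boxtimes_\omega\P^1 \to \Robba_A(\delta_1)\boxtimes_\omega \zp$ is controlled, as in the vanishing computations of Step 3 in the proof of Theorem \ref{thm:alps}), so $\Delta_+' \subseteq M \boxtimes \zp = \Delta_+$; comparing with the exact sequences forces equality. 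This is the argument of \cite[Proposition 6.6(i)]{colmez2015} and carries over verbatim with $A$ in place of $L$.

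\textbf{Steps 2--3 (recovering $\Delta$ and identifying $M = \Delta_+ \boxtimes_{\omega,\iota}\P^1$).} For (2), the $\overline{P}^+$-module $\Delta_+$ restricted to $P^+$ gives, via the dictionary of \S \ref{sec:prelimz}, a $(\varphi,\Gamma)$-module structure: the key point is that $\Delta_+ \boxtimes \zp$ carries compatible actions of $\varphi$, $\psi$ and $\Gamma$ coming from the $G^+$-structure, and the associated $P^+$-sheaf is the one attached to a $(\varphi,\Gamma)$-module $\Delta$ over $\Robba_A$ — here I would invoke the identification $\Delta_+$ corresponds to the preimage of $\Robba_A^+(\delta_2)$ in an extension $0 \to \Robba_A(\delta_1) \to \Delta \to \Robba_A(\delta_2) \to 0$, using the finiteness and freeness results (Proposition \ref{kpxgamma}) to see that $\Delta := \Delta_+[1/t]$ or rather the $\Robba_A$-span of $\Delta_+$ inside the appropriate localization is finite projective. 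Uniqueness follows since $\Delta$ is recovered from $\Delta_+$ by the formula $\Delta = \varinjlim \psi^{-n}(\Delta_+)$ (equivalently $\Res_{\zp}$ is a bijection onto $\Delta_+$ and $\Delta \boxtimes \zp = \Delta_+$ determines $\Delta$). For (3), the argument of \cite[Proposition 6.6(iii)]{colmez2015} shows $\Delta_+ \boxtimes \zpe = \Res_\zpe(\Delta_+)$ is stable under the involution $w$ attached to the $G$-action on $M$ (because $w$ preserves $M \boxtimes \zpe$ and $\Res_\zpe \Delta_+ = \Res_\zpe M \cap (\Delta \boxtimes \zpe)$ is cut out by the central character and the sub/quotient structure), and then the very definition of $M \boxtimes_{\omega,\iota}\P^1$ as pairs $(z_1,z_2)$ with $\Res_\zpe(z_1) = \iota(\Res_\zpe(z_2))$ together with the reconstruction of a $G$-equivariant sheaf from the gluing data along $\zpe$ gives $M \cong \Delta_+ \boxtimes_{\omega,\iota}\P^1$; this is the content of \cite[\S 3.1.1]{colmez2015} which applies over any coefficient ring.

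\textbf{Step 4 (extending the involution and globalizing).} Here I would apply Proposition \ref{prop:invext} with $\Delta_1 = \Robba_A(\delta_1)$, $\delta = \delta_2$, and $\iota$ the involution from (3), to get a unique $\Robba_A(\Gamma)$-anti-linear involution (with respect to $j \colon \sigma_a \mapsto \delta_2(a)\sigma_a^{-1}$, which up to the twist by $\chi$ is the $j$ appearing in the statement — I should be careful to match the conventions, writing $j$ for the involution $\sigma_a \mapsto \delta_2(a)\sigma_a^{-1}$ on $\Robba_A(\Gamma)$ as in Proposition \ref{prop:invext}) on all of $\Delta \boxtimes \zpe$ extending $\iota$. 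Then $\Delta \boxtimes_{w,\iota}\P^1$ is defined as the module of pairs $(z_1,z_2) \in \Delta \times \Delta$ with $\Res_\zpe(z_1) = \iota(\Res_\zpe(z_2))$, equipped with the $\tilde G$-action of \S \ref{RtimesP1}; it remains to check (a) the $\tilde G$-action factors through $G$ and (b) it lies in the exact sequence $0 \to \Robba_A(\delta_1)\boxtimes_\omega\P^1 \to \Delta\boxtimes_{w,\iota}\P^1 \to \Robba_A(\delta_2)\boxtimes_\omega\P^1 \to 0$. For (b), exactness is the exactness of the functor $N \mapsto N\boxtimes_{\omega,\iota}\P^1$ (the Lemma in \S \ref{RtimesP1}, applied fibrewise to $0 \to \Robba_A(\delta_1) \to \Delta \to \Robba_A(\delta_2)\to 0$, the involutions being compatible by construction). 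For (a), this is where I expect the \textbf{main obstacle}: one must show the relations defining $G$ as a quotient of $\tilde G$ are satisfied. I would reduce this to the case of a point as in the proof of Proposition \ref{actionG}: the action of $\tilde G$ on $\Delta\boxtimes_{w,\iota}\P^1$ commutes with base change $A \to A/\mathfrak{m}$ (by Lemma \ref{lem:cast} and the uniqueness in Proposition \ref{prop:invext}), so when $A$ is reduced the vanishing of $(g-1)z$ can be checked on each fibre, where it holds by \cite[Proposition 6.6(iv)]{colmez2015}; the general case follows by the same induction on the index of nilpotence of the nilradical, using the short exact sequences $0 \to (\Delta_{A/N}\boxtimes_{w}\P^1)\otimes_{A/N}N^j \to \Delta_A\boxtimes_w\P^1 \to \Delta_{A/N^j}\boxtimes_w\P^1 \to 0$ (which are exact by flatness of $\Robba_A$ over $A$, Lemma \ref{flatnessR}) and the fact that $g-1$ induces an endomorphism of this sequence vanishing on the two outer terms. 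Once the $G$-action is established, $\Delta \boxtimes_\omega\P^1 := \Delta\boxtimes_{w,\iota}\P^1$ is the desired $G$-module, and its uniqueness as an extension of $\Robba_A(\delta_2)\boxtimes_\omega\P^1$ by $\Robba_A(\delta_1)\boxtimes_\omega\P^1$ restricting to $M$ follows from the uniqueness statements in (1)--(4) and Theorem \ref{thm:alps}. Finally I would record that $\Delta\boxtimes_\omega\P^1$ is a $\qp$-analytic sheaf (hence an object of $\mathscr{G}_{G,A}$), being an extension of such in the category of separately continuous $\mathscr{D}(G,A)$-modules, as already used in \S \ref{sec:maincofinza}.
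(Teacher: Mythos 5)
There is a genuine gap, and it sits at the very start. Your Step 1 takes $\Delta_+ := M \boxtimes \zp$, i.e.\ you apply $\Res_{\zp}$ (and later $\Res_{\P^1 - \zp}$) to $M$. But $M$ is only given as an extension of $\Robba_A^+(\delta_2) \boxtimes_\omega \P^1$ by $\Robba_A(\delta_1) \boxtimes_\omega \P^1$ in the category of $G$-modules of analytic type (equivalently, of separately continuous $\mathscr{D}(G,A)$-modules); it carries no sheaf structure over $\P^1$ a priori, so ``$M \boxtimes \zp$'' is not defined. Endowing $M$ with such a structure is precisely the content of parts (1) and (3), so your construction of $\Delta_+$ is circular, and the same defect undermines your uniqueness argument (``looking at supports'') and your sketch of (3), where you again use $\Res_{\zpe}$ on $M$. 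The actual argument works inside the preimage $\tilde M \subseteq M$ of the genuine $G^+$-submodule $\Robba_A^+(\delta_2) \boxtimes_\omega \zp$ (this makes sense because the quotient \emph{is} a sheaf), which is an extension of $\Robba_A^+(\delta_2) \boxtimes_\omega \zp$ by $\Robba_A(\delta_1) \boxtimes_\omega \P^1$; one then pushes out along $\Robba_A(\delta_1)\boxtimes_\omega\P^1 \to Q' := (\Robba_A(\delta_1)\boxtimes_\omega\P^1)/(\Robba_A(\delta_1)\boxtimes_\omega\zp)$ and kills the obstruction by proving
$\mathrm{Ext}^1_{G^+}(\Robba_A^+(\delta_2)\boxtimes_\omega\zp, Q') = 0$, using $\Robba_A^+(\delta_2)\boxtimes_\omega\zp \cong \mathrm{ind}_{\overline{B}^+}^{G^+}(\delta_2\otimes\chi^{-1}\delta_1)$, Shapiro's lemma, and the injectivity of $\tau - 1$ on $Q' = \Robba_A(\delta_1)\boxtimes_\omega(\P^1-\zp)$ (Step 3 of the proof of Theorem \ref{thm:alps}). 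Uniqueness of $\Delta_+$ is proved by the same injectivity: if $X\subseteq M$ is another such submodule and $e\in X$ lifts $1\otimes\delta_2$, writing $e = e' + z$ with $e'\in\Delta_+$ and $z$ in the $Q'$-direction, the relation $(\tau-1)e \in \Robba_A(\delta_1)\boxtimes_\omega\zp$ forces $z=0$. None of this Ext-vanishing input appears in your proposal, and without it (1) does not get off the ground.

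For the remaining steps: your Step 2 (recovering $\Delta$ via $\psi^{-n}$-limits or $t$-inversion) is more complicated than needed — the paper simply uses that extensions of $\Robba_A^+(\delta_2)$ by $\Robba_A(\delta_1)$ as $A^+$-modules correspond to extensions of $\Robba_A(\delta_2)$ by $\Robba_A(\delta_1)$ — and your Step 3 omits the actual mechanism for $w$-stability of $\Delta_+\boxtimes\zpe$, namely the identity $w\matrice{p}{i}{0}{1} = \matrice{p}{i^{-1}}{0}{1}\matrice{-i^{-1}}{0}{p}{i}$ with the second factor in $G^+$, together with the surjectivity check in the sequence $0\to\Delta_+\boxtimes\zpe\to\Delta_+\oplus\Delta_+\to M\to 0$. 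Your Step 4 (Proposition \ref{prop:invext}, then factorization of the $\tilde G$-action through $G$ by reduction to a point and induction on the nilpotence index, using Lemma \ref{lem:cast} and flatness) does match the paper's argument.
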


\begin{proof}
We follow the proof of \cite[Proposition 6.7,]{colmez2015}. 
\begin{enumerate}
\item Note that $G^{+}$ stablizes $\zp$ and so $\Robba_A^{+}(\delta_2) \boxtimes_{\omega} \zp$ is a $G^{+}$-module. Step 1 of Theorem \ref{thm:alps} shows that
\[
\Robba_A^{+}(\delta_2) \boxtimes_\omega \zp \cong \mathrm{Ind}_{\overline{B}^+}^{G^+}( \delta_2^{-1} \otimes \delta_1^{-1} \chi)^* = \mathrm{ind}_{\overline{B}^+}^{G^+}( \delta_2 \otimes \chi^{-1} \delta_1).
\]
We have an exact sequence of $G^+$-modules
\[
0 \to \Robba_A(\delta_1) \boxtimes_{\omega} \zp \to \Robba_A(\delta_1) \boxtimes_{\omega} \P^1 \to Q' \to 0,
\]
where we define $Q' := (\Robba_A(\delta_1) \boxtimes_{\omega} \P^1)/ (\Robba_A(\delta_1) \boxtimes_{\omega} \zp)$ as a $G^+$-module.
Proposition \ref{Shap} gives
\begin{align*}
\mathrm{Ext}^1_{G^+}(\Robba_A^+(\delta_2) \boxtimes_\omega \zp, Q') &= \mathrm{Ext}^1_{\overline{B}^{+}}(\delta_2 \otimes \chi^{-1} \delta_1, Q') \\
&\overset{(i)}{=} \mathrm{Ext}^1_{\overline{P}^{+}}(\delta_2 \otimes \chi^{-1} \delta_1, Q') \\
&\overset{(ii)}{=} 0
\end{align*}
where (i) follows from the fact that $\delta_2 \otimes \chi^{-1} \delta_1$ and $\Robba_A(\delta_1) \boxtimes_\omega (\P^1-\zp)$ have the same central character $\omega$, and (ii) follows from Step 3 of the proof of Theorem \ref{thm:alps}. This proves the existence of a $G^{+}$-submodule $\Delta_{+} \subset M$. We now prove uniqueness. If $X \subset M$ is a $G^{+}$-extension of $\Robba_A^{+}(\delta_2) \boxtimes_{\omega} \zp$ by $\Robba_A(\delta_1) \boxtimes_{\omega} \zp$ and $e \in X$ is a lift of $1 \otimes \delta_2 \in \Robba^{+}(\delta_2)$, then $\left({ \matrice 1 0 p 1} - 1\right)\cdot e \in \Robba_A(\delta_1) \boxtimes_{\omega} \zp$. Writing $e = e' + z$ for some $e' \in \Delta_+$ and $z \in Q'$, we see that $\left({ \matrice 1 0 p 1} - 1\right)\cdot z \in \Robba_A(\delta_1) \boxtimes_{\omega} \zp$. Thus $z=0$ as $\left({ \matrice 1 0 p 1} - 1\right)$ is injective on $Q'$ (noting that $Q' = \Robba_A(\delta_1) \boxtimes_\omega (\P^1-\zp)$ as $\overline{U}^1$-modules). 

\item This follows from the fact that extensions (as $A^+$-modules) of $\Robba_A^{+}(\delta_2)$ by $\Robba_A(\delta_1)$ are in correspondence with extensions of $\Robba_A(\delta_2)$ by $\Robba_A(\delta_1)$. 

\item Since $\Delta_+$ is a $G^{+}$-module, by restricting to $P^{+}$, we can think of it as  a $P^{+}$-module living on $\zp$. Thus the notation $\Delta_+ \boxtimes \zpe$ makes sense.

For $i \in \zpe$ the identity
\[
w\begin{pmatrix}p & i \\ 0 & 1 \end{pmatrix} = \begin{pmatrix} p & i^{-1}\\ 0 & 1 \end{pmatrix}\begin{pmatrix} -i^{-1} & 0 \\ p & i \end{pmatrix}
\] 
and the fact that $\begin{pmatrix} -i^{-1} & 0 \\ p & i \end{pmatrix} \in G^{+}$ implies that
\begin{equation}\label{eq:alen}
w (\Delta_+ \boxtimes (i+ p\zp)) \subset \Delta_+ \boxtimes (i^{-1} + p\zp)
\end{equation}
The inclusion in \eqref{eq:alen} is in fact an equality since $w$ is an involution. This shows the stability of $\Delta_+ \boxtimes \zpe$ by $w$.

To show the equality of the statement, it suffices to show that the sequence
\begin{equation}\label{eq:donz}
0 \rightarrow \Delta_+ \boxtimes \zpe \xrightarrow{x \mapsto (x,- wx)} \Delta_+ \oplus \Delta_+ \xrightarrow{(y,z) \mapsto y + w \cdot z} M \to 0
\end{equation}
is exact. The exactness on the middle is clear since any section supported on $\zp$ such that it's involution is also supported on $\zp$ is necessarily supported on $\zpe$. Finally, to prove surjectivity of $\Delta_+ \oplus \Delta_+ \to M$ in \eqref{eq:donz}, it suffices to note that induced applications $\Robba_A(\delta_1) \oplus \Robba_A(\delta_1) \to \Robba_A(\delta_1) \boxtimes_{\omega} \P^1$ and $\Robba_A^+(\delta_2) \oplus \Robba_A^{+}(\delta_2) \to \Robba_A^+(\delta_2) \boxtimes_\omega \P^1$ are surjective. 

\item The existence and uniqueness of $\iota$ follows from Proposition \ref{prop:invext}. For the last part it suffices to show that the action of $\tilde{G}$ on $\Delta \boxtimes_{\omega, \iota} \P^1$ factorizes via $G$. First note that if $A$ is a finite extension of $\qp$ the result follows from \cite[Proposition 6.7(iv)]{colmez2015}. 
 
We now proceed by induction on the index $i \geq 0$ of nilpotence of $A$. Suppose first that $A$ is reduced. Take $(z_1,z_2) \in \Delta \boxtimes_{\omega, \iota} \P^1$ and $g$ in the kernel of $\tilde{G} \to G$. It suffices to show that $y = (g-1)z = 0$. Call $y = (y_1, y_2)$. Let $\mathfrak{m} \subset A$ be a maximal ideal. By Lemma \ref{lem:cast} and by the result for the case of a point, $y_i =0$ mod $\mathfrak{m}$. If we write $y_i = \sum_{n \in \Z} a_{n,i} T^n \oplus \sum_{n \in \Z} a_{n,i} T^n$ for $i=1,2$ we see that $a_{n,i} = 0$ mod $\mathfrak{m}$ and hence $y_i = 0$ so that $y = 0$, as desired.

Suppose now the result is true for every affinoid algebra of index of nilpotence $\leq j$ and let $A$ be an affinoid algebra whose nilradical $N$ satisfies $N^{j+1} = 0$ and $g$ be in the kernel of $\tilde{G} \to G$. We have the following short exact sequence (note that $\Delta$ is a flat $A$-module because it is an extension of flat $A$-modules and so is $\Delta \boxtimes_{\omega,\iota} \P^1$ who is topologically isomorphic to two copies of $\Delta$)
\[ 0 \to (\Delta \boxtimes_{\omega,\iota} \P^1) \otimes_{A} N^j \to \Delta \boxtimes_{\omega, \iota} \P^1 \to (\Delta \boxtimes_{\omega, \iota} \P^1) \otimes_A A/N^j \to 0. \]
We can identify $(\Delta \boxtimes_{\omega, \iota} \P^1) \otimes_A A/N^j$ with $(\Delta \otimes_A A/N^j) \boxtimes_{\omega, \iota} \P^1$ and $(\Delta \boxtimes_{\omega,\iota} \P^1) \otimes_{A} N^j$ with $(\Delta \otimes_{A} A/N) \boxtimes_{\omega,\iota} \P^1 \otimes_{A/N} N^j$. The result now follows by the inductive hypothesis and the base case.

\end{enumerate}

\end{proof}

From now on we denote by $\Delta \boxtimes_\omega \P^1$ the module $\Delta \boxtimes_{\omega, \iota} \P^1$ constructed in Proposition \ref{prop:nextal}. 

\subsection{The representation $\Pi(\Delta)$}
We are now almost ready to construct the representation $\Pi(\Delta)$ and prove Theorem \ref{thm:quas}. We will need some preparation results. We start by showing that $H^1(\overline{P}^{+}, \Robba_A^{-}(\delta_1, \delta_2))$ is a free $A$-module in the \emph{quasi}-regular case\footnote{Here quasi-regular means that pointwise $\delta_1\delta_2^{-1}$ is never of the form $\chi x^{i}$ for some $i \geq 0$. Clearly regular implies quasi-regular.}. 

\begin{prop}\label{prop:detg}
Let $\delta_1, \delta_2 \colon \qpe \to A^\times$ be locally analytic characters such that $\delta_1\delta_2^{-1}$ is quasi-regular. Then there is an isomorphism $H^1(\overline{P}^{+}, \Robba_A^{-}(\delta_1, \delta_2))$ is a free $A$-module of rank $1$.
\end{prop}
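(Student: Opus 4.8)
The plan is to reduce the statement to the $A^+$-cohomology computation already available in Section~\ref{sec:cohphigammch}, using the short exact sequence of $\overline{P}^+$-modules relating $\Robba_A^-(\delta_1,\delta_2)$ to $\Robba_A$ and $\mathrm{LA}(\zp,A)$, together with the analytic-versus-continuous comparison of Proposition~\ref{prop:lazcompcom} and the Hochschild--Serre spectral sequence for $\overline{P}^+ = \overline{U} \rtimes A^+$. Concretely, one first reduces to the reduced case by d\'evissage along the nilradical: if $N \subseteq A$ has $N^{j+1}=0$ the short exact sequence $0 \to \Robba_{A}^{-}(\delta_1,\delta_2)\otimes_A N^j \to \Robba_A^-(\delta_1,\delta_2) \to \Robba_{A/N^j}^-(\delta_1,\delta_2) \to 0$ together with the $\mathrm{Tor}$-spectral sequence, the flatness of $\Robba_A$ over $A$ (Lemma~\ref{flatnessR}), and the vanishing of the relevant connecting maps (as in Lemma~\ref{3.29} and Theorem~\ref{3.31}, but now for $\Robba_A^-$, where one uses the finiteness of $H^i(\overline{P}^+, \Robba_A^-(\delta_1,\delta_2))$ already established) lets one lift freeness from $A/N^j$ and $A/N$ to $A$ via Nakayama; so we may assume $A$ reduced.

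For $A$ reduced, the strategy is to show that $H^1(\overline{P}^+, \Robba_A^-(\delta_1,\delta_2))$ is finite (already known, Corollary~\ref{cor:fincohinto} and \S\ref{sec:maincofinza}), that it commutes with the base change $A \to A/\mathfrak{m}$ for every maximal ideal $\mathfrak{m}$ (via the $\mathrm{Tor}$-spectral sequence, using that $H^2$ and $H^3$ of the relevant Koszul-type complex vanish or are controlled — here the quasi-regularity hypothesis $\delta_1\delta_2^{-1}\ne \chi x^i$ pointwise is exactly what makes the pointwise $H^j(\overline{P}^+,\Robba_L^-(\delta_1,\delta_2))$ behave as in Proposition~\ref{cohomfinal}, giving $\dim_L H^1 = 1$ at every point), and finally that a finite $A$-module over a reduced affinoid whose fiber dimension is constantly $1$ is locally free of rank $1$ by \cite[Lemma 2.1.8]{kedlaya2014}. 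To pin down freeness (rather than merely local freeness) one exhibits an explicit global generator: the class of $\mathbf{1}_\zpe \delta \otimes \delta_2^{-1}$, which by Lemma~\ref{cohom1-}(1) generates $H^1$ at every point where $\delta_1\delta_2^{-1}$ is not of the form $x^i$; in the quasi-regular case this covers all points, and one checks this section is $\overline{P}^+$-invariant modulo coboundaries globally over $A$, hence defines a global class mapping to a basis in each fiber, so it is a generator by Nakayama.

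The main obstacle I expect is the base-change step: showing $H^1(\overline{P}^+, \Robba_A^-(\delta_1,\delta_2))\otimes_A A/\mathfrak{m} \xrightarrow{\sim} H^1(\overline{P}^+, \Robba_{A/\mathfrak{m}}^-(\delta_1,\delta_2))$ requires knowing that the higher cohomology groups $H^2$, $H^3$ of the pseudo-coherent Koszul complex $\mathscr{C}_{\tau,\varphi,\gamma}(\Robba_A^-(\delta_1,\delta_2))$ are either zero or free in a way that kills the relevant $\mathrm{Tor}_1$ contributions — and unlike the full module $\Robba_A(\delta_1,\delta_2)$, for $\Robba_A^-$ one does have a nonzero $H^2$ (dimension $1$ pointwise, by Proposition~\ref{cohomfinal}), so one must argue that this $H^2$ is itself locally free of rank $1$ (again via constancy of fiber dimension and \cite[Lemma 2.1.8]{kedlaya2014}) before the spectral sequence degenerates in the relevant range. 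Once this bootstrapping is set up correctly, the rest is a formal application of the results of \S\ref{sec:cohphigammch}, \S\ref{sec:maincofinza}, \S\ref{sec:maincofinz} and Proposition~\ref{prop:lazcompcom}.
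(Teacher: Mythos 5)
Your overall bookkeeping of the spectral sequence is fine (first kill $H^3$ by Nakayama, then base-change $H^2$, show it has constant fiber dimension $1$, then base-change $H^1$), but the argument as written proves strictly less than the statement: over a reduced $A$, constancy of the fiber dimension plus \cite[Lemma 2.1.8]{kedlaya2014} only gives that $H^1(\overline{P}^+,\Robba_A^-(\delta_1,\delta_2))$ is \emph{locally} free (projective) of rank $1$, and an affinoid algebra can have nontrivial rank-one projectives. Your proposed fix --- taking ``the class of $\mathbf{1}_\zpe\delta\otimes\delta_2^{-1}$'' as a global generator --- is not substantiated: that element lives in the coefficient module, not in the space of $1$-cocycles, and the identification of it with a generator of $H^1$ in Lemma \ref{cohom1-} goes through the Lie-algebra complex and its $\tilde{P}$-invariants, which the paper only sets up over a field $L$; no $\overline{P}^+$-cocycle over $A$ is actually produced, so ``invariant modulo coboundaries globally over $A$'' does not yet have a meaning. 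The nilpotent d\'evissage at the start has the same defect: knowing the result over $A/N$ and $A/N^j$ does not by itself lift \emph{freeness} (as opposed to a comparison isomorphism) to $A$, so the reduction to reduced $A$ is also unjustified as stated.

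The paper takes a different route that avoids both problems: it proves (Lemma \ref{lem:isogeh}) that restriction induces an isomorphism $H^1(\overline{P}^+,\Robba_A^-(\delta_1,\delta_2))\xrightarrow{\sim}H^1(A^+,\Robba_A^-(\delta))$ by running the cone/pseudo-coherence argument of Theorem \ref{3.31} for $\Robba_A^-$ --- using flatness of $\Robba_A^-$ (Lemma \ref{lem:kasd}), the vanishing $H^2(A^+,\Robba_A^-(\delta))=0$ in the quasi-regular case (Corollary \ref{H2A+R-cor}), and the point case \cite[Lemme 5.24]{colmez2015}, with the induction on nilpotence performed on the comparison map, not on freeness --- and then invokes Proposition \ref{prop:LAfree}, whose proof exhibits $H^1(A^+,\Robba_A^-\otimes\delta)$ as $\coker(f)$ with an explicit free direct summand $A\cdot\mathbf{1}_\zpe\delta$ of $(T_N\otimes\delta)^\Gamma$, valid for arbitrary, possibly non-reduced, $A$. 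It is this explicit $A^+$-side computation that yields genuine freeness; if you want to salvage your direct approach on the $\overline{P}^+$ side, you would have to supply an honest cocycle over $A$ realizing the generator (or some other argument trivializing the rank-one projective), which is precisely what the comparison with $A^+$-cohomology accomplishes.
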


\begin{proof}
This will follow from Proposition \ref{prop:LAfree} and Lemma \ref{lem:isogeh} below.
\end{proof}

\begin{lemm}\label{lem:kasd}
$\Robba_A^-$ is a flat $A$-module. 
\end{lemm}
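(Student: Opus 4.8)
The plan is to reduce the statement to the flatness of the annulus algebras $\Robba_A^I$ recorded in Lemma \ref{flatnessR}, by identifying $\Robba_A^-$ with a space of locally analytic functions and then exhibiting that space as a filtered colimit of finite direct sums of such algebras.

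Concretely, by the Colmez transform (Proposition \ref{amice}) there is an isomorphism of topological $A$-modules $\Robba_A^- \cong \mathrm{LA}(\zp, A)\otimes\chi^{-1}$; as the character twist does not affect the underlying $A$-module structure, it is enough to prove that $\mathrm{LA}(\zp, A)$ is $A$-flat. Since $\zp$ is compact, $\mathrm{LA}(\zp, A) = \varinjlim_{h\ge 1} C^{\mathrm{an}}_h(\zp, A)$, the directed colimit (along the natural inclusions) of the spaces $C^{\mathrm{an}}_h(\zp, A)$ of $A$-valued functions that are analytic on each ball $a+p^h\zp$, with $a$ running over $\zp/p^h\zp$. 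Decomposing $\zp$ into these $p^h$ balls and translating each of them to $p^h\zp = \{v_p(T)\ge h\}$ yields an $A$-linear isomorphism $C^{\mathrm{an}}_h(\zp, A) \cong (\Robba_A^{[h,\infty]})^{\oplus p^h}$. For $h\ge 1$ one has $[h,\infty]\subseteq\,]0,\infty]$, so $\Robba_A^{[h,\infty]}$ is flat over $A$ by Lemma \ref{flatnessR}; hence each $C^{\mathrm{an}}_h(\zp, A)$ is flat, being a finite direct sum of flat modules, and therefore so is the filtered colimit $\mathrm{LA}(\zp, A)$. This gives the flatness of $\Robba_A^-$.

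The one point deserving care is the identification $C^{\mathrm{an}}_h(\zp, A)\cong(\Robba_A^{[h,\infty]})^{\oplus p^h}$: one must check that the space of $A$-valued functions analytic on the closed disk $\{v_p(T)\ge h\}$ coincides on the nose with $\Robba_\qp^{[h,\infty]}\widehat{\otimes}_\qp A$, which is how $\Robba_A^{[h,\infty]}$ was defined. This holds because $\Robba_\qp^{[h,\infty]}$ is an orthonormalizable $\qp$-Banach space, so that completed tensoring with $A$ recovers exactly the $A$-coefficient version; I expect this bookkeeping to be the only nontrivial step, everything else being formal (exactness of filtered colimits and stability of flatness under finite direct sums and filtered colimits). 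One could instead argue directly with $\Robba_A^- = \varinjlim_{s>0}\bigl(\Robba_A^{]0,s]}/\Robba_A^+\bigr)$, but passing through $\mathrm{LA}(\zp, A)$ keeps the argument shortest.
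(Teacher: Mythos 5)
Your proof is correct, but it follows a genuinely different route from the paper's. You pass through the dictionary of functional analysis: via the Colmez transform (Proposition \ref{amice}) you replace $\Robba_A^-$ by $\mathrm{LA}(\zp,A)$ (the twist by $\chi^{-1}$ being irrelevant for the module structure), write it as the filtered colimit of the spaces $\mathrm{LA}_h(\zp,A)$, identify each of these, ball by ball, with $(\Robba_A^{[h,\infty]})^{\oplus p^h}$, and then invoke Lemma \ref{flatnessR} together with stability of flatness under finite direct sums and filtered colimits. The identification you flag as the delicate step is indeed fine: the paper already interprets $\Robba_A^{[h,\infty]} = \Robba_\qp^{[h,\infty]}\widehat{\otimes}_\qp A$ as power series with $A$-coefficients satisfying the $v^{[h,\infty]}$ convergence condition, which is exactly the space of $A$-valued analytic functions on a closed ball of radius $p^{-h}$, and the decomposition $\mathrm{LA}(\zp,A)=\varinjlim_h \mathrm{LA}_h(\zp,A)$ is the one used elsewhere in the paper (e.g.\ in the proof of Lemma \ref{lem:LAgoodfu}). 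The paper instead argues directly on the Robba-ring side: using Kedlaya's result that the Banach $A$-algebras $\Robba_A^{[r,s]}$ are of countable type, it identifies the quotient $\Robba_A^{[r,s]}/\Robba_A^{[r,\infty]}$ with a completed direct sum $\widehat{\oplus}_{i}Ae_i$, checks by hand that $I\otimes_A(-)$ injects for finitely generated ideals $I$, notes the quotient is independent of $r$ and equals $\Robba_A^{]0,s]}/\Robba_A^+$, and concludes by exactness of filtered colimits. In substance both arguments rest on the same mechanism (potentially orthonormalizable Banach $A$-modules are flat); yours outsources that mechanism to the already-quoted Lemma \ref{flatnessR} of Chenevier and so is shorter given the paper's toolkit, while the paper's version is self-contained at the level of the quotient and never needs the Colmez transform.
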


\begin{proof}
For $0 < r \leq s \leq \infty$ rings $\Robba_A^{[r, s]}$ are Banach $A$-algebras of countable type. Thus by \cite[Lemma 1.3.8]{kedpadic}, we can identify $\Robba_A^{[r, s]}/\Robba_A^{[r,\infty]}$ with the completed direct sum $\widehat{\oplus}_{i \in I}Ae_i$ where $(e_i)_{i \in I}$ form a potentially orthonormal basis. We note in the following $(\Robba_A^{[r, s]})^-$ the module $\Robba_A^{[r, s]}/\Robba_A^{[r,\infty]}$. Under this identification, if $I \subseteq A'$ is a finitely generated ideal of $A$, then 
\[
I \otimes_A (\Robba_A^{[r, s]})^- \cong \widehat{\oplus}_{i \in I} Ie_i.
\]
This implies that the morphism $I \otimes_A (\Robba_A^{[r, s]})^- \to (\Robba_A^{[r, s]})^-$ is injective. Thus $(\Robba_A^{[r, s]})^-$ is a flat $A$-module. Observe that, in fact, the quotient $\Robba_A^{[r, s]}/\Robba_A^{[r,\infty]}$ does not depend on $r$ and coincides with $\Robba_A^{]0, s]}/\Robba_A^{]0,\infty]}$ so that this last module is flat. Finally, since filtered colimits are exact, this implies that 
\[
\varinjlim_{s >0} \Robba_A^{]0, s]}/\Robba_A^{]0, \infty]} = \Robba_A/\Robba_A^+ = \Robba_A^-
\]
is a flat $A$-module. 

\end{proof}

%

\begin{lemm}\label{lem:isogeh}
Let $\delta_1, \delta_2 \colon \qpe \to A^\times$ such that $\delta_1\delta_2^{-1}$ is quasi-regular. The restriction morphism
\[
H^{1}(\overline{P}^{+}, \Robba_A^{-}(\delta_1, \delta_2)) \to H^1(A^+, \Robba_A^{-}(\delta))
\]
is an isomorphism.
\end{lemm}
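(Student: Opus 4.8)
The plan is to transpose the proof of Theorem \ref{3.31} \emph{verbatim}, with $\Robba_A^-(\delta_1,\delta_2)$ in place of $\Robba_A(\delta_1,\delta_2)$ and $\Robba_A^-(\delta)$ in place of $\Robba_A(\delta_1\delta_2^{-1})$, and to check that each of its ingredients survives in the present (merely quasi-regular) setting. The structural inputs used there are: (i) flatness of the coefficient module over $A$, which for $\Robba_A^-$ is exactly Lemma \ref{lem:kasd}; (ii) pseudo-coherence of the Koszul complexes, i.e.\ finiteness of $H^i(\overline{P}^+,\Robba_A^-(\delta_1,\delta_2))$ and $H^i(A^+,\Robba_A^-(\delta))$, which follows from Lemma \ref{cols} together with the computations of \S\ref{sec:cohphigammch}; and (iii) the vanishing statements $H^2(A^+,\Robba_A^-(\delta))=0$ — valid precisely because $\delta_1\delta_2^{-1}$ is quasi-regular, by Corollary \ref{H2A+R-cor} — and $H^3(\overline{P}^+,\Robba_A^-(\delta_1,\delta_2))=0$, which follows from the pointwise computation $\dim_L H^3(\overline{P}^+,\Robba_L^-(\delta_1,\delta_2))=0$ of Proposition \ref{cohomfinal}, combined with finiteness, the $\Tor$-spectral sequence (using Lemma \ref{lem:kasd}) and Nakayama's lemma. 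Granting these, one forms the cone $C^{\bullet}$ of the restriction morphism $C^{\bullet}_{\overline{P}^+}(\Robba_A^-(\delta_1,\delta_2))\to C^{\bullet}_{A^+}(\Robba_A^-(\delta))$ in $\mathcal{D}^{-}_{\mathrm{pc}}(A)$; the vanishing in (iii) forces $H^2(C^{\bullet})=H^3(C^{\bullet})=0$, the $\Tor$-spectral sequence then gives $H^1(C^{\bullet}\otimes^{\mathbf{L}}A/\mathfrak{m})=H^1(C^{\bullet})\otimes_A A/\mathfrak{m}$ for every maximal ideal $\mathfrak{m}$, and one concludes $H^0(C^{\bullet})=0$ and that $\gamma_1$ (hence $\gamma$) vanishes exactly as in loc.\ cit., using Nakayama and \cite[Lemma 2.1.8(1)]{kedlaya2014} (here one also uses that, $A$ being reduced, a finite $A$-module with locally constant fibre dimension is locally free).

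As in Theorem \ref{3.31}, one first reduces to the case where $A$ is reduced by induction on the nilpotence index of the nilradical $N$, via the sequences $0\to\Robba_A^-(\delta_1,\delta_2)\otimes_A N^i\to\Robba_A^-(\delta_1,\delta_2)\to\Robba_A^-(\delta_1,\delta_2)\otimes_A A/N^i\to0$ and the five-lemma. Two of the auxiliary lemmas simplify. The surjectivity of $H^0(A^+,\Robba_A^-(\delta))\to H^0(A^+,\Robba_A^-(\delta)\otimes_A A/N^i)$ needed in the chase is automatic, since $H^0(A^+,\Robba_A^-(\delta))=0$ (Lemma \ref{H0A+R-}, or directly by injectivity of $1-\delta(p)\varphi$, Lemma \ref{sum}). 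The analogue of Lemma \ref{3.29} — injectivity of $H^2(\overline{P}^+,\Robba_A^-(\delta_1,\delta_2)\otimes_A I)\to H^2(\overline{P}^+,\Robba_A^-(\delta_1,\delta_2))$ for an ideal $I\subseteq A$ — is a sub-case of the argument already given in the proof of Lemma \ref{3.29}: using Lemma \ref{lielem} one reduces to showing that if $f\in\mathrm{LA}(\zp,A)$ has $(a^++1)f$ and $(p\varphi-1)f$ both in $\mathrm{LA}(\zp,I)$ then $f\in\mathrm{LA}(\zp,I)$, and injectivity of $1-p\delta(p)\varphi$ on $\mathrm{LA}(\zp,A/I)$ (Lemma \ref{sum}) settles this, the relevant vanishing $H^2_{\Lie}(A^+,\Robba_L^-(\delta_1,\delta_2))=0$ being supplied again by Corollary \ref{H2A+R-cor}. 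The analogue of Proposition \ref{3.28} (reduced $A$, finite coefficients $M$) is formal since the fibres $M\otimes_A A/\mathfrak{m}$ are vector spaces, hence flat.

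The one genuinely new point, replacing the appeal to \cite[Proposition 5.18]{colmez2015} in Theorem \ref{3.31}, is the base case of the induction: the pointwise isomorphism
\[
H^1(\overline{P}^+,\Robba_L^-(\delta_1,\delta_2))\;\xrightarrow{\ \sim\ }\;H^1(A^+,\Robba_L^-(\delta))
\]
for $L$ a finite extension of $\qp$ with $\delta_1\delta_2^{-1}$ quasi-regular. Both sides are one-dimensional over $L$ — the left by Proposition \ref{cohomfinal}, the right by the point case of Proposition \ref{prop:LAfree} — so it suffices to see the restriction is nonzero. This can be done by comparing explicit generators: the class of $\mathbf{1}_{\zpe}\delta\otimes\delta$ generates the source (Lemma \ref{cohom1-}(1)), and the cocycle $g\mapsto(g-1)(\mathbf{1}_{\zpe}\delta\otimes\delta)$ restricted to $A^+$ represents the standard generator of the target; alternatively the statement can be extracted from \cite[Corollaire 5.23]{colmez2015}. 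This step — and, behind it, the need to rerun the whole derived-category argument rather than simply cite the case already treated — is where the work lies; everything else is bookkeeping.

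Finally, when $\delta_1\delta_2^{-1}$ is not merely quasi-regular but regular (pointwise never $x^{-i}$ either), one can bypass all of the above: by Proposition \ref{cohomfinal} and Proposition \ref{H1R+} the cohomologies $H^i(\overline{P}^+,\Robba_A^+(\delta_1,\delta_2))$ and $H^i(A^+,\Robba_A^+(\delta))$ vanish in every degree, so the long exact sequences attached to $0\to\Robba_A^+(\delta_1,\delta_2)\to\Robba_A(\delta_1,\delta_2)\to\Robba_A^-(\delta_1,\delta_2)\to0$ and to the corresponding $A^+$-sequence, together with the five-lemma, reduce Lemma \ref{lem:isogeh} directly to Theorem \ref{3.31}.
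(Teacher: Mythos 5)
Your proposal is correct and follows essentially the paper's own route: the paper's proof of this lemma is literally ``the same proof as Theorem \ref{3.31} with $\Robba_A^-$ replacing $\Robba_A$'', resting on exactly the inputs you list — flatness of $\Robba_A^-$ (Lemma \ref{lem:kasd}), pseudo-coherence of the two Koszul complexes via Lemmas \ref{H0A+R-}, \ref{cols} and Proposition \ref{prop:LAfree}, the quasi-regularity-driven vanishing $H^2(A^+,\Robba_A^-(\delta))=0$ of Corollary \ref{H2A+R-cor}, and the point case quoted from Colmez (\cite[Lemme 5.24]{colmez2015}, the same role your reference to Corollaire 5.23 plays). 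The one caveat is that your sketched ``direct'' check of the point case via the cocycle $g\mapsto(g-1)(\mathbf{1}_{\zpe}\delta\otimes\delta)$ does not work as written (such a map is a coboundary, hence trivial in $H^1$), so that step should rest on the citation, exactly as in the paper.
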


\begin{proof}
This is precisely the same proof as Theorem \ref{3.31} with $\Robba_A^{-}$ replacing $\Robba_A$. The key points are the following
\begin{itemize}
\item The morphism $$C^{\bullet}_{\overline{P}^{+}}(\mathscr{R}^-_{A}(\delta_1, \delta_2)) \rightarrow C^{\bullet}_{A^+}(\mathscr{R}^-_{A}(\delta))$$
is in $\mathcal{D}^{-}_{\mathrm{pc}}(A)$ by Lemmas \ref{H0A+R-} and  \ref{cols} and Proposition \ref{prop:LAfree}. 
\item $\Robba_A^-$ is flat $A$-module, cf. Lemma \ref{lem:kasd}. This means that for any maximal ideal $\mathfrak{m}  \subset A$ we have
$$C^{\bullet}_{\overline{P}^{+}}(\mathscr{R}^-_{A}(\delta_1, \delta_2)) \otimes^{\mathbf{L}} A/\mathfrak{m} \cong C^{\bullet}_{\overline{P}^{+}}(\mathscr{R}^-_{A/\mathfrak{m}}(\delta_1, \delta_2))$$
and
$$C^{\bullet}_{A^+}(\mathscr{R}^-_{A}(\delta)) \otimes^{\mathbf{L}} A/\mathfrak{m} \cong C^{\bullet}_{A^+}(\mathscr{R}^-_{A/\mathfrak{m}}(\delta)).$$
\item Since $\delta_1\delta_2^{-1}$ is quasi-regular, $H^{2}(C^{\bullet}_{A^+}(\mathscr{R}^-_{A}(\delta))) = 0$ by Corollary \ref{H2A+R-cor}.

\item The result is true when $A$ is a finite extension of $\qp$, cf. \cite[Lemme 5.24]{colmez2015}. 
\end{itemize}
\end{proof}

This completes the proof of Proposition \ref{prop:detg}. Finally we need a lemma which identifies $\check{\Delta} \boxtimes_{\omega^{-1}} \P^1$ as the topological dual of $\Delta \boxtimes_{\omega} \P^1$ (equipped with the strong topology). 

\begin{lemm} \label{lem:dualla}
If $\Delta$ is an extension of $\Robba_A(\delta_2)$ by $\Robba_A(\delta_1)$ and if the $G$-module $\Delta \boxtimes_{\omega} \P^1$ exists, then its dual is $\check{\Delta} \boxtimes_{\omega^{-1}} \P^1$. 
\end{lemm}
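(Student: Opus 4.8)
The plan is to combine the duality functor $\Delta \mapsto \check\Delta = \Hom_{\Robba_A}(\Delta,\Robba_A(\chi))$ recalled in \S\ref{sec:dark} with the explicit construction of $\Delta\boxtimes_\omega\P^1$ in Proposition \ref{prop:nextal}, and then to match up the pairings. First I would recall the pairing $\lbrace\ ,\ \rbrace\colon \check\Delta\times\Delta\to A$, $(\check z,z)\mapsto \mathrm{res}_0(\langle\sigma_{-1}(\check z),z\rangle)$, which identifies $\check\Delta$ with the topological dual of $\Delta$ (this uses that $\Delta$ is free over $\Robba_A$, which holds here since $\Delta$ is an extension of $\Robba_A(\delta_2)$ by $\Robba_A(\delta_1)$, hence of rank $2$ and free). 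Note that $\check\Delta$ is again an extension, namely of $\Robba_A(\chi\delta_1^{-1})$ by $\Robba_A(\chi\delta_2^{-1})$, and its associated character is $\check\omega := (\chi\delta_1^{-1})(\chi\delta_2^{-1})\chi^{-1} = \delta_1^{-1}\delta_2^{-1}\chi = \omega^{-1}$, so that $\check\Delta\boxtimes_{\omega^{-1}}\P^1$ makes sense and, by Proposition \ref{prop:nextal}, exists as soon as $\delta_2^{-1}\delta_1^{-1} = (\delta_1\delta_2^{-1})^{-1}$ times appropriate twists is regular; regularity of $\delta_1\delta_2^{-1}$ is symmetric enough that this is automatic (one checks $\chi x^i$ and $x^{-i}$ are interchanged under inversion of the relevant character).

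The core step is to produce a $G$-equivariant perfect pairing $(\check\Delta\boxtimes_{\omega^{-1}}\P^1)\times(\Delta\boxtimes_\omega\P^1)\to A$. I would extend the formula defining $\lbrace\ ,\ \rbrace$ to sections over $\P^1$ following \cite[\S II.1]{colmez2010} and the corresponding discussion in \cite{colmez2015}: given $z = (z_1,z_2)\in\Delta\boxtimes_\omega\P^1$ and $\check z=(\check z_1,\check z_2)\in\check\Delta\boxtimes_{\omega^{-1}}\P^1$, one sets $\langle\!\langle \check z,z\rangle\!\rangle := \lbrace \check z_1, z_1\rbrace_{\zp} + \lbrace \Res_{p\zp}(w\check z_1), \Res_{p\zp}(wz_1)\rbrace_{p\zp}$ or, more symmetrically, glues the local pairings on the two charts of $\P^1$ using the compatibility of $w_*$ with $\Res$ (Lemma \ref{lem3.50}, third bullet) and the relation $m_\delta\circ w_* = w_*\circ m_{\delta^{-1}}$ (Lemma \ref{lem:preinvres}), which guarantees that the involutions used to glue $\Delta$ and $\check\Delta$ are adjoint to each other. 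The verification that this is well-defined (independent of the chart) and $\tilde G$-invariant is a matter of checking the defining relations of $\tilde G$ on generators, exactly as in the rank-$1$ case; since $\Delta\boxtimes_\omega\P^1$ is built from $\Robba_A(\delta_i)\boxtimes_\omega\P^1$ by the extension of Proposition \ref{prop:nextal}, it suffices to treat the rank-$1$ constituents and then argue by a five-lemma/dévissage over the short exact sequence $0\to\Robba_A(\delta_1)\boxtimes_\omega\P^1\to\Delta\boxtimes_\omega\P^1\to\Robba_A(\delta_2)\boxtimes_\omega\P^1\to 0$.

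Perfectness of the pairing I would check by reduction to $\P^1 = \zp \cup w\zp$: on the chart $\zp$ the pairing restricts (up to the twist by $\delta_2^{\pm 1}$) to the pairing $\check\Delta\times\Delta\to A$ of \S\ref{sec:dark}, which is perfect by \cite[Proposition III.2.3]{colmirabol}; a Mayer--Vietoris type argument over the two charts, together with perfectness of the $\zpe$-pairing and compatibility of $w_*$ with restriction, then upgrades this to perfectness over $\P^1$. Alternatively, and perhaps more cleanly in the relative setting, I would use the exact sequence $0\to B_A(\delta_1,\delta_2)^*\otimes\omega \to \Robba_A(\delta_1)\boxtimes_\omega\P^1\to B_A(\delta_2,\delta_1)\to 0$ (from Lemma \ref{lem:princser}/Proposition \ref{actionG}) and its analogue for $\check\Delta$, observe that the pairing interchanges $B_A(\delta_1,\delta_2)^*\otimes\omega$ with the quotient $B_A(\delta_1,\delta_2)$ on the dual side (since $\check\delta_i = \chi\delta_i^{-1}$ and $\check\delta_1\chi^{-1}\otimes\check\delta_2 = \delta_1^{-1}\otimes\chi\delta_2^{-1}$ is exactly the contragredient datum of $\delta_1\chi^{-1}\otimes\delta_2$ twisted by $\omega^{\pm1}$), and conclude by the five lemma from the fact that the principal series $B_A(\delta_i,\delta_j)$ and their duals are strong $A$-duals of each other.

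The main obstacle will be the relative functional-analytic bookkeeping: one must make sure the pairing $\langle\!\langle\ ,\ \rangle\!\rangle$ identifies $\check\Delta\boxtimes_{\omega^{-1}}\P^1$ with the \emph{strong} $A$-dual (Definition \ref{def:stdua}) of $\Delta\boxtimes_\omega\P^1$ as a locally convex $A$-module, not merely as an abstract $A$-module, and this requires knowing that $\Delta\boxtimes_\omega\P^1$ is reflexive and barrelled (it is of $A$-LF-type, being an extension of such, so one can invoke the relative reflexivity statements of the appendix, e.g. the reflexivity of $\mathrm{LA}(\zp,A)$ and $\mathscr{D}(\zp,A)$, together with stability of reflexivity under extensions). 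Apart from that, everything is a faithful transcription of \cite[\S VI]{colmez2010} and the corresponding passages of \cite{colmez2015}, with $A$ in place of $L$; no step introduces a genuinely new difficulty beyond the care needed because the strong dual in $\mathrm{LCS}_A$ is, in general, badly behaved (as warned after Definition \textbf{A1}).
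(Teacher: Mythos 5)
Your core argument is the same as the paper's: both define the pairing on $\P^1$ by gluing the residue pairing $\lbrace\ ,\ \rbrace$ of \S\ref{sec:dark} over the two charts, $\lbrace \check{z}, z \rbrace_{\P^1} = \lbrace \Res_{\zp}\check{z},\Res_{\zp}z\rbrace + \lbrace \Res_{p\zp}w\cdot\check{z},\Res_{p\zp}w\cdot z\rbrace$, then check $\tilde{G}$-invariance and deduce perfectness from perfectness of $\lbrace\ ,\ \rbrace$. The one place you genuinely diverge is how the object $\check{\Delta}\boxtimes_{\omega^{-1}}\P^1$ is produced: the paper simply observes that if $(\Delta,\omega,\iota)$ is compatible then so is $(\check{\Delta},\omega^{-1},\iota^{*})$, where $\iota^{*}$ is the involution of $\check{\Delta}\boxtimes\zpe$ adjoint to $\iota$ under $\lbrace\ ,\ \rbrace$; this needs no hypothesis beyond the existence of $\Delta\boxtimes_{\omega}\P^1$, matching the statement of the lemma, which assumes no regularity. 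Your route instead re-invokes the regularity machinery (Theorem \ref{thm:alps}/Proposition \ref{prop:nextal}) to build the dual-side sheaf, which imports a hypothesis the lemma does not make, and your justification of why regularity carries over is off: dualizing sends the triangulation $(\delta_1,\delta_2)$ to $(\chi\delta_2^{-1},\chi\delta_1^{-1})$, whose ratio is again $\delta_1\delta_2^{-1}$ — nothing is ``interchanged under inversion'' (had the ratio actually been inverted, regularity would not be preserved, since $x^{i}$ is regular while $x^{-i}$ is not). Moreover, with your construction you would still owe the identification of the gluing involution on the dual side with the adjoint of $\iota$ before the displayed pairing can be shown well defined and $\tilde{G}$-invariant; defining $\iota^{*}$ as the adjoint, as the paper does, makes this step tautological and also works for the rank-two $\Delta$ directly, without the dévissage through the rank-one constituents. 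Your perfectness and strong-duality remarks are consistent with the paper's (sketched) argument.
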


\begin{proof}
This is the same proof as \cite[Proposition 3.2]{colmez2015}, so we just provide a sketch. It suffices to construct a perfect pairing which identifies $\Delta \boxtimes_{\omega} \P^1$ and $\check{\Delta} \boxtimes_{\omega^{-1}} \P^1$ as the topological duals of one another (as topological $A[G]$-modules). To construct this pairing first note that if $(\Delta, \omega, \iota)$ is compatible, then so is  $(\check{\Delta}, \omega^{-1}, \iota^{*})$, where 
\[
\iota^* \colon \check{\Delta} \boxtimes \zpe \to \check{\Delta} \boxtimes \zpe
\] 
is the involution of $\check{\Delta} \boxtimes \zpe$ adjoint to that of $\iota$ with respect to the pairing $\lbrace \text{ }, \text{ } \rbrace$ (cf. \S\ref{sec:dark} for the definition of $\lbrace \text{ }, \text{ } \rbrace$). To see this one first defines a pairing
\begin{align*}
\lbrace \text{ },\text{ } \rbrace_{\P^1} \colon (\check{\Delta} \boxtimes_{\omega^{-1}, \iota^{*}} \P^1) \times (\Delta \boxtimes_{\omega, \iota} \P^1) \to A
\end{align*} 
by the formula
\[
\lbrace \check{z}, z \rbrace_{\P^1} := \lbrace \Res_{\zp} \check{z}, \Res_{\zp} z \rbrace + \lbrace \Res_{p\zp} w \cdot \check{z}, \Res_{p\zp} w \cdot z \rbrace. 
\]
One then proceeds to check that $\lbrace g\cdot \check{z}, g\cdot z \rbrace_{\P^1} = \lbrace \check{z}, z \rbrace_{\P^1}$ for all $g \in \tilde{G}$. Finally one notes that since $\lbrace \text{ },\text{ } \rbrace$ is perfect, so is $\lbrace \text{ },\text{ } \rbrace_{\P^1}$.  
\end{proof}

\begin{proof}[Proof of Theorem \ref{thm:quas}]
By Proposition \ref{actionG}, Theorem \ref{thm:alps} and Proposition \ref{prop:nextal} we have that\footnote{The notation $M = [M_1 - M_2 - \ldots -M_n]$ means that $M$ admits an increasing filtration $0 \subseteq F_1 \subseteq \hdots \subseteq F_n = M$ by sub-objects such that $M_i = F_i / F_{i - 1}$ for $i = 1, \hdots, n$. }
\[
\Delta \boxtimes_{\omega} \P^1 = [B_A(\delta_2,\delta_1)^* \otimes \omega - B_A(\delta_1, \delta_2) - B_A(\delta_1, \delta_2)^* \otimes \omega - B_A(\delta_2, \delta_1)].
\]
We begin by showing that the middle extension $[B_A(\delta_1, \delta_2) - B_A(\delta_1, \delta_2)^* \otimes \omega]$ is split in the category $\mathscr{G}_{G, A}$. We compute
\begin{align*}
\mathrm{Ext}^1_G(B(\delta_1, \delta_2)^* \otimes \omega, \Robba_A^-(\delta_1) \boxtimes_\omega \P^1) &\overset{(i)}{\cong} \mathrm{Ext}^1_{\overline{B}}(\delta_2 \otimes \chi^{-1} \delta_1, \Robba_A^-(\delta_1) \boxtimes_\omega \P^1) \\
&\overset{(ii)}{\cong} \mathrm{Ext}^1_{\overline{P}}(\delta_2 \otimes \chi^{-1} \delta_1, \Robba_A^-(\delta_1) \boxtimes_\omega \P^1)
\end{align*}
where $(i)$ follows from Proposition \ref{Shap} and $(ii)$ follows from the fact that both $(\delta_2 \otimes \chi^{-1} \delta_1)$ and $\Robba_A^-(\delta_1) \boxtimes_\omega \P^1$ have the same central character. Then as $\overline{P}$-modules 
\[
(\Robba_A^-(\delta_1) \boxtimes_\omega \P^1) \otimes (\delta_2^{-1} \otimes \chi\delta_1^{-1}) \cong (\Robba_A^-(\delta_1) \boxtimes_\omega \P^1) \otimes \delta_2^{-1}. 
\]
Let us denote by $\Robba_A^-(\delta_1,\delta_2) \boxtimes \P^1$ the $\overline{P}$-module $(\Robba_A^-(\delta_1) \boxtimes_\omega \P^1) \otimes \delta_2^{-1}$, so that we get
\[ \mathrm{Ext}^1_{\overline{P}}(\delta_2 \otimes \chi^{-1} \delta_1, \Robba_A^-(\delta_1) \boxtimes_\omega \P^1) = H^1_{\rm an}(\overline{P}, \Robba_A^-(\delta_1,\delta_2) \boxtimes \P^1).\] Finally, by \cite[Lemme 6.4]{colmez2015}
\[
H^1_{\rm an}(\overline{P}, \Robba_A^-(\delta_1,\delta_2) \boxtimes \P^1) = H^1_{\rm an}(\overline{P}^+, \Robba_A^-(\delta_1,\delta_2) \boxtimes \P^1). 
\] Putting this calculations together, we conclude that
\[ \mathrm{Ext}^1_G(B(\delta_1, \delta_2)^* \otimes \omega, \Robba_A^-(\delta_1) \boxtimes_\omega \P^1)  = H^1_{\rm an}(\overline{P}^+, \Robba_A^-(\delta_1,\delta_2) \boxtimes \P^1). \]

Consider the commutative diagram:
$$
\begin{tikzcd} [row sep = large, column sep = large]
H^{1}_{\rm an}(\overline{P}^+, \Robba_A(\delta_1,\delta_2)) \arrow[r, "\sim"] \arrow[d] &
H^{1}_{\rm an}(\overline{P}^+, \Robba_A(\delta_1,\delta_2) \boxtimes \P^1) \arrow[d]  \\
H^{1}_{\rm an}(\overline{P}^+, \Robba_A^-(\delta_1,\delta_2)) \arrow[r] &
H^{1}_{\rm an}(\overline{P}^+, \Robba_A^-(\delta_1,\delta_2) \boxtimes \P^1),
\end{tikzcd}
$$
where the top horizontal arrow is an isomorphism by Theorems \ref{3.31} and \ref{thm:alps}. By reinterpreting the extensions in terms of cohomology classes, we see that showing that the middle extension $[B_A(\delta_1, \delta_2) - B_A(\delta_1, \delta_2)^* \otimes \omega]$ splits is equivalent to showing that the right vertical arrow is the zero morphism. Now, by Proposition \ref{prop:detg}, $H^{1}_{\rm an}(\overline{P}^+, \Robba_A^-(\delta_1,\delta_2))$ is a free $A$-module of rank 1 and the same proof as the first point in \cite[Remarque 5.26]{colmez2015} now shows that the bottom horizontal arrow
\[
H^{1}_{\rm an}(\overline{P}^+, \Robba_A^-(\delta_1,\delta_2)) \to H^{1}_{\rm an}(\overline{P}^+, \Robba_A^-(\delta_1,\delta_2) \boxtimes \P^1)
\]
is the zero morphism and so is the right vertical arrow, proving the claim.

It follows that $\Delta \boxtimes_{\omega} \P^1$ is an extension of $\Pi_1$ by $\Pi_2^* \otimes \omega$, where $\Pi_1$ and $\Pi_2$ are extensions of $B_A(\delta_2,\delta_1)$ by $B_A(\delta_1,\delta_2)$. By Lemma \ref{lem:dualla}, it follows that $\Pi_1 = \Pi_2$. We now define $\Pi(\Delta) := \Pi_2$. 

Furthermore if $\Delta$ is a non-trivial extension of $\Robba_A(\delta_2)$ by $\Robba_A(\delta_1)$, then so is the extension of $B_A(\delta_1,\delta_2)^* \otimes \omega$ by $\Robba_A(\delta_1) \boxtimes_{\omega} \P^1$ (recalling that 
\[
\mathrm{Ext}^1_G(\Robba_A^+(\delta_2) \boxtimes_\omega \P^1, \Robba_A(\delta_1) \boxtimes_\omega \P^1) \cong \mathrm{Ext}^1(\Robba_A(\delta_2), \Robba_A(\delta_1)),\]
cf. Theorem \ref{thm:alps}). This implies that $\Pi(\Delta)$ is a non-trivial extension of $B_A(\delta_2,\delta_2)$ by $B_A(\delta_1,\delta_2)$ and finishes the proof.

\end{proof}

\appendix
\section{The category of locally analytic $G$-representations in $A$-modules} \label{ap:lagrep}

Let $A$ be an affinoid $\qp$-algebra (in the sense of Tate). Let $H$ be a locally $\qp$-analytic group (for applications $H$ will be a closed locally $\qp$-analytic subgroup of $\GL_2(\qp)$). We attempt to generalise some definitions from \cite{schneidnon} in the case where the base coefficient is an $\qp$-affinoid algebra. There are some results in this direction in \cite[\S 3]{joheignnew} although our approach is different. In particular, our aim is to give a reasonable definition of the category of locally analytic $H$-representations in $A$-modules, $\mathrm{Rep}_{A}^{\mathrm{la}}(H)$, analogous to the definition in \cite[\S 3]{tetschlad} (in the case where $A$ is a finite extension of $\qp$), and to study the (locally analytic) cohomology of such a representation.

\subsection{Preliminaries and definitions}

In what follows if $V$ and $W$ are two locally convex $\qp$-vector spaces and in the situation that the bijection
\[
V \otimes_{\qp,\iota} W \to V \otimes_{\qp, \pi} W 
\]
is a topological isomorphism, we write simply $V \otimes_{\qp} W$ and $V \widehat{\otimes}_{\qp} W$ to denote the topological tensor product and its completion, respectively. Note that $H$ is strictly paracompact (this means that every open covering of $H$ admits a locally finite refinement of pairwise disjoint open subsets) and so it admits a covering of pairwise disjoint open compact subsets.  

We need to define a notion of a locally convex $A$-module. First we recall the definition of a \emph{topological} $A$-module.   

\begin{defi}
A topological $A$-module is an $A$-module endowed with a topology such that module addition $+ \colon M \times M \to M$ and scalar multiplication $\cdot \colon A \times M \to M$ are continuous functions (where the domains of these functions are endowed with product topologies). 
\end{defi}

\begin{defi}
Let $M$ be an $A$-module. A seminorm $q$ on $M$ is a function $q \colon M \to \mathbf{R}$ such that
\begin{itemize}
\item $q(am) = \lvert a \lvert q(m)$ for all $a \in A$ and $m \in M$, where $\lvert \cdot \lvert$ is any non-zero multiplicative seminorm on $A$.
\item $q(m+n) \leq \mathrm{max} \{q(m),q(n) \}$ for any $m, n \in M$. 
\end{itemize}
\end{defi}

Let $(q_i)_{i \in I}$ be a family of seminorms on an $A$-module $M$. We define a topology on $M$ to be the coarsest topology on $M$ such that
\begin{enumerate}
\item All $q_i \colon M \to \mathbf{R}$, for $i \in I$, are continuous.
\item All translation maps $m + - \colon M \to M$, for $m \in M$, are continuous.
\end{enumerate} 


\begin{rema} \label{rem:fagt}
One would at a first glance be tempted to define a locally convex $A$-module as a topological $A$-module whose underlying topology is given by a family of seminorms in the above sense. The problem with this definition is twofold. The $\qp$-affinoid algebra $A$ equipped with the topology defined by the Gauss norm say, will not necessarily be a locally convex $A$-module (unless $A$ is reduced). This is essentially due to the fact that the Gauss norm is not necessarily multiplicative on $A$. On the other hand the topology on $A$ defined by the seminorms coming from the Berkovich spectrum $\mathcal{M}(A)$ coincides with the topology induced by the spectral seminorm ($f \in A \mapsto \mathrm{max}_{x \in \mathcal{M}(A)} \lvert f(x) \lvert$). Under this topology $A$ will indeed be a locally convex $A$-module but not necessarily Hausdorff. 
\end{rema}

Due to Remark \ref{rem:fagt} we define a locally convex $A$-module in the following way.

\begin{defi} \label{def:right} 
A locally convex $A$-module is a topological $A$-module whose underlying topology is a locally convex $\qp$-vector space. We let $\mathrm{LCS}_A$ be the category of locally convex $A$-modules. Its morphisms are all continuous $A$-linear maps.
\end{defi}

\begin{rema}
Let us show that this definition is coherent in the case when $A = L$ is a finite extension of $\qp$. That is, that a locally convex $\qp$-vector space equipped with a continuous multiplication by $L$ is also an $L$-locally convex vector space.

We employ the notion of \cite[\S 4]{schneidnon}. Let $L$ be a finite extension of $\qp$. It is clear that any locally convex $L$-vector space in the sense of loc.cit. satisfies the conditions of Definition \ref{def:right}. On the other hand, let $M$ be a locally convex $\qp$-vector space (whose topology is defined by a family of lattices $\mathscr{B}$) equipped with a continuous multiplication by $L$. 
We show that we can equip $M$ with a system of lattices $\mathscr{B}'$ satisfying conditions (lc1) and (lc2) of \cite[\S 4]{schneidnon} with $K$ there replaced by $L$ defining the same topology as $\mathscr{B}$. Let $x_i$ be a $\zp$-basis of $\mathcal{O}_L$. For $U \in \mathscr{B}$, set $U' = \sum_i x_i U$ and denote $\mathscr{B}' = \{ U' : U \in \mathscr{B} \}$. It is easy to see that this family of $\mathcal{O}_L$-lattices satisfies conditions (lc1) and (lc2). We show that the topology defined by $\mathscr{B}'$ coincides with that defined by $\mathscr{B}$.

- Let $U \in \mathscr{B}$. Since multiplication by $x_i$ is an homeomorphism (multiplication by $x_i^{-1}$ being a continuous inverse) $x_i U$ for all $i$ is open (in the topology defined by $\mathscr{B}$). Thus so is $\sum x_i U = U'$, which shows that the topology defined by $\mathscr{B}$ is finer than that defined by $\mathscr{B}'$.

- On the otherhand, since $U' = \sum x_i U$ is open in the topology defined by $\mathscr{B}'$ so is $\cap x_i^{-1}U' = U$ ($L$ is a finite extension of $\qp$, so this is a finite intersection). This shows that the topology defined by $\mathscr{B}'$ is finer than that defined by $\mathscr{B}$. This shows that a locally convex $\qp$-vector space equipped with  a continuous multiplication by $L$ is also an $L$-convex vector space.  

\end{rema}

\begin{lemm}
$A$ equipped with its norm topology is a barrelled, complete Hausdorff locally convex $A$-module.
\end{lemm}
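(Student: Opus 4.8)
The statement to prove is that $A$, equipped with its Gauss-norm (or equivalently spectral-seminorm-adjacent) Banach topology, is a barrelled, complete, Hausdorff locally convex $A$-module. Most of these properties are essentially formal, and the plan is to verify them one at a time, reducing everything to standard facts about $\qp$-Banach spaces and the definition of a locally convex $A$-module (Definition \ref{def:right}).

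First I would check that $A$ is a topological $A$-module: module addition is continuous because $A$ is a topological group under addition (being a Banach space), and scalar multiplication $A \times A \to A$ is continuous because the Gauss norm is submultiplicative, i.e.\ $|fg|_A \leq |f|_A |g|_A$, so the multiplication map is even Lipschitz on bounded sets and jointly continuous. Next, the underlying topology of $A$ is by definition that of a $\qp$-Banach space, hence in particular a locally convex $\qp$-vector space; this is exactly what Definition \ref{def:right} requires, so $A \in \mathrm{LCS}_A$. Hausdorffness is immediate since $|\cdot|_A$ is a genuine norm (a Tate affinoid algebra with its Gauss norm is separated), and completeness is part of the definition of an affinoid algebra recalled in the Notations section ($A$ is a Banach space with norm $|\cdot|_A$).

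The only point requiring a small argument is barrelledness. Here I would invoke the general fact that every $\qp$-Banach space — more generally every Fréchet space, and indeed every Baire locally convex space — is barrelled: a barrel (a closed, convex, balanced, absorbing $\zp$-submodule, in the nonarchimedean formulation a closed absorbing lattice) in a Banach space is a neighbourhood of $0$, by the Baire category theorem applied to the countable covering $\bigcup_{n} p^{-n} B$ of $A$ by scalar multiples of the barrel $B$. Since $A$ is complete and metrizable it is a Baire space, so one of the $p^{-n}B$ has nonempty interior, and by homogeneity and the lattice property $B$ itself is a neighbourhood of $0$. This is a standard result (cf.\ the nonarchimedean functional analysis literature, e.g.\ Schneider's book), so I would simply cite it rather than reprove it.

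There is essentially no hard part here; the statement is a packaging lemma recording that the coefficient ring $A$ sits inside its own category $\mathrm{LCS}_A$ with the expected good properties, so that it can later serve as a test object and as a module over itself in the representation-theoretic constructions. The only thing to be slightly careful about — and worth a sentence in the write-up — is that one uses the Gauss norm topology rather than the spectral-seminorm topology discussed in Remark \ref{rem:fagt}; with the Gauss norm, $A$ is a topological $A$-module precisely because of submultiplicativity, and it is Hausdorff because the Gauss norm is a norm, even though it need not be multiplicative when $A$ is non-reduced.
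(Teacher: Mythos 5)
Your proof is correct and follows essentially the same route as the paper: continuity of multiplication from submultiplicativity of the Gauss norm, Hausdorffness and completeness because the topology is given by a Banach algebra norm, and barrelledness by citing the standard fact that $\qp$-Banach spaces are barrelled (the paper simply cites \cite[page 40]{schneidnon} rather than sketching the Baire category argument). No gaps.
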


\begin{proof}
Consider the induced Gauss norm on $A$, $\lvert \cdot \lvert$. By \cite[\S 3.1, Proposition 5(i)]{bosh1}, $\lvert \cdot \lvert$ is a $\qp$-algebra norm. Thus $A$ equipped with its norm topology is a locally convex $\qp$-vector space. Moreover since $\lvert \cdot \lvert$ is sub-multiplicative ($\lvert ab \lvert \leq \lvert a \lvert \cdot \lvert b\lvert$), multiplication by $A$ is continuous. Finally since the topology is defined by a norm, it is Hausdorff and $A$ is a $\qp$-Banach algebra for this norm. Note that all Banach spaces are barrelled, cf. \cite[page 40]{schneidnon}. 
\end{proof}

We now prove that there is a well defined Hausdorff completion for a locally convex $A$-module. 

\begin{lemm} \label{lem:hauscomp}
For any locally convex $A$-module $M$ there exists an up to unique topological isomorphism unique complete Hausdorff topological $A$-module $\widehat{M}$ together with a continuous $A$-linear map
\[
c_M  \colon M \to \widehat{M}
\]
such that the following universal property holds: For any continuous $A$-linear map $f \colon M \to N$ into a complete Hausdorff locally convex $A$-module $N$ there is a unique continuous $A$-linear map $\widehat{f} \colon \widehat{M} \to N$ such that
\[
f = \widehat{f} \circ c_M. 
\]
\end{lemm}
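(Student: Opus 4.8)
The plan is to construct $\widehat{M}$ concretely as the Hausdorff completion of $M$ regarded merely as a locally convex $\qp$-vector space, and then to check that the $A$-module structure extends to it. First I would recall that, since a locally convex $A$-module is by Definition \ref{def:right} in particular a locally convex $\qp$-vector space, the classical theory (as in \cite[\S 7]{schneidnon}) provides a complete Hausdorff locally convex $\qp$-vector space $\widehat{M}$ together with a continuous $\qp$-linear map $c_M \colon M \to \widehat{M}$ having the analogous universal property among continuous $\qp$-linear maps into complete Hausdorff locally convex $\qp$-vector spaces. Concretely $\widehat{M}$ may be realized as the completion of $M/\overline{\{0\}}$, or equivalently as a projective limit $\varprojlim_U M/U$ over the lattices $U$ defining the topology, with its natural projective-limit topology. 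The point is then that this same object carries a canonical $A$-module structure making it a complete Hausdorff \emph{locally convex $A$-module}.

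The key steps, in order, are as follows. (1) For each $a \in A$, scalar multiplication $m \mapsto am$ is a continuous $\qp$-linear endomorphism of $M$; composing with $c_M$ and using the universal property of the $\qp$-completion for the map $M \xrightarrow{a\cdot} M \xrightarrow{c_M} \widehat{M}$ yields a unique continuous $\qp$-linear map $\widehat{a\cdot} \colon \widehat{M} \to \widehat{M}$ with $\widehat{a\cdot}\circ c_M = c_M \circ (a\cdot)$. (2) Uniqueness in the universal property, together with density of $c_M(M)$ in $\widehat{M}$, forces the module axioms: $\widehat{(ab)\cdot} = \widehat{a\cdot}\circ\widehat{b\cdot}$, $\widehat{(a+b)\cdot} = \widehat{a\cdot}+\widehat{b\cdot}$, $\widehat{1\cdot} = \mathrm{id}$, and $\widehat{a\cdot}$ is additive, each of these being an identity of continuous maps that holds after precomposition with the map $c_M$ onto a dense subspace of a Hausdorff space, hence everywhere. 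This equips $\widehat{M}$ with an $A$-module structure and makes $c_M$ an $A$-linear map. (3) One must check that the joint scalar multiplication $A \times \widehat{M} \to \widehat{M}$ is continuous, not merely separately continuous; here I would use that the topology of $\widehat{M}$ is that of $\varprojlim_U M/U$ where each $U$ can be taken to be an $A$-submodule (since the original topology on $M$ is $A$-module topology, the lattices defining it can be shrunk to $A$-stable ones, exactly as in the coherence remark following Definition \ref{def:right}), so that $\widehat{M}$ is a projective limit of the seminormed quotients $M/U$ with their induced continuous $A$-actions, and continuity of the $A$-action passes to the projective limit. Thus $\widehat{M}$ is a complete Hausdorff locally convex $A$-module. (4) Finally, given any continuous $A$-linear $f \colon M \to N$ into a complete Hausdorff locally convex $A$-module $N$, the $\qp$-linear universal property yields a unique continuous $\qp$-linear $\widehat{f}\colon \widehat{M}\to N$ with $f = \widehat{f}\circ c_M$; $A$-linearity of $\widehat{f}$ follows once more by the dense-subspace/Hausdorff argument, since $\widehat{f}(\widehat{a\cdot} x) - a\widehat{f}(x)$ is a continuous function of $x$ vanishing on $c_M(M)$. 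Uniqueness of $\widehat{M}$ up to unique topological $A$-isomorphism is then the standard formal consequence of the universal property.

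I do not expect a serious obstacle here: the only mildly delicate point is step (3), ensuring the $A$-action on $\widehat{M}$ is jointly (and not just separately) continuous, which is why I would be careful to build $\widehat{M}$ as a projective limit over $A$-stable lattices rather than invoking the $\qp$-completion as a black box. Everything else is a routine application of the universal property of the $\qp$-Hausdorff-completion combined with the principle that a continuous map into a Hausdorff space is determined by its restriction to a dense subspace. I would present the proof in roughly this order, keeping the verifications of the module axioms terse.
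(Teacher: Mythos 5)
Your proposal is correct in substance and follows the same template as the paper's proof: pass to $M/\overline{\{0\}}$, take Schneider's $\qp$-Hausdorff completion $\widehat{M}$, transfer the $A$-structure across the dense image of $M$, and deduce uniqueness and the universal property formally. The difference is in how the $A$-action on $\widehat{M}$ is produced. The paper extends the whole multiplication map in one stroke: since $A$ is complete, $\widehat{A\times M}=A\times\widehat{M}$, and the continuous map $A\times M\to M\to\widehat{M}$ extends to a continuous $\alpha\colon A\times\widehat{M}\to\widehat{M}$, density then forcing the module axioms and the $A$-linearity of $c_M$. You instead extend each operator $m\mapsto am$ separately by the linear universal property and then argue joint continuity via stable lattices. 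Your route is longer but has the merit of making the joint-continuity issue explicit, which the paper's appeal to the (linear) universal property for a bilinear map rather glosses over.

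One correction is needed in your step (3). There are no proper open \emph{$A$-stable} lattices: an open $A$-submodule is absorbing, hence equal to $M$. What your argument actually needs, and what is true, is stability under the unit ball $A^0=\{a\in A: |a|_A\leq 1\}$; moreover the coherence remark after Definition \ref{def:right} does not supply this for an affinoid $A$, since its trick rests on a finite $\zp$-basis of $\mathcal{O}_L$. Instead use joint continuity of $A\times M\to M$ directly: given an open lattice $U$, pick $n$ and an open lattice $V$ with $p^nA^0\cdot V\subseteq U$; then $A^0\cdot p^nV$ is an $A^0$-stable open lattice contained in $U$. With this, the basic lattices $\ker(\widehat{M}\to M/U)=\overline{c_M(U)}$ of $\widehat{M}=\varprojlim_U M/U$ are $A^0$-stable (each $\widehat{a\cdot}$, $a\in A^0$, preserves the closure of $c_M(U)$), and joint continuity of $A\times\widehat{M}\to\widehat{M}$ follows from continuity at $(0,0)$ together with separate continuity, as you intended. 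Also phrase the limit statement carefully: the quotients $M/U$ are discrete and carry only an $A^0$-action, so the $A$-module structure lives on the limit, not on the individual terms. With these adjustments your proof goes through.
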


\begin{proof}
Uniqueness follows from the universal property. For the existence, replacing $M$ by $M/\overline{\{0\}}$ if necessary, we may assume that $M$ is Hausdorff (note that $\overline{\{0\}}$ is a locally convex $A$-module and thus so is $M/\overline{\{0\}}$). We consider $M$ as a locally convex $\qp$-vector space and let $\widehat{M}$ be as in \cite[Proposition 7.5]{schneidnon}. We show that $\widehat{M}$ is a topological $A$-module. It is easy to see that $\widehat{A \times M} = A \times \widehat{M}$ (as locally convex $\qp$-vector spaces). By the universal property in loc.cit. the $A$-module structure $A \times M \to M$ extends to a continuous morphism $\alpha \colon A \times \widehat{M} \to \widehat{M}$. Since $M \to \widehat{M}$ is an injection and $M$ is dense in $\widehat{M}$, it follows that $\alpha$ exhibits $\widehat{M}$ as a topological $A$-module. Finally since $c_M$ is an injection it is $A$-linear. 
\end{proof}

From now on when $A$ is considered as a locally convex $A$-module, we will assume it is equipped with its Gauss-norm topology, $(A, |\cdot|_A)$. 

We need the notion of the \emph{strong} dual of a locally convex $A$-module. This will be much less well behaved compared to the classical situation when $A$ is a finite extension of $\qp$. For example if the dimension of $H$ is greater than 1 then we do not even know if $\mathrm{LA}(H,A)$ is $A$-reflexive, cf. Conjecture \ref{conj:compLA} and Remark \ref{rem:conrelg}. 

\begin{defi} \label{def:stdua}
Let $M$ be a locally convex $A$-module. As an abstract $A$-module, we define the
\[
M'_b := \mathrm{Hom}_{A,\mathrm{cont}}(M,A)
\]
Now equip $\mathrm{Hom}_{\qp, \mathrm{cont}}(M,A)$ with the strong $\qp$-locally convex topology and give 
\[M'_b \subseteq \mathrm{Hom}_{\qp, \mathrm{cont}}(M,A)
\]
the induced subspace topology. We call $M'_b$ equipped with this topology, the strong dual of $M$. We say $M$ is $A$-reflexive if the canonical morphism $M \to (M'_b)'_b$ is a topological isomorphism. 
\end{defi}

\begin{rema}
Let $M$ be a locally convex $A$-module. The strong dual $M'_b$ can equivalently be defined with the topology obtained by taking the sets $\{f \colon M \to A | f(B) \subseteq U \}$ for $B \subseteq M$ a bounded set and $U \subseteq A$ open, as a system of neighbourhoods of $0$. 
\end{rema}

Let $R \in \{ \qp,A \}$. For $M$ and $N$ locally convex $R$-modules, we will sometimes write $\mathcal{L}_{R,b}(M,N) := \mathrm{Hom}_{R,\mathrm{cont}}(M,N)$ equipped with the strong topology. If $M$ is a locally convex $\qp$-vector space, then we will denote the \emph{classical} strong dual of $M$, cf. \cite[Chapter 1, \S 9]{schneidnon}, by $M'_{\qp,b}$. 

We now prove that $M'_b$ as given by Definition \ref{def:stdua} is indeed a locally convex $A$-module.

\begin{lemm}
If $M$ is a locally convex $A$-module, then so is its strong dual $M'_b$.
\end{lemm}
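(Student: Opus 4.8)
The statement to prove is: if $M$ is a locally convex $A$-module, then its strong dual $M'_b = \Hom_{A,\mathrm{cont}}(M,A)$, equipped with the topology induced from the inclusion $M'_b \subseteq \Hom_{\qp,\mathrm{cont}}(M,A) = \mathcal{L}_{\qp,b}(M,A)$, is again a locally convex $A$-module in the sense of Definition \ref{def:right}. So I need to check three things: that $M'_b$ is an $A$-module; that the induced topology makes it a topological $A$-module; and that this topology is that of a locally convex $\qp$-vector space.

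First I would record the algebraic structure. The set $\Hom_{A,\mathrm{cont}}(M,A)$ is an $A$-module via $(a\cdot f)(m) := a\, f(m)$; this is well defined and $A$-linear because $A$ is commutative and multiplication by $a$ on $A$ is continuous (so $a\cdot f$ is still continuous $A$-linear). This is a submodule of the $A$-module $\Hom_{\qp,\mathrm{cont}}(M,A)$, where the latter carries the $A$-action coming from post-composition with multiplication on the target $A$ (again well defined since $A\times A\to A$ is continuous). Next, the topological vector space facts: $\mathcal{L}_{\qp,b}(M,A)$ is a locally convex $\qp$-vector space by the general theory (\cite[Chapter 1, \S 9]{schneidnon}), hence so is its subspace $M'_b$; this immediately gives the ``underlying topology is a locally convex $\qp$-vector space'' clause.

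The one substantive point is continuity of the scalar multiplication $A \times M'_b \to M'_b$. I would argue directly with the defining system of neighbourhoods of $0$ for the strong topology, namely the sets $W(B,U) = \{ f : f(B) \subseteq U\}$ for $B\subseteq M$ bounded and $U\subseteq A$ a neighbourhood of $0$ (using the description in the Remark following Definition \ref{def:stdua}). Given $(a_0, f_0) \in A\times M'_b$ and a basic neighbourhood $f_0 + W(B,U)$ of $a_0 f_0$, one uses: (i) $U$ is a neighbourhood of $0$ in the topological ring $A$, so there is a neighbourhood $U_1$ of $0$ with $a_0 U_1 \subseteq U$ and, by continuity of multiplication, a neighbourhood $U_2$ of $0$ and $\epsilon>0$ with $\{a: |a-a_0|_A<\epsilon\}\cdot U_2 \subseteq$ (a fixed neighbourhood inside $U$); (ii) $f_0(B)$ is a bounded subset of $A$ (image of a bounded set under a continuous linear map), so there is $\epsilon'>0$ with $\{a : |a-a_0|_A < \epsilon'\}\cdot f_0(B) \subseteq U$. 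Combining, for $a$ close to $a_0$ and $f \in f_0 + W(B, U')$ with $U'$ chosen small enough, $(af - a_0 f_0)(B) = a(f-f_0)(B) + (a-a_0)f_0(B) \subseteq U$, which is exactly the required inclusion. Additivity and continuity of addition on $M'_b$ are inherited from $\mathcal{L}_{\qp,b}(M,A)$. The main (only) obstacle is keeping the bookkeeping of neighbourhoods in the non-multiplicative ring $A$ honest — one must not assume $|\cdot|_A$ is multiplicative and instead invoke only continuity of $A\times A \to A$ and boundedness of $f_0(B)$; once that is set up the verification is routine. This completes the proof.
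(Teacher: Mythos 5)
Your proof is correct and follows essentially the same route as the paper: reduce to the continuity of the scalar multiplication $A \times M'_b \to M'_b$, then verify it directly from the defining data of the strong topology via the decomposition $af - a_0f_0 = a(f-f_0) + (a-a_0)f_0$. The paper phrases this with the seminorms $p_B(f) = \sup_{v \in B}\lvert f(v)\rvert$ and the estimate $p_B(af) \leq \lvert a\rvert\, p_B(f)$ coming from sub-multiplicativity of the Gauss norm, which is just a compact form of your neighbourhood bookkeeping with the sets $\{f : f(B)\subseteq U\}$.
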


\begin{proof}
By \cite[\S 5]{schneidnon}, $M'_b$ is a locally convex $\qp$-vector space. So it suffices to show that multiplication by $A$ is continuous. This comes down to chasing definitions. Let $\lvert \cdot \lvert$ be the Gauss norm on $A$. For any bounded set $B$ of $M$ (viewing $M$ as a locally convex $\qp$-vector space), we have the seminorm (on $\mathrm{Hom}_{\qp, \mathrm{cont}}(M,A)$)
\[
p_{B}(f) := \mathrm{sup}_{v \in B} \lvert f(v) \lvert.
\]
The locally convex topology on $\mathrm{Hom}_{\qp, \mathrm{cont}}(M,A)$ is then defined by the family of seminorms $\{ p_B \}_{B \in \mathscr{B}}$ where $\mathscr{B}$ is the set of all bounded subsets of $M$. For any finitely many seminorms $p_{B_1}, p_{B_2}, \ldots, p_{B_r}$ in the given family and any real number $\epsilon >0$ the open sets
\[
\{ f \in \mathrm{Hom}_{\qp, \mathrm{cont}}(M,A) \lvert \text{ }p_{B_1}, p_{B_2}, \ldots, p_{B_r}(f) \leq \epsilon \}
\]
form a basis around 0 of $\mathrm{Hom}_{\qp, \mathrm{cont}}(M,A)$. For any $a \in A$ we have $p_B(af) \leq \lvert a \lvert p_B(f)$ and now it is easy to see that multiplication by $A$ is  continuous on $\mathrm{Hom}_{\qp, \mathrm{cont}}(M,A)$. Thus it is also continuous on $\mathrm{Hom}_{A, \mathrm{cont}}(M,A)$.  
\end{proof}

\begin{exa} \label{ex:afelrob}
By \cite[Lemma 2.1.19]{kedlaya2014} (cf. also \cite[5.5 Proposition]{richcrewfin}), we have  topological isomorphisms \footnote{Note that, identifying $\Robba_A^- = \varinjlim_s \Robba_A^{]0, s]} / \Robba_A^{]0, +\infty]}$, one can see (cf. Lemma \ref{lem:LAgoodfu} and Lemma \ref{homvarinj} below) that $\mathcal{L}_{A, b}(\Robba_A^-, A) = \varprojlim_s \mathcal{L}_{A, b}( \Robba_A^{]0, s]} / \Robba^{]0, +\infty]}, A)$ is Fr\'echet (observe that the space $\Robba_A^{]0, s]} / \Robba_A^{]0, +\infty]}$ with its topology defined by $v^{]0, s]}$, where  $v^{]0, s]}$ is the valuation induced by $v^{[r, s]}$ for any $r < s$, is a Banach space).}
\[ \mathcal{L}_{A, b}(\Robba_A^{-}, A) = \Robba_A^+, \;\;\; \mathcal{L}_{A, b}(\Robba_A, A) = \Robba_A. \] Note that, if we denote by $\Robba_A^\sim$ the sub-$A$-module of $\Robba_A$ given by Laurent series whose non-negative powers of $T$ vanish, equipped with it's induced topology, then we have $(\Robba_A^+)^{\perp} = \Robba_A^+$ and $(\Robba_A^\sim)^{\perp} = \Robba_A^\sim$ (the orthogonal is taken with respect to the natural separately continuous pairing $\Robba_A \times \Robba_A \to A$, $(f, g) \mapsto \text{r\'es}_0(f(T)g(T) dT)$). This shows that $\Robba_A^+$ and $\Robba_A^\sim$ are closed sub-$A$-modules of $\Robba_A$ and that we have a topological decomposition $\Robba_A = \Robba_A^+ \oplus \Robba_A^\sim$ (cf. \cite[Proposition 8.8(ii)]{schneidnon}). We can conclude, by using \cite[Lemma 5.3(ii)]{schneidnon}, that we have a topological isomorphism of locally convex $A$-modules $\Robba_A^- \cong \Robba_A^\sim$ and $\Robba_A^+ \cong \Robba_A / \Robba_A^\sim$. On the other hand, the same argument of \cite[Lemma 2.1.19]{kedlaya2014} shows that \[ \mathcal{L}_{A, b}(\Robba_A^+, A) = \Robba_A^-. \] Indeed, as in loc. cit., the inverse to the natural map $\Robba_A \to \mathrm{Hom}_{A, \mathrm{cont}}(\Robba_A, A)$ induced by the pairing $\text{r\'es}_0$ is given by associating, to any $\mu \in \mathrm{Hom}_{A, \mathrm{cont}}(\Robba_A, A)$, the power series $\sum_{n \in \Z} \mu(T^{-1 - n}) T^n$. This series lies in $\Robba_A^\sim$ if and only if $\mu(T^{-1 - n}) = 0$ for all $n \geq 0$, or equivalently $\mu(\Robba_A^\sim) = 0$. This shows in particular that the spaces $\Robba_A^+$ and $\Robba_A^-$ are $A$-reflexive.
\end{exa}

We need to recall some classical definitions adapted to our context:

\begin{defi}
A Cauchy net in a locally convex $A$-module $M$ is a net $(m_i)_{i \in I}$ in $M$ (a family
of vectors $m_i$ in $M$ where the index set $I$ is directed) such that for every $\epsilon >0$ and every seminorm $q_\alpha$ (defining the topology on $M$), $\exists \kappa$ such that for all $\lambda, \mu > \kappa$, $q_\alpha(x_\lambda - x_\mu) < \epsilon$.  $M$ is complete if and only if every Cauchy net converges.  
\end{defi}

\begin{defi} \label{def:frech}
Let $R \in \{ \qp, A \}$ and let $M$ be a locally convex $R$-module. We say $M$ is a Fr\'echet space if it is metrizable and complete. We say $M$ is $R$-LB-type if it can be written as a countable increasing union of $R$-Banach spaces with $R$-linear injective transition morphisms. We say $M$ is $R$-LF-type if it can be written as a countable increasing union of locally convex $R$-modules which are Fr\'echet spaces with $R$-linear injective transition morphisms. 
\end{defi}

\subsection{Relative non-archimedean functional analysis}

Here are our first (and main) examples of locally convex $A$-modules:

\begin{lemm} \label{lem:magdimo}
$\Robba_A^+$, $\Robba_A^+ \boxtimes \P^1$, $\Robba_A^-$, $\Robba_A^- \boxtimes \P^1$, $\Robba_A$ and $\Robba_A \boxtimes \P^1$ are complete Hausdorff locally convex $A$-modules. Moreover $\Robba_A^+$ and $\Robba_A^+ \boxtimes \P^1$ are Fr\'{e}chet spaces, $\Robba_A^-$ and $\Robba_A^- \boxtimes \P^1$ are of $A$-LB-type and $\Robba_A$ and $\Robba_A \boxtimes \P^1$ are of $A$-LF-type. For $? \in \{ +,-,\emptyset \}$ we have that 
\[
\Robba_A^? = \Robba_{\qp}^? \widehat{\otimes}_{\qp} A,
\]
\[ \Robba_A^? \boxtimes \P^1 = (\Robba_{\qp}^? \boxtimes \P^1) \widehat{\otimes}_{\qp} A. \]
\end{lemm}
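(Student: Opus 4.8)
The strategy is to reduce all the assertions to the field case $A=\qp$, using the base-change identities $\Robba_A^{[r,s]}=\Robba_\qp^{[r,s]}\widehat{\otimes}_\qp A$ built into the definitions together with the structure theory of the Robba ring over $\qp$. The first step is to eliminate the $\P^1$-versions. Recall from \S\ref{RtimesP1} that $\Res_\zpe=1-\varphi\psi$ as an operator on $\Robba_A$ (and, since $\psi$ preserves $\Robba_A^+$, also on $\Robba_A^+$ and on $\Robba_A^-=\Robba_A/\Robba_A^+$), that $\psi\circ\varphi=\mathrm{id}$, and that $\varphi$ is continuous with continuous left inverse $\psi$. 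Hence, for $M\in\{\Robba_A,\Robba_A^+,\Robba_A^-\}$, the map $\Res_\zpe$ is a continuous $A$-linear idempotent with image $M\boxtimes\zpe=M^{\psi=0}$ and kernel $\varphi(M)$, so that $M=M^{\psi=0}\oplus\varphi(M)$ is a topological direct sum of closed $A$-submodules and $\varphi\colon M\xrightarrow{\sim}\varphi(M)$ is a topological isomorphism. Since the involution $\iota$ occurring in the definition of $M\boxtimes\P^1$ is a continuous $A$-linear involution of $M\boxtimes\zpe$, hence a topological automorphism, the mutually inverse continuous $A$-linear maps
\[(z_1,z_2)\longmapsto(z_1,\varphi\psi z_2),\qquad (z_1,y)\longmapsto\bigl(z_1,\iota(\Res_\zpe z_1)+y\bigr)\]
identify $M\boxtimes\P^1=\{(z_1,z_2)\in M^2:\Res_\zpe z_1=\iota(\Res_\zpe z_2)\}$ with $M\oplus\varphi(M)\cong M^{\oplus2}$. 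All operators used here ($\varphi$, $\psi$, and, for the untwisted involution, $\iota=w_*$, cf. Proposition \ref{amice}) are base-changed from $\qp$, so this isomorphism is compatible with $-\widehat{\otimes}_\qp A$; thus, granting $M=M_\qp\widehat{\otimes}_\qp A$, one gets $M\boxtimes\P^1\cong M_\qp^{\oplus2}\widehat{\otimes}_\qp A=(M_\qp\boxtimes\P^1)\widehat{\otimes}_\qp A$. As completeness, Hausdorffness, and being of Fréchet / $A$-LB / $A$-LF-type are all stable under finite direct sums, it remains to prove the lemma for $M$ alone.

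Now I would treat $\Robba_A^{]0,s]}$, $\Robba_A^+$, $\Robba_A^-$, $\Robba_A$ in turn. Each $\Robba_A^{[r,s]}=\Robba_\qp^{[r,s]}\widehat{\otimes}_\qp A$ is the completed tensor product of two $\qp$-Banach spaces over the $\qp$-Banach algebra $A$, hence a Banach $A$-module, in particular a complete Hausdorff locally convex $A$-module. Choosing a cofinal sequence $r_n\downarrow0$, we have $\Robba_A^{]0,s]}=\varprojlim_n\Robba_A^{[r_n,s]}$, a countable projective limit of Banach $A$-modules along injective continuous $A$-linear restriction maps, hence a Fréchet space, complete and Hausdorff, and again a locally convex $A$-module; the same holds for $\Robba_A^+=\Robba_A^{]0,\infty]}$. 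The identity $\Robba_A^+=\Robba_\qp^+\widehat{\otimes}_\qp A$ is \cite[Lemme 1.3]{chen2013}, and its proof applies verbatim to give $\Robba_A^{]0,s]}=\Robba_\qp^{]0,s]}\widehat{\otimes}_\qp A$ (one uses that $\Robba_\qp^+$ and $\Robba_\qp^{]0,s]}$ are nuclear Fréchet spaces, so that $-\widehat{\otimes}_\qp A$ commutes with these countable projective limits). For the remaining two, choose $s_n\downarrow0$: then $\Robba_A=\varinjlim_n\Robba_A^{]0,s_n]}$ is a countable inductive limit of Fréchet spaces along injective continuous $A$-linear transition maps, so it is of $A$-LF-type; and $\Robba_A^-=\Robba_A/\Robba_A^+=\varinjlim_n\bigl(\Robba_A^{]0,s_n]}/\Robba_A^+\bigr)$ is a countable inductive limit of Banach $A$-modules, each quotient $\Robba_A^{]0,s_n]}/\Robba_A^+$ being Banach (cf. the footnote to Example \ref{ex:afelrob}), so it is of $A$-LB-type.

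What remains — and here lies the only real difficulty — is to establish completeness and Hausdorffness of $\Robba_A$ and $\Robba_A^-$, together with the base-change identities $\Robba_A=\Robba_\qp\widehat{\otimes}_\qp A$ and $\Robba_A^-=\Robba_\qp^-\widehat{\otimes}_\qp A$: the completed projective tensor product does not in general commute with inductive limits, and a countable inductive limit of Fréchet spaces need not be complete. The plan is to use that the field-case spaces $\Robba_\qp$ and $\Robba_\qp^-\cong\mathrm{LA}(\zp,\qp)\otimes\chi^{-1}$ (Proposition \ref{amice}) are of compact type, i.e. inductive limits of $\qp$-Banach spaces with compact, hence nuclear, transition maps; for such spaces $-\widehat{\otimes}_\qp A$ does commute with the defining inductive limit, so by the previous paragraph $\Robba_A=\varinjlim_n\bigl(\Robba_\qp^{]0,s_n]}\widehat{\otimes}_\qp A\bigr)=\Robba_\qp\widehat{\otimes}_\qp A$, and likewise $\Robba_A^-=\Robba_\qp^-\widehat{\otimes}_\qp A$. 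Alternatively, these identities together with the completeness of $\Robba_A^-$ can be read off from the potentially orthonormal decompositions of $\Robba_A^{[r,s]}/\Robba_A^{[r,\infty]}$ of \cite[Lemma 1.3.8]{kedpadic} and \cite[5.5 Proposition]{richcrewfin} used in Lemma \ref{lem:kasd}. Completeness and Hausdorffness of $\Robba_A$ and $\Robba_A^-$ are then automatic, a completed tensor product of complete Hausdorff locally convex spaces being complete and Hausdorff. Feeding all of this back into the first paragraph yields the corresponding statements for the $\P^1$-versions, finishing the proof.
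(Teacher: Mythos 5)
Most of your argument runs parallel to the paper's: the identification of $M \boxtimes \P^1$ with $M^{\oplus 2}$ via $\varphi,\psi,\Res_\zpe$ and the continuity of the involution is essentially the paper's reduction, the treatment of $\Robba_A^{[r,s]}$, $\Robba_A^{]0,s]}$, $\Robba_A^+$ (via Chenevier) and the $A$-LB/$A$-LF-type statements are fine, and your handling of $\Robba_A^-$ through the compact-type space $\Robba_\qp^-\cong\mathrm{LA}(\zp,\qp)\otimes\chi^{-1}$ is a legitimate variant of the paper's route (which instead uses $A$-reflexivity of $\Robba_A^\pm$ via the residue pairing, Example \ref{ex:afelrob} and Remark \ref{rem:conrelg}). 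The genuine gap is in the step you yourself flag as the only real difficulty: you deduce completeness, Hausdorffness and the identity $\Robba_A=\Robba_\qp\widehat{\otimes}_\qp A$ (hence also everything about $\Robba_A\boxtimes\P^1$) from the claim that $\Robba_\qp$ is of compact type, so that $-\widehat{\otimes}_\qp A$ commutes with its defining inductive limit. That claim is false: $\Robba_\qp$ is not an LB-space (its steps $\Robba_\qp^{]0,s]}$ are non-normable Fr\'echet spaces), the transition maps $\Robba_\qp^{]0,s]}\to\Robba_\qp^{]0,s']}$ are not compact (they restrict to the identity on the closed, infinite-dimensional subspace $\Robba_\qp^+$, and a relatively compact $0$-neighbourhood there would force $\Robba_\qp^+$ to be finite-dimensional), and, decisively, the strong dual of $\Robba_\qp$ is $\Robba_\qp$ itself via the residue pairing (Example \ref{ex:afelrob}), which is not Fr\'echet, whereas compact type spaces have nuclear Fr\'echet strong duals. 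Your "alternative" via the orthonormal decompositions of Lemma \ref{lem:kasd} only concerns $\Robba_A^-$, so nothing in the proposal actually establishes the assertions for $\Robba_A$.

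The paper closes precisely this point with a splitting argument rather than a commutation-with-colimits argument: Example \ref{ex:afelrob} uses the residue pairing to show that $\Robba_A^+$ and $\Robba_A^\sim$ are closed and that $\Robba_A=\Robba_A^+\oplus\Robba_A^\sim$ is a topological direct sum with $\Robba_A^\sim\cong\Robba_A^-$; completeness, Hausdorffness and $\Robba_A=\Robba_\qp\widehat{\otimes}_\qp A$ then follow from the corresponding statements for the two summands, since $\widehat{\otimes}_\qp A$ commutes with finite direct sums. (One can also obtain the splitting without duality, by checking from the formula for $v^{[r,s]}$ that $f\mapsto(f^+,f^-)$ is an isometric decomposition of each $\Robba_A^{[r,s]}$, hence a topological one of $\Robba_A$.) Some such input is indispensable in your write-up. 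A secondary inaccuracy: $\Robba_\qp^{]0,s]}$ is not nuclear either (the linking maps $\Robba_\qp^{[r',s]}\to\Robba_\qp^{[r,s]}$ are isometric on principal parts), though the identity $\Robba_A^{]0,s]}=\Robba_\qp^{]0,s]}\widehat{\otimes}_\qp A$ you want there can instead be justified by the commutation of the completed projective tensor product with countable reduced projective limits of Fr\'echet spaces, which is how Chenevier argues.
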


\begin{proof}
We first prove the statement for $\Robba_A^?$, $? \in \{ +, -, \emptyset \}$. By \cite[lemme 1.3(i)]{chen2013} we have an isomorphism as Fr\'echet spaces (in the category of locally convex $\qp$-vector spaces) $\Robba_A^+ = \Robba_{\qp}^+ \widehat{\otimes}_{\qp} A$. Thus $\Robba_A^+$ is a locally convex $\qp$-vector space. Multiplication by $A$ is clearly continuous on $\Robba_{\qp}^+ \otimes_{\qp} A$ (the latter is also a locally convex $\qp$-vector space) and so by Lemma \ref{lem:hauscomp}, the completion $\Robba_A^+$ is a locally convex $A$-module. 

By example \ref{ex:afelrob}, $\Robba_A^-$ is $A$-reflexive and so by Remark \ref{rem:conrelg} we have an isomorphism $\Robba_A^- = \Robba_{\qp}^- \widehat{\otimes}_{\qp} A$. Moreover $\Robba_A^- = \varinjlim_s \Robba_A^{]0, s]} / \Robba_A^{]0, +\infty]}$ is $A$-LB-type.

Finally by definition $\Robba_A$ is $A$-LF-type and since $\Robba_A = \Robba_A^+ \oplus \Robba_A^-$ (as topological $A$-modules), it is also Hausdorff and complete. We compute 
\begin{align*}
\Robba_A &= \Robba_A^+ \oplus \Robba_A^- \\
&= (\Robba_{\qp}^+ \widehat{\otimes}_{\qp} A) \oplus (\Robba_{\qp}^- \widehat{\otimes}_{\qp} A) \\
&= (\Robba_\qp^+ \oplus \Robba_\qp^-) \widehat{\otimes}_{\qp} A \\
&= \Robba_\qp \widehat{\otimes}_{\qp} A.
\end{align*} 

We finally observe that, if $M \in \{ \Robba_A^+, \Robba_A^-, \Robba_A \}$, then $M \boxtimes \P^1$ (with its topology induced by the inclusion $M \boxtimes \P^1 \subseteq M \times M$) is topologically isomorphic to $M \times M$, the isomorphism given by $z \mapsto(\mathrm{Res}_\zp(z), \psi(\mathrm{Res}_\zp(wz)) )$ with inverse $(z_1, z_2) \mapsto (z_1, \varphi(z_2) + w(\mathrm{Res}_\zpe(z_1)))$.
\end{proof}

Let $M$ be a Hausdorff locally convex $A$-module. We define a locally convex $A$-module $\mathrm{LA}(H,M)$ of all $M$-valued \emph{locally analytic} functions on $H$. 

\begin{defi}\label{def:repla}
An $M$-index $\mathcal{I}$ on $H$ is a family of triples
\[
\{ (H_i,\phi_i, M_i) \}_{i \in I}
\]
where the $H_i$ are pairwise disjoint open subsets of $H$ which cover $H$, each $\phi_i \colon H_i \to \mathbf{Q}^d_p$ is chart\footnote{To be more precise one takes $\mathbb{H}_i$ an affinoid rigid analytic space over $\qp$ isomorphic to a closed ball, so that $\phi_i$ induces an isomorphism $\phi'_i \colon H_i \xrightarrow{\sim} \mathbb{H}_i(\qp)$.} for the manifold $H$ whose image is an affinoid ball and $M_i \hookrightarrow M$ is a continuous linear injection from an $A$-Banach space $M_i$ into $M$.  Let $\mathcal{F}_{\phi_i}(M_i)$ be the set of all functions $f \colon H_i \to M_i$ such that $f \circ \phi_i^{-1}$ is an $M_i$-valued holomorphic function on the affinoid ball $\phi_i(H_i)$. Note that $\mathcal{F}_{\phi_i}(M_i)$ is an $A$-Banach space. We set
\[
\mathcal{F}_{\mathcal{I}}(M) := \prod_{i \in I} \mathcal{F}_{\phi_i}(M_i),
\]
where $\mathcal{F}_{\mathcal{I}}(M)$ is equipped with the direct product topology (in particular it is a locally convex $A$-module).
We then define
\[
\mathrm{LA}(H,M) := \varinjlim_{\mathcal{I}} \mathcal{F}_{\mathcal{I}}(M)
\]
equipped with the $\qp$-locally convex inductive limit topology.  
\end{defi}

\begin{rema}
In Definition \ref{def:repla}, in order to see that $\mathrm{LA}(H,M)$ is a locally convex $A$-module, one needs to check that multiplication by $A$ on $\mathrm{LA}(H,M)$ is continuous. Indeed since, $\cdot \colon A \times M \to M$ is continuous, then so is $A \times \mathcal{F}_{\phi_i}(M_i) \to \mathcal{F}_{\phi_i}(M_i)$. This implies that multiplication $B_{\mathcal{I}} \colon A \times \mathcal{F}_{\mathcal{I}}(M) \to \mathcal{F}_{\mathcal{I}}(M)$ is continuous. Denote by $B \colon A \times \mathrm{LA}(H,M) \to \mathrm{LA}(H,M)$, the multiplication by $A$ on $\mathrm{LA}(H,M)$. The continuity of $B$ follows from the continuity of the $B_{\mathcal{I}}$ (cf. the 3rd paragraph of the proof of Lemma \ref{lem:LAgoodfu} where a similar problem is proved). 
\end{rema}

\begin{lemm} \label{lem:schdua}
Let $(V, ||\cdot||)$ be a $\qp$-Banach space and let $V'_{0} \subset V'_{\qp,b}$ be the unit ball. Given a constant $C$ and a vector $v \in V$, if $|l(v)|_p \leq C$ for all $l \in V'_{0}$ then $||v|| \leq C$. 
\end{lemm}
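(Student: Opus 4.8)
The statement is the Hahn--Banach--type reflexivity fact for $\qp$-Banach spaces: a vector $v$ with $|l(v)|_p\le C$ for all $l$ in the unit ball $V'_0$ of $V'_{\qp,b}$ must satisfy $\|v\|\le C$. The plan is to argue by contrapositive: suppose $\|v\|>C$, and produce a functional $l\in V'_0$ with $|l(v)|_p>C$.

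First I would reduce to producing \emph{some} continuous linear functional $l_0$ on $V$ with $\|l_0\|_{\mathrm{op}}\le 1$ and $|l_0(v)|_p$ close to $\|v\|$; rescaling is harmless since $V'_0$ is exactly the set of operator-norm-$\le 1$ functionals (here one should note the mild subtlety that the valuation on $\qp$ is discrete, so $\|v\|$ may not be attained as $|l_0(v)|$ exactly, but one can get $|l_0(v)|_p\ge p^{-1}\|v\|$, and then absorb the $p^{-1}$ either by the discreteness of the value group of $V$ or, more robustly, by replacing $V$ with a spherically complete extension field's scalar extension — I would instead just invoke the non-archimedean Hahn--Banach theorem directly, which is cleaner). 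The key input is the non-archimedean Hahn--Banach extension theorem over $\qp$: since $\qp$ is spherically complete (being locally compact, hence complete with discrete value group, which is a fortiori spherically complete), every continuous linear functional on a subspace of a $\qp$-normed space extends to the whole space without increasing its norm; see \cite[Proposition 9.2]{schneidnon} or the standard reference on non-archimedean functional analysis.

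Concretely, the main steps are: (1) assume $\|v\|>C$, in particular $v\ne 0$; (2) on the one-dimensional subspace $\qp\cdot v\subseteq V$ define $l_0(\lambda v):=\lambda\cdot\|v\|$ — wait, this is not $\qp$-valued in the right normalization, so instead define $l_0(\lambda v) := \lambda$, which has operator norm $\|l_0\|_{\mathrm{op}}=\sup_{\lambda\ne 0}|\lambda|_p/\|\lambda v\|=1/\|v\|$; (3) apply Hahn--Banach to extend $l_0$ to $\tilde l_0\in V'_{\qp,b}$ with $\|\tilde l_0\|_{\mathrm{op}}=1/\|v\|$; (4) set $l:=\|v\|\cdot\tilde l_0$, which then has $\|l\|_{\mathrm{op}}= 1$, hence $l\in V'_0$, and $l(v)=\|v\|\cdot\tilde l_0(v)=\|v\|\cdot l_0(v)=\|v\|$, so $|l(v)|_p=|\|v\||_p$. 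The final wrinkle is that $|l(v)|_p$ is the $p$-adic absolute value of the real number $\|v\|$, which need not literally equal $\|v\|$; but since in the statement $C$ and $\|v\|$ are compared as real numbers while the hypothesis bounds $|l(v)|_p$, one works with a fixed $l$ attaining $l(v)=v$-coordinate and uses that the value group is the one of $\qp$ — i.e. $\|v\|$ can be taken in $p^{\mathbb Z}$ after rescaling $v$ by a scalar, so $|\|v\||_p=\|v\|^{-1}$...

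Rather than untangle that normalization bookkeeping here, the honest plan is: choose $\lambda\in\qp^\times$ with $p^{-1}<|\lambda|_p\,\|v\|\le 1$ (possible by discreteness of $|\qp^\times|$), apply Hahn--Banach to the functional on $\qp\cdot(\lambda v)$ sending $\lambda v\mapsto 1$ — which has operator norm $1/\|\lambda v\|\le 1$... this still isn't in $V_0'$ on the nose. I expect the cleanest writeup simply quotes the standard corollary of non-archimedean Hahn--Banach over a spherically complete field: \emph{for every $v\in V$, $\|v\|=\sup_{l\in V'_0}|l(v)|_p$}, and the lemma is immediate from this. So the real content, and the only place to be careful, is citing/justifying that this sup formula holds — which rests on spherical completeness of $\qp$ — and I would structure the proof as: state the non-archimedean Hahn--Banach theorem for $\qp$-Banach spaces with reference, derive the norm-via-dual formula, and conclude. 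The main (and only) obstacle is the correct normalization of the extended functional so that it genuinely lands in the unit ball $V'_0$; over a discretely valued field this is a routine scaling argument, but it must be written carefully.
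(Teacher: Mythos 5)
Your proposal takes essentially the same route as the paper: its proof is a one-line application of the non-archimedean Hahn--Banach theorem, citing exactly \cite[Proposition 9.2]{schneidnon}, to a functional defined on the line $U_0=\qp\, v$ and extended without increasing the norm, which is precisely your plan. The normalization worry you raise (the value group $p^{\mathbb{Z}}$ is discrete, so the extended functional attains $\|v\|$ on the nose only when $\|v\|\in p^{\mathbb{Z}}$) is genuine but is glossed over in the paper as well, and it is harmless in the paper's application, where the lemma is applied to $A$ with its Gauss norm, whose values lie in $p^{\mathbb{Z}}\cup\{0\}$.
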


\begin{proof}
This is a direct consequence of the Hahn-Banach theorem. Applying \cite[Proposition 9.2]{schneidnon} with $U := V$, $q := ||\cdot||$, $U_{o}:= \qp v$ and $|l_{o}(v)|_p \geq||v||$ for all $a \in \qp$ we obtain a linear form $l \in V'_{0}$ and $|l(v)|_p \geq ||v||$. 
\end{proof}

To kick-start our study of locally convex $A$-modules and their relationship to $\mathrm{Rep}_{A}^{\mathrm{la}}(H)$ (cf. Definition \ref{def:lareph}) we need to know that $\mathrm{LA}(H,A)$ is well-behaved. F\'eaux states explicitly in his thesis (cf. \cite{feauxtob}) that he does not know if $\mathrm{LA}(H,A)$ is complete. The completeness of $\mathrm{LA}(H,A)$ has since become somewhat of a folklore conjecture:

\begin{conj} \label{conj:compLA}
If $H$ is a compact locally $\qp$-analytic group then $\mathrm{LA}(H,A)$ is complete. 
\end{conj}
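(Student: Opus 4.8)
The plan is to reduce the statement to a question about the \emph{regularity} of a countable inductive limit of $A$-Banach spaces, and then to attack that question by two complementary routes: a direct bounded-retractivity argument and a duality reformulation.

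First I would unwind the definition of $\mathrm{LA}(H,A)$ for $H$ compact. Choosing a decreasing chain $H\supseteq H_1\supseteq H_2\supseteq\cdots$ of open normal subgroups together with a compatible family of charts, one obtains $\mathrm{LA}(H,A)=\varinjlim_n C^{\mathrm{an}}_n(H,A)$ as a countable locally convex inductive limit, where $C^{\mathrm{an}}_n(H,A)$ is the $A$-Banach space of functions that are rigid-analytic on each coset of $H_n$. Since each $C^{\mathrm{an}}_n(H,\qp)$ is orthonormalizable, $C^{\mathrm{an}}_n(H,A)=C^{\mathrm{an}}_n(H,\qp)\widehat{\otimes}_{\qp}A$, and the transition maps are $T_n\widehat{\otimes}_{\qp}\mathrm{id}_A$, where $T_n\colon C^{\mathrm{an}}_n(H,\qp)\to C^{\mathrm{an}}_{n+1}(H,\qp)$ is the compact transition map of the compact-type space $\mathrm{LA}(H,\qp)$. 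Thus the problem becomes: is the inductive limit of the (complete) Banach spaces $C^{\mathrm{an}}_n(H,\qp)\widehat{\otimes}_{\qp}A$, with transition maps obtained by tensoring compact operators with $\mathrm{id}_A$, itself complete? By the standard criteria it suffices to show this inductive limit is boundedly retractive, i.e.\ every bounded subset is contained and bounded in some step $C^{\mathrm{an}}_n(H,A)$ and the subspace topologies induced from the $C^{\mathrm{an}}_m(H,A)$, $m\geq n$, all agree with the limit topology on it. The first half is precisely the $A$-regularity of $\mathrm{LA}(H,A)$ recorded in Lemma~\ref{lem:LAgoodfu}; the real content is the coincidence of topologies on bounded sets.

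The second line of attack goes through duality, and it already settles $\dim H=1$ via the identification $\Robba_A^-\cong\mathrm{LA}(\zp,A)\otimes\chi^{-1}$ of Proposition~\ref{amice} together with the completeness of $\Robba_A^-$ from Lemma~\ref{lem:magdimo}. In general, Proposition~\ref{prop:scbres} gives $\mathscr{D}(H,A)=\mathscr{D}(H,\qp)\widehat{\otimes}_{\qp}A$ with $\mathscr{D}(H,\qp)$ a nuclear Fr\'echet space. Continuous $A$-linear functionals on $\mathscr{D}(H,\qp)\widehat{\otimes}_{\qp}A$ correspond to continuous $\qp$-linear functionals on $\mathscr{D}(H,\qp)$, whence $\mathscr{D}(H,A)'_b=\mathcal{L}_{\qp,b}(\mathscr{D}(H,\qp),A)=\mathscr{D}(H,\qp)'_{\qp,b}\widehat{\otimes}_{\qp}A=\mathrm{LA}(H,\qp)\widehat{\otimes}_{\qp}A$, the middle identity using nuclearity of $\mathscr{D}(H,\qp)$. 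This space is complete, and the evaluation map is a continuous injection $\mathrm{LA}(H,A)\hookrightarrow\mathscr{D}(H,A)'_b=\mathrm{LA}(H,\qp)\widehat{\otimes}_{\qp}A$ with dense image; hence $\mathrm{LA}(H,\qp)\widehat{\otimes}_{\qp}A$ is a completion of $\mathrm{LA}(H,A)$, and the conjecture is equivalent to surjectivity of this map, i.e.\ to $\mathrm{LA}(H,A)=\mathrm{LA}(H,\qp)\widehat{\otimes}_{\qp}A$ together with the $A$-reflexivity of $\mathrm{LA}(H,A)$. I would try to prove surjectivity by showing that every element of $\mathrm{LA}(H,\qp)\widehat{\otimes}_{\qp}A$, being a limit of finite sums $\sum_i f_i\otimes a_i$ with the $f_i$ all analytic of some common level $n$, has its limit trapped at level $n$ — using compactness of the $T_n$ to constrain the $\qp$-directions along which the limit could otherwise escape to higher level, and completeness of $C^{\mathrm{an}}_n(H,\qp)\widehat{\otimes}_{\qp}A$.

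The hard part, in either approach, is one and the same phenomenon: as soon as $A$ is infinite-dimensional over $\qp$, the image of the unit ball of $C^{\mathrm{an}}_n(H,A)$ is bounded but \emph{not} relatively compact in $C^{\mathrm{an}}_{n+1}(H,A)$, so the classical Silva/$(\mathrm{DFS})$-space argument that makes $\mathrm{LA}(H,\qp)$ complete and regular breaks down, and one needs a version of it in which ``relatively compact'' is replaced by ``relatively compact relative to $A$''. Making this precise — presumably through a Mittag-Leffler/Retakh-type argument that exploits the factorization of compact operators through finite-rank ones together with the $A$-regularity of Lemma~\ref{lem:LAgoodfu} — is the main obstacle, and it is conceivable that the full statement needs an extra hypothesis on $A$ (for instance that $A$ admit an orthonormal $\qp$-Banach basis, which holds for many Tate algebras and reduces the problem to an explicit K\"othe-space computation generalizing the case $\dim H=1$). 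Failing a resolution in full generality, I would at least record the reduction and the $A$-reflexivity reformulation above, since they isolate exactly what remains.
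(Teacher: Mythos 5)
You have not proved the statement, and in fact the paper does not prove it either: it is recorded there precisely as an open ``folklore'' conjecture. What the paper actually establishes is strictly weaker, namely the $A$-regularity of $\mathrm{LA}(H,A)$ (Lemma \ref{lem:LAgoodfu}), which is then used as a substitute for completeness in proving $\mathscr{D}(H,A)=\mathscr{D}(H,\qp)\widehat{\otimes}_{\qp}A$; the only case where completeness itself is known is $\dim H=1$, via the identification of $\mathrm{LA}(\zp,A)$ with $\Robba_A^-$ (Lemma \ref{lem:magdimo}, Proposition \ref{amice}), together with the reformulation that completeness would force $\mathrm{LA}(H,A)\cong\mathrm{LA}(H,\qp)\widehat{\otimes}_{\qp}A$ and is tied to $A$-reflexivity (Remark \ref{rem:conrelg}). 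Your two reductions essentially reproduce this state of affairs, and you candidly say that the decisive step is missing; so the proposal cannot be accepted as a proof of the conjecture.

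To name the gap concretely: in the first route, $A$-regularity only gives that bounded sets are contained and bounded in some step $C^{\mathrm{an}}_n(H,A)$; what is missing is that the limit topology and the step topologies agree on such bounded sets (bounded retractivity), and the classical Silva/DFS mechanism is unavailable exactly because $T_n\widehat{\otimes}_{\qp}\mathrm{id}_A$ is not compact once $A$ is infinite-dimensional over $\qp$ --- you point at a Mittag-Leffler/Retakh-type replacement but do not supply one. In the second route there is also a logical slip: a continuous injection with dense image into a complete space does not exhibit the target as the completion of the source unless the map is a topological embedding, which is precisely what is unknown; moreover the natural map $\mathrm{LA}(H,A)\to\mathrm{LA}(H,\qp)\widehat{\otimes}_{\qp,\pi}A$ is already a continuous \emph{bijection} (Remark \ref{rem:conrelg}), so the conjecture is not a surjectivity question but the question of whether this bijection is open, equivalently whether $\mathrm{LA}(H,A)$ is complete --- which makes that reformulation circular rather than a reduction. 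As it stands, then, the proposal is a reasonable reduction plus an honest acknowledgment that the essential step remains open, consistent with the status of the statement in the paper.
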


\begin{rema} \label{rem:fonlbs}
If Conjecture \ref{conj:compLA} is true then one can show that $\mathrm{LA}(H, A) \cong \mathrm{LA}(H, \qp) \widehat{\otimes}_{\qp, \pi} A \cong \mathrm{LA}(H, \qp) \widehat{\otimes}_{\qp, \iota} A$, cf. Remark \ref{rem:conrelg}. If $H$ is of dimension 1 then Conjecture \ref{conj:compLA} is true, cf. lemma \ref{lem:magdimo}. 
\end{rema}

Although we are unable to prove Conjecture \ref{conj:compLA} we show that $\mathrm{LA}(H,A)$ is sufficiently well-behaved for applications.

\begin{defi} \label{def:goodspat}
Let $R \in \{ \qp, A \}$. We call a Hausdorff $R$-LB-type $V = \varinjlim_n V_n$, $R$-\emph{regular} if, for every bounded subset $B$ of $V$, there exists an $n$ such that $V_n$ contains $B$ and $B$ is bounded in $V_n$. 
\end{defi}

\begin{rema}
Let $R \in \{\qp,A \}$. By \cite[Proposition 1.1.10 and 1.1.11]{Emertonred}, a Hausdorff semi-complete $R$-LB-type is $R$-regular. 
\end{rema}

Before we state our next result we need some notation. Via Mahler expansions (cf. \cite[III. Th\'eor\`eme 1.2.4]{Lazardgrp}), the set of continuous functions from $\mathbf{Z}_p^d$ to $\qp$ can be viewed as the space of all series
\[
\sum_{\alpha \in \mathbf{N}^d} c_{\alpha} \binom{x}{\alpha}
\]
with $c_{\alpha} \in \qp$ such that $\lvert c_{\alpha} \lvert \to 0$ as $\lvert \alpha \lvert \to 0$. Here as usual
\[
\binom{x}{\alpha} := \binom{x_1}{\alpha_1} \cdots \binom{x_d}{\alpha_d}
\]
and
\[
\lvert \alpha \lvert := \sum_{i=1}^{d}\alpha_i
\]
for $x = (x_1,\ldots,x_d) \in \mathbf{Z}_p^d$ and multi-indices $\alpha = (\alpha_1,\ldots,\alpha_d) \in \mathbf{N}^d$. We are grateful for G. Dospinescu for supplying us with the idea for the following lemma.

\begin{lemm} \label{lem:LAgoodfu}
If $H$ is a compact locally $\qp$-analytic group then $\mathrm{LA}(H,A)$ is $A$-regular. 
\end{lemm}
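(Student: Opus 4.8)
The plan is to reduce the regularity of $\mathrm{LA}(H,A)$ to a concrete boundedness estimate for holomorphic functions on affinoid balls, using the structure of $\mathrm{LA}(H,A)$ as an inductive limit of Banach spaces attached to $M$-indices. First I would fix, once and for all, a cofinal sequence of $\qp$-indices $(\mathcal{I}_n)_{n \geq 1}$ on the compact group $H$ (e.g. the ones coming from a decreasing filtration of $H$ by open normal subgroups together with the standard charts identifying cosets with $\zp^d$ and radius-$p^{-n}$ polydiscs), so that $\mathrm{LA}(H,A) = \varinjlim_n \mathcal{F}_{\mathcal{I}_n}(A)$ as a locally convex $A$-module, with each $\mathcal{F}_{\mathcal{I}_n}(A)$ an $A$-Banach space with a natural Banach norm $\|\cdot\|_n$ (sup of Gauss norms of the Mahler/Taylor coefficients, weighted by the radius). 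The $A$-regularity assertion is then the statement: every bounded subset $B \subseteq \mathrm{LA}(H,A)$ is contained in, and bounded in, some $\mathcal{F}_{\mathcal{I}_n}(A)$.

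The key step is a transfer-of-boundedness argument through the scalar case. Let $B$ be bounded in $\mathrm{LA}(H,A)$. Composing with an arbitrary continuous $A$-linear (equivalently, by restriction, $\qp$-linear) functional $\lambda\colon A \to \qp$ of norm $\leq 1$ yields a bounded subset $\lambda_*(B)$ of $\mathrm{LA}(H,\qp)$; here one uses that $\lambda$ is continuous, hence maps bounded sets to bounded sets, and that it commutes with the inductive-limit structure. Now $\mathrm{LA}(H,\qp)$ is of compact type, in particular a complete Hausdorff LB-space, hence $\qp$-regular by the cited results of Emerton, so there is an integer $n = n(\lambda)$ with $\lambda_*(B) \subseteq \mathcal{F}_{\mathcal{I}_n}(\qp)$ and bounded there. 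The crucial point is to make $n$ independent of $\lambda$: for this I would argue that if $f \in \mathrm{LA}(H,A)$ is such that $\lambda \circ f \in \mathcal{F}_{\mathcal{I}_n}(\qp)$ with $\|\lambda\circ f\|_n \leq C$ for \emph{all} $\lambda \in (A,|\cdot|_A)'$ of norm $\leq 1$, then $f \in \mathcal{F}_{\mathcal{I}_n}(A)$ with $\|f\|_n \leq C$. This is exactly a coefficientwise application of Lemma \ref{lem:schdua}: writing $f$ locally as $\sum_\alpha c_\alpha(f)\binom{x}{\alpha}$ with $c_\alpha(f) \in A$, one has $\lambda(c_\alpha(f)) = c_\alpha(\lambda\circ f)$, and the norm $\|\lambda\circ f\|_n$ controls all the $|\lambda(c_\alpha(f))|_p$ uniformly (with the appropriate radius weights and the condition $|c_\alpha|\to 0$); Lemma \ref{lem:schdua} applied to the Banach space $(A,|\cdot|_A)$ then bounds $|c_\alpha(f)|_A$ by the same quantity, which both places $f$ in $\mathcal{F}_{\mathcal{I}_n}(A)$ and bounds $\|f\|_n$.

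It remains to extract a single $n$ working for the whole family $B$. Here I would use that $B$, being bounded in the LB-space $\mathrm{LA}(H,A)$, has the property that for each bounded sequence of scalars the corresponding combinations stay bounded; more directly, one runs a Baire/barrelledness argument or a countable exhaustion: the sets $B_n := \{ f \in B : \lambda\circ f \in \mathcal{F}_{\mathcal{I}_n}(\qp) \text{ and } \|\lambda \circ f\|_n \le n \ \forall \lambda\}$ form an increasing sequence whose union is $B$ by the previous paragraph applied pointwise, and boundedness of $B$ forces this exhaustion to stabilize (using that $\mathrm{LA}(H,\qp)$ is $\qp$-regular to get, for the bounded set $\bigcup_{\lambda}\lambda_*(B)$... — more carefully, one first shows $\bigcup_{\|\lambda\|\le 1}\lambda_*(B)$ is bounded in $\mathrm{LA}(H,\qp)$, invokes $\qp$-regularity once to land it all in a fixed $\mathcal{F}_{\mathcal{I}_N}(\qp)$ with a uniform norm bound $C$, and then the estimate of the previous paragraph gives $B \subseteq \mathcal{F}_{\mathcal{I}_N}(A)$ bounded by $C$). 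The main obstacle I anticipate is precisely this last uniformity: verifying that $\bigcup_{\|\lambda\|\leq 1}\lambda_*(B)$ is genuinely bounded in $\mathrm{LA}(H,\qp)$, i.e. that the seminorms of $\mathrm{LA}(H,\qp)$ evaluated on this union stay finite. This should follow from the definition of the strong/bounded-set topology on $\mathrm{LA}(H,A)$ together with the fact that the defining seminorms of $\mathrm{LA}(H,\qp)$ are recovered from those of $\mathrm{LA}(H,A)$ by composing with unit-norm functionals (again Lemma \ref{lem:schdua} in the form: $\sup_\lambda |\lambda(a)|_p = |a|_A$), but it requires care to interchange the sup over $\lambda$ with the sup over $B$ and over the index set of the seminorm. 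Once that interchange is justified, the proof closes.
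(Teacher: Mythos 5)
Your proposal follows the same skeleton as the paper's proof — compose with functionals in the unit ball $A'_0$ of $A'_{\qp,b}$, use $\qp$-regularity of $\mathrm{LA}(H,\qp)$, then transfer back coefficientwise via Lemma \ref{lem:schdua} — but the step you yourself flag as the "main obstacle" is precisely the heart of the matter, and your sketch for it does not work as stated. You need that $\bigcup_{\|\lambda\|\le 1}\lambda_*(B)$ is bounded in $\mathrm{LA}(H,\qp)$, and you propose to get this because "the defining seminorms of $\mathrm{LA}(H,\qp)$ are recovered from those of $\mathrm{LA}(H,A)$ by composing with unit-norm functionals." But both spaces carry locally convex \emph{inductive limit} topologies, whose continuous seminorms are not indexed by the Banach-level norms in any such simple way; boundedness in an LB-space cannot be tested against a naive family of seminorms, and the interchange of suprema you defer is exactly the content, not a routine verification. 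The paper supplies this missing ingredient by proving that the bilinear map $B\colon A'_{\qp,b}\times\mathrm{LA}(H,A)\to\mathrm{LA}(H,\qp)$, $(l,f)\mapsto l\circ f$, is \emph{jointly continuous}: one shows, by a lattice argument for the locally convex final topology (reducing a neighbourhood of $0$ in the limit to its traces on the levels, and writing preimages as unions over $n\ge h$), that continuity of $B$ is equivalent to continuity of each $B_h\colon A'_{\qp,b}\times\mathrm{LA}_h(H,A)\to\mathrm{LA}_h(H,\qp)$, where it follows from the trivial estimate $|l\circ f|_{h,\qp}\le\|l\|\,|f|_{h,A}$. Joint continuity then maps the bounded set $A'_0\times T$ to a bounded set, after which $\qp$-regularity of $\mathrm{LA}(H,\qp)$ and Lemma \ref{lem:schdua} finish the argument. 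To make your proof complete you must establish this continuity (or at least a hypocontinuity/boundedness statement) for $B$; that is the real work of the lemma.

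There is also a secondary inaccuracy in your transfer step: knowing $\|\lambda\circ f\|$ bounded by $C$ in a fixed level for all unit $\lambda$ only gives, via Lemma \ref{lem:schdua}, a uniform lower bound $v_A(a_n(f))-v_p(k_{n,h})\ge C$ on the weighted Mahler coefficients, whereas membership of $f$ in the Banach space $\mathrm{LA}_h(\zp^d,A)$ requires these quantities to tend to $+\infty$. So $f$ need not lie in the \emph{same} level, contrary to your claim. The paper repairs this by moving one step up the inductive system: since $v_p(k_{n,h})-v_p(k_{n,h+1})\ge 0$ and tends to $+\infty$ with $|n|$, the same bound yields $f\in\mathrm{LA}_{h+1}(\zp^d,A)$ with $v_{h+1,A}(f)\ge C$, which is all that $A$-regularity requires. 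This index shift is a small fix, but without it your step asserting $f\in\mathcal{F}_{\mathcal{I}_n}(A)$ with the same $n$ is not correct for the Banach spaces as defined (coefficients must decay, not merely be bounded).
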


\begin{proof}
By choosing a covering of $H$ by a finite number of open subsets isomorphic (as locally $\qp$-analytic manifolds) to $\Z_p^d$ for some $d \in \N$, we suppose that $H = \Z_p^d$, cf. \cite[Corollary 8.34]{promangrp}. For each $h \in \N$ and $f \in \mathrm{LA}(H, A)$, write $f = \sum_{n \in \N^d} a_n(f) {x \choose \alpha}$, $a_n(f) \in A$, the Mahler expansion of the continuous function $f$. By Amice's theorem (cf. \cite[III. Th\'eor\`eme 1.3.8]{Lazardgrp} or \cite[Th\'eor\`eme I.4.7]{colmfonp}), we have
\[
\mathrm{LA}_h(\mathbf{Z}_p^d,\qp) = \underset{n \in \mathbf{N}^d}{\widehat{\bigoplus}} \qp \cdot k_{n,h}\binom{x}{n}
\] 
where $k_{n,h}:= \lfloor p^{-h} n_1\rfloor! \ldots \lfloor p^{-h} n_d\rfloor!$, and where $\mathrm{LA}_h(\mathbf{Z}_p^d,\qp)$ denotes the space of functions which are analytic on every ball of poly-radius $(h, \hdots, h)$. One also obtains
\[
\mathrm{LA}_h(\mathbf{Z}_p^d,A) = \underset{n \in \mathbf{N}^d}{\widehat{\bigoplus}} A \cdot k_{n,h}\binom{x}{n},
\] which shows in particular that each $\mathrm{LA}_h(\mathbf{Z}_p^d,A)$ is complete. By definition, we also have
\[ \mathrm{LA}(\mathbf{Z}_p^d,A) = \varinjlim_{h \in \N} \mathrm{LA}_h(\mathbf{Z}_p^d,A). \] We denote the norms on $\mathrm{LA}_h(H,\qp)$ and $\mathrm{LA}_h(H,A)$ by $|\cdot|_{h,\qp}$ and $|\cdot|_{h,A}$, respectively and $v_{h,\qp}$ and $v_{h,A}$ their respective valuations.

Let $T \subset \mathrm{LA}(H,A)$ be a bounded subset and consider the $\qp$-bilinear application
\begin{align*}
B \colon A'_{\qp,b} \times \mathrm{LA}(H,A) &\to \mathrm{LA}(H,\qp) \\
(l,f) &\mapsto l \circ f.
\end{align*} Note that the Mahler coefficients of $l \circ f$ are then just given by $a_n(l \circ f) = l(a_n(f))$.

We show that the map $B$ above is continuous. Note (as is easily seen by looking at Mahler expansions) that restriction gives, for every $h \in \N$, $\qp$-bilinear forms
\begin{align*}
B_h \colon A'_{\qp,b} \times \mathrm{LA}_h(H,A) &\to \mathrm{LA}_h(H,\qp) \\
(l,f) &\mapsto l \circ f.
\end{align*}
We claim that $B$ is continuous if and only if $B_h$ is continuous for every $h \in \N$. Indeed, by the definition of the locally convex final topology, the topology of $\mathrm{LA}(H,\qp)$ is defined by the family of all lattices $L \subseteq \mathrm{LA}(H,\qp)$ such that $L \cap \mathrm{LA}_h(H,\qp)$ is open (in $\mathrm{LA}_h(H,\qp)$) for all $h \in \N$. So let $L$ be such a lattice. We want to show that $L' := B^{-1}(L)$ is open (in $A'_{\qp,b} \times \mathrm{LA}(H,A)$). Note first that $L'$ is a lattice: if $a \in \zp$ and if $(x, y) \in L'$ then $B(a(x, y)) = a^2 B(x, y) \in L$ so that $a(x, y) \in L'$ and, if $(x, y) \in A'_{\qp,b} \times \mathrm{LA}(H,A)$, then there exists an $a \in \Q_p^\times$ such that $a B(x, y) \in L$ and, writing $a = a' / p^k$, $a' \in \zp$, we also have that $(a')^2 B(x, y) \in L$, whence $a' (x, y) \in L'$, which proves that $L'$ is a lattice. So (again by definition of the locally convex final topology), $L'$ is open if and only if $L' \cap (A'_{\qp,b} \times \mathrm{LA}_h(H,A))$ is open for every $h \in \N$. Noting $B_{h,n} \colon A'_{\qp,b} \times \mathrm{LA}_h(H,A) \to \mathrm{LA}_n(H,\qp)$ for $n \geq h$ the composition of $B_h$ with the natural continuous inclusion $\mathrm{LA}_h(H,\qp) \to \mathrm{LA}_n(H,\qp)$, we have that 
\begin{eqnarray*}
L' \cap (A'_{\qp,b} \times \mathrm{LA}_h(H,A)) &=& B^{-1}(L) \cap (A'_{\qp,b} \times \mathrm{LA}_h(H,A)) \\
&=& B^{-1}(L \cap \cup_{n \geq h} \mathrm{LA}_n(H,\qp)) \cap (A'_{\qp,b} \times \mathrm{LA}_h(H,A)) \\
&=& \cup_{n \geq h} [ B^{-1}(L \cap \mathrm{LA}_n(H,\qp)) \cap (A'_{\qp,b} \times \mathrm{LA}_h(H,A)) ] \\
&=& \cup_{n \geq h} B_{h,n}^{-1}(L \cap \mathrm{LA}_n(H,\qp)),
\end{eqnarray*}
which is open if each $B_h$ (and hence $B_{h,n}$) is continuous. This proves the claim. We finally prove that each map $B_h$ is continuous. Indeed $|B_h(l,f)|_{h,\qp} = |l \circ f|_{h,\qp} \leq ||l|||f|_{h,A}$ (where $||\cdot||$ is the norm on $A'_{\qp,b}$) and so $B$ is continuous.

Now let
\[
S:= A'_{0} \times T \subset A'_{\qp,b} \times \mathrm{LA}(H,A)
\]
where $A'_{0} \subset A'_{\qp,b}$ is the unit ball. Then $S$ is bounded and so is $B(S)$, since $B$ is continuous. By \cite[Proposition 1.1.11]{Emertonred}, $\mathrm{LA}(H,\qp)$ is $\qp$-regular and so for some $h \geq 1$, $B(S)$ is contained in $\mathrm{LA}_h(H, \qp)$ and there exists a constant $C$ such that
\[ v_{h,\qp}(l \circ f) = \inf_{n \in \N^d}  v_p(l(a_n(f))) - v_p (k_{n,h}) \geq C \] for all $l \in A'_0$ and some constant $D$ (which depends on $C$). Lemma \ref{lem:schdua} now implies that $v_A(a_n(f)) - v_p (k_{n,h}) \geq C$ for all $n \in \N^d$. Now
\begin{align*}
v_A(a_n(f)) - v_p (k_{n,h+1}) &= v_A(a_n(f)) - v_p (k_{n,h}) + v_p (k_{n,h}) - v_p (k_{n,h+1}) \\
&\geq C  + v_p (k_{n,h}) - v_p (k_{n,h+1}) \xrightarrow{|n| \to +\infty} + \infty. 
\end{align*}
This implies that $f \in \mathrm{LA}_{h+1}(\mathbf{Z}_p^d,A)$ for all $f \in T$. We now compute
\begin{align*}
v_{h+1,A}(f) &= \inf_{n \in \N^d}  v_A(a_n(f)) - v_p (k_{n,h+1}) \\
&= \inf_{n \in \N^d}  v_A(a_n(f)) - v_p (k_{n,f}) + v_p (k_{n,h}) - v_p (k_{n,h+1}) \\
&\geq C
\end{align*}
since $v_p (k_{n,h}) - v_p (k_{n,h+1}) \geq 0$. This shows that $T$ is contained and bounded in $\mathrm{LA}_{h+1}(\mathbf{Z}_p^d,A)$. Thus $\mathrm{LA}(H,A)$ is $A$-regular. 
\end{proof}

\begin{rema}
For an alternative proof of the fact that the $\qp$-bilinear map $B$ in the proof of Lemma \ref{lem:LAgoodfu} is continuous, note that $B$ is the composition of the morphisms
\begin{align*}
B \colon A'_{\qp,b} \times \mathrm{LA}(H,A) &\xrightarrow{\mathrm{id} \times \alpha}  A'_{\qp,b} \times (A \widehat{\otimes}_{\qp, \pi} \mathrm{LA}(H, \qp)) \\
&\to A'_{\qp,b} \widehat{\otimes}_{\qp, \pi} (A \widehat{\otimes}_{\qp, \pi} \mathrm{LA}(H, \qp))\\
&= (A'_{\qp,b} \widehat{\otimes}_{\qp, \pi} A) \widehat{\otimes}_{\qp, \pi} \mathrm{LA}(H, \qp)\\
&\xrightarrow{\beta} \mathrm{LA}(H,\qp),
\end{align*}
where $\alpha$ is the morphism of \cite[Proposition 2.2.10]{Emertonred} (cf. the discussion immediately after loc.cit.) which is a continuous bijection. The last morphism $\beta$ is induced from the pairing of $A$ and $A'_{\qp,b}$. 
\end{rema}

\subsection{Relative locally analytic representations}

In this section we define the category $\mathrm{Rep}_{A}^{\mathrm{la}}(H)$ and we study the structure of a locally analytic representation over the relative distribution algebras, generalizing some fundamental work of Schneider and Teitelbaum to our relative setting.

The following definition is similar to \cite[D\'efinition 3.2]{BreuHellSch}.

\begin{defi} \label{def:lareph}
An object $M$ in $\mathrm{Rep}_{A}^{\mathrm{la}}(H)$ is a barrelled, Hausdorff, locally convex $A$-module equipped with a topological\footnote{We say that the $H$-action on $M$ is \emph{topological} if $H$ induces continuous endomorphisms of $M$.} $A$-linear action of $H$,  such that, for each $m \in M$, the orbit map $h \mapsto h \cdot m$ is an element in $\mathrm{LA}(H,M)$. Morphisms are given by continuous $A$-linear $H$-maps. 
\end{defi}

\begin{rema} [locally analytic induced representation] \label{rem:lains}
Let $G$ be a locally $\qp$-analytic group, $H$ a closed locally $\qp$-analytic subgroup and suppose that $G/H$ is compact. Let $M$ be an object of $\mathrm{Rep}_{A}^{\mathrm{la}}(H)$, which is Banach. Then
\[
\mathrm{Ind}_{H}^{G}(M) := \left\{f \in \mathrm{LA}(G,M) \text{ } \lvert \text{ } \forall h_i \in H_i: \text{ } f(h_1h_2) = h_2^{-1}\cdot f(h_1) \right\} 
\] 
identifies (as topological $A$-modules) with $\mathrm{LA}(G/H,M)$, cf. \cite[Satz 4.3.1]{feauxtob}. Moreover $\mathrm{Ind}_{H}^{G}(M)$ (equipped with the natural action of $G$: $(g \cdot f)(x) := f(g^{-1}x)$) is an object of $\mathrm{Rep}_{A}^{\mathrm{la}}(G)$, cf. Satz 4.1.5 in loc.cit.

To track the action of $\mathscr{D}(G)$ on $\mathrm{LA}(G/H,M)$ induced by the above isomorphism, we need to explicit this isomorphism. Any choice of a section $G/H \to G$ gives an isomorphism of locally $\qp$ analytic manifolds $G \cong G / H \times H$. This gives an isomorphism $\mathrm{LA}(G, M) \cong \mathrm{LA}(G/H \times H, M)$ and the space $\mathrm{Ind}_H^G(M) \subseteq \mathrm{LA}(G, M)$ is identified with the sub-module $\{ f \; : \; f(\overline{g}, h) = h^{-1} \cdot f(\overline{g}, 1) , \; \overline{g} \in G / H, h \in H\}$ of $\mathrm{LA}(G/H \times H, M)$. On the other hand, the composition
\[ \mathrm{LA}(H, M) \to \mathrm{LA}(H, \mathrm{End}(M)) \times \mathrm{LA}(G / H, M) \to \mathrm{LA}(G/H \times H, M)  \]
\[ \tilde{f} \mapsto (\rho^{-1}, \tilde{f}) \mapsto [(\overline{g}, h) \mapsto h^{-1} \cdot \tilde{f}(\overline{g}) ], \] where we have noted $\rho: G \to \mathrm{GL}(M)$ the representation of $H$ on $M$, induces an isomorphism between $\mathrm{LA}(G / H, M)$ and the image of $\mathrm{Ind}_H^G(M)$ in $\mathrm{LA}(G, M)$.
\end{rema} 

It will turn out that every complete object of $\mathrm{Rep}_{A}^{\mathrm{la}}(H)$ carries a structure of a $\mathscr{D}(H, A)$-module. The following lemma is essentially \cite[Proposition 1.3]{Tayhomcoh}.

\begin{lemm} \label{basechange}
Let $M$ be a locally convex $\qp$-module and let $N$ be a locally convex $A$-module. Then $\tilde{f}(a \otimes x) = a f(x)$ defines a topological $A$-linear isomorphism 
\[
\mathcal{L}_{\qp, b}(M, N) \xrightarrow{\sim} \mathcal{L}_{A, b}(M \otimes_{\qp, \pi} A, N) \\
\]
\end{lemm}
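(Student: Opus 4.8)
The plan is to establish the two sides of the claimed isomorphism: that $\tilde f$ is a well-defined continuous $A$-linear map, and that it admits a continuous inverse. First I would recall that $\mathcal{L}_{\qp,b}(M,N) = \mathrm{Hom}_{\qp,\mathrm{cont}}(M,N)$ already carries a natural $A$-module structure, since $N$ is an $A$-module: for $f \colon M \to N$ continuous $\qp$-linear and $a \in A$, the map $(af)(x) := a\cdot f(x)$ is again continuous $\qp$-linear, because multiplication by $a$ on $N$ is continuous. The assignment $f \mapsto \tilde f$, where $\tilde f(a \otimes x) = af(x)$, is visibly $\qp$-bilinear in $(a,x)$ and hence factors through a $\qp$-linear map $M \otimes_{\qp,\pi} A \to N$; $A$-linearity of $\tilde f$ as a map out of the $A$-module $M\otimes_{\qp,\pi}A$ (where $A$ acts on the right tensor factor) is then a direct check, as is $A$-linearity of the assignment $f \mapsto \tilde f$ itself.

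Next I would verify continuity in both directions. For $\tilde f$ continuous from $M \otimes_{\qp,\pi} A$ to $N$: this is exactly the defining universal property of the projective tensor product topology — a $\qp$-bilinear map $M \times A \to N$ is continuous if and only if the induced linear map on $M \otimes_{\qp,\pi} A$ is continuous — so $\tilde f$ is continuous precisely because the map $(x,a) \mapsto af(x)$ is separately… in fact jointly continuous (continuity of scalar multiplication $A \times N \to N$ composed with $f \times \mathrm{id}$). Conversely, given a continuous $A$-linear $g \colon M\otimes_{\qp,\pi}A \to N$, the composite $M \to M\otimes_{\qp,\pi}A \xrightarrow{g} N$, $x \mapsto g(x\otimes 1)$, is continuous $\qp$-linear, giving the inverse assignment; one checks these are mutually inverse on underlying modules by the usual tensor-product bookkeeping. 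The genuine work is in checking that the bijection $\mathcal{L}_{\qp,b}(M,N) \to \mathcal{L}_{A,b}(M\otimes_{\qp,\pi}A,N)$ is a \emph{topological} isomorphism for the strong topologies, i.e. that it identifies the bornologies: a subset $B \subseteq M$ is bounded iff its image $\{x \otimes 1 : x \in B\}$ generates (together with the bounded subsets of $A$) a cofinal family of bounded subsets of $M\otimes_{\qp,\pi}A$. Here I would invoke the standard fact that the bounded subsets of a projective tensor product $M \otimes_{\qp,\pi} A$ are exactly those contained in the closed absolutely convex hull of $B_1 \otimes B_2$ for $B_1 \subseteq M$, $B_2 \subseteq A$ bounded (for the non-archimedean projective tensor product this is e.g. in Schneider's book or follows from the definition of the projective tensor seminorms), so that the seminorms $p_{B}$ on the two sides match up.

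I expect the main obstacle to be precisely this last point — matching the strong (bounded-convergence) topologies rather than just the algebraic/continuous structure — since it requires a clean description of the bounded subsets of $M \otimes_{\qp,\pi} A$ and a verification that the corresponding systems of seminorms $q \mapsto \sup_{b \in B}|q(f(b))|_A$ and $g \mapsto \sup_{b' \in B'} |q'(g(b'))|_A$ are equivalent under the correspondence. Since the statement is cited as ``essentially \cite[Proposition 1.3]{Tayhomcoh}'', I would in practice reduce to that reference for the non-relative prototype and only indicate the modifications needed to carry coefficients in $A$ rather than $\qp$; the algebraic part is routine and the topological part is a direct but slightly tedious seminorm computation that I would present compactly.
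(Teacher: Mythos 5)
Your algebraic skeleton and the easy half of the topological claim are the same as in the paper: $\tilde f$ is the linearization of the jointly continuous bilinear map $(x,a)\mapsto af(x)$ (the paper writes it equivalently as the composite of $f\otimes\mathrm{id}$ with the module map $N\otimes_{\qp,\pi}A\to N$), the inverse is $g\mapsto g_0$ with $g_0(x)=g(x\otimes 1)$, and continuity of $g\mapsto g_0$ comes from the fact that $\alpha\colon m\mapsto m\otimes 1$ carries bounded subsets of $M$ to bounded subsets of $M\otimes_{\qp,\pi}A$. Up to this point there is nothing to object to.

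The gap is in the continuity of $f\mapsto\tilde f$, where you invoke the statement that every bounded subset of $M\otimes_{\qp,\pi}A$ is contained in the closed absolutely convex hull of $B_1\otimes B_2$ with $B_1\subseteq M$ and $B_2\subseteq A$ bounded. This is not a standard fact and it does not ``follow from the definition of the projective tensor seminorms'': boundedness of a set $B'$ only gives, for each continuous seminorm $p$ on $M$, representations $z=\sum_i m_i\otimes a_i$ of its elements with $\max_i p(m_i)\,|a_i|_A$ uniformly controlled, but the representation depends on $p$, and one cannot in general extract a single bounded $B_1$ that works for all seminorms simultaneously. This is exactly the (non-archimedean analogue of) Grothendieck's probl\`eme des topologies, whose archimedean version is known to fail in general even for Fr\'echet spaces, and no such description of bounded subsets appears among the tensor product results of \cite{schneidnon}. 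So, as written, the hard direction is not established. Note that the paper's proof does not go through any structure theorem for bounded sets: it only uses \cite[Corollary 17.5(iii)]{schneidnon}, namely that $\alpha$ is a homeomorphism onto its image (so that intersecting a bounded subset $B$ of $M\otimes_{\qp,\pi}A$ with the image of $M$ produces the bounded set $\alpha^{-1}(B)\subseteq M$), and then compares the sub-basic open sets $\{g : g(B)\subseteq U\}$ and $\{f : f(\alpha^{-1}(B))\subseteq U\}$ of the two strong topologies directly. You should either argue along these lines, or, if you prefer the bounded-set route, prove the required statement in the special situation actually used later (e.g. $M=\mathscr{D}(H,\qp)$ Fr\'echet and $A$ Banach) instead of asserting it for arbitrary locally convex $M$.
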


\begin{proof}
For $f \in \mathcal{L}_{\qp, b}(M, N)$, the map $\tilde{f}$ is given by the composition of the continuous map $M \otimes_{\qp, \pi} A \to N \otimes_{\qp, \pi} A$ induced by $a \otimes x \mapsto a \otimes f(x)$ and the (continuous) map $N \otimes_{\qp, \pi} A \to N$ induced by the $A$-module structure on $N$, so it is well defined. The inverse of $f \mapsto \tilde{f}$ is given by $g \mapsto g_0$, where $g_0(x) = g(x \otimes 1)$. This shows that the map of the statement induces an $A$-linear bijection.

The fact that it's a topological isomorphism follows from \cite[Corollary 17.5(iii)]{schneidnon}: the map $\alpha \colon M \to M \otimes_{\qp, \pi} A$ defined by $m \mapsto m \otimes 1$ induces an homeomorphism between $M$ and it's image, and so the image of any bounded subset of $M$ is bounded in $M \otimes_{\qp, \pi} A$ and, conversely, the inverse image (i.e its intersection with $M \subseteq M \otimes_{\qp, \pi} A$) of any bounded subset of $M \otimes_{\qp, \pi} A$ is bounded in $M$. To show that $f \mapsto \tilde{f}$ is continuous, it is enough to show that, if $B \subseteq M \otimes_{\qp, \pi} A$ is bounded and if $U \subseteq N$ is an open set, then the inverse image of the set $\{ f \colon M \otimes_{\qp, \pi} A \to N \; | \; f(B) \subseteq U \}$ is open in $\mathcal{L}_{\qp, b}(M, N)$, but this inverse image is nothing but $\{ f \colon M \to N \; | \; f(\alpha^{-1}(B)) \subseteq U)\}$, which is open since $\alpha^{-1}(B)$ is bounded in $M$. Conversely, if $B \subseteq M$ is bounded and $U \subseteq N$ is open, then the inverse image of $\{ f \colon M \to N \; | \; f(B) \subseteq U\}$ by the map $g \mapsto g_0$ is $\{ g \colon M \otimes_{\qp, \pi} A \to N \; | \; g(\alpha(B)) \subseteq U \}$, which is open since $\alpha(B)$ is bounded in $M \otimes_{\qp, \pi} A$, and shows that the inverse map is continuous. This completes the proof.
\end{proof}

\begin{coro} \label{basechangecoro}
In the setting of Lemma \ref{basechange}, if in addition $N$ is Hausdorff and complete, then \[
\mathcal{L}_{\qp, b}(M, N) \xrightarrow{\sim} \mathcal{L}_{A, b}(M \widehat{\otimes}_{\qp, \pi} A, N) \\
\]
\end{coro}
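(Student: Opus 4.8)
The statement to prove, Corollary \ref{basechangecoro}, is a completed-tensor-product version of Lemma \ref{basechange}: if $M$ is a locally convex $\qp$-vector space and $N$ is a Hausdorff complete locally convex $A$-module, then $\tilde f(a\otimes x)=af(x)$ induces a topological $A$-linear isomorphism $\mathcal{L}_{\qp,b}(M,N)\xrightarrow{\sim}\mathcal{L}_{A,b}(M\widehat\otimes_{\qp,\pi}A,N)$.

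The plan is to deduce this directly from Lemma \ref{basechange} by exploiting the universal property of completion, without redoing the boundedness bookkeeping. First I would observe that since $N$ is Hausdorff and complete, any continuous $A$-linear (in particular any continuous $\qp$-linear) map $M\otimes_{\qp,\pi}A\to N$ extends uniquely to a continuous map on the completion $M\widehat\otimes_{\qp,\pi}A$, and conversely every continuous map on $M\widehat\otimes_{\qp,\pi}A$ restricts to one on the dense subspace $M\otimes_{\qp,\pi}A$; this restriction-extension correspondence is $A$-linear and is a bijection on the level of abstract $A$-modules, i.e. $\mathcal{L}_{A,b}(M\otimes_{\qp,\pi}A,N)\cong \mathcal{L}_{A,b}(M\widehat\otimes_{\qp,\pi}A,N)$ as $A$-modules. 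Composing with the $A$-linear bijection of Lemma \ref{basechange} gives the underlying $A$-linear bijection $\mathcal{L}_{\qp,b}(M,N)\xrightarrow{\sim}\mathcal{L}_{A,b}(M\widehat\otimes_{\qp,\pi}A,N)$, which moreover one checks is $\tilde f(a\otimes x)=af(x)$ on the image of $M\otimes_{\qp,\pi}A$ and hence everywhere by density.

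It then remains to verify that both directions are continuous for the strong topologies. Here I would use that a subset of $M\otimes_{\qp,\pi}A$ is bounded if and only if it is bounded as a subset of $M\widehat\otimes_{\qp,\pi}A$ (the inclusion of a dense subspace into its completion is a topological embedding, so boundedness is detected the same way; more precisely the closure of a bounded set is bounded and every bounded subset of the completion meets $M\otimes_{\qp,\pi}A$ in a set whose closure contains it — one should phrase this via \cite[Corollary 17.5]{schneidnon} or the analogous statement, as in the proof of Lemma \ref{basechange}). Consequently the families of seminorms $p_B$ indexed by bounded $B$ agree (up to closure) on $\mathcal{L}_{A,b}(M\otimes_{\qp,\pi}A,N)$ and $\mathcal{L}_{A,b}(M\widehat\otimes_{\qp,\pi}A,N)$, so the restriction map between these two strong dual-type spaces is a topological isomorphism. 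Chaining this with the topological isomorphism already established in Lemma \ref{basechange} yields the result.

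The only mildly delicate point — the main (small) obstacle — is making the boundedness comparison between $M\otimes_{\qp,\pi}A$ and its completion fully rigorous: one wants that a bounded subset of the completion is contained in the closure of a bounded subset of the uncompleted space, so that testing continuity on bounded sets of one is equivalent to testing on the other. This is a standard fact about completions of locally convex spaces (a bounded set in $\widehat V$ has bounded closure, and its intersection with $V$ need not be large, but by density one may instead note that every continuous seminorm on $\widehat V$ restricts to one on $V$ and vice versa, so the bornologies are mutually cofinal via closure). I would invoke this as in \cite[\S 17--18]{schneidnon} rather than reprove it. Everything else is bookkeeping identical to that already carried out in the proof of Lemma \ref{basechange}.
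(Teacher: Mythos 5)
Your reduction is exactly the paper's: the published proof of Corollary \ref{basechangecoro} is one line, deducing it from Lemma \ref{basechange} together with the universal property of the Hausdorff completion (\cite[Corollary 7.7]{schneidnon}), i.e. precisely your restriction--extension correspondence, which uses that $N$ is Hausdorff and complete. So the $A$-linear bijection, and the identification of the resulting map with $\tilde f(a\otimes x)=af(x)$ by density, are correct and coincide with the paper's argument.

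The step you yourself flag as delicate is, however, not justified by the reason you give. You argue that every bounded subset of $M\widehat{\otimes}_{\qp,\pi}A$ is dominated (via closures) by a bounded subset of $M\otimes_{\qp,\pi}A$ ``because continuous seminorms on the completion restrict to continuous seminorms on the dense subspace and conversely.'' That inference is invalid: boundedness is indeed tested against continuous seminorms, but the bijection between seminorms does not produce, for a bounded $B'$ in the completion, a bounded $B$ in the dense subspace with $B'\subseteq\overline{B}$; this domination property is not automatic for general locally convex spaces (it does hold when $M\otimes_{\qp,\pi}A$ is metrizable, e.g. when $M$ is a Banach space), and \cite[Corollary 17.5]{schneidnon} concerns the tensor-product topology, not completions, so it does not supply it either. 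Thus, as a proof of the \emph{topological} isomorphism in full generality, your third paragraph has a hole. To be fair, the paper's one-line proof does not address this point at all: the universal property only gives the $A$-linear bijection, and in the places where the corollary is invoked later, either the topologies are explicitly forgotten (Lemma \ref{locandist}) or $M$ is Banach (Lemma \ref{distbasechange}), in which case the bounded-set comparison you need is the standard metrizable one.
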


\begin{proof}
This is an immediate consequence of Lemma \ref{basechange} above and the universal property of Hausdorff completion, cf. \cite[Corollary 7.7]{schneidnon}. 
\end{proof}

\begin{rema}
Since \cite[Corollary 17.5(iii)]{schneidnon} holds for projective as well as inductive tensor product, both Lemma \ref{basechange} and Corollary \ref{basechangecoro} hold with inductive tensor product replacing the projective one.
\end{rema}

\begin{defi} \label{defaltdisalf}
We define the space of distributions on $H$ with values in $A$ as the strong dual of $\mathrm{LA}(H,A)'_b$ (cf. Definition \ref{def:stdua}) \[ \mathscr{D}(H, A) := \mathrm{LA}(H, A)'_b. \]
\end{defi}

\begin{lemma} \label{homvarinj}
Let $R \in \{\qp,A \}$. If $V = \varinjlim_n V_n$ is $R$-regular (cf. Definition \ref{def:goodspat}) then for any locally convex $R$-module, the natural map $\mathcal{L}_{R, b}(V, W) \to \varprojlim_n \mathcal{L}_{R, b}(V_n, W)$ is a topological isomorphism which is $R$-linear.
\end{lemma}

\begin{proof}
This is the same proof as \cite[Proposition 1.1.22]{Emertonred}. The crucial point in loc.cit. is that a Hausdorff semi-complete $R$-LB-type is $R$-regular. 
\end{proof}

\begin{lemm} \label{distbasechange}
Let $H$ is a compact locally $\qp$-analytic group. We have an isomorphism of locally convex $A$-modules \[ \mathscr{D}(H, A) = \mathscr{D}(H, \qp) \widehat{\otimes}_{\qp, \pi} A. \] In particular, $\mathscr{D}(H, A)$ is an $A$-Fr\'echet space.
\end{lemm}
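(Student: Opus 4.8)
The plan is to realise both sides as countable reduced projective limits of $A$-Banach modules indexed by radii of analyticity, and to compare them level by level, using the base-change isomorphism of Corollary \ref{basechangecoro} together with the $A$-regularity of $\mathrm{LA}(H,A)$ from Lemma \ref{lem:LAgoodfu}. First a harmless reduction: since $H$ is compact it has a finite partition $H=\bigsqcup_{j=1}^{N}H_j$ into open subsets each isomorphic as a locally $\qp$-analytic manifold to $\mathbf{Z}_p^d$ (\cite[Corollary 8.34]{promangrp}); consequently $\mathrm{LA}(H,M)=\bigoplus_{j=1}^{N}\mathrm{LA}(H_j,M)$ for any Hausdorff locally convex $A$-module $M$, hence $\mathscr{D}(H,A)=\bigoplus_j\mathscr{D}(H_j,A)$ and likewise $\mathscr{D}(H,\qp)=\bigoplus_j\mathscr{D}(H_j,\qp)$, and as $\widehat{\otimes}_{\qp,\pi}A$ commutes with finite direct sums we may assume $H=\mathbf{Z}_p^d$. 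Now write $\mathrm{LA}(H,A)=\varinjlim_h\mathrm{LA}_h(H,A)$ with each $\mathrm{LA}_h(H,A)$ an $A$-Banach space, as in the proof of Lemma \ref{lem:LAgoodfu}; that lemma says this presentation is $A$-regular (and, bounded sets being the same over $A$ and $\qp$, $\qp$-regular), so Lemma \ref{homvarinj} gives a topological $A$-linear isomorphism $\mathscr{D}(H,A)=\mathcal{L}_{A,b}(\mathrm{LA}(H,A),A)=\varprojlim_h\mathcal{L}_{A,b}(\mathrm{LA}_h(H,A),A)$. By the Amice description recalled in the proof of Lemma \ref{lem:LAgoodfu}, $\mathrm{LA}_h(H,A)=\widehat{\bigoplus}_{n}A\cdot k_{n,h}\binom{x}{n}$ is orthonormalizable over $A$, whence $\mathrm{LA}_h(H,A)=\mathrm{LA}_h(H,\qp)\widehat{\otimes}_{\qp,\pi}A$; since $A$ is Hausdorff and complete, Corollary \ref{basechangecoro} yields $\mathcal{L}_{A,b}(\mathrm{LA}_h(H,A),A)\cong\mathcal{L}_{\qp,b}(\mathrm{LA}_h(H,\qp),A)$. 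Combining, one gets $\mathscr{D}(H,A)\cong\varprojlim_h\mathcal{L}_{\qp,b}(\mathrm{LA}_h(H,\qp),A)$.

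The remaining, and genuinely substantial, step is to identify this projective limit with $\mathscr{D}(H,\qp)\widehat{\otimes}_{\qp,\pi}A=\big(\varprojlim_h\mathrm{LA}_h(H,\qp)'_b\big)\widehat{\otimes}_{\qp,\pi}A$. The mechanism is that the transition maps $\mathrm{LA}_h(H,\qp)\hookrightarrow\mathrm{LA}_{h+1}(H,\qp)$ are compact, so $\mathrm{LA}(H,\qp)$ is of compact type and $\mathscr{D}(H,\qp)$ is a nuclear Fréchet space. Dually this means that any element of $\varprojlim_h\mathcal{L}_{\qp,b}(\mathrm{LA}_h(H,\qp),A)$ has, at level $h$, a representative that factors as a continuous map $\mathrm{LA}_{h+1}(H,\qp)\to A$ precomposed with a compact inclusion, hence is itself a compact operator $\mathrm{LA}_h(H,\qp)\to A$; thus $\mathscr{D}(H,A)\cong\varprojlim_h\mathcal{K}(\mathrm{LA}_h(H,\qp),A)$ and, similarly, $\mathscr{D}(H,\qp)\cong\varprojlim_h\mathcal{K}(\mathrm{LA}_h(H,\qp),\qp)$. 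Writing $\mathrm{LA}_h(H,\qp)\cong c_0(\mathbf{N}^d)$ (with the $h$-th norm), one has $\mathcal{K}(c_0(\mathbf{N}^d),A)\cong c_0(\mathbf{N}^d,A)\cong c_0(\mathbf{N}^d)\widehat{\otimes}_{\qp,\pi}A\cong\mathcal{K}(c_0(\mathbf{N}^d),\qp)\widehat{\otimes}_{\qp,\pi}A$, compatibly with the transition morphisms; and since $\widehat{\otimes}_{\qp,\pi}A$ commutes with reduced countable projective limits of Banach spaces (Fréchet tensored with Banach), taking $\varprojlim_h$ gives $\mathscr{D}(H,A)\cong\mathscr{D}(H,\qp)\widehat{\otimes}_{\qp,\pi}A$. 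As $\mathscr{D}(\zp^d,\qp)$ is Fréchet and $A$ is Banach the projective and injective tensor topologies agree here, so this also gives the form stated in Proposition \ref{prop:scbres}.

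Finally, the presentation $\mathscr{D}(H,A)=\varprojlim_h\big(\mathrm{LA}_h(H,\qp)'_b\widehat{\otimes}_{\qp,\pi}A\big)$ exhibits $\mathscr{D}(H,A)$ as a countable reduced projective limit of $A$-Banach modules, hence as a metrizable complete locally convex $A$-module, i.e. an $A$-Fréchet space, which is the last assertion. The hard part of the argument is exactly the middle paragraph: Lemma \ref{homvarinj} and Corollary \ref{basechangecoro} take care of the formal bookkeeping, but since at each finite level $\mathrm{LA}_h(H,\qp)'_b\widehat{\otimes}_{\qp,\pi}A$ is strictly smaller than $\mathcal{L}_{\qp,b}(\mathrm{LA}_h(H,\qp),A)$ when $A$ is infinite-dimensional, one genuinely needs the compactness of the transition maps in the $\mathrm{LA}_h$-tower (equivalently the nuclearity of $\mathscr{D}(H,\qp)$), together with the approximation property of $A$ over $\qp$, to see that the two projective limits coincide.
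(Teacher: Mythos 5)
Your first half coincides with the paper's own proof, just run in the opposite direction: the reduction to $H=\zp^d$, the identifications $\mathscr{D}(H,A)=\mathcal{L}_{A,b}(\mathrm{LA}(H,A),A)=\varprojlim_h\mathcal{L}_{A,b}(\mathrm{LA}_h(H,A),A)=\varprojlim_h\mathcal{L}_{A,b}(\mathrm{LA}_h(H,\qp)\widehat{\otimes}_{\qp,\pi}A,A)=\varprojlim_h\mathcal{L}_{\qp,b}(\mathrm{LA}_h(H,\qp),A)$ via Lemma \ref{lem:LAgoodfu}, Lemma \ref{homvarinj} and Corollary \ref{basechangecoro} are exactly steps (ii)--(vi) of the paper's chain. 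The divergence is in the remaining step: the paper passes from $\varprojlim_h\mathcal{L}_{\qp,b}(\mathrm{LA}_h(H,\qp),A)=\mathcal{L}_{\qp,b}(\mathrm{LA}(H,\qp),A)$ to $\mathscr{D}(H,\qp)\widehat{\otimes}_{\qp,\pi}A$ by invoking reflexivity of $\mathrm{LA}(H,\qp)$ together with \cite[Proposition 20.9]{schneidnon}, which is precisely where the compact-type/nuclearity input you are trying to exploit by hand is packaged; you instead attempt to reprove this instance level by level.

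That plan is viable, but the level-wise identifications you assert are false, and this is a genuine gap as written. Compactness of an operator out of $c_0$ does not force its values on the standard basis to be a null family: the functional $c_0(\mathbf{N})\to\qp$, $(x_n)\mapsto\sum_n x_n$, maps the unit ball onto $\zp$, hence is compact, while its matrix is $(1,1,1,\ldots)\notin c_0$. Indeed every continuous map into the locally compact field $\qp$ is compact, so $\mathcal{K}(c_0,\qp)=\mathcal{L}(c_0,\qp)\cong\ell^\infty\neq c_0$, and likewise $\mathcal{K}(c_0(\mathbf{N}^d),A)\supsetneq c_0(\mathbf{N}^d,A)$; your displayed chain would identify $c_0(\mathbf{N}^d)\widehat{\otimes}_{\qp,\pi}A$ with $\ell^\infty(\mathbf{N}^d)\widehat{\otimes}_{\qp,\pi}A$, which is not correct, and in particular the level-$h$ object you end up with is not $\mathrm{LA}_h(H,\qp)'_b\widehat{\otimes}_{\qp,\pi}A$. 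What actually does the work is not compactness of the individual level-$h$ operator but the explicit factorization through the weighted inclusion: if $f$ is bounded on $\mathrm{LA}_{h+1}(H,\qp)$, then $\bigl\lvert f\bigl(k_{n,h}\tbinom{x}{n}\bigr)\bigr\rvert\le\Vert f\Vert\cdot\lvert k_{n,h}/k_{n,h+1}\rvert\to0$, so the level-$h$ component of any element of the projective limit corresponds to a null family in $c_0(\mathbf{N}^d,A)\cong c_0(\mathbf{N}^d)\widehat{\otimes}_{\qp,\pi}A$ of values on the level-$h$ orthonormal basis; the same computation with $A=\qp$ identifies the projective limit of these coefficient spaces with $\mathscr{D}(H,\qp)$, and commuting $\widehat{\otimes}_{\qp,\pi}A$ with the countable reduced projective limit then concludes. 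So your route can be repaired (it amounts to reproving the needed case of \cite[Proposition 20.9]{schneidnon}), but the middle paragraph as written is wrong; the shortest correct argument is either the weight-ratio estimate above or, as in the paper, the citation of Proposition 20.9 plus reflexivity of $\mathrm{LA}(H,\qp)$. Your final observation that the presentation as a countable projective limit of $A$-Banach modules gives the $A$-Fr\'echet property agrees with the paper's concluding remark.
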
 


\begin{proof}
By \cite[Proposition 20.9]{schneidnon} we have \[ \mathscr{D}(H, \qp) \widehat{\otimes}_{\qp, \pi} A = \mathcal{L}_{\qp, b}(D(H, \qp)'_{\qp, b}, A). \] We conclude by observing that (we use the notation from the proof of Lemma \ref{lem:LAgoodfu})
\begin{eqnarray*}
\mathcal{L}_{\qp, b}(D(H, \qp)'_{\qp, b}, A) &\overset{(i)}{=}& \mathcal{L}_{\qp, b}(\mathrm{LA}(H, \qp), A) \\
&\overset{(ii)}{=}& \varprojlim_n \mathcal{L}_{\qp, b}(\mathrm{LA}_n(H , \qp), A) \\
&\overset{(iii)}{=}& \varprojlim_n \mathcal{L}_{A, b}(\mathrm{LA}_n(H , \qp) \widehat{\otimes}_{\qp, \pi} A, A) \\
&\overset{(iv)}{=}& \varprojlim_n \mathcal{L}_{A, b}(\mathrm{LA}_n(H , A), A) \\
&\overset{(v)}{=}& \mathcal{L}_{A, b}(\mathrm{LA}(H , A), A) \\
&\overset{(vi)}{=}& \mathscr{D}(H, A),
\end{eqnarray*}
where (i) follows by reflexivity of $\mathrm{LA}(H, \qp)$ (cf. \cite[Lemma 2.1 and Theorem 1.1]{tetschlad}), (ii) follows from Lemma \ref{homvarinj}, (iii) follows from Corollary \ref{basechangecoro}, (iv) is an immediate consequence of the definition of $\mathrm{LA}_n(H, A)$, (v) is a consequence of Lemmas \ref{lem:LAgoodfu} and \ref{homvarinj} and (vi) is by definition. The last assertion follows since $\mathscr{D}(H, \qp)$ is Fr\'echet and $A$ is Banach so their completed projective tensor product is Fr\'echet. This completes the proof.
\end{proof}

\begin{rema} \label{rem:conrelg}
For $H$ a compact locally $\qp$-analytic group, the natural morphism \[ \alpha \colon \mathrm{LA}(H, A) \to \mathrm{LA}(H, \qp) \widehat{\otimes}_{\qp, \pi} A \] is (cf. the discussion immediately after \cite[Proposition 2.2.10]{Emertonred}) a continuous bijection. We do not know whether it is actually a topological isomorphism, cf. Conjecture \ref{conj:compLA}. By \cite[Theorem 2]{compprojten}, this is the case if $\mathrm{LA}(H, A)$ is complete (note that $\mathrm{LA}_h(H,A)$ has a Schauder basis, by Amice's theorem, so it has the approximation property). We claim that $\alpha$ is a topological isomorphism if and only if $\mathrm{LA}(H, A)$ is a reflexive $A$-module. Indeed, this follows from the following equalities:
\begin{eqnarray*}
\mathcal{L}_{A, b}(\mathscr{D}(H, A), A) &=& \mathcal{L}_{A, b}(\mathscr{D}(H, \qp) \widehat{\otimes}_{\qp, \pi} A, A) \\
&=& \mathcal{L}_{\qp, b}(\mathscr{D}(H, \qp), A) \\
&=& \mathcal{L}_{\qp, b}(\mathscr{D}(H, \qp), \qp) \widehat{\otimes}_{\qp, \pi} A  \\
&=&  \mathrm{LA}(H, \qp) \widehat{\otimes}_{\qp, \pi} A. 
\end{eqnarray*}
\end{rema}

\begin{lemm}\label{lem:decodist}
Let $\{ H_i\}_{i\in I}$ be pairwise disjoint compact open subsets which cover $H$. Then there is a ($A$-linear) topological isomorphism
\[
\mathscr{D}(H,A) = \bigoplus_{i} \mathscr{D}(H_i,A).
\]
Moreover $\mathscr{D}(H,A)$ is complete and Hausdorff. 
\end{lemm}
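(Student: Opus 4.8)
The statement asserts that for a partition of $H$ into pairwise disjoint compact open subsets $\{H_i\}_{i \in I}$, there is an $A$-linear topological isomorphism $\mathscr{D}(H,A) = \bigoplus_i \mathscr{D}(H_i, A)$, and moreover $\mathscr{D}(H,A)$ is complete and Hausdorff. The plan is to dualize a corresponding direct product decomposition for locally analytic functions. First I would observe that, since $H$ is strictly paracompact and the $H_i$ cover $H$ by pairwise disjoint compact opens, restriction of functions induces a topological isomorphism of locally convex $A$-modules
\[
\mathrm{LA}(H,A) = \prod_{i \in I} \mathrm{LA}(H_i, A),
\]
which is immediate from the definition of $\mathrm{LA}(H,M)$ via $M$-indices (Definition \ref{def:repla}): any $M$-index on $H$ refines to one subordinate to the partition, and conversely a compatible family of local data glues. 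This identifies the topology on $\mathrm{LA}(H,A)$ with the product topology on the right-hand side.

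Next I would pass to strong duals. For a locally convex topological vector space written as a direct product $\prod_i V_i$, the strong dual is the locally convex direct sum $\bigoplus_i (V_i)'_b$ — this is \cite[Proposition 9.11]{schneidnon} (or its analogue), applied first over $\qp$ and then transported to the $A$-linear setting using that $\mathscr{D}(H_i,A) = \mathcal{L}_{A,b}(\mathrm{LA}(H_i,A),A)$ sits inside $\mathcal{L}_{\qp,b}(\mathrm{LA}(H_i,A),A)$ with the induced topology (Definition \ref{def:stdua}). Concretely, a continuous $A$-linear functional on $\prod_i \mathrm{LA}(H_i,A)$ factors through a finite sub-product (because each $\mathrm{LA}(H_i,A)$, as each $H_i$ is compact, is barrelled and the argument of loc.cit. applies verbatim), giving the algebraic decomposition $\mathscr{D}(H,A) = \bigoplus_i \mathscr{D}(H_i,A)$; matching up the bounded sets of $\prod_i \mathrm{LA}(H_i,A)$ with finite products of bounded sets of the factors (again as in the compact case treated via Lemma \ref{lem:LAgoodfu}, each $\mathrm{LA}(H_i,A)$ being $A$-regular) shows the strong topologies agree, i.e. the decomposition is a topological isomorphism, and it is $A$-linear since all the maps involved are.

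Finally, completeness and Hausdorffness follow: each $\mathscr{D}(H_i,A)$ is an $A$-Fr\'echet space by Lemma \ref{distbasechange} (as each $H_i$ is compact), hence complete and Hausdorff, and a locally convex direct sum of complete Hausdorff spaces is complete and Hausdorff (\cite[Lemma 5.3 and the subsequent discussion, resp. Proposition 5.6]{schneidnon}). The main obstacle I anticipate is purely bookkeeping: carefully checking that the identification of bounded subsets of the product with (finite tuples of) bounded subsets of the factors is compatible with the passage from $\qp$-linear to $A$-linear continuous duals, so that the subspace topology defining the strong $A$-dual matches the direct-sum topology on the nose. This is where one must invoke the $A$-regularity of $\mathrm{LA}(H_i,A)$ together with Lemma \ref{homvarinj}, rather than appealing to a bare reference over $\qp$.
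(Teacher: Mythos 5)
Your proposal is correct and follows essentially the same route as the paper: identify $\mathrm{LA}(H,A)$ with $\prod_i \mathrm{LA}(H_i,A)$, pass to strong duals by repeating the proof of \cite[Proposition 9.11]{schneidnon}, and conclude completeness and Hausdorffness from standard facts about locally convex direct sums (the paper cites \cite[Corollary 5.4 and Lemma 7.8]{schneidnon}, you instead use that each $\mathscr{D}(H_i,A)$ is $A$-Fr\'echet). The appeals to barrelledness and to $A$-regularity of $\mathrm{LA}(H_i,A)$ are not needed for this dualization step, but they do no harm.
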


\begin{proof}
We have a topological isomorphism
\[
\mathrm{LA}(H,A) = \prod_{i}\mathrm{LA}(H_i,A).
\]
The claim now follows from the fact that there is a topological isomorphism 
\[
(\prod_{i}\mathrm{LA}(H_i,A) )'_b = \bigoplus_{i} \mathrm{LA}(H_i,A)'_b
\]
To see this last fact, one repeats the same proof for \cite[Proposition 9.11]{schneidnon}. Finally $\mathscr{D}(H,A)$ is complete and Hausdorff follows from \cite[Corollary 5.4 and Lemma 7.8]{schneidnon}.
\end{proof}

\begin{lemm}\label{lem:distrbcompl}
Let $H$ be a locally $\qp$-analytic group. We have an isomorphism of locally convex $A$-modules 
\[ \mathscr{D}(H, A) = \mathscr{D}(H, \qp) \widehat{\otimes}_{\qp, \iota} A. \] 
\end{lemm}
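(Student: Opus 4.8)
The statement to prove is Proposition~\ref{prop:scbres}, now appearing as Lemma~\ref{lem:distrbcompl}: for an arbitrary locally $\qp$-analytic group $H$, one has $\mathscr{D}(H, A) = \mathscr{D}(H, \qp) \widehat{\otimes}_{\qp, \iota} A$ as locally convex $A$-modules.

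The plan is to reduce the general (non-compact) case to the compact case already handled in Lemma~\ref{distbasechange}, by means of the paracompactness decomposition. First I would fix a covering $\{H_i\}_{i \in I}$ of $H$ by pairwise disjoint compact open subsets; such a covering exists because $H$ is strictly paracompact. By the defining property of $\mathrm{LA}(H, -)$ and the fact that the $H_i$ are open and disjoint, there are topological isomorphisms $\mathrm{LA}(H, A) = \prod_{i \in I} \mathrm{LA}(H_i, A)$ and $\mathrm{LA}(H, \qp) = \prod_{i \in I} \mathrm{LA}(H_i, \qp)$, compatible with the $A$-module structures. Dualizing, exactly as in Lemma~\ref{lem:decodist}, one gets $\mathscr{D}(H, A) = \bigoplus_{i \in I} \mathscr{D}(H_i, A)$ and likewise $\mathscr{D}(H, \qp) = \bigoplus_{i \in I} \mathscr{D}(H_i, \qp)$, where the locally convex direct sums carry their natural topology (the argument of \cite[Proposition~9.11]{schneidnon} applies since a product of $\mathrm{LA}(H_i, -)$'s is being dualized).

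Next I would apply Lemma~\ref{distbasechange} to each compact piece: $\mathscr{D}(H_i, A) = \mathscr{D}(H_i, \qp) \widehat{\otimes}_{\qp, \pi} A$. Since each $\mathscr{D}(H_i, \qp)$ is a nuclear Fr\'echet space (being the strong dual of the space of locally analytic functions on a compact group, which is of compact type), the injective and projective tensor products with the Banach space $A$ agree, so in fact $\mathscr{D}(H_i, A) = \mathscr{D}(H_i, \qp) \widehat{\otimes}_{\qp, \iota} A$. It then remains to interchange the direct sum with the completed injective tensor product: one needs a topological isomorphism $\left( \bigoplus_{i \in I} \mathscr{D}(H_i, \qp) \right) \widehat{\otimes}_{\qp, \iota} A = \bigoplus_{i \in I} \left( \mathscr{D}(H_i, \qp) \widehat{\otimes}_{\qp, \iota} A \right)$. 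The injective tensor product commutes with locally convex direct sums (and with the completion thereof, by a cofinality-of-finite-subsums argument), and this is where I would invoke the relevant compatibility of $\widehat{\otimes}_\iota$ with $\bigoplus$ — for instance via \cite[\S 17]{schneidnon} together with the fact that $A$ is Banach, reducing to the algebraic statement $\left( \bigoplus_i V_i \right) \otimes_{\qp} A = \bigoplus_i (V_i \otimes_{\qp} A)$ and then passing to completions. Stringing these identifications together gives
\[
\mathscr{D}(H, A) = \bigoplus_i \mathscr{D}(H_i, A) = \bigoplus_i \big( \mathscr{D}(H_i, \qp) \widehat{\otimes}_{\qp, \iota} A \big) = \Big( \bigoplus_i \mathscr{D}(H_i, \qp) \Big) \widehat{\otimes}_{\qp, \iota} A = \mathscr{D}(H, \qp) \widehat{\otimes}_{\qp, \iota} A,
\]
as desired, and all isomorphisms are $A$-linear by construction.

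The main obstacle I anticipate is the last commutation step: being careful about whether the completed injective tensor product genuinely commutes with an infinite locally convex direct sum, since completion and infinite direct sums do not interact as smoothly as one might like (the completed direct sum is not simply the direct sum of completions in general — but here each summand $\mathscr{D}(H_i, \qp)$ is already complete, which is what saves us). I would need to check that the natural continuous bijection from the direct sum side to the tensor-product side is open, which amounts to verifying that a bounded (equivalently, for an LB-type space, a "finitely supported up to small tails") subset behaves well under the identification; alternatively one can dualize the whole argument back to the function-space side, where $\mathrm{LA}(H, A) = \prod_i \mathrm{LA}(H_i, A)$ and products are better behaved, and only at the very end take strong duals. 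I would also double-check that the case $I$ infinite does not cause the injective and projective tensor products to diverge at the level of the direct sum — but since we only ever tensor the individual nuclear Fr\'echet summands with $A$ before summing, this is not an issue.
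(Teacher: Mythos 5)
Your proposal is correct and follows essentially the same route as the paper: decompose $H$ into pairwise disjoint compact open subsets, apply the compact case (Lemma \ref{distbasechange}, where $\iota=\pi$ simply because each $\mathscr{D}(H_i,\qp)$ is Fr\'echet and $A$ is Banach — nuclearity is not needed), and then commute the completed inductive tensor product with the locally convex direct sum. The paper resolves your flagged ``main obstacle'' exactly along the lines you sketch: it first commutes the uncompleted inductive tensor product with the direct sum (via \cite[Lemma 1.1.30]{Emertonred}) and then passes to completions by the universal property, using that the left-hand side $\bigoplus_i \mathscr{D}(H_i,A) = \mathscr{D}(H,A)$ is already complete and Hausdorff (Lemma \ref{lem:decodist}), so no separate openness check is required.
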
 

\begin{proof}
This is an immediate consequence of Lemmas \ref{distbasechange} and \ref{lem:decodist}. Let $\{H_i \}_{i \in I}$ be pairwise disjoint compact open subsets which cover $H$.

$$
\begin{tikzcd} [row sep = large, column sep = large]
\bigoplus_{i} \left[ \mathscr{D}(H_i,\qp) \otimes_{\qp,\iota} A \right] \arrow[r, "\sim"] \arrow[d] &
\mathscr{D}(H,\qp) \otimes_{\qp,\iota} A \arrow[d]  \\
\bigoplus_{i} \left[ \mathscr{D}(H_i,\qp) \widehat{\otimes}_{\qp,\iota} A \right] \arrow[r] &
\mathscr{D}(H,\qp) \widehat{\otimes}_{\qp,\iota} A.
\end{tikzcd}
$$
Now \cite[Lemma 1.1.30]{Emertonred} implies that the top horizontal arrow is a topological isomorphism. By definition the right vertical arrow is a topological embedding (since $\mathscr{D}(H,\qp)$ is Hausdorff, so is $\mathscr{D}(H,\qp) \otimes_{\qp} A$, cf. \cite[Corollary 17.5(i)]{schneidnon}) that identifies its target with the completion of its source. We will show that the same is true for the left vertical arrow which will imply that the bottom horizontal arrow is a topological isomorphism, as required. Since the composite of the top horizontal arrow and the right vertical arrow is a topological embedding, the same is true for the left vertical arrow. It clearly has dense image and the target ($= \mathscr{D}(H,A)$) is complete. This completes the proof.   
\end{proof}

\begin{rema}
In the setting of Lemma \ref{lem:distrbcompl}, \cite[I.1.3 Proposition 6]{groprodten} shows that 
\[ \mathscr{D}(H, A) = \mathscr{D}(H, \qp) \widehat{\otimes}_{\qp, \pi} A \]
is a topological isomorphism. 
\end{rema}

The following is a relative version of the integration map constructed in \cite[Theorem 2.2]{tetschlad}. 

\begin{lemma} \label{locandist}
Let $H$ be a locally $\qp$-analytic group and let $M$ be a complete Hausdorff locally convex $A$-module. There is a unique $A$-linear map \[ I: \mathrm{LA}(H, M) \to \mathrm{Hom}_{A, \mathrm{cont}}(\mathscr{D}(H, A), M), \] satisfying $ I(\phi)(\delta_{h_i} \otimes 1) = \phi(h_i)$ for all $\phi \in \mathrm{LA}(H_i, M)$ and all $h_i \in H_i$. Here $\{H_i\}_{i \in I}$ are pairwise disjoint compact open subsets covering $H$, and $\delta_{h_i} \in \mathscr{D}(H_i,\qp)$ such that $\delta_{h_i}(f) := f(h_i)$ for all $f \in \mathrm{LA}(H_i,\qp)$.

 Moreover, if $M$ is A-$LB$-type (cf. Definition \ref{def:frech}) then this map is a bijection.
\end{lemma}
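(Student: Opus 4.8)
The plan is to reduce everything to the base field $\qp$, where the integration map is exactly \cite[Theorem 2.2]{tetschlad}, and to assemble the relative statement from it using the base-change results already established (Corollary \ref{basechangecoro}, Lemma \ref{distbasechange}, Lemma \ref{lem:decodist}) together with the general structure of $\mathrm{LA}(H,M)$ as a locally convex inductive/projective limit. The first step is to treat the case where $H$ is compact. In that case $\mathrm{LA}(H,M) = \varinjlim_{\mathcal I} \mathcal F_{\mathcal I}(M)$, and since $M$ is complete and Hausdorff we may further reduce to describing $I$ on each Banach piece $\mathcal F_{\phi_i}(M_i) = \mathcal F_{\phi_i}(\qp)\widehat{\otimes}_{\qp,\pi} M_i$; here I would invoke Corollary \ref{basechangecoro} (with the roles interchanged, using that $M_i$ is an $A$-Banach space) to identify $\mathrm{Hom}_{A,\mathrm{cont}}$-spaces against $\mathscr D(H,A)$ with $\mathrm{Hom}_{\qp,\mathrm{cont}}$-spaces against $\mathscr D(H,\qp)$, and then apply Schneider--Teitelbaum. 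Concretely: by Lemma \ref{distbasechange}, $\mathscr D(H,A) = \mathscr D(H,\qp)\widehat\otimes_{\qp,\pi}A$, so $\mathrm{Hom}_{A,\mathrm{cont}}(\mathscr D(H,A),M) = \mathrm{Hom}_{\qp,\mathrm{cont}}(\mathscr D(H,\qp),M)$ by Lemma \ref{basechange}/Corollary \ref{basechangecoro}, and the $\qp$-linear integration map of \cite[Theorem 2.2]{tetschlad} applied to $M$ viewed as a $\qp$-module furnishes $I$. One checks it is $A$-linear because $A$ acts through scalars on both sides and the normalization $I(\phi)(\delta_h\otimes 1) = \phi(h)$ pins it down; uniqueness is immediate since the $\delta_h$ span a dense subspace of $\mathscr D(H,A)$ (density of the locally constant/Dirac combinations, as in loc.\ cit.).

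For general (not necessarily compact) $H$, I would use strict paracompactness: choose pairwise disjoint compact open $\{H_i\}_{i\in I}$ covering $H$, so $\mathrm{LA}(H,M) = \prod_i \mathrm{LA}(H_i,M)$ and, by Lemma \ref{lem:decodist}, $\mathscr D(H,A) = \bigoplus_i \mathscr D(H_i,A)$. Then $\mathrm{Hom}_{A,\mathrm{cont}}(\mathscr D(H,A),M) = \prod_i \mathrm{Hom}_{A,\mathrm{cont}}(\mathscr D(H_i,A),M)$, and the map $I$ is simply the product of the maps $I_i$ constructed for each compact $H_i$; the normalization condition on $H_i$ is inherited componentwise, and uniqueness follows from uniqueness on each piece. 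This shows $I$ exists and is unique as an $A$-linear map with the stated property, without any hypothesis on $M$ beyond completeness and Hausdorffness.

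For the bijectivity statement when $M$ is of $A$-LB-type, write $M = \varinjlim_n M_n$ with $M_n$ $A$-Banach and injective $A$-linear transition maps. The strategy is again to reduce to the compact case and then to a single Banach space. For $H$ compact, $\mathrm{LA}(H,M) = \varinjlim_n \mathrm{LA}(H,M_n)$: this uses that $\mathrm{LA}(H,-)$ commutes with such inductive limits, which is where $A$-regularity of $\mathrm{LA}(H,A)$ (Lemma \ref{lem:LAgoodfu}) and the structure $\mathrm{LA}(H,M_n) = \varinjlim_{\mathcal I}\mathcal F_{\mathcal I}(M_n)$ enter — a locally analytic function with values in $M$ lands, on each chart, in some Banach piece $M_i\subseteq M$, and boundedness/regularity lets one take $M_i = M_n$ for a single $n$ uniformly over the (finite, since $H$ is compact) covering. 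On the target side, $\mathscr D(H,A)$ is an $A$-Fréchet space (Lemma \ref{distbasechange}), and $\mathrm{Hom}_{A,\mathrm{cont}}(\mathscr D(H,A),M_n)$ is handled by \cite[Theorem 2.2]{tetschlad} base-changed as above, giving that $I$ restricted to each $\mathrm{LA}(H,M_n)$ is a bijection onto $\mathrm{Hom}_{A,\mathrm{cont}}(\mathscr D(H,A),M_n)$; passing to the limit (and using that a continuous linear map out of the Fréchet space $\mathscr D(H,A)$ into $\varinjlim_n M_n$ factors through some $M_n$, a standard consequence of $\mathscr D(H,A)$ being Fréchet — cf. \cite[Proposition 1.1.10]{Emertonred} and the arguments around $A$-regularity) yields that $I$ is a bijection on $\mathrm{LA}(H,M)$. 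The non-compact case then follows by taking the product over the $\{H_i\}$ as before, noting $M$ of $A$-LB-type ensures each component map is a bijection.

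The main obstacle I anticipate is the interchange of $\mathrm{LA}(H,-)$ with the inductive limit $M = \varinjlim_n M_n$, i.e.\ showing $\mathrm{LA}(H,M) = \varinjlim_n \mathrm{LA}(H,M_n)$ as locally convex $A$-modules (not just as abstract modules) — and, on the dual side, that a continuous $A$-linear map $\mathscr D(H,A)\to M$ factors through some $M_n$. Both are exactly the kind of statement that fails for general $M$ and needs the $A$-regularity established in Lemma \ref{lem:LAgoodfu} together with the Fréchet property of $\mathscr D(H,A)$; getting the topologies (as opposed to just the underlying maps) to match on the inductive limit is the delicate point, and I would model it closely on the $\qp$-coefficient arguments in \cite[\S 1.1]{Emertonred} and the proof of \cite[Theorem 2.2]{tetschlad}, invoking Corollary \ref{basechangecoro} at each Banach level to transfer the result from $\qp$ to $A$.
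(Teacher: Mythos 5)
Your construction of $I$ follows the same route as the paper: apply the Schneider--Teitelbaum integration map to $M$ viewed as a $\qp$-vector space, reduce to compact $H$ via the decomposition $\mathscr{D}(H,A)=\bigoplus_i\mathscr{D}(H_i,A)$ of Lemma \ref{lem:decodist}, and identify the target through Corollary \ref{basechangecoro} and Lemma \ref{distbasechange}; that part matches the paper's proof essentially verbatim. Where you genuinely diverge is the bijectivity step. The paper gets it for free: since an $A$-Banach space is in particular a $\qp$-Banach space, an $M$ of $A$-LB-type is of $\qp$-LB-type, so \cite[Theorem 2.2]{tetschlad} already gives that $I_{\qp}\colon\mathrm{LA}(H,M)\to\mathrm{Hom}_{\qp,\mathrm{cont}}(\mathscr{D}(H,\qp),M)$ is a bijection, and one only post-composes with the (unconditional) bijections $r$ and $s$ coming from Corollary \ref{basechangecoro} and Lemma \ref{distbasechange}. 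You instead filter $M=\varinjlim_n M_n$ by its Banach pieces and pass to the limit on both sides, which forces you to prove (i) $\mathrm{LA}(H,M)=\bigcup_n\mathrm{LA}(H,M_n)$ and (ii) that every continuous $A$-linear map $\mathscr{D}(H,A)\to M$ factors through some $M_n$. Both can indeed be supplied under the hypotheses — $M$ is complete Hausdorff of LB-type, hence regular by \cite[Proposition 1.1.11]{Emertonred}, and (ii) is a Grothendieck-factorization argument for maps from the Fr\'echet space $\mathscr{D}(H,A)$ into a countable increasing union of Banach images — so your route is workable, but it buys nothing here beyond extra compatibility statements with the filtration, and two of your supporting remarks are slightly off: the relevant regularity is that of $M$ itself, not the $A$-regularity of $\mathrm{LA}(H,A)$ from Lemma \ref{lem:LAgoodfu}, and the worry about matching topologies on the inductive limit is moot, since the lemma only claims an $A$-linear bijection, not a topological isomorphism.
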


\begin{proof}
By \cite[Theorem 2.2]{tetschlad}  (cf. also the comment immediately after its proof), one has a unique map \[ I_\qp \colon \mathrm{LA}(H, M) \to \mathrm{Hom}_{\qp, \mathrm{cont}}(\mathscr{D}(H, \qp), M), \] satisfying $I_\qp(\phi)(\delta_h \otimes 1) = \phi(h)$, $h \in H$, and which is bijective if $M$ is of $\qp$-LB-type. Note that this map is clearly $A$-linear.

By Lemma \ref{lem:decodist}, one reduces to show the result for $H$ compact. So assume $H$ is compact. By Corollary \ref{basechangecoro} (where we forget the topologies), since $M$ is Hausdorff and complete, there is an $A$-linear bijection \[r \colon \mathrm{Hom}_{\qp, \mathrm{cont}}(\mathscr{D}(H, \qp), M) \xrightarrow{\sim} \mathrm{Hom}_{A, \mathrm{cont}}(\mathscr{D}(H, \qp) \widehat{\otimes}_{\qp, \pi} A, M). \] Moreover, Lemma \ref{distbasechange} gives an isomorphism
\[s \colon \mathrm{Hom}_{A, \mathrm{cont}}(\mathscr{D}(H, \qp) \widehat{\otimes}_{\qp, \pi} A, M) \xrightarrow{\sim} \mathrm{Hom}_{A, \mathrm{cont}}(\mathscr{D}(H, A), M). \]
The composition of all these maps ($s\circ r \circ I_\qp$) gives the desired map \[ I: \mathrm{LA}(H, M) \to \mathrm{Hom}_{A, \mathrm{cont}}(\mathscr{D}(H, A), M). \] The result follows.
\end{proof}

Let $\mathrm{Rep}_{A}^{\mathrm{la},LB}(H) \subseteq \mathrm{Rep}_{A}^{\mathrm{la}}(H)$ be the full subcategory consisting of spaces which are $A$-LB-type and complete. As a result we obtain the following corollary. 

\begin{coro} \label{cor:imded}
The category of $\mathrm{Rep}_{A}^{\mathrm{la},LB}(H)$ is equivalent to the category of complete, Hausdorff locally convex $A$-modules which are of $A$-LB-type equipped with a separately continuous $\mathscr{D}(H,A)$-action (more precisely the module structure morphism $\mathscr{D}(H,A) \times M \to M$ is $A$-bilinear and separately continuous) with morphisms all continuous $\mathscr{D}(H,A)$-linear maps. 
\end{coro}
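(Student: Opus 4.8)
\textbf{Plan for the proof of Corollary \ref{cor:imded}.}

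The strategy is to bootstrap from the integration map of Lemma \ref{locandist}, in exact analogy with \cite[Corollary 3.3]{tetschlad}. First I would construct the functor in one direction: given $M \in \mathrm{Rep}_{A}^{\mathrm{la},LB}(H)$, the orbit maps $h \mapsto h \cdot m$ assemble into an $A$-linear map $M \to \mathrm{LA}(H,M)$, and composing with the bijection $I$ of Lemma \ref{locandist} yields, for each $m$, an element of $\mathrm{Hom}_{A,\mathrm{cont}}(\mathscr{D}(H,A),M)$. Evaluating gives a pairing $\mathscr{D}(H,A) \times M \to M$; I must check it is $A$-bilinear and separately continuous, and that it is a genuine module structure (i.e.\ associative for the convolution product on $\mathscr{D}(H,A)$ and unital). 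Bilinearity is immediate from the $A$-linearity of $I$ and of the orbit-map assignment; separate continuity in the $M$-variable follows because $I(\phi)$ is continuous for fixed $\phi$, and in the $\mathscr{D}(H,A)$-variable because the orbit map lands in $\mathrm{LA}(H,M)$ with its natural topology and $I$ is continuous there. For associativity one reduces, using the density of the Dirac distributions $\delta_h \otimes 1$ and separate continuity, to the identity $\delta_{g}\cdot(\delta_{h}\cdot m) = \delta_{gh}\cdot m$, which is just the fact that $H$ acts on $M$.

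In the other direction, given a complete Hausdorff $A$-LB-type module $M$ with a separately continuous $A$-bilinear $\mathscr{D}(H,A)$-action, I would recover the $H$-action by letting $h \in H$ act via the Dirac distribution $\delta_h \otimes 1 \in \mathscr{D}(H,A)$. One must show the resulting $H$-action is topological (continuity of each $h$ is clear from separate continuity in $M$) and that each orbit map is locally analytic: here I invoke the bijectivity half of Lemma \ref{locandist}, since the module structure morphism, viewed as an $A$-linear map $M \to \mathrm{Hom}_{A,\mathrm{cont}}(\mathscr{D}(H,A),M)$, lies in the image of $I$ precisely because $M$ is $A$-LB-type, and its preimage under $I$ is by construction the family of orbit maps in $\mathrm{LA}(H,M)$. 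Barrelledness of $M$ is needed for $M$ to be an object of $\mathrm{Rep}_A^{\mathrm{la}}(H)$; since an $A$-LB-type space is a locally convex inductive limit of Banach spaces, hence of barrelled spaces, it is barrelled by \cite[\S 6, Remark]{schneidnon}-type reasoning. Finally I would check that these two constructions are mutually inverse, which again comes down to the compatibility $I(o_m)(\delta_h \otimes 1) = o_m(h) = h\cdot m$ asserted in Lemma \ref{locandist}, and that morphisms correspond: a continuous $A$-linear map is $H$-equivariant if and only if it is $\mathscr{D}(H,A)$-linear, using density of Dirac distributions and separate continuity once more.

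The main obstacle I anticipate is not any single deep point but the careful bookkeeping of topologies in verifying separate (rather than joint) continuity of the module map and the equivalence of the two notions of morphism; in particular, passing between $\mathrm{LA}(H,M)$ and $\mathrm{Hom}_{A,\mathrm{cont}}(\mathscr{D}(H,A),M)$ must be done using Lemma \ref{locandist} together with the identification $\mathscr{D}(H,A) = \mathscr{D}(H,\qp)\widehat{\otimes}_{\qp,\iota}A$ of Lemma \ref{lem:distrbcompl} (equivalently Proposition \ref{prop:scbres}), so that the reduction to the known field case \cite[Corollary 3.3]{tetschlad} is legitimate. Everything else is a routine transcription of the Schneider--Teitelbaum argument once the relative integration map is in hand.
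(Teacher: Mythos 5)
Your proposal is correct and follows essentially the same route as the paper: the paper deduces the corollary directly from the integration map of Lemma \ref{locandist} (itself obtained from the base change $\mathscr{D}(H,A)=\mathscr{D}(H,\qp)\widehat{\otimes}_{\qp}A$ and Schneider--Teitelbaum's \cite[Theorem 2.2, Corollary 3.3]{tetschlad}), and your plan is just the standard unpacking of that deduction. No gaps.
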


\begin{proof}
This is an immediate consequence of Lemma \ref{locandist}. 
\end{proof}

\subsection{Locally analytic cohomology and Shapiro's lemma}

In this section we prove Shapiro's Lemma for a relative version of the cohomology theory developed by Kohlhaase in \cite{kohl2011}. We should warn the reader that Lazard's definition of locally analytic cohomology of a locally $\qp$-analytic group via analytic cochains, cf. \cite[Chapitre V, \S 2.3]{Lazardgrp} (or \cite{modlaztr} for a modern treatment), does not always coincide with the cohomology groups defined in \cite{kohl2011}. Futhermore the cohomology groups defined by Kohlhasse are finer than that of Lazard, in the sense that they themeselves carry a locally convex topology. In what follows however, we will ignore this extra structure. Let us first explain the setup. Let $H$ be a locally $\qp$-analytic group (for applications $H$ will be a closed locally $\qp$-analytic subgroup of $\GL_2(\qp)$). We will follow closely the treatment in \cite{kohl2011}, albeit in a relative setting. In particular we are able to reduce many of the arguments to the case considered in loc.cit. The key is lemma \ref{lem:distrbcompl}. 

\begin{defi}\label{def:lacatlur}
Let $\mathscr{G}_{H,A}$ denote the category of complete Hausdorff locally convex $A$-modules with the structure of a separately continuous $A$-linear $\mathscr{D}(H,A)$-module, taking as morphisms all continuous $\mathscr{D}(H,A)$-linear maps. More precisely we demand that the module structure morphism
\[
\mathscr{D}(H,A) \times M \to M
\] 
is $A$-bilinear and separately continuous.
\end{defi}

\begin{rema}
Alternatively, one sees that $\mathscr{G}_{H,A}$ can be also defined as the category of complete Hausdorff locally convex $\qp$-modules with the structure of a separately continuous $\mathscr{D}(H,A)$-module, taking as morphisms all continuous $\mathscr{D}(H,A)$-linear maps.
\end{rema}

We begin by showing that the convolution product on $(\mathscr{D}(H,\qp, *)$, cf. \cite[\S 2]{tetschlad}, extends to $\mathscr{D}(H,A)$. Indeed by Lemma \ref{lem:distrbcompl} we have an isomorphism of locally convex $A$-modules
\[ \mathscr{D}(H, A) = \mathscr{D}(H, \qp) \widehat{\otimes}_{\qp, \iota} A. \]
We define for $h_1,h_2 \in H$ an $A$-bilinear, separately continuous map
\begin{align*}
*_A \colon (\mathscr{D}(H,\qp) \otimes_{\qp,\iota} A) \times (\mathscr{D}(H,\qp) \otimes_{\qp,\iota} A) &\to (\mathscr{D}(H,\qp) \otimes_{\qp,\iota} A)\\
(\delta_{h_1} \otimes 1) \times (\delta_{h_2} \otimes 1) &\mapsto (\delta_{h_1}*\delta_{h_2} \otimes 1).
\end{align*}
Since the Dirac distributions $\delta_h$ for $h\in H$ are dense in $\mathscr{D}(H,\qp)$, cf. \cite[Lemma 3.1]{tetschlad}, $*_A$ is well defined. Note that $*_A$ is separately continuous since $*$ is separately continuous, cf. \cite[Proposition 2.3]{tetschlad}. It is clear that $*_A$ extends uniquely to an $A$-bilinear, separately continuous map (which we denote by $*$, abusing notation)
\[
* \colon \mathscr{D}(H,A) \times \mathscr{D}(H,A) \to \mathscr{D}(H,A).
\]
The following lemma summarizes the above discussion. 

\begin{lemm}\label{lem:algprodf}
$(\mathscr{D}(H,A), *)$ is an associative $A$-algebra with $\delta_1 \otimes 1$ ($1 \in H$ is the unit element) as the unit element. Futhermore the convolution $*$ is separately continuous and $A$-bilinear. 
\end{lemm}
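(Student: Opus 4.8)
\textbf{Proof proposal for Lemma \ref{lem:algprodf}.}

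The plan is to deduce everything from the analogous facts over $\qp$, namely \cite[Proposition 2.3]{tetschlad}, which establishes that $(\mathscr{D}(H,\qp),*)$ is an associative $\qp$-algebra with unit $\delta_1$ and that $*$ is separately continuous, together with the base-change identification $\mathscr{D}(H,A) = \mathscr{D}(H,\qp)\widehat{\otimes}_{\qp,\iota} A$ of Lemma \ref{lem:distrbcompl}. First I would recall the construction of $*$ already sketched above: one defines $*_A$ on the dense $\qp$-subspace $\mathscr{D}(H,\qp)\otimes_{\qp,\iota} A$ by declaring $(\delta_{h_1}\otimes a_1)*_A(\delta_{h_2}\otimes a_2) = (\delta_{h_1}*\delta_{h_2})\otimes a_1a_2$ on Dirac distributions and extending by $A$-bilinearity, using the density of Dirac distributions (\cite[Lemma 3.1]{tetschlad}) to see that this is well defined, and then extends by separate continuity to the completion. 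The first task is to record that $*$ is $A$-bilinear: this is immediate from the formula on the generating set $\{\delta_h\otimes a\}$ and the separate continuity of the extension, since $A$-bilinearity is a closed condition.

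The associativity and unit properties are then proved by a density argument. For associativity, I would fix $x,y \in \mathscr{D}(H,A)$ and consider the two continuous maps $z \mapsto (x*y)*z$ and $z\mapsto x*(y*z)$ from $\mathscr{D}(H,A)$ to itself; by separate continuity of $*$ these are continuous, and they agree on the dense subspace spanned by Dirac distributions $\delta_h\otimes 1$ because associativity holds there — this reduces, via the formula on generators, to associativity of $*$ on $\mathscr{D}(H,\qp)$, i.e. \cite[Proposition 2.3]{tetschlad} (the scalars in $A$ just multiply). Hence the two maps coincide for all $z$; then one lets $x$, and afterwards $y$, vary through $\mathscr{D}(H,A)$ by the same density-plus-separate-continuity argument, using that $\{\delta_h\otimes 1\}$ spans a dense $A$-submodule of $\mathscr{D}(H,A)$ (which follows from the density of Dirac distributions in $\mathscr{D}(H,\qp)$ and the fact that $\mathscr{D}(H,\qp)\otimes_{\qp,\iota} A$ is dense in $\mathscr{D}(H,A)=\mathscr{D}(H,\qp)\widehat{\otimes}_{\qp,\iota} A$). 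For the unit, one checks $(\delta_1\otimes 1)*(\delta_h\otimes a)=\delta_h\otimes a=(\delta_h\otimes a)*(\delta_1\otimes 1)$ directly on generators using that $\delta_1$ is the unit of $(\mathscr{D}(H,\qp),*)$, and then extends by separate continuity.

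The only mild subtlety — and the step I expect to require the most care — is the handling of separate continuity through these limiting arguments: one must be careful that "agree on a dense subset" propagates to "agree everywhere" only because the relevant maps are genuinely continuous (not merely separately continuous in two variables simultaneously), which is why I would fix one argument at a time. This is exactly the pattern used in \cite{tetschlad} and poses no essential new difficulty in the relative setting, since $\mathscr{D}(H,A)$ is Hausdorff (Lemma \ref{lem:decodist}) so that limits of continuous maps agreeing on a dense set are unique. No topological input beyond separate continuity of $*_A$, the density of $\mathscr{D}(H,\qp)\otimes_{\qp,\iota}A$, and Hausdorffness is needed; everything algebraic is inherited termwise from $(\mathscr{D}(H,\qp),*)$.
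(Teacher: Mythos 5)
Your proposal is correct and follows essentially the same route as the paper: the paper gives no separate proof, since the lemma is stated as a summary of the immediately preceding discussion, which is exactly your construction (define $*_A$ on Dirac distributions via $(\delta_{h_1}\otimes 1)*(\delta_{h_2}\otimes 1)=(\delta_{h_1}*\delta_{h_2})\otimes 1$, invoke density of Diracs and \cite[Proposition 2.3]{tetschlad} for separate continuity, then extend uniquely to the completion $\mathscr{D}(H,A)=\mathscr{D}(H,\qp)\widehat{\otimes}_{\qp,\iota}A$ of Lemma \ref{lem:distrbcompl}). Your added density bootstrap for associativity and the unit (fixing one argument at a time, using Hausdorffness) merely makes explicit what the paper leaves implicit; just be careful to state the bootstrap in the right order, first with all arguments Dirac and then letting them vary one at a time, rather than starting with arbitrary $x,y$.
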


As a consequence of Lemma \ref{lem:algprodf} and the fact that $\mathscr{D}(H,A)$ is complete and Hausdorff (cf. Lemma \ref{lem:decodist}) the convolution product $(\mathscr{D}(H,A), *)$ induces a unique continuous $A$-linear map
\begin{equation}\label{eq:whjg}
\mathscr{D}(H,A) \widehat{\otimes}_{A,\iota} \mathscr{D}(H,A) \to \mathscr{D}(H,A).
\end{equation}

We now endow $\mathscr{G}_{H,A}$ (and $\mathrm{LCS}_A$) with the structure of an exact category. A sequence in $\mathscr{G}_{H,A}$ (or $\mathrm{LCS}_A$)
\[
\cdots \to M_{i-1} \xrightarrow{\alpha_{i-1}} M_i \xrightarrow{\alpha_i} M_{i+1} \xrightarrow{\alpha_{i+1}} \cdots
\] 
is called \emph{s-exact} if $M_i = K_i \oplus L_i$ (as topological $A$-modules) where $K_i := \mathrm{ker}(\alpha_i)$ and $\alpha_i$ induces an isomorphism (as topological $A$-modules) between $L_i$ and $K_{i+1}$. 

\begin{rema}
A sequence in $\mathscr{G}_{H,A}$
\[
0 \to M \to N \to P \to 0
\]
is s-exact iff it is split in the category of topological $A$-modules. 
\end{rema}

\begin{defi}
An object $P$ of $\mathscr{G}_{H,A}$ is called \emph{s-projective} if the functor $\mathrm{Hom}_{\mathscr{G}_{H,A}}(P, \cdot)$ transforms all short s-exact sequences
\[
0 \to M_1 \to M_2 \to M_3 \to 0
\]
in $\mathscr{G}_{H,A}$ into exact sequences of $A$-modules. 
\end{defi}

\begin{lemm}
If $M$ is any complete Hausdorff locally convex $A$-module, then
\[
\mathscr{D}(H,A) \widehat{\otimes}_{A,\iota} M
\]
is an object of $\mathscr{G}_{H,A}$.
\end{lemm}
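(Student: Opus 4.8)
The plan is to verify the two properties defining an object of $\mathscr{G}_{H,A}$, namely that $\mathscr{D}(H,A) \widehat{\otimes}_{A,\iota} M$ is a complete Hausdorff locally convex $A$-module, and that it carries a separately continuous $A$-linear $\mathscr{D}(H,A)$-module structure. For the first point, recall that $\mathscr{D}(H,A)$ is complete and Hausdorff by Lemma \ref{lem:decodist}, and $M$ is complete and Hausdorff by hypothesis; the inductive tensor product of two Hausdorff locally convex spaces is Hausdorff by \cite[Corollary 17.5(i)]{schneidnon}, so $\mathscr{D}(H,A) \otimes_{A,\iota} M$ is a Hausdorff locally convex $A$-module (the $A$-module structure being continuous since it is so on each factor), and then its Hausdorff completion $\mathscr{D}(H,A) \widehat{\otimes}_{A,\iota} M$ exists and is a complete Hausdorff locally convex $A$-module by Lemma \ref{lem:hauscomp}.

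Next I would construct the module structure. Using the separately continuous, $A$-bilinear convolution $* \colon \mathscr{D}(H,A) \times \mathscr{D}(H,A) \to \mathscr{D}(H,A)$ of Lemma \ref{lem:algprodf}, together with the identity on $M$, one obtains a separately continuous $A$-bilinear map $\mathscr{D}(H,A) \times (\mathscr{D}(H,A) \otimes_{A,\iota} M) \to \mathscr{D}(H,A) \otimes_{A,\iota} M$ on the algebraic level, sending $(\lambda, \mu \otimes m)$ to $(\lambda * \mu) \otimes m$. Concretely, for fixed $\lambda$ this is the map induced on the tensor product by $(\lambda * -) \otimes \mathrm{id}_M$, which is continuous; and for fixed $\mu \otimes m$ (and more generally a fixed element of the tensor product) one checks separate continuity in the first variable using separate continuity of $*$ and the universal property of the inductive tensor topology. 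One then passes to the completion: since $M$ (hence $\mathscr{D}(H,A)\widehat{\otimes}_{A,\iota} M$) is complete and Hausdorff, the separately continuous action on the dense subspace extends to a separately continuous $A$-bilinear action $\mathscr{D}(H,A) \times (\mathscr{D}(H,A) \widehat{\otimes}_{A,\iota} M) \to \mathscr{D}(H,A) \widehat{\otimes}_{A,\iota} M$ — here one uses that a continuous map into a complete Hausdorff space extends uniquely from a dense subspace, applied variable-by-variable as in the proof of Lemma \ref{lem:algprodf}. Associativity and unitality of this action follow from the corresponding properties of $*$ (Lemma \ref{lem:algprodf}) by density of the Dirac distributions.

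The main obstacle I anticipate is the careful bookkeeping of \emph{separate} continuity when passing to the Hausdorff completion: separate continuity is not preserved by completion as automatically as joint continuity would be, so one must argue one slot at a time, fixing an element of one factor and extending the resulting continuous linear map, and then checking that the bilinear map so obtained is still separately continuous in the other slot. This is precisely the argument used in the paragraph preceding Lemma \ref{lem:algprodf} to extend $*_A$ to all of $\mathscr{D}(H,A)$, and in the extension of the convolution to a map out of $\mathscr{D}(H,A) \widehat{\otimes}_{A,\iota} \mathscr{D}(H,A)$ in \eqref{eq:whjg}; I would simply adapt it, replacing the second copy of $\mathscr{D}(H,A)$ by $M$ and $*$ by the map $\lambda \otimes \mu \otimes m \mapsto (\lambda * \mu) \otimes m$. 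Everything else — the locally convex and $A$-module axioms — is formal and follows from the cited results in \cite{schneidnon} and the lemmas already established in this section.
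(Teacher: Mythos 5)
Your first paragraph (completeness, Hausdorffness, the $A$-module structure on the completion) is fine and matches what the paper takes for granted. The problem is in the construction of the $\mathscr{D}(H,A)$-action. Extending in the second variable is indeed harmless: for fixed $\lambda$ the map $(\lambda * -)\otimes\mathrm{id}_M$ is linear and continuous on the dense subspace $\mathscr{D}(H,A)\otimes_{A,\iota}M$ and so extends to the completion. But your closing step --- ``checking that the bilinear map so obtained is still separately continuous in the other slot'' --- is exactly the point that does not follow from a variable-by-variable density argument. For $x$ in the completion but outside the dense subspace, $\lambda\mapsto\lambda\cdot x$ is only exhibited as a pointwise limit of the continuous maps $\lambda\mapsto\lambda\cdot x_\alpha$, and pointwise limits of continuous linear maps need not be continuous; the space $\mathscr{D}(H,A)\widehat{\otimes}_{A,\iota}M$ is in general not metrizable, so you cannot reduce to sequences, and a Banach--Steinhaus/hypocontinuity argument would require the approximating net to lie in a bounded set, which the completion does not provide. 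A merely separately continuous bilinear map does not in general extend, with separate continuity preserved, from a dense subspace of one factor to its completion, so the appeal to the analogous extension of $*_A$ before Lemma \ref{lem:algprodf} is an analogy, not a proof.

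The paper closes this gap by linearizing first, and you should too: the separately continuous convolution induces, via the universal property of the inductive tensor product and completeness of $\mathscr{D}(H,A)$, the continuous $A$-linear map \eqref{eq:whjg}; applying $(-)\widehat{\otimes}_{A,\iota}M$ to it (functoriality of the completed tensor product for continuous linear maps) and using the associativity isomorphism $\mathscr{D}(H,A)\widehat{\otimes}_{A,\iota}\bigl(\mathscr{D}(H,A)\widehat{\otimes}_{A,\iota}M\bigr)\cong\bigl(\mathscr{D}(H,A)\widehat{\otimes}_{A,\iota}\mathscr{D}(H,A)\bigr)\widehat{\otimes}_{A,\iota}M$ yields a continuous $A$-linear map $\mathscr{D}(H,A)\widehat{\otimes}_{A,\iota}\bigl(\mathscr{D}(H,A)\widehat{\otimes}_{A,\iota}M\bigr)\to\mathscr{D}(H,A)\widehat{\otimes}_{A,\iota}M$. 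Precomposing with the canonical separately continuous bilinear map $\mathscr{D}(H,A)\times\bigl(\mathscr{D}(H,A)\widehat{\otimes}_{A,\iota}M\bigr)\to\mathscr{D}(H,A)\widehat{\otimes}_{A,\iota}\bigl(\mathscr{D}(H,A)\widehat{\otimes}_{A,\iota}M\bigr)$ gives separate continuity in both slots at once, and associativity and unitality are inherited from Lemma \ref{lem:algprodf} by density of the Dirac distributions, as you say. With your construction recast in this form the proof is complete; as written, the separate continuity in the $\mathscr{D}(H,A)$-variable is asserted rather than proved.
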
 

\begin{proof}
Indeed $\mathscr{D}(H,A) \widehat{\otimes}_{A,\iota} M$, being Hausdorff and complete by definition, it suffices to remark that by tensoring the identity map on $M$ with \eqref{eq:whjg} we obtain a continuous $A$-linear map
\[
\mathscr{D}(H,A) \widehat{\otimes}_{A,\iota} (\mathscr{D}(H,A) \widehat{\otimes}_{A,\iota} M) \cong (\mathscr{D}(H,A) \widehat{\otimes}_{A,\iota} \mathscr{D}(H,A)) \widehat{\otimes}_{A,\iota} M \to \mathscr{D}(H,A) \widehat{\otimes}_{A,\iota} M.
\]
\end{proof}

We'll call an object of the form $\mathscr{D}(H,A) \widehat{\otimes}_{A,\iota} M$ (for $M$ any complete Hausdorff locally convex $A$-module) in $\mathscr{G}_{H,A}$ \emph{s-free}. As one expects, s-projective modules can be viewed as direct summands of an s-free module.

\begin{lemm}\label{lem:projfres}
An object $P$ of $\mathscr{G}_{H,A}$ is s-projective if and only if it is a direct summand (in $\mathscr{G}_{H,A}$) of an s-free module. 
\end{lemm}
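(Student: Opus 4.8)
\textbf{Proof plan for Lemma \ref{lem:projfres}.}

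The plan is to mimic the classical argument that a module is projective if and only if it is a direct summand of a free one, carried out in the exact category $(\mathscr{G}_{H,A}, \textrm{s-exact})$. First I would prove the ``if'' direction: suppose $P \oplus Q \cong \mathscr{D}(H,A) \widehat{\otimes}_{A,\iota} M$ in $\mathscr{G}_{H,A}$ for some complete Hausdorff locally convex $A$-module $M$. Since the s-free module $\mathscr{D}(H,A) \widehat{\otimes}_{A,\iota} M$ is s-projective, the functor $\mathrm{Hom}_{\mathscr{G}_{H,A}}(\mathscr{D}(H,A) \widehat{\otimes}_{A,\iota} M, -)$ is exact on short s-exact sequences; this uses the adjunction-type identification $\mathrm{Hom}_{\mathscr{G}_{H,A}}(\mathscr{D}(H,A) \widehat{\otimes}_{A,\iota} M, N) \cong \mathcal{L}_{A,b}(M, N)$ (the universal property of $\widehat{\otimes}_{A,\iota}$ together with the fact that a $\mathscr{D}(H,A)$-linear map out of an s-free module is determined by its restriction to $M$), and the fact that if $0 \to N_1 \to N_2 \to N_3 \to 0$ is s-exact then it is topologically split, so applying $\mathcal{L}_{A,b}(M,-)$ keeps it exact. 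Then $\mathrm{Hom}_{\mathscr{G}_{H,A}}(P,-) $ is a direct summand of $\mathrm{Hom}_{\mathscr{G}_{H,A}}(\mathscr{D}(H,A) \widehat{\otimes}_{A,\iota} M, -)$ as a functor (because $\mathrm{Hom}$ turns the direct sum $P \oplus Q$ into a product), hence is itself exact on short s-exact sequences; thus $P$ is s-projective.

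For the ``only if'' direction, given an s-projective $P$ I would exhibit a canonical s-free module surjecting onto it in an s-exact way and then split that surjection. The natural candidate is the multiplication map $\mu \colon \mathscr{D}(H,A) \widehat{\otimes}_{A,\iota} P \to P$, obtained from the separately continuous $A$-bilinear module action $\mathscr{D}(H,A) \times P \to P$ (which exists since $P \in \mathscr{G}_{H,A}$) by the universal property of the completed inductive tensor product, using that $P$ is complete and Hausdorff. This map is $\mathscr{D}(H,A)$-linear (by associativity of the action, i.e. Lemma \ref{lem:algprodf} applied to $P$), continuous, and surjective, and it has a continuous $A$-linear (but not a priori $\mathscr{D}(H,A)$-linear) section $s_0 \colon P \to \mathscr{D}(H,A) \widehat{\otimes}_{A,\iota} P$, namely $p \mapsto (\delta_1 \otimes 1) \otimes p$, which splits it in $\mathrm{LCS}_A$ since $\mu((\delta_1 \otimes 1)\otimes p) = p$. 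Hence the short exact sequence
\[
0 \to \ker(\mu) \to \mathscr{D}(H,A) \widehat{\otimes}_{A,\iota} P \xrightarrow{\mu} P \to 0
\]
is s-exact (the kernel is a closed $\mathscr{D}(H,A)$-submodule, being the kernel of a continuous linear map into a Hausdorff space, and it is a topological direct summand via $s_0$, so it lies in $\mathscr{G}_{H,A}$). Now I apply s-projectivity of $P$: the identity $\mathrm{id}_P$ lifts through $\mu$ to a morphism $s \colon P \to \mathscr{D}(H,A) \widehat{\otimes}_{A,\iota} P$ in $\mathscr{G}_{H,A}$ with $\mu \circ s = \mathrm{id}_P$. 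Therefore $\mathscr{D}(H,A) \widehat{\otimes}_{A,\iota} P \cong P \oplus \ker(\mu)$ in $\mathscr{G}_{H,A}$, exhibiting $P$ as a direct summand of an s-free module.

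The main obstacle I anticipate is bookkeeping with the topologies rather than anything deep: one must check that $\ker(\mu)$ is genuinely a topological direct complement (so that the sequence is s-exact in the strict sense defined above, $M = K \oplus L$ as \emph{topological} $A$-modules), which follows from the existence of the continuous section $s_0$ and the open mapping formalism for the decomposition, but deserves to be spelled out carefully; and one must verify that the section $s$ produced by s-projectivity, being a morphism in $\mathscr{G}_{H,A}$, is automatically continuous and $\mathscr{D}(H,A)$-linear, which is built into the definition of s-projective (the functor $\mathrm{Hom}_{\mathscr{G}_{H,A}}$ takes values in $A$-modules of continuous $\mathscr{D}(H,A)$-linear maps). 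A secondary point to be careful about is the identification $\mathrm{Hom}_{\mathscr{G}_{H,A}}(\mathscr{D}(H,A)\widehat{\otimes}_{A,\iota}M, N) \cong \mathcal{L}_{A,b}(M,N)$ used in the ``if'' direction; this is the relative analogue of the corresponding fact in \cite{kohl2011} and should be stated as a preliminary observation, invoking Corollary \ref{basechangecoro} and the universal property of $\widehat{\otimes}_{A,\iota}$.
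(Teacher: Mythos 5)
Your proof is correct and follows essentially the same route as the paper: the paper likewise first establishes the identification $\mathrm{Hom}_{\mathscr{G}_{H,A}}(\mathscr{D}(H,A)\widehat{\otimes}_{A,\iota}M,N)\cong\mathrm{Hom}_{A,\mathrm{cont}}(M,N)$ and then simply cites \cite[Proposition 1.4]{Tayhomcoh} for the formal part, which is exactly the classical argument you spell out (s-free modules are s-projective via this adjunction together with the topological splitness of s-exact sequences, and the multiplication map $\mathscr{D}(H,A)\widehat{\otimes}_{A,\iota}P\to P$ with its continuous $A$-linear section $p\mapsto(\delta_1\otimes 1)\otimes p$ is split in $\mathscr{G}_{H,A}$ by s-projectivity). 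The only difference is that you make the cited formal step explicit rather than invoking the reference.
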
 

\begin{proof}
First note that for any complete Hausdorff locally convex $A$-module $M$ and any object $N$ of $\mathscr{G}_{H,A}$ there is a natural continuous $A$-linear bijection
\[
\mathrm{Hom}_{\mathscr{G}_{H,A}}(\mathscr{D}(H,A) \widehat{\otimes}_{A,\iota} M, N) \to \mathrm{Hom}_{A,\mathrm{cont}}(M,N).
\]
This is the same proof as the first paragraph of the proof of Lemma \ref{basechange} (with $A$ replaced by $\mathscr{D}(H,A)$). The result now follows from \cite[Proposition 1.4]{Tayhomcoh}. 
\end{proof}

We will be interested in considering the cohomology of objects in $\mathscr{G}_{H,A}$ and so we need the notion of a resolution.

\begin{defi}
If $M$ is an object of $\mathscr{G}_{H,A}$ then by an \emph{s-projective s-resolution} of $M$ we mean an s-exact sequence
\[
\cdots \to X_1 \xrightarrow{d_1} X_0 \xrightarrow{d_0} M
\]
in $\mathscr{G}_{H,A}$ in which all objects $X_i$ are s-projective. 
\end{defi}

For an object $M$ of $\mathscr{G}_{H,A}$ let $B_{-1}(H,M) := M$ and for $q \geq 0$ let
\[
B_q(H,M) := \mathscr{D}(H,A) \widehat{\otimes}_{A,\iota} B_{q-1}(H,M)
\] 
with its structure of an s-free module. For $q \geq 0$ define
\[
d_q(\delta_0 \otimes \cdots \otimes \delta_q \otimes m) := \sum_{i=0}^{q-1} (-1)^i \delta_0 \otimes \ldots \delta_i\delta_{i+1} \otimes \cdots \otimes \delta_q \otimes m +(-1)^q\delta_0 \otimes \cdots \otimes \delta_{q-1} \otimes \delta_qm.
\] 

\begin{lemm}
For any object $M$ of $\mathscr{G}_{H,A}$ the sequence $(B_q(H,M), d_q)_{q \geq 0}$ is an s-projective s-resolution of $M$ in $\mathscr{G}_{H,A}$. 
\end{lemm}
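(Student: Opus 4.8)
The claim to prove is the standard "bar resolution" fact: for any object $M$ of $\mathscr{G}_{H,A}$, the complex $(B_q(H,M),d_q)_{q\geq 0}$ together with the augmentation $d_{-1}=d_0\colon B_0(H,M)=\mathscr{D}(H,A)\widehat{\otimes}_{A,\iota}M\to M$ (the module action) is an s-projective s-resolution of $M$ in $\mathscr{G}_{H,A}$. Each $B_q(H,M)$ is s-free by construction, hence s-projective, so the only genuine content is that the augmented complex is s-exact, i.e.\ split in the category of topological $A$-modules. The plan is to exhibit an explicit contracting homotopy exactly as in the classical case treated by Kohlhaase, and to check that it is a continuous $A$-linear splitting.

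First I would write out the complex and define the homotopy. Set $s_{-1}\colon M\to B_0(H,M)$ by $s_{-1}(m)=(\delta_1\otimes 1)\otimes m$, where $\delta_1\otimes 1\in\mathscr{D}(H,A)$ is the unit element from Lemma \ref{lem:algprodf}, and for $q\geq 0$ set
\[
s_q\colon B_q(H,M)\to B_{q+1}(H,M),\qquad s_q(\delta_0\otimes\cdots\otimes\delta_q\otimes m)=(\delta_1\otimes 1)\otimes\delta_0\otimes\cdots\otimes\delta_q\otimes m.
\]
These maps are well defined and continuous: each is induced by the continuous $A$-bilinear insertion of the unit in the first tensor slot, using that $\widehat{\otimes}_{A,\iota}$ is functorial and that $\mathscr{D}(H,A)$ is complete and Hausdorff (Lemma \ref{lem:decodist}), so that the map extends from the algebraic inductive tensor product to its completion. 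A direct computation on elementary tensors gives the homotopy identity $d_{q+1}s_q+s_{q-1}d_q=\mathrm{id}$ for all $q\geq 0$ (and $d_0 s_{-1}=\mathrm{id}_M$), since the leading term of $d_{q+1}s_q$ — contracting the inserted unit against $\delta_0$ — returns the identity, while all remaining terms cancel against $s_{q-1}d_q$. This is exactly the computation in the classical bar resolution; because all the operations in sight ($\widehat{\otimes}_{A,\iota}$, the convolution $*$, the module action, insertion of the unit) are continuous and $A$-linear, the identity, verified on the dense subspace of elementary tensors, holds everywhere.

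Next I would deduce s-exactness from the homotopy. Given the contracting homotopy $(s_q)$, the standard argument shows each $B_q(H,M)$ decomposes as a topological direct sum $K_q\oplus L_q$ with $K_q=\ker d_q$ and $d_q$ restricting to a topological isomorphism $L_q\xrightarrow{\sim}K_{q-1}$ (with $K_{-1}=M$): one takes $e_q:=s_{q-1}d_q$, which is a continuous $A$-linear idempotent on $B_q(H,M)$ by the homotopy identity ($e_q^2=s_{q-1}d_q s_{q-1}d_q=s_{q-1}(\mathrm{id}-s_{q-2}d_{q-1})d_q=e_q$ using $d_{q-1}d_q=0$), so $B_q=\ker e_q\oplus\mathrm{im}\,e_q$ as topological $A$-modules (a continuous idempotent on a complete Hausdorff locally convex space yields a topological direct sum decomposition), and one identifies $\ker e_q=\ker d_q$ and $d_q|_{\mathrm{im}\,e_q}$ as the required isomorphism onto $\ker d_{q-1}$, with continuous inverse $s_{q-1}$. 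This verifies the definition of s-exactness given before Definition \ref{def:lacatlur}. Finally, s-projectivity of each $B_q(H,M)$ is immediate from Lemma \ref{lem:projfres} since they are s-free by construction, completing the proof.

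The main obstacle, such as it is, is bookkeeping rather than conceptual: one must be careful that the homotopy maps $s_q$, defined on elementary tensors, genuinely extend to continuous $A$-linear maps on the completed tensor products $B_q(H,M)=\mathscr{D}(H,A)\widehat{\otimes}_{A,\iota}B_{q-1}(H,M)$, and that the idempotent splitting argument is valid for these (possibly non-metrizable, $A$-LF-type rather than Fréchet) spaces — this uses only that they are complete and Hausdorff, together with the fact that the category of such spaces is closed under the relevant direct summands, which is where completeness of $\mathscr{D}(H,A)$ (Lemma \ref{lem:decodist}) and of $M$ enter. One should also note that the homotopy is only $A$-linear, not $\mathscr{D}(H,A)$-linear — but that is all that is required, since s-exactness is a condition in $\mathrm{LCS}_A$ (i.e.\ on the underlying topological $A$-modules), not in $\mathscr{G}_{H,A}$.
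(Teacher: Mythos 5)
Your proof is correct, but it takes a different route from the paper. The paper's own proof is a one-line reduction: by Lemma \ref{lem:distrbcompl} one has $\mathscr{D}(H,A)\widehat{\otimes}_{A,\iota}N=\mathscr{D}(H,\qp)\widehat{\otimes}_{\qp,\iota}N$ for any complete Hausdorff locally convex $A$-module $N$, so each $B_q(H,M)$ coincides, as a topological module with its differential, with the bar complex of \cite{kohl2011} over $\qp$, and s-exactness together with s-projectivity is then quoted from Kohlhaase's Proposition 2.4. You instead reprove that statement directly in the relative category: the unit-insertion maps $s_q$ are continuous and $A$-linear, the identity $d_{q+1}s_q+s_{q-1}d_q=\mathrm{id}$ holds on the dense span of elementary tensors and hence everywhere, and the continuous idempotents $e_q=s_{q-1}d_q$ produce the topological splittings $B_q=\ker d_q\oplus\mathrm{im}\,e_q$ with $d_q$ restricting to an isomorphism onto $\ker d_{q-1}$ with continuous inverse $s_{q-1}$; your observation that the homotopy need only be $A$-linear, since s-exactness is a condition in $\mathrm{LCS}_A$, is exactly the right point. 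What your argument buys is self-containedness (no appeal to the absolute case, and it would work verbatim over any Banach base), at the cost of redoing the bookkeeping that Kohlhaase already carried out; what the paper's reduction buys is brevity and the guarantee that the relative resolution is literally the one used in \cite{kohl2011}, which is convenient for the later comparison arguments. The only blemishes are cosmetic: the clash between $\delta_1$ as the Dirac unit and the $\delta_i$ in elementary tensors, and the slightly confusing phrase ``$d_{-1}=d_0$'' for the augmentation.
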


\begin{proof}
This is an immediate consequence of \cite[Proposition 2.4]{kohl2011}. The critical point is that by Lemma \ref{lem:distrbcompl}
\[
B_q(H,M) = \mathscr{D}(H,A) \widehat{\otimes}_{A,\iota} B_{q-1}(H,M) = \mathscr{D}(H,\qp) \widehat{\otimes}_{\qp,\iota} B_{q-1}(H,M)
\]
and so the $B_q(H,M)$ defined above coincide with the ones defined in \cite{kohl2011}. 
\end{proof}

\begin{defi}\label{def:goodextnoab}
If $M$ and $N$ are objects of $\mathscr{G}_{H,A}$ we define $\Ext^{q}_{\mathscr{G}_{H,A}}(M, N)$ to be the $q$th cohomology group of the complex $\mathrm{Hom}_{\mathscr{G}_{H,A}}(B_\bullet(H,M),N)$ for any $q \geq 0$. 
\end{defi}

\begin{rema}
For any two objects $M$ and $N$ of $\mathscr{G}_{H,A}$, as usual $\Ext^{1}_{\mathscr{G}_{H,A}}(M, N)$ is the set of equivalence classes of s-exact sequences
\[
0 \to N \to E \to M \to 0
\]
with objects $E$ of $\mathscr{G}_{H,A}$.
\end{rema}

As in the setting of \cite{kohl2011}, one can identify the categories of separately continuous left and right $\mathscr{D}(H,A)$-modules. If $M$ and $N$ are objects of $\mathscr{G}_{H,A}$, $M$ a right module, we define $M \widetilde{\otimes}_{\mathscr{D}(H,A),\iota} N$ to be the quotient of $M \widehat{\otimes}_{A,\iota} N$ by the image of the natural map 
\begin{align*}
M \widehat{\otimes}_{A,\iota} \mathscr{D}(H,A) \widehat{\otimes}_{A,\iota} N &\to M \widehat{\otimes}_{A,\iota} N \\
m \otimes \delta \otimes n &\mapsto m\delta \otimes n - m \otimes \delta n,
\end{align*}
where $m \in M$, $n \in N$ and $\delta \in \mathscr{D}(H,A)$. The induced topology is the quotient topology. 

\begin{lemm}\label{lem:tstenpr}
For any complete Hausdorff locally convex $A$-module $M$ and any object $N$ of $\mathscr{G}_{H,A}$ there is a natural $A$-linear topological isomorphism
\[
(M \widehat{\otimes}_{A,\iota} \mathscr{D}(H,A)) \widetilde{\otimes}_{\mathscr{D}(H,A), \iota} N \cong M \widehat{\otimes}_{A,\iota}N.
\]
If the object $P$ of $\mathscr{G}_{H,A}$ is s-projective then the functor $P \widetilde{\otimes}_{\mathscr{D}(H,A),\iota} (\cdot)$ takes s-exact sequences in $\mathscr{G}_{H,A}$ to exact sequences of $A$-modules. If $P$ is s-free this functor takes s-exact sequences in $\mathscr{G}_{H,A}$ to s-exact sequences in $\mathrm{LCS}_A$.  
\end{lemm}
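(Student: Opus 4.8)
The plan is to prove Lemma \ref{lem:tstenpr} by reducing everything to the corresponding assertions in Kohlhaase's setting over $\qp$, exactly as has been done repeatedly throughout this appendix. The crucial input is Lemma \ref{lem:distrbcompl}, which identifies $\mathscr{D}(H,A) = \mathscr{D}(H,\qp) \widehat{\otimes}_{\qp,\iota} A$ and, more generally, allows one to rewrite $M \widehat{\otimes}_{A,\iota} \mathscr{D}(H,A) = M \widehat{\otimes}_{\qp,\iota} \mathscr{D}(H,\qp)$ for any complete Hausdorff locally convex $A$-module $M$ (viewing the latter as a locally convex $\qp$-module). Thus the relative $\widetilde{\otimes}_{\mathscr{D}(H,A),\iota}$ and the $\qp$-version $\widetilde{\otimes}_{\mathscr{D}(H,\qp),\iota}$ of \cite{kohl2011} agree on the relevant objects, and the statements become special cases of \cite[Lemma 2.5]{kohl2011} (or the analogous lemma there).

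First I would establish the topological isomorphism $(M \widehat{\otimes}_{A,\iota} \mathscr{D}(H,A)) \widetilde{\otimes}_{\mathscr{D}(H,A),\iota} N \cong M \widehat{\otimes}_{A,\iota} N$. Unwinding the definition, the left-hand side is the quotient of $M \widehat{\otimes}_{A,\iota} \mathscr{D}(H,A) \widehat{\otimes}_{A,\iota} N$ by the closure of the subspace generated by the relations $m \otimes \delta \cdot \delta' \otimes n - m \otimes \delta \otimes \delta' n$; the map $m \otimes \delta \otimes n \mapsto m \otimes \delta n$ (using the $\mathscr{D}(H,A)$-module structure on $N$) is well defined, continuous and $A$-linear, with continuous inverse induced by $m \otimes n \mapsto m \otimes (\delta_1 \otimes 1) \otimes n$. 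The verification that these are mutually inverse topological isomorphisms is purely formal once one knows $\delta_1 \otimes 1$ acts as the identity on $N$ (Lemma \ref{lem:algprodf}) and that the quotient topology behaves well under completed tensor products; I would invoke \cite[Lemma 2.5]{kohl2011} directly after the identification above rather than redo the computation.

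For the second assertion, suppose $P$ is s-projective; by Lemma \ref{lem:projfres} it is a direct summand in $\mathscr{G}_{H,A}$ of an s-free module $\mathscr{D}(H,A) \widehat{\otimes}_{A,\iota} M$. Since a direct summand of an exact sequence of $A$-modules is again exact, and since $\widetilde{\otimes}_{\mathscr{D}(H,A),\iota}$ commutes with the direct-sum decomposition, it suffices to treat the s-free case. There, by the first part, $P \widetilde{\otimes}_{\mathscr{D}(H,A),\iota} (\cdot) = M \widehat{\otimes}_{A,\iota} (\cdot)$, and a short s-exact sequence $0 \to N_1 \to N_2 \to N_3 \to 0$ in $\mathscr{G}_{H,A}$ is by definition split as topological $A$-modules, so applying $M \widehat{\otimes}_{A,\iota} (\cdot)$ preserves the splitting — giving an s-exact sequence in $\mathrm{LCS}_A$, hence in particular an exact sequence of $A$-modules. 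This simultaneously yields the refined statement for $P$ s-free and the coarser statement for $P$ merely s-projective.

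The main obstacle, such as it is, lies in checking that the identification of $\widetilde{\otimes}_{\mathscr{D}(H,A),\iota}$ with $\widetilde{\otimes}_{\mathscr{D}(H,\qp),\iota}$ is compatible with quotient topologies — i.e.\ that passing to the quotient by the (closure of the) relation subspace commutes with the base-change identification of Lemma \ref{lem:distrbcompl}. This is a standard but slightly delicate point about completed tensor products and quotients of locally convex spaces; I would handle it by noting that the relation subspaces literally coincide under the identification $M \widehat{\otimes}_{A,\iota} \mathscr{D}(H,A) = M \widehat{\otimes}_{\qp,\iota} \mathscr{D}(H,\qp)$ (the Dirac distributions $\delta_h$ being dense, as used in the construction of $*$), so the quotient topologies agree and the reduction to \cite{kohl2011} is legitimate. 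Everything else is bookkeeping with split exact sequences and the exactness of $M \widehat{\otimes}_{A,\iota}(\cdot)$ on such sequences.
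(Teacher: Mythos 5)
Your proposal is correct and follows essentially the same route as the paper: for the second and third assertions the paper likewise combines Lemma \ref{lem:projfres} with the observation that completed tensoring against $M$ preserves split (s-exact) sequences, exactly as you do. For the first isomorphism the paper simply cites \cite[Proposition 1.5]{Tayhomcoh}, whose content is the direct verification (unit distribution acting as the identity, the map $m \otimes \delta \otimes n \mapsto m \otimes \delta n$ and its inverse) that you spell out, so no genuinely different argument is involved.
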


\begin{proof}
The first part is \cite[Proposition 1.5]{Tayhomcoh}. The second part follows from that fact that $(-) \widehat{\otimes}_{\qp,\iota} M$ preserves the s-exactness of sequences of locally convex $A$-modules and Lemma \ref{lem:projfres}. 
\end{proof}

Let $H_1$ be a locally $\qp$-analytic group and let $H_2$ be a closed locally $\qp$-analytic subgroup. For an object $M$ of $\mathscr{G}_{H_2,A}$ we set

\begin{equation}\label{eq:ind}
\mathrm{ind}_{H_2}^{H_1}(M) := \mathscr{D}(H_1,A) \widetilde{\otimes}_{\mathscr{D}(H_2,A),\iota} M.
\end{equation}

For \eqref{eq:ind} to be a functor, we need the following lemma. 

\begin{lemm}\label{lem:dsfrs}
The (right) $\mathscr{D}(H_2,A)$-module $\mathscr{D}(H_1,A)$ is s-free. In particular there is an $A$-linear topological isomorphism
\[
\mathscr{D}(H_1,A) \cong \mathscr{D}(H_1/H_2,A) \widehat{\otimes}_{A,\iota} \mathscr{D}(H_2,A). 
\] 
\end{lemm}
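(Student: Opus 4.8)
\textbf{Proof plan for Lemma \ref{lem:dsfrs}.}

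The plan is to reduce the statement to the $\qp$-coefficient case already available in the literature (namely \cite[Lemma 3.8]{kohl2011} or the corresponding statement in \cite{Tayhomcoh}), using the base-change identity $\mathscr{D}(H,A) = \mathscr{D}(H,\qp)\widehat{\otimes}_{\qp,\iota} A$ of Lemma \ref{lem:distrbcompl} throughout. First I would recall that, over $\qp$, a choice of a locally $\qp$-analytic section $s\colon H_1/H_2\to H_1$ of the projection gives an isomorphism of locally $\qp$-analytic manifolds $H_1\cong (H_1/H_2)\times H_2$, and hence (by Lemma \ref{lem:decodist}, which reduces to the compact case, together with the Fr\'echet tensor identity of Lemma \ref{distbasechange}) a topological isomorphism of locally convex $\qp$-vector spaces
\[
\mathscr{D}(H_1,\qp)\cong \mathscr{D}(H_1/H_2,\qp)\,\widehat{\otimes}_{\qp,\iota}\,\mathscr{D}(H_2,\qp),
\]
under which the right $\mathscr{D}(H_2,\qp)$-action on the left-hand side corresponds to the obvious right multiplication on the second factor of the right-hand side; this is precisely the $\qp$-version of the assertion, and it exhibits $\mathscr{D}(H_1,\qp)$ as an s-free right $\mathscr{D}(H_2,\qp)$-module (s-free in the sense defined above, with $M=\mathscr{D}(H_1/H_2,\qp)$).

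Next I would tensor the displayed $\qp$-isomorphism with $A$ over $\qp$ and complete. Applying $(-)\widehat{\otimes}_{\qp,\iota}A$ to both sides and using associativity and commutativity of the completed inductive tensor product (as in the proof of Lemma \ref{lem:magdimo} and Lemma \ref{lem:distrbcompl}), together with Lemma \ref{lem:distrbcompl} applied to $H_1$, to $H_2$ and to the (non-compact, but strictly paracompact) $H_1/H_2$, one gets
\[
\mathscr{D}(H_1,A)\;\cong\;\bigl(\mathscr{D}(H_1/H_2,\qp)\,\widehat{\otimes}_{\qp,\iota}\,A\bigr)\,\widehat{\otimes}_{A,\iota}\,\bigl(\mathscr{D}(H_2,\qp)\,\widehat{\otimes}_{\qp,\iota}\,A\bigr)\;\cong\;\mathscr{D}(H_1/H_2,A)\,\widehat{\otimes}_{A,\iota}\,\mathscr{D}(H_2,A),
\]
where the middle step is just the observation that $(V\widehat{\otimes}_{\qp}A)\widehat{\otimes}_{A}(W\widehat{\otimes}_{\qp}A)=(V\widehat{\otimes}_{\qp}W)\widehat{\otimes}_{\qp}A$ for Fr\'echet (or LF) spaces $V,W$ over $\qp$. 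One then checks that this composite isomorphism is $A$-linear, topological, and compatible with the right $\mathscr{D}(H_2,A)$-module structures (the $\mathscr{D}(H_2,A)$-action is the $A$-linear extension of the $\mathscr{D}(H_2,\qp)$-action, so compatibility follows from the $\qp$-case by density of Dirac distributions, as in Lemma \ref{lem:algprodf}). Since $\mathscr{D}(H_1/H_2,A)$ is a complete Hausdorff locally convex $A$-module, this exactly says $\mathscr{D}(H_1,A)=\mathscr{D}(H_1/H_2,A)\widehat{\otimes}_{A,\iota}\mathscr{D}(H_2,A)$ is s-free as a right $\mathscr{D}(H_2,A)$-module, proving both assertions of the lemma.

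The main obstacle I anticipate is purely topological bookkeeping: one must be careful that all the tensor-product identities hold with the \emph{inductive} tensor product (not merely after passing to the projective one), which is where the hypotheses that the spaces involved are of $A$-LF-type, together with \cite[Lemma 1.1.30]{Emertonred} (already invoked in the proof of Lemma \ref{lem:distrbcompl}) and the fact that $H_1/H_2$ is strictly paracompact so that $\mathscr{D}(H_1/H_2,A)$ decomposes as a locally convex direct sum of Fr\'echet pieces, become essential; the non-compactness of $H_1/H_2$ forces one to handle this decomposition explicitly, exactly as in the proof of Lemma \ref{lem:decodist}. Once the identification of topological $A$-modules is in place, the compatibility with the algebra and module structures is a routine density argument and carries no real difficulty.
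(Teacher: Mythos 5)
Your proposal is correct and follows essentially the same route as the paper: quote the $\qp$-coefficient isomorphism $\mathscr{D}(H_1,\qp)\cong\mathscr{D}(H_1/H_2,\qp)\,\widehat{\otimes}_{\qp,\iota}\,\mathscr{D}(H_2,\qp)$ from Kohlhaase (the paper cites \cite[Lemma 5.2]{kohl2011}) and then base-change along $\mathscr{D}(H,A)=\mathscr{D}(H,\qp)\,\widehat{\otimes}_{\qp,\iota}\,A$ (Lemma \ref{lem:distrbcompl}) applied to $H_1$, $H_2$ and $H_1/H_2$, regrouping the completed inductive tensor products exactly as in your middle display. The extra remarks you make about the section $s\colon H_1/H_2\to H_1$ and the density-of-Diracs check of $\mathscr{D}(H_2,A)$-linearity are consistent with, and slightly more explicit than, the paper's argument.
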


\begin{proof}
The proof of \cite[Lemma 5.2]{kohl2011} gives that
\begin{equation}\label{eq:unfb}
\mathscr{D}(H_1,\qp) \cong \mathscr{D}(H_1/H_2,\qp) \widehat{\otimes}_{\qp,\iota} \mathscr{D}(H_2,\qp)
\end{equation}
We now compute
\begin{align*}
\mathscr{D}(H_1,A) &\overset{(i)}{=} \mathscr{D}(H_1,\qp) \widehat{\otimes}_{\qp,\iota} A \\
&\overset{(ii)}{=} \mathscr{D}(H_1/H_2,\qp) \widehat{\otimes}_{\qp,\iota} \mathscr{D}(H_2,\qp) \widehat{\otimes}_{\qp,\iota} A \\
&\overset{(iii)}{=} \mathscr{D}(H_1/H_2,\qp) \widehat{\otimes}_{\qp,\iota} \mathscr{D}(H_2,A) \\
&= \mathscr{D}(H_1/H_2,\qp) \widehat{\otimes}_{\qp,\iota} A \widehat{\otimes}_{A,\iota} \mathscr{D}(H_2,A) \\
&\overset{(iv)}{=} \mathscr{D}(H_1/H_2,A) \widehat{\otimes}_{A,\iota} \mathscr{D}(H_2,A)
\end{align*}
where (i), (iii) and (iv) follows from Lemma \ref{lem:distrbcompl}, and (ii) follows from \eqref{eq:unfb}. This completes the proof. 
\end{proof}

We are now ready to prove the following lemma. 

\begin{lemm} \label{lem:predjkz}
The functor
\begin{align*}
\mathrm{ind}_{H_2}^{H_1} \colon \mathscr{G}_{H_2,A} &\to \mathscr{G}_{H_1,A}\\
M &\mapsto \mathrm{ind}_{H_2}^{H_1}(M)
\end{align*}
takes s-exact sequences to s-exact sequences and s-projective objects to s-projective objects. 
\end{lemm}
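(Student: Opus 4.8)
The plan is to deduce the statement formally from two facts already in hand: the s-freeness of $\mathscr{D}(H_1,A)$ as a right $\mathscr{D}(H_2,A)$-module (Lemma \ref{lem:dsfrs}) and the tensor-product bookkeeping of Lemma \ref{lem:tstenpr}. First I would record the concrete shape of the functor. Identifying $\mathscr{D}(H_1,A)$, via the identification of left and right separately continuous $\mathscr{D}(H_2,A)$-modules, with the s-free module $\mathscr{D}(H_1/H_2,A)\widehat{\otimes}_{A,\iota}\mathscr{D}(H_2,A)$, the first part of Lemma \ref{lem:tstenpr} (with $M=\mathscr{D}(H_1/H_2,A)$ and $N=M$) yields a natural topological $A$-linear isomorphism
\[
\mathrm{ind}_{H_2}^{H_1}(M)=\mathscr{D}(H_1,A)\widetilde{\otimes}_{\mathscr{D}(H_2,A),\iota}M\;\cong\;\mathscr{D}(H_1/H_2,A)\widehat{\otimes}_{A,\iota}M .
\]
In particular $\mathrm{ind}_{H_2}^{H_1}(M)$ is Hausdorff and complete, confirming that $\mathrm{ind}_{H_2}^{H_1}$ is a well-defined additive functor $\mathscr{G}_{H_2,A}\to\mathscr{G}_{H_1,A}$, the $\mathscr{D}(H_1,A)$-module structure being induced by left multiplication on the first factor of $\mathscr{D}(H_1,A)$.

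For s-exactness, let $\cdots\to M_{i-1}\to M_i\to M_{i+1}\to\cdots$ be an s-exact sequence in $\mathscr{G}_{H_2,A}$. Since $\mathscr{D}(H_1,A)$ is s-free as a right $\mathscr{D}(H_2,A)$-module, the last assertion of Lemma \ref{lem:tstenpr} applies to the functor $\mathscr{D}(H_1,A)\widetilde{\otimes}_{\mathscr{D}(H_2,A),\iota}(-)$ and shows the image sequence is s-exact in $\mathrm{LCS}_A$. As the connecting maps are $\mathscr{D}(H_1,A)$-linear by functoriality, and the notion of s-exactness refers only to the underlying topological $A$-module structure, this image sequence is s-exact in $\mathscr{G}_{H_1,A}$.

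For s-projectivity I would invoke Lemma \ref{lem:projfres}: an s-projective $P$ in $\mathscr{G}_{H_2,A}$ is a direct summand (in $\mathscr{G}_{H_2,A}$) of an s-free module $\mathscr{D}(H_2,A)\widehat{\otimes}_{A,\iota}V$. Additivity of $\mathrm{ind}_{H_2}^{H_1}$ then reduces the claim to showing $\mathrm{ind}_{H_2}^{H_1}(\mathscr{D}(H_2,A)\widehat{\otimes}_{A,\iota}V)$ is s-free in $\mathscr{G}_{H_1,A}$; combining the displayed isomorphism with Lemma \ref{lem:dsfrs} once more gives
\[
\mathrm{ind}_{H_2}^{H_1}(\mathscr{D}(H_2,A)\widehat{\otimes}_{A,\iota}V)\cong\mathscr{D}(H_1/H_2,A)\widehat{\otimes}_{A,\iota}\mathscr{D}(H_2,A)\widehat{\otimes}_{A,\iota}V\cong\mathscr{D}(H_1,A)\widehat{\otimes}_{A,\iota}V ,
\]
$\mathscr{D}(H_1,A)$-linearly, so $\mathrm{ind}_{H_2}^{H_1}(P)$ is a direct summand of an s-free module and hence s-projective by Lemma \ref{lem:projfres}. (Alternatively, once the Frobenius-reciprocity adjunction between $\mathrm{ind}_{H_2}^{H_1}$ and restriction is in place, preservation of s-projectives is immediate from the exactness of restriction.)

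The genuinely delicate point — the one I would treat carefully — is not exactness of any tensor functor, which Lemma \ref{lem:tstenpr} hands us, but the verification that the topological isomorphisms in the two displays above are compatible with the $\mathscr{D}(H_1,A)$-module structures, i.e. that Lemma \ref{lem:tstenpr} can be applied with the right and left module roles played by $\mathscr{D}(H_1/H_2,A)\widehat{\otimes}_{A,\iota}\mathscr{D}(H_2,A)$ and $M$ in a way that respects the left $\mathscr{D}(H_1,A)$-action. This is precisely where Lemma \ref{lem:dsfrs} enters essentially: it is the statement that $\mathscr{D}(H_1,A)$ decomposes, compatibly with the right $\mathscr{D}(H_2,A)$-action, as an s-free module, which is exactly what makes $\widetilde{\otimes}_{\mathscr{D}(H_2,A),\iota}$ well-behaved here. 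Everything else is routine bookkeeping with inductive tensor products.
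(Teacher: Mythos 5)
Your argument is correct and follows essentially the same route as the paper: the identification $\mathrm{ind}_{H_2}^{H_1}(M)\cong\mathscr{D}(H_1/H_2,A)\widehat{\otimes}_{A,\iota}M$ via Lemmas \ref{lem:tstenpr} and \ref{lem:dsfrs}, preservation of s-exactness from the last assertion of Lemma \ref{lem:tstenpr} applied to the s-free right module $\mathscr{D}(H_1,A)$, and preservation of s-projectives via the direct-summand characterization of Lemma \ref{lem:projfres} together with the fact that $\mathrm{ind}_{H_2}^{H_1}$ respects direct sums. You merely spell out details the paper leaves implicit, so no changes are needed.
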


\begin{proof}
Lemmas \ref{lem:tstenpr} and \ref{lem:dsfrs} imply that there is a natural $A$-linear topological isomorphism
\begin{equation}\label{eq:easindfor}
\mathrm{ind}_{H_2}^{H_1}(M) = \mathscr{D}(H_1/H_2,A) \widehat{\otimes}_{A,\iota} M. 
\end{equation}
Thus $\mathrm{ind}_{H_2}^{H_1}(M)$ is Hausdorff and complete. Its structure of a separately continuous $\mathscr{D}(H_1,A)$-module is the one induced from the s-free module $\mathscr{D}(H_1,A) \widehat{\otimes}_{A,\iota} M$. The final assertion follows from Lemmas \ref{lem:tstenpr} and \ref{lem:dsfrs}, and the fact that $\mathrm{ind}_{H_2}^{H_1}(M)$ respects direct sums.
\end{proof}

The next result relates locally analytic induction $\mathrm{Ind}_{H_2}^{H_1}$, cf. Remark \ref{rem:lains} and the functor $\mathrm{ind}_{H_2}^{H_1}$. 

\begin{lemm}\label{lem:prinbag}
Let $\delta \colon H_2 \to A^\times$ be a locally analytic character and suppose $H_1/H_2$ is compact and of dimension 1. Then $\mathrm{Ind}_{H_2}^{H_1} \delta$ and $\left( \mathrm{Ind}_{H_2}^{H_1} \delta \right)'_b$  are objects of $\mathscr{G}_{H_1,A}$ (where $\left( \mathrm{Ind}_{H_2}^{H_1} \delta \right)'_b$ is equipped with $H_1$-action: $(h_1 \cdot F)(f) := F(h_1^{-1}\cdot f)$ for $F \in \left( \mathrm{Ind}_{H_2}^{H_1} \delta \right)'_b$, $f \in \mathrm{Ind}_{H_2}^{H_1} \delta$ and $h_1 \in H_1$) , and we have an isomorphism
\[
\left( \mathrm{Ind}_{H_2}^{H_1} \delta \right)'_b \cong \mathrm{ind}_{H_2}^{H_1} \delta^{-1}
\]
in the category $\mathscr{G}_{H_1,A}$. 
\end{lemm}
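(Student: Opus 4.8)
\textbf{Proof plan for Lemma \ref{lem:prinbag}.}

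The plan is to reduce everything to the case of trivial coefficients, where the corresponding statement is Corollary 5.3 (or the discussion preceding it) in \cite{kohl2011}, and then to propagate the result through the base change $(-)\widehat{\otimes}_{\qp}A$ using the tools already assembled in this appendix. First I would check that $\mathrm{Ind}_{H_2}^{H_1}\delta$ is an object of $\mathscr{G}_{H_1,A}$: by Remark \ref{rem:lains} it identifies, as a locally convex $A$-module, with $\mathrm{LA}(H_1/H_2,A)$, which is an $A$-LB-type complete Hausdorff module (here I use that $H_1/H_2$ is compact of dimension $1$, so by Lemma \ref{lem:magdimo} completeness is available), and it carries a locally analytic $H_1$-action, hence by Corollary \ref{cor:imded} (or Theorem \ref{thm:maiapen}) it acquires a separately continuous $\mathscr{D}(H_1,A)$-module structure. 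Therefore $\mathrm{Ind}_{H_2}^{H_1}\delta \in \mathscr{G}_{H_1,A}$. Its strong dual $\left(\mathrm{Ind}_{H_2}^{H_1}\delta\right)'_b$ is again complete and Hausdorff (a strong dual always is), and the contragredient $H_1$-action is again separately continuous for the $\mathscr{D}(H_1,A)$-structure (dualizing the separately continuous module map), so it too lies in $\mathscr{G}_{H_1,A}$.

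Next I would construct the comparison map. Over $\qp$, one has the well-known identification $\left(\mathrm{Ind}_{H_2}^{H_1}\delta_0\right)'_{\qp,b} \cong \mathrm{ind}_{H_2}^{H_1}\delta_0^{-1}$ for a $\qp$-valued locally analytic character $\delta_0$, which is exactly the statement $\mathscr{D}(H_1,\qp)\widetilde{\otimes}_{\mathscr{D}(H_2,\qp),\iota}\delta_0^{-1}$; this is \cite[\S 5]{kohl2011}. The plan is then to tensor with $A$: using Lemma \ref{lem:distrbcompl} ($\mathscr{D}(H,A)=\mathscr{D}(H,\qp)\widehat{\otimes}_{\qp,\iota}A$), the formula \eqref{eq:easindfor} ($\mathrm{ind}_{H_2}^{H_1}(M) = \mathscr{D}(H_1/H_2,A)\widehat{\otimes}_{A,\iota}M$), and the Amice-type isomorphism $\mathscr{D}(H_1/H_2,A)\cong\Robba_A^+$ from Lemma \ref{lem:magdimo} together with $\mathrm{LA}(H_1/H_2,A)\cong\Robba_A^-$ (the dimension-$1$ hypothesis is used precisely here, cf. also the Remark after Proposition \ref{prop:scbres} and Example \ref{ex:afelrob}). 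Concretely: by Remark \ref{rem:lains}, $\left(\mathrm{Ind}_{H_2}^{H_1}\delta\right)'_b = \mathcal{L}_{A,b}(\mathrm{LA}(H_1/H_2,A)\otimes\delta, A)$; by Lemma \ref{homvarinj} applied to the $A$-LB-type module $\mathrm{LA}(H_1/H_2,A)$ (which is $A$-regular by Lemma \ref{lem:LAgoodfu}) this dual is $\varprojlim$ of the duals of the Banach pieces, and by the $A$-reflexivity observations of Example \ref{ex:afelrob} it equals $\mathscr{D}(H_1/H_2,A)\otimes\delta^{-1}$, which by \eqref{eq:easindfor} is $\mathrm{ind}_{H_2}^{H_1}\delta^{-1}$. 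Finally I would check that this identification is $H_1$-equivariant and $\mathscr{D}(H_1,A)$-linear, which follows by density of Dirac distributions (Lemma \ref{lem:algprodf} and \cite[Lemma 3.1]{tetschlad}) from the corresponding compatibility over $\qp$.

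The main obstacle I anticipate is the bookkeeping around topologies in the passage from $\qp$ to $A$: one must make sure that the strong dual of the $A$-completed induced representation really is the $A$-completed coinduced module and not merely an abstract-module isomorphism, i.e. that the map is a topological isomorphism of objects of $\mathscr{G}_{H_1,A}$. This is where the $A$-regularity of $\mathrm{LA}(H_1/H_2,A)$ (Lemma \ref{lem:LAgoodfu}) and the explicit $\Robba_A^{\pm}$ descriptions (Lemma \ref{lem:magdimo}, Example \ref{ex:afelrob}) do the real work, since in general the formation of strong duals does not commute with completed tensor products; the dimension-one hypothesis is exactly what makes $\mathrm{LA}(H_1/H_2,A)$ reflexive and lets the argument close. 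Everything else — equivariance, the module structure, completeness and Hausdorffness — is routine given the machinery of this appendix.
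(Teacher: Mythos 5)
Your first half matches the paper's proof exactly: membership of $\mathrm{Ind}_{H_2}^{H_1}\delta$ in $\mathscr{G}_{H_1,A}$ via Remark \ref{rem:lains}, completeness and $A$-LB-type of $\mathrm{LA}(H_1/H_2,A)$ in dimension one, and Corollary \ref{cor:imded}. Your identification of the underlying locally convex $A$-modules is also correct but over-engineered: since $\mathscr{D}(H_1/H_2,A)$ is by definition $\mathrm{LA}(H_1/H_2,A)'_b$ and $\mathrm{ind}_{H_2}^{H_1}\delta^{-1}\cong\mathscr{D}(H_1/H_2,A)\widehat{\otimes}_{A,\iota}\delta^{-1}$ by \eqref{eq:easindfor}, that identification is essentially definitional and needs neither the $A$-regularity of Lemma \ref{lem:LAgoodfu} nor the reflexivity statements of Example \ref{ex:afelrob}. (Also, ``a strong dual is always complete'' is false in general; here it holds because the dual is identified with the Fr\'echet space $\mathscr{D}(H_1/H_2,A)$.)

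The genuine gap is in the final step, which is where the content of the lemma actually sits. First, you cannot get equivariance ``from the corresponding compatibility over $\qp$'': $\delta$ takes values in $A^\times$, so $\mathrm{Ind}_{H_2}^{H_1}\delta$ has no $\qp$-form and the statement of \cite{kohl2011} does not base-change to it; the check on Dirac distributions must be done directly over $A$, and under your chain of identifications (which passes through the non-canonical splitting of Lemma \ref{lem:dsfrs}) that check is precisely the nontrivial computation, not a formality. Second, the assertion that $\left(\mathrm{Ind}_{H_2}^{H_1}\delta\right)'_b$ lies in $\mathscr{G}_{H_1,A}$ ``by dualizing the separately continuous module map'' is unjustified: separate continuity does not pass to strong duals in general. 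The paper resolves both points together: by \cite[\S 4.3]{feauxtob} there is a continuous $H_1$-equivariant $A$-linear surjection $\mathscr{D}(H_1,A)\twoheadrightarrow\left(\mathrm{Ind}_{H_2}^{H_1}\delta\right)'_b$, strict by \cite[Theorem 1.1.17]{Emertonred}; its kernel is computed explicitly, using a section of $H_1\to H_1/H_2$, to be the closed $A$-span of $\{h_1h_2-\delta^{-1}(h_2)h_1\}$, i.e.\ exactly the defining submodule of $\mathrm{ind}_{H_2}^{H_1}\delta^{-1}=\mathscr{D}(H_1,A)\widetilde{\otimes}_{\mathscr{D}(H_2,A),\iota}\delta^{-1}$, giving a topological $H_1$-equivariant isomorphism; then membership of the dual in $\mathscr{G}_{H_1,A}$ follows because it is thereby a locally analytic representation (\cite[Proposition 3.2]{tetschlad}), and $\mathscr{D}(H_1,A)$-linearity follows by continuity and density of Dirac distributions. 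Your plan needs this quotient presentation and kernel computation (or an equivalent explicit equivariance verification over $A$) to close.
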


\begin{proof}
Indeed by Remark \ref{rem:lains}, $\mathrm{Ind}_{H_2}^{H_1} \delta \cong \mathrm{LA}(H_1/H_2,A)$ as locally convex $A$-modules. By Remark \ref{rem:fonlbs} the latter is a complete locally convex $A$-module of $A$-LB-type. Thus $\mathrm{Ind}_{H_2}^{H_1} \delta$ is an object of $\mathrm{Rep}_{A}^{\mathrm{la},LB}(H_1)$. By Corollary \ref{cor:imded}, it follows that $\mathrm{Ind}_{H_2}^{H_1} \delta$ is an object of $\mathscr{G}_{H_1,A}$ as claimed.

Now as locally convex $A$-modules 
\begin{align*}
\left( \mathrm{Ind}_{H_2}^{H_1} \delta \right)'_b &\cong \mathrm{LA}(H_1/H_2,A)'_b \\
&\overset{(i)}\cong \mathscr{D}(H_1/H_2,A) \\
&\overset{(ii)}{\cong} \mathrm{ind}_{H_2}^{H_1} \delta^{-1}
\end{align*}
where (i) is by definition and (ii) follows from \eqref{eq:easindfor}. By \cite[\S 4.3]{feauxtob} we get a continuous $H_1$-equivariant $A$-linear surjection
\begin{equation} \label{eq:futhtal}
\mathscr{D}(H_1,A) \twoheadrightarrow \left( \mathrm{Ind}_{H_2}^{H_1} \delta \right)'_b,
\end{equation}
which by \cite[Theorem 1.1.17]{Emertonred} is strict. It is easy to check that the closure of the $A$-linear span of
\[
X := \{ h_1h_2 - \delta^{-1}(h_2)h_1 \text{ } \lvert \text{ } h_1 \in H_1, h_2 \in H_2 \}
\]
is the kernel of \eqref{eq:futhtal}. Indeed $X$ is contained in the kernel. For the converse pick a section $s \colon H_1/H_2 \to H_1$ of the canonical surjection $\pi \colon H_1 \to H_1/H_2$. Then we have an isomorphism
\begin{align*}
H_1/H_2 \times H_2 &\xrightarrow{\sim} H_1 \\
(x,b) &\mapsto s(x)\cdot b.
\end{align*}
Suppose now $T := \sum_{1 \leq i \leq m}a_ih_i$ is in the kernel of \eqref{eq:futhtal} with $a_i \in A$ and $h_i \in H_1$. Via the isomorphism $H_1/H_2 \times H_2 \xrightarrow{\sim} H_1$ we can rewrite $T$ uniquely of the form 
\[
\sum_{j}s(x_j)\left( \sum_i a_{j,i}b_{j,i} \right)
\] 
with $x_j \in H_1/H_2$ (pairwise distinct), $a_{j,i} \in A$ and $b_{j,i} \in H_2$. Under this identification, $\mathrm{Ind}_{H_2}^{H_1} \delta$ is identified with $\mathrm{LA}(H_1/H_2,A)$. Thus for each $j$ we must have
\[
\sum_{i}a_{j,i}\delta^{-1}(b_{j,i}) = 0.
\]
It is now clear that $T$ is contained in the $A$-linear span of $X$.

Thus we get a continuous $H_1$-equivariant $A$-linear topological isomorphism
\[
\alpha \colon \mathrm{ind}_{H_2}^{H_1} \delta^{-1} \xrightarrow{\sim} \left( \mathrm{Ind}_{H_2}^{H_1} \delta \right)'_b.
\]
Thus $\left( \mathrm{Ind}_{H_2}^{H_1} \delta \right)'_b$ is a locally analytic $H_1$-representation in $A$-modules. By \cite[Proposition 3.2]{tetschlad}, it follows that $\left( \mathrm{Ind}_{H_2}^{H_1} \delta \right)'_b$ is an object of $\mathscr{G}_{H_1,A}$. Since \eqref{eq:futhtal} is $H_1$-invariant, by continuity, $\alpha$ is $\mathscr{D}(H_1,A)$-linear. This completes the proof.    
\end{proof}

\begin{lemm} [Relative Frobenius reciprocity] \label{lem:relfrob}
If $M$ and $N$ are objects of $\mathscr{G}_{H_2,A}$ and $\mathscr{G}_{H_1,A}$, respectively, then there is an $A$-linear bijection
\[
\mathrm{Hom}_{\mathscr{G}_{H_1,A}}(\mathrm{ind}_{H_2}^{H_1}(M), N) \to \mathrm{Hom}_{\mathscr{G}_{H_2,A}}(M,N)
\] 
\end{lemm}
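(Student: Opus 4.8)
The statement to prove is Lemma~\ref{lem:relfrob}, a relative version of Frobenius reciprocity: for $M$ an object of $\mathscr{G}_{H_2,A}$ and $N$ an object of $\mathscr{G}_{H_1,A}$, there is an $A$-linear bijection $\mathrm{Hom}_{\mathscr{G}_{H_1,A}}(\mathrm{ind}_{H_2}^{H_1}(M), N) \to \mathrm{Hom}_{\mathscr{G}_{H_2,A}}(M,N)$.

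The plan is to produce the map in both directions explicitly and check they are mutually inverse. Recall from \eqref{eq:ind} that $\mathrm{ind}_{H_2}^{H_1}(M) = \mathscr{D}(H_1,A) \widetilde{\otimes}_{\mathscr{D}(H_2,A),\iota} M$, and that the canonical map $M \to \mathrm{ind}_{H_2}^{H_1}(M)$, $m \mapsto (\delta_1 \otimes 1) \otimes m$ (where $1 \in H_1$ is the unit), is continuous and $\mathscr{D}(H_2,A)$-linear, since $\mathscr{D}(H_2,A)$ acts on the left factor $\mathscr{D}(H_1,A)$ by right convolution through the subalgebra inclusion $\mathscr{D}(H_2,A) \hookrightarrow \mathscr{D}(H_1,A)$ coming from Lemma~\ref{lem:dsfrs}. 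First I would define the restriction map: given $\varphi \in \mathrm{Hom}_{\mathscr{G}_{H_1,A}}(\mathrm{ind}_{H_2}^{H_1}(M), N)$, compose with the canonical map to get an element of $\mathrm{Hom}_{\mathscr{G}_{H_2,A}}(M,N)$; this is visibly $A$-linear in $\varphi$. For the inverse, given $\psi \in \mathrm{Hom}_{\mathscr{G}_{H_2,A}}(M,N)$, I would use the universal property of $\widehat{\otimes}_{A,\iota}$ together with the $\mathscr{D}(H_1,A)$-module structure on $N$: the $A$-bilinear separately continuous map $\mathscr{D}(H_1,A) \times M \to N$, $(\mu, m) \mapsto \mu \cdot \psi(m)$, factors through $\mathscr{D}(H_1,A) \widehat{\otimes}_{A,\iota} M$ and, because $\psi$ is $\mathscr{D}(H_2,A)$-linear, descends to the quotient $\mathscr{D}(H_1,A) \widetilde{\otimes}_{\mathscr{D}(H_2,A),\iota} M = \mathrm{ind}_{H_2}^{H_1}(M)$; one then checks the resulting map is $\mathscr{D}(H_1,A)$-linear and continuous.

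The two constructions are manifestly mutually inverse: restricting $\mu \otimes m \mapsto \mu \cdot \psi(m)$ along $m \mapsto \delta_1 \otimes m$ recovers $\psi$ since $\delta_1$ is the unit of $(\mathscr{D}(H_1,A),*)$ by Lemma~\ref{lem:algprodf}; conversely, starting from $\varphi$, the map $\mu \otimes m \mapsto \mu \cdot \varphi(\delta_1 \otimes m) = \varphi(\mu \cdot (\delta_1 \otimes m)) = \varphi(\mu \otimes m)$ recovers $\varphi$ by $\mathscr{D}(H_1,A)$-linearity and the fact that elements of the form $\mu \otimes m$ span a dense subspace (the image of $\mathscr{D}(H_1,A) \widehat{\otimes}_{A,\iota} M$ in the Hausdorff quotient). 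The only subtlety is to verify that the descended map $\mathrm{ind}_{H_2}^{H_1}(M) \to N$ is genuinely continuous for the quotient topology on $\mathrm{ind}_{H_2}^{H_1}(M)$ and that it lands in $\mathscr{G}_{H_1,A}$-morphisms; this follows because the map out of $\mathscr{D}(H_1,A) \widehat{\otimes}_{A,\iota} M$ is continuous by the universal property (using that $N$ is complete Hausdorff and the pairing is separately continuous — this is the same mechanism as in Lemma~\ref{lem:tstenpr}) and continuity passes to the quotient topology, while $\mathscr{D}(H_1,A)$-linearity is immediate from associativity of $*$.

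I expect the main obstacle to be purely topological bookkeeping: confirming that the separately continuous $A$-bilinear map $\mathscr{D}(H_1,A) \times M \to N$ indeed induces a continuous map on the completed inductive tensor product and then on the Hausdorff quotient defining $\widetilde{\otimes}_{\mathscr{D}(H_2,A),\iota}$, and that density of the image of $\mathscr{D}(H_1,A) \widehat{\otimes}_{A,\iota} M$ suffices to pin down the morphism uniquely. All of this is already implicitly available: the identification $\mathrm{ind}_{H_2}^{H_1}(M) \cong \mathscr{D}(H_1/H_2,A)\widehat{\otimes}_{A,\iota}M$ from \eqref{eq:easindfor} shows $\mathrm{ind}_{H_2}^{H_1}(M)$ is complete and Hausdorff, and Lemma~\ref{lem:tstenpr} packages exactly the tensor-quotient manipulations needed. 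Since the non-relative statement is \cite[Proposition 5.1]{kohl2011} (or the analogous step in \cite{Tayhomcoh}), an alternative and shorter route would be to deduce the relative statement from the $\qp$-statement via the base-change isomorphism of Lemma~\ref{lem:distrbcompl}, but the direct argument above is clean enough to write out in full.
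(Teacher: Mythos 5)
Your construction is correct and is essentially the paper's own argument: the paper phrases it via the free-module adjunction $\mathrm{Hom}_{\mathscr{G}_{H_1,A}}(\mathscr{D}(H_1,A)\widehat{\otimes}_{A,\iota}M,N)\cong\mathrm{Hom}_{A,\mathrm{cont}}(M,N)$ (from the proof of Lemma \ref{lem:projfres}) and the observation that a map factors through the quotient $\mathrm{ind}_{H_2}^{H_1}(M)$ exactly when its restriction is $\mathscr{D}(H_2,A)$-linear, which is precisely the descent-plus-restriction mechanism you spell out by hand. Your explicit verification of the two mutually inverse maps (unit $\delta_1$, density of elementary tensors, continuity on the quotient) is just a more detailed rendering of the same proof.
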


\begin{proof}
From the proof of Lemma \ref{lem:projfres} we have an $A$-linear bijection
\begin{align*}
\alpha \colon \mathrm{Hom}_{\mathscr{G}_{H_1,A}}(\mathscr{D}(H_1,A) \widehat{\otimes}_{A,\iota} M, N) &\to \mathrm{Hom}_{A,\mathrm{cont}}(M,N) \\
g &\mapsto \alpha(g)
\end{align*}
where $\alpha(g)(m) := g(1 \otimes m)$. It follows directly from the definitions that a continuous $\mathscr{D}(H_1,A)$-linear map $g$ from the left factors through $\mathrm{ind}_{H_2}^{H_1}(M)$ ($= \mathscr{D}(H_1/H_2,A) \widehat{\otimes}_{A,\iota} M$) if and only if $\alpha(g)$ is $\mathscr{D}(H_2,A)$-linear.  
\end{proof}

\begin{prop} [Relative Shapiro's Lemma] \label{thm:relshaplem}
Let $H_1$ be a locally $\qp$-analytic group and let $H_2$ be a closed locally $\qp$-analytic subgroup. If $M$ and $N$ are objects of $\mathscr{G}_{H_2,A}$ and $\mathscr{G}_{H_1,A}$, respectively, then there are $A$-linear bijections
\[
\Ext^{q}_{\mathscr{G}_{H_1,A}}(\mathrm{ind}_{H_2}^{H_1}(M), N) \to \Ext^{q}_{\mathscr{G}_{H_2,A}}(M, N)
\]
for all $q \geq 0$. 
\end{prop}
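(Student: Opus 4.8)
The plan is to run the standard derived-functor argument, using Relative Frobenius reciprocity (Lemma \ref{lem:relfrob}) together with the exactness and projectivity-preservation of $\mathrm{ind}_{H_2}^{H_1}$ established in Lemma \ref{lem:predjkz}. The map in question is the one induced on $\Ext$-groups by the canonical $\mathscr{D}(H_2,A)$-linear map $M \to \mathrm{ind}_{H_2}^{H_1}(M)$, $m \mapsto 1 \otimes m$, together with restriction of the $\mathscr{D}(H_1,A)$-action on $N$ to $H_2$; on $\mathrm{Hom}$'s this recovers the reciprocity isomorphism of Lemma \ref{lem:relfrob} (restriction of a map to $1 \otimes M$), so the content of the statement is that the derived version remains bijective.

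First I would record the comparison theorem for the exact category $\mathscr{G}_{H,A}$: for any object $M$, the groups $\Ext^q_{\mathscr{G}_{H,A}}(M,N)$ of Definition \ref{def:goodextnoab}, defined via the bar resolution $B_\bullet(H,M)$, may be computed from \emph{any} s-projective s-resolution $X_\bullet \to M$ in $\mathscr{G}_{H,A}$. This is purely formal: the lifting property of s-projectives against short s-exact sequences lets one build, by the usual induction, a chain map $X_\bullet \to B_\bullet(H,M)$ over $\mathrm{id}_M$ and a homotopy inverse, unique up to chain homotopy; since $\mathrm{Hom}_{\mathscr{G}_{H,A}}(-,N)$ is an additive functor it carries this homotopy equivalence to a homotopy equivalence of the cochain complexes computing $\Ext$. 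The topological subtleties (completeness, separate continuity, strictness) are exactly those already handled by Kohlhaase and Thomas, so one may also invoke \cite{kohl2011}, \cite{Tayhomcoh} directly here.

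Next, take the bar resolution $B_\bullet(H_2,M) \to M$ in $\mathscr{G}_{H_2,A}$ and apply $\mathrm{ind}_{H_2}^{H_1}$. By Lemma \ref{lem:predjkz} the complex
\[
\cdots \to \mathrm{ind}_{H_2}^{H_1}(B_1(H_2,M)) \to \mathrm{ind}_{H_2}^{H_1}(B_0(H_2,M)) \to \mathrm{ind}_{H_2}^{H_1}(M)
\]
is an s-exact complex of s-projective objects of $\mathscr{G}_{H_1,A}$, i.e.\ an s-projective s-resolution of $\mathrm{ind}_{H_2}^{H_1}(M)$. Hence, by the comparison theorem of the previous paragraph, $\Ext^q_{\mathscr{G}_{H_1,A}}(\mathrm{ind}_{H_2}^{H_1}(M),N)$ is the $q$th cohomology of $\mathrm{Hom}_{\mathscr{G}_{H_1,A}}(\mathrm{ind}_{H_2}^{H_1}(B_\bullet(H_2,M)),N)$. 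Now apply Frobenius reciprocity (Lemma \ref{lem:relfrob}) in each degree: the bijection $g \mapsto (m \mapsto g(1\otimes m))$ is visibly natural in the first variable, so the degreewise isomorphisms
\[
\mathrm{Hom}_{\mathscr{G}_{H_1,A}}(\mathrm{ind}_{H_2}^{H_1}(B_q(H_2,M)),N) \xrightarrow{\ \sim\ } \mathrm{Hom}_{\mathscr{G}_{H_2,A}}(B_q(H_2,M),N)
\]
commute with the differentials induced by the $d_q$, giving an isomorphism of cochain complexes. Passing to cohomology yields the desired $A$-linear bijection $\Ext^{q}_{\mathscr{G}_{H_1,A}}(\mathrm{ind}_{H_2}^{H_1}(M), N) \to \Ext^{q}_{\mathscr{G}_{H_2,A}}(M, N)$, which by the identification in the first paragraph is the map appearing in the statement.

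The main obstacle is the comparison theorem of the second paragraph — that $\Ext$ in $\mathscr{G}_{H,A}$ does not depend on the chosen s-projective s-resolution. Because $\mathscr{G}_{H,A}$ is not abelian but only exact (with s-exact sequences as admissible short exact sequences, and all objects required complete, Hausdorff, with separately continuous action), one must check that the inductive construction of comparison chain maps and chain homotopies really uses only the lifting property of s-projectives against s-exact sequences, and that the auxiliary maps it produces are genuine morphisms in $\mathscr{G}_{H,A}$, i.e.\ continuous and $\mathscr{D}(H,A)$-linear. Everything else — Lemmas \ref{lem:predjkz} and \ref{lem:relfrob} and the explicit bar resolution $B_\bullet(H_2,M)$ — is already in place.
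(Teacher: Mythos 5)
Your argument is correct and is essentially the paper's own proof: take the bar resolution $B_\bullet(H_2,M)$, apply $\mathrm{ind}_{H_2}^{H_1}$ (an s-projective s-resolution of $\mathrm{ind}_{H_2}^{H_1}(M)$ by Lemma \ref{lem:predjkz}), and use Frobenius reciprocity (Lemma \ref{lem:relfrob}) degreewise to get an isomorphism of cochain complexes. The only difference is that you spell out the comparison step — that $\Ext$ in the exact category $\mathscr{G}_{H,A}$ is independent of the chosen s-projective s-resolution — which the paper leaves implicit in ``the result now follows''; this is a standard relative-homological-algebra fact and your sketch of it is fine.
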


\begin{proof}
Choose an s-projective s-resolution $X_{\bullet} \to M$ in $\mathscr{G}_{H_2,A}$ (e.g. $(B_q(H,M), d_q)_{q \geq 0}$). By Lemma \ref{lem:predjkz}, the complex $\mathrm{ind}_{H_2}^{H_1}(X_{\bullet}) \to \mathrm{ind}_{H_2}^{H_1}(M)$ is an s-projective s-resolution of $\mathrm{ind}_{H_2}^{H_1}(M)$. By Lemma \ref{lem:relfrob} there is an $A$-linear bijection of complexes
\[
\mathrm{Hom}_{\mathscr{G}_{H_1,A}}(\mathrm{ind}_{H_2}^{H_1}(X_{\bullet}), N) \to \mathrm{Hom}_{\mathscr{G}_{X_{H_2},A}}(X_\bullet,N). 
\] 
The result now follows. 
\end{proof}

\bibliographystyle{acm}
\bibliography{library}

\end{document}